\documentclass[11pt,reqno,oneside]{amsbook}

\usepackage{amsfonts,amssymb,amsmath,gensymb,epic,eepic,float,fullpage}
\usepackage{longtable}
\usepackage[usenames,dvipsnames]{color}
\usepackage{siunitx}
\usepackage{empheq}
\usepackage[mathscr]{euscript}
\usepackage[T1]{fontenc}
\usepackage{mathtools}
\usepackage{comment}
\usepackage{bm}
\usepackage{caption}
\usepackage{hyperref}
\usepackage{enumerate,enumitem}
\usepackage{tikz}
\usetikzlibrary{graphs,patterns,decorations.markings,arrows,matrix}
\usetikzlibrary{calc,decorations.pathmorphing,decorations.pathreplacing,shapes}
\allowdisplaybreaks

\newtheorem{thm}{Theorem}[section]
\newtheorem{lem}[thm]{Lemma}
\newtheorem{prop}[thm]{Proposition}

\newtheorem{cor}[thm]{Corollary}

\theoremstyle{definition}
\newtheorem{defn}[thm]{Definition}

\theoremstyle{remark}
\newtheorem{rmk}[thm]{Remark}

\theoremstyle{remark}
\newtheorem{ex}[thm]{Example}

\def\wh{\widehat}
\def\b{\bar}

\def\leq{\leqslant}
\def\geq{\geqslant}

\def\l{\lambda}
\def\m{\mu}
\def\n{\nu}

\def\ie{{\it i.e.}\/,}
\def\cf{{\it cf.}\/}

\colorlet{lgray}{white!85!black}
\colorlet{lred}{white!85!red}
\colorlet{lgreen}{white!80!green}
\colorlet{dgreen}{black!30!green}
\definecolor{green}{rgb}{0.1,0.8,0.1}
\definecolor{yellow}{rgb}{1.0,0.85,0.25}

\newcommand{\bra}[1]{\left\langle #1\right|}
\newcommand{\ket}[1]{\left|#1\right\rangle}

\renewcommand{\tikz}[2]{
\begin{tikzpicture}[scale=#1,baseline=(current bounding box.center),>=stealth]
#2
\end{tikzpicture}}

\renewcommand{\vert}[7]{
W_{#1,#2} \left(#3; 
\tikz{0.3}{
\draw[lgray,line width=1pt,->] (-1,0) -- (1,0);
\draw[lgray,line width=1pt,->] (0,-1) -- (0,1);
\node[left] at (-1,0) {\tiny $#5$};\node[right] at (1,0) {\tiny $#7$};
\node[below] at (0,-1) {\tiny $#4$};\node[above] at (0,1) {\tiny $#6$};
}\right)}

\def\V{\mathbb{V}(1^n)}
\def\Vs{\mathbb{V}^{*}(1^n)}
\def\B{\mathcal{B}}
\def\C{\mathcal{C}}
\def\F{{\sf F}}
\def\G{{\sf G}}

\def\I{\bm{I}}
\def\J{\bm{J}}
\def\K{\bm{K}}

\def\AA{\bm{A}}
\def\BB{\bm{B}}
\def\CC{\bm{C}}
\def\DD{\bm{D}}
\def\l{{\sf L}}
\def\m{{\sf M}}
\def\n{{\sf N}}
\newcommand{\As}[2]{\AA_{[#1,#2]}}
\newcommand{\Is}[2]{\I_{[#1,#2]}}
\newcommand{\Ks}[2]{\K_{[#1,#2]}}
\newcommand{\rotgr}{\rotatebox[origin=c]{-90}{$<$}}

\renewcommand\le{\leq}
\renewcommand\ge{\geq}

\numberwithin{equation}{section}


\makeindex

\begin{document}

\title{\Huge{Coloured stochastic vertex models \\ and their spectral theory}}

\author{
\LARGE{Alexei Borodin}
\footnote{\large{
Department of Mathematics, MIT, Cambridge, USA, and Institute for Information Transmission Problems, Moscow, Russia. E-mail: \texttt{borodin@math.mit.edu}
\medskip
}} 
and 
\LARGE{Michael Wheeler}
\footnote{\large{
School of Mathematics and Statistics, The University of Melbourne, Parkville, Victoria, Australia. E-mail: \texttt{wheelerm@unimelb.edu.au}
}}
}

\maketitle


{\large{\bf Abstract.}
This work is dedicated to $\mathfrak{sl}_{n+1}$-related integrable stochastic vertex models; we call such models \emph{coloured}. We prove several results about these models, which include the following: 

\medskip
\begin{enumerate}

\item We construct the basis of (rational) eigenfunctions of the coloured transfer-matrices as partition functions of our lattice models with certain boundary conditions. Similarly, we construct a dual basis and prove the corresponding orthogonality relations and Plancherel formulae. 

\medskip

\item We derive a variety of combinatorial properties of those eigenfunctions, such as branching rules, exchange relations under Hecke divided-difference operators, (skew) Cauchy identities of different types, and monomial expansions. 

\medskip

\item We show that our eigenfunctions are certain (non-obvious) reductions of the nested Bethe Ansatz eigenfunctions. 

\medskip

\item For models in a quadrant with domain-wall (or half-Bernoulli) boundary conditions, we prove a matching relation that identifies the distribution of the coloured height function at a point with the distribution of the height function along a line in an associated colour-blind ($\mathfrak{sl}_2$-related) stochastic vertex model. Thanks to a variety of known results about asymptotics of height functions of the colour-blind models, this implies a similar variety of limit theorems 
for the coloured height function of our models. 

\medskip

\item We demonstrate how the coloured-uncoloured match degenerates to the coloured (or multi-species) versions of the ASEP, $q$-PushTASEP, and the $q$-boson model. 

\medskip

\item We show how our eigenfunctions relate to non-symmetric Cherednik--Macdonald theory, and we make use of this connection to prove a probabilistic matching result by applying Cherednik--Dunkl operators to the corresponding non-symmetric Cauchy identity.

\end{enumerate}
}

\numberwithin{section}{chapter}
\numberwithin{subsection}{section}

\setcounter{tocdepth}{1}
\makeatletter
\def\l@subsection{\@tocline{2}{0pt}{2.5pc}{5pc}{}}
\makeatother
\tableofcontents

\chapter{Introduction}

\section{Preface}

Exactly solvable models of Statistical Mechanics is a very well developed 
subject with an illustrious history that spans Mathematics, Physics, and Chemistry. 
Its traditional goals have been analyzing thermodynamic equilibrium in various models of Statistical Mechanics, like in Onsager's 1944 solution of the two-dimensional Ising model \cite{Onsager}, see also Baxter's book \cite{Baxter}; and providing a convenient algebraic formalism for studying integrable systems in Quantum Mechanics in and out of equilibrium, \cf\ Jimbo--Miwa's book \cite{JimboM}.

A novel direction has been added in more recent years (although the pioneering work of Gwa--Spohn \cite{GwaS} goes back to 1992) -- applying the same solvability mechanisms to \emph{Markovian} systems, that can also be often viewed as models of Statistical Mechanics from an out-of-equilibrium perspective. Those include certain classes of interacting particles systems, with the Asymmetric Simple Exclusion Process, or ASEP, as a ubiquitous example, and directed polymers in random media, with the celebrated Kardar--Parisi--Zhang (KPZ) stochastic partial differential equation as a representative example. 

The first wave of these Markovian integrable systems started in late 1990s with the papers of Johansson \cite{Johansson} and Baik--Deift--Johansson \cite{BaikDJ}, and the key to their solvability, or \emph{integrability}, was in (highly non-obvious) reductions to what physicists would call \emph{free-fermion models} -- probabilistic systems, many of whose observables are expressed in terms of determinants and Pfaffians.\footnote{The two-dimensional Ising model mentioned above would also be called `free-fermion' in the physics literature, although it is `less solvable' than the models of \cite{Johansson, BaikDJ} and their relatives. Statistical mechanical models of similar level of free-fermion solvability are dimer models with explicitly known coupling functions.} 

The second wave of integrable Markovian systems started in late 2000s, and their reliance on the methods developed for integrable models of Statistical and Quantum Mechanics was much more apparent. For example, looking at the earlier papers of the second wave we see that: (a) The pioneering work of Tracy--Widom \cite{TracyW1,TracyW2,TracyW3} on the ASEP was based on the famous idea of Bethe \cite{Bethe} of looking for eigenfunctions of a quantum many-body system in the form of superposition of those for noninteracting bodies (coordinate Bethe Ansatz); (b) The work of O'Connell \cite{O'Connell} and Borodin--Corwin \cite{BorodinC} on semi-discrete Brownian polymers utilized properties of eigenfunctions of the Macdonald--Ruijsenaars quantum integrable system -- the celebrated Macdonald polynomials and their degenerations; (c) The physics papers of Dotsenko \cite{Dotsenko} and Calabrese--Le Doussal--Rosso \cite{CalabreseDR}, and a later work of Borodin--Corwin--Sasamoto \cite{BorodinCS} used a duality trick to show that certain observables of infinite-dimensional models solve finite-dimensional quantum many-body systems that are, in their turn, solvable by the coordinate Bethe Ansatz.

It turned out that all the above examples, as well as many others, can be united under a single umbrella -- integrable stochastic vertex models. 

Such a unification was first realized by Corwin--Petrov \cite{CorwinP} on the basis of \cite{Borodin} under the name of {stochastic higher spin six vertex model}, see \cite{BorodinP2} for a lecture style exposition. Its existence was due to the fact that all these models were governed by the same algebraic structure -- the quantum affine group $U_q(\widehat{\mathfrak{sl}_2})$. This was later extended to the level of the elliptic quantum group $E_{\tau,\eta}(\mathfrak{sl}_2)$ in \cite{Borodin3}, \cite{Aggarwal}, which produced \emph{dynamic} stochastic vertex models. 

The natural next step in the ladder is the quantum groups of higher rank, and stochastic vertex models corresponding to those have been introduced by Kuniba--Mangazeev--Maruyama--Okado in \cite{KunibaMMO}. In a certain degeneration, these models reproduce \emph{multi-species} exclusion processes that have been around at least since the 1990s. A dynamic extension was given by Kuniba in \cite{Kuniba}. 

Of course, one does not just want to define more and more general models; one wants to analyze their behaviour in various large time and space limits and put it in the framework of universality classes. 

For $\mathfrak{sl}_2$-related models, a few powerful approaches have been developed. Free-fermionic reductions work well for the models from the first wave and a few of those from the second wave, see \cite{BorodinG} for a survey of the former and \cite{Borodin2}, \cite{BorodinO}, \cite{BarraquandBCW} for the latter. Direct analysis of integral representations of the Markov kernels (a.k.a. transfer-matrices) is sometimes possible, for either the Markovian system itself, like in \cite{TracyW1,TracyW2,TracyW3}, or for certain \emph{duality functionals} (usually the $q$-moments) that evolve in time in a similar fashion, like in \cite{BorodinCS}, \cite{BorodinCG}. Both lead to exact characterization of the large time behaviour in numerous examples.  A Plancherel theory for Fourier-like bases of eigenfunctions of such Markov kernels has been developed in \cite{BorodinCPS}, \cite{BorodinCPS2}, \cite{BorodinP1}, and derivation of the $q$-moments from Cauchy (reproducing kernel) identities for such functions was the 
central topic of \cite{BorodinP1}, \cite{Borodin3}. The $q$-moments can also be obtained from the eigenaction of Macdonald difference operators on Macdonald symmetric polynomials \cite{BorodinC}, \cite{BorodinBW}. 

For the models related to $\mathfrak{sl}_{n+1}$ with $n\ge 2$, the progress has been much more modest. The only asymptotic result that we are aware of is a very recent announcement in \cite{ChenGHS} of a computation of the probability of two groups of particles of different species to completely change their order at large times (the result is also matched to an earlier 
prediction by Spohn \cite{Spohn}). Further, to our knowledge, the only algebraic advances towards possible asymptotics appeared in recent works of Kuan, where for certain models duality functionals have been constructed (see \cite{Kuan} and references therein) and integral representations for transfer-matrices have been derived \cite{Kuan2}; and in a paper by Takeyama \cite{Takeyama}, which contains a combinatorial formula for eigenfunctions of the Markov kernel for a multi-species $q$-boson model.

The primary goal of the present work is to advance the analysis of the $\mathfrak{sl}_{n+1}$-related
stochastic vertex models. We call such models \emph{coloured}, because they consist of paths of different colours (that correspond to different species, in more conventional terminology). We concentrate on the \emph{rainbow sector}, where the colours of all paths are pairwise distinct -- on one hand, it is simpler algebraically, and on the other hand, rainbow stochastic model collapse onto more 
degenerate ones by forgetting some of the colour distinctions. 

Here are our main results. 

\smallskip

\noindent$\bullet$ \quad We construct the basis of (rational) eigenfunctions of the coloured transfer-matrices as partition functions of our lattice models with certain boundary conditions. Similarly, we construct a dual basis and prove the corresponding orthogonality relations and Plancherel formulas. This yields an explicit integral representation of the transfer-matrices that, in particular, sheds some light on the nature of the integral representations obtained in \cite{ChenGHS}. 

\smallskip

\noindent$\bullet$\quad We derive a variety of combinatorial properties of these eigenfunctions, such as branching rules, exchange relations under Hecke 
divided-difference operators, (skew) Cauchy identities of different types, and monomial expansions. At the particular value $s=0$ of the spin parameter $s$, the eigenfunctions turn into non-symmetric Hall--Littlewood polynomials, and, consequently, we call them the \emph{non-symmetric spin Hall--Littlewood functions}. We also show that the non-symmetric spin Hall--Littlewood functions are certain (non-obvious) reductions of the nested Bethe Ansatz eigenfunctions, also known as the \emph{weight functions}. 

\smallskip

\noindent$\bullet$\quad For the coloured stochastic vertex model in a quadrant with a domain-wall (or half-Bernoulli) boundary condition, we prove a matching relation that identifies the distribution of the coloured height function at a point (that encodes the colours of the paths that pass through or below this point) with the distribution of the height function along a line in an associated colour-blind ($\mathfrak{sl}_2$-related) stochastic vertex model. Thanks to a variety of known (proved or conjectural) results about asymptotics of height functions of the colour-blind models, this implies a similar variety of (proved or conjectural) limit theorems 
for the coloured height function of our models. We also demonstrate how this coloured-uncoloured match degenerates to the coloured (or multi-species) versions of the ASEP, $q$-PushTASEP, and the $q$-boson model.

\smallskip

\noindent$\bullet$\quad Another matching relation that we prove identifies the one-point distribution of the coloured height function with the multi-point distribution of zeros of compositions distributed according to an ascending \emph{non-symmetric} Hall--Littlewood process. This is the first appearance of the 
non-symmetric Cherednik--Macdonald theory in a probabilistic setup, and we make use of it by proving the match by applying Cherednik--Dunkl operators to the corresponding non-symmetric Cauchy identity. 
  
Let us now describe our results in more detail. 

\section{The model} The vertex models that we consider in the present work assign weights to finite collections of finite paths drawn on a square grid. Each vertex for which there exists a path that enters and exits it produces a weight that depends on the configuration of all the paths that go through this vertex. 
The total weight for a collection of paths is the product of weights of the vertices that the paths traverse. (Thus, we tacitly assume the normalization in which the weight of an empty vertex is always equal to 1.)

Our paths are going to be \emph{coloured}, \emph{i.e.}, each path carries a colour that is a number between 1 and $n$, where $n\ge 1$ is a fixed parameter. 
We will usually assume that each horizontal edge of the underlying square grid can carry no more than one path, while vertical edges can be occupied by multiple paths. Thus, the states of the horizontal edges can be encoded by an integer between 0 and $n$, with 0 denoting an edge that is not occupied by a path, while the states of the vertical edges can be encoded by $n$-dimensional (nonnegative-valued) vectors which specify the number of times each colour $\{1,\dots,n\}$ appears at that edge. We will also mostly restrict ourselves to the situation when for each colour there is no more than one path of this colour in any path collection; in this case the vectors assigned to vertical edges will be length-$n$ binary strings.

Our paths will always travel upward in the vertical direction, and in the horizontal direction a path can travel rightward or leftward, depending on the region of the grid it is in; this choice will always be explicitly specified. 

Vertex weights in the regions of rightward travel are denoted as 
\begin{align}
\label{generic-L-intro}
L_{x,q,s}(\I,j; \K,\ell)
\equiv
L_x(\I,j; \K,\ell)
\index{L1@$L_x(\I,j; \K,\ell)$; vertex weights}
=
\tikz{0.7}{
	\draw[lgray,line width=1.5pt,->] (-1,0) -- (1,0);
	\draw[lgray,line width=4pt,->] (0,-1) -- (0,1);
	\node[left] at (-1,0) {\tiny $j$};\node[right] at (1,0) {\tiny $\ell$};
	\node[below] at (0,-1) {\tiny $\I$};\node[above] at (0,1) {\tiny $\K$};
},
\quad
0\le j,\ell\le n,
\quad
\I,\K \in \{0,1,2,\dots\}^n,
\end{align}
where the vectors $\I = (I_1,\dots,I_n)$, $\K = (K_1,\dots,K_n)$ are chosen such that $I_i$ (resp., $K_i$) gives the number of paths of colour $i$ present at the bottom (resp., top) edge of the vertex. The explicit values of the weights \eqref{generic-L-intro} are summarized by the table below:
\begin{align}
\label{s-weights-intro}
\begin{tabular}{|c|c|c|}
\hline
\quad
\tikz{0.7}{
	\draw[lgray,line width=1.5pt,->] (-1,0) -- (1,0);
	\draw[lgray,line width=4pt,->] (0,-1) -- (0,1);
	\node[left] at (-1,0) {\tiny $0$};\node[right] at (1,0) {\tiny $0$};
	\node[below] at (0,-1) {\tiny $\I$};\node[above] at (0,1) {\tiny $\I$};
}
\quad
&
\quad
\tikz{0.7}{
	\draw[lgray,line width=1.5pt,->] (-1,0) -- (1,0);
	\draw[lgray,line width=4pt,->] (0,-1) -- (0,1);
	\node[left] at (-1,0) {\tiny $i$};\node[right] at (1,0) {\tiny $i$};
	\node[below] at (0,-1) {\tiny $\I$};\node[above] at (0,1) {\tiny $\I$};
}
\quad
&
\quad
\tikz{0.7}{
	\draw[lgray,line width=1.5pt,->] (-1,0) -- (1,0);
	\draw[lgray,line width=4pt,->] (0,-1) -- (0,1);
	\node[left] at (-1,0) {\tiny $0$};\node[right] at (1,0) {\tiny $i$};
	\node[below] at (0,-1) {\tiny $\I$};\node[above] at (0,1) {\tiny $\I^{-}_i$};
}
\quad
\\[1.3cm]
\quad
$\dfrac{1-s x q^{\Is{1}{n}}}{1-sx}$
\quad
& 
\quad
$\dfrac{(x-sq^{I_i}) q^{\Is{i+1}{n}}}{1-sx}$
\quad
& 
\quad
$\dfrac{x(1-q^{I_i}) q^{\Is{i+1}{n}}}{1-sx}$
\quad
\\[0.7cm]
\hline
\quad
\tikz{0.7}{
	\draw[lgray,line width=1.5pt,->] (-1,0) -- (1,0);
	\draw[lgray,line width=4pt,->] (0,-1) -- (0,1);
	\node[left] at (-1,0) {\tiny $i$};\node[right] at (1,0) {\tiny $0$};
	\node[below] at (0,-1) {\tiny $\I$};\node[above] at (0,1) {\tiny $\I^{+}_i$};
}
\quad
&
\quad
\tikz{0.7}{
	\draw[lgray,line width=1.5pt,->] (-1,0) -- (1,0);
	\draw[lgray,line width=4pt,->] (0,-1) -- (0,1);
	\node[left] at (-1,0) {\tiny $i$};\node[right] at (1,0) {\tiny $j$};
	\node[below] at (0,-1) {\tiny $\I$};\node[above] at (0,1) 
	{\tiny $\I^{+-}_{ij}$};
}
\quad
&
\quad
\tikz{0.7}{
	\draw[lgray,line width=1.5pt,->] (-1,0) -- (1,0);
	\draw[lgray,line width=4pt,->] (0,-1) -- (0,1);
	\node[left] at (-1,0) {\tiny $j$};\node[right] at (1,0) {\tiny $i$};
	\node[below] at (0,-1) {\tiny $\I$};\node[above] at (0,1) {\tiny $\I^{+-}_{ji}$};
}
\quad
\\[1.3cm] 
\quad
$\dfrac{1-s^2 q^{\Is{1}{n}}}{1-sx}$
\quad
& 
\quad
$\dfrac{x(1-q^{I_j}) q^{\Is{j+1}{n}}}{1-sx}$
\quad
&
\quad
$\dfrac{s(1-q^{I_i})q^{\Is{i+1}{n}}}{1-sx}$
\quad
\\[0.7cm]
\hline
\end{tabular} 
\end{align}
where we assume that $1 \leq i<j \leq n$. Here $q$ \index{q@$q$; quantization parameter} is the \emph{quantization parameter}, $s$ \index{s@$s$; spin parameter} is the \emph{spin parameter}, and $x$ \index{x5@$x$; spectral parameter} is the \emph{spectral parameter}; the notation $\Is{k}{n}$ stands for 
$\sum_{a=k}^{n} I_a$. These weights correspond to the image of the universal $R$-matrix for the quantum affine group $U_q(\wh{\mathfrak{sl}_{n+1}})$ in the tensor product of its vector representation (horizontal edges) and a Verma module (vertical edges), with the spin parameter encoding its highest weight. At $s=q^{-\frac N2}$ with $N \in \mathbb{Z}_{\geq1}$, it is easy to see that the above weights will prevent the appearance of vertical edges occupied by more than $N$ paths; in representation theoretic language this corresponds to finite-dimensional irreducible quotients of the Verma modules that are equivalent to symmetric powers of the vector representation. In particular, $N=1$ forbids multiple occupation for vertical edges, and the above weights turn into a version of the $U_q(\wh{\mathfrak{sl}_{n+1}})$ $R$-matrix in its defining, or fundamental, representation.
 
The key property of the above weights is that they satisfy the \emph{Yang--Baxter equation}; we give a detailed account of it in Chapter \ref{sec:models}. 

A simple gauge transformation 
\begin{equation}
\label{gauge-intro}
\tilde{L}_x(\I,j;\K,\ell)
\index{L2@$\tilde{L}_x(\I,j;\K,\ell)$; stochastic weights}
:=
(-s)^{\mathbf{1}_{\ell\ge 1}}\cdot
L_x(\I,j;\K,\ell)
\end{equation}
makes the weights \eqref{generic-L-intro}, \eqref{s-weights-intro}   \emph{stochastic}, in the sense that for any fixed states of the incoming edges, the sum over all possible states of the outgoing edges is always equal to 1. Thus, if the parameters are chosen so that the modified weights are nonnegative, they can be viewed as Markovian transition probabilities. A stochastic normalization of the weights in the $\mathfrak{sl}_{n+1}$ case first appeared in \cite{KunibaMMO}; in Appendix \ref{app:kuan-weights}, we document the precise link between our notation and the one used in \cite[Section 3.5]{Kuan}. 

In the regions of leftward travel, we will use a different set of weights denoted as 
\begin{align}
\label{generic-M-intro}
M_{x,q,s}(\I,j;\K,\ell)
\equiv
M_x(\I,j;\K,\ell)
\index{M1@$M_x(\I,j;\K,\ell)$; dual vertex weights}
=
\tikz{0.7}{
	\draw[lgray,line width=1.5pt,<-] (-1,0) -- (1,0);
	\draw[lgray,line width=4pt,->] (0,-1) -- (0,1);
	\node[left] at (-1,0) {\tiny $\ell$};\node[right] at (1,0) {\tiny $j$};
	\node[below] at (0,-1) {\tiny $\I$};\node[above] at (0,1) {\tiny $\K$};
},
\quad
0\le j,\ell \le n,
\quad
\I,\K \in \{0,1,2,\dots\}^n,
\end{align}
and defined by 
\begin{align}
\label{sym1-intro}
M_{{x}^{-1},{q}^{-1},{s}^{-1}}(\I,j;\K,\ell)
&=
(-s)^{\bm{1}_{\ell \geq 1}-\bm{1}_{j \geq 1}}
\cdot
L_{x,q,s}(\I,j;\K,\ell).
\end{align}
The stochastic modification takes the form $\tilde{M}_x(\I,j;\K,\ell)
:=
(-s)^{-\mathbf{1}_{j\ge 1}}
\cdot M_x(\I,j;\K,\ell)$. \index{M2@$\tilde{M}_x(\I,j;\K,\ell)$; dual stochastic weights}

\section{The transfer-matrix and its eigenfunctions}
Consider the total weight (\emph{i.e.}, the partition function) for paths that start vertically in a prescribed configuration, end vertically in another prescribed configuration one row higher, and can move horizontally (leftward) in between. This can be illustrated pictorially as
\index{G@$G_{\mu/\nu}$; transfer matrix}
\begin{align}
\label{G-pf-intro}
G_{\mu/\nu}(x)
&=
\tikz{0.8}{
		\draw[lgray,line width=1.5pt,<-] (1,1) -- (8,1);
	\foreach\x in {2,...,7}{
		\draw[lgray,line width=4pt,->] (\x,0) -- (\x,2);
	}
	\node[left] at (0.5,1) {$x \leftarrow$};
	\node[above] at (7,2) {$\cdots$};
	\node[above] at (6,2) {$\cdots$};
	\node[above] at (5,2) {$\cdots$};
	\node[above] at (4,2) {\footnotesize$\bm{B}(2)$};
	\node[above] at (3,2) {\footnotesize$\bm{B}(1)$};
	\node[above] at (2,2) {\footnotesize$\bm{B}(0)$};
	\node[below] at (7,0) {$\cdots$};
	\node[below] at (6,0) {$\cdots$};
	\node[below] at (5,0) {$\cdots$};
	\node[below] at (4,0) {\footnotesize$\bm{A}(2)$};
	\node[below] at (3,0) {\footnotesize$\bm{A}(1)$};
	\node[below] at (2,0) {\footnotesize$\bm{A}(0)$};
	\node[right] at (8,1) {$0$};
	\node[left] at (1,1) {$0$};
}
\end{align}
Here $x$ is the spectral parameter used for the weights \eqref{generic-M-intro} in this picture, $\mu=(\mu_1,\dots,\mu_n)$ is a \emph{composition of length $n$}, or a string of nonnegative integers, that gives the starting positions for paths of colours $1,\dots,n$, respectively, $\nu=(\nu_1,\dots,\nu_n)$ is the composition that gives the final positions of the same paths, and the vectors $\bm{A}(k), \bm{B}(k)$ are used to encode $\mu$ and $\nu$ in the grid:
\begin{align}
\label{AB-states}
\bm{A}(k) = \sum_{j=1}^{n} \bm{1}_{\mu_j = k} \bm{e}_j,
\qquad
\bm{B}(k) = \sum_{j=1}^{n} \bm{1}_{\nu_j = k} \bm{e}_j,
\qquad 
\qquad k\in \mathbb{Z}_{\geq0}\,,
\end{align}
with $\bm{e}_j$ denoting the $j$-th Euclidean unit vector.

The partition function $G_{\mu/\nu}(x)$ is the \emph{transfer-matrix}. As \eqref{gauge-intro} and \eqref{sym1-intro} show, it is simply related to the matrix of transition probabilities for a Markov chain (provided that the parameters are chosen in such a way that all entries are nonnegative). 

Let us now describe our spectral representation of the transfer-matrix. 

For a composition $\mu$ and $n$ complex parameters $x_1,\dots, x_n$, consider another partition function $f_\mu(x_1,\dots,x_n)$ corresponding to the picture below:
\begin{align}
\label{f-def-intro}
f_{\mu}(x_1,\dots,x_n)
\index{f1@$f_{\mu}$; non-symmetric spin Hall--Littlewood}
&=
\tikz{0.8}{
	\foreach\y in {1,...,5}{
		\draw[lgray,line width=1.5pt,->] (1,\y) -- (8,\y);
	}
	\foreach\x in {2,...,7}{
		\draw[lgray,line width=4pt,->] (\x,0) -- (\x,6);
	}
	\node[left] at (0.5,1) {$x_1 \rightarrow$};
	\node[left] at (0.5,2) {$x_2 \rightarrow$};
	\node[left] at (0.5,3) {$\vdots$};
	\node[left] at (0.5,4) {$\vdots$};
	\node[left] at (0.5,5) {$x_n \rightarrow$};
	\node[below] at (7,0) {$\cdots$};
	\node[below] at (6,0) {$\cdots$};
	\node[below] at (5,0) {$\cdots$};
	\node[below] at (4,0) {\footnotesize$\bm{0}$};
	\node[below] at (3,0) {\footnotesize$\bm{0}$};
	\node[below] at (2,0) {\footnotesize$\bm{0}$};
	\node[above] at (7,6) {$\cdots$};
	\node[above] at (6,6) {$\cdots$};
	\node[above] at (5,6) {$\cdots$};
	\node[above] at (4,6) {\footnotesize$\bm{A}(2)$};
	\node[above] at (3,6) {\footnotesize$\bm{A}(1)$};
	\node[above] at (2,6) {\footnotesize$\bm{A}(0)$};
	\node[right] at (8,1) {$0$};
	\node[right] at (8,2) {$0$};
	\node[right] at (8,3) {$\vdots$};
	\node[right] at (8,4) {$\vdots$};
	\node[right] at (8,5) {$0$};
	\node[left] at (1,1) {$1$};
	\node[left] at (1,2) {$2$};
	\node[left] at (1,3) {$\vdots$};
	\node[left] at (1,4) {$\vdots$};
	\node[left] at (1,5) {$n$};
}
\end{align}
Here $\bm{A}(k)$'s encode $\mu$ in the same way as in \eqref{AB-states}, and the numbers $1,\dots,n$ next to rightward arrows on the left say that paths of colours $1,\dots,n$ enter through the left boundary in the 1st, \dots, $n$-th row, respectively (all horizontal travel is rightward). Further, the symbols $x_1,\dots,x_n$ say that we use those spectral parameters in the corresponding rows to compute the weights \eqref{generic-L-intro}. The zeros on the bottom and on the right mean that no paths enter and exit there. 

\begin{prop}\label{prop:fG-skew-intro} (Special case of Proposition \ref{prop:fG-skew})
Assume $x_1,\dots,x_n,y\in \mathbb{C}$ are such that
\begin{align}
\label{weight-condition2-intro}
\left|
\frac{x_i-s}{1-sx_i}
\cdot
\frac{y-s}{1-sy}
\right|
<
1,
\qquad
\text{for all}\ 1 \leq i \leq n.
\end{align}
Then for any composition $\nu = (\nu_1,\dots,\nu_n)$ of length $n$ one has the identity
	\begin{align}
	\label{fG-skew-intro}
	\sum_{\mu}
	f_{\mu}(x_1,\dots,x_n)
	G_{\mu / \nu}(y)
	=
	\prod_{i=1}^{n}
	\frac{1-q x_i y}{q(1-x_i y)}
	\cdot
	f_{\nu}(x_1,\dots,x_n),
	\end{align}
	where the summation is taken over all length-$n$ compositions 
	$\mu = (\mu_1,\dots,\mu_n)$.
\end{prop}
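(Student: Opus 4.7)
The plan I would follow is to prove \eqref{fG-skew-intro} via the standard Yang--Baxter ``train argument.'' First I observe that, since the top vertical boundary of the $f_\mu(x_1,\dots,x_n)$ lattice and the bottom vertical boundary of the $G_{\mu/\nu}(y)$ lattice both encode the composition $\mu$ via the vectors $\bm{A}(k)$ of \eqref{AB-states}, the sum $\sum_\mu f_\mu(x_1,\dots,x_n)\,G_{\mu/\nu}(y)$ is nothing other than the partition function $Z$ of the single composite lattice obtained by stacking the leftward $M_y$-weight row on top of the $n$-row rightward $L_{x_i}$-weight lattice. The boundaries of $Z$ are: bottom vertical $\bm{0}$; top vertical encoding $\nu$; right horizontal $0$ in every row; and left horizontal $0$ in the $y$-row and $i$ in the $x_i$-row.

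Next, I would invoke the Yang--Baxter equation (YBE) for the $L$ and $M$ weights (established in Chapter~\ref{sec:models}), which furnishes an intertwining $R$-vertex between any rightward $L_{x_i}$-row and leftward $M_y$-row, to commute the $y$-row past each $x_i$-row. Specifically, attach such an $R$-intertwiner at the right boundary of $Z$ between the $y$-row and the $x_n$-row; since both incoming horizontal states are $0$, the attachment contributes a factor of $1$. Propagating the intertwiner column-by-column to the left via repeated YBE moves preserves the value of $Z$. When the intertwiner reaches the left boundary it couples to horizontal boundary states $0$ and $n$, and the corresponding matrix element of the $R$-matrix evaluates to the scalar $\tfrac{1-qx_ny}{q(1-x_ny)}$. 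Iterating this procedure for $i=n-1,n-2,\dots,1$ extracts the full prefactor $\prod_{i=1}^n \tfrac{1-qx_iy}{q(1-x_iy)}$; after all $n$ such swaps, the $y$-row is decoupled from the rightward rows, and the surviving partition function is precisely the lattice defining $f_\nu(x_1,\dots,x_n)$.

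The chief technical obstacle I anticipate is the absolute convergence of the intermediate sums over vertical-edge multiplicities that the $R$-intertwiner picks up as it traverses the lattice. Each such sum is a geometric series whose ratio contains the combination $\tfrac{(x_i-s)(y-s)}{(1-sx_i)(1-sy)}$ appearing in \eqref{weight-condition2-intro}, so the hypothesis of the proposition is precisely what guarantees that all the formal YBE manipulations may be carried out term-by-term. A secondary bookkeeping challenge is that the $R$-vertex may mix colours nontrivially as it crosses each column, yet the cumulative effect on the boundary data must be exactly the swap described above, with the scalar prefactor arising only at the left edge; verifying this requires checking that the bulk YBE moves really do preserve all internal boundary data they encounter, leaving the prefactor as the sole residue of the intertwiner's passage.
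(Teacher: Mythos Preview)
Your approach is correct and is essentially the same as the paper's: the paper packages the Yang--Baxter train argument you describe as the row-operator commutation relation $\C_i(x)\B_0(y)=\dfrac{1-qx y}{q(1-x y)}\,\B_0(y)\C_i(x)$ (equation \eqref{CB>} with $j=0$), and then applies it $n$ times inside the expectation value $\bra{\varnothing}\C_1(x_1)\cdots\C_n(x_n)\B_0(y)\ket{\nu}$, finishing with $\bra{\varnothing}\B_0(y)=\bra{\varnothing}$. The vanishing of the unwanted intermediate terms that you flag as the chief technical obstacle is precisely the infinite-volume argument used in the paper's derivation of \eqref{CB>} under the hypothesis \eqref{weight-condition2-intro}; note, however, that what must be controlled there is not a sum over vertical-edge multiplicities but the horizontal edge state carried by the $R$-intertwiner at the far right of the semi-infinite lattice.
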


Equation \eqref{fG-skew-intro} shows that $f_\mu$'s are algebraic eigenfunctions of the transfer-matrix $G_{\mu/\nu}$. However, since we are in infinite-dimensional space (with basis parameterized by compositions of length $n$), we need some sort of spectral analysis to have any hope of using $f_\mu$'s for
studying the Markov evolution. This is precisely what we explain next. 

\section{Plancherel theory} We need to define functional spaces on which our Fourier-like transform with kernel $f_\mu(x_1,\dots,x_n)$ will act. For the clarity of exposition, we choose the smallest possible spaces; they could be extended by a natural completion procedure but we will not pursue this here. 

Let $\mathcal{C}^n$ \index{C@$\mathcal{C}^n$; finite composition space} denote the space of complex-valued, finitely supported functions on $\mathbb{Z}^n$, and let $\mathcal{L}^n$ \index{L@$\mathcal{L}^n$; Laurent polynomial space} denote the space of all functions $\Phi:\mathbb{C}^n\to \mathbb{C}$ such that (a) $\Phi(x_1,\dots,x_n)$ is a Laurent polynomial in the variables $(x_i-s)/(1-sx_i)$, $1\le i\le n$, and (b)
$\lim_{x_i\to\infty} \Phi(x_1,\dots,x_n)=0$ for all $1\le i\le n$. 

\index{g1@$g_\mu$; dual non-symmetric spin Hall--Littlewood}
We also need the dual functions $g_\mu(x_1,\dots,x_n)$, which can be defined as 
\begin{align}
\label{f-g-sym-intro}
g_{\tilde\mu}(x_n^{-1},\dots,x_1^{-1};q^{-1},s^{-1})
=
c_{\mu}(q,s)
\index{c@$c_{\mu}(q,s)$}
\prod_{i=1}^{n}
x_i
\cdot
f_{\mu}(x_1,\dots,x_n;q,s),\qquad \tilde{\mu} = (\mu_n,\dots,\mu_1),
\end{align}
where the multiplicative constant $c_{\mu}(q,s)$ is given by
\begin{align}
\label{cmu-intro}
c_{\mu}(q,s)
=
\frac{s^n (q-1)^n q^{{\rm inv}(\tilde\mu)}}{\prod_{j \geq 0} (s^2;q)_{m_j(\mu)}},
\qquad
{\rm inv}(\tilde\mu)
=
\#\{i<j : \tilde{\mu}_i \geq \tilde{\mu}_j\}
=
\#\{i<j : \mu_i \leq \mu_j\},
\end{align}
with the standard $q$-Pochhammer \index{$(a;q)_m$; $q$-Pochhammer symbol} definitions
\begin{align*}
(a;q)_m := (1-a)(1-qa) \cdots (1-q^{m-1}a), \quad m \geq 1,
\qquad
(a;q)_0 := 1,
\end{align*}
and with $m_j(\mu):=\#\{1 \leq k \leq n:\mu_k=j\}$ for all $j\ge 0$; as well as
their alternative normalization
\index{g1@$g^{*}_{\mu}$}
\begin{align*}
g^{*}_{\mu}(x_1,\dots,x_n)
:=
{q^{n(n+1)/2}}{(q-1)^{-n}}
\cdot
g_{\mu}(x_1,\dots,x_n).
\end{align*}
The functions $g_\mu(x_1,\dots,x_n)$ can also be constructed as suitable partition functions, similarly to \eqref{f-def-intro}, \cf\ \eqref{g-def} below. 

The functions $f_\mu$ and $g_\mu$ naturally extend to compositions $\mu$ with arbitrary (not necessarily nonnegative) parts via
\begin{align}
f_{\mu+(k,\dots,k)}(x_1,\dots,x_n)
&=
\prod_{i=1}^{n}
\left( \frac{x_i-s}{1-sx_i} \right)^k
f_{\mu}(x_1,\dots,x_n),\qquad k\in \mathbb{Z},
\end{align}
and similarly for $g_\mu$. One checks that $f_\mu,g_\mu\in \mathcal{L}^n$ for any $\mu\in\mathbb{Z}^n$. 

We can now define a forward transform $\mathfrak{G}:\mathcal{C}^n\to\mathcal{L}^n$ and an inverse transform $\mathfrak{F}:\mathcal{C}^n\to\mathcal{L}^n$ as 
\begin{gather*}
\mathfrak{G}[\alpha](x_1,\dots,x_n)
\index{G@$\mathfrak{G}$; forward transform}
=
\sum_{\mu \in \mathbb{Z}^n}
\alpha(\mu)
g^{*}_{\mu}(x_1,\dots,x_n),
\\
\mathfrak{F}[\Phi](\mu)
\index{F@$\mathfrak{F}$; inverse transform}
=
\left( \frac{1}{2\pi\sqrt{-1}} \right)^n
\oint_{C_1}
\frac{dx_1}{x_1}
\cdots 
\oint_{C_n}
\frac{dx_n}{x_n}
\prod_{1 \leq i<j \leq n}
\frac{x_j-x_i}{x_j-q x_i}
f_{\mu}(\b{x}_1,\dots,\b{x}_n)
\Phi(x_1,\dots,x_n),
\end{gather*}
where $\{C_1,\dots,C_n\}$ are closed, pairwise non-intersecting, positively oriented contours in the complex plane such that they all surround the point $s$, and the contours $C_i$ and $q \cdot C_i$ are both contained within contour $C_{i+1}$ for all $1 \leq i \leq n-1$, where $q \cdot C_i$ denotes the image of $C_i$ under multiplication by $q$.	An illustration of such contours is given in Figure \ref{fig:contours} below.

\begin{thm}\label{thm-plancherel-intro} (Theorem \ref{thm-plancherel} below)
	The maps $\mathfrak{F} \circ \mathfrak{G} : \mathcal{C}^n \rightarrow \mathcal{C}^n$ and $\mathfrak{G} \circ \mathfrak{F} : \mathcal{L}^n \rightarrow \mathcal{L}^n$ both act as the identity; we have
	\begin{align}
	\label{compose1-intro}
	\mathfrak{F} \circ \mathfrak{G} 
	=
	{\rm id}
	\in 
	{\rm End}(\mathcal{C}^n), \qquad \mathfrak{G} \circ \mathfrak{F} 
	=
	{\rm id}
	\in 
	{\rm End}(\mathcal{L}^n).
	\end{align}
\end{thm}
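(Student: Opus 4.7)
\smallskip

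The plan is to reduce both identities in \eqref{compose1-intro} to a single bi-orthogonality relation between $f_\mu$ and $g^{*}_{\nu}$, together with a basis/completeness property for $\{g^{*}_\mu\}_{\mu\in\mathbb Z^n}$ in $\mathcal L^n$. Concretely, I would first prove the orthogonality
\begin{align*}
\left(\frac{1}{2\pi\sqrt{-1}}\right)^{n}
\oint_{C_1}\!\cdots\!\oint_{C_n}
\prod_{1\le i<j\le n}\frac{x_j-x_i}{x_j-qx_i}\,
f_{\mu}(\b x_1,\dots,\b x_n)\,g^{*}_{\nu}(x_1,\dots,x_n)\,
\prod_{i=1}^{n}\frac{dx_i}{x_i}
= \delta_{\mu,\nu},
\end{align*}
with contours as in the definition of $\mathfrak F$. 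Since every $\alpha\in\mathcal C^n$ is finitely supported, the sum defining $\mathfrak G[\alpha]$ commutes with the $\mathfrak F$-integral, so this identity immediately yields $\mathfrak F\circ\mathfrak G = \mathrm{id}_{\mathcal C^n}$. If in addition $\{g^{*}_\mu\}$ spans $\mathcal L^n$, then $\mathfrak G\colon\mathcal C^n\to\mathcal L^n$ is a bijection and the orthogonality forces $\mathfrak F$ to be its two-sided inverse, giving the second identity for free.

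The orthogonality I would prove by iterated residue calculus, peeling off one integration variable at a time. The partition-function representations of $f_\mu$ and $g^{*}_\nu$ make their pole structure in each $x_i$ explicit and confined to a controlled discrete set inherited from the denominators $(1-sx_i)$ in \eqref{s-weights-intro}; the nested contour condition $C_i,\,qC_i\subset C_{i+1}$ is designed precisely so that, after residues are collected, the surviving terms are indexed by matchings of the colour labels of $\mu$ with those of $\nu$ compatible with the rainbow structure. The Vandermonde-like measure $\prod_{i<j}(x_j-x_i)/(x_j-qx_i)$ interacts with the Hecke-type exchange relations for $f$ and $g$ (proved in earlier chapters) to collapse the resulting combinatorial sum into a Kronecker delta. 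The base case $n=1$ is a direct Cauchy residue computation; for $n\ge 2$ one peels off the $x_n$-integral, uses the skew Cauchy identity of Proposition \ref{prop:fG-skew-intro}-type to reduce the remaining integrand to an $(n-1)$-variable problem, and applies induction.

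The completeness statement that $\{g^{*}_\mu\}$ is a basis of $\mathcal L^n$ would follow from the triangular monomial expansion of $g^{*}_\mu$ in the variables $u_i=(x_i-s)/(1-sx_i)$ constructed in the combinatorial portion of the paper: each $g^{*}_\mu$ carries a distinguished leading monomial (with nonvanishing coefficient) indexed by $\mu$, with lower-order terms in a natural dominance-type order on $\mathbb Z^n$; since every element of $\mathcal L^n$ is a Laurent polynomial in the $u_i$'s that vanishes as $x_i\to\infty$, it expands uniquely and finitely against the $g^{*}_\mu$-basis by standard triangular-system linear algebra.

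I expect the principal obstacle to be the bi-orthogonality integral itself. Arranging the iterated residues so that the rainbow labels of $\mu$ and $\nu$ are forced to match colour-by-colour is delicate: the calculation requires the partition-function identities (exchange relations under Hecke divided-difference operators, and the skew Cauchy identity) to interlock precisely with the nested-contour combinatorics, and the standard colour-blind $\mathfrak{sl}_2$ tricks do not apply. Once the orthogonality is in place, however, the Plancherel theorem is a short formal consequence of the basis property.
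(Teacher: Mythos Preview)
Your high-level strategy is correct and matches the paper's architecture: the first identity $\mathfrak{F}\circ\mathfrak{G}=\mathrm{id}$ does follow immediately from the bi-orthogonality $\langle f_\mu,g^*_\nu\rangle_{q,s}=\mathbf{1}_{\mu=\nu}$ (Theorem~\ref{thm:orthog} in the paper), exactly as you describe. However, the detailed execution of both the orthogonality proof and the second identity differs from what you propose.

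For orthogonality, the paper does not induct on $n$ via a skew Cauchy identity. Instead it works in three stages: first prove the case $\mu=\delta$ anti-dominant by substituting the explicit monomial expansion of $g^*_\nu$ (the permutation-graph formula of Corollary~\ref{cor:mon-exp}) and applying a residue argument (Lemma~\ref{lem:xixi}) that extends the colour-blind analysis of \cite{BorodinCPS}; then fix the normalisation at $\mu=\nu=\delta$ using the Mimachi--Noumi identity~\eqref{mimachi-id}; finally reach generic $\mu$ via the Hecke adjointness property~\eqref{adjoint} of the scalar product. Your proposed inductive use of Proposition~\ref{prop:fG-skew-intro} is problematic: that identity pairs $f_\mu$ with the \emph{symmetric} transfer-matrix kernels $G_{\mu/\nu}$, not with $g^*_\nu$, and there is no evident way to peel one variable off $\langle f_\mu,g^*_\nu\rangle$ and land on an $(n{-}1)$-variable instance of the same problem.

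For $\mathfrak{G}\circ\mathfrak{F}=\mathrm{id}$, the paper also bypasses your basis/triangularity route. Rather than proving that $\{g^*_\mu\}$ spans $\mathcal{L}^n$, it applies $\mathfrak{G}\circ\mathfrak{F}$ to $\Phi$ directly, truncates the resulting $\mu$-sum to a finite range (using Proposition~\ref{prop:finite-mu}), interchanges sum and integral, and then invokes the Mimachi--Noumi summation~\eqref{mimachi-id} to collapse $\sum_\mu f_\mu(\bar x)g^*_\mu(y)$ into a reproducing kernel; a sequential residue evaluation at $x_i=y_i$ then returns $\Phi(y)$. This is cleaner and avoids having to establish a triangular leading-monomial structure for $g^*_\mu$, which is not obvious from the permutation-graph expansion (the monomials $\xi_\delta(x_{\rho(1)},\dots,x_{\rho(n)})$ appearing there are all rearrangements of the same multidegree, so any triangularity would have to come from a subtler ordering than the one you suggest).
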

Unraveling the first of the relations \eqref{compose1-intro} shows that $\{f_\mu\}$ and $\{g^*_\nu\}$ form biorthonormal bases in $\mathcal{L}^n$, \cf\ Theorem \ref{thm:orthog} below. For versions of Theorem \ref{thm-plancherel-intro}
in the colour-blind ($\mathfrak{sl}_2$-related) case, see \cite{BorodinCPS}, \cite{BorodinCPS2}, \cite{BorodinP1} and references therein. 

As a corollary of Theorem \ref{thm-plancherel-intro}, one obtains a \emph{spectral decomposition} of the transfer-matrix:
\begin{equation}
\label{Gmunu-int-intro}
G_{\mu/\nu}(y)
=
\frac{q^{-n}}{(2\pi\sqrt{-1})^n}
\oint_{C_1}
\frac{dx_1}{x_1}
\cdots 
\oint_{C_n}
\frac{dx_n}{x_n}
\prod_{1 \leq i<j \leq n}
\frac{x_j-x_i}{x_j-q x_i}
\prod_{i=1}^{n}
\frac{x_i - q y}{x_i - y}
f_{\nu}(\b{x}_1,\dots,\b{x}_n)
g^{*}_{\mu}(x_1,\dots,x_n).
\end{equation}
For $\mu = (\mu_1 \geq \cdots \geq \mu_n)$ and $\nu = (\nu_1 \leq \cdots \leq \nu_n)$ this can be substantially simplified; see Remark \ref{rmk:exchange} below, showing a certain connection with the recent work \cite{ChenGHS}. 

While Theorem \ref{thm-plancherel-intro} is easy to state, it was certainly not easy to prove, and understanding structural properties of the functions $f_\mu$ is key. Let us summarize some of these properties. 

\section{Summation identities, recursive relations, monomial expansions} The functions $f_\mu$, $g_\mu$, and their skew variants (defined as partition functions of the form \eqref{f-def-intro}, but with possibly nonempty set of paths entering through the bottom boundary), satisfy a host of summation identities that can be found in Chapter \ref{sec:branching} below. One of those identities is \eqref{fG-skew-intro} above. Let us reproduce another one of them here, because of its importance for \eqref{compose1-intro}, and also because it is strikingly similar to an identity of Mimachi--Noumi for non-symmetric Macdonald polynomials \cite{MimachiN}.

\begin{thm}
	\label{thm:mimachi-intro} (Theorem \ref{thm:mimachi} below)
	Let $(x_1,\dots,x_n)$ and $(y_1,\dots,y_n)$ be two sets of complex parameters such that
	\begin{align}
	\left|
	\frac{x_i-s}{1-sx_i}
	\cdot
	\frac{y_j-s}{1-sy_j}
	\right|
	<
	1,
	\qquad
	\text{for all}\ 1 \leq i,j \leq n.
	\end{align}
	Then
	\begin{align}
	\label{mimachi-id-intro}
	\sum_{\mu}
	f_{\mu}(x_1,\dots,x_n)
	g^{*}_{\mu}(y_1,\dots,y_n)
	=
	\prod_{i=1}^{n}
	\frac{1}{1-x_i y_i}
	\prod_{n \geq i>j \geq 1}
	\frac{1-q x_i y_j}{1-x_i y_j}\,,
	\end{align}
	where the summation is over all compositions $\mu$ (with nonnegative coordinates).
\end{thm}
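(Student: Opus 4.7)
The plan is to prove \eqref{mimachi-id-intro} by interpreting the left-hand side as a single combined lattice partition function and simplifying it via repeated applications of the Yang--Baxter equation.

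First I would realise both $f_\mu(x_1,\dots,x_n)$ and $g^{*}_\mu(y_1,\dots,y_n)$ as partition functions of finite rectangular lattices---the former using the $L$-weights \eqref{s-weights-intro}, the latter using the $M$-weights \eqref{generic-M-intro} together with the normalisation $c_\mu(q,s)$ from \eqref{cmu-intro}---whose shared horizontal boundary encodes the composition $\mu$ via \eqref{AB-states}. Gluing the two rectangles along this $\mu$-boundary and summing over all compositions $\mu$ produces a single combined partition function $Z(\bm{x};\bm{y})$ whose value equals the left-hand side of \eqref{mimachi-id-intro}; the hypothesis on the parameters ensures absolute convergence of this sum.

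Next I would use the Yang--Baxter equation (Chapter \ref{sec:models}) to perform row-exchange operations in the combined lattice. An $L$-row with parameter $x_i$ can be slid past an $M$-row with parameter $y_j$ at the cost of a scalar factor $\tfrac{1-qx_iy_j}{1-x_iy_j}$, as follows from tracking the signs and normalisations in \eqref{sym1-intro}. A prescribed sequence of such exchanges brings the combined lattice into a canonical form in which the rows are paired diagonally as $(x_i,y_i)$, with all upper-triangular pairs $(i>j)$ having been exchanged and contributing the corresponding factors to the overall weight. For each diagonal pair, colour conservation plus an explicit one-variable computation collapses the remaining summation to the geometric-series value $\tfrac{1}{1-x_iy_i}$, assembling the full right-hand side of \eqref{mimachi-id-intro}.

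The main obstacle is the asymmetry of the right-hand side: factors $\tfrac{1-qx_iy_j}{1-x_iy_j}$ appear only for $i>j$, never for $i<j$. This asymmetry must be built into the initial stacking order and respected throughout the exchange procedure; verifying that pairs with $i<j$ do not contribute spurious factors requires careful combinatorial bookkeeping of colour conservation in the glued lattice. A cleaner organisation of the same argument proceeds by induction on $n$: the inductive step peels off a single pair of rows using the branching rules of Chapter \ref{sec:branching} together with a one-pair skew Cauchy identity, itself proven by a single Yang--Baxter exchange. Either way, the crux of the proof is showing that the exchanges and the boundary sums distribute precisely according to the stated upper-triangular product, with the normalisation $c_\mu(q,s)$ correctly absorbing all incidental scalar factors that would otherwise clutter the identity.
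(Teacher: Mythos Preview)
Your plan matches the paper's proof almost exactly: the paper writes the glued lattice algebraically as $\mathcal{E}_n = \bra{\varnothing}\C_1(x_1)\cdots\C_n(x_n)\B_1(y_1)\cdots\B_n(y_n)\ket{\varnothing}$, then uses the commutation relation \eqref{CB>} (a consequence of Yang--Baxter, \eqref{RLLb}) to slide $\B_1(y_1)$ leftward past $\C_n,\dots,\C_2$, picking up precisely your off-diagonal factors, and finally invokes \eqref{CB=} on $\bra{\varnothing}\C_1(x_1)\B_1(y_1)$ to reduce to $\mathcal{E}_{n-1}$, telescoping to the answer. Two small corrections to your sketch: the row-exchange factor depends on the \emph{colour} labels $i,j$ carried on the left boundary of the rows (not on rapidities alone), and the diagonal contribution $\tfrac{1}{1-x_iy_i}$ arises not from a geometric sum over $\mu_i$ but because the extra terms in the same-colour relation \eqref{CB=} are annihilated by $\bra{\varnothing}\B_k=0$ for $k\ge 1$; the $i>j$ asymmetry that worried you falls out automatically from this one-pair-at-a-time recursion, with the normalisation in $g^*_\mu$ absorbing the stray powers of $q$ and $(q-1)$.
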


The similarity with non-symmetric Macdonald polynomials is not a coincidence -- we prove, in Theorem \ref{thm:f-E} below, that at the value $s=0$ of our spin parameter, the functions $f_\mu$ coincide with the non-symmetric Macdonald polynomials in the Hall--Littlewood specialization (Macdonald's $q$-parameter vanishes, and Macdonald's $t$-parameter coincides with our quantization parameter). Furthermore, colour-blindness results of Section \ref{ssec:colour-blind} easily show that if one sums $f_\mu$'s over all possible choices of colours of the top outgoing edges (equivalently, one can sum over all compositions $\mu$ of length $n$ whose entries, when ordered, give the same partition), one recovers the \emph{symmetric} spin Hall--Littlewood functions introduced in \cite{Borodin}; this is Proposition \ref{prop:f-sum-F} in the text.

Because of these coincidences, we call our $f_\mu$'s the \emph{non-symmetric spin Hall--Littlewood functions}. 

The connection to non-symmetric Macdonald theory was originally a surprise to us, largely because, to our best knowledge, the non-symmetric Macdonald polynomials (with any values of their parameters) were not known to have partition function representations as in \eqref{f-def-intro}. We establish the connection through the following two statements. 

The first one is the base for a recursion:
\begin{prop}\label{prop:f-delta-intro} (Proposition \ref{prop:f-delta} below)
	Let $\delta = (\delta_1 \leq \cdots \leq \delta_n)$ be an \emph{anti-dominant} composition. The corresponding non-symmetric spin Hall--Littlewood function $f_{\delta}$ is completely factorized:
	\begin{align}
	\label{f-delta-intro}
	f_{\delta}(x_1,\dots,x_n)
	\index{f3@$f_{\delta}$}
	=
	\frac{\prod_{j \geq 0} (s^2;q)_{m_j(\delta)}}{\prod_{i=1}^{n} (1-s x_i)}
	\prod_{i=1}^{n} \left( \frac{x_i-s}{1-sx_i} \right)^{\delta_i}, \quad  m_j(\delta)=\#\{k\in \{1,\dots,n\}:\delta_k=j\}, \  j\ge 0.
	\end{align}
\end{prop}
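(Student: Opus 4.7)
The plan is to show that for anti-dominant $\delta = (\delta_1 \leq \cdots \leq \delta_n)$, the partition function \eqref{f-def-intro} defining $f_\delta$ is supported on a single path configuration---the \emph{direct} one, in which the path of colour $r$ traverses row $r$ rightward to column $\delta_r$ and then turns upward, exiting straight through column $\delta_r$---and then to read off the weight of this configuration directly from the table \eqref{s-weights-intro}.

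For uniqueness, I would use downward induction on $r$. The case $r = n$ is forced: colour $n$ can appear only in row $n$, so its single right-to-up turn must match its exit column $\delta_n$. Assuming colours $r+1, \dots, n$ all travel directly, any non-direct path for colour $r$ must contain a first up-to-right turn at some vertex $(r', c)$ with $r' > r$ and $c < \delta_r$, where colour $r$ enters from below and exits to the right. Examining the six vertex types in \eqref{s-weights-intro}, only types 3, 5, and 6 are compatible with such a transition. Type 3 is excluded because colour $r'$---still horizontal at column $c < \delta_r \leq \delta_{r'}$ by the inductive hypothesis and anti-dominance---occupies the incoming left edge; type 5 would require a horizontal colour $< r$ from the left, contradicting that colour $r' > r$ sits there; and the only admissible type-6 swap (with $j = r'$) would send $r'$ upward at column $c < \delta_{r'}$, contradicting directness of $r'$.

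To compute the weight of the direct configuration, I classify each vertex $(r, c)$ into three cases. When $c < \delta_r$, the vertex is type 2 with horizontal colour $r$ over $\I = \sum_{j < r,\, \delta_j = c} \bm{e}_j$; since $\I$ carries no colour $\geq r$, both $I_r$ and $\Is{r+1}{n}$ vanish, giving weight $(x_r - s)/(1 - s x_r)$. When $c = \delta_r$, the vertex is type 4 with $\I = \sum_{j < r,\, \delta_j = \delta_r} \bm{e}_j$ of total size $k_r := |\{j < r : \delta_j = \delta_r\}|$, so the weight is $(1 - s^2 q^{k_r})/(1 - s x_r)$. When $c > \delta_r$, anti-dominance forces $\I = \bm{0}$, yielding an empty type-1 vertex of weight $1$. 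Multiplying over all vertices gives the factor $\prod_r \bigl((x_r - s)/(1 - s x_r)\bigr)^{\delta_r}$ from horizontal passes and $\prod_r (1 - s^2 q^{k_r})/(1 - s x_r)$ from turns; regrouping $\prod_r (1 - s^2 q^{k_r})$ by the value $j$ of $\delta_r$---anti-dominance makes $\{r : \delta_r = j\}$ a contiguous block of $m_j(\delta)$ indices over which $k_r$ runs through $0, 1, \dots, m_j(\delta) - 1$---collapses it to $\prod_j (s^2; q)_{m_j(\delta)}$, matching \eqref{f-delta-intro}.

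The hard part is the uniqueness step: the swap vertices of types 5 and 6 a priori permit detours in which low-colour paths divert through rows reserved for higher-colour paths, and excluding these rests on the careful interplay between the anti-dominance of $\delta$ and the combinatorics of how those swap vertices exchange horizontal and vertical data. Once uniqueness is in hand, the remaining weight bookkeeping is routine.
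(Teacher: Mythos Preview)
Your proof is correct and follows essentially the same approach as the paper. The paper's version is organized as a row-peeling recurrence---it observes that colour $n$ necessarily saturates all horizontal edges of the top row up to column $\delta_n$, which freezes that row and yields $f_{(\delta_1,\dots,\delta_n)} = \frac{1-s^2 q^{m_{\delta_n}(\delta)-1}}{1-sx_n}\bigl(\frac{x_n-s}{1-sx_n}\bigr)^{\delta_n} f_{(\delta_1,\dots,\delta_{n-1})}$---whereas you phrase the same freezing as a colour-by-colour induction and then read off the full weight directly; the underlying mechanism (higher colours occupy the relevant horizontal edges, forbidding the swap vertices that would let lower colours deviate) is identical.
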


The second one is the recursion itself:
\begin{thm}\label{thm:hecke-f-intro} (a part of Theorem \ref{thm:hecke-f} below)
	Let $\mu=(\mu_1,\dots,\mu_n)$ be a length-$n$ composition with $\mu_i<\mu_{i+1}$ for some $1\le i\le n-1$. Then 
	\begin{equation}
	\label{T-f-intro}
	T_i \cdot f_{\mu}(x_1,\dots,x_n) = f_{(\mu_1,\dots,\mu_{i+1},\mu_i,\dots,\mu_n)}(x_1,\dots,x_n),
	\end{equation}
	where 
	 \begin{align}
	 \index{T@$T_i$; Hecke divided-difference operator}
	 T_i \equiv q - \frac{x_i-q x_{i+1}}{x_i-x_{i+1}} (1-\mathfrak{s}_i),
	 \quad
	 1 \leq i \leq n-1,
	 \end{align}
with elementary transpositions \index{s@$\mathfrak{s}_i$; elementary transposition} $\mathfrak{s}_i \cdot h(x_1,\dots,x_n)
:=h(x_1,\dots,x_{i+1},x_i,\dots,x_n)$, are the Demazure--Lusztig operators of the polynomial representation of the Hecke algebra of type $A_{n-1}$. 	 
\end{thm}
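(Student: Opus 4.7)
The plan is to derive the Hecke exchange relation \eqref{T-f-intro} from the Yang--Baxter equation for the weights $L_x$ that is established in Chapter \ref{sec:models}. The relevant YBE is the RLL intertwining
\[
R(x_i,x_{i+1})\, L^{(i)}_{x_i}\, L^{(i+1)}_{x_{i+1}}
\;=\;
L^{(i)}_{x_{i+1}}\, L^{(i+1)}_{x_i} \, R(x_i,x_{i+1}),
\]
where $R(x_i,x_{i+1})$ is the R-matrix of $U_q(\wh{\mathfrak{sl}_{n+1}})$ in the tensor square of the vector representation, acting on the pair of horizontal edges in rows $i$ and $i+1$.

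First I would attach $R(x_i,x_{i+1})$ to the right side of the lattice \eqref{f-def-intro} between rows $i$ and $i+1$; because the right boundary in those rows is in the state $(0,0)$, only the scalar matrix element $R^{00}_{00}$ contributes, and it is nonzero. Next, I would use the RLL relation column by column (the standard ``train argument'') to transport this R-matrix from the right all the way to the left edge of the lattice. The net effect is to swap the spectral parameters of rows $i$ and $i+1$ throughout the bulk, while depositing an R-matrix on the left boundary that now acts on the incoming colour pair $(i,i+1)$. Colour conservation in the rainbow sector restricts $R\,(|i\rangle\otimes|i+1\rangle)$ to a linear combination of $|i\rangle\otimes|i+1\rangle$ and $|i+1\rangle\otimes|i\rangle$. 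The diagonal channel reproduces $\mathfrak{s}_i f_\mu(x_1,\dots,x_n)$, while the off-diagonal channel gives the partition function with incoming colours $(i+1,i)$ in rows $(i,i+1)$ and swapped spectral parameters. A global relabelling $i\leftrightarrow i+1$ of these two colours, together with the rescaling produced by the $q^{\Is{i+1}{n}}$-factors in \eqref{s-weights-intro}, identifies that off-diagonal partition function with $\mathfrak{s}_i f_{s_i\mu}(x_1,\dots,x_n)$ up to an explicit scalar. Combining the three resulting scalars, solving for $f_{s_i\mu}$, and matching against the known matrix elements of $R$ then reproduces exactly the Demazure--Lusztig operator $T_i$ in the form given in the statement.

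The main obstacle I anticipate is the off-diagonal identification step. The vertex weights in \eqref{s-weights-intro} are \emph{not} invariant under swapping the labels of colours $i$ and $i+1$: the factor $q^{\Is{i+1}{n}}$ shifts by $q^{\pm 1}$ at each vertex that is traversed vertically by exactly one of the two paths. Tracking these shifts vertex by vertex and verifying that their accumulated product equals the ratio $R^{i,i+1}_{i+1,i}/R^{i,i+1}_{i,i+1}$ prescribed by the R-matrix is the delicate combinatorial point, and it is what ultimately forces the Hecke rational coefficient $\tfrac{x_i-qx_{i+1}}{x_i-x_{i+1}}$ to appear. As a consistency check on the scalars, I would apply $T_i$ directly to the factorized formula \eqref{f-delta-intro} of Proposition \ref{prop:f-delta-intro} for antidominant $\delta$ with $\delta_i<\delta_{i+1}$, and verify that the output is the partition function $f_{s_i\delta}$ for the composition with a single descent at position $i$; this base case also supplies the seed for an alternative, inductive reformulation in which arbitrary $\mu$ is reached from an antidominant composition along a reduced word.
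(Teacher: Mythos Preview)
Your train argument via the RLL relation is correct and amounts to the first half of the paper's proof: pushing $R$ through yields a linear relation among $f_\mu$, $\mathfrak{s}_i f_\mu$, and the partition function $f^{(i,i+1)}_\mu$ with the incoming colours $i,i+1$ swapped at the left boundary. This is exactly the identity $T_i\cdot f_\mu=(q-1)f_\mu+f^{(i,i+1)}_\mu$ of \eqref{key1}, which the paper derives from the commutation relation \eqref{CC>}.

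The genuine gap is the colour-relabelling step. The swap $i\leftrightarrow i+1$ is indeed a bijection between configurations of $f^{(i,i+1)}_\mu$ and configurations of $f_{\mathfrak{s}_i\mu}$, but the ratio of Boltzmann weights is \emph{not} configuration-independent. At each vertex where paths $i$ and $i+1$ meet, the swap sends the two ``straight-through'' vertices of \eqref{4vertices} to one another with weight ratio $q$, while it sends the two ``turning'' vertices to one another with weight ratio equal to the local spectral parameter; the accumulated product therefore depends on the full sequence of meeting types along the two paths, and no relation $f^{(i,i+1)}_\mu=c\cdot f_{\mathfrak{s}_i\mu}$ can hold. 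The $n=2$ computation at the end of Section~\ref{ssec:f-ex} exhibits this explicitly: the non-touching configurations of $f^{(1,2)}_{(\mu_1,\mu_2)}$ and $f_{(\mu_2,\mu_1)}$ match under the swap, but the single touching configurations do not, and the defect is precisely $(q-1)f_{(\mu_1,\mu_2)}$. Your sanity check on $f_\delta$ passes only because the anti-dominant case has a unique configuration, so there is nothing to average.

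What the paper actually needs, and proves, is the three-term relation \eqref{key2}: $(q-1)f_\mu+f^{(i,i+1)}_\mu=f_{\mathfrak{s}_i\mu}$. This is done at the pre-fused level by refining each of the three partition functions according to the set of points where paths $i$ and $i+1$ meet (Definition~\ref{def:F-refine} and Proposition~\ref{prop:trio}) and then invoking the inductive identity \eqref{pk-1} for the local partition function $p_k$ at a chain of meetings. That three-term identity, not a single scalar, is what absorbs the configuration-dependent shifts you correctly anticipate as the obstacle.
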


The combination of \eqref{f-delta-intro} and \eqref{T-f-intro} provides an algorithm for evaluating $f_\mu$'s, but does not yield a closed formula. 
We offer two rather different formulas for the $f_\mu$'s; both represent them as sums of factorized (monomial) expressions with certain coefficients. The appearance of factorized monomials in both expansions below follows from \eqref{f-delta-intro}. 

The first monomial expression that we give plays a key role in our proof of Theorem \ref{thm-plancherel-intro}.

\begin{thm} (combination of Theorem \ref{thm:fmu-fsigma}, Theorem \ref{thm:fsig-sym}, and Proposition \ref{prop:Z-sigma} below)
	Fix a composition $\mu$, and let $\delta = (\delta_1 \leq \cdots \leq \delta_n)$ and $\sigma \in \mathfrak{S}_n$ be an anti-dominant composition and a minimal-length permutation such that $\mu_i = \delta_{\sigma(i)}$, $1 \leq i \leq n$. Then 
\begin{multline}
f_\mu(x_1^{-1},\dots,x_n^{-1};q^{-1},s^{-1})=
s^{-n} q^{-{\rm inv}(\mu)} 
\prod_{i=1}^{n} x_i\\ \times
\sum_{\kappa \in \mathfrak{S}_n}
\prod_{1 \leq a < b \leq n}
\frac{x_{\kappa(b)} - q x_{\kappa(a)}}{x_{\kappa(b)} - x_{\kappa(a)}}\,
Z_{\kappa}^{\sigma}(x_1,\dots,x_n)
f_{\delta}(x_{\kappa(1)},\dots,x_{\kappa(n)};q,s),
\end{multline}	
where ${\rm inv}(\mu) = \#\{i < j : \mu_i \geq \mu_j\}$, and the coefficients 
$Z_{\kappa}^{\sigma}$ can be determined as follows.
For any permutation $\rho\in\mathfrak{S}_n$ with further notation $\tilde 
\rho(i)=n-\rho(i)+1$, $Z_{\tilde\rho}^{\sigma}(y_n,\dots,y_1)$ equals the 
partition function in a square region of size $n\times n$ with domain wall 
boundary conditions corresponding to the following picture:
	\begin{align}
	\label{Z-sigma-intro}
    Z_{\tilde\rho}^{\sigma}(y_n,\dots,y_1)
	:=
	\tikz{0.7}{
		\foreach\y in {1,...,5}{
			\draw[lgray,line width=1.5pt,->] (0,\y) -- (6,\y);
		}
		\foreach\x in {1,...,5}{
			\draw[lgray,line width=1.5pt,->] (\x,0) -- (\x,6);
		}
		\node[below] at (1,0) {\scriptsize $\sigma(n)$};
		\node[below] at (2,0) {\scriptsize $\cdots$};
		\node[below] at (3,0) {\scriptsize $\cdots$};
		\node[below] at (4,0) {\scriptsize $\sigma(2)$};
		\node[below] at (5,0) {\scriptsize $\sigma(1)$};
		\node[above] at (1,6) {\scriptsize $0$};
		\node[above] at (2,6) {\scriptsize $\cdots$};
		\node[above] at (3,6) {\scriptsize $\cdots$};
		\node[above] at (4,6) {\scriptsize $0$};
		\node[above] at (5,6) {\scriptsize $0$};
		\node[left] at (0,1) {\scriptsize $0$};
		\node[left] at (0,2) {\scriptsize $\vdots$};
		\node[left] at (0,3) {\scriptsize $\vdots$};
		\node[left] at (0,4) {\scriptsize $0$};
		\node[left] at (0,5) {\scriptsize $0$};
		\node[right] at (6,1) {\scriptsize $n$};
		\node[right] at (6,2) {\scriptsize $\vdots$};
		\node[right] at (6,3) {\scriptsize $\vdots$};
		\node[right] at (6,4) {\scriptsize $2$};
		\node[right] at (6,5) {\scriptsize $1$};
  	    \node[left] at (-1,1) {$y_{\rho(n)}$};
	    \node[left] at (-1,2) {$\vdots$};
	    \node[left] at (-1,3) {$\vdots$};
	    \node[left] at (-1,4) {$y_{\rho(2)}$};
	    \node[left] at (-1,5) {$y_{\rho(1)}$};
		\node[below] at (1,-1) {$y_1$};
		\node[below] at (2,-1) {$\cdots$};
		\node[below] at (3,-1) {$\cdots$};
		\node[below] at (4,-1) {$y_{n-1}$};
		\node[below] at (5,-1) {$y_n$};
	}
	\end{align}
	in which the $i$-th horizontal line (counted from the top) carries 
\emph{rapidity} $y_{\rho(i)}$, left external edge state $0$ and right external 
edge state $i$, while the $j$-th vertical line (counted from the left) carries 
\emph{rapidity} $y_j$, bottom external edge state $\sigma(n-j+1)$ and top 
external edge state $0$. Here no edge can be occupied by more than one path, with the vertex weights summarized by the table below:

\begin{align}
\label{col-vert-intro}
\begin{tabular}{|c|c|c|}
\hline
\quad
\tikz{0.6}{
	\draw[lgray,line width=1.5pt,->] (-1,0) -- (1,0);
	\draw[lgray,line width=1.5pt,->] (0,-1) -- (0,1);
	\node[left] at (-1,0) {\tiny $i$};\node[right] at (1,0) {\tiny $i$};
	\node[below] at (0,-1) {\tiny $i$};\node[above] at (0,1) {\tiny $i$};
}
\quad
&
\quad
\tikz{0.6}{
	\draw[lgray,line width=1.5pt,->] (-1,0) -- (1,0);
	\draw[lgray,line width=1.5pt,->] (0,-1) -- (0,1);
	\node[left] at (-1,0) {\tiny $i$};\node[right] at (1,0) {\tiny $i$};
	\node[below] at (0,-1) {\tiny $j$};\node[above] at (0,1) {\tiny $j$};
}
\quad
&
\quad
\tikz{0.6}{
	\draw[lgray,line width=1.5pt,->] (-1,0) -- (1,0);
	\draw[lgray,line width=1.5pt,->] (0,-1) -- (0,1);
	\node[left] at (-1,0) {\tiny $i$};\node[right] at (1,0) {\tiny $j$};
	\node[below] at (0,-1) {\tiny $j$};\node[above] at (0,1) {\tiny $i$};
}
\quad
\\[1.3cm]
\quad
$1$
\quad
& 
\quad
$\dfrac{q(1-z)}{1-qz}$
\quad
& 
\quad
$\dfrac{1-q}{1-qz}$
\quad
\\[0.7cm]
\hline
&
\quad
\tikz{0.6}{
	\draw[lgray,line width=1.5pt,->] (-1,0) -- (1,0);
	\draw[lgray,line width=1.5pt,->] (0,-1) -- (0,1);
	\node[left] at (-1,0) {\tiny $j$};\node[right] at (1,0) {\tiny $j$};
	\node[below] at (0,-1) {\tiny $i$};\node[above] at (0,1) {\tiny $i$};
}
\quad
&
\quad
\tikz{0.6}{
	\draw[lgray,line width=1.5pt,->] (-1,0) -- (1,0);
	\draw[lgray,line width=1.5pt,->] (0,-1) -- (0,1);
	\node[left] at (-1,0) {\tiny $j$};\node[right] at (1,0) {\tiny $i$};
	\node[below] at (0,-1) {\tiny $i$};\node[above] at (0,1) {\tiny $j$};
}
\quad
\\[1.3cm]
& 
\quad
$\dfrac{1-z}{1-qz}$
\quad
&
\quad
$\dfrac{(1-q)z}{1-qz}$
\quad 
\\[0.7cm]
\hline
\end{tabular}
\end{align}

\noindent where $0\le i<j\le n$, and the \emph{spectral parameter} $z$ is equal to the ratio of the rapidities on the vertical and horizontal lines that cross at the corresponding vertex. 
\end{thm}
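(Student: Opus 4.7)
The plan is to prove the expansion by induction on the length $\ell(\sigma)$ of the minimal permutation such that $\mu_i = \delta_{\sigma(i)}$. In the base case $\ell(\sigma) = 0$ the composition $\mu = \delta$ is already anti-dominant, and the domain-wall boundary conditions in \eqref{Z-sigma-intro} with $\sigma = e$ force each coloured path to travel straight from its entry point at the bottom to the right edge at the corresponding row, leaving $\kappa = e$ as the only permutation for which $Z_\kappa^{e}$ is nonzero. The asserted identity then reduces to a direct comparison of two factorized expressions for $f_\delta(x_1^{-1},\dots,x_n^{-1};q^{-1},s^{-1})$, which follows from the explicit formula \eqref{f-delta-intro} together with the readily verified invariance of $\frac{x-s}{1-sx}$ under the joint inversion $(x,s) \mapsto (x^{-1},s^{-1})$.

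For the inductive step, I would show that multiplying the right-hand side by the Hecke operator $T_i$ has the effect of replacing $\sigma$ by $s_i\sigma$ in the bottom boundary of $Z_\kappa^\sigma$, which by Theorem \ref{thm:hecke-f-intro} is exactly what the left-hand side does when $\mu_i < \mu_{i+1}$. The translation between the action of $T_i$ on spectral variables and a swap of two adjacent bottom labels in the domain-wall picture is produced by the Yang--Baxter equation for the weights \eqref{col-vert-intro}: inserting an additional vertex of this type just below two adjacent columns and propagating it upwards through the entire diagram exchanges the colours entering from the bottom while producing exactly the coefficient $q - \frac{x_i - q x_{i+1}}{x_i - x_{i+1}}(1-\mathfrak{s}_i)$ once the rational prefactor $\prod_{a<b}\frac{x_{\kappa(b)}-qx_{\kappa(a)}}{x_{\kappa(b)}-x_{\kappa(a)}}$ is absorbed into the change-of-summation variable $\kappa$.

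The main obstacle will be controlling the interplay between the variable inversion $(x,q,s) \mapsto (x^{-1},q^{-1},s^{-1})$ on the left-hand side and the pure colour-permutation weights \eqref{col-vert-intro} (which are spin-free and represent the vector--vector $U_q(\widehat{\mathfrak{sl}_{n+1}})$ $R$-matrix) that define $Z_\kappa^\sigma$. My strategy here is to invoke the duality \eqref{sym1-intro} to rewrite $f_\mu$ at inverse parameters in terms of the dual weights \eqref{generic-M-intro}, and then argue row-by-row: repeatedly applying the Yang--Baxter equation one can replace each of the $n$ horizontal lines of the $f_\mu$ partition function by a composition of vector--vector vertices (which collect into $Z_\kappa^\sigma$) and a single factorized column of vector--Verma vertices whose evaluation yields $f_\delta(x_{\kappa(1)},\dots,x_{\kappa(n)};q,s)$. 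Finally, Proposition \ref{prop:Z-sigma} (the partition function interpretation of $Z_\kappa^\sigma$) would be verified independently by a direct recursion in $n$ or in the length of $\rho$, reducing it to a coloured domain-wall calculation whose evaluation via Yang--Baxter moves is standard.
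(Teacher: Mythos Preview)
Your approach is genuinely different from the paper's, and the inductive framework is natural, but the heart of the matter --- the inductive step --- has a real gap.

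The paper does not proceed by induction on $\ell(\sigma)$. Instead it factors the result into three independent pieces. For Theorem~\ref{thm:fmu-fsigma} it uses the operator identity $T_i(x,q) = q\,\widehat{T}_i(x^{-1},q^{-1})$ together with $\widehat{T}_i\cdot f_\mu = f_\mu^{(i,i+1)}$ and the explicit inversion symmetry of $f_\delta$ to relate $f_\mu(x;q,s)$ directly to $f^\sigma_\delta(x^{-1};q^{-1},s^{-1})$. For Theorem~\ref{thm:fsig-sym} it uses the $F$-matrix (Drinfeld twist) formalism: writing $f^\sigma_\delta$ via column operators and inserting $F^{-1}F$, the defining relation $F_{\sigma}R^\sigma = F$ lets one thread an $R^\rho$ through the product, producing exactly the sum over $\mathfrak{S}_n$ with factorized coefficients. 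Proposition~\ref{prop:Z-sigma} is then a separate vertex-splitting argument at $z=1$.

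Your inductive step requires showing that $\widehat{T}_i$ applied to the entire right-hand side --- the sum over $\kappa$ of the rational prefactor times $Z^\sigma_\kappa(x)$ times $f_\delta(x_{\kappa(1)},\dots,x_{\kappa(n)})$ --- replaces $\sigma$ by $s_i\sigma$. Your proposed mechanism (attach an $R$-vertex below two adjacent columns of the domain-wall picture and push it up) does not align with this: in the domain-wall lattice of \eqref{Z-sigma-intro} the bottom labels $\sigma(i)$ and $\sigma(i+1)$ sit at columns with rapidities $y_{n-i+1}$ and $y_{n-i}$, whereas $\widehat{T}_i$ acts on $x_i,x_{i+1}$; moreover the horizontal rapidities are themselves specializations $y_{\rho(j)}$, so a Yang--Baxter move in the column rapidities does not produce a clean divided-difference operator in the $x$-variables. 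The interaction with the $\kappa$-dependent prefactor and with $f_\delta(x_\kappa)$ (which is itself not fixed by $\mathfrak{s}_i$) compounds the problem. In effect, what you would need to verify is precisely that the sum equals $f^\sigma_\delta(x;q,s)$ and that $\widehat{T}_i f^\sigma_\delta = f^{s_i\sigma}_\delta$ when $\sigma(i)<\sigma(i+1)$ --- and the first of these is exactly the content of the $F$-matrix expansion that your proposal avoids. The ``row-by-row'' strategy in your third paragraph is too vague to serve as a substitute.
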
 

Our second monomial expansion has its origin in the nested Bethe Ansatz. We 
prove that the non-symmetric spin Hall--Littlewood functions are appropriate 
specializations of the off-shell nested Bethe vectors also known as the 
\emph{weight functions}. The specializations reduce the number of free variables 
from $n(n+1)/2$ to $n$. Combined with a known symmetrization formula for the 
nested Bethe vectors, see \cite{TarasovV} and references therein, we obtain the 
following statement. 

\begin{thm}\label{thm:f-mu-sym-intro}(Theorem \ref{thm:f-mu-sym} below)
	Fix a composition $\mu = (\mu_1,\dots,\mu_n)$ and let $\delta = (\delta_1 \leq \cdots \leq \delta_n)$ be its anti-dominant reordering. Define a vector $\gamma(\mu)$ via 
	\begin{align*}
	\gamma(\mu) 
	= 
	(\gamma_1,\gamma_2,\dots,\gamma_n)
	= 
	w_{\mu} \cdot (1,2,\dots,n),
	\end{align*}
	where $w_{\mu} \in \mathfrak{S}_n$ is the minimal-length permutation such that $w_{\mu} \cdot \mu = \delta$. Further, define a sequence of vectors of decreasing lengths by 
	\begin{align*}
	p^{(1)} = \gamma(\mu),
	\qquad
	p^{(i+1)} = p^{(i)} \backslash \{i\},
	\ \
	1 \leq i \leq n-1,
	\end{align*}
	as well as $n-1$ strictly increasing integer sequences $a^{(2)},\dots,a^{(n)}$ that, as sets, are given by $a^{(i)} = \{ 1 \leq b \leq n-i+2 : p^{(i-1)}_b \geq i \}$. The non-symmetric spin Hall--Littlewood functions are given by 	
	\begin{multline*}
	f_{\mu}(x_n,\dots,x_1)
	=
	\sum_{\sigma^{(1)} \in \mathfrak{S}_n}
	\cdots
	\sum_{\sigma^{(n-1)} \in \mathfrak{S}_2}
	f_{\delta}\Big( x_{\sigma^{(1)}(1)}, \dots, x_{\sigma^{(1)}(n)} \Big)
	\prod_{1 \leq i < j \leq n}
	\frac{q x_{\sigma^{(1)}(i)}-x_{\sigma^{(1)}(j)}}
	{x_{\sigma^{(1)}(i)}-x_{\sigma^{(1)}(j)}}
	\\ 	\times
	\prod_{b=2}^{n}
	\psi_{\left\{a^{(b)}_1,\dots,a^{(b)}_{n-b+1}\right\}}
	\Big(
	\sigma^{(b)} \cdot (x_1,\dots,x_{n-b+1}); 
	\sigma^{(b-1)} \cdot (x_1,\dots,x_{n-b+2})
	\Big)
	\prod_{1 \leq i < j \leq n-b+1}
	\frac{q x_{\sigma^{(b)}(i)}-x_{\sigma^{(b)}(j)}}
	{x_{\sigma^{(b)}(i)}-x_{\sigma^{(b)}(j)}}\,,
	\end{multline*}
	where by agreement $\sigma^{(n)}$ denotes the trivial permutation $\sigma^{(n)} = (1) \in \mathfrak{S}_1$, and 
	\begin{align*}
	\psi_{\{a_1,\dots,a_m\}}\left( x_1,\dots,x_m; y_1,\dots,y_M \right)
	=
	\prod_{i=1}^{m}
	\left[
	\frac{(1-q) y_{a_i}}{x_i - q y_{a_i}}
	\prod_{j=1}^{a_i-1}
	\frac{x_i-y_j}{x_i-q y_j} 
	\right],
	\qquad
	\forall\ 1 \leq m \leq M.
	\end{align*}
\end{thm}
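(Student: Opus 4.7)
The plan is to identify $f_\mu$ with a specific specialization of the off-shell nested Bethe vector (``weight function'') for the $U_q(\widehat{\mathfrak{sl}_{n+1}})$-vertex model, and then to invoke the standard Tarasov--Varchenko-type symmetrization formula for these vectors to obtain the claimed expansion.

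First I would set up an algebraic Bethe Ansatz description matching the lattice-model definition of $f_\mu$ in \eqref{f-def-intro}. Reading the picture row by row from top to bottom, $f_\mu$ is expressed as a matrix element of a product of ``creation'' entries of the monodromy matrix acting on the empty vacuum; on row $k$ the colour-$k$ path enters the lattice with rapidity $x_k$, and the final vertical state on the top boundary is prescribed by the composition $\mu$. Within the nested Bethe Ansatz one rewrites each colour-$k$ creation operator via an auxiliary rank-$(n{-}k{+}1)$ vertex model, introducing at each nesting step a new batch of auxiliary rapidities. The fully off-shell weight function depends on $n(n+1)/2$ rapidities, organized in $n-1$ nesting levels with decreasing cardinalities, and admits an explicit combinatorial expansion as a sum indexed by sequences of permutations $(\sigma^{(1)},\dots,\sigma^{(n-1)})$ with coefficients built from $R$-matrix hopping factors and level-$k$ nesting $\psi$-factors, cf.\ \cite{TarasovV}.

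The main obstacle is proving that $f_\mu$ is precisely the specialization of the off-shell weight function obtained by collapsing the level-$k$ auxiliary rapidities to the prefix $(x_1,\dots,x_{n-k+1})$, with the surviving-colour labels dictated by the permutation $w_\mu$ taking $\mu$ to its anti-dominant reordering $\delta$. The technical content splits into three pieces: (a) checking that the specialization is \emph{non-singular}, i.e.\ that no pole of the weight function lies on the specialization locus; (b) matching the action of the nested creation operators, after specialization, with the actual lattice-model structure of \eqref{f-def-intro} -- this is most cleanly done by induction on $n$, peeling off the colour-$1$ row of the partition function and using the Yang--Baxter equation for the $L$-weights \eqref{s-weights-intro} to commute the colour-$1$ rapidity past the remaining rows, reducing the problem to a rank-$(n{-}1)$ partition function; and (c) identifying the surviving-colour index set $a^{(k)}=\{b : p^{(k-1)}_b \geq k\}$ with the colour bookkeeping of the nested recursion, which amounts to observing that $\gamma(\mu) = w_\mu\cdot(1,\dots,n)$ records the colours of the exiting top-boundary paths in the order they occupy the anti-dominant template $\delta$.

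Once this identification is in place, the theorem follows by direct substitution into the Tarasov--Varchenko symmetrization formula: the hopping coefficient at level $k$ specializes to the factor $\prod_{1 \leq i < j \leq n-k+1}\frac{q x_{\sigma^{(k)}(i)} - x_{\sigma^{(k)}(j)}}{x_{\sigma^{(k)}(i)} - x_{\sigma^{(k)}(j)}}$, the $\psi$-factor at level $k$ becomes $\psi_{\{a^{(k)}_1,\dots,a^{(k)}_{n-k+1}\}}$ evaluated at the specialized rapidities at consecutive levels, and the innermost seed of the recursion -- the anti-dominant function $f_\delta$ -- is furnished in completely factorized closed form by Proposition \ref{prop:f-delta-intro}. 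The resulting expression matches the right-hand side of the claim term by term.
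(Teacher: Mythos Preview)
Your overall strategy --- identify $f_\mu$ with a specialized off-shell nested Bethe vector, then read off the Tarasov--Varchenko symmetrization --- is exactly the paper's plan. However, your proposed execution of the identification step differs from the paper's and leaves a genuine gap.

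The paper does \emph{not} work directly with $f_\mu$ in the higher-spin model \eqref{s-weights-intro}. Instead it passes through the pre-fused function $\mathcal{F}_\mu(x_1,\dots,x_n;Y)$ of Definition~\ref{def:prefus}, a partition function in the \emph{fundamental} model \eqref{fund-vert} with vertical lines grouped into bundles. The Bethe vector components \eqref{bethe-comp} are written as products of rectangular blocks $Z^{(0)},Z^{(1)},\dots,Z^{(n-1)}$, and the key observation (Proposition~\ref{prop:block-reduce}) is that under the specialization $x^{(i+1)}_a=x^{(i)}_a$ each block $Z^{(i)}$ with $i\ge 1$ collapses to a product of Kronecker deltas, because $R_z|_{z=1}$ is the permutation matrix and the square lattice ``splits'' along its anti-diagonal. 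This freezes all nested blocks in one shot and leaves only $Z^{(0)}$, which is precisely $\mathcal{F}_\mu(x_n,\dots,x_1;Y)$. There is no induction on $n$ peeling off rows, and no appeal to Yang--Baxter commutation of the $L$-weights; the mechanism is purely the $R(1)=P$ degeneration applied block by block.

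Your proposal also omits the second half of the argument: passing from $\mathcal{F}_\mu$ back to $f_\mu$ via fusion. The symmetrization formula \eqref{generic-bv} lives in the fundamental model and its $b=1$ factor is a $\psi$-function in the bundle rapidities $Y$. One must take the principal specializations $y^{(k)}_j = sq^{J_k-j}$ and analytically continue $q^{J_k}\mapsto s^{-2}$ (Theorem~\ref{thm:fusion-F}); the paper then computes explicitly that this converts $\psi_{\{a^{(1)}_1,\dots,a^{(1)}_n\}}(\,\cdot\,;Y)$ into a constant multiple of $f_\delta$. This is how $f_\delta$ actually enters the formula --- not as a ``seed of the recursion'' supplied directly by Proposition~\ref{prop:f-delta-intro}, but as the fused limit of the outermost $\psi$-factor. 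Without this step you cannot get from a statement about weight functions in the spin-$\tfrac12$ model to one about $f_\mu$ with generic $s$.
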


While Theorem \ref{thm:f-mu-sym-intro} is not used in the rest of the work, we 
view it as an important bridge between what we do and the more traditional 
spectral approach to the higher rank vertex models in finite volume that 
proceeds through the nested Bethe Ansatz. 

\section{Matching distributions} While we fully expect the spectral representation \eqref{Gmunu-int-intro} and the non-symmetric spin Hall--Littlewood functions to be effective for asymptotic analysis of coloured vertex models, we do not attempt to do that in the present work. Instead, we focus on another approach that has been quite successful recently in the case of the colour-blind models, \cf\ \cite{Borodin2}, \cite{BorodinBW}, \cite{BarraquandBCW}. More concretely, we look for distributional matching of observables in different models. Surprisingly, we find a matching that allows one to extract probabilistic and asymptotic information for coloured models from that for the colour-blind ones. As the latter ones are well-studied, one can immediately carry over the known results and conjectures to the coloured situation, \cf\ Section \ref{ss:asymptotics-intro} below. 

Our first matching statement concerns the partition functions associated with the following pictures:

\medskip

\tikz{0.55}{
	\foreach\y in {1,...,5}{
		\draw[lgray,line width=1.5pt] (0,\y) -- (8,\y);
		\draw[line width=1.5pt,->] (0,\y) -- (1,\y);
	}
	\foreach\x in {1,...,7}{
		\draw[lgray,line width=1.5pt] (\x,0) -- (\x,6);
	}
	\foreach\x in {2,5,6}{
		\draw[line width=1.5pt,->] (\x,5) -- (\x,6);
	}
	\foreach\y in {2,5}{
		\draw[line width=1.5pt,->] (7,\y) -- (8,\y);
	}
	\node[above] at (6,6) {$I_k$};
	\node[above] at (5,6) {$<$};
	\node[above] at (4,6) {$\cdots$};
	\node[above] at (3,6) {$<$};
	\node[above] at (2,6) {$I_1$};
	\node[below] at (7,0) {$y_M$};
	\node[below] at (3.5,0) {$\cdots$};
	\node[below] at (4.5,0) {$\cdots$};
	\node[below] at (1,0) {$y_1$};
	\node[left] at (0,1) {$x_N$};
	\node[left] at (0,2.5) {$\vdots$};
	\node[left] at (0,4) {$x_2$};
	\node[left] at (0,5) {$x_1$};
	\node[right] at (8,2) {$J_{\ell}$};
	\node[right] at (8,2.8) {$\rotgr$};
	\node[right] at (8,3.5) {$\vdots$};
	\node[right] at (8,4) {$\rotgr$};
	\node[right] at (8,5) {$J_1$};
}
\qquad\quad 
\tikz{0.55}{
	\foreach\y in {1,...,5}{
		\draw[lgray,line width=1.5pt] (0,\y) -- (8,\y);
	}
	\draw[line width=1.5pt,red,->] (0,5) -- (1,5) node[midway,above] {\tiny $N$};
	\draw[line width=1.5pt,orange,->] (0,4) -- (1,4);
	\draw[line width=1.5pt,yellow,->] (0,3) -- (1,3);
	\draw[line width=1.5pt,green,->] (0,2) -- (1,2) node[midway,above] {\tiny $2$};
	\draw[line width=1.5pt,blue,->] (0,1) -- (1,1) node[midway,above] {\tiny $1$};
	\foreach\x in {1,...,7}{
		\draw[lgray,line width=1.5pt] (\x,0) -- (\x,6);
	}
	\draw[line width=1.5pt,yellow,->] (2,5) -- (2,6);
	\draw[line width=1.5pt,red,->] (6,5) -- (6,6);
	\draw[line width=1.5pt,green,->] (7,2) -- (8,2);
	\draw[line width=1.5pt,blue,->] (7,3) -- (8,3);
	\draw[line width=1.5pt,orange,->] (7,5) -- (8,5);
	\node[above] at (6,6) {$I_k$\}};
	\node[above] at (5,6) {$<$};
	\node[above] at (4,6) {$\cdots$};
	\node[above] at (3,6) {$<$};
	\node[above] at (2,6) {\{$I_1$};
	\node[above,text centered] at (0,6) {positions};
	\node[below] at (7,0) {$y_M$};
	\node[below] at (3.5,0) {$\cdots$};
	\node[below] at (4.5,0) {$\cdots$};
	\node[below] at (1,0) {$y_1$};
	\node[left] at (0,1) {$x_1$};
	\node[left] at (0,2) {$x_2$};
	\node[left] at (0,3.5) {$\vdots$};
	\node[left] at (0,5) {$x_N$};
	\draw [decorate,decoration={brace,amplitude=10pt},xshift=-4pt,yshift=0pt] (8.5,5) -- (8.5,1) 
	node [black,midway,xshift=2cm] {colours\,$\{J_1,\dots,J_{\ell}\}$};
}

\medskip

The picture on the left is for the partition function in the (stochastic) colour-blind model on an $M\times N$ rectangle with no more than one path on each edge, row rapidities $x_1,\dots,x_N$ (numbered top-to-bottom), column rapidities $y_1,\dots,y_M$ (numbered left-to-right), and  
vertex weights given by \eqref{col-vert-intro} with $n=1$ and with the spectral parameter $z$ equal to $x_iy_j$, where $x_i$ and $y_j$ are the rapidities of the vertex's row and column. The boundary conditions are specified by requiring that paths enter through every horizontal edge along the left boundary and exit through positions $\mathcal{I}=\{I_1<\dots<I_k\}$ on the top boundary and positions $\mathcal{J}=\{J_1<\dots<J_{\ell}\}$ on the right boundary. We denote this partition function by $\mathbb{P}_{\rm 6v}(\mathcal{I},\mathcal{J})$. \index{P1@$\mathbb{P}_{\rm 6v}(\mathcal{I},\mathcal{J})$}

The picture on the right is for the partition function in the (stochastic) coloured model on a similar rectangle with $n=N$ colours, no more than one path on each edge, row rapidities $x_1,\dots,x_N$ numbered \emph{bottom-to-top}, column rapidities $y_1,\dots,y_M$ (numbered left-to-right), and  
vertex weights given by \eqref{col-vert-intro} with $n=N$ and with the spectral parameter $z$ equal to $x_iy_j$, where $x_i$ and $y_j$ are the rapidities of the vertex's row and column. The boundary conditions are specified by requiring that a path of colour $i$, $1\le i\le N$, enters through the horizontal edge in row $i$ (with rapidity $x_i$) on the left boundary. We also fix the positions (but not the colours) of the paths that exit through the top boundary to be given by $\mathcal{I}$, exactly as for the colour-blind case, and we fix colours (but not the positions) of the paths that exit through the right boundary to be given by $\mathcal{J}=\{J_1<\dots<J_{\ell}\}$. We denote this partition function by $\mathbb{P}_{\rm col}(\mathcal{I},\mathcal{J})$. \index{P2@$\mathbb{P}_{\rm col}(\mathcal{I},\mathcal{J})$} See Figure \ref{fig:dawn} for a simulation of the coloured model.

\begin{figure}
\includegraphics[scale=0.5]{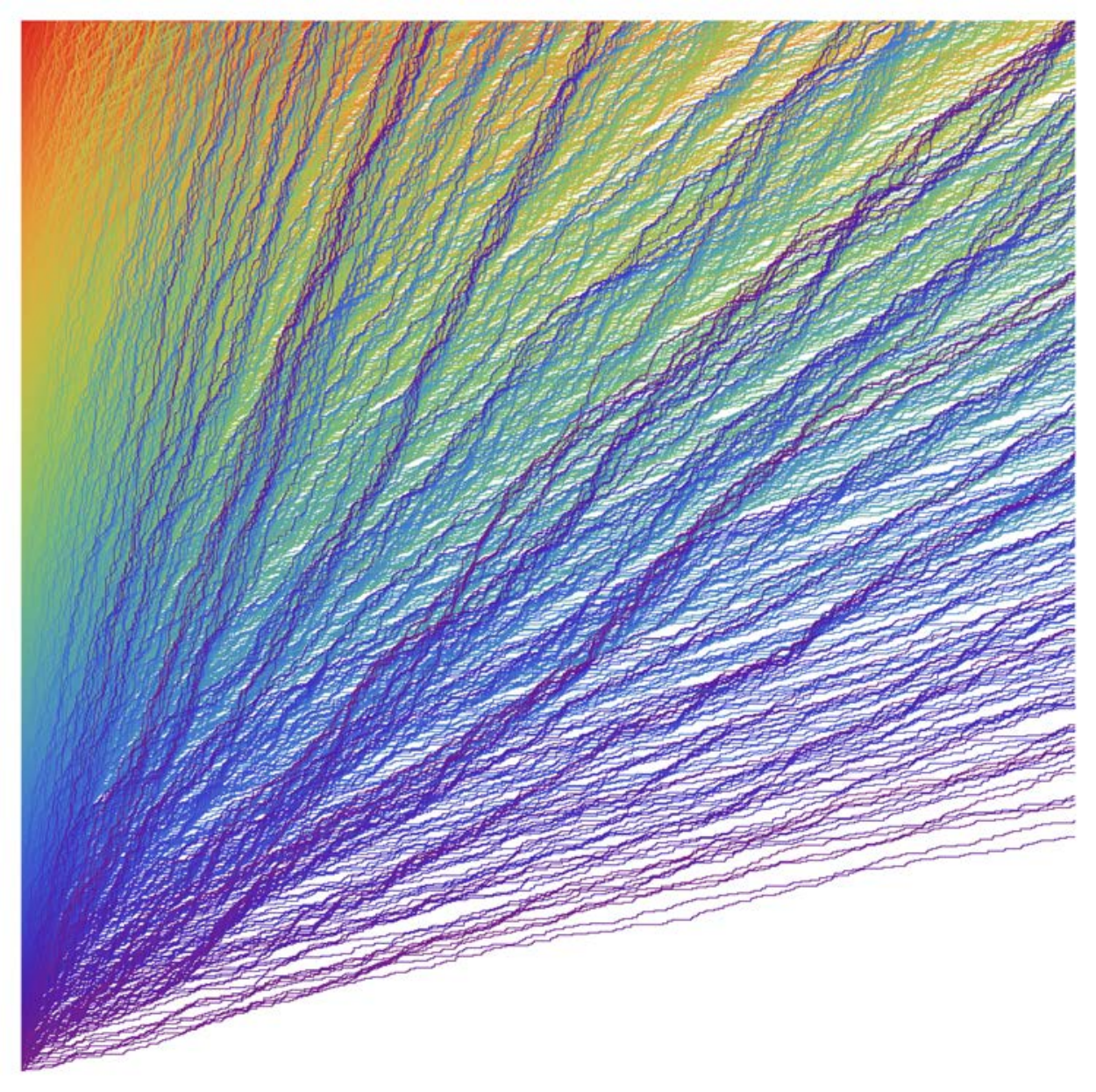}
\caption{A simulation of the stochastic coloured model in a quadrant with domain wall boundary conditions (courtesy of L.~Petrov).}
\label{fig:dawn}
\end{figure}  

\begin{thm}
	\label{thm:2=3-intro} (Theorem \ref{thm:2=3} below)
	For any integers $M,N \geq 1$, and two integer sets $\mathcal{I} = \{1 \leq I_1 < \cdots < I_k \leq M\}$ and 
	$\mathcal{J} = \{1 \leq J_1 < \cdots < J_{\ell} \leq N\}$, 
	the following equality of distributions holds:
	\begin{align}
	\label{2=3-intro }
	\mathbb{P}_{\rm 6v}(\mathcal{I},\mathcal{J})
	=
	\mathbb{P}_{\rm col}(\mathcal{I},\mathcal{J}).
	\end{align}
\end{thm}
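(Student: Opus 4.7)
The plan is to match the two partition functions by expressing each as a bilinear sum involving the non-symmetric spin Hall--Littlewood functions $f_\mu$ introduced in \eqref{f-def-intro}, and then collapsing the sum to a symmetric (i.e.\ colour-blind) object via the Cauchy-type identity \eqref{mimachi-id-intro} together with the colour-blindness reduction of $f_\mu$ to the symmetric spin Hall--Littlewood functions (Proposition \ref{prop:f-sum-F} referenced in the excerpt).

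First, I would interpret $\mathbb{P}_{\rm col}(\mathcal{I},\mathcal{J})$ via the pictorial definition \eqref{f-def-intro}. The coloured model in the $M \times N$ rectangle with the prescribed domain-wall left boundary naturally produces a sum of products of the form $f_\mu(x_1,\dots,x_N)$, where the composition $\mu \in \{0,1,\dots,M\}^N$ records, for each colour, its top-exit position (or a marker if the path leaves through the right boundary). The requirement that top-exit positions are exactly $\mathcal{I}$ and right-exit colours are exactly $\mathcal{J}$ then amounts to summing $f_\mu$ over (i) all bijections from the complementary colour set $\{1,\dots,N\}\setminus\mathcal{J}$ to $\mathcal{I}$, and (ii) all choices of positions at which the colours in $\mathcal{J}$ escape on the right. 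I would absorb the $y$-dependence into the partition function of the $M$ horizontal lines, writing it as a matching sum against a dual object built from evaluations in $(y_1,\dots,y_M)$.

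Next, I would use the Hecke exchange relations of Theorem \ref{thm:hecke-f-intro} to reorganise the sum on the top boundary: since we do not track the colours exiting at $\mathcal{I}$, summing $f_\mu$ over the corresponding orderings of colours produces a function symmetric under the Hecke action in the free variables, which by Proposition \ref{prop:f-sum-F} is precisely the symmetric spin Hall--Littlewood function. The Yang--Baxter equation would then allow me to permute the rows; since the left boundary is the identity pattern (colour $i$ enters row $i$), this row-sorting is trivial on the left but crucially converts right-exit \emph{colour} data into right-exit \emph{position} data. The resulting partition function should manifestly coincide with the six-vertex model expression for $\mathbb{P}_{\rm 6v}(\mathcal{I},\mathcal{J})$.

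The hard part will be the last step, namely showing that the partial colour-marginalisation on the right boundary (fixing colours $\mathcal{J}$ but summing over positions) matches the colour-blind marginal that fixes positions $\mathcal{J}$. This is subtle because the non-symmetric functions entangle colour and position information, and the reduction is not a direct consequence of Proposition \ref{prop:f-sum-F}. I would approach it by applying the skew Cauchy identity \eqref{fG-skew-intro} iteratively (one row at a time), producing recursions for both $\mathbb{P}_{\rm col}$ and $\mathbb{P}_{\rm 6v}$ that agree by induction on $N+M$. Small-case checks (e.g.\ $N=M=2$) should verify the combinatorial bookkeeping before the inductive step; the Plancherel theorem (Theorem \ref{thm-plancherel-intro}) then provides enough rigidity to promote the pointwise matching to the distributional identity \eqref{2=3-intro }.
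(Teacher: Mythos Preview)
Your proposal has a genuine gap at the very first step. The functions $f_\mu$ of \eqref{f-def-intro} live in the \emph{higher-spin} model \eqref{s-weights} on a \emph{semi-infinite} lattice with all right edges in state $0$; the partition functions $X_{M,N}(\mathcal{I},\mathcal{J})$ and $Z_{M,N}(\mathcal{I},\mathcal{J})$ that realise $\mathbb{P}_{\rm col}$ and $\mathbb{P}_{\rm 6v}$ are built from the \emph{fundamental} vertices \eqref{col-vert}/\eqref{six-vert} on a \emph{finite} $M\times N$ rectangle with nontrivial right-boundary exits. There is no direct way to write $X_{M,N}(\mathcal{I},\mathcal{J})$ as a sum of $f_\mu$'s, and the colour-blindness of Proposition \ref{prop:f-sum-F} only applies when all colours exit through the top (it is exactly the case $\mathcal{J}=\varnothing$, cf.\ Remark \ref{rmk:fw}). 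Once $\mathcal{J}\neq\varnothing$, the right boundary breaks the structure you are relying on, which is why you correctly flag that step as ``the hard part'' --- but your proposed fix via \eqref{fG-skew-intro} is again an identity in the higher-spin dual model, not the fundamental one. The Plancherel theorem plays no role here.

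The paper's proof is completely self-contained at the level of the fundamental model and never invokes $f_\mu$. It first establishes the special case $\mathcal{I}=\{M-k+1,\dots,M\}$ by Lagrange interpolation: one checks that $\prod_j(1-qx_Ny_j)\,Z_{M,N}$ and $\prod_j(1-qx_Ny_j)\,X_{M,N}$ are both polynomials in $x_N$ of degree $\le M$, symmetric in $(y_1,\dots,y_M)$, satisfy matching recursions in $(M,N)$ at the points $x_N=1/y_j$ and $x_N=0$ (or factor off the bottom/top row when $N\in\mathcal{J}$), and share the same $M=N=1$ initial condition. These properties determine the partition function uniquely, so the special case follows. The general $\mathcal{I}$ is then reached by showing that both $Z_{M,N}$ and $X_{M,N}$ obey the \emph{same} Hecke-type exchange under $T_{I_i-1}(y)$, which decreases an element of $\mathcal{I}$ by one; iterating this from the maximal $\mathcal{I}$ gives the result. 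Your instinct to use Hecke generators and to look for matching recursions was sound, but the recursions must be obtained by direct lattice/Yang--Baxter analysis of $Z$ and $X$ themselves, not by detouring through $f_\mu$.
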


This statement also allows for fusion, leading to multiple paths occupying vertical edges; see Theorem \ref{thm:fused2=3}.

A weaker version of the result \eqref{thm:2=3-intro} was previously obtained in \cite{FodaW}. This earlier result applied to the situation in which $M=N$, $\mathcal{I} = \{1,\dots,N\}$, and where the set $\mathcal{J}$ is empty, so that all coloured paths exit the partition function $\mathbb{P}_{\rm col}(\{1,\dots,N\},\varnothing)$ via its top boundary; see Remark \ref{rmk:fw}.

The weights \eqref{col-vert-intro} are \emph{stochastic}, which means that the partition function $\mathbb{P}_{\rm 6v}(\mathcal{I},\mathcal{J})$ can be viewed as the probability, for a Markovian process of propagating paths that entered through the left boundary, to exit the rectangle at prescribed locations. Similarly, the partition function $\mathbb{P}_{\rm col}(\mathcal{I},\mathcal{J})$
is the probability for similarly constructed Markovian \emph{coloured} paths to (a) exit through positions $\mathcal{I}$ on top of the rectangle, and (b) have the set of colours of the paths that exit through the right boundary equal to $\mathcal{J}$. 

The proof of Theorem \ref{thm:2=3} that we give is a non-trivial recursive argument (of the type broadly used in \cite{FodaW,WheelerZ}) that may not look particularly illuminating. As a matter of fact, the way we first encountered this matching went through a correspondence with a third partition function related to \emph{coloured Hall--Littlewood processes}; let us define it. 

Having two positive integers $M,N \geq 1$, a composition $\mu = (\mu_1,\dots,\mu_M)$ of length $M$, and a Gelfand--Tsetlin pattern (sequence of interlacing\footnote{We write \index{aN@$\nu\succ\kappa$; interlacing partitions} 
$\nu\succ\kappa$ or $\kappa\prec\nu$ if and only if the partitions $\nu$ and $\kappa$ interlace, \emph{i.e.}, $\nu_1\ge\kappa_1\ge\nu_2\ge\kappa_2\geq \cdots$.} partitions) 
$\bm{\lambda} = \left\{\lambda^{(1)}\succ\dots\succ\lambda^{(N)}=\varnothing\right\}$ of length $N-1$, associate to this pair of objects the weight 
\begin{align}
\label{asc-HL-intro}
\mathbb{W}_{M,N}
(\mu,\bm{\lambda})
=
E_{\tilde\mu}(y_M,\dots,y_1)
\index{E@$E_{\mu}$; non-symmetric Hall--Littlewood}
\cdot
Q_{\mu^{+}/\lambda^{(1)}}(x_1)
\cdot
\prod_{j=2}^{N}
Q_{\lambda^{(j-1)}/\lambda^{(j)}}(x_j)
\cdot
\prod_{i=1}^{N}
\prod_{j=1}^{M}
\frac{1-x_i y_j}{1-q x_i y_j}\,.
\end{align}
Here $Q$'s \index{Q@$Q_{\lambda/\nu}$; skew Hall--Littlewood} are symmetric (skew) Hall--Littlewood polynomials, $E$'s are non-symmetric Hall--Littlewood polynomials, $\mu^+$ \index{am@$\mu^+$; dominant ordering of $\mu$} is the nondecreasing ordering of $\mu$, and $(\tilde{\mu}_1,\dots,\tilde{\mu}_M)=(\mu_M,\dots,\mu_1)$.  

For $0\le x_i,y_j<1$, this defines a probability measure; the weights \eqref{asc-HL-intro} are nonnegative, and they add up to one:
\begin{align}
\label{Cauchy-summation}
\sum_{\mu}
\sum_{\bm{\lambda}}
\mathbb{W}_{M,N}
(\mu,\bm{\lambda})
\equiv 
1.
\end{align}
This summation follows from the fact that symmetrization of non-symmetric Hall--Littlewood polynomials yields the symmetric ones, and the Cauchy identity for the symmetric Hall--Littlewood polynomials. 

Define the {\it zero set} $z(\mu)$ \index{z6@$z(\mu)$} of the composition $(\mu_1,\dots,\mu_M)$ as
$z(\mu) = \{1 \leq i \leq M : \mu_i = 0\}$,
and a further set
\begin{align*}
\zeta(\mu,\bm{\lambda})
\index{af@$\zeta(\mu,\bm{\lambda})$}
=
\{1 \leq j \leq N: \ell(\lambda^{(j-1)}) - \ell(\lambda^{(j)}) = 0\},
\qquad
\lambda^{(0)} \equiv \mu^{+},
\end{align*}
which measures the differences between lengths of neighbouring partitions in the extended Gelfand--Tsetlin pattern $\mu^{+} \succ \lambda^{(1)} \succ \cdots \succ \lambda^{(N-1)} \succ \lambda^{(N)} = \varnothing$. Further, we define (with notation $\b{\mathcal{I}}=\{1,\dots,M\}\setminus\mathcal{I}$)
\begin{align*}
\index{P3@$\mathbb{P}_{\rm cHL}(\mathcal{I},\mathcal{J})$}
\mathbb{P}_{\rm cHL}(\mathcal{I},\mathcal{J})
=
\sum_{\mu}
\sum_{\bm{\lambda}}
\mathbb{W}_{M,N}
(\mu,\bm{\lambda})
\cdot
\bm{1}_{z(\mu)=\b{\mathcal{I}}}
\cdot
\bm{1}_{\zeta(\mu,\bm{\lambda}) = \mathcal{J}},
\end{align*}
which is the distribution of the random variables $z(\mu)$, $\zeta(\mu,\bm{\lambda})$ in the pair $(\mu,\bm{\lambda})$ sampled with respect to the coloured Hall--Littlewood process \eqref{asc-HL-intro}.

\begin{thm}
	\label{thm:2=1-intro} (Theorem \ref{thm:2=1} below)
	For any integers $M,N \geq 1$, and two integer sets $\mathcal{I} = \{1 \leq I_1 < \cdots < I_k \leq M\}$ and 
	$\mathcal{J} = \{1 \leq J_1 < \cdots < J_{\ell} \leq N\}$, the following equality of distributions holds:
	\begin{align}
	\label{2=1-intro}
	\mathbb{P}_{\rm 6v}(\mathcal{I},\mathcal{J})
	=
	\mathbb{P}_{\rm cHL}(\mathcal{I},\mathcal{J}).
	\end{align}
\end{thm}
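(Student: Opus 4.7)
The plan is to establish the matching $\mathbb P_{\rm col}(\mathcal I,\mathcal J)=\mathbb P_{\rm cHL}(\mathcal I,\mathcal J)$ directly and combine it with Theorem \ref{thm:2=3-intro} to obtain (\ref{2=1-intro}). The key observation is that at the spin specialization $s=0$ the non-symmetric spin Hall--Littlewood function $f_\mu$ collapses (up to an explicit factor) to the non-symmetric Hall--Littlewood polynomial $E_{\tilde\mu}$, so the partition-function representation (\ref{f-def-intro}) of $f_\mu$ gives $E_{\tilde\mu}$ as a partition function of a coloured $N\times M$ horizontal strip of the lattice model governed by the weights (\ref{col-vert-intro}).

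First I would identify the factor $E_{\tilde\mu}(y_M,\dots,y_1)$ inside $\mathbb W_{M,N}(\mu,\bm\lambda)$ with the top $N$ rows of the coloured rectangle underlying $\mathbb P_{\rm col}$, the rows carrying the spectral parameters $y_M,\dots,y_1$ and coloured paths $1,\dots,N$ entering from the left and exiting through the top at positions encoded by $\mu$. Simultaneously, I would realize the telescoping product $\prod_{j=1}^{N}Q_{\lambda^{(j-1)}/\lambda^{(j)}}(x_j)$ (with $\lambda^{(0)}=\mu^+$ and $\lambda^{(N)}=\varnothing$) as the partition function of an $N$-row vertical strip of a colour-blind stochastic lattice model with row rapidities $x_1,\dots,x_N$, using the standard identification of skew Hall--Littlewood polynomials with such partition functions (see e.g.\ \cite{BorodinP1}). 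Under this dictionary the length $\ell(\lambda^{(j)})$ counts the number of paths crossing a horizontal cut above row $j$, and the equality $\ell(\lambda^{(j-1)})=\ell(\lambda^{(j)})$ is equivalent to no path exiting to the right in row $j$; thus the indicator $\bm 1_{\zeta(\mu,\bm\lambda)=\mathcal J}$ fixes the set of rows (equivalently, of colours, via the convention that colour $j$ enters in row $j$) through which paths \emph{do not} exit to the right.

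Next I would glue the two strips along the composition $\mu$: the horizontal interface stores the partition $\mu^+$ on the colour-blind side and the full composition $\mu$ on the coloured side, so summing over $\mu$ and $\bm\lambda$ with the two indicators amounts to summing over all path configurations in the composite $M\times N$ rectangle, with the positions of top-exits forced to be $\mathcal I$ (via $\bm 1_{z(\mu)=\bar{\mathcal I}}$) and the colours of right-exits forced to be the complement of $\mathcal J$ in $\{1,\dots,N\}$ (via $\bm 1_{\zeta(\mu,\bm\lambda)=\mathcal J}$, modulo an indexing flip). The Cauchy-type prefactor $\prod_{i,j}(1-x_iy_j)/(1-qx_iy_j)$ in $\mathbb W_{M,N}$ is precisely the normalization that turns the combined partition function into the stochastic probability $\mathbb P_{\rm col}(\mathcal I,\mathcal J)$ defined via the weights (\ref{col-vert-intro}), completing the identification.

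The main obstacle will be step two, namely setting up the precise lattice-model representation of $\prod_{j} Q_{\lambda^{(j-1)}/\lambda^{(j)}}(x_j)$ so that the length-change statistic $\zeta(\mu,\bm\lambda)$ really does correspond to the occurrence of right-edge exits, and so that stacking on top of the $E$-strip produces exactly the weights (\ref{col-vert-intro}) (and not a different, merely gauge-equivalent set). A natural route is to apply the skew Cauchy identity for spin Hall--Littlewood functions developed in Chapter \ref{sec:branching} at $s=0$, feeding the $y$-variables into the coloured rows and the $x$-variables into the colour-blind rows; this identity should supply both the gluing factor over $\mu$ and the stochastic normalization in one step, thereby bypassing any need to manipulate $E$ and $Q$ directly.
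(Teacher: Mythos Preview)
Your plan has a genuine geometric gap. The partition function \eqref{f-def-intro} for $f_\mu$ at $s=0$ (which indeed equals $E_{\tilde\mu}(y_M,\dots,y_1)$ by Theorem~\ref{thm:f-E}) is an $M$-row semi-infinite lattice with $M$ colours and \emph{row} rapidities $y_1,\dots,y_M$; the rectangle underlying $\mathbb{P}_{\rm col}$ has $N$ rows carrying the $x$-rapidities, $M$ columns carrying the $y$-rapidities, and $N$ (not $M$) colours. These are not pieces of the same lattice, and there are no ``top $N$ rows with spectral parameters $y_M,\dots,y_1$'' in the $\mathbb{P}_{\rm col}$ rectangle. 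Stacking the $f_\mu$-lattice (with $L$-weights) on top of the $G_{\mu/\nu}$-lattice (with $M$-weights), as in the Cauchy identities of Section~\ref{ssec:cauch}, produces an $(M{+}N)$-row semi-infinite strip rather than an $M\times N$ quadrant, so the skew Cauchy identity you propose does not supply the missing glue.

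There is a second, more conceptual obstruction. The $Q$-strip and the pattern $\bm\lambda$ are colour-blind: they record only how many paths are present at each level, not which colours they carry. Your equivalence ``row $j \Leftrightarrow$ colour $j$'' fails in the coloured model, since a path of colour $j$ may exit through any row; hence $\zeta(\mu,\bm\lambda)$ encodes \emph{positions} of (non-)exits --- the $\mathbb{P}_{\rm 6v}$-type observable --- and cannot be read as fixing the \emph{colours} of right-exits required by $\mathbb{P}_{\rm col}$. The paper's proof of Theorem~\ref{thm:2=1} accordingly never passes through $\mathbb{P}_{\rm col}$: it uses freezing of the $0$-th column at $s=0$ together with the colour-blindness identities of Section~\ref{ssec:colour-blind} to convert $\sum_{\mu:\mu^+=\kappa,\,z(\mu)=\bar{\mathcal{I}}}f_\mu$ into a \emph{symmetric} $P$-strip with its own length-change indicator (equation~\eqref{skewPP}), pairs this with the $Q$-strip, and then invokes \cite[Theorem~5.5]{BorodinBW} to identify the resulting symmetric Hall--Littlewood process observable directly with $Z_{M,N}(\mathcal{I},\mathcal{J})=\mathbb{P}_{\rm 6v}(\mathcal{I},\mathcal{J})$. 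A direct graphical proof of $\mathbb{P}_{\rm col}=\mathbb{P}_{\rm cHL}$ does exist (the Introduction alludes to it), but it requires the Hecke exchange relations of Theorem~\ref{thm:hecke-f}, not a simple gluing.
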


Together with Theorem \ref{thm:2=3-intro}, this also implies 
\begin{align}
\label{3=1-intro}
\mathbb{P}_{\rm col}(\mathcal{I},\mathcal{J})
=
\mathbb{P}_{\rm cHL}(\mathcal{I},\mathcal{J}).
\end{align}

We give a proof of \eqref{2=1-intro} by a graphical argument involving the Yang--Baxter equation. There is also a parallel proof of \eqref{3=1-intro}
that uses graphical arguments and exchange relations similar to \eqref{T-f-intro}, but we leave it out of this paper. Instead, we offer a second proof of \eqref{2=1-intro} based on different ideas. 
More exactly, using the results of \cite{KirillovN}, we compute averages of observables of the coloured Hall--Littlewood process by applying Cherednik--Dunkl operators to a version of the Cauchy summation identity \eqref{Cauchy-summation}, match them to the corresponding observables of $\mathbb{P}_{\rm 6v}(\mathcal{I},\mathcal{J})$ (known thanks to \cite{BorodinP1,BorodinBW}), and prove that the resulting sets of observables are rich enough to identify the measures. We hope that this approach might be extendable to understanding joint distributions of colours of paths passing under multiple locations of the lattice (as opposed to the single vertex $(M,N)$ in the definition of $\mathbb{P}_{\rm col}(\mathcal{I},\mathcal{J})$).

\section{Matching for interacting particle systems} The distributional match of Theorems \ref{thm:2=3-intro} and \ref{thm:fused2=3} can be followed through various degenerations of the involved stochastic vertex models. In Chapter \ref{sec:reduction} we give detailed descriptions of how such degenerations work for three continuous time Markov chains -- the ASEP, a system of $q$-bosons, and an interacting particle system with bounded occupancy of each site that generalizes the system known as the PushTASEP (or long-range TASEP). Let us give a brief description of the ASEP result.

Consider a system of particles on the one-dimensional lattice $\mathbb{Z}$ with no more than a single particle per site. Assume that the particles have colours that are positive integers (several particles may have the same colour, and not all colours need to be utilized). The empty sites, that are also commonly called holes, can be naturally identified with particles of colour 0, in which case every site of the lattice is occupied by a particle of a nonnegative colour. 

The coloured ASEP is a continuous time Markov process on the space of such systems of particles. A (somewhat informal) description of the Markovian evolution is as follows: each particle is equipped with a left and a right exponential clock of rates $q$ and 1, respectively; all the clocks in the system are independent. When the left (resp., right) clock of a particle rings, it checks if the site immediately to the left (resp., right) of its current location is occupied by a particle of a smaller colour. If it is, then these two particles are swapped, and if not, then nothing happens. 

If all particles of the system are of a single colour (not counting the holes), then this evolution reduces to the usual uncoloured ASEP.
Note that the uncoloured ASEP is also the projection of the coloured one when the distinctions between the colours (apart from 0) are being ignored. More generally, one can always reduce the number of colours in the coloured ASEP by ignoring distinctions between colours in any interval. If particles of only two nonzero colours are present in the system, then the particles of the smaller colour behave as \emph{second class particles}, in the conventional ASEP terminology. 

We will be concerned with the \emph{(half-)Bernoulli} initial condition (at time $t=0$) defined as follows. Choose $p\in (0,1]$, and for each $i=1,2,\dots$, place a particle of colour $i$ at location $(-i)$ with probability $p$, independently over all $i$. 
For $p=1$ all the sites $-1,-2,\dots$ are going to be occupied by particles of colours $1,2,\dots$, respectively. We refer to the latter case as the \emph{step} initial condition. 

Let us now state the matching result. 

Fix an arbitrary integer $P$ (reference position) and two sets of pairwise distinct integers $\mathcal{I} = \{I_1 < \cdots < I_k \leq P\}$ and $\mathcal{J} = \{1 \leq J_1 < \cdots < J_{\ell} \}$. Consider the following probabilities:
\begin{multline*}
\mathbb{P}^{\rm ASEP}(\mathcal{I},\mathcal{J};P,t)
\index{P4@$\mathbb{P}^{\rm ASEP}(\mathcal{I},\mathcal{J};P,t)$}
\\
=
{\rm Prob}\Big\{\text{there is a particle at each of the locations } I_1,\dots,I_k;P+J_1,\dots,P+J_\ell\Big\},
\end{multline*}
\begin{multline*}
\mathbb{P}^{\rm mASEP}(\mathcal{I},\mathcal{J};P,t)
\index{P5@$\mathbb{P}^{\rm mASEP}(\mathcal{I},\mathcal{J};P,t)$}
\\
={\rm Prob}\Big\{\begin{matrix}\text{there is a particle at each of the locations }I_1,\dots,I_k,\text{ and }\\
\text{ there is a particle of each of the colours } J_1,\dots,J_\ell \text{ to the right of location } P 
 \end{matrix}\Big\}.
\end{multline*}

\begin{thm}\label{thm:ASEP-match-intro} (Theorem \ref{thm:ASEP-match} below)
	Consider the coloured ASEP with the half-Bernoulli initial condition, as defined above. Then
	for all time $t \geq 0$, any integer $P \in \mathbb{Z}$, and arbitrary integer sets $\mathcal{I} = \{I_1 < \cdots < I_k \leq P\}$ and $\mathcal{J} = \{1 \leq J_1 < \cdots < J_{\ell} \}$, we have
	\begin{align*}
	\mathbb{P}^{\rm ASEP}(\mathcal{I},\mathcal{J};P,t)
	=
	\mathbb{P}^{\rm mASEP}(\mathcal{I},\mathcal{J};P,t).
	\end{align*}
\end{thm}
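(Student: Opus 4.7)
The plan is to deduce Theorem \ref{thm:ASEP-match-intro} as a continuous-time degeneration of the vertex-model matching Theorem \ref{thm:2=3-intro} (more precisely, its fused version, Theorem \ref{thm:fused2=3}), exactly as the rest of Chapter \ref{sec:reduction} handles $q$-PushTASEP and the $q$-boson system. The coloured ASEP is a standard scaling limit of the coloured stochastic six-vertex model: one specialises parameters so that individual vertex weights deviate from the trivial configuration by an amount of order $\epsilon$, then lets the number of columns $M$ tend to infinity with $M\epsilon\to t$ fixed. Under this limit, swaps at columns are driven by independent Poisson clocks with the correct left/right jump rates $q$ and $1$, and the rule that a path can only cross one of smaller label recovers the coloured ASEP dynamics. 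The uncoloured ASEP is obtained from precisely the same scaling applied to the $n=1$ (colour-blind) six-vertex model.

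Under this degeneration the translation of observables is straightforward but requires some bookkeeping. The row index in the vertex model indexes colours (for $\mathbb P_{\rm col}$) or is trivial (for $\mathbb P_{\rm 6v}$); the column index becomes the spatial coordinate in the ASEP. If one identifies column $M$ with an auxiliary reference point and thinks of columns $1,\dots,P$ as the region "$\le P$" and columns $P+1,\dots,M$ as the region "$>P$", then exits through the top boundary at positions $\mathcal{I}\subset\{1,\dots,P\}$ correspond to a particle being present at each site $I_1,\dots,I_k\le P$, while exits through the right boundary at positions $J_1<\dots<J_\ell\in\{1,\dots,N\}$ correspond, on the colour-blind side, to particles being present at sites $P+J_1,\dots,P+J_\ell$, and, on the coloured side, to particles of colours $J_1,\dots,J_\ell$ having escaped past $P$. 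The half-Bernoulli initial condition with parameter $p$ is recovered either by prefacing the left boundary with a single column of Bernoulli$(p)$ vertices, or by starting the vertex model from a product initial column distribution that is stable under the stochastic weights; the latter is the natural discrete-time analogue of the Bernoulli product measure and is compatible with both sides of \eqref{2=3-intro} by construction.

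The main technical obstacle is to make the limit transition rigorous: one needs to verify that the left-boundary/initial-condition data can be chosen to converge \emph{jointly} to the half-Bernoulli ASEP initial condition in both the $n=1$ and the $n=N$ versions of the vertex model, and that the events encoded by $\mathcal{I}$ and $\mathcal{J}$ are preserved under the limit. Compactness is not an issue because only finitely many vertices (those in a fixed neighbourhood of the relevant paths) contribute non-trivially on any given compact time interval, so one has only finitely many Bernoulli or exponential random variables converging. The key identity to check in the limit is that the probabilistic interpretation of "exits through the right boundary" stabilises: for each finite $M$ one applies Theorem \ref{thm:fused2=3}, and the resulting equality between two probability distributions on subsets of $\{1,\dots,M\}\times\{1,\dots,N\}$ passes to the limit by dominated convergence. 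Once this is established, Theorem \ref{thm:ASEP-match-intro} is a direct consequence of \eqref{2=3-intro} together with the identification of limiting observables described above, with the parameter $p$ surviving the limit as the prescribed Bernoulli intensity of the initial condition.
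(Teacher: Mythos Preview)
Your approach is essentially the same as the paper's: both deduce the ASEP matching as a continuous-time degeneration of the fused vertex-model matching (Theorem \ref{thm:fused2=3}), with the half-Bernoulli initial condition realised via a special $0$-th column whose spin and rapidity parameters are tuned to produce Bernoulli-$p$ entry probabilities in the $\epsilon\to 0$ limit. The paper likewise only sketches this argument, citing prior work for the rigorous limit transition.

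The one place your sketch diverges from the paper is the geometry of the limit. You write that ``the column index becomes the spatial coordinate'' and take $M\to\infty$ with $M\epsilon\to t$; the paper instead reads off ASEP configurations along \emph{diagonal} cross-sections of the quadrant (the edge sets $\mathcal{E}(k)$ of Section \ref{sssec:to-step}), with the diagonal index $k=t/\epsilon$ playing the role of time and the \emph{offset from the diagonal} playing the role of ASEP position. This is why the row index $J_a$ of a right exit in $\mathbb{P}_{\rm 6v}$ corresponds to ASEP position $P+J_a$, and why the reversal of the row-rapidity ordering between $\mathbb{P}_{\rm 6v}$ and $\mathbb{P}_{\rm col}$ is essential to the match. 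Your direct column-as-space identification does not by itself yield this correspondence, so your translation of observables on the colour-blind side is not quite right as stated; the argument goes through once you switch to the diagonal (or more generally transversal) cross-sections of Remark \ref{rmk:sets}.
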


One can show that in the case of step initial condition ($p=1$) and $\mathcal{I}=\varnothing$, Theorem \ref{thm:ASEP-match-intro} also follows from \cite[Theorem 1.4]{AAV}. This does not seem to extend to arbitrary $p$ and $\mathcal{I}$, however.   

Matching results for the coloured $q$-bosons and the PushTASEP-like coloured system that are somewhat similar to Theorem \ref{thm:ASEP-match-intro}, and that are also derived from the vertex model matching of Theorems \ref{thm:2=3-intro} and \ref{thm:fused2=3}, are given in Sections \ref{sec:bosons}--\ref{sec:push} below (see Theorems \ref{thm:12.5} and \ref{thm:push-match}, respectively).

\section{Asymptotics}\label{ss:asymptotics-intro} The matching of Theorem \ref{thm:2=3-intro} and its extensions
gives rise to a host of asymptotic statements for the coloured models via the   corresponding asymptotics of the colour-blind ones. Let us survey the latter ones very quickly. 

It is convenient to speak in terms of the \emph{height functions} that count the number of paths/particles to the right (or left) of a given location in the two-dimensional space time. For coloured models, one can speak of a \emph{coloured} height function that counts the number of paths/particles to the right of a specified location, but only takes into account paths/particles whose colour does not exceed a certain cut-off. This turns the height function into a random function of \emph{three} variables rather than two. The matching statements thus provide identities between
the distributions of the coloured height functions on certain one-dimensional sections of the three-dimensional physical space, and those of the uncoloured height functions on one-dimensional sections of the two-dimensional physical space. 

The large time/space/colour asymptotics for the models considered in the present work can be of (at least) four different types; in terms of the colour-blind models, they correspond to: 

\begin{enumerate}

\item A local limit regime with observed space locations at finite distance from each other and the limiting object typically being a product of Bernoulli measures or their generalization (local equilibrium). 

\item Global Gaussian random field asymptotics with observation points ranging over the whole active region of the physical space, in the case of symmetric models ($q=1$), or weakly asymmetric ones ($q\to 1$) with relatively small, but still diverging scaling of space/time. 

\item In the weakly asymmetric case, intermediate space/time scaling and appropriate mesoscopic distances between observation points yield limiting random polymers in random media, \emph{e.g.}, the KPZ equation. 

\item In the asymmetric case ($q\ne 1$ is fixed), or the weakly asymmetric case at large scales, mesoscopic distances between observation points produce a broadly universal limiting object known as the \emph{KPZ fixed point}. 

\end{enumerate}

Note that in order to use the matching for deriving a non-trivial coloured result from an uncoloured one, one needs to know \emph{process level} convergence of the uncoloured height function as the one-point distributions match trivially. 
This substantially reduces the number of rigorously known results; there are, however, widely believed conjectures and heuristic arguments wherever the one-point convergence is known.

For (1), the very first result is the behaviour of the particle of colour 1 in the coloured TASEP, and our matching result says that it is asymptotically equivalent to the colour-blind TASEP density. This is a celebrated result of Ferrari--Kipnis \cite{FK} about the second class particle in a rarefaction fan. Its extensions to many colours for the ASEP with step initial condition have been investigated by Amir--Angel--Valko \cite{AAV} via a matching result that is not far from (the ASEP degeneration of) ours. For a single second class particle, a recent work by Bal\'azs--Nagy \cite{BN} proves results for more general (continuous time) processes, again by a suitable matching. 

For (2), the asymptotic behaviour of the uncoloured height function is a deterministic law of large numbers (hydrodynamic limit) given by a first order PDE, and Gaussian fluctuations around it are given by a solution of a stochastic PDE that is a classical second order PDE with a white noise inhomogeneity, \cf\ De Masi--Presutti--Scacciatelli \cite{MPS}, Dittrich--G\"artner \cite{DG}, Borodin--Gorin \cite{BG-telegraph}, Shen--Tsai \cite{ShenT} and references therein. No coloured results or conjectures have been previously known. 

For (3), the uncoloured height function converges to the logarithm of the partition function for a finite temperature directed polymer in a Gaussian random medium, see, Bertini--Giacomin \cite{BG} and \cite{ACQ} for the ASEP and the continuum polymer (equivalently, the KPZ equation), Borodin--Corwin \cite{BorodinC} for the $q$-TASEP and the O'Connell--Yor (semi-discrete) polymer, Corwin--Tsai \cite{CT} and Corwin--Ghosal--Shen--Tsai \cite{CGST} for the stochastic six vertex and the continuum polymer. No coloured results or conjectures have been previously known. 

For (4), only for the TASEP and PushTASEP (as well as for a few closely related, \emph{determinantal} models) process level convergence results have been rigorously established, \cf\ Borodin--Ferrari
\cite{BorodinFerrari}, Matetski--Quastel--Remenik \cite{MQR}; conjectures, however, are broadly available, \cf\ Corwin \cite{Corwin-survey}, Spohn \cite{SpohnKPZ}, Corwin--Dimitrov \cite{CD}. No coloured results have been previously known. The only available coloured predictions have been made by Spohn \cite{Spohn}, but it remains unclear whether they are applicable to any of the models covered by our matchings. 

It would be extremely interesting to access any of the outlined asymptotic results without employing the matching, but rather through utilizing the spectral analysis of the transfer-matrices described above. We leave this to a future work. 

\section{A word about extensions} The algebraic formalism of the present work should be readily extendable to: 

\begin{itemize}
	
	\item Inhomogeneous lattice models, similarly to what was done in \cite{BorodinP1} in the colour-blind case;
	
	\item The spin $q$-Whittaker functions, similarly to what was done in 
	\cite{BorodinWheeler} in the colour-blind case;
	
	\item Elliptic and trigonometric IRF (Interaction-Round-a-Face) or SOS (Solid-On-Solid) models, similarly to what was done in \cite{Borodin3}, \cite{Aggarwal} in the colour-blind case. 
	
\end{itemize}

We chose to leave these more general scenarios out of the scope of this paper that, as it is, ended up pretty long.

\section{Acknowledgments} A.~B. was
partially supported by the NSF grant DMS-1607901 and DMS-1664619. M.~W. was partially supported by the ARC grant DE160100958.

\chapter{Rank-$n$ vertex models}
\label{sec:models}

\section{Stochastic $U_q(\wh{\mathfrak{sl}_{n+1}})$ $R$-matrix\index{R@$R$-matrix}}
\label{ssec:fundamental}
 
The higher-spin vertex model that we consider in this paper can be obtained from the $U_q(\wh{\mathfrak{sl}_{n+1}})$ $R$-matrix under the fusion procedure. To begin, let us recall the form of this $R$-matrix \cite{Jimbo}. The $U_q(\wh{\mathfrak{sl}_{n+1}})$ $R$-matrix acts in a tensor product $W_a \otimes W_b$ of two $(n+1)$-dimensional vector spaces, and takes the form
\begin{align}
\label{Rmat}
R_{ab}(z)
&=
\sum_{i=0}^{n}
\left(
R_z(i,i;i,i)
E^{(ii)}_a \otimes E^{(ii)}_b
\right)
\\
\nonumber
&+
\sum_{0 \leq i < j \leq n}
\left(
R_z(j,i;j,i)
E^{(ii)}_a \otimes E^{(jj)}_b
+
R_z(i,j;i,j)
E^{(jj)}_a \otimes E^{(ii)}_b
\right)
\\
\nonumber
&+
\sum_{0 \leq i < j \leq n}
\left(
R_z(j,i;i,j)
E^{(ij)}_a \otimes E^{(ji)}_b
+
R_z(i,j;j,i)
E^{(ji)}_a \otimes E^{(ij)}_b
\right)
\end{align}
where $E^{(ij)}_c \in {\rm End}(W_c)$ denotes the $(n+1) \times (n+1)$ elementary matrix with a $1$ at position $(i,j)$ and $0$ everywhere else, acting in $W_c \cong \mathbb{C}^{n+1}$. The matrix entries are rational functions of the spectral parameter $z$ and the quantization parameter $q$; they are given by
\index{R@$R_z(i,j;k,\ell)$; $R$-matrix entries}
\begin{align}
\label{R-weights-a}
&
\left.
R_z(i,i;i,i)
=
1,
\quad
i \in \{0,1,\dots,n\},
\right.
\\
\nonumber
\\
\label{R-weights-bc}
&
\left.
\begin{array}{ll}
R_z(j,i;j,i)
=
\dfrac{q(1-z)}{1-qz},
&
\quad
R_z(i,j;i,j)
=
\dfrac{1-z}{1-qz}
\\ \\
R_z(j,i;i,j)
=
\dfrac{1-q}{1-qz},
&
\quad
R_z(i,j;j,i)
=
\dfrac{(1-q)z}{1-qz}
\end{array}
\right\}
\quad
i,j \in \{0,1,\dots,n\},
\quad  i<j.
\end{align}
All other matrix entries $R_z(i,j;k,\ell)$ which do not fall into a category listed above are by definition equal to $0$. The model described above differs slightly from the one listed in \cite{Jimbo}, since its entries $R_{z}(j,i;j,i)$ and $R_{z}(i,j;i,j)$ are not symmetric for $i \not= j$. The asymmetric form that we use preserves the integrability of the model, and makes it stochastic: 
\begin{prop}
\label{prop:YB}
The $R$-matrix \eqref{Rmat} satisfies the Yang--Baxter equation and unitarity relations
\begin{align}
\label{YB}
&
R_{ab}(y/x) R_{ac}(z/x) R_{bc}(z/y)
=
R_{bc}(z/y) R_{ac}(z/x) R_{ab}(y/x),
\\
\label{unitarity}
&
R_{ab}(y/x) R_{ba}(x/y) = 1,
\end{align}
which hold as identities in ${\rm End}(W_a \otimes W_b \otimes W_c)$ and ${\rm End}(W_a \otimes W_b)$, respectively.
\end{prop}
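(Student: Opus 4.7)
The plan is to exploit \emph{colour conservation}: every nonzero matrix element $R_z(i,j;k,\ell)$ in \eqref{R-weights-a}--\eqref{R-weights-bc} satisfies $\{i,j\}=\{k,\ell\}$ as a multiset. Consequently, applied to a basis vector $e_i\otimes e_j\otimes e_k \in W_a\otimes W_b\otimes W_c$, both sides of the Yang--Baxter equation \eqref{YB} remain inside the finite-dimensional subspace spanned by those $e_\alpha\otimes e_\beta\otimes e_\gamma$ with $\{\alpha,\beta,\gamma\}=\{i,j,k\}$ as multisets. This reduces \eqref{YB} to a finite list of scalar identities indexed by ordered input/output triples sharing the same multiset of labels.

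\medskip

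The case analysis splits according to the multiset type of $\{i,j,k\}$. For $i=j=k$, every relevant weight is $R_z(i,i;i,i)=1$ and \eqref{YB} is immediate. For multisets of type $\{i,i,j\}$ with $i\ne j$, the relevant invariant subspace is three-dimensional and each scalar identity reduces, after clearing the common denominator $(1-qy/x)(1-qz/x)(1-qz/y)$, to a polynomial identity in $(x,y,z,q)$ that can be verified by direct expansion. The substantive case is three distinct labels, which produces a $6\times 6$ matrix identity on the span of $\{e_{\pi(i)}\otimes e_{\pi(j)}\otimes e_{\pi(k)}:\pi\in\mathfrak{S}_3\}$.

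\medskip

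The key simplification for the three-distinct sector is that, for any pair of distinct labels $\{a,b\}$, the restriction of the weights \eqref{R-weights-bc} to the two-colour subspace $\mathrm{span}(e_a,e_b)^{\otimes 2}$ depends only on the order of the pair, not on the particular values of $a$ and $b$; these are precisely the asymmetric stochastic six-vertex weights. Hence the three-distinct YBE sector is formally identical to the classical rank-one Yang--Baxter equation for the stochastic six-vertex $R$-matrix applied to three distinguishable particles, a system of rational identities that is well known. In practice I would carry it out by enumerating the six permutations of $(i,j,k)$, substituting $u=y/x$, $w=z/y$ so that $z/x=uw$, clearing the denominator $(1-qu)(1-quw)(1-qw)$, and reading off the resulting short polynomial identities in $(u,w,q)$.

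\medskip

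Unitarity \eqref{unitarity} is a direct $2\times 2$ block computation. It is trivial on each $e_i\otimes e_i$. On each invariant subspace $\mathrm{span}(e_i\otimes e_j, e_j\otimes e_i)$ with $i<j$, writing $R_{ab}(z)$ and $R_{ba}(z^{-1})$ as $2\times 2$ matrices in this basis, the $(1,1)$ entry of the product simplifies to
\[
\frac{q(1-z)(1-z^{-1}) + (1-q)^2}{(1-qz)(1-qz^{-1})} = \frac{1+q^2 - q(z+z^{-1})}{(1-qz)(1-qz^{-1})} = 1,
\]
and the two off-diagonal entries vanish through a one-line cancellation. The main obstacle in the whole argument is the ordered bookkeeping of the three-distinct-colour YBE sector; once this is organized via the rank-one reduction above, both \eqref{YB} and \eqref{unitarity} follow without further difficulty.
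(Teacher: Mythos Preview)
The paper does not supply a proof of this proposition: it is stated as a known structural fact about the stochastic $U_q(\wh{\mathfrak{sl}_{n+1}})$ $R$-matrix (with an implicit reference to \cite{Jimbo}), and the graphical forms of \eqref{YB} and \eqref{unitarity} are simply recorded for later use. Your direct verification via colour conservation is therefore not being compared against any argument in the text; it stands as an independent proof.

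Your approach is correct. The reduction to multiset sectors is exactly right, the trivial and two-colour sectors are handled as you say, and the unitarity computation is accurate. One phrasing to tighten: calling the three-distinct-colour sector ``the classical rank-one Yang--Baxter equation for the stochastic six-vertex $R$-matrix applied to three distinguishable particles'' is a slight abuse, since the rank-one model has only two states and hence no three-distinct sector. What is true, and what your argument actually uses, is that for any fixed ordered triple $i<j<k$ the $6\times 6$ block of \eqref{YB} is independent of the particular labels (only their relative order enters the weights \eqref{R-weights-bc}), so there is a \emph{single} $6\times 6$ rational identity to check. That identity is indeed well known for this family of asymmetric/stochastic $R$-matrices, and your substitution $u=y/x$, $w=z/y$ followed by clearing $(1-qu)(1-quw)(1-qw)$ is the standard way to carry it out. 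You should also note explicitly that the $\{i,i,j\}$ sector splits into the two subcases $i<j$ and $i>j$, though both are short.
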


\begin{prop}
\label{prop:Rstoch}
For any fixed $i,j \in \{0,1,\dots,n\}$ there holds
\begin{align}
\label{stoch}
\sum_{k=0}^{n}
\sum_{\ell=0}^{n}
R_z(i,j;k,\ell)
=
1.
\end{align}
Equivalently, all rows of the matrix \eqref{Rmat} sum to $1$.
\end{prop}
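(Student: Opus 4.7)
The plan is a direct case analysis on the pair $(i,j)$ of incoming states, using the explicit formulas \eqref{R-weights-a}--\eqref{R-weights-bc} together with the rule that any entry $R_z(i,j;k,\ell)$ not covered by those formulas is zero. Because of this sparsity, for each fixed $(i,j)$ only one or two pairs $(k,\ell)$ contribute to the double sum \eqref{stoch}, so the identity reduces to the verification of a couple of elementary algebraic identities.

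First I would dispose of the diagonal case $i=j$. The only nonzero entry of the form $R_z(i,i;k,\ell)$ is $R_z(i,i;i,i)=1$ from \eqref{R-weights-a}, so the sum over $(k,\ell)$ is trivially $1$.

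Next, for $i<j$, the only surviving entries are $R_z(i,j;i,j)$ and $R_z(i,j;j,i)$, namely
\begin{align*}
\sum_{k,\ell} R_z(i,j;k,\ell)
&= R_z(i,j;i,j) + R_z(i,j;j,i)
= \frac{1-z}{1-qz} + \frac{(1-q)z}{1-qz}
= \frac{1-qz}{1-qz}=1.
\end{align*}
For $i>j$, the only surviving entries are $R_z(i,j;i,j)$ and $R_z(i,j;j,i)$, which now fall into the $R_z(j',i';j',i')$ and $R_z(j',i';i',j')$ categories of \eqref{R-weights-bc} with $(i',j')=(j,i)$; thus
\begin{align*}
\sum_{k,\ell} R_z(i,j;k,\ell)
&= \frac{q(1-z)}{1-qz} + \frac{1-q}{1-qz}
= \frac{q-qz+1-q}{1-qz}
=1.
\end{align*}

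This exhausts all cases. There is no real obstacle; the only thing to be careful about is the bookkeeping in the asymmetric cases $i<j$ versus $i>j$, since the formulas \eqref{R-weights-bc} are written under the convention $i<j$ and the asymmetry between $R_z(j,i;j,i)$ and $R_z(i,j;i,j)$ matters. The key observation justifying the entire statement is that in each of the three cases above, the two nonzero weights add to a numerator of $1-qz$, exactly cancelling the common denominator.
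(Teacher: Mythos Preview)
Your proof is correct. The paper does not actually supply a proof of this proposition; it is stated without argument, presumably because the direct case check you carry out is immediate from the explicit weights \eqref{R-weights-a}--\eqref{R-weights-bc}. Your case analysis is exactly the natural verification one would expect.
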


We shall denote the entries of the $R$-matrix pictorially using vertices. A vertex is the intersection of an oriented horizontal and vertical line, with a state variable $i \in \{0,1,\dots,n\}$ assigned to each of the connected horizontal and vertical line segments. The $R$-matrix entries are identified with such vertices as shown below: 
\begin{align}
\label{R-vert}
R_z(i,j; k,\ell)
=
\tikz{0.7}{
\draw[densely dotted] (0.5,0) arc (0:90:0.5);
\draw[lgray,line width=1.5pt,->] (-1,0) -- (1,0);
\draw[lgray,line width=1.5pt,->] (0,-1) -- (0,1);
\node[left] at (-1,0) {\tiny $j$};\node[right] at (1,0) {\tiny $\ell$};
\node[below] at (0,-1) {\tiny $i$};\node[above] at (0,1) {\tiny $k$};
},
\quad
i,j,k,\ell \in \{0,1,\dots,n\},
\end{align}
where the marked angle assumes the value of the spectral parameter\footnote{We will use both of the terms {\it spectral parameter} and {\it rapidity} in this work, but for slightly different purposes. The variable attached to a lattice line will be termed ``rapidity'' whereas the argument of an $R$-matrix, which is the ratio of the two rapidities passing through that vertex, will be termed ``spectral parameter''.}, \ie\ it is equal to $z$. One can interpret the above figure as the propagation of coloured lattice paths through a vertex: each edge label $i \geq 1$ represents a coloured path superimposed over that edge, while the case $i=0$ indicates that no path is present. The {\it incoming} paths are those situated at the left and bottom edges of the vertex; those at the right and top are called {\it outgoing}. The weight of the vertex, $R_z(i,j; k,\ell)$, vanishes identically unless the total flux of colours through the vertex is preserved, \ie\ unless the ensemble of incoming colours is the same as the ensemble of outgoing colours. This gives rise to five categories of vertices:
\begin{align}
\label{fund-vert}
\begin{tabular}{|c|c|c|}
\hline
\quad
\tikz{0.6}{
	\draw[lgray,line width=1.5pt,->] (-1,0) -- (1,0);
	\draw[lgray,line width=1.5pt,->] (0,-1) -- (0,1);
	\node[left] at (-1,0) {\tiny $i$};\node[right] at (1,0) {\tiny $i$};
	\node[below] at (0,-1) {\tiny $i$};\node[above] at (0,1) {\tiny $i$};
}
\quad
&
\quad
\tikz{0.6}{
	\draw[lgray,line width=1.5pt,->] (-1,0) -- (1,0);
	\draw[lgray,line width=1.5pt,->] (0,-1) -- (0,1);
	\node[left] at (-1,0) {\tiny $i$};\node[right] at (1,0) {\tiny $i$};
	\node[below] at (0,-1) {\tiny $j$};\node[above] at (0,1) {\tiny $j$};
}
\quad
&
\quad
\tikz{0.6}{
	\draw[lgray,line width=1.5pt,->] (-1,0) -- (1,0);
	\draw[lgray,line width=1.5pt,->] (0,-1) -- (0,1);
	\node[left] at (-1,0) {\tiny $i$};\node[right] at (1,0) {\tiny $j$};
	\node[below] at (0,-1) {\tiny $j$};\node[above] at (0,1) {\tiny $i$};
}
\quad
\\[1.3cm]
\quad
$1$
\quad
& 
\quad
$\dfrac{q(1-z)}{1-qz}$
\quad
& 
\quad
$\dfrac{1-q}{1-qz}$
\quad
\\[0.7cm]
\hline
&
\quad
\tikz{0.6}{
	\draw[lgray,line width=1.5pt,->] (-1,0) -- (1,0);
	\draw[lgray,line width=1.5pt,->] (0,-1) -- (0,1);
	\node[left] at (-1,0) {\tiny $j$};\node[right] at (1,0) {\tiny $j$};
	\node[below] at (0,-1) {\tiny $i$};\node[above] at (0,1) {\tiny $i$};
}
\quad
&
\quad
\tikz{0.6}{
	\draw[lgray,line width=1.5pt,->] (-1,0) -- (1,0);
	\draw[lgray,line width=1.5pt,->] (0,-1) -- (0,1);
	\node[left] at (-1,0) {\tiny $j$};\node[right] at (1,0) {\tiny $i$};
	\node[below] at (0,-1) {\tiny $i$};\node[above] at (0,1) {\tiny $j$};
}
\quad
\\[1.3cm]
& 
\quad
$\dfrac{1-z}{1-qz}$
\quad
&
\quad
$\dfrac{(1-q)z}{1-qz}$
\quad 
\\[0.7cm]
\hline
\end{tabular}
\end{align}
where we assume that $0 \leq i < j \leq n$. These are the pictorial representations of the five types of weights in \eqref{R-weights-a}, \eqref{R-weights-bc}.

Having set up these vertex notations, the relations of Propositions \ref{prop:YB} and \ref{prop:Rstoch} then have simple graphical interpretations. The Yang--Baxter equation \eqref{YB} becomes
\begin{align*}
\sum_{0 \leq k_1,k_2,k_3 \leq n}
\tikz{0.8}{
\draw[lgray,line width=1.5pt,->]
(-2,1) node[above,scale=0.6] {\color{black} $i_1$} -- (-1,0) node[below,scale=0.6] {\color{black} $k_1$} -- (1,0) node[right,scale=0.6] {\color{black} $j_1$};
\draw[lgray,line width=1.5pt,->] 
(-2,0) node[below,scale=0.6] {\color{black} $i_2$} -- (-1,1) node[above,scale=0.6] {\color{black} $k_2$} -- (1,1) node[right,scale=0.6] {\color{black} $j_2$};
\draw[lgray,line width=1.5pt,->] 
(0,-1) node[below,scale=0.6] {\color{black} $i_3$} -- (0,0.5) node[scale=0.6] {\color{black} $k_3$} -- (0,2) node[above,scale=0.6] {\color{black} $j_3$};
\node[left] at (-2.2,1) {$x \rightarrow$};
\node[left] at (-2.2,0) {$y \rightarrow$};
\node[below] at (0,-1.4) {$\uparrow$};
\node[below] at (0,-1.9) {$z$};
}
\quad
=
\quad
\sum_{0 \leq k_1,k_2,k_3 \leq n}
\tikz{0.8}{
\draw[lgray,line width=1.5pt,->] 
(-1,1) node[left,scale=0.6] {\color{black} $i_1$} -- (1,1) node[above,scale=0.6] {\color{black} $k_1$} -- (2,0) node[below,scale=0.6] {\color{black} $j_1$};
\draw[lgray,line width=1.5pt,->] 
(-1,0) node[left,scale=0.6] {\color{black} $i_2$} -- (1,0) node[below,scale=0.6] {\color{black} $k_2$} -- (2,1) node[above,scale=0.6] {\color{black} $j_2$};
\draw[lgray,line width=1.5pt,->] 
(0,-1) node[below,scale=0.6] {\color{black} $i_3$} -- (0,0.5) node[scale=0.6] {\color{black} $k_3$} -- (0,2) node[above,scale=0.6] {\color{black} $j_3$};
\node[left] at (-1.5,1) {$x \rightarrow$};
\node[left] at (-1.5,0) {$y \rightarrow$};
\node[below] at (0,-1.4) {$\uparrow$};
\node[below] at (0,-1.9) {$z$};
}
\end{align*}
for all fixed indices $i_1,i_2,i_3,j_1,j_2,j_3 \in \{0,1,\dots,n\}$; the unitarity relation \eqref{unitarity} becomes
\begin{align*}
\sum_{0 \leq k_1,k_2 \leq n}
\tikz{0.7}{
\draw[lgray,line width=1.5pt,->,rounded corners] (-1,0) node[left,scale=0.6] {\color{black} $i_1$} -- (1,0) node[below,scale=0.6] {\color{black} $k_1$} -- (1,2) node[above,scale=0.6] {\color{black} $j_1$};
\draw[lgray,line width=1.5pt,->,rounded corners] (0,-1) node[below,scale=0.6] {\color{black} $i_2$} -- (0,1) node[above,scale=0.6] {\color{black} $k_2$} -- (2,1) node[right,scale=0.6] {\color{black} $j_2$};
\node[left] at (-1.5,0) {$x \rightarrow$};
\node[below] at (0,-1.4) {$\uparrow$};
\node[below] at (0,-2) {$y$};
}
=
1,
\end{align*}
for all fixed indices $i_1,i_2,j_1,j_2 \in \{0,1,\dots,n\}$; and the stochasticity relation \eqref{stoch} reads
\begin{align*}
\sum_{k=0}^{n}
\sum_{\ell=0}^{n}
\tikz{0.7}{
\draw[lgray,line width=1.5pt,->] (-1,0) -- (1,0);
\draw[lgray,line width=1.5pt,->] (0,-1) -- (0,1);
\node[left] at (-1,0) {\tiny $j$};\node[right] at (1,0) {\tiny $\ell$};
\node[below] at (0,-1) {\tiny $i$};\node[above] at (0,1) {\tiny $k$};
}
=
1,
\end{align*}
for all fixed indices $i,j \in \{0,1,\dots,n\}$. We will use these graphical identities frequently throughout the text. 

\section{Higher-spin $L$ and $M$-matrices}
\label{ssec:models}

We now generalize the lattice model discussed in Section \ref{ssec:fundamental}, by lifting it to a higher-spin setting. In this more general model, horizontal edges continue to have a finite-dimensional state-space: the edge is either empty (state $0$), or it is occupied by a path of colour $1 \leq i \leq n$, and accordingly horizontal edges of the lattice continue to be assigned a single index $0 \leq i \leq n$, which encodes their state. 

On the other hand, vertical edges can now be occupied by any number of paths, of any colour. Vertical edges are thus assigned vector labels $\I = (I_1,\dots,I_n)$, where $I_i \geq 0$ denotes the number of paths of colour $i$ situated at that edge. A completely empty vertical edge is encoded by the vector $\I = (0,\dots,0)=\mathbf{0}$. A generic vertex in this model is shown below: 
\begin{align}
\label{generic-L}
L_{x,q,s}(\I,j; \K,\ell)
\equiv
L_x(\I,j; \K,\ell)
\index{L1@$L_x(\I,j; \K,\ell)$; vertex weights}
=
\tikz{0.7}{
\draw[lgray,line width=1.5pt,->] (-1,0) -- (1,0);
\draw[lgray,line width=4pt,->] (0,-1) -- (0,1);
\node[left] at (-1,0) {\tiny $j$};\node[right] at (1,0) {\tiny $\ell$};
\node[below] at (0,-1) {\tiny $\I$};\node[above] at (0,1) {\tiny $\K$};
},
\quad
j,\ell \in \{0,1,\dots,n\},
\quad
\I,\K \in \mathbb{N}^n,
\end{align}
where \index{N@$\mathbb{N}$; natural numbers, including $0$} $\mathbb{N} = \{0,1,2,\dots\}$.\footnote{To avoid having to specify nonnegative integers by the set $\mathbb{Z}_{\geq 0}$, we define the natural numbers $\mathbb{N}$ to include $0$.} The vertex in \eqref{generic-L} is drawn using a thick vertical line; this is to distinguish it from the ordinary $R$-matrix vertex \eqref{R-vert} and to indicate that many lattice paths can now occupy vertical edges.

Before stating the vertex weights explicitly, we develop some useful vector notation. For all $1\leq i \leq n$, let $\bm{e}_i$ denote the $i$-th Euclidean unit vector. For any vector $\I = (I_1,\dots,I_n) \in \mathbb{N}^n$ and indices $i,j \in \{1,\dots,n\}$ we define 
\index{I@$\I^{+}_{i}$}
\index{I@$\I^{-}_{i}$}
\index{I@$\I^{+-}_{ij}$}
\index{I@$\lvert\I\rvert$}
\index{I@$\Is{i}{j}$}
\begin{align*}
\I^{+}_{i}
=
\I + \bm{e}_i,
\quad
\I^{-}_{i}
=
\I - \bm{e}_i,
\quad
\I^{+-}_{ij}
=
\I + \bm{e}_i - \bm{e}_j,
\quad
|\I|
=
\sum_{k=1}^{n} I_k,
\quad
\Is{i}{j}
=
\sum_{k=i}^{j} I_k,
\end{align*}
where in the final case it is assumed that $i \leq j$; by agreement, we choose $\Is{i}{j} = 0$ for $i>j$. In terms of these notations, we tabulate the vertex weights for our higher-spin lattice model below:
\begin{align}
\label{s-weights}
\begin{tabular}{|c|c|c|}
\hline
\quad
\tikz{0.7}{
\draw[lgray,line width=1.5pt,->] (-1,0) -- (1,0);
\draw[lgray,line width=4pt,->] (0,-1) -- (0,1);
\node[left] at (-1,0) {\tiny $0$};\node[right] at (1,0) {\tiny $0$};
\node[below] at (0,-1) {\tiny $\I$};\node[above] at (0,1) {\tiny $\I$};
}
\quad
&
\quad
\tikz{0.7}{
\draw[lgray,line width=1.5pt,->] (-1,0) -- (1,0);
\draw[lgray,line width=4pt,->] (0,-1) -- (0,1);
\node[left] at (-1,0) {\tiny $i$};\node[right] at (1,0) {\tiny $i$};
\node[below] at (0,-1) {\tiny $\I$};\node[above] at (0,1) {\tiny $\I$};
}
\quad
&
\quad
\tikz{0.7}{
\draw[lgray,line width=1.5pt,->] (-1,0) -- (1,0);
\draw[lgray,line width=4pt,->] (0,-1) -- (0,1);
\node[left] at (-1,0) {\tiny $0$};\node[right] at (1,0) {\tiny $i$};
\node[below] at (0,-1) {\tiny $\I$};\node[above] at (0,1) {\tiny $\I^{-}_i$};
}
\quad
\\[1.3cm]
\quad
$\dfrac{1-s x q^{\Is{1}{n}}}{1-sx}$
\quad
& 
\quad
$\dfrac{(x-sq^{I_i}) q^{\Is{i+1}{n}}}{1-sx}$
\quad
& 
\quad
$\dfrac{x(1-q^{I_i}) q^{\Is{i+1}{n}}}{1-sx}$
\quad
\\[0.7cm]
\hline
\quad
\tikz{0.7}{
\draw[lgray,line width=1.5pt,->] (-1,0) -- (1,0);
\draw[lgray,line width=4pt,->] (0,-1) -- (0,1);
\node[left] at (-1,0) {\tiny $i$};\node[right] at (1,0) {\tiny $0$};
\node[below] at (0,-1) {\tiny $\I$};\node[above] at (0,1) {\tiny $\I^{+}_i$};
}
\quad
&
\quad
\tikz{0.7}{
\draw[lgray,line width=1.5pt,->] (-1,0) -- (1,0);
\draw[lgray,line width=4pt,->] (0,-1) -- (0,1);
\node[left] at (-1,0) {\tiny $i$};\node[right] at (1,0) {\tiny $j$};
\node[below] at (0,-1) {\tiny $\I$};\node[above] at (0,1) 
{\tiny $\I^{+-}_{ij}$};
}
\quad
&
\quad
\tikz{0.7}{
\draw[lgray,line width=1.5pt,->] (-1,0) -- (1,0);
\draw[lgray,line width=4pt,->] (0,-1) -- (0,1);
\node[left] at (-1,0) {\tiny $j$};\node[right] at (1,0) {\tiny $i$};
\node[below] at (0,-1) {\tiny $\I$};\node[above] at (0,1) {\tiny $\I^{+-}_{ji}$};
}
\quad
\\[1.3cm] 
\quad
$\dfrac{1-s^2 q^{\Is{1}{n}}}{1-sx}$
\quad
& 
\quad
$\dfrac{x(1-q^{I_j}) q^{\Is{j+1}{n}}}{1-sx}$
\quad
&
\quad
$\dfrac{s(1-q^{I_i})q^{\Is{i+1}{n}}}{1-sx}$
\quad
\\[0.7cm]
\hline
\end{tabular} 
\end{align}
where the indices $i,j$ take any values $i,j \in \{1,\dots,n\}$, and where it is assumed that $i<j$.


\begin{rmk}
At $n=1$, the vector state $\I$ collapses to a single integer state, and the model reduces to the higher-spin vertex model studied in \cite{Borodin,BorodinP1,BorodinP2}. Namely, one recovers a model with weights
\begin{align}
\label{rank1-weights}
\begin{tabular}{|c|c|c|c|}
\hline
\quad
\tikz{0.7}{
\draw[lgray,line width=1.5pt,->] (-1,0) -- (1,0);
\draw[lgray,line width=4pt,->] (0,-1) -- (0,1);
\node[left] at (-1,0) {\tiny $0$};\node[right] at (1,0) {\tiny $0$};
\node[below] at (0,-1) {\tiny $I$};\node[above] at (0,1) {\tiny $I$};
\node at (0,0) {$\bullet$};
}
\quad
&
\quad
\tikz{0.7}{
\draw[lgray,line width=1.5pt,->] (-1,0) -- (1,0);
\draw[lgray,line width=4pt,->] (0,-1) -- (0,1);
\node[left] at (-1,0) {\tiny $1$};\node[right] at (1,0) {\tiny $1$};
\node[below] at (0,-1) {\tiny $I$};\node[above] at (0,1) {\tiny $I$};
\node at (0,0) {$\bullet$};
}
\quad
&
\quad
\tikz{0.7}{
\draw[lgray,line width=1.5pt,->] (-1,0) -- (1,0);
\draw[lgray,line width=4pt,->] (0,-1) -- (0,1);
\node[left] at (-1,0) {\tiny $0$};\node[right] at (1,0) {\tiny $1$};
\node[below] at (0,-1) {\tiny $I$};\node[above] at (0,1) {\tiny $I-1$};
\node at (0,0) {$\bullet$};
}
\quad
&
\quad
\tikz{0.7}{
\draw[lgray,line width=1.5pt,->] (-1,0) -- (1,0);
\draw[lgray,line width=4pt,->] (0,-1) -- (0,1);
\node[left] at (-1,0) {\tiny $1$};\node[right] at (1,0) {\tiny $0$};
\node[below] at (0,-1) {\tiny $I$};\node[above] at (0,1) {\tiny $I+1$};
\node at (0,0) {$\bullet$};
}
\\[1.3cm]
\quad
$\dfrac{1-s x q^{I}}{1-sx}$
\quad
& 
\quad
$\dfrac{(x-sq^{I})}{1-sx}$
\quad
& 
\quad
$\dfrac{x(1-q^{I})}{1-sx}$
\quad
&
\quad
$\dfrac{1-s^2 q^{I}}{1-sx}$
\quad
\\[0.7cm]
\hline
\end{tabular} 
\end{align}
where $I \in \mathbb{N}$ (we no longer employ boldface in the $n=1$ case, since all edge states consist of a single component). To emphasize the difference between the general-rank \eqref{s-weights} and rank-1 vertices \eqref{rank1-weights}, where there is potential for confusion, we include a dot when drawing the latter. We shall also write a generic vertex in the rank-1 model \eqref{rank1-weights} as follows:
\begin{align}
\label{Lbull}
L^{\bullet}_x(I,j;K,\ell)
\index{L3@$L^{\bullet}_x(I,j;K,\ell)$; colour-blind weights}
=
\tikz{0.7}{
\draw[lgray,line width=1.5pt,->] (-1,0) -- (1,0);
\draw[lgray,line width=4pt,->] (0,-1) -- (0,1);
\node[left] at (-1,0) {\tiny $j$};\node[right] at (1,0) {\tiny $\ell$};
\node[below] at (0,-1) {\tiny $I$};\node[above] at (0,1) {\tiny $K$};
\node at (0,0) {$\bullet$};
},
\quad
j,\ell \in \{0,1\},
\quad
I,K \in \mathbb{N}.
\end{align}
The Boltzmann weights \eqref{rank1-weights} are precisely the same as the ``conjugated'' weights $w_u^{\sf c}$ used in \cite[Section 2, Figure 5]{BorodinP2}.
\end{rmk}

\begin{rmk}
For general $n$, the model \eqref{s-weights} was considered in \cite{KunibaMMO,BosnjakM} and \cite{GarbaliGW}, where the vertex weights were obtained under a certain tensor product of representations of the universal $U_q(\wh{\mathfrak{sl}_{n+1}})$ $R$-matrix. It also appeared in \cite{Kuan}; in Appendix \ref{app:kuan-weights}, we document the precise link between the notation used in \cite[Section 3.5]{Kuan} and our own notation.  
\end{rmk}

In our constructions we will also make use of a second set of vertex weights, which we call {\it dual vertices.} A generic dual vertex has the form
\begin{align}
\label{generic-M}
M_{x,q,s}(\I,j;\K,\ell)
\equiv
M_x(\I,j;\K,\ell)
\index{M1@$M_x(\I,j;\K,\ell)$; dual vertex weights}
=
\tikz{0.7}{
\draw[lgray,line width=1.5pt,<-] (-1,0) -- (1,0);
\draw[lgray,line width=4pt,->] (0,-1) -- (0,1);
\node[left] at (-1,0) {\tiny $\ell$};\node[right] at (1,0) {\tiny $j$};
\node[below] at (0,-1) {\tiny $\I$};\node[above] at (0,1) {\tiny $\K$};
},
\quad
j,\ell \in \{0,1,\dots,n\},
\quad
\I,\K \in \mathbb{N}^n,
\end{align}
where the horizontal line is now oriented from right to left. The dual weights are tabulated explicitly below:
\begin{align}
\label{dual-s-weights}
\begin{tabular}{|c|c|c|}
\hline
\quad
\tikz{0.7}{
\draw[lgray,line width=1.5pt,<-] (-1,0) -- (1,0);
\draw[lgray,line width=4pt,->] (0,-1) -- (0,1);
\node[left] at (-1,0) {\tiny $0$};\node[right] at (1,0) {\tiny $0$};
\node[below] at (0,-1) {\tiny $\I$};\node[above] at (0,1) {\tiny $\I$};
}
\quad
&
\quad
\tikz{0.7}{
\draw[lgray,line width=1.5pt,<-] (-1,0) -- (1,0);
\draw[lgray,line width=4pt,->] (0,-1) -- (0,1);
\node[left] at (-1,0) {\tiny $i$};\node[right] at (1,0) {\tiny $i$};
\node[below] at (0,-1) {\tiny $\I$};\node[above] at (0,1) {\tiny $\I$};
}
\quad
&
\quad
\tikz{0.7}{
\draw[lgray,line width=1.5pt,<-] (-1,0) -- (1,0);
\draw[lgray,line width=4pt,->] (0,-1) -- (0,1);
\node[left] at (-1,0) {\tiny $i$};\node[right] at (1,0) {\tiny $0$};
\node[below] at (0,-1) {\tiny $\I$};\node[above] at (0,1) {\tiny $\I^{-}_i$};
}
\quad
\\[1.3cm]
\quad
$\dfrac{q^{-\Is{1}{n}}-s x}{1-s x}$
\quad
& 
\quad
$\dfrac{(xq^{-I_i}-s) q^{-\Is{i+1}{n}}}{1-s x}$
\quad
& 
\quad
$\dfrac{(1-q^{-I_i}) q^{-\Is{i+1}{n}}}{1-s x}$
\quad
\\[0.7cm]
\hline
\quad
\tikz{0.7}{
\draw[lgray,line width=1.5pt,<-] (-1,0) -- (1,0);
\draw[lgray,line width=4pt,->] (0,-1) -- (0,1);
\node[left] at (-1,0) {\tiny $0$};\node[right] at (1,0) {\tiny $i$};
\node[below] at (0,-1) {\tiny $\I$};\node[above] at (0,1) {\tiny $\I^{+}_i$};
}
\quad
&
\quad
\tikz{0.7}{
\draw[lgray,line width=1.5pt,<-] (-1,0) -- (1,0);
\draw[lgray,line width=4pt,->] (0,-1) -- (0,1);
\node[left] at (-1,0) {\tiny $j$};\node[right] at (1,0) {\tiny $i$};
\node[below] at (0,-1) {\tiny $\I$};\node[above] at (0,1) {\tiny $\I^{+-}_{ij}$};
}
\quad
&
\quad
\tikz{0.7}{
\draw[lgray,line width=1.5pt,<-] (-1,0) -- (1,0);
\draw[lgray,line width=4pt,->] (0,-1) -- (0,1);
\node[left] at (-1,0) {\tiny $i$};\node[right] at (1,0) {\tiny $j$};
\node[below] at (0,-1) {\tiny $\I$};\node[above] at (0,1) 
{\tiny $\I^{+-}_{ji}$};
}
\quad
\\[1.3cm]
\quad
$\dfrac{x(s^2-q^{-\Is{1}{n}})}{1-s x}$
\quad
& 
\quad
$\dfrac{s(q^{-I_j}-1) q^{-\Is{j+1}{n}}}{1-s x}$
\quad
&
\quad
$\dfrac{x(q^{-I_i}-1)q^{-\Is{i+1}{n}}}{1-s x}$
\quad
\\[0.7cm]
\hline
\end{tabular} 
\end{align}
In the table above we again assume that $i,j \in \{1,\dots,n\}$ with $i<j$. In the case $n=1$, the weights \eqref{dual-s-weights} reduce to
\begin{align}
\label{rank1-dual-weights}
\begin{tabular}{|c|c|c|c|}
\hline
\quad
\tikz{0.7}{
\draw[lgray,line width=1.5pt,<-] (-1,0) -- (1,0);
\draw[lgray,line width=4pt,->] (0,-1) -- (0,1);
\node[left] at (-1,0) {\tiny $0$};\node[right] at (1,0) {\tiny $0$};
\node[below] at (0,-1) {\tiny $I$};\node[above] at (0,1) {\tiny $I$};
\node at (0,0) {$\bullet$};
}
\quad
&
\quad
\tikz{0.7}{
\draw[lgray,line width=1.5pt,<-] (-1,0) -- (1,0);
\draw[lgray,line width=4pt,->] (0,-1) -- (0,1);
\node[left] at (-1,0) {\tiny $1$};\node[right] at (1,0) {\tiny $1$};
\node[below] at (0,-1) {\tiny $I$};\node[above] at (0,1) {\tiny $I$};
\node at (0,0) {$\bullet$};
}
\quad
&
\quad
\tikz{0.7}{
\draw[lgray,line width=1.5pt,<-] (-1,0) -- (1,0);
\draw[lgray,line width=4pt,->] (0,-1) -- (0,1);
\node[left] at (-1,0) {\tiny $1$};\node[right] at (1,0) {\tiny $0$};
\node[below] at (0,-1) {\tiny $I$};\node[above] at (0,1) {\tiny $I-1$};
\node at (0,0) {$\bullet$};
}
\quad
&
\quad
\tikz{0.7}{
\draw[lgray,line width=1.5pt,<-] (-1,0) -- (1,0);
\draw[lgray,line width=4pt,->] (0,-1) -- (0,1);
\node[left] at (-1,0) {\tiny $0$};\node[right] at (1,0) {\tiny $1$};
\node[below] at (0,-1) {\tiny $I$};\node[above] at (0,1) {\tiny $I+1$};
\node at (0,0) {$\bullet$};
}
\\[1.3cm]
\quad
$\dfrac{q^{-I}-s x}{1-sx}$
\quad
& 
\quad
$\dfrac{(xq^{-I}-s)}{1-sx}$
\quad
& 
\quad
$\dfrac{1-q^{-I}}{1-sx}$
\quad
&
\quad
$\dfrac{x(s^2 - q^{-I})}{1-sx}$
\quad
\\[0.7cm]
\hline
\end{tabular} 
\end{align}
and in analogy with \eqref{Lbull}, we write a generic rank-1 dual vertex as
\begin{align}
M^{\bullet}_x(I,j;K,\ell)
\index{M3@$M^{\bullet}_x(I,j;K,\ell)$; dual colour-blind weights}
=
\tikz{0.7}{
\draw[lgray,line width=1.5pt,<-] (-1,0) -- (1,0);
\draw[lgray,line width=4pt,->] (0,-1) -- (0,1);
\node[left] at (-1,0) {\tiny $\ell$};\node[right] at (1,0) {\tiny $j$};
\node[below] at (0,-1) {\tiny $I$};\node[above] at (0,1) {\tiny $K$};
\node at (0,0) {$\bullet$};
},
\quad
j,\ell \in \{0,1\},
\quad
I,K \in \mathbb{N}.
\end{align}

The dual weights \eqref{dual-s-weights} and those listed in \eqref{s-weights} are not independent of each other; the two sets of vertices are related via certain symmetries which are given in Proposition \ref{prop:sym}. These symmetries have a natural interpretation: \eqref{sym1} describes reflection of the vertex \eqref{generic-M} about its vertical axis, while \eqref{sym2} is related to reflection about the horizontal axis (and reversal of the orientation arrows on both lines).
\begin{prop}
\label{prop:sym}
The vertex weights \eqref{s-weights} and \eqref{dual-s-weights} are related via the following transformations:
\begin{align}
\label{sym1}
M_{\b{x},\b{q},\b{s}}(\I,j;\K,\ell)
&=
(-s)^{\bm{1}_{\ell \geq 1}-\bm{1}_{j \geq 1}}
\cdot
L_{x,q,s}(\I,j;\K,\ell),
\\
\label{sym2}
M_{\b{x},\b{q},\b{s}}(\tilde{\I},\tilde{\jmath};\tilde{\K},\tilde{\ell})
&=
\left(-sxq^{\Is{1}{\ell-1}}\right)^{\bm{1}_{\ell \geq 1}}
\left(-sxq^{\Ks{j+1}{n}}\right)^{-\bm{1}_{j \geq 1}}
\frac{(s^2;q)_{|\K|}}{(s^2;q)_{|\I|}}
\prod_{i=1}^{n}
\frac{(q;q)_{I_i}}{(q;q)_{K_i}}
\cdot
L_{x,q,s}(\K,\ell;\I,j),
\end{align}
where we use bars as a shorthand for reciprocation of variables; namely, $\b{x} := x^{-1}$, $\b{q}:= q^{-1}$ and $\b{s} := s^{-1}$. We have also defined the conjugate $\tilde{\ell}$ of an index $\ell \in \{0,1,\dots,n\}$ by
\begin{align}
\label{conj1}
\tilde{\ell}
:=
\left\{
\begin{array}{ll}
0, & \quad \ell = 0,
\\
n-\ell+1, & \quad \ell \geq 1,
\end{array}
\right.
\end{align}
and the conjugate of the vector $\I = (I_1,\dots,I_n)$ by
\begin{align}
\label{conj2}
\index{I@$\tilde{\I}$}
\tilde{\I} = (\tilde{I}_1,\dots,\tilde{I}_n) := (I_n,\dots,I_1),
\end{align}
which is simply the vector $\I$ read in reverse.
\end{prop}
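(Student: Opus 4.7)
Both identities in Proposition~\ref{prop:sym} are purely algebraic equalities between explicit rational functions. My plan is therefore a direct case-by-case verification against the tables \eqref{s-weights} and \eqref{dual-s-weights}. Each table has exactly six admissible configurations, characterized by the pair of horizontal colours $(j,\ell)$ together with the way the vertical state transitions from $\I$ to $\K$ (which is forced by colour conservation). So there is no combinatorial content to address, only six algebraic identities to check per symmetry.

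For \eqref{sym1}, I would take each entry of \eqref{dual-s-weights}, substitute $(x,q,s)\mapsto(x^{-1},q^{-1},s^{-1})$, and multiply numerator and denominator by a suitable power of $sx$ to clear the reciprocated parameters. In every case the resulting expression differs from the corresponding entry of \eqref{s-weights} by a power of $-s$. Comparing the six cases, one finds that the exponent is uniformly $\bm{1}_{\ell\ge 1}-\bm{1}_{j\ge 1}$: the diagonal entries $j=\ell$ produce exponent $0$; the ``colour-exits-right'' entry $(j,\ell)=(0,i)$ produces $+1$; the ``colour-enters-from-left'' entry $(j,\ell)=(i,0)$ produces $-1$; and the two swap entries with $i<j$ produce $0$.

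For \eqref{sym2}, the conjugations $(\I,j)\mapsto(\tilde\I,\tilde\jmath)$ simultaneously reverse the coordinates of the vertical vector and reflect the colour labels by $i\mapsto n-i+1$, while the right-hand side $L_{x,q,s}(\K,\ell;\I,j)$ refers to the $L$-vertex with top and bottom exchanged. The geometric content is that turning the $M$-vertex upside down and reversing the order of colours produces an $L$-vertex, up to a normalization. I would again proceed case by case, using elementary identities such as $|\tilde\I|=|\I|$ together with the fact that the partial sum $\Is{i+1}{n}$ evaluated at $\tilde\I$ equals the partial sum $\Is{1}{n-i}$ evaluated at $\I$, in order to translate the $q$-exponents of the conjugated $M$-weight to those of the $L$-weight at the swapped configuration. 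The $(j,\ell)$-independent Pochhammer ratio $(s^2;q)_{|\K|}/(s^2;q)_{|\I|}\cdot\prod_i(q;q)_{I_i}/(q;q)_{K_i}$ reconciles the overall normalization difference (and equals $1$ on the diagonal entries where $\K=\I$), while the $(j,\ell)$-dependent prefactors $(-sxq^{\Is{1}{\ell-1}})^{\bm{1}_{\ell\ge 1}}$ and $(-sxq^{\Ks{j+1}{n}})^{-\bm{1}_{j\ge 1}}$ absorb the residual $q$-power mismatches that arise at configurations with nontrivial horizontal colour.

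The main obstacle is the bookkeeping for \eqref{sym2}: tracking how partial sums $\Is{i+1}{n}$ in the $q$-exponents of the $M$-entries transform under simultaneous reversal of the vector and the colour labels, and verifying that the result matches the $L$-entry at the swapped configuration up to the stated prefactors. This is not hard in any single case, but consistency across the six cases requires care because the prefactor must depend only on the boundary data $(j,\ell,\I,\K)$ and not on internal structure. Once the partial-sum conversions are laid out, each of the six cases reduces to a short algebraic rearrangement of $q$-, $s$-, and $x$-powers.
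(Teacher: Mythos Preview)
Your proposal is correct and matches the paper's own approach: the paper verifies \eqref{sym1} by reciprocating all variables in the dual-weight table and comparing entry by entry against \eqref{s-weights}, and it dispatches \eqref{sym2} by a case-by-case check whose details are omitted. Your outline of how the partial sums and Pochhammer factors behave under the conjugation is a reasonable expansion of what the paper leaves implicit.
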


\begin{proof}
The first symmetry \eqref{sym1} is readily apparent by reciprocating all variables present in \eqref{dual-s-weights}, reflecting each vertex about its central vertical line, then comparing against the entries of the table \eqref{s-weights}. The second symmetry \eqref{sym2} requires a case-by-case check, the details of which we omit.
\end{proof}

\section{Intertwining equations}

We will make use of three extensions of the Yang--Baxter equation \eqref{YB} to the higher-spin setting. There is one equation for the exchange of two $L$-matrices under the action of $R$, another for the exchange of two $M$-matrices, and finally an equation of mixed-type which allows the exchange of $L$ and $M$. Collectively, we refer to these as the {\it intertwining equations.} They are the foundation for the commutation relations between the operators used to build the rows of our two-dimensional lattice, and are responsible in turn for almost all of the properties enjoyed by the rational functions $f_{\mu}$ and $g_{\nu}$ (to be introduced later).

\begin{prop}
\label{prop:3RLL}
For any fixed integers $i_1,i_2,i_3,j_1,j_2,j_3 \in \{0,1,\dots,n\}$ and vectors $\I,\J \in \mathbb{N}^n$, the vertex weights \eqref{R-weights-a}, \eqref{R-weights-bc}, \eqref{s-weights} and \eqref{dual-s-weights} satisfy the relations
\begin{multline}
\label{RLLa}
\sum_{0 \leq k_1,k_2 \leq n}
\
\sum_{\K \in \mathbb{N}^n}
R_{y/x}(i_2,i_1;k_2,k_1)
L_x(\I,k_1;\K,j_1)
L_y(\K,k_2;\J,j_2)
=
\\
\sum_{0 \leq k_1,k_2 \leq n}
\
\sum_{\K \in \mathbb{N}^n}
L_y(\I,i_2;\K,k_2)
L_x(\K,i_1;\J,k_1)
R_{y/x}(k_2,k_1;j_2,j_1),
\end{multline}

\begin{multline}
\label{RLLb}
\sum_{0 \leq k_1,k_3 \leq n}
\
\sum_{\K \in \mathbb{N}^n}
L_x(\I,i_1;\K,k_1)
R_{\b{q}\b{x}\b{z}}(i_3,k_1;k_3,j_1)
M_z(\K,k_3;\J,j_3)
=
\\
\sum_{0 \leq k_1,k_3 \leq n}
\
\sum_{\K \in \mathbb{N}^n}
M_z(\I,i_3;\K,k_3)
R_{\b{q}\b{x}\b{z}}(k_3,i_1;j_3,k_1)
L_x(\K,k_1;\J,j_1),
\end{multline}

\begin{multline}
\label{RLLc}
\sum_{0 \leq k_2,k_3 \leq n}
\
\sum_{\K \in \mathbb{N}^n}
M_y(\I,i_2;\K,k_2)
M_z(\K,i_3;\J,k_3)
R_{y/z}(k_3,k_2;j_3,j_2)
=
\\
\sum_{0 \leq k_2,k_3 \leq n}
\
\sum_{\K \in \mathbb{N}^n}
R_{y/z}(i_3,i_2;k_3,k_2)
M_z(\I,k_3;\K,j_3)
M_y(\K,k_2;\J,j_2).
\end{multline}
Translating the equations \eqref{RLLa}--\eqref{RLLc} into vertex form using the identifications \eqref{R-vert}, \eqref{generic-L} and \eqref{generic-M}, they take the form
\begin{align}
\label{graph-RLLa}
\sum_{0 \leq k_1,k_2 \leq n}
\
\sum_{\K \in \mathbb{N}^n}
\tikz{0.8}{
\draw[densely dotted] (-1.25,0.25) arc (-45:45:{1/(2*sqrt(2))});
\draw[lgray,line width=1.5pt,->]
(-2,1) node[above,scale=0.6] {\color{black} $i_1$} -- (-1,0) node[below,scale=0.6] {\color{black} $k_1$} -- (1,0) node[right,scale=0.6] {\color{black} $j_1$};
\draw[lgray,line width=1.5pt,->] 
(-2,0) node[below,scale=0.6] {\color{black} $i_2$} -- (-1,1) node[above,scale=0.6] {\color{black} $k_2$} -- (1,1) node[right,scale=0.6] {\color{black} $j_2$};
\draw[lgray,line width=4pt,->] 
(0,-1) node[below,scale=0.6] {\color{black} $\I$} -- (0,0.5) node[scale=0.6] {\color{black} $\K$} -- (0,2) node[above,scale=0.6] {\color{black} $\J$};
\node[left] at (-2.2,1) {$x \rightarrow$};
\node[left] at (-2.2,0) {$y \rightarrow$};
}
\quad
=
\quad
\sum_{0 \leq k_1,k_2 \leq n}
\
\sum_{\K \in \mathbb{N}^n}
\tikz{0.8}{
\draw[densely dotted] (1.75,0.25) arc (-45:45:{1/(2*sqrt(2))});
\draw[lgray,line width=1.5pt,->] 
(-1,1) node[left,scale=0.6] {\color{black} $i_1$} -- (1,1) node[above,scale=0.6] {\color{black} $k_1$} -- (2,0) node[below,scale=0.6] {\color{black} $j_1$};
\draw[lgray,line width=1.5pt,->] 
(-1,0) node[left,scale=0.6] {\color{black} $i_2$} -- (1,0) node[below,scale=0.6] {\color{black} $k_2$} -- (2,1) node[above,scale=0.6] {\color{black} $j_2$};
\draw[lgray,line width=4pt,->] 
(0,-1) node[below,scale=0.6] {\color{black} $\I$} -- (0,0.5) node[scale=0.6] {\color{black} $\K$} -- (0,2) node[above,scale=0.6] {\color{black} $\J$};
\node[left] at (-1.5,1) {$x \rightarrow$};
\node[left] at (-1.5,0) {$y \rightarrow$};
}
\end{align}
with marked angle equal to $y/x$,
\begin{align}
\label{graph-RLLb}
\sum_{0 \leq k_1,k_3 \leq n}
\
\sum_{\K \in \mathbb{N}^n}
\tikz{0.8}{
\draw[densely dotted] (1.75,0.75) arc (45:135:{1/(2*sqrt(2))});
\draw[lgray,line width=1.5pt,<-] 
(-1,1) node[left,scale=0.6] {\color{black} $j_3$} -- (1,1) node[above,scale=0.6] {\color{black} $k_3$} -- (2,0) node[below,scale=0.6] {\color{black} $i_3$};
\draw[lgray,line width=1.5pt,->] 
(-1,0) node[left,scale=0.6] {\color{black} $i_1$} -- (1,0) node[below,scale=0.6] {\color{black} $k_1$} -- (2,1) node[above,scale=0.6] {\color{black} $j_1$};
\draw[lgray,line width=4pt,->] 
(0,-1) node[below,scale=0.6] {\color{black} $\I$} -- (0,0.5) node[scale=0.6] {\color{black} $\K$} -- (0,2) node[above,scale=0.6] {\color{black} $\J$};
\node[left] at (-1.5,1) {$z \leftarrow$};
\node[left] at (-1.5,0) {$x \rightarrow$};
}
\quad
=
\quad
\sum_{0 \leq k_1,k_3 \leq n}
\
\sum_{\K \in \mathbb{N}^n}
\tikz{0.8}{
\draw[densely dotted] (-1.25,0.75) arc (45:135:{1/(2*sqrt(2))});
\draw[lgray,line width=1.5pt,<-] 
(-2,1) node[above,scale=0.6] {\color{black} $j_3$} -- (-1,0) node[below,scale=0.6] {\color{black} $k_3$} -- (1,0) node[right,scale=0.6] {\color{black} $i_3$};
\draw[lgray,line width=1.5pt,->] 
(-2,0) node[below,scale=0.6] {\color{black} $i_1$} -- (-1,1) node[above,scale=0.6] {\color{black} $k_1$} -- (1,1) node[right,scale=0.6] {\color{black} $j_1$};
\draw[lgray,line width=4pt,->] 
(0,-1) node[below,scale=0.6] {\color{black} $\I$} -- (0,0.5) node[scale=0.6] {\color{black} $\K$} -- (0,2) node[above,scale=0.6] {\color{black} $\J$};
\node[left] at (-2.2,1) {$z \leftarrow$};
\node[left] at (-2.2,0) {$x \rightarrow$};
}
\end{align}
with marked angle equal to $\b{q}\b{x}\b{z}$,
\begin{align}
\label{graph-RLLc}
\sum_{0 \leq k_2,k_3 \leq n}
\
\sum_{\K \in \mathbb{N}^n}
\tikz{0.8}{
\draw[densely dotted] (-1.75,0.75) arc (135:225:{1/(2*sqrt(2))});
\draw[lgray,line width=1.5pt,<-] 
(-2,1) node[above,scale=0.6] {\color{black} $j_2$} -- (-1,0) node[below,scale=0.6] {\color{black} $k_2$} -- (1,0) node[right,scale=0.6] {\color{black} $i_2$};
\draw[lgray,line width=1.5pt,<-] 
(-2,0) node[below,scale=0.6] {\color{black} $j_3$} -- (-1,1) node[above,scale=0.6] {\color{black} $k_3$} -- (1,1) node[right,scale=0.6] {\color{black} $i_3$};
\draw[lgray,line width=4pt,->] 
(0,-1) node[below,scale=0.6] {\color{black} $\I$} -- (0,0.5) node[scale=0.6] {\color{black} $\K$} -- (0,2) node[above,scale=0.6] {\color{black} $\J$};
\node[left] at (-2.2,1) {$y \leftarrow$};
\node[left] at (-2.2,0) {$z \leftarrow$};
}
\quad
=
\quad
\sum_{0 \leq k_2,k_3 \leq n}
\
\sum_{\K \in \mathbb{N}^n}
\tikz{0.8}{
\draw[densely dotted] (1.25,0.75) arc (135:225:{1/(2*sqrt(2))});
\draw[lgray,line width=1.5pt,<-] 
(-1,1) node[left,scale=0.6] {\color{black} $j_2$} -- (1,1) node[above,scale=0.6] {\color{black} $k_2$} -- (2,0) node[below,scale=0.6] {\color{black} $i_2$};
\draw[lgray,line width=1.5pt,<-] 
(-1,0) node[left,scale=0.6] {\color{black} $j_3$} -- (1,0) node[below,scale=0.6] {\color{black} $k_3$} -- (2,1) node[above,scale=0.6] {\color{black} $i_3$};
\draw[lgray,line width=4pt,->] 
(0,-1) node[below,scale=0.6] {\color{black} $\I$} -- (0,0.5) node[scale=0.6] {\color{black} $\K$} -- (0,2) node[above,scale=0.6] {\color{black} $\J$};
\node[left] at (-1.5,1) {$y \leftarrow$};
\node[left] at (-1.5,0) {$z \leftarrow$};
}
\end{align}
with marked angle equal to $y/z$.
\end{prop}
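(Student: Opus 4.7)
The plan is to establish \eqref{RLLa} first and then deduce \eqref{RLLb} and \eqref{RLLc} from it by exploiting the symmetries of Proposition \ref{prop:sym}. All three identities are rank-$n$ extensions of the fundamental Yang--Baxter equation \eqref{YB}, obtained from the universal $R$-matrix of $U_q(\wh{\mathfrak{sl}_{n+1}})$ evaluated in a triple tensor product in which the thick vertical edge carries a Verma module $\mathcal{F}_s$ with highest weight determined by $s$, while the thin horizontal edges carry either the vector representation $V$ (for $L$) or its dual (for $M$).

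First, \eqref{RLLa} is the standard RLL equation for $L$ viewed as an intertwiner $V \otimes \mathcal{F}_s \to \mathcal{F}_s \otimes V$. A hands-on proof goes by induction on the column occupation number $|\I|$, realizing the thick vertical edge as a fused stack of $|\I|$ unit edges via the $q$-symmetrizer that defines fusion, and then propagating the two horizontal $R$-lines past each unit crossing in turn using the base equation \eqref{YB}. At each stage the sum over the newly created internal vertical index collapses precisely by \eqref{YB}, and after $|\I|$ such propagations one recovers \eqref{RLLa}. Equivalently, one may invoke directly the universal Yang--Baxter equation in $V \otimes V \otimes \mathcal{F}_s$.

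Second, \eqref{RLLc} will be deduced from \eqref{RLLa} by the reciprocation symmetry \eqref{sym1}. Applying this symmetry to both $M_y$ and $M_z$ converts each into $L_{\bar{y}}$ and $L_{\bar{z}}$ at the reciprocated parameters $\bar{q}, \bar{s}$, together with sign prefactors of the form $(-s)^{\pm \mathbf{1}_{\cdot\geq1}}$. The key observation is that these prefactors telescope: the factors carried by the internal summation indices $k_2, k_3$ appear exactly once as an ``outgoing'' contribution and once as an ``incoming'' contribution on each side, and hence cancel, while the factors carried by the external indices $i_2, i_3, j_2, j_3$ factor out equally on both sides. What remains is \eqref{RLLa} at the reciprocated parameters; the $R$-matrix then needs to be matched via the elementary identity $R_{\bar{z},\bar{q}}(j,i;j,i) = R_{z,q}(i,j;i,j)$ (and its analogs for the cross-type entries), verified directly from \eqref{R-weights-bc}.

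Third, the mixed equation \eqref{RLLb} is the subtle one. Applying \eqref{sym1} only to the $M$-matrix converts it to an $L_{\bar{z}}$ at reciprocated $\bar{q}, \bar{s}$, producing an equation between two $L$-matrices at \emph{incompatible} values of the quantum and spin parameters. The crossing-shift $\bar{q}\bar{x}\bar{z}$ in the argument of the $R$-matrix is exactly what compensates for this: it reflects the fact that the reversed horizontal line carries the dual vector representation, whose intertwiner with $V$ is related to the original $R$-matrix by the standard crossing prescription for $U_q(\wh{\mathfrak{sl}_{n+1}})$. A clean proof proceeds by the same fusion induction on $|\I|$ as in step one: the base case $|\I|=|\J|=0$ reduces to a finite case-analysis against the explicit tables \eqref{R-weights-bc}, \eqref{s-weights}, \eqref{dual-s-weights}, and the inductive step runs identically since fusion commutes with orientation reversal on a single horizontal line.

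The main obstacle is precisely the crossing identity underpinning \eqref{RLLb}: justifying the argument $\bar{q}\bar{x}\bar{z}$ of the $R$-matrix without appealing to the full Hopf-algebraic apparatus requires a careful verification of the base step with both orientations present. The bookkeeping of the sign prefactors from \eqref{sym1} and of the external indices $i_1, i_3, j_1, j_3$ across all their possible coincidence patterns is routine but error-prone; once that base case is in hand, the inductive fusion argument propagates it to arbitrary $\I, \J$ and the three relations follow in the form stated.
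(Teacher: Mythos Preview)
Your deduction of \eqref{RLLc} from \eqref{RLLa} via the symmetry \eqref{sym1} is correct and is a pleasant shortcut: once one checks the elementary identity $R_{\bar z,\bar q}(a,b;c,d)=R_{z,q}(b,a;d,c)$ and uses colour conservation of $R$ to see that the $(-s)^{\pm}$ prefactors depend only on the external indices, \eqref{RLLc} drops out of \eqref{RLLa} with no further work. The paper instead obtains all three relations as specializations of the Bosnjak--Mangazeev master Yang--Baxter equation (Appendix~\ref{app:RLL}), combined with the higher-spin symmetry \eqref{bm-sym}; your route for \eqref{RLLc} is more economical, since it never leaves the rank-$n$ fundamental/$L$-matrix setting.

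However, there is a persistent confusion in your description of fusion which becomes a genuine gap for \eqref{RLLb}. Fusion does \emph{not} realize the thick vertical edge as a stack of $|\I|$ unit edges; it realizes it as a stack of $M$ unit edges, where $M$ is an independent positive integer that enters the weights only through $q^{M}$ and is subsequently analytically continued via $q^{M}\mapsto s^{-2}$ (see Appendix~\ref{app:fusion}, especially \eqref{fus-rec} and \eqref{an-cont}). The occupation number $|\I|$ ranges over $\{0,1,\dots,M\}$ and is not the induction parameter. Consequently your ``base case $|\I|=|\J|=0$'' is not a base case for anything: at $|\I|=|\J|=0$ the identity \eqref{RLLb} is still a nontrivial rational identity in the generic spin parameter $s$, and there is no recursion taking you from $|\I|=0$ to $|\I|\ge 1$ at fixed $s$.

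If you wish to prove \eqref{RLLb} by fusing the vertical line, the correct base case is $M=1$ (equivalently $s=q^{-1/2}$), where both $L$ and $M$ collapse to fundamental-type vertices. That base case is \emph{not} the Yang--Baxter equation \eqref{YB} itself but a crossing-type identity for the fundamental $R$-matrix, reflecting that one horizontal line carries the dual vector representation; it is exactly here that the shift $\bar q\bar x\bar z$ in the spectral parameter arises and must be verified. You would also need a fused construction of $M$ (an analogue of the row-vertex \eqref{R-row} with reversed orientation, together with $q$-exchangeability and a stacking property), which is obtainable from the $L$-version via \eqref{sym1} but is not something you have supplied. The paper sidesteps both issues by working inside the master equation \eqref{master-yb} at $\l=\n=1$ and invoking \eqref{bm-sym} to flip $W_{\m,1}$ to $W_{1,\m}$; that symmetry is the substantive input your sketch is missing.
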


\begin{proof}
There are several different ways to establish the relations \eqref{RLLa}--\eqref{RLLc}. One proof goes through representations of quantized affine algebra $U_q(\wh{\mathfrak{sl}_{n+1}})$, as in \cite{GarbaliGW}. Alternatively, one can analyze all possible cases of the fixed indices in \eqref{RLLa}--\eqref{RLLc}, and perform a direct check of all identities that arise. A more constructive proof is to start from the Yang--Baxter equation and perform fusion, replacing one of the vector spaces in \eqref{YB} by a symmetric tensor representation of integer weight ${\sf J}$, followed by the analytic continuation $q^{\sf J} \mapsto s^{-2}$ \cite{Borodin}. This leads to each of the three possibilities \eqref{RLLa}--\eqref{RLLc}, depending on which of the spaces $W_a,W_b,W_c$ one chooses to single out for replacement. In Appendix \ref{app:fusion}, we outline how this procedure works in the first case, \eqref{RLLa}.

Yet another approach is outlined in Appendix \ref{app:RLL}. It is based on the fact that all three identities \eqref{RLLa}--\eqref{RLLc} are in fact special cases of a master Yang--Baxter equation obtained in \cite{BosnjakM}.
\end{proof}

\section{Colour-blindness results}
\label{ssec:colour-blind}

In this section we examine a further important feature of the model \eqref{s-weights}. It is a property which has been observed in a number of earlier publications \cite{FodaW,GarbaliGW,Kuan}, and used to different effects within those works. This property allows certain linear combinations of higher-rank partition functions to be computable as rank-1 partition functions; \ie\ the linear combinations end up being equivalent to partition functions in the model \eqref{s-weights} with $n=1$. 
In the present work, partition functions which have this property will be termed {\it colour-blind}. The source of the colour-blindness can be traced back to Proposition \ref{prop:colour-blind}, below.

\begin{defn}
For any integer $k \in \{0,1,\dots,n\}$ define its colour-blind projection
\begin{align*}
\index{ai@$\theta(k)$}
\theta(k) 
:=
\bm{1}_{k \geq 1}
=
\left\{ \begin{array}{ll} 0, & \quad k=0, \\ 1, & \quad k \geq 1. \end{array} \right.
\end{align*}
Similarly, for compositions $\K = (K_1,\dots,K_n) \in \mathbb{N}^n$ we define a projection onto $\mathbb{N}$ via their {\it weight}:
\begin{align}
\label{project}
|\K| := \sum_{i=1}^{n} K_i.
\end{align}
Finally, for any $k \in \mathbb{N}$, write the preimage of $k$ under the map \eqref{project} as follows:
\begin{align*}
\index{W@$\mathcal{W}(k)$}
\mathcal{W}(k)
:=
\{ \K \in \mathbb{N}^n : |\K| = k \},
\end{align*}
which is simply the set of all length-$n$ compositions of weight $k$.
\end{defn}

\begin{prop}
\label{prop:colour-blind}
Let $0 \leq j \leq n$ and $\I = (I_1,\dots,I_n)$ be fixed incoming left and bottom edge states of the vertex \eqref{generic-L}. Fix a further nonnegative integer $k \in \mathbb{N}$. Then for any such $j$, $\I$ and $k$ the following identities hold:
\begin{align}
\label{cb1}
\sum_{\K \in \mathcal{W}(k)}
\left( \tikz{0.7}{
\draw[lgray,line width=1.5pt,->] (-1,0) -- (1,0);
\draw[lgray,line width=4pt,->] (0,-1) -- (0,1);
\node[left] at (-1,0) {\tiny $j$};\node[right] at (1,0) {\tiny $0$};
\node[below] at (0,-1) {\tiny $\I$};\node[above] at (0,1) {\tiny $\K$};
} \right)
&=
\tikz{0.7}{
\draw[lgray,line width=1.5pt,->] (-1,0) -- (1,0);
\draw[lgray,line width=4pt,->] (0,-1) -- (0,1);
\node[left] at (-1,0) {\tiny $\theta(j)$};\node[right] at (1,0) {\tiny $0$};
\node[below] at (0,-1) {\tiny $|\I|$};\node[above] at (0,1) {\tiny $k$};
\node at (0,0) {$\bullet$};
},
\\
\label{cb2}
\sum_{\K \in \mathcal{W}(k)}
\sum_{1 \leq \ell \leq n}
\left( \tikz{0.7}{
\draw[lgray,line width=1.5pt,->] (-1,0) -- (1,0);
\draw[lgray,line width=4pt,->] (0,-1) -- (0,1);
\node[left] at (-1,0) {\tiny $j$};\node[right] at (1,0) {\tiny $\ell$};
\node[below] at (0,-1) {\tiny $\I$};\node[above] at (0,1) {\tiny $\K$};
} \right)
&=
\tikz{0.7}{
\draw[lgray,line width=1.5pt,->] (-1,0) -- (1,0);
\draw[lgray,line width=4pt,->] (0,-1) -- (0,1);
\node[left] at (-1,0) {\tiny $\theta(j)$};\node[right] at (1,0) {\tiny $1$};
\node[below] at (0,-1) {\tiny $|\I|$};\node[above] at (0,1) {\tiny $k$};
\node at (0,0) {$\bullet$};
},
\end{align}
where the vertex weights on the left hand side of \eqref{cb1} and \eqref{cb2} are given by \eqref{s-weights}, while those on the right hand side are given by \eqref{rank1-weights}.
\end{prop}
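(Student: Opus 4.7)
The plan is to reduce the proposition to a direct case analysis based on the incoming left state $j$ (either $j=0$ or $j\geq 1$), the outgoing right state configuration (either $\ell=0$ or $\ell\geq 1$), and the weight constraint $|\K|=k$. Since the vertex weights in \eqref{s-weights} involve only $q^{I_i}$ and partial sums $q^{\Is{i+1}{n}}$, the desired colour-blind identities should follow from elementary telescoping identities. Concretely, I would first observe that by the conservation of paths at a vertex, $|\K|$ is determined by $(\I,j,\ell)$: if $\ell=0$ then $|\K|=|\I|+\theta(j)$, and if $\ell\geq 1$ then $|\K|=|\I|-1+\theta(j)$. Consequently only the value $|\I|=k-\theta(j)$ (in the $\ell=0$ case) or $|\I|=k+1-\theta(j)$ (in the $\ell\geq 1$ case) contributes, matching exactly the conservation constraint for the rank-$1$ vertex on the right-hand sides of \eqref{cb1} and \eqref{cb2}.

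Next I would handle the four resulting cases separately. For \eqref{cb1} with $j=0$: only $\K=\I$ contributes (with $|\I|=k$) and the weight $\frac{1-sxq^{|\I|}}{1-sx}$ agrees with the rank-$1$ weight for the configuration $(0,|\I|;0,|\I|)$. For \eqref{cb1} with $j=i\geq 1$: only $\K=\I^+_i$ contributes (with $|\I|=k-1$) and the weight $\frac{1-s^2q^{|\I|}}{1-sx}$ agrees with the rank-$1$ weight for $(i,I;0,I+1)$ at $I=k-1$. For \eqref{cb2} with $j=0$: the contributing configurations are $(0,\I)\to(\ell,\I^-_\ell)$, $1\leq\ell\leq n$, with $|\I|=k+1$, and the sum of weights is
\begin{align*}
\frac{x}{1-sx}\sum_{\ell=1}^n (1-q^{I_\ell})q^{\Is{\ell+1}{n}}
=\frac{x}{1-sx}\sum_{\ell=1}^n\bigl(q^{\Is{\ell+1}{n}}-q^{\Is{\ell}{n}}\bigr)
=\frac{x(1-q^{k+1})}{1-sx},
\end{align*}
which telescopes to the correct rank-$1$ weight.

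The main calculation, and the only place where care is needed, is \eqref{cb2} with $j=i\geq 1$. Here three families of configurations contribute, all with $|\K|=|\I|=k$: the diagonal term $\ell=i$ with weight $(x-sq^{I_i})q^{\Is{i+1}{n}}/(1-sx)$; the terms $\ell>i$ with weight $x(1-q^{I_\ell})q^{\Is{\ell+1}{n}}/(1-sx)$; and the terms $\ell<i$ with weight $s(1-q^{I_\ell})q^{\Is{\ell+1}{n}}/(1-sx)$. I would split the diagonal term as $(x-sq^{I_i})q^{\Is{i+1}{n}}=xq^{\Is{i+1}{n}}-sq^{\Is{i}{n}}$, then combine the $xq^{\Is{i+1}{n}}$ piece with the telescoping sum over $\ell>i$ and the $-sq^{\Is{i}{n}}$ piece with the telescoping sum over $\ell<i$:
\begin{align*}
xq^{\Is{i+1}{n}}+x\sum_{\ell>i}\bigl(q^{\Is{\ell+1}{n}}-q^{\Is{\ell}{n}}\bigr)=x,\qquad
-sq^{\Is{i}{n}}+s\sum_{\ell<i}\bigl(q^{\Is{\ell+1}{n}}-q^{\Is{\ell}{n}}\bigr)=-sq^{|\I|}=-sq^k.
\end{align*}
Summing gives $(x-sq^k)/(1-sx)$, matching the rank-$1$ weight for $(1,k;1,k)$.

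The only real obstacle is bookkeeping: ensuring that the three contributing vertex types in the $j,\ell\geq 1$ case are split correctly and that the two telescoping sums are matched with the two pieces of the split diagonal term. Once this split is set up, the verification is mechanical, and the proposition follows by assembling the four cases.
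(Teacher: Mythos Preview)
Your proof is correct and follows essentially the same approach as the paper: a four-way case analysis on $(j=0$ vs.\ $j\geq 1)\times(\ell=0$ vs.\ $\ell\geq 1)$, with the first two cases trivializing by conservation and the last two handled by telescoping $\sum_\ell (q^{\Is{\ell+1}{n}}-q^{\Is{\ell}{n}})$. The only cosmetic difference is that in the $j,\ell\geq 1$ case the paper telescopes the $\ell>i$ and $\ell<i$ sums first and then combines with the diagonal, whereas you split the diagonal term first and feed its two pieces into the respective telescoping sums; the arithmetic is identical.
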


\begin{proof}
Let us begin with the proof of \eqref{cb1}, and split it into the cases $j=0$ and 
$1 \leq j \leq n$. For $j=0$, the left hand side of \eqref{cb1} is given by
\begin{align}
\label{00-lhs}
\sum_{\K \in \mathcal{W}(k)}
\left( \tikz{0.7}{
\draw[lgray,line width=1.5pt,->] (-1,0) -- (1,0);
\draw[lgray,line width=4pt,->] (0,-1) -- (0,1);
\node[left] at (-1,0) {\tiny $0$};\node[right] at (1,0) {\tiny $0$};
\node[below] at (0,-1) {\tiny $\I$};\node[above] at (0,1) {\tiny $\K$};
} \right)
=
\left(
\tikz{0.7}{
\draw[lgray,line width=1.5pt,->] (-1,0) -- (1,0);
\draw[lgray,line width=4pt,->] (0,-1) -- (0,1);
\node[left] at (-1,0) {\tiny $0$};\node[right] at (1,0) {\tiny $0$};
\node[below] at (0,-1) {\tiny $\I$};\node[above] at (0,1) {\tiny $\I$};
}
\right) \bm{1}_{k=|\I|}
=
\frac{1-sxq^{|\I|}}{1-sx} \cdot \bm{1}_{k=|\I|},
\end{align}
where the trivialization of the sum is due to the fact that necessarily $\K = \I$, or the vertex in the summand of \eqref{00-lhs} vanishes. Meanwhile, for $j=0$, the right hand side of \eqref{cb1} reads
\begin{align}
\label{00-rhs}
\tikz{0.7}{
\draw[lgray,line width=1.5pt,->] (-1,0) -- (1,0);
\draw[lgray,line width=4pt,->] (0,-1) -- (0,1);
\node[left] at (-1,0) {\tiny $0$};\node[right] at (1,0) {\tiny $0$};
\node[below] at (0,-1) {\tiny $|\I|$};\node[above] at (0,1) {\tiny $k$};
\node at (0,0) {$\bullet$};
}
=
\frac{1-sxq^{|\I|}}{1-sx} \cdot \bm{1}_{k=|\I|}.
\end{align}
Equating \eqref{00-lhs} and \eqref{00-rhs} we obtain the $j=0$ case of \eqref{cb1}. The 
$1 \leq j \leq n$ case is no more difficult; for $1 \leq j \leq n$, the left hand side of \eqref{cb1} is given by
\begin{align}
\label{10-lhs}
\sum_{\K \in \mathcal{W}(k)}
\left( \tikz{0.7}{
\draw[lgray,line width=1.5pt,->] (-1,0) -- (1,0);
\draw[lgray,line width=4pt,->] (0,-1) -- (0,1);
\node[left] at (-1,0) {\tiny $j$};\node[right] at (1,0) {\tiny $0$};
\node[below] at (0,-1) {\tiny $\I$};\node[above] at (0,1) {\tiny $\K$};
} \right)
=
\left(
\tikz{0.7}{
\draw[lgray,line width=1.5pt,->] (-1,0) -- (1,0);
\draw[lgray,line width=4pt,->] (0,-1) -- (0,1);
\node[left] at (-1,0) {\tiny $j$};\node[right] at (1,0) {\tiny $0$};
\node[below] at (0,-1) {\tiny $\I$};\node[above] at (0,1) {\tiny $\I^{+}_j$};
}
\right) \bm{1}_{k=|\I|+1}
=
\frac{1-s^2 q^{|\I|}}{1-sx} \cdot \bm{1}_{k=|\I|+1},
\end{align}
with the reduction of the sum again coming from the conservation requirement $\K = \I + \bm{e}_j = \I^{+}_j$. For $1 \leq j \leq n$ we have $\theta(j) = 1$, and the right hand side of \eqref{cb1} is then given by
\begin{align}
\label{10-rhs}
\tikz{0.7}{
\draw[lgray,line width=1.5pt,->] (-1,0) -- (1,0);
\draw[lgray,line width=4pt,->] (0,-1) -- (0,1);
\node[left] at (-1,0) {\tiny $1$};\node[right] at (1,0) {\tiny $0$};
\node[below] at (0,-1) {\tiny $|\I|$};\node[above] at (0,1) {\tiny $k$};
\node at (0,0) {$\bullet$};
}
=
\frac{1-s^2 q^{|\I|}}{1-sx} \cdot \bm{1}_{k=|\I|+1}.
\end{align}
Matching \eqref{10-lhs} and \eqref{10-rhs} we prove the $1 \leq j \leq n$ case of \eqref{cb1}.

The proof of \eqref{cb2} is slightly more involved, since the left hand side features an extra summation; it turns out that this sum telescopes. We again divide the proof into the cases $j=0$ and $1 \leq j \leq n$. For $j=0$, the left hand side of \eqref{cb2} takes the form
\begin{align}
\label{01-lhs}
\sum_{\K \in \mathcal{W}(k)}
\sum_{1 \leq \ell \leq n}
\left( \tikz{0.7}{
\draw[lgray,line width=1.5pt,->] (-1,0) -- (1,0);
\draw[lgray,line width=4pt,->] (0,-1) -- (0,1);
\node[left] at (-1,0) {\tiny $0$};\node[right] at (1,0) {\tiny $\ell$};
\node[below] at (0,-1) {\tiny $\I$};\node[above] at (0,1) {\tiny $\K$};
} \right)
&=
\sum_{1 \leq \ell \leq n}
\left( \tikz{0.7}{
\draw[lgray,line width=1.5pt,->] (-1,0) -- (1,0);
\draw[lgray,line width=4pt,->] (0,-1) -- (0,1);
\node[left] at (-1,0) {\tiny $0$};\node[right] at (1,0) {\tiny $\ell$};
\node[below] at (0,-1) {\tiny $\I$};\node[above] at (0,1) {\tiny $\I^{-}_{\ell}$};
} \right) 
\bm{1}_{k=|\I|-1}
\\
\nonumber
&=
\sum_{1 \leq \ell \leq n}
\frac{x (1-q^{I_\ell}) q^{\Is{\ell+1}{n}}}{1-sx} \cdot
\bm{1}_{k=|\I|-1}
=
\frac{x(1-q^{|\I|})}{1-sx} \cdot
\bm{1}_{k=|\I|-1}
\end{align}
where we eliminated the sum over $\K$ using the conservation requirement 
$\K = \I - \bm{e}_\ell = \I^{-}_\ell$. On the other hand, for $j=0$ the right hand side of \eqref{cb2} is equal to
\begin{align}
\label{01-rhs}
\tikz{0.7}{
\draw[lgray,line width=1.5pt,->] (-1,0) -- (1,0);
\draw[lgray,line width=4pt,->] (0,-1) -- (0,1);
\node[left] at (-1,0) {\tiny $0$};\node[right] at (1,0) {\tiny $1$};
\node[below] at (0,-1) {\tiny $|\I|$};\node[above] at (0,1) {\tiny $k$};
\node at (0,0) {$\bullet$};
}
=
\frac{x(1-q^{|\I|})}{1-sx} \cdot
\bm{1}_{k=|\I|-1},
\end{align}
which agrees with the expression obtained in \eqref{01-lhs}, completing the proof of \eqref{cb2} for $j=0$. Finally, when $1 \leq j \leq n$, the left hand side of \eqref{cb2} reads
\begin{align}
\label{11-lhs}
\sum_{\K \in \mathcal{W}(k)}
\sum_{1 \leq \ell \leq n}
\left( \tikz{0.7}{
\draw[lgray,line width=1.5pt,->] (-1,0) -- (1,0);
\draw[lgray,line width=4pt,->] (0,-1) -- (0,1);
\node[left] at (-1,0) {\tiny $j$};\node[right] at (1,0) {\tiny $\ell$};
\node[below] at (0,-1) {\tiny $\I$};\node[above] at (0,1) {\tiny $\K$};
} \right)
&=
\sum_{1 \leq \ell \leq n}
\left( \tikz{0.7}{
\draw[lgray,line width=1.5pt,->] (-1,0) -- (1,0);
\draw[lgray,line width=4pt,->] (0,-1) -- (0,1);
\node[left] at (-1,0) {\tiny $j$};\node[right] at (1,0) {\tiny $\ell$};
\node[below] at (0,-1) {\tiny $\I$};\node[above] at (0,1) {\tiny $\I^{+-}_{j\ell}$};
} \right) 
\bm{1}_{k=|\I|},
\end{align}
due to the conservation requirement $\K = \I + \bm{e}_j - \bm{e}_{\ell} = \I^{+-}_{j\ell}$. The weights in \eqref{11-lhs} are then naturally grouped into three categories: those with edge states $j < \ell$, $j = \ell$, and $j > \ell$. Splitting the sum \eqref{11-lhs} accordingly, we find that the sub-summations can be telescoped:
\begin{align}
\label{11-lhs2}
&
\sum_{\K \in \mathcal{W}(k)}
\sum_{1 \leq \ell \leq n}
\left( \tikz{0.7}{
\draw[lgray,line width=1.5pt,->] (-1,0) -- (1,0);
\draw[lgray,line width=4pt,->] (0,-1) -- (0,1);
\node[left] at (-1,0) {\tiny $j$};\node[right] at (1,0) {\tiny $\ell$};
\node[below] at (0,-1) {\tiny $\I$};\node[above] at (0,1) {\tiny $\K$};
} \right)
\\
\nonumber
&
=
\left(
\sum_{j < \ell \leq n}
\frac{x(1-q^{I_\ell})q^{\Is{\ell+1}{n}}}{1-sx}
+
\frac{(x-sq^{I_j})q^{\Is{j+1}{n}}}{1-sx}
+
\sum_{1 \leq \ell < j}
\frac{s(1-q^{I_\ell})q^{\Is{\ell+1}{n}}}{1-sx}
\right)
\bm{1}_{k=|\I|}
\\
\nonumber
&=
\left(
\frac{x(1-q^{\Is{j+1}{n}})}{1-sx}
+
\frac{(x-sq^{I_j})q^{\Is{j+1}{n}}}{1-sx}
+
\frac{s(1-q^{\Is{1}{j-1}})q^{\Is{j}{n}}}{1-sx}
\right)
\bm{1}_{k=|\I|}
=
\frac{x-sq^{|\I|}}{1-sx} \cdot
\bm{1}_{k=|\I|}.
\end{align}
Comparing with the right hand side of \eqref{cb2} for $1 \leq j \leq n$,
\begin{align}
\label{11-rhs}
\tikz{0.7}{
\draw[lgray,line width=1.5pt,->] (-1,0) -- (1,0);
\draw[lgray,line width=4pt,->] (0,-1) -- (0,1);
\node[left] at (-1,0) {\tiny $1$};\node[right] at (1,0) {\tiny $1$};
\node[below] at (0,-1) {\tiny $|\I|$};\node[above] at (0,1) {\tiny $k$};
\node at (0,0) {$\bullet$};
}
=
\frac{x-sq^{|\I|}}{1-sx} \cdot
\bm{1}_{k=|\I|},
\end{align}
we observe the required agreement between \eqref{11-lhs2} and \eqref{11-rhs}.
\end{proof}

\begin{rmk}
The colour-blindness results \eqref{cb1} and \eqref{cb2} can be written algebraically as
\begin{align}
\label{cb3}
\sum_{\K \in \mathcal{W}(k)}
L_x(\I,j;\K,0)
=
L^{\bullet}_x(|\I|,\theta(j);k,0),
\\
\label{cb4}
\sum_{\K \in \mathcal{W}(k)}
\sum_{1 \leq \ell \leq n}
L_x(\I,j;\K,\ell)
=
L^{\bullet}_x(|\I|,\theta(j);k,1).
\end{align}
These relations also extend to the dual vertex weights \eqref{dual-s-weights} and \eqref{rank1-dual-weights}; namely, one has
\begin{align}
\label{cb5}
\sum_{\K \in \mathcal{W}(k)}
M_x(\I,j;\K,0)
=
M^{\bullet}_x(|\I|,\theta(j);k,0),
\\
\label{cb6}
\sum_{\K \in \mathcal{W}(k)}
\sum_{1 \leq \ell \leq n}
M_x(\I,j;\K,\ell)
=
M^{\bullet}_x(|\I|,\theta(j);k,1),
\end{align}
which can be easily deduced by applying the symmetry \eqref{sym1} to \eqref{cb3} and \eqref{cb4}.
\end{rmk}

\section{Stochastic weights}
\label{ssec:stoch-wt}

As stated, the vertex models \eqref{s-weights} and \eqref{dual-s-weights} are not stochastic, but they can be brought into stochastic form via a simple gauge transformation:
\begin{prop}
\label{prop:stoch}
Define modified versions of the vertices \eqref{generic-L} and \eqref{generic-M}, given by
\index{L2@$\tilde{L}_x(\I,j;\K,\ell)$; stochastic weights}
\index{M2@$\tilde{M}_x(\I,j;\K,\ell)$; dual stochastic weights}
\begin{align}
\label{stoch-wt}
\tilde{L}_x(\I,j;\K,\ell)
:=
(-s)^{\theta(\ell)}
L_x(\I,j;\K,\ell),
\qquad
\tilde{M}_x(\I,j;\K,\ell)
:=
(-s)^{-\theta(j)}
M_x(\I,j;\K,\ell),
\end{align}
\ie\ the vertex \eqref{generic-L} receives an extra factor of $(-s)$ if $\ell > 0$, and similarly for the vertex \eqref{generic-M}. The modified weights satisfy the stochasticity equations
\begin{align}
\label{LM-stoch}
\sum_{\K \in \mathbb{N}^n}
\sum_{0 \leq \ell \leq n}
\tilde{L}_x(\I,j;\K,\ell)
=
1,
\qquad
\sum_{\K \in \mathbb{N}^n}
\sum_{0 \leq \ell \leq n}
\tilde{M}_x(\I,j;\K,\ell)
=
1,
\end{align}
for all fixed values of $0 \leq j \leq n$ and $\I \in \mathbb{N}^n$.
\end{prop}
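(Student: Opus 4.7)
The plan is to verify both identities in \eqref{LM-stoch} by reducing each double sum to a short, explicit list of nonzero terms via the colour-conservation constraint, and then to deduce the stochasticity of $\tilde{M}$ from that of $\tilde{L}$ by invoking the symmetry \eqref{sym1} of Proposition \ref{prop:sym}.

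First I would prove the $\tilde{L}$ identity by a direct case-split on the incoming horizontal state $j$. For fixed $\I$ and $j$, colour conservation forces the outgoing pair $(\K,\ell)$ to lie in one of at most $n+1$ explicit configurations read off the table \eqref{s-weights}. When $j=0$ the nonzero terms are $\ell=0$ with $\K=\I$ and $\ell=i$ with $\K=\I^{-}_{i}$ for $1\leq i\leq n$. When $j\geq 1$ the nonzero terms are $\ell=0$ with $\K=\I^{+}_{j}$, $\ell=j$ with $\K=\I$, and $\ell=i$ with $\K=\I^{+-}_{ji}$ for $i\neq j$, where the sub-cases $i>j$ and $i<j$ use different rows of \eqref{s-weights}.

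The computational engine in both cases is the telescoping identity
\begin{align*}
\sum_{i=a}^{b}(1-q^{I_i})q^{\Is{i+1}{n}}
=
\sum_{i=a}^{b}\bigl(q^{\Is{i+1}{n}}-q^{\Is{i}{n}}\bigr)
=
q^{\Is{b+1}{n}}-q^{\Is{a}{n}}.
\end{align*}
For $j=0$ this collapses the $\ell\geq 1$ contribution to $-sx(1-q^{|\I|})/(1-sx)$, which combines with the $\ell=0$ term $(1-sxq^{|\I|})/(1-sx)$ to give exactly $1$. For $j\geq 1$ I would telescope the $i>j$ and $i<j$ sums separately, producing $-sx(1-q^{\Is{j+1}{n}})/(1-sx)$ and $-s^{2}(q^{\Is{j}{n}}-q^{|\I|})/(1-sx)$, respectively, then verify by direct cancellation that all $s^{2}q^{|\I|}$, $sxq^{\Is{j+1}{n}}$, and $s^{2}q^{\Is{j}{n}}$ contributions annihilate, leaving numerator $1-sx$. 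The main (mild) obstacle is just the bookkeeping in this second case, where four weight categories with different $(-s)^{\theta(\ell)}$ prefactors must be reconciled; the identity $I_{j}+\Is{j+1}{n}=\Is{j}{n}$ is what eliminates the residual $s^{2}$ terms.

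Finally, for $\tilde{M}$ I would avoid a second case analysis by applying \eqref{sym1}. Replacing $(x,q,s)$ by $(\b{x},\b{q},\b{s})$ throughout \eqref{sym1} rewrites it as
\begin{align*}
M_{x,q,s}(\I,j;\K,\ell)=(-\b{s})^{\theta(\ell)-\theta(j)}L_{\b{x},\b{q},\b{s}}(\I,j;\K,\ell).
\end{align*}
Multiplying by $(-s)^{-\theta(j)}$, using $(-s)^{-\theta(j)}(-\b{s})^{-\theta(j)}=1$, and recognising $(-\b{s})^{\theta(\ell)}L_{\b{x},\b{q},\b{s}}$ as $\tilde{L}_{\b{x},\b{q},\b{s}}$ (by the definition \eqref{stoch-wt} applied with spin parameter $\b{s}$), I get the pointwise identity
\begin{align*}
\tilde{M}_{x,q,s}(\I,j;\K,\ell)=\tilde{L}_{\b{x},\b{q},\b{s}}(\I,j;\K,\ell).
\end{align*}
Summing over $\K,\ell$ and invoking the already-established first identity in \eqref{LM-stoch} — a rational identity in $(x,q,s)$ that remains valid after reciprocating all three parameters — yields the second identity in \eqref{LM-stoch} and completes the proof.
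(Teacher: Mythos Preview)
Your proof is correct. Both you and the paper reduce the $\tilde{M}$ identity to the $\tilde{L}$ identity via the symmetry \eqref{sym1}, and both ultimately rely on the same telescoping identity $\sum_i (1-q^{I_i})q^{\Is{i+1}{n}} = q^{\Is{b+1}{n}}-q^{\Is{a}{n}}$. The difference is organizational: the paper first packages the telescoping into the colour-blindness relations of Proposition \ref{prop:colour-blind}, which express the relevant sums as rank-$1$ vertex weights \eqref{rank1-weights}, and then checks stochasticity at rank $1$; you instead inline that same telescoping directly into the stochasticity computation. Your route is more self-contained for this single proposition, while the paper's route isolates colour-blindness as a reusable lemma that is invoked again elsewhere (e.g.\ in Proposition \ref{prop:f-sum-F} and Section \ref{ssec:G}).
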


\begin{proof}
By virtue of the symmetry \eqref{sym1} and the definitions \eqref{stoch-wt}, it can be seen that the stochasticity relation for $\tilde{M}_x(\I,j;\K,\ell)$ is equivalent to the stochasticity of $\tilde{L}_x(\I,j;\K,\ell)$. Focusing therefore on the first of the equations \eqref{LM-stoch}, we write the left hand side as
\begin{align}
\label{stoch-refine}
\sum_{\K \in \mathbb{N}^n}
\sum_{0 \leq \ell \leq n}
\tilde{L}_x(\I,j;\K,\ell)
=
\sum_{k \in \mathbb{N}}
\left(
\sum_{\K \in \mathcal{W}(k)}
L_{x}(\I,j;\K,0)
-s
\sum_{\K \in \mathcal{W}(k)}
\sum_{1 \leq \ell \leq n}
L_{x}(\I,j;\K,\ell)
\right),
\end{align}
where we have refined the sum over $\K$ by the weight $|\K|$, and split the sum over $\ell$ into the cases $\ell = 0$ and $1 \leq \ell \leq n$. We can now use Proposition \ref{prop:colour-blind} to compute the sums on the right hand side of \eqref{stoch-refine}; they are precisely the identities \eqref{cb1} and \eqref{cb2}. Hence,
\begin{align}
\label{2.8-1}
\sum_{\K \in \mathbb{N}^n}
\sum_{0 \leq \ell \leq n}
\tilde{L}_x(\I,j;\K,\ell)
=
\sum_{k \in \mathbb{N}}
L^{\bullet}_{x}(|\I|,\theta(j);k,0)
-
s 
\sum_{k \in \mathbb{N}}
L^{\bullet}_{x}(|\I|,\theta(j);k,1),
\end{align}
where the right hand side sums are taken over vertices in the rank-1 model \eqref{rank1-weights}. By conservation of paths, the first sum is non-vanishing only at $k=|\I| + \theta(j)$, while the second survives only at $k = |\I| + \theta(j) - 1$. Substituting the weights \eqref{rank1-weights} into \eqref{2.8-1}, we find that
\begin{align*}
\sum_{\K \in \mathbb{N}^n}
\sum_{0 \leq \ell \leq n}
\tilde{L}_x(\I,j;\K,\ell)
=
\left\{
\begin{array}{ll}
\dfrac{1-s x q^{|\I|}}{1-sx}
-
\dfrac{s x(1-q^{|\I|})}{1-sx}
=
1,
&\quad 
\theta(j) = 0,
\\ \\
\dfrac{1-s^2 q^{|\I|}}{1-sx}
-
\dfrac{s (x-sq^{|\I|})}{1-sx}
=
1,
&\quad
\theta(j) = 1.
\end{array}
\right.
\end{align*}

\end{proof}

\begin{rmk}{\rm
It is straightforward to show that the transformed weights \eqref{stoch-wt} continue to obey the intertwining relations \eqref{RLLa}--\eqref{RLLc} if one performs the replacements $L_x \mapsto \tilde{L}_x$ and $M_x \mapsto \tilde{M}_x$, meaning that the integrability of the models is preserved. In fact, it is actually the weights \eqref{stoch-wt} that one obtains by applying the fusion procedure to the starting $R$-matrix \eqref{Rmat}; the overall factors of $(-s)$ and $(-s^{-1})$ can then be dropped to obtain the models \eqref{s-weights} and \eqref{dual-s-weights}. We refer the reader to Appendices \ref{app:fusion} and \ref{app:RLL} for more details. 

The main reason for introducing the non-stochastic versions $L_x$ and $M_x$ of the weights is for consistency with \cite{Borodin,BorodinP1,BorodinP2,BorodinWheeler}; if one preferred, one could work exclusively with the stochastic weights  $\tilde{L}_x$ and $\tilde{M}_x$ in everything that follows.
}
\end{rmk}

\chapter{Row operators and non-symmetric rational functions}

\section{Space of states and row operators}
\label{ssec:row-ops}

Let $V$ be the infinite-dimensional vector space obtained by taking the linear span of all $n$-tuples of nonnegative integers:  
\begin{align*}
V= {\rm Span}\{\ket{\I}\} =
{\rm Span}\{\ket{i_1,\dots,i_n}\}_{i_1,\dots,i_n \in \mathbb{N}}.
\end{align*}
The vector space $V$ is the basic ingredient that allows us to translate between a lattice model description of partition functions and a purely algebraic one. It is convenient to consider an infinite tensor product of such spaces, 
\begin{align*}
\index{V1@$\mathbb{V}$; space of states}
\mathbb{V} = V_0 \otimes V_1 \otimes V_2 \otimes \cdots,
\end{align*}
where each $V_i$ denotes a copy of $V$. Let $\bigotimes_{k=0}^{\infty} \ket{\I_k}_k$ be a finite state in $\mathbb{V}$, \ie\ assume that there exists $N \in \mathbb{N}$ such that $\I_k = \bm{0}$ for all $k > N$; in what follows only such states are considered. We define two families of linear operators acting on the finite states (their action on linear combinations of such states in $\mathbb{V}$ is then deduced by linearity):
\begin{align}
\label{C-row}
\index{C@$\C_i(x)$; row operators}
\C_i(x)
:
\bigotimes_{k=0}^{\infty}
\ket{\I_k}_k
\mapsto
\sum_{\J_0,\J_1,\ldots \in \mathbb{N}^n}
\left(
\tikz{0.6}{
\draw[lgray,line width=1.5pt,->] (0,0) -- (7,0);
\foreach\x in {1,...,6}{
\draw[lgray,line width=4pt,->] (\x,-1) -- (\x,1);
}
\node[left] at (-0.5,0) {$x \rightarrow$};
\node[left] at (0,0) {\tiny $i$};\node[right] at (7,0) {\tiny $0$};
\node[below] at (6,-1) {\tiny $\cdots$};\node[above] at (6,1) {\tiny $\cdots$};
\node[below] at (5,-1) {\tiny $\cdots$};\node[above] at (5,1) {\tiny $\cdots$};
\node[below] at (4,-1) {\tiny $\cdots$};\node[above] at (4,1) {\tiny $\cdots$};
\node[below] at (3,-1) {\tiny $\J_2$};\node[above] at (3,1) {\tiny $\I_2$};
\node[below] at (2,-1) {\tiny $\J_1$};\node[above] at (2,1) {\tiny $\I_1$};
\node[below] at (1,-1) {\tiny $\J_0$};\node[above] at (1,1) {\tiny $\I_0$};
}
\right)
\bigotimes_{k=0}^{\infty}
\ket{\J_k}_k,
\quad
0 \leq i \leq n,
\end{align}

\begin{align}
\label{B-row}
\index{B@$\B_i(x)$; row operators}
\B_i(x)
:
\bigotimes_{k=0}^{\infty}
\ket{\I_k}_k
\mapsto
\sum_{\J_0,\J_1,\ldots \in \mathbb{N}^n}
\left(
\tikz{0.6}{
\draw[lgray,line width=1.5pt,<-] (0,0) -- (7,0);
\foreach\x in {1,...,6}{
\draw[lgray,line width=4pt,->] (\x,-1) -- (\x,1);
}
\node[left] at (-0.5,0) {$x \leftarrow$};
\node[left] at (0,0) {\tiny $i$};\node[right] at (7,0) {\tiny $0$};
\node[below] at (6,-1) {\tiny $\cdots$};\node[above] at (6,1) {\tiny $\cdots$};
\node[below] at (5,-1) {\tiny $\cdots$};\node[above] at (5,1) {\tiny $\cdots$};
\node[below] at (4,-1) {\tiny $\cdots$};\node[above] at (4,1) {\tiny $\cdots$};
\node[below] at (3,-1) {\tiny $\J_2$};\node[above] at (3,1) {\tiny $\I_2$};
\node[below] at (2,-1) {\tiny $\J_1$};\node[above] at (2,1) {\tiny $\I_1$};
\node[below] at (1,-1) {\tiny $\J_0$};\node[above] at (1,1) {\tiny $\I_0$};
}
\right)
\bigotimes_{k=0}^{\infty}
\ket{\J_k}_k,
\quad
0 \leq i \leq n,
\end{align}
where the expansion coefficients in the sums are one-row partition functions in the higher-spin vertex models defined in Section \ref{ssec:models}, on a semi-infinite lattice. It is easy to see that these partition functions are well defined and are completely factorized over the Boltzmann weights \eqref{s-weights} and \eqref{dual-s-weights}, for any fixed finite state $\bigotimes_{k=0}^{\infty} \ket{\J_k}_k$ in the summations \eqref{C-row} and \eqref{B-row}. Indeed, once the state $(\J_0,\J_1,\J_2,\dots)$ at the base of these partition functions is fixed, one finds that all internal horizontal edges assume a unique state (by conservation of lattice paths) and that only finitely many vertices have a Boltzmann weight not equal to $1$ (since sufficiently far to the right, only empty vertices occur).

\section{Commutation relations}

We proceed to derive the necessary commutation relations between the row operators introduced in Section \ref{ssec:row-ops}. They can all be recovered by iteration of the local intertwining equations \eqref{graph-RLLa}--\eqref{graph-RLLc}, as we now show.
\begin{thm}
\label{thm:CC}
Fix two nonnegative integers $i,j$ such that $0 \leq i,j \leq n$. The row operators \eqref{C-row} satisfy the exchange relations
\begin{align}
\label{CC=}
\C_i(x) \C_i(y) &= \C_i(y) \C_i(x),
\\
\label{CC<}
q\, \C_i(x) \C_j(y) &= \frac{x-qy}{x-y} \C_j(y) \C_i(x) - \frac{(1-q)x}{x-y} \C_j(x) \C_i(y),
\quad\quad
i<j,
\\
\label{CC>}
\C_i(x) \C_j(y) &= \frac{x-qy}{x-y} \C_j(y) \C_i(x) - \frac{(1-q)y}{x-y} \C_j(x) \C_i(y),
\quad\quad
i>j.
\end{align}
\end{thm}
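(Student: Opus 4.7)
The plan is to derive all three identities by transporting an auxiliary $R$-vertex across the full two-row semi-infinite lattice representing $\C_a(y)\,\C_b(x)$, one column at a time, via the intertwining equation \eqref{graph-RLLa}. This is a standard ``train argument'' from the algebraic Bethe ansatz, adapted to the semi-infinite setup of Section \ref{ssec:row-ops}.

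In the conventions of \eqref{C-row} the top edge of an $L$-vertex is the operator's input and the bottom its output, so in the product $\C_a(y)\,\C_b(x)$ the operator $\C_a(y)$ sits on the bottom row (rapidity $y$, left state $a$) and $\C_b(x)$ on the top row (rapidity $x$, left state $b$), with both rows exiting to $0$ on the far right. First I attach an auxiliary $R$-vertex $R_{y/x}$ to the far right with both outgoing edges set to $0$; path conservation forces $R_{y/x}(k_2,k_1;0,0)=\delta_{k_1,0}\delta_{k_2,0}$, with value $1$, so this attachment leaves the operator unchanged. Applying \eqref{graph-RLLa} column by column then transports $R$ from right to left; finite support of the state in $\mathbb{V}$ ensures that only finitely many columns carry nontrivial $L$-weights, and $R$ propagates freely through the empty tail. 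Once $R$ sits on the far left, the two rows to its right have their rapidities swapped (top rapidity $y$, bottom rapidity $x$), and the left-boundary inputs are the $R$-outputs $k_2,k_1$ summed against the weight $R_{y/x}(a,b;k_2,k_1)$. Reading off the resulting picture as an operator identity yields the master commutation
\begin{equation*}
\C_a(y)\,\C_b(x) \;=\; \sum_{k_1,k_2=0}^{n} R_{y/x}(a,b;k_2,k_1)\,\C_{k_1}(x)\,\C_{k_2}(y),\qquad a,b\in\{0,\dots,n\}.
\end{equation*}

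The three claimed relations are then routine specializations. Taking $a=b=i$, only the diagonal term $(k_2,k_1)=(i,i)$ survives with weight $1$, giving $\C_i(y)\C_i(x)=\C_i(x)\C_i(y)$, which is \eqref{CC=}. For $(a,b)=(j,i)$ with $i<j$, only $(k_2,k_1)\in\{(j,i),(i,j)\}$ contribute, with $R$-weights $q(x-y)/(x-qy)$ and $(1-q)x/(x-qy)$ respectively; clearing denominators and solving for $\C_i(x)\C_j(y)$ reproduces \eqref{CC<}, the factor $q$ on the left-hand side coming from the ``larger-first'' diagonal entry $R_z(j,i;j,i)=q(1-z)/(1-qz)$. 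The case $(a,b)=(j,i)$ with $i>j$ is handled identically, except that $(j,i)$ is now ``smaller-first'', so its diagonal $R$-entry is $(1-z)/(1-qz)$ and the analogous rearrangement produces \eqref{CC>} with no $q$-factor on the left.

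The main conceptual step, and the only one that is not mechanical, is the semi-infinite train argument of the second paragraph: justifying that \eqref{graph-RLLa} may be iterated across infinitely many columns. This reduces to finite support of the state together with the trivial application of \eqref{graph-RLLa} on empty columns. Everything else is elementary bookkeeping with the explicit entries \eqref{R-weights-a}--\eqref{R-weights-bc}.
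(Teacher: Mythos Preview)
Your proof is correct and follows essentially the same approach as the paper's own proof: both derive the global two-row intertwining identity from iterated applications of \eqref{graph-RLLa}, use that the $R$-vertex on the right with edges $(0,0)$ is trivially $1$, and then read off the three relations by specializing the left-boundary colours and looking up the relevant entries in \eqref{R-weights-bc}. The only cosmetic difference is that you phrase it as transporting the $R$-vertex right-to-left and record the resulting master commutation $\C_a(y)\C_b(x)=\sum_{k_1,k_2}R_{y/x}(a,b;k_2,k_1)\C_{k_1}(x)\C_{k_2}(y)$ explicitly before specializing, whereas the paper writes down the $(N{+}1)$-site identity \eqref{global-RLLa} directly and then specializes $i_1,i_2,j_1,j_2$.
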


\begin{proof}
Start by writing an $(N+1)$-site version of the intertwining equation \eqref{graph-RLLa},
\begin{align}
\label{global-RLLa}
\sum_{0 \leq k_1,k_2 \leq n}
\
\sum_{\K_0,\dots,\K_N \in \mathbb{N}^n}
\tikz{0.6}{
\draw[densely dotted] (-1.25,0.25) arc (-45:45:{1/(2*sqrt(2))});
\node at (1,0.5) {$\cdots$};
\draw[lgray,line width=1.5pt,->]
(-2,1) node[above,scale=0.6] {\color{black} $i_1$} -- (-1,0) node[below,scale=0.6] {\color{black} $k_1$} -- (3,0) node[right,scale=0.6] {\color{black} $j_1$};
\draw[lgray,line width=1.5pt,->] 
(-2,0) node[below,scale=0.6] {\color{black} $i_2$} -- (-1,1) node[above,scale=0.6] {\color{black} $k_2$} -- (3,1) node[right,scale=0.6] {\color{black} $j_2$};
\draw[lgray,line width=4pt,->]
(0,-1) node[below,scale=0.6] {\color{black} $\I_0$} -- (0,0.5) node[scale=0.6] {\color{black} $\K_0$} -- (0,2) node[above,scale=0.6] {\color{black} $\J_0$};
\draw[lgray,line width=4pt,->]
(2,-1) node[below,scale=0.6] {\color{black} $\I_N$} -- (2,0.5) node[scale=0.6] {\color{black} $\K_N$} -- (2,2) node[above,scale=0.6] {\color{black} $\J_N$};
\node[left] at (-2.2,1) {$x \rightarrow$};
\node[left] at (-2.2,0) {$y \rightarrow$};
}
=
\sum_{0 \leq k_1,k_2 \leq n}
\
\sum_{\K_0,\dots,\K_N \in \mathbb{N}^n}
\tikz{0.6}{
\draw[densely dotted] (1.75,0.25) arc (-45:45:{1/(2*sqrt(2))});
\node at (-1,0.5) {$\cdots$};
\draw[lgray,line width=1.5pt,->] 
(-3,1) node[left,scale=0.6] {\color{black} $i_1$} -- (1,1) node[above,scale=0.6] {\color{black} $k_1$} -- (2,0) node[below,scale=0.6] {\color{black} $j_1$};
\draw[lgray,line width=1.5pt,->] 
(-3,0) node[left,scale=0.6] {\color{black} $i_2$} -- (1,0) node[below,scale=0.6] {\color{black} $k_2$} -- (2,1) node[above,scale=0.6] {\color{black} $j_2$};
\draw[lgray,line width=4pt,->] 
(-2,-1) node[below,scale=0.6] {\color{black} $\I_0$} -- (-2,0.5) node[scale=0.6] {\color{black} $\K_0$} -- (-2,2) node[above,scale=0.6] {\color{black} $\J_0$};
\draw[lgray,line width=4pt,->] 
(0,-1) node[below,scale=0.6] {\color{black} $\I_N$} -- (0,0.5) node[scale=0.6] {\color{black} $\K_N$} -- (0,2) node[above,scale=0.6] {\color{black} $\J_N$};
\node[left] at (-3.5,1) {$x \rightarrow$};
\node[left] at (-3.5,0) {$y \rightarrow$};
}
\end{align}
which holds for any $N \geq 0$, and fixed indices $i_1,i_2,j_1,j_2, (\I_0,\dots,\I_N), (\J_0,\dots,\J_N)$. The proof of \eqref{global-RLLa} follows by $N+1$ applications of the local relation \eqref{graph-RLLa} and is standard in quantum integrable models (see, for example, \cite{JimboM}); we will not repeat it here.

The proof of \eqref{CC=} follows by making the choice $i_1 = i_2 = i$ and $j_1 = j_2 = 0$ in \eqref{global-RLLa}. On the left hand side, one finds that the summation over $k_1, k_2$ is trivialized: necessarily, $k_1 = k_2 = i$, since this is the only choice for which the $R$-vertex remains non-vanishing. The resulting $R$-vertex is of the type \eqref{R-weights-a}, with weight $1$, allowing us to suppress it. A similar argument applies to the right hand side of \eqref{global-RLLa}: the summation over $k_1,k_2$ is restricted to $k_1=k_2=0$, with the resulting $R$-vertex again being of the type \eqref{R-weights-a}. We therefore find the relation
\begin{align}
\label{finiteCC=}
\sum_{\K_0,\dots,\K_N \in \mathbb{N}^n}
\tikz{0.6}{
\node at (1,0.5) {$\cdots$};
\draw[lgray,line width=1.5pt,->]
(-1,0) node[left,scale=0.6] {\color{black} $i$} -- (3,0) node[right,scale=0.6] {\color{black} $0$};
\draw[lgray,line width=1.5pt,->] 
(-1,1) node[left,scale=0.6] {\color{black} $i$} -- (3,1) node[right,scale=0.6] {\color{black} $0$};
\draw[lgray,line width=4pt,->]
(0,-1) node[below,scale=0.6] {\color{black} $\I_0$} -- (0,0.5) node[scale=0.6] {\color{black} $\K_0$} -- (0,2) node[above,scale=0.6] {\color{black} $\J_0$};
\draw[lgray,line width=4pt,->]
(2,-1) node[below,scale=0.6] {\color{black} $\I_N$} -- (2,0.5) node[scale=0.6] {\color{black} $\K_N$} -- (2,2) node[above,scale=0.6] {\color{black} $\J_N$};
\node[left] at (-1.5,1) {$y \rightarrow$};
\node[left] at (-1.5,0) {$x \rightarrow$};
}
=
\sum_{\K_0,\dots,\K_N \in \mathbb{N}^n}
\tikz{0.6}{
\node at (-1,0.5) {$\cdots$};
\draw[lgray,line width=1.5pt,->] 
(-3,1) node[left,scale=0.6] {\color{black} $i$} -- (1,1) node[right,scale=0.6] {\color{black} $0$};
\draw[lgray,line width=1.5pt,->] 
(-3,0) node[left,scale=0.6] {\color{black} $i$} -- (1,0) node[right,scale=0.6] {\color{black} $0$};
\draw[lgray,line width=4pt,->] 
(-2,-1) node[below,scale=0.6] {\color{black} $\I_0$} -- (-2,0.5) node[scale=0.6] {\color{black} $\K_0$} -- (-2,2) node[above,scale=0.6] {\color{black} $\J_0$};
\draw[lgray,line width=4pt,->] 
(0,-1) node[below,scale=0.6] {\color{black} $\I_N$} -- (0,0.5) node[scale=0.6] {\color{black} $\K_N$} -- (0,2) node[above,scale=0.6] {\color{black} $\J_N$};
\node[left] at (-3.5,1) {$x \rightarrow$};
\node[left] at (-3.5,0) {$y \rightarrow$};
}
\end{align}
which holds independently of the value of $N$. The rows in the relation \eqref{finiteCC=} are stable under the limit $N \rightarrow \infty$, assuming that $(\I_0,\dots,\I_N), (\J_0,\dots,\J_N)$ converge to finite states in $\mathbb{V}$, at which point we recognise precisely the matrix elements of the commutation relation \eqref{CC=}.

Equations \eqref{CC<} and \eqref{CC>} follow by making a different choice of the free indices in \eqref{global-RLLa}, namely, $i_1 = i$, $i_2 = j$ with $i \not= j$ and $j_1 = j_2 = 0$. Performing this specialization in \eqref{global-RLLa} leads to a more involved relation than in the last case. The sum over $k_1,k_2$ on the right hand side of the equation continues to be restricted to $k_1 = k_2 = 0$, as previously, while the sum on the left hand side now gives rise to two possible terms. We thus obtain the relation
\begin{multline*}
\sum_{\K_0,\dots,\K_N \in \mathbb{N}^n}
\left(
\tikz{0.6}{
\draw[densely dotted] (-1.25,0.25) arc (-45:45:{1/(2*sqrt(2))});
\node at (1,0.5) {$\cdots$};
\draw[lgray,line width=1.5pt,->]
(-2,1) node[above,scale=0.6] {\color{black} $i$} -- (-1,0) node[below,scale=0.6] {\color{black} $i$} -- (3,0) node[right,scale=0.6] {\color{black} $0$};
\draw[lgray,line width=1.5pt,->] 
(-2,0) node[below,scale=0.6] {\color{black} $j$} -- (-1,1) node[above,scale=0.6] {\color{black} $j$} -- (3,1) node[right,scale=0.6] {\color{black} $0$};
\draw[lgray,line width=4pt,->]
(0,-1) node[below,scale=0.6] {\color{black} $\I_0$} -- (0,0.5) node[scale=0.6] {\color{black} $\K_0$} -- (0,2) node[above,scale=0.6] {\color{black} $\J_0$};
\draw[lgray,line width=4pt,->]
(2,-1) node[below,scale=0.6] {\color{black} $\I_N$} -- (2,0.5) node[scale=0.6] {\color{black} $\K_N$} -- (2,2) node[above,scale=0.6] {\color{black} $\J_N$};
\node[left] at (-2.2,1) {$x \rightarrow$};
\node[left] at (-2.2,0) {$y \rightarrow$};
}
+
\tikz{0.6}{
\draw[densely dotted] (-1.25,0.25) arc (-45:45:{1/(2*sqrt(2))});
\node at (1,0.5) {$\cdots$};
\draw[lgray,line width=1.5pt,->]
(-2,1) node[above,scale=0.6] {\color{black} $i$} -- (-1,0) node[below,scale=0.6] {\color{black} $j$} -- (3,0) node[right,scale=0.6] {\color{black} $0$};
\draw[lgray,line width=1.5pt,->] 
(-2,0) node[below,scale=0.6] {\color{black} $j$} -- (-1,1) node[above,scale=0.6] {\color{black} $i$} -- (3,1) node[right,scale=0.6] {\color{black} $0$};
\draw[lgray,line width=4pt,->]
(0,-1) node[below,scale=0.6] {\color{black} $\I_0$} -- (0,0.5) node[scale=0.6] {\color{black} $\K_0$} -- (0,2) node[above,scale=0.6] {\color{black} $\J_0$};
\draw[lgray,line width=4pt,->]
(2,-1) node[below,scale=0.6] {\color{black} $\I_N$} -- (2,0.5) node[scale=0.6] {\color{black} $\K_N$} -- (2,2) node[above,scale=0.6] {\color{black} $\J_N$};
\node[left] at (-2.2,1) {$x \rightarrow$};
\node[left] at (-2.2,0) {$y \rightarrow$};
}
\right)
\\
=
\sum_{\K_0,\dots,\K_N \in \mathbb{N}^n}
\tikz{0.6}{
\node at (-1,0.5) {$\cdots$};
\draw[lgray,line width=1.5pt,->] 
(-3,1) node[left,scale=0.6] {\color{black} $i$} -- (1,1) node[right,scale=0.6] {\color{black} $0$};
\draw[lgray,line width=1.5pt,->] 
(-3,0) node[left,scale=0.6] {\color{black} $j$} -- (1,0) node[right,scale=0.6] {\color{black} $0$};
\draw[lgray,line width=4pt,->] 
(-2,-1) node[below,scale=0.6] {\color{black} $\I_0$} -- (-2,0.5) node[scale=0.6] {\color{black} $\K_0$} -- (-2,2) node[above,scale=0.6] {\color{black} $\J_0$};
\draw[lgray,line width=4pt,->] 
(0,-1) node[below,scale=0.6] {\color{black} $\I_N$} -- (0,0.5) node[scale=0.6] {\color{black} $\K_N$} -- (0,2) node[above,scale=0.6] {\color{black} $\J_N$};
\node[left] at (-3.5,1) {$x \rightarrow$};
\node[left] at (-3.5,0) {$y \rightarrow$};
}
\end{multline*}
in which the two $R$-vertices appearing on the left hand side are given by \eqref{R-weights-bc}. Replacing these vertices by the rational functions in $x,y$ that they represent, and again taking $N \rightarrow \infty$ in a stable way, we recover the matrix elements of the equations
\begin{align*}
\frac{q(1-y/x)}{(1-q y/x)}
\C_i(x) \C_j(y)
+
\frac{(1-q)}{(1-qy/x)}
\C_j(x) \C_i(y)
&=
\C_j(y) \C_i(x),
\qquad
i < j,
\\
\frac{(1-y/x)}{(1-q y/x)}
\C_i(x) \C_j(y)
+
\frac{(1-q)y/x}{(1-qy/x)}
\C_j(x) \C_i(y)
&=
\C_j(y) \C_i(x),
\qquad
i > j.
\end{align*}
After rearrangement, these become precisely \eqref{CC<} and \eqref{CC>}, respectively.

\end{proof}

\begin{rmk}{\rm
The commutation relations of Theorem \ref{thm:CC} are familiar in the algebraic Bethe Ansatz approach to solvable lattice models; see \cite{Faddeev,KorepinBI}. The first relation \eqref{CC=} just expresses the fact that operators of the same type commute, regardless of their rapidity arguments. The second and third relations \eqref{CC<}, \eqref{CC>} are of the same form as identities used to compute the action of a transfer matrix on a Bethe vector (see, for example, \cite[Section 4]{Faddeev}).

We will use these relations to derive properties of the non-symmetric rational functions $f_{\mu}$, such as symmetry under the interchange of two variables (via \eqref{CC=}) or their transformations under the action of Hecke divided-difference operators (via \eqref{CC<} and \eqref{CC>}).
}
\end{rmk}

\begin{thm}
Fix two nonnegative integers $i,j$ such that $0 \leq i,j \leq n$, and complex parameters $x,y$ such that
\begin{align}
\label{weight-condition}
\left|
\frac{x-s}{1-sx}
\cdot
\frac{y-s}{1-sy}
\right|
<
1.
\end{align}
The row operators \eqref{C-row} and \eqref{B-row} obey the following commutation relations:
\begin{align}
\label{CB=}
\C_i(x) \B_i(y) &=
\frac{1-q^{-1}}{1-xy} \sum_{k < i} \B_k(y) \C_k(x)
+
\B_i(y) \C_i(x)
-
\frac{(1-q)xy}{1-xy} \sum_{k > i} \B_k(y) \C_k(x),
\\
\label{CB<}
\C_i(x) \B_j(y) &= \frac{1-qxy}{1-xy} \B_j(y) \C_i(x),
\qquad
i<j,
\\
\label{CB>}
q\, \C_i(x) \B_j(y) &= \frac{1-qxy}{1-xy} \B_j(y) \C_i(x),
\qquad
i>j.
\end{align}
\end{thm}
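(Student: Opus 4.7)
The plan is to iterate the mixed intertwining relation \eqref{graph-RLLb} column-by-column via the ``train'' argument exactly as in the proof of Theorem~3.1, except now the $R$-vertex to be transported carries spectral parameter $\b{q}\b{x}\b{y} = 1/(qxy)$ and shuffles between an $L_x$-row (on the bottom) and an $M_y$-row (on the top). I will specialise the external edge labels to $i_1 = i$, $j_3 = j$ and $j_1 = i_3 = 0$, which are precisely the boundary conditions built into the definitions \eqref{C-row}, \eqref{B-row} of $\C_i(x)$ and $\B_j(y)$.

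When the $R$-vertex is transported to the right end, colour conservation at $R_{\b{q}\b{x}\b{y}}(0, k_1; k_3, 0)$ forces $k_3 = k_1 =: m$. The $m = 0$ contribution is exactly the matrix element of $\C_i(x)\B_j(y)$ on finite states, while each $m \geq 1$ contribution involves two non-standard rows in which a colour-$m$ path propagates all the way to the right end of both horizontal lines. Beyond the finite support of the external vertical states, every empty vertex in the $L_x$-row contributes $(x-s)/(1-sx)$ and every empty vertex in the $M_y$-row contributes $(y-s)/(1-sy)$; under the hypothesis \eqref{weight-condition}, the combined infinite tail equals $\bigl((x-s)(y-s)/((1-sx)(1-sy))\bigr)^{N}$ and vanishes as $N \to \infty$. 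Only the $m=0$ contribution therefore survives in the limit.

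When the $R$-vertex is instead transported to the left end, both rows retain their standard semi-infinite form with $\C_{k_1}(x)$ on top and $\B_{k_3}(y)$ on bottom, so equating the two sides yields the clean operator identity
\begin{equation*}
\C_i(x)\, \B_j(y) = \sum_{k_1, k_3} R_{\b{q}\b{x}\b{y}}(k_3, i; j, k_1)\, \B_{k_3}(y)\, \C_{k_1}(x).
\end{equation*}
The three relations \eqref{CB=}, \eqref{CB<}, \eqref{CB>} then drop out by case analysis of the $R$-matrix entries using \eqref{R-weights-a}--\eqref{R-weights-bc} with $z = 1/(qxy)$. For $i < j$, colour conservation leaves only $(k_3, k_1) = (j, i)$, weighted by $\frac{q(1-z)}{1-qz} = \frac{1-qxy}{1-xy}$; for $i > j$ the same configuration survives but with weight $\frac{1-z}{1-qz} = \frac{1-qxy}{q(1-xy)}$ (hence the extra factor of $q$ on the left-hand side of \eqref{CB>}); for $i = j$ the admissible configurations are $k_3 = k_1 = k$ for arbitrary $k$, producing the diagonal term $\B_i(y)\C_i(x)$ together with sums over $k < i$ and $k > i$ whose $R$-matrix weights simplify to $\frac{1-q^{-1}}{1-xy}$ and $-\frac{(1-q)xy}{1-xy}$, respectively.

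The only genuinely delicate step is the tail-vanishing argument: the iterated identity is algebraically exact for every finite $N$, but without \eqref{weight-condition} the $m \geq 1$ terms on the left-hand side do not descend to operators on $\mathbb{V}$ and the semi-infinite limit would be ill-defined. Condition \eqref{weight-condition} is exactly what suppresses these leaky contributions and guarantees that the resulting commutation relations make sense on the space of finite states.
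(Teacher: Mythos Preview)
Your proposal is correct and follows essentially the same approach as the paper: iterate the local intertwining equation \eqref{graph-RLLb} to an $(N+1)$-site version, specialise the external labels to $i_1=i$, $j_3=j$, $i_3=j_1=0$, then pass to the semi-infinite limit using the tail-vanishing argument under \eqref{weight-condition}. Your presentation is slightly more streamlined in that you package all three cases into the single master identity $\C_i(x)\B_j(y)=\sum_{k_1,k_3}R_{\b{q}\b{x}\b{y}}(k_3,i;j,k_1)\B_{k_3}(y)\C_{k_1}(x)$ and then read off \eqref{CB<}, \eqref{CB>}, \eqref{CB=} by case analysis, whereas the paper treats $i\neq j$ and $i=j$ separately; but this is a cosmetic difference, not a different argument.
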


\begin{proof}
As in the proof of Theorem \ref{thm:CC}, we begin by writing an $(N+1)$-site version of the appropriate intertwining equation, in this case \eqref{graph-RLLb}:
\begin{align}
\label{global-RLLb}
\sum_{0 \leq k_1,k_2 \leq n}
\
\sum_{\K_0,\dots,\K_N \in \mathbb{N}^n}
\tikz{0.6}{
\draw[densely dotted] (1.75,0.75) arc (45:135:{1/(2*sqrt(2))});
\node at (-1,0.5) {$\cdots$};
\draw[lgray,line width=1.5pt,<-] 
(-3,1) node[left,scale=0.6] {\color{black} $j_2$} -- (1,1) node[above,scale=0.6] {\color{black} $k_2$} -- (2,0) node[below,scale=0.6] {\color{black} $i_2$};
\draw[lgray,line width=1.5pt,->] 
(-3,0) node[left,scale=0.6] {\color{black} $i_1$} -- (1,0) node[below,scale=0.6] {\color{black} $k_1$} -- (2,1) node[above,scale=0.6] {\color{black} $j_1$};
\draw[lgray,line width=4pt,->] 
(-2,-1) node[below,scale=0.6] {\color{black} $\I_0$} -- (-2,0.5) node[scale=0.6] {\color{black} $\K_0$} -- (-2,2) node[above,scale=0.6] {\color{black} $\J_0$};
\draw[lgray,line width=4pt,->] 
(0,-1) node[below,scale=0.6] {\color{black} $\I_N$} -- (0,0.5) node[scale=0.6] {\color{black} $\K_N$} -- (0,2) node[above,scale=0.6] {\color{black} $\J_N$};
\node[left] at (-3.5,1) {$y \leftarrow$};
\node[left] at (-3.5,0) {$x \rightarrow$};
}
=
\sum_{0 \leq k_1,k_2 \leq n}
\
\sum_{\K_0,\dots,\K_N \in \mathbb{N}^n}
\tikz{0.6}{
\draw[densely dotted] (-1.25,0.75) arc (45:135:{1/(2*sqrt(2))});
\node at (1,0.5) {$\cdots$};
\draw[lgray,line width=1.5pt,<-]
(-2,1) node[above,scale=0.6] {\color{black} $j_2$} -- (-1,0) node[below,scale=0.6] {\color{black} $k_2$} -- (3,0) node[right,scale=0.6] {\color{black} $i_2$};
\draw[lgray,line width=1.5pt,->] 
(-2,0) node[below,scale=0.6] {\color{black} $i_1$} -- (-1,1) node[above,scale=0.6] {\color{black} $k_1$} -- (3,1) node[right,scale=0.6] {\color{black} $j_1$};
\draw[lgray,line width=4pt,->]
(0,-1) node[below,scale=0.6] {\color{black} $\I_0$} -- (0,0.5) node[scale=0.6] {\color{black} $\K_0$} -- (0,2) node[above,scale=0.6] {\color{black} $\J_0$};
\draw[lgray,line width=4pt,->]
(2,-1) node[below,scale=0.6] {\color{black} $\I_N$} -- (2,0.5) node[scale=0.6] {\color{black} $\K_N$} -- (2,2) node[above,scale=0.6] {\color{black} $\J_N$};
\node[left] at (-2.2,1) {$y \leftarrow$};
\node[left] at (-2.2,0) {$x \rightarrow$};
}
\end{align}
where the $R$-vertex on either side of the equation carries spectral parameter $\b{q}\b{x}\b{y}$.

Let us commence with the proof of \eqref{CB<} and \eqref{CB>}. These identities follow from \eqref{global-RLLb} by choosing the external indices to be $i_1 = i$, $j_2 = j$ with $i \not= j$, and $i_2 = j_1 = 0$. Since $i \not= j$, one finds that the right hand side of \eqref{global-RLLb} reduces to a single term, as $k_1 = i$, $k_2 = j$ are the only possible choices giving a non-vanishing $R$-vertex. The left hand side of \eqref{global-RLLb} does not reduce to a unique term, but it does simplify to a single sum, since we require $k_1 = k_2$ by conservation arguments. Putting all of this together, we arrive at the identity
\begin{align}
\label{pre-CB<>}
\sum_{0 \leq k \leq n}
\
\sum_{\K_0,\dots,\K_N \in \mathbb{N}^n}
\tikz{0.6}{
\draw[densely dotted] (1.75,0.75) arc (45:135:{1/(2*sqrt(2))});
\node at (-1,0.5) {$\cdots$};
\draw[lgray,line width=1.5pt,<-] 
(-3,1) node[left,scale=0.6] {\color{black} $j$} -- (1,1) node[above,scale=0.6] {\color{black} $k$} -- (2,0) node[below,scale=0.6] {\color{black} $0$};
\draw[lgray,line width=1.5pt,->] 
(-3,0) node[left,scale=0.6] {\color{black} $i$} -- (1,0) node[below,scale=0.6] {\color{black} $k$} -- (2,1) node[above,scale=0.6] {\color{black} $0$};
\draw[lgray,line width=4pt,->] 
(-2,-1) node[below,scale=0.6] {\color{black} $\I_0$} -- (-2,0.5) node[scale=0.6] {\color{black} $\K_0$} -- (-2,2) node[above,scale=0.6] {\color{black} $\J_0$};
\draw[lgray,line width=4pt,->] 
(0,-1) node[below,scale=0.6] {\color{black} $\I_N$} -- (0,0.5) node[scale=0.6] {\color{black} $\K_N$} -- (0,2) node[above,scale=0.6] {\color{black} $\J_N$};
\node[left] at (-3.5,1) {$y \leftarrow$};
\node[left] at (-3.5,0) {$x \rightarrow$};
}
=
\sum_{\K_0,\dots,\K_N \in \mathbb{N}^n}
\tikz{0.6}{
\draw[densely dotted] (-1.25,0.75) arc (45:135:{1/(2*sqrt(2))});
\node at (1,0.5) {$\cdots$};
\draw[lgray,line width=1.5pt,<-]
(-2,1) node[above,scale=0.6] {\color{black} $j$} -- (-1,0) node[below,scale=0.6] {\color{black} $j$} -- (3,0) node[right,scale=0.6] {\color{black} $0$};
\draw[lgray,line width=1.5pt,->] 
(-2,0) node[below,scale=0.6] {\color{black} $i$} -- (-1,1) node[above,scale=0.6] {\color{black} $i$} -- (3,1) node[right,scale=0.6] {\color{black} $0$};
\draw[lgray,line width=4pt,->]
(0,-1) node[below,scale=0.6] {\color{black} $\I_0$} -- (0,0.5) node[scale=0.6] {\color{black} $\K_0$} -- (0,2) node[above,scale=0.6] {\color{black} $\J_0$};
\draw[lgray,line width=4pt,->]
(2,-1) node[below,scale=0.6] {\color{black} $\I_N$} -- (2,0.5) node[scale=0.6] {\color{black} $\K_N$} -- (2,2) node[above,scale=0.6] {\color{black} $\J_N$};
\node[left] at (-2.2,1) {$y \leftarrow$};
\node[left] at (-2.2,0) {$x \rightarrow$};
}
\end{align}
valid for any $N \geq 0$. Next we will consider the infinite volume limit of \eqref{pre-CB<>}, when it simplifies further. This simplification is analogous to the infinite volume commutation relations obtained in \cite{Borodin,BorodinP1,WheelerZ}. Namely, as $N \rightarrow \infty$ one finds that all terms corresponding to $1 \leq k \leq n$ on the left hand side of \eqref{pre-CB<>} become vanishingly small, yielding $k=0$ as the only surviving term. To see how this works, consider padding the arbitrary states $(\I_0,\dots,\I_N)$ and $(\J_0,\dots,\J_N)$ to the right by $M$ zero vectors, as a precursor to taking $M \rightarrow \infty$. The left hand side of \eqref{pre-CB<>} becomes
\begin{align}
\label{padded}
\sum_{0 \leq k \leq n}
\
\sum_{\K_0,\dots,\K_{N+M} \in \mathbb{N}^n}
\tikz{0.6}{
\draw[densely dotted] (6.75,0.75) arc (45:135:{1/(2*sqrt(2))});
\node at (-1,0.5) {$\cdots$};
\draw[lgray,line width=1.5pt,<-] 
(-3,1) node[left,scale=0.6] {\color{black} $j$} -- (6,1) node[above,scale=0.6] {\color{black} $k$} -- (7,0) node[below,scale=0.6] {\color{black} $0$};
\draw[lgray,line width=1.5pt,->] 
(-3,0) node[left,scale=0.6] {\color{black} $i$} -- (6,0) node[below,scale=0.6] {\color{black} $k$} -- (7,1) node[above,scale=0.6] {\color{black} $0$};
\draw[lgray,line width=4pt,->] 
(-2,-1) node[below,scale=0.6] {\color{black} $\I_0$} -- (-2,0.5) node[scale=0.6] {\color{black} $\K_0$} -- (-2,2) node[above,scale=0.6] {\color{black} $\J_0$};
\draw[lgray,line width=4pt,->] 
(0,-1) node[below,scale=0.6] {\color{black} $\I_N$} -- (0,0.5) node[scale=0.6] {\color{black} $\K_N$} -- (0,2) node[above,scale=0.6] {\color{black} $\J_N$};
\foreach\x in {2,...,5}{
\draw[lgray,line width=4pt,->] 
(\x,-1) node[below,scale=0.6] {\color{black} $\bm{0}$} -- (\x,0.5) -- (\x,2) node[above,scale=0.6] {\color{black} $\bm{0}$};
}
\node[scale=0.6] at (2,0.5) {$\K_{N+1}$};
\node[scale=0.6] at (3.5,0.5) {$\cdots$};
\node[scale=0.6] at (5,0.5) {$\K_{N+M}$};
\node[scale=0.8] at (1,1) {$\star$};
\node[scale=0.8] at (1,0) {$\star$};
\node[left] at (-3.5,1) {$y \leftarrow$};
\node[left] at (-3.5,0) {$x \rightarrow$};
}
\end{align}
and for $k \geq 1$ we observe a trivialization of the ``padded'' region in \eqref{padded}: since $k \geq 1$ corresponds with a lattice path of colour $k$ being ejected/injected at the right edges of the lattice, these paths must originate somewhere. Given the empty states at the bottom and top of the padded region, we see that the paths of colour $k$ must arise at the positions marked $\star$. This, in turn, completely saturates the horizontal edges in the padded region by paths. As a consequence, all of the vertical edge states $\K_{N+1},\dots,\K_{N+M}$ are forced to assume the value $\bm{0}$. We can then compute the contribution of the padded region using the vertex weights \eqref{s-weights} and \eqref{dual-s-weights}: it produces an overall factor of
\begin{align*}
\left( \frac{x-s}{1-sx} \cdot \frac{y-s}{1-sy} \right)^M,
\end{align*}
which vanishes as $M \rightarrow \infty$, due to the assumption \eqref{weight-condition}. It follows that, as $(\I_0,\dots,\I_N), (\J_0,\dots,\J_N)$ converge to finite states in $\mathbb{V}$, only the term $k=0$ produces a non-zero contribution to the left hand side of \eqref{pre-CB<>}. We can then read off the following relations:
\begin{align*}
\C_i(x) \B_j(y)
&=
\frac{q(1-\b{q}\b{x}\b{y})}{(1-\b{x}\b{y})}
\B_j(y) \C_i(x),
\qquad
i<j,
\\
\C_i(x) \B_j(y)
&=
\frac{(1-\b{q}\b{x}\b{y})}{(1-\b{x}\b{y})}
\B_j(y) \C_i(x),
\qquad
i>j,
\end{align*}
which rearrange to yield \eqref{CB<} and \eqref{CB>}.

The proof of \eqref{CB=} is analogous. One begins by choosing the edge states in \eqref{global-RLLb} to be $i_1 = j_2 = i$ and $i_2 = j_1 = 0$, and it then reads
\begin{align}
\label{pre-CB=}
\sum_{0 \leq k \leq n}
\
\sum_{\K_0,\dots,\K_N \in \mathbb{N}^n}
\tikz{0.6}{
\draw[densely dotted] (1.75,0.75) arc (45:135:{1/(2*sqrt(2))});
\node at (-1,0.5) {$\cdots$};
\draw[lgray,line width=1.5pt,<-] 
(-3,1) node[left,scale=0.6] {\color{black} $i$} -- (1,1) node[above,scale=0.6] {\color{black} $k$} -- (2,0) node[below,scale=0.6] {\color{black} $0$};
\draw[lgray,line width=1.5pt,->] 
(-3,0) node[left,scale=0.6] {\color{black} $i$} -- (1,0) node[below,scale=0.6] {\color{black} $k$} -- (2,1) node[above,scale=0.6] {\color{black} $0$};
\draw[lgray,line width=4pt,->] 
(-2,-1) node[below,scale=0.6] {\color{black} $\I_0$} -- (-2,0.5) node[scale=0.6] {\color{black} $\K_0$} -- (-2,2) node[above,scale=0.6] {\color{black} $\J_0$};
\draw[lgray,line width=4pt,->] 
(0,-1) node[below,scale=0.6] {\color{black} $\I_N$} -- (0,0.5) node[scale=0.6] {\color{black} $\K_N$} -- (0,2) node[above,scale=0.6] {\color{black} $\J_N$};
\node[left] at (-3.5,1) {$y \leftarrow$};
\node[left] at (-3.5,0) {$x \rightarrow$};
}
=
\sum_{0 \leq k \leq n}
\
\sum_{\K_0,\dots,\K_N \in \mathbb{N}^n}
\tikz{0.6}{
\draw[densely dotted] (-1.25,0.75) arc (45:135:{1/(2*sqrt(2))});
\node at (1,0.5) {$\cdots$};
\draw[lgray,line width=1.5pt,<-]
(-2,1) node[above,scale=0.6] {\color{black} $i$} -- (-1,0) node[below,scale=0.6] {\color{black} $k$} -- (3,0) node[right,scale=0.6] {\color{black} $0$};
\draw[lgray,line width=1.5pt,->] 
(-2,0) node[below,scale=0.6] {\color{black} $i$} -- (-1,1) node[above,scale=0.6] {\color{black} $k$} -- (3,1) node[right,scale=0.6] {\color{black} $0$};
\draw[lgray,line width=4pt,->]
(0,-1) node[below,scale=0.6] {\color{black} $\I_0$} -- (0,0.5) node[scale=0.6] {\color{black} $\K_0$} -- (0,2) node[above,scale=0.6] {\color{black} $\J_0$};
\draw[lgray,line width=4pt,->]
(2,-1) node[below,scale=0.6] {\color{black} $\I_N$} -- (2,0.5) node[scale=0.6] {\color{black} $\K_N$} -- (2,2) node[above,scale=0.6] {\color{black} $\J_N$};
\node[left] at (-2.2,1) {$y \leftarrow$};
\node[left] at (-2.2,0) {$x \rightarrow$};
}
\end{align}
In contrast with the proof of \eqref{CB<} and \eqref{CB>}, we are forced to maintain a sum over $k$ on the right hand side of \eqref{pre-CB=}: this is due to the fact that now $i_1 = j_2 = i$, meaning that the path of colour $i$ is already conserved through the $R$-vertex on the right hand side, allowing its remaining (bottom and right) edges to be summed freely. One then transitions to infinite volume, in the same way as outlined above. The left hand side of \eqref{pre-CB=} again reduces to a single non-vanishing term; conversely, the right hand side remains a sum over $n+1$ terms, each coming with a factor to be determined from the Boltzmann weights \eqref{R-weights-bc}. Computing each of these terms, one finds the matrix elements of the equation
\begin{align*}
\C_i(x) \B_i(y)
=
\frac{(1-q) \b{q}\b{x}\b{y}}{(1-\b{x}\b{y})}
\sum_{k=0}^{i-1}
\B_k(y) \C_k(x)
+
\B_i(y) \C_i(x)
+
\frac{(1-q)}{(1-\b{x}\b{y})}
\sum_{k=i+1}^{n}
\B_k(y) \C_k(x),
\end{align*}
which is just \eqref{CB=}.

\end{proof}

\begin{rmk}
Commutation relations \eqref{CB<} and \eqref{CB>} play a key role in this work; we shall use them to derive summation identities for the rational functions $f_{\mu}$, 
$g_{\mu}$. We have included the relation \eqref{CB=} mainly for completeness; although we do not make use of it in subsequent calculations, we feel that it is nevertheless important to note that the commutation between $\C_i(x)$ and $\B_i(y)$ is not simple (in the case of matching indices, $i$). Indeed, it is this relation alone which ensures that Cauchy-type identities with factorized kernels are somewhat rare in the higher-rank setting.
\end{rmk}

\begin{thm}
Fix two nonnegative integers $i,j$ such that $0 \leq i,j \leq n$. The row operators \eqref{B-row} satisfy the exchange relations
\begin{align}
\label{BB=}
\B_i(x) \B_i(y) &= \B_i(y) \B_i(x),
\\
\label{BB<}
q\, \B_i(x) \B_j(y) &= \frac{y-qx}{y-x} \B_j(y) \B_i(x) - \frac{(1-q)x}{y-x} \B_j(x) \B_i(y),
\qquad
i<j,
\\
\label{BB>}
\B_i(x) \B_j(y) &= \frac{y-qx}{y-x} \B_j(y) \B_i(x) - \frac{(1-q)y}{y-x} \B_j(x) \B_i(y),
\qquad
i>j.
\end{align}
\end{thm}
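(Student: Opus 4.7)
My plan mirrors the derivation of Theorem~\ref{thm:CC} step for step: where that proof rested on the $RLL$ intertwining \eqref{graph-RLLa}, I will use the $MMR$ intertwining \eqref{graph-RLLc}. The first step is to iterate \eqref{graph-RLLc} across $N{+}1$ vertical columns to produce a two-row identity equating two partition functions that differ only by the position of the $R$-vertex (far left versus far right), with indices $i_2, i_3, j_2, j_3$ labelling the four external ends of the horizontal $M$-rows and $(\I_0,\dots,\I_N)$, $(\J_0,\dots,\J_N)$ labelling the bottom/top of the vertical lines. This is the standard ``train-argument'' iteration and is identical in spirit to the one that produced \eqref{finiteCC=}.

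Next I will specialize the free indices appropriately. For \eqref{BB=} I set $i_2 = i_3 = 0$ and $j_2 = j_3 = i$: colour conservation at the $R$-vertex collapses the sum over $(k_2,k_3)$ to a single term on each side, of type \eqref{R-weights-a} with weight $1$. The remaining two-row partition functions then stabilize under $N \to \infty$ to yield precisely the matrix elements of $\B_i(x)\B_i(y) = \B_i(y)\B_i(x)$. For \eqref{BB<} and \eqref{BB>} I set $i_2 = i_3 = 0$, $j_2 = i$, $j_3 = j$ with $i \ne j$. The RHS still collapses to a single term (forcing $k_2 = k_3 = 0$), whereas the LHS retains both admissible $R$-configurations $(k_2, k_3) = (i,j)$ (``straight-through'') and $(k_2, k_3) = (j,i)$ (``swap''); their coefficients are read off from the two lower entries of \eqref{R-weights-bc} with spectral parameter $y/z$. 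Passing to the infinite-volume limit and rearranging then gives \eqref{BB<} for $i<j$ and \eqref{BB>} for $i>j$.

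The only subtlety I anticipate is book-keeping: the two horizontal lines actually cross at the $R$-vertex in \eqref{graph-RLLc}, so the row that is \emph{physically on top} over the vertical columns carries rapidity $z$ on the LHS but rapidity $y$ on the RHS, and the associated edge-indices shift accordingly. I must therefore carefully track which of $y,z$ ends up as the argument of the earlier- versus later-applied row operator (recall that bottom-row operators act first) before I can translate the equality of matrix elements into an operator identity, and in particular before the coefficients $(y-qx)/(y-x)$, $(1-q)x/(y-x)$, $(1-q)y/(y-x)$ appear in their \eqref{BB<}/\eqref{BB>} form (as opposed to the $(x-qy)/(x-y)$-style coefficients of the $\C\C$ relations).

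Unlike in the $\C\B$ commutation theorem, no ``padded-to-vacuum'' argument invoking \eqref{weight-condition} is required here: for the $\B\B$ exchange, the columns beyond site $N$ contribute only trivial vertices of weight $1$ to each $M$-row (both the left and right external horizontal edges are $0$ in the relevant sites), so the matrix elements stabilize automatically.
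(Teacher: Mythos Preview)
Your proposal is correct and follows essentially the same approach as the paper: the paper also iterates \eqref{graph-RLLc} to an $(N{+}1)$-site two-row identity, specializes the right-hand external indices to $0$ and the left-hand external indices to $i$ (respectively $i,j$), and takes the infinite-volume limit. Your write-up is in fact more detailed than the paper's, which simply says the proof ``proceeds in essentially the same way as the proof of Theorem~\ref{thm:CC}'' and lists the specializations.
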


\begin{proof}
The proof proceeds in essentially the same way as the proof of Theorem \ref{thm:CC}, so we will omit most of its details. The starting point is an $(N+1)$-site version of the intertwining equation \eqref{graph-RLLc}:
\begin{align}
\label{global-RLLc}
\sum_{0 \leq k_1,k_2 \leq n}
\
\sum_{\K_0,\dots,\K_N \in \mathbb{N}^n}
\tikz{0.6}{
\draw[densely dotted] (-1.75,0.75) arc (135:225:{1/(2*sqrt(2))});
\node at (1,0.5) {$\cdots$};
\draw[lgray,line width=1.5pt,<-]
(-2,1) node[above,scale=0.6] {\color{black} $j_1$} -- (-1,0) node[below,scale=0.6] {\color{black} $k_1$} -- (3,0) node[right,scale=0.6] {\color{black} $i_1$};
\draw[lgray,line width=1.5pt,<-] 
(-2,0) node[below,scale=0.6] {\color{black} $j_2$} -- (-1,1) node[above,scale=0.6] {\color{black} $k_2$} -- (3,1) node[right,scale=0.6] {\color{black} $i_2$};
\draw[lgray,line width=4pt,->]
(0,-1) node[below,scale=0.6] {\color{black} $\I_0$} -- (0,0.5) node[scale=0.6] {\color{black} $\K_0$} -- (0,2) node[above,scale=0.6] {\color{black} $\J_0$};
\draw[lgray,line width=4pt,->]
(2,-1) node[below,scale=0.6] {\color{black} $\I_N$} -- (2,0.5) node[scale=0.6] {\color{black} $\K_N$} -- (2,2) node[above,scale=0.6] {\color{black} $\J_N$};
\node[left] at (-2.2,1) {$x \leftarrow$};
\node[left] at (-2.2,0) {$y \leftarrow$};
}
=
\sum_{0 \leq k_1,k_2 \leq n}
\
\sum_{\K_0,\dots,\K_N \in \mathbb{N}^n}
\tikz{0.6}{
\draw[densely dotted] (1.25,0.75) arc (135:225:{1/(2*sqrt(2))});
\node at (-1,0.5) {$\cdots$};
\draw[lgray,line width=1.5pt,<-] 
(-3,1) node[left,scale=0.6] {\color{black} $j_1$} -- (1,1) node[above,scale=0.6] {\color{black} $k_1$} -- (2,0) node[below,scale=0.6] {\color{black} $i_1$};
\draw[lgray,line width=1.5pt,<-] 
(-3,0) node[left,scale=0.6] {\color{black} $j_2$} -- (1,0) node[below,scale=0.6] {\color{black} $k_2$} -- (2,1) node[above,scale=0.6] {\color{black} $i_2$};
\draw[lgray,line width=4pt,->] 
(-2,-1) node[below,scale=0.6] {\color{black} $\I_0$} -- (-2,0.5) node[scale=0.6] {\color{black} $\K_0$} -- (-2,2) node[above,scale=0.6] {\color{black} $\J_0$};
\draw[lgray,line width=4pt,->] 
(0,-1) node[below,scale=0.6] {\color{black} $\I_N$} -- (0,0.5) node[scale=0.6] {\color{black} $\K_N$} -- (0,2) node[above,scale=0.6] {\color{black} $\J_N$};
\node[left] at (-3.5,1) {$x \leftarrow$};
\node[left] at (-3.5,0) {$y \leftarrow$};
}
\end{align}
which gets specialized to $j_1 = j_2 = i$, $i_1 = i_2 = 0$ for the proof of \eqref{BB=}, and to $j_1 = i$, $j_2 = j$, $i_1 = i_2 = 0$ for the proof of \eqref{BB<} and \eqref{BB>}.

\end{proof}

\section{Coloured compositions} 

\begin{defn}
Let $\lambda = (\lambda_1,\dots,\lambda_n)$ be a composition of length $n$ and weight $m$, \ie\ such that $\sum_{i=1}^{n} \lambda_i = m$. Denote the partial sums of $\lambda$ by $\sum_{i=1}^{k} \lambda_i = \ell_k$. We introduce the set $\mathcal{S}_{\lambda}$ of $\lambda$-coloured compositions as follows:
\begin{align}
\label{lambda-col}
\index{S@$\mathcal{S}_{\lambda}$; set of $\lambda$-coloured compositions}
\mathcal{S}_{\lambda}
=
\Big\{ 
\mu 
= 
(\mu_1 \geq \cdots \geq \mu_{\ell_1} |
\mu_{\ell_1+1} \geq \cdots \geq \mu_{\ell_2} |
\cdots |
\mu_{\ell_{n-1}+1} \geq \cdots \geq \mu_{\ell_n})
\Big\}.
\end{align}
That is, the elements of $\mathcal{S}_{\lambda}$ are length-$m$ compositions $\mu$, which have been subdivided into blocks of length $\lambda_k$, $1 \leq k \leq n$. These blocks demarcate the colouring of $\mu$. Within any given block, the parts of $\mu$ have the same colouring and are weakly decreasing.
\end{defn}

\begin{ex}
Let $n=5$ and $\lambda = (1,0,3,0,2)$. Then 
\begin{align*}
\mathcal{S}_{\lambda}
=
\Big\{ 
\mu 
=
(\mu_1 | \cdot | \mu_2 \geq \mu_3 \geq \mu_4 | \cdot | \mu_5 \geq \mu_6 )
\Big\},
\end{align*}
where blocks of length zero are indicated by a dot.

\end{ex}

\begin{rmk}
\label{rmk:2cases}{\rm 
Two special cases of $\lambda$-coloured compositions will be particularly important for our purposes. The first is when $\lambda = (n,0,\dots,0)$. In this case, compositions $\mu \in \mathcal{S}_{\lambda}$ consist of a single block whose parts are weakly decreasing; \ie\ one simply recovers partitions. As we shall see, reducing to this case will allow us to recover the symmetric rational functions $\F_{\mu}$, $\G_{\mu}$ previously studied in \cite{Borodin,BorodinP1,BorodinP2}.

The second of these is when $\lambda = (1,1,\dots,1) = (1^n)$. In this situation, compositions $\mu \in \mathcal{S}_{\lambda}$ consist of $n$ blocks, each of a different colour. Since all blocks have unit length, the parts of $\mu$ are not bound by any inequalities; accordingly, one recovers the set of all length-$n$ compositions. We will sometimes refer to these as {\it rainbow compositions}, and in writing them we will omit the bars used in \eqref{lambda-col} to separate colours.
}
\end{rmk}

Let $\mu \in \mathcal{S}_{\lambda}$ be a $\lambda$-coloured composition, with $\ell_k$ denoting the partial sums of $\lambda$, as above. We associate to each such $\mu$ a vector \index{am@$\ket{\mu}_{\lambda}$} $\ket{\mu}_{\lambda} \in \mathbb{V}$, defined as follows:
\begin{align}
\label{A(k)}
\ket{\mu}_{\lambda}
:=
\bigotimes_{k=0}^{\infty}
\ket{\bm{A}(k)}_k,
\qquad
\bm{A}(k) = \sum_{j=1}^{n} A_j(k) \bm{e}_j,
\qquad
A_j(k)
=
\#\{ i : \mu_i = k,\ \ell_{j-1} +1 \leq i \leq \ell_j \},
\end{align}
where by agreement $\ell_{0} = 0$. In other words, the component $A_j(k)$ enumerates the number of parts in the $j$-th block of $\mu$ which are equal to $k$. From this one can define vector subspaces $\mathbb{V}(\lambda)$ \index{V2@$\mathbb{V}(\lambda)$; sectors of $\mathbb{V}$} which provide a natural grading of $\mathbb{V}$:
\begin{align}
\label{filter}
\mathbb{V} 
= 
\bigoplus_{m=0}^{\infty}
\bigoplus_{\lambda \in \mathcal{W}(m)}
\mathbb{V}(\lambda),
\quad\quad
\mathbb{V}(\lambda) 
:=
{\rm Span}_{\mathbb{C}} 
\Big\{ \ket{\mu}_{\lambda} \Big\}_{\mu \in \mathcal{S}_{\lambda}},
\end{align}
where the direct sum is taken over all compositions $\lambda$ of length $n$. The grading \eqref{filter} splits $\mathbb{V}$ into subspaces with fixed particle content: $\mathbb{V}(\lambda)$ is the linear span of all states consisting of $\lambda_i$ particles of type $i$, for all $1 \leq i \leq n$. We refer to these subspaces as {\it sectors} of $\mathbb{V}$.

\begin{rmk}{\rm
Let us return to the two special cases considered in Remark \ref{rmk:2cases} and study them in the state-vector language introduced above. When $\lambda = (n,0,\dots,0)$, $\mu$ is a partition and the components $A_j(k)$ in \eqref{A(k)} are given by
\begin{align*}
A_1(k) = \#\{ i : \mu_i = k \} \equiv m_k(\mu),
\qquad
A_j(k) = 0, \quad \forall\ 2 \leq j \leq n,
\end{align*}
where $m_k(\mu)$ are the multiplicities of the partition $\mu$. The resulting vectors $\ket{\mu}_{(n,0,\dots,0)}$ are precisely the partition states used in the algebraic construction of the Hall--Littlewood polynomials \cite{Tsilevich,WheelerZ} and the higher-spin rational symmetric functions which generalize them \cite{Borodin,BorodinP1,BorodinP2}.

When $\lambda = (1^n)$, $\mu$ is a generic composition, and the components $A_j(k)$ in \eqref{A(k)} are given by
\begin{align}
\label{statesA}
A_j(k) 
=
\left\{
\begin{array}{ll}
1, \quad & \mu_j = k,
\\
0, \quad & {\rm otherwise}.
\end{array}
\right.
\end{align}

\smallskip
\noindent
The resulting vectors $\ket{\mu}_{(1^n)}$ span the {\it rainbow sector} \index{V3@$\mathbb{V}(1^n)$; rainbow sector} $\mathbb{V}(1^n) \subset \mathbb{V}$; we will use it almost exclusively in this work. We often write the basis elements of $\mathbb{V}(1^n)$ as \index{am@$\ket{\mu}$; composition states} $\ket{\mu}_{(1^n)} \equiv \ket{\mu}$, dropping the unnecessary subscript.
}
\end{rmk}

%

\begin{ex}
Let $n=4$, $\lambda = (1,1,1,1)$ and $\mu = (2,0,2,1)$. We find that
\begin{align*}
\ket{\mu}
=
\bigotimes_{k=0}^{\infty}
\ket{\bm{A}(k)}_k,
\quad
\bm{A}(0) = \bm{e}_2,
\quad
\bm{A}(1) = \bm{e}_4,
\quad
\bm{A}(2) = \bm{e}_1 + \bm{e}_3,
\quad
\bm{A}(k) = \bm{0},\ \forall\ k \geq 3,
\end{align*}
or more explicitly,
\begin{align*}
\ket{\mu} = \ket{0,1,0,0}_0 \otimes \ket{0,0,0,1}_1 \otimes \ket{1,0,1,0}_2 \otimes \ket{0,0,0,0}_3 \otimes \cdots.
\end{align*}
\end{ex}

\section{The rational non-symmetric functions $f_{\mu}$ and $g_{\mu}$}

We now come to the definition of the non-symmetric functions which are the principal subject of this work. They depend on an indexing composition $\mu = (\mu_1,\dots,\mu_m)$ of length $m$, and are rational functions in the parameters $q$, $s$, as well as the alphabet $(x_1,\dots,x_m)$.  

\begin{defn}[Generic-sector rational functions]
\label{defn:gen-sector}
Let $\lambda = (\lambda_1,\dots,\lambda_n)$ be a composition of weight $m$, and fix a $\lambda$-coloured composition $\mu = (\mu_1,\dots,\mu_m) \in \mathcal{S}_{\lambda}$. Write $\ell_k = \sum_{i=1}^{k} \lambda_i$ for the $k$-th partial sum of $\lambda$. We define a family of non-symmetric rational functions as matrix elements of products of the row operators \eqref{C-row}:
\begin{align}
\label{generic-f}
\index{f@$f_{\mu}(\lambda;x_1,\dots,x_m)$}
f_{\mu}(\lambda; x_1,\dots,x_m)
:=
\bra{\varnothing}
\left(
\prod_{i=1}^{\ell_1}
\C_1(x_i)
\right)
\left(
\prod_{i=\ell_1+1}^{\ell_2}
\C_2(x_i)
\right)
\cdots
\left(
\prod_{i=\ell_{n-1}+1}^{\ell_n}
\C_n(x_i)
\right)
\ket{\mu}_{\lambda},
\end{align}
where $\ket{\mu}_{\lambda} \in \mathbb{V}(\lambda)$ is given by \eqref{A(k)} and $\bra{\varnothing} \in \mathbb{V}^{*}$ denotes the (dual) vacuum state
\begin{align}
\label{dual-vac}
\bra{\varnothing}
=
\bigotimes_{k=0}^{\infty}
\bra{\bm{0}}_k,
\end{align}
which is completely devoid of particles.
\end{defn}

Apart from specifying the sector to which $\ket{\mu}_{\lambda}$ belongs, $\lambda$ plays a further role in the expectation value \eqref{generic-f}: $\lambda_k$ counts the number of $\C_k$ operators present. In view of the commutativity \eqref{CC=} of operators with a common subscript, we see that $\lambda$ then encodes the partial symmetries of the functions \eqref{generic-f} in the alphabet $(x_1,\dots,x_m)$. Namely, $f_{\mu}(\lambda;x_1,\dots,x_m)$ is individually symmetric in each of the subsets of variables $(x_{\ell_{k-1}+1},\dots,x_{\ell_k})$, $1 \leq k \leq n$. 

\begin{rmk}
When $\lambda = (n,0,\dots,0)$, only $\mathcal{C}_1$ operators appear in \eqref{generic-f} and it becomes a purely rank-1 quantity, which is symmetric in the full alphabet $(x_1,\dots,x_n)$. In fact, one finds that
\begin{align*}
f_{\mu}(\lambda; x_1,\dots,x_n)
=
\F^{\sf c}_{\mu}(x_1,\dots,x_n),
\index{F@$\F^{\sf c}_{\mu}$; symmetric spin Hall--Littlewood}
\qquad
\lambda = (n,0,\dots,0),
\end{align*}
where $\mu$ is a partition and $\F^{\sf c}_{\mu}(x_1,\dots,x_n)$ denotes a symmetric spin Hall--Littlewood function, as defined in \cite[Section 4]{BorodinP2}:
\begin{align}
\label{s-HL}
\F^{\sf c}_{\mu}(x_1,\dots,x_n)
=
\tikz{0.8}{
\foreach\y in {1,...,5}{
\draw[lgray,line width=1.5pt,->] (1,\y) -- (8,\y);
}
\foreach\x in {2,...,7}{
\draw[lgray,line width=4pt,->] (\x,0) -- (\x,6);
}
\foreach\y in {1,...,5}{
\foreach\x in {2,...,7}{
\node at (\x,\y) {$\bullet$};
}}
\node[left] at (0.5,1) {$x_1 \rightarrow$};
\node[left] at (0.5,2) {$x_2 \rightarrow$};
\node[left] at (0.5,3) {$\vdots$};
\node[left] at (0.5,4) {$\vdots$};
\node[left] at (0.5,5) {$x_n \rightarrow$};
\node[below] at (7,0) {$\cdots$};
\node[below] at (6,0) {$\cdots$};
\node[below] at (5,0) {$\cdots$};
\node[below] at (4,0) {$0$};
\node[below] at (3,0) {$0$};
\node[below] at (2,0) {$0$};
\node[above] at (7,6) {$\cdots$};
\node[above] at (6,6) {$\cdots$};
\node[above] at (5,6) {$\cdots$};
\node[above] at (4,6) {$m_2$};
\node[above] at (3,6) {$m_1$};
\node[above] at (2,6) {$m_0$};
\node[right] at (8,1) {$0$};
\node[right] at (8,2) {$0$};
\node[right] at (8,3) {$\vdots$};
\node[right] at (8,4) {$\vdots$};
\node[right] at (8,5) {$0$};
\node[left] at (1,1) {$1$};
\node[left] at (1,2) {$1$};
\node[left] at (1,3) {$\vdots$};
\node[left] at (1,4) {$\vdots$};
\node[left] at (1,5) {$1$};
},
\qquad\qquad
m_i \equiv m_i(\mu).
\end{align}
\end{rmk}

\begin{defn}[Non-symmetric spin Hall--Littlewood functions]
Fix a rainbow composition $\mu = (\mu_1,\dots,\mu_n)$. The non-symmetric spin Hall--Littlewood function $f_{\mu}(x_1,\dots,x_n)$ is defined as the $\lambda = (1^n)$ specialization of \eqref{generic-f}:
\begin{align}
\label{f-HL}
\index{f1@$f_{\mu}$; non-symmetric spin Hall--Littlewood}
f_{\mu}(x_1,\dots,x_n)
:=
\bra{\varnothing}
\C_1(x_1)
\C_2(x_2)
\cdots
\C_n(x_n)
\ket{\mu},
\end{align}
where $\ket{\mu} \in \V$ and $\bra{\varnothing} \in \mathbb{V}^{*}$ is as defined in \eqref{dual-vac}. Notice that we drop $\lambda$ from the arguments of $f_{\mu}$ as its value is now determined; this also distinguishes the non-symmetric spin Hall--Littlewood functions notationally from \eqref{generic-f}.
\end{defn}

As we will show in Chapter \ref{sec:properties}, the functions \eqref{f-HL} generalize the {\it non-symmetric Hall--Littlewood polynomials} \cite{DescouensL} via the inclusion of the parameter $s$; accordingly, we refer to them as {\it non-symmetric spin Hall--Littlewood functions}. They are the ``maximally asymmetric'' members of the general family \eqref{generic-f}: since no $\C_k$ operator appears twice in \eqref{f-HL}, they are not in general preserved under permutations of the variables $(x_1,\dots,x_n)$, although partial symmetries may still exist. In this sense, the symmetric rational functions of \cite{Borodin,BorodinP1,BorodinP2} live at one end of the spectrum of functions introduced in Definition \ref{defn:gen-sector}, while the non-symmetric functions \eqref{f-HL} are situated at the other extreme.

The functions $f_{\mu}(\lambda;x_1,\dots,x_m)$ admit a convenient description as partition functions in the vertex model \eqref{s-weights}. We will focus on this graphical description in the case of the rainbow sector $\lambda = (1^n)$. To recover the desired partition function, one simply replaces each $\mathcal{C}_i(x_i)$ operator in \eqref{f-HL} by its interpretation as a row of vertices, with left-to-right ordering in \eqref{f-HL} corresponding to bottom-to-top ordering of rows in the partition function. We obtain the expression
\begin{align}
\label{f-def}
f_{\mu}(x_1,\dots,x_n)
&=
\tikz{0.8}{
\foreach\y in {1,...,5}{
\draw[lgray,line width=1.5pt,->] (1,\y) -- (8,\y);
}
\foreach\x in {2,...,7}{
\draw[lgray,line width=4pt,->] (\x,0) -- (\x,6);
}
\node[left] at (0.5,1) {$x_1 \rightarrow$};
\node[left] at (0.5,2) {$x_2 \rightarrow$};
\node[left] at (0.5,3) {$\vdots$};
\node[left] at (0.5,4) {$\vdots$};
\node[left] at (0.5,5) {$x_n \rightarrow$};
\node[below] at (7,0) {$\cdots$};
\node[below] at (6,0) {$\cdots$};
\node[below] at (5,0) {$\cdots$};
\node[below] at (4,0) {\footnotesize$\bm{0}$};
\node[below] at (3,0) {\footnotesize$\bm{0}$};
\node[below] at (2,0) {\footnotesize$\bm{0}$};
\node[above] at (7,6) {$\cdots$};
\node[above] at (6,6) {$\cdots$};
\node[above] at (5,6) {$\cdots$};
\node[above] at (4,6) {\footnotesize$\bm{A}(2)$};
\node[above] at (3,6) {\footnotesize$\bm{A}(1)$};
\node[above] at (2,6) {\footnotesize$\bm{A}(0)$};
\node[right] at (8,1) {$0$};
\node[right] at (8,2) {$0$};
\node[right] at (8,3) {$\vdots$};
\node[right] at (8,4) {$\vdots$};
\node[right] at (8,5) {$0$};
\node[left] at (1,1) {$1$};
\node[left] at (1,2) {$2$};
\node[left] at (1,3) {$\vdots$};
\node[left] at (1,4) {$\vdots$};
\node[left] at (1,5) {$n$};
}
\end{align}
where the state $\bm{A}(k)$ at the top of the $k$-th column is the $k$-th vector in the infinite tensor product formula for $\ket{\mu}_{\lambda}$, as given by \eqref{A(k)}, \eqref{statesA}. The vertices in the $i$-th row of the lattice (counted from the bottom) carry their own spectral parameter $x_i$, as indicated. 

\begin{prop}
\label{prop:f-sum-F}
Fix a partition $\nu$, let $\F^{\sf c}_{\nu}(x_1,\dots,x_n)$ denote a symmetric spin Hall--Littlewood function, as given by \eqref{s-HL}, and $f_{\mu}(x_1,\dots,x_n)$ its non-symmetric analogue \eqref{f-def}. We then have the symmetrization identity
\begin{align}
\label{f-sum-F}
\sum_{\mu: \mu^{+} = \nu}
f_{\mu}(x_1,\dots,x_n)
=
\F^{\sf c}_{\nu}(x_1,\dots,x_n),
\end{align}
with the sum taken over all compositions $\mu$ with dominant reordering $\mu^{+} = \nu$.
\end{prop}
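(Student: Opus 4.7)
Plan: Both sides of \eqref{f-sum-F} admit natural interpretations as partition functions, and the identity will follow from a vertex-by-vertex application of the colour-blindness results of Proposition \ref{prop:colour-blind}.

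By \eqref{f-def}, summing $f_{\mu}(x_1,\ldots,x_n)$ over compositions $\mu$ with $\mu^+ = \nu$ is equivalent to summing the coloured lattice partition function over all top-boundary states $\bm{A}(k) \in \mathcal{W}(m_k(\nu))$, subject to the constraint that each colour $1,\ldots,n$ appears at exactly one column. Since the row operators $\C_1(x_1),\dots,\C_n(x_n)$ each inject a single path of their respective colour, only top-boundary states in the rainbow sector contribute to $\bra{\varnothing}\C_1\cdots\C_n(\,\cdot\,)$; we can therefore enlarge the constrained sum to the unconstrained product $\bigotimes_k\bigl(\sum_{\K\in\mathcal{W}(m_k(\nu))}\ket{\K}_k\bigr)$ without changing the expectation value, since the additional terms (in which some colour is missing or duplicated on the top boundary) vanish automatically against the rainbow left boundary.

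Next, group lattice configurations by their rank-1 \emph{skeleton}, obtained by replacing each horizontal edge colour $j$ by its $\theta$-projection $\theta(j)\in\{0,1\}$ and each vertical composition $\K$ by its weight $|\K|\in\mathbb{N}$. The key claim is that, for each rank-1 skeleton $\mathcal{S}$ compatible with the boundary data, the sum of coloured Boltzmann weights over colour refinements of $\mathcal{S}$ equals the rank-1 Boltzmann weight of $\mathcal{S}$. This is the graphical content of the colour-blindness identities: at each vertex, summing $L_x(\I,j;\K,\ell)$ over $\K\in\mathcal{W}(k)$ with $\ell=0$ fixed yields $L^\bullet_x(|\I|,\theta(j);k,0)$ by \eqref{cb1}, while the analogous sum with the additional summation over $\ell \in \{1,\ldots,n\}$ yields $L^\bullet_x(|\I|,\theta(j);k,1)$ by \eqref{cb2}. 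Summing the resulting rank-1 weights over all compatible skeletons identifies the LHS of \eqref{f-sum-F} with the rank-1 partition function \eqref{s-HL} having top-boundary multiplicities $m_k(\nu)$ and all-$1$ left boundary (the $\theta$-projection of the rainbow boundary $1,2,\ldots,n$), which is precisely $\F^{\sf c}_\nu(x_1,\ldots,x_n)$.

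The main technical difficulty lies in coordinating the local colour-blindness summations at adjacent vertices, since the right output colour of one vertex is the left input of its neighbour. I would handle this by processing each row from right to left, beginning at the fixed right-boundary value $\ell=0$ and invoking \eqref{cb1} at the rightmost vertex; the resulting two-term decomposition (according to whether $\theta(j)$ of the left input equals $0$ or $1$) mirrors the binary occupancy of internal horizontal edges in the rank-1 model and lets the colour-blind reduction propagate cleanly leftward through each row. Once the reduction has been carried out in every row, the coloured sum telescopes to the rank-1 one, which is the desired equality.
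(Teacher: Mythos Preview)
Your proposal is correct and takes essentially the same approach as the paper: both arguments reduce the coloured partition function to its rank-1 counterpart by successive application of the colour-blindness relations \eqref{cb1} and \eqref{cb2} from Proposition~\ref{prop:colour-blind}. The paper states this in two sentences, while you spell out the mechanics (the skeleton grouping, the right-to-left sweep along each row); the only small omission is that the rows should be processed from top to bottom so that the summed top-boundary data is available when each row is reached, but this is implicit in your setup.
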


\begin{proof}
The summation $\sum_{\mu: \mu^{+} = \nu}$ effectively sums over all ways of colouring the $n$ paths that leave the partition function \eqref{f-def} via its top edges, while keeping fixed the positions where paths leave. Such a summation is precisely the sort which gives rise to the colour-blindness phenomenon of Proposition \ref{prop:colour-blind}. The result \eqref{f-sum-F} then follows by successive application of the relations \eqref{cb1}, \eqref{cb2}.  
\end{proof}

\begin{defn}[Dual generic-sector rational functions]
Let $\lambda = (\lambda_1,\dots,\lambda_n)$ be a composition of weight $m$, and fix a $\lambda$-coloured composition $\mu = (\mu_1,\dots,\mu_m) \in \mathcal{S}_{\lambda}$. Write $\ell_k = \sum_{i=1}^{k} \lambda_i$ for the $k$-th partial sum of $\lambda$. Define a further family of non-symmetric rational functions, this time as matrix elements of products of the row operators \eqref{B-row}:
\begin{align}
\label{generic-g}
\index{g@$g_{\mu}(\lambda; x_1,\dots,x_m)$}
g_{\mu}(\lambda; x_1,\dots,x_m)
:=
\bra{\mu}_{\lambda}
\left(
\prod_{i=1}^{\ell_1}
\B_1(x_i)
\right)
\left(
\prod_{i=\ell_1+1}^{\ell_2}
\B_2(x_i)
\right)
\cdots
\left(
\prod_{i=\ell_{n-1}+1}^{\ell_n}
\B_n(x_i)
\right)
\ket{\varnothing},
\end{align}
where $\bra{\mu}_{\lambda} \in \mathbb{V}^{*}(\lambda)$ is the dual of the vector \eqref{A(k)}, and \index{am@$\ket{\varnothing}$; vacuum state} $\ket{\varnothing} \in \mathbb{V}$ denotes the vacuum state
\begin{align}
\label{vac}
\ket{\varnothing}
=
\bigotimes_{k=0}^{\infty}
\ket{\bm{0}}_k.
\end{align}

\end{defn}

\begin{defn}[Dual non-symmetric spin Hall--Littlewood functions]
Fix a rainbow composition $\mu = (\mu_1,\dots,\mu_n)$. The dual non-symmetric spin Hall--Littlewood function $g_{\mu}(x_1,\dots,x_n)$ is defined as the $\lambda = (1^n)$ specialization of \eqref{generic-g}:
\begin{align}
\label{g-HL}
g_{\mu}(x_1,\dots,x_n)
:=
\bra{\mu}
\B_1(x_1)
\B_2(x_2)
\cdots
\B_n(x_n)
\ket{\varnothing},
\end{align}
where $\bra{\mu} \in \Vs$ and $\ket{\varnothing} \in \mathbb{V}$ is as defined in \eqref{vac}.
\end{defn}

Similarly to above, the functions $g_{\mu}(\lambda;x_1,\dots,x_m)$ also admit a description as partition functions, now in the vertex model \eqref{dual-s-weights}. Let us again focus on this graphical description in the case of the rainbow sector, $\lambda = (1^n)$. Replacing each $\B_i(x_i)$ operator in \eqref{g-HL} by the appropriate row of vertices, as dictated by \eqref{B-row}, we find that
\index{g1@$g_\mu$; dual non-symmetric spin Hall--Littlewood}
\begin{align}
\label{g-def}
g_{\mu}(x_1,\dots,x_n)
&=
\tikz{0.8}{
\foreach\y in {1,...,5}{
\draw[lgray,line width=1.5pt,<-] (1,\y) -- (8,\y);
}
\foreach\x in {2,...,7}{
\draw[lgray,line width=4pt,->] (\x,0) -- (\x,6);
}
\node[left] at (0.5,1) {$x_1 \leftarrow$};
\node[left] at (0.5,2) {$x_2 \leftarrow$};
\node[left] at (0.5,3) {$\vdots$};
\node[left] at (0.5,4) {$\vdots$};
\node[left] at (0.5,5) {$x_n \leftarrow$};
\node[above] at (7,6) {$\cdots$};
\node[above] at (6,6) {$\cdots$};
\node[above] at (5,6) {$\cdots$};
\node[above] at (4,6) {\footnotesize$\bm{0}$};
\node[above] at (3,6) {\footnotesize$\bm{0}$};
\node[above] at (2,6) {\footnotesize$\bm{0}$};
\node[below] at (7,0) {$\cdots$};
\node[below] at (6,0) {$\cdots$};
\node[below] at (5,0) {$\cdots$};
\node[below] at (4,0) {\footnotesize$\bm{A}(2)$};
\node[below] at (3,0) {\footnotesize$\bm{A}(1)$};
\node[below] at (2,0) {\footnotesize$\bm{A}(0)$};
\node[right] at (8,1) {$0$};
\node[right] at (8,2) {$0$};
\node[right] at (8,3) {$\vdots$};
\node[right] at (8,4) {$\vdots$};
\node[right] at (8,5) {$0$};
\node[left] at (1,1) {$1$};
\node[left] at (1,2) {$2$};
\node[left] at (1,3) {$\vdots$};
\node[left] at (1,4) {$\vdots$};
\node[left] at (1,5) {$n$};
}
\end{align}
where the state $\bm{A}(k)$ at the bottom of the $k$-th column is given by \eqref{A(k)}, \eqref{statesA}.

\begin{rmk}{\rm
Both of the partition functions \eqref{f-def} and \eqref{g-def} are manifestly sums of non-vanishing terms. One can view the states $\{1,\dots,n\}$ on the left edges of \eqref{f-def} as incoming lattice paths of each of the $n$ possible colours, while the composition state at the top of \eqref{f-def} represents the departure of these paths from the lattice. Since the total flux of paths through the lattice is conserved, each global configuration in \eqref{f-def} can be decomposed into a union of single vertex configurations, with path conservation being respected at every individual vertex. This ensures that the weight of any such global configuration will be non-zero. A similar argument applies to \eqref{g-def}. 

Both partition functions are meaningful, despite being defined on a semi-infinite lattice, since in any configuration of the lattice paths in \eqref{f-def} and \eqref{g-def}, the only vertices which occur infinitely many times are 
$
\tikz{0.3}{
\draw[lgray,line width=0.5pt,->] (-1,0) -- (1,0);
\draw[lgray,line width=2pt,->] (0,-1) -- (0,1);
\node[left] at (-1,0) {\tiny $0$};\node[right] at (1,0) {\tiny $0$};
\node[below] at (0,-1) {\tiny $\bm{0}$};\node[above] at (0,1) {\tiny $\bm{0}$};
}$ 
and 
$
\tikz{0.3}{
\draw[lgray,line width=0.5pt,<-] (-1,0) -- (1,0);
\draw[lgray,line width=2pt,->] (0,-1) -- (0,1);
\node[left] at (-1,0) {\tiny $0$};\node[right] at (1,0) {\tiny $0$};
\node[below] at (0,-1) {\tiny $\bm{0}$};\node[above] at (0,1) {\tiny $\bm{0}$};
}$, 
both having weight $1$.}
\end{rmk}


\begin{ex}
Let $n=2$ and $\mu = (2,1)$. The vector $\ket{\mu}$ is given by
\begin{align*}
\ket{\mu}
=
\bigotimes_{k=0}^{\infty}
\ket{\bm{A}(k)}_k,
\quad
\bm{A}(0) = \bm{0},
\quad
\bm{A}(1) = \bm{e}_2,
\quad
\bm{A}(2) = \bm{e}_1,
\quad
\bm{A}(k) = \bm{0},\ \forall\ k \geq 3,
\end{align*}
or more explicitly, 
$
\ket{\mu} = \ket{0,0}_0 \otimes \ket{0,1}_1 \otimes \ket{1,0}_2 \otimes \ket{0,0}_3 \otimes \cdots
$. Calculating $f_{\mu}(x_1,x_2)$ from its representation \eqref{f-def} as a partition function consisting of two rows, we find two possible lattice configurations:
\begin{align*}
\tikz{0.8}{
\foreach\y in {4,5}{
\draw[lgray,line width=1.5pt] (2,\y) -- (8,\y);
}
\foreach\x in {3,...,7}{
\draw[lgray,line width=4pt] (\x,3) -- (\x,6);
}
\node[left] at (1.5,4) {$x_1 \rightarrow$};
\node[left] at (1.5,5) {$x_2 \rightarrow$};
\node[above] at (7,6) {$\cdots$};
\node[above] at (6,6) {\footnotesize$(0,0)$};
\node[above] at (5,6) {\footnotesize$(1,0)$};
\node[above] at (4,6) {\footnotesize$(0,1)$};
\node[above] at (3,6) {\footnotesize$(0,0)$};
\node[below] at (7,3) {$\cdots$};
\node[below] at (6,3) {\footnotesize$(0,0)$};
\node[below] at (5,3) {\footnotesize$(0,0)$};
\node[below] at (4,3) {\footnotesize$(0,0)$};
\node[below] at (3,3) {\footnotesize$(0,0)$};
\node[left] at (2,4) {$1$};
\node[left] at (2,5) {$2$};
\node[right] at (8,4) {$0$};
\node[right] at (8,5) {$0$};
\draw[ultra thick,blue,->] (2,4) -- (5,4) -- (5,6);
\draw[ultra thick,green,->] (2,5) -- (4,5) -- (4,6);
}
\quad\quad
\tikz{0.8}{
\foreach\y in {4,5}{
\draw[lgray,line width=1.5pt] (2,\y) -- (8,\y);
}
\foreach\x in {3,...,7}{
\draw[lgray,line width=4pt] (\x,3) -- (\x,6);
}
\node[left] at (1.5,4) {$x_1 \rightarrow$};
\node[left] at (1.5,5) {$x_2 \rightarrow$};
\node[above] at (7,6) {$\cdots$};
\node[above] at (6,6) {\footnotesize$(0,0)$};
\node[above] at (5,6) {\footnotesize$(1,0)$};
\node[above] at (4,6) {\footnotesize$(0,1)$};
\node[above] at (3,6) {\footnotesize$(0,0)$};
\node[below] at (7,3) {$\cdots$};
\node[below] at (6,3) {\footnotesize$(0,0)$};
\node[below] at (5,3) {\footnotesize$(0,0)$};
\node[below] at (4,3) {\footnotesize$(0,0)$};
\node[below] at (3,3) {\footnotesize$(0,0)$};
\node[left] at (2,4) {$1$};
\node[left] at (2,5) {$2$};
\node[right] at (8,4) {$0$};
\node[right] at (8,5) {$0$};
\draw[ultra thick,blue,->] (2,4) -- (4,4) -- (4,5) -- (5,5) -- (5,6);
\draw[ultra thick,green,->] (2,5) -- (4,5) -- (4,6);
}
\end{align*}
where the blue (darker) line represents a path of colour 1 and the green (lighter) line represents a path of colour 2. Summing the weights of the two configurations, one obtains $f_{\mu}(x_1,x_2)$ explicitly:
\begin{align*}
f_{\mu}(x_1,x_2)
&=
\left(\frac{x_1-s}{1-s x_1} \right)^2 
\left(\frac{1-s^2}{1-s x_1} \right)
\left(\frac{x_2-s}{1-s x_2} \right)
\left(\frac{1-s^2}{1-s x_2} \right)
\left(\frac{1-s q x_2}{1-s x_2} \right)
\\
&+
\left(\frac{x_1-s}{1-s x_1} \right) 
\left(\frac{1-s^2}{1-s x_1} \right)
\left(\frac{x_2-s}{1-s x_2} \right)
\left(\frac{s(1-q)}{1-s x_2} \right)
\left(\frac{1-s^2}{1-s x_2} \right).
\end{align*}
\end{ex}

\section{Permuted boundary conditions}

As we have defined them, the non-symmetric spin Hall--Littlewood functions \eqref{f-HL} feature a product of $\C_i$ operators with strictly increasing index $i$, as one reads from left to right. At the level of the partition function \eqref{f-def}, this means that the paths of colour $1 \leq i \leq n$ come in sequentially as one reads the left boundary edges from bottom to top. 

It will sometimes be useful to relax this ordering, allowing the colours to enter the lattice \eqref{f-def} via any permutation of $\{1,\dots,n\}$.  To this effect, we make the following definition:
\begin{defn}
Let $\mu = (\mu_1,\dots,\mu_n)$ be a rainbow composition and fix a permutation $\sigma \in \mathfrak{S}_n$. The $\sigma$-permuted non-symmetric spin Hall--Littlewood functions $f^{\sigma}_{\mu}(x_1,\dots,x_n)$ and $g^{\sigma}_{\mu}(x_1,\dots,x_n)$ \index{f2@$f^{\sigma}_{\mu}$} \index{g2@$g^{\sigma}_{\mu}$} are defined as follows:
\begin{align}
\label{sigma-f}
f^{\sigma}_{\mu}(x_1,\dots,x_n)
&:=
\bra{\varnothing}
\C_{\sigma(1)}(x_1)
\C_{\sigma(2)}(x_2)
\cdots
\C_{\sigma(n)}(x_n)
\ket{\mu},
\\
\label{sigma-g}
g^{\sigma}_{\mu}(x_1,\dots,x_n)
&:=
\bra{\mu}
\B_{\sigma(1)}(x_1)
\B_{\sigma(2)}(x_2)
\cdots
\B_{\sigma(n)}(x_n)
\ket{\varnothing},
\end{align}
where $\ket{\mu} \in \V$, $\bra{\mu} \in \Vs$ and $\ket{\varnothing} \in \mathbb{V}$, 
$\bra{\varnothing} \in \mathbb{V}^{*}$ are as defined in \eqref{vac}, \eqref{dual-vac}.
\end{defn}

\begin{rmk}{\rm
The permuted functions \eqref{sigma-f} and \eqref{sigma-g} are in the same vein as the ``permuted basement Macdonald polynomials'' recently introduced in \cite[Section 6]{Alexandersson}.
}
\end{rmk}

\section{Pre-fused functions}
\label{ssec:prefused}

As detailed in Appendix \ref{app:fusion}, the higher-spin model \eqref{s-weights} can be obtained from the fundamental vertex model \eqref{fund-vert} via the fusion procedure. It therefore turns out to be possible to define a partition function in the model \eqref{fund-vert} which, under appropriate specializations and analytic continuation, becomes proportional to the partition function \eqref{f-def} which defines $f_{\mu}$. This will be an important definition, since it expedites the proof of many of the subsequent properties of $f_{\mu}$; we refer to this as a ``pre-fused'' version of $f_{\mu}$.

Consider an infinite set $\{J_0,J_1,J_2,\dots\}$ of positive integers. The precise value of these integers is unimportant; they only need to be sufficiently large, and in what follows we essentially treat them as variables. We work on a semi-infinite lattice consisting of $n$ rows, where the $i$-th row (counted from the bottom of the lattice) carries the rapidity $x_i$. The columns of the lattice are arranged into ``bundles'' of cardinality $J_k$, $k \geq 0$. Within the $k$-th bundle, the $j$-th column (counted from the left) carries the rapidity $y^{(k)}_j$, $1 \leq j \leq J_k$. 

Let us now fix the boundary conditions of this lattice. As in the construction of \eqref{f-def}, we place an incoming path of colour $i$ on the left external edge of the $i$-th row, while all right external edges (situated infinitely far from the origin) are chosen to be unoccupied. The bottom external edges of the lattice are also chosen to be empty. By conservation of lattice paths, the colours $\{1,\dots,n\}$ must therefore emerge somewhere from the top external edges of the lattice. We will allow each colour to exit the lattice via any bundle; the only restriction that we impose is that within each bundle the outgoing colours are ordered increasingly. In other words, if the outgoing states of the $k$-th bundle are denoted by the set $\mathcal{A}^{(k)} = \{a^{(k)}_1,\dots,a^{(k)}_{J_k}\}$, we must have $0 \leq a^{(k)}_1 \leq \cdots \leq a^{(k)}_{J_k} \leq n$. See Figure \ref{fig:pre-fuse} for an example of this construction.

\begin{figure}
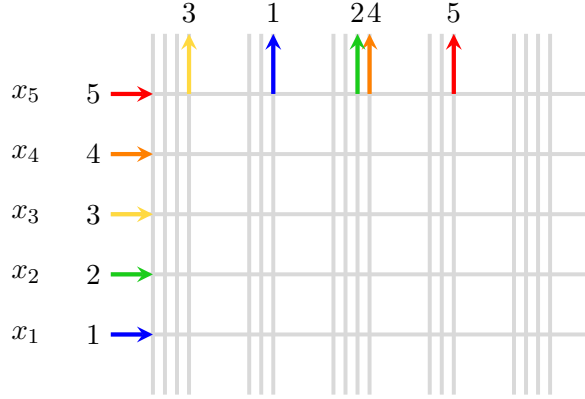

\tikz{0.8}{
\foreach\y in {1,...,5}{
\node[left] at (-1,\y) {$x_\y$};
\draw[lgray,line width=1.5pt] (0,\y) -- (8,\y);
}
\foreach\x in {0.7,0.9,1.1,1.3}{
\draw[lgray,line width=1.5pt] (\x,0) -- (\x,6);
}
\foreach\x in {2.3,2.5,2.7}{
\draw[lgray,line width=1.5pt] (\x,0) -- (\x,6);
}
\foreach\x in {3.7,3.9,4.1,4.3}{
\draw[lgray,line width=1.5pt] (\x,0) -- (\x,6);
}
\foreach\x in {5.3,5.5,5.7}{
\draw[lgray,line width=1.5pt] (\x,0) -- (\x,6);
}
\foreach\x in {6.7,6.9,7.1,7.3}{
\draw[lgray,line width=1.5pt] (\x,0) -- (\x,6);
}
\draw[ultra thick,red,->] (0,5) -- (0.7,5);
\draw[ultra thick,orange,->] (0,4) -- (0.7,4);
\draw[ultra thick,yellow,->] (0,3) -- (0.7,3);
\draw[ultra thick,green,->] (0,2) -- (0.7,2); 
\draw[ultra thick,blue,->] (0,1) -- (0.7,1);
\draw[ultra thick,red,->] (5.7,5) -- (5.7,6) node[above,black] {$5$};
\draw[ultra thick,orange,->] (4.3,5) -- (4.3,6) node[above,black] {\ $4$};
\draw[ultra thick,yellow,->] (1.3,5) -- (1.3,6) node[above,black] {$3$};
\draw[ultra thick,green,->] (4.1,5) -- (4.1,6) node[above,black] {$2$\ };
\draw[ultra thick,blue,->] (2.7,5) -- (2.7,6) node[above,black] {$1$};
\node[left] at (0,1) {$\tiny 1$};
\node[left] at (0,2) {$\tiny 2$};
\node[left] at (0,3) {$\tiny 3$};
\node[left] at (0,4) {$\tiny 4$};
\node[left] at (0,5) {$\tiny 5$};
}
\caption{A partition function in the model \eqref{fund-vert} for $n=5$, with columns grouped into bundles of cardinality $J_k$, $k \geq 0$. On the example, $\{J_0,J_1,J_2,J_3,J_4,\dots\} = \{4,3,4,3,4,\dots\}$. The outgoing states at the top of the lattice can be read off as $\mathcal{A}^{(0)} = \{0,0,0,3\}$, $\mathcal{A}^{(1)} = \{0,0,1\}$, $\mathcal{A}^{(2)} = \{0,0,2,4\}$, $\mathcal{A}^{(3)} = \{0,0,5\}$, while all subsequent bundles are unoccupied. Notice that the outgoing colours in each bundle satisfy the required increasing property.}
\label{fig:pre-fuse}
\end{figure}

\begin{defn}
\label{def:prefus}
Fix a composition $\mu = (\mu_1,\dots,\mu_n)$ and an infinite set of positive integers $\{J_0,J_1,J_2,\dots\}$ with $J_k \geq m_k(\mu)$, for all $k \geq 0$. A configuration $\mathcal{C}$ mapping $(1,\dots,n)$ to $(\mu_1,\dots,\mu_n)$, written $\mathcal{C} : (1,\dots,n) \mapsto (\mu_1,\dots,\mu_n)$, is an ensemble of up-right paths in the model \eqref{fund-vert}, with a path of colour $i$ beginning at the coordinate $(-1,i)$ and such that $i \in \mathcal{A}^{(\mu_i)}$. By specifying via which bundle colour $i$ leaves the lattice, we determine its precise outgoing coordinate uniquely, in view of the ordering property mentioned above.

The weight of a configuration $\mathcal{C}$, written $W_{\mathcal{C}}(x_1,\dots,x_n;Y)$, is the product of Boltzmann weights \eqref{fund-vert} assigned to each vertex in $\mathcal{C}$. We define a non-symmetric function 
$\mathcal{F}_{\mu}(x_1,\dots,x_n;Y)$ as follows:
\begin{align}
\label{f-fund}
\index{Fa@$\mathcal{F}_{\mu}$; pre-fused version of $f_{\mu}$}
\mathcal{F}_{\mu}\left(x_1,\dots,x_n;Y\right)
=
\sum_{\mathcal{C} : (1,\dots,n) \mapsto (\mu_1,\dots,\mu_n)}
W_{\mathcal{C}}\left(x_1,\dots,x_n;Y\right),
\end{align}
where 
\begin{align*}
Y = \left(y^{(0)},y^{(1)},y^{(2)},\dots \right)
\end{align*}
denotes the union of all sets of vertical rapidities. We refer to \eqref{f-fund} as the pre-fused version of the non-symmetric spin Hall--Littlewood function $f_{\mu}$.

More generally, for $\sigma \in \mathfrak{S}_n$, we define
\begin{align}
\label{fsig-fund}
\index{Fb@$\mathcal{F}^{\sigma}_{\mu}$}
\mathcal{F}^{\sigma}_{\mu}\left(x_1,\dots,x_n;Y\right)
=
\sum_{\mathcal{C} : (\sigma^{-1}(1),\dots,\sigma^{-1}(n)) 
\mapsto (\mu_1,\dots,\mu_n)}
W_{\mathcal{C}}\left(x_1,\dots,x_n;Y\right),
\end{align}
summed over configurations mapping $(\sigma^{-1}(1),\dots,\sigma^{-1}(n))$ to 
$(\mu_1,\dots,\mu_n)$; this refers to ensembles of up-right paths with a path of colour $i$ beginning at the coordinate $(-1,\sigma^{-1}(i))$ and such that 
$i \in \mathcal{A}^{(\mu_i)}$.
\end{defn}

\begin{thm}
\label{thm:fusion-F}
Let $\mathcal{F}_{\mu}(x_1,\dots,x_n;Y)$ be as defined in \eqref{f-fund}. We specialize the sets of vertical rapidities to geometric progressions in $q$ with base $s$:
\begin{align}
\label{geom-spec}
y^{(k)}_j = s q^{J_k-j},
\quad\quad
1 \leq j \leq J_k,
\quad\quad
k \geq 0.
\end{align}
Let the image of $\mathcal{F}_{\mu}$ under these specializations be denoted $\mathcal{F}^{*}_{\mu}$. $\mathcal{F}^{*}_{\mu}$ depends rationally on the set of parameters $\{q^{J_0},q^{J_1},q^{J_2},\dots\}$, but is otherwise independent of the value of $\{J_0,J_1,J_2,\dots\}$. We may analytically continue in these variables, performing the replacements $q^{J_k} \mapsto s^{-2}$, leading to the relation
\begin{align}
\label{prefused-f}
\mathcal{F}^{*}_{\mu}\left(x_1,\dots,x_n; q^{J_0},q^{J_1},\dots\right) 
\Big|_{q^{J_0},q^{J_1},\ldots = s^{-2}}
=
\frac{(-s)^{|\mu|}(1-q)^n}{\prod_{k \geq 0} (s^{-2};q^{-1})_{m_k}}
f_{\mu}(x_1,\dots,x_n),
\end{align}
where $m_k \equiv m_k(\mu)$, the multiplicity of part $k$ in $\mu$.

\end{thm}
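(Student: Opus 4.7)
The plan is to recognise Theorem \ref{thm:fusion-F} as a direct incarnation of the fusion construction that produces the higher-spin vertex model \eqref{s-weights} out of the fundamental $R$-matrix model \eqref{fund-vert} (as outlined in Appendix \ref{app:fusion}). The central fact from fusion is the following: a vertical stack of $N$ copies of the $R$-matrix vertices \eqref{fund-vert} along a single column, with vertical rapidities specialised to the geometric progression $\{s, sq, \ldots, sq^{N-1}\}$ and with the outputs symmetrised, coincides, after analytic continuation $q^N \mapsto s^{-2}$, with a single higher-spin vertex from \eqref{s-weights} with spin parameter $s$. The required symmetrisation is achieved automatically by summing over configurations whose outgoing colours along the bundle are weakly ordered $0\le a_1\le\cdots\le a_N\le n$, because for a geometric progression of rapidities this ordered sum is exactly the projector onto the symmetric tensor subrepresentation inside the $N$-fold tensor power of the vector representation.

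First I would apply this bundle-by-bundle to the lattice defining $\mathcal{F}_{\mu}$. Each bundle of $J_k$ columns carries the specialised rapidities $y^{(k)}_j = sq^{J_k-j}$ as required by \eqref{geom-spec}; the bottom external edges of every bundle are empty (matching $\bm{A}(k) = \bm{0}$ at the bottom of \eqref{f-def}); the ordering constraint $0\le a^{(k)}_1\le\cdots\le a^{(k)}_{J_k}\le n$ on top outputs and the emptiness of the right edges of the bundle jointly implement the symmetriser. After performing the fusion and continuing $q^{J_k}\mapsto s^{-2}$, each bundle collapses to one vertical line of a higher-spin lattice, the outgoing composition $\bm A(k) \in \{0,1\}^n$ with $A_j(k) = \bm 1_{\mu_j = k}$, and the horizontal structure unchanged: colours $1,\ldots,n$ enter from the left in rows $1,\ldots,n$, and all right edges are empty. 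This is precisely the boundary data of the partition function \eqref{f-def}, so up to a scalar multiple one recovers $f_\mu(x_1,\ldots,x_n)$.

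Second, I would track the multiplicative constants. Fusion naturally produces the stochastic higher-spin weights $\tilde L_x$ from \eqref{stoch-wt}, and converting to $L_x$ introduces a factor of $(-s)^{-\theta(\ell)}$ at each fused vertex whose right outgoing state is nonzero. Along any row in the reconstructed higher-spin lattice, the row for colour $i$ has its rightmost nonzero output at column $\mu_i$, and it is the count of nonzero outgoing right horizontal edges across the whole diagram that reassembles as $\sum_i \mu_i = |\mu|$, contributing $(-s)^{|\mu|}$. The remaining scalar is the normalisation of the fusion projector in each bundle: summing the vertices with ordered outgoing states at the specialised rapidities $\{sq^{J_k - j}\}$ in a bundle with $m_k(\mu)$ outgoing colours produces, upon analytic continuation $q^{J_k}\mapsto s^{-2}$, a factor $(1-q)^{m_k(\mu)}/(s^{-2};q^{-1})_{m_k(\mu)}$ (this is the standard $q$-symmetriser normalisation for a length-$m_k(\mu)$ occupation in a column of $J_k$ fused rapidities). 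Multiplying over all bundles and using $\sum_{k\ge 0} m_k(\mu) = n$ gives the overall factor $(1-q)^n / \prod_{k\ge 0}(s^{-2};q^{-1})_{m_k}$, matching \eqref{prefused-f}.

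The main obstacle is the constant bookkeeping in the previous paragraph: one must verify that the fusion conventions of Appendix \ref{app:fusion} (the direction in which rapidities in a bundle increase, and the sign/Pochhammer prefactors accompanying the analytic continuation $q^{J_k}\mapsto s^{-2}$) match the convention of Definition \ref{def:prefus} (ordering within $\mathcal{A}^{(k)}$ and specialisation \eqref{geom-spec}). Once these are aligned, the construction itself is forced: the symmetriser onto the irreducible symmetric tensor block is unique, and the matching of boundary data between the two partition functions is automatic. A useful sanity check along the way is that both sides depend rationally on $q^{J_k}$ before the continuation, and that their $J_k\to\infty$ limits (which truncate to occupation at most $N$ for generic $s$) agree with the known limits of the higher-spin $L$-matrix, which reduces the matching to verifying equality on a dense family of specialisations.
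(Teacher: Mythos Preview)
Your proposal is correct and follows essentially the same approach as the paper: apply the stacking relation \eqref{stack-Nrow} bundle-by-bundle (the empty bottom trivialises the $q$-symmetriser sum, the increasingly-ordered top matches the required output), analytically continue $q^{J_k}\mapsto s^{-2}$ to obtain $\tilde{L}_x$ vertices via \eqref{an-cont}, then pass to $L_x$ weights picking up $(-s)^{|\mu|}$, with the projector normalisation $Z_q(J_k;\K)/Z_q(J_k;\bm{0}) = (q^{J_k};q^{-1})_{m_k}/(1-q)^{m_k}$ accounting for the remaining factor. One small imprecision: the count of vertices with $\ell\geq 1$ equals $|\mu|$ because each colour $i$ takes exactly $\mu_i$ horizontal unit steps in total across all rows (not just row $i$), but your conclusion is correct.
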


\begin{proof}
The proof combines several ingredients from Appendix \ref{app:fusion}. The $k$-th bundle of the lattice should be viewed as a stack of $n$ row-vertices of width $J_k$. The vertical spectral parameters within this bundle are specialized as in \eqref{geom-spec}, which are the correct values for the fusion procedure. Furthermore, note that the bottom edges of the bundle are unoccupied by particles (which can be viewed as a trivial sum over all states in the empty sector), while the top edges are occupied by particles whose colours are ordered increasingly. We may therefore apply equation \eqref{stack-Nrow} to replace each row-vertex in the bundle by a $J_k$-fused vertex, as given by \eqref{fus-wt} with $M=J_k$; this replacement is accompanied by an overall multiplicative factor of $(1-q)^{m_k}/(q^{J_k};q^{-1})_{m_k}$, which can be found as an artefact of the normalization in \eqref{stack-Nrow}.

The final step is the analytic continuation $q^{J_k} \mapsto s^{-2}$, which sends each $J_k$-fused vertex to a stochastic $\tilde{L}_x$ matrix; see \eqref{an-cont}. These can be replaced by their non-stochastic counterparts $L_x$ \eqref{s-weights}, at the expense of the factor $(-s)^{(\#\ \text{horizontal unit steps by paths})}$. Using the fact that the total number of unit horizontal steps in each lattice configuration is precisely $|\mu|$, while $\sum_{k \geq 0} m_k = n$, the result \eqref{prefused-f} is then immediate.
\end{proof}

\begin{rmk}
\label{rmk:general-spins}
It is possible to construct a more general version of the functions $f_{\mu}$, by choosing geometric progressions
\begin{align}
\label{geom-spec2}
y^{(k)}_j = s_k y_k q^{J_k-j},
\quad\quad
1 \leq j \leq J_k,
\quad\quad
k \geq 0,
\end{align}
in place of \eqref{geom-spec}. This leads to a version of the model \eqref{s-weights} with inhomogeneous spins $s_k$ and vertical rapidities $y_k$; see Sections \ref{ssec:inhom-model}--\ref{ssec:inhom-model2}.

\end{rmk}

One can also define pre-fused versions of $g_{\mu}$ and $g_{\mu}^{\sigma}$ but we will not do it here, since it turns out to be unnecessary for our purposes.

\chapter{Branching rules and summation identities}
\label{sec:branching}

\section{Skew functions $f_{\mu/\nu}$ and $g_{\mu/\nu}$}

By allowing slightly more general classes of expectation values than in the definitions \eqref{generic-f} and \eqref{generic-g}, one can define {\it skew versions} of the previous non-symmetric rational functions.

\begin{defn}[Generic-sector skew rational functions]
Let $m,m'$ be two integers such that $m > m' \geq 0$. Fix two compositions $\lambda = (\lambda_1,\dots,\lambda_n)$ of weight $m$ and $\kappa = (\kappa_1,\dots,\kappa_n)$ of weight $m'$ with $\lambda_i \geq \kappa_i$ for all $1 \leq i \leq n$. Denote the partial sums of their difference $\lambda-\kappa$ by
\begin{align}
\label{part-sums}
\Delta_j
=
\sum_{i=1}^{j} (\lambda_i - \kappa_i),
\qquad
1 \leq j \leq n.
\end{align}
From these, we fix a $\lambda$-coloured composition $\mu = (\mu_1,\dots,\mu_m)$ and a $\kappa$-coloured composition $\nu = (\nu_1,\dots,\nu_{m'})$. We define skew versions of the functions \eqref{generic-f} as follows:
\begin{align}
\label{sk-generic-f}
f_{\mu/\nu}(\lambda-\kappa;x_1,\dots,x_{m-m'})
:=
\bra{\nu}_{\kappa}
\left(
\prod_{i=1}^{\Delta_1}
\C_1(x_i)
\right)
\cdots
\left(
\prod_{i=\Delta_{n-1}+1}^{\Delta_n}
\C_n(x_i)
\right)
\ket{\mu}_{\lambda},
\end{align}
where $\ket{\mu}_{\lambda} \in \mathbb{V}(\lambda)$ and $\bra{\nu}_{\kappa} \in \mathbb{V}^{*}(\kappa)$. The reduction from the skew functions \eqref{sk-generic-f} to their non-skew counterparts \eqref{generic-f} is obtained by taking $m' = 0$ (so that $\kappa = (0^n)$ and $\nu =\varnothing$).
\end{defn}

\begin{defn}[Rainbow-sector skew rational functions]
Select a pair of rainbow compositions $\mu = (\mu_1,\dots,\mu_m)$ and $\nu = (\nu_1,\dots,\nu_{m'})$, where $m > m' \geq 0$. We introduce a skew-version of the non-symmetric spin Hall--Littlewood function \eqref{f-HL} as follows:
\begin{align}
\label{skew-f-HL}
\index{f@$f_{\mu/\nu}$; skew non-symmetric spin Hall--Littlewood}
f_{\mu/\nu}(x_1,\dots,x_{m-m'})
:=
\bra{\nu} \C_{m'+1}(x_1) \C_{m'+2}(x_2) \dots \C_{m}(x_{m-m'}) \ket{\mu},
\end{align}
where $\ket{\mu} \in \mathbb{V}(1^{m}0^{n-m})$ and $\bra{\nu} \in \mathbb{V}^{*}(1^{m'}0^{n-m'})$. These functions can be considered as the special case $\lambda = 1^{m} 0^{n-m}$, $\kappa = 1^{m'}0^{n-m'}$ of the more general functions \eqref{sk-generic-f}; in other words, $\lambda$ and $\kappa$ are both chosen to be single-column partitions, with $\lambda \supset \kappa$.
\end{defn}

Translating the expectation value \eqref{skew-f-HL} into graphical language, we obtain the following partition function:
\begin{align}
\label{f-sk-pf}
f_{\mu/\nu}(x_1,\dots,x_{m-m'})
&=
\tikz{0.8}{
\foreach\y in {1,...,5}{
\draw[lgray,line width=1.5pt,->] (1,\y) -- (8,\y);
}
\foreach\x in {2,...,7}{
\draw[lgray,line width=4pt,->] (\x,0) -- (\x,6);
}
\node[left] at (-1,1) {$x_1 \rightarrow$};
\node[left] at (-1,2) {$x_2 \rightarrow$};
\node[left] at (-1,3) {$\vdots$};
\node[left] at (-1,4) {$\vdots$};
\node[left] at (-1,5) {$x_{m-m'} \rightarrow$};
\node[below] at (7,0) {$\cdots$};
\node[below] at (6,0) {$\cdots$};
\node[below] at (5,0) {$\cdots$};
\node[below] at (4,0) {\footnotesize$\bm{B}(2)$};
\node[below] at (3,0) {\footnotesize$\bm{B}(1)$};
\node[below] at (2,0) {\footnotesize$\bm{B}(0)$};
\node[above] at (7,6) {$\cdots$};
\node[above] at (6,6) {$\cdots$};
\node[above] at (5,6) {$\cdots$};
\node[above] at (4,6) {\footnotesize$\bm{A}(2)$};
\node[above] at (3,6) {\footnotesize$\bm{A}(1)$};
\node[above] at (2,6) {\footnotesize$\bm{A}(0)$};
\node[right] at (8,1) {$0$};
\node[right] at (8,2) {$0$};
\node[right] at (8,3) {$\vdots$};
\node[right] at (8,4) {$\vdots$};
\node[right] at (8,5) {$0$};
\node[left] at (1,1) {$m'+1$};
\node[left] at (1,2) {$m'+2$};
\node[left] at (1,3) {$\vdots$};
\node[left] at (1,4) {$\vdots$};
\node[left] at (1,5) {$m$};
}
\end{align}
where the states at the top and bottom of the lattice are given by
\begin{align}
\label{a-b-states}
\bm{A}(k) = \sum_{j=1}^{n} A_j(k) \bm{e}_j,
\quad
A_j(k) = \bm{1}_{\mu_j = k},
\qquad
\bm{B}(k) = \sum_{j=1}^{n} B_j(k) \bm{e}_j,
\quad
B_j(k) = \bm{1}_{\nu_j = k}.
\end{align}
In the special case $m'=0$ (and $m=n$) the composition $\nu$ has length zero, \ie\ 
$\nu = \varnothing$, and all $B_j(k) \equiv 0$. We recover in this case the partition function expression \eqref{f-def} for $f_{\mu}(x_1,\dots,x_n)$.

\begin{defn}[Dual generic-sector skew rational functions]
Let $m,m'$ be two integers such that $m > m' \geq 0$. Fix two compositions $\lambda = (\lambda_1,\dots,\lambda_n)$ of weight $m$ and $\kappa = (\kappa_1,\dots,\kappa_n)$ of weight $m'$ with $\lambda_i \geq \kappa_i$ for all $1 \leq i \leq n$, and let the partial sums of their difference be given by \eqref{part-sums}. Fix a $\lambda$-coloured composition $\mu = (\mu_1,\dots,\mu_m)$ and a $\kappa$-coloured composition $\nu = (\nu_1,\dots,\nu_{m'})$. We define
\begin{align}
\label{sk-generic-g}
g_{\mu/\nu}(\lambda-\kappa; x_1,\dots,x_{m-m'})
:=
\bra{\mu}_{\lambda}
\left(
\prod_{i=1}^{\Delta_1}
\B_1(x_i)
\right)
\cdots
\left(
\prod_{i=\Delta_{n-1}+1}^{\Delta_n}
\B_n(x_i)
\right)
\ket{\nu}_{\kappa},
\end{align}
where $\ket{\nu}_{\kappa} \in \mathbb{V}(\kappa)$ and $\bra{\mu}_{\lambda} \in \mathbb{V}^{*}(\lambda)$. 
\end{defn}

\begin{defn}[Dual rainbow-sector skew rational functions]
Fix two rainbow compositions $\mu = (\mu_1,\dots,\mu_m)$ and $\nu = (\nu_1,\dots,\nu_{m'})$, where $m > m' \geq 0$. We introduce a skew-version of the dual non-symmetric spin Hall--Littlewood function \eqref{g-HL} as follows:
\begin{align}
\label{skew-g-HL}
\index{g@$g_{\mu/\nu}$}
g_{\mu/\nu}(x_1,\dots,x_{m-m'})
:=
\bra{\mu} \B_{m'+1}(x_1) \B_{m'+2}(x_2) \dots \B_{m}(x_{m-m'}) \ket{\nu},
\end{align}
where $\ket{\nu} \in \mathbb{V}(0^{m}1^{n-m})$ and $\bra{\mu} \in \mathbb{V}^{*}(0^{m'} 1^{n-m'})$. These functions can be considered as the special case $\lambda = 0^{m'}1^{n-m'}$, $\kappa = 0^{m} 1^{n-m}$ of the more general functions \eqref{sk-generic-g}.
\end{defn}

Converting \eqref{skew-g-HL} into vertex-model formalism, we obtain the partition function
\begin{align}
\label{g-sk-pf}
g_{\mu/\nu}(x_1,\dots,x_{m-m'})
&=
\tikz{0.8}{
\foreach\y in {1,...,5}{
\draw[lgray,line width=1.5pt,<-] (1,\y) -- (8,\y);
}
\foreach\x in {2,...,7}{
\draw[lgray,line width=4pt,->] (\x,0) -- (\x,6);
}
\node[left] at (-1,1) {$x_1 \leftarrow$};
\node[left] at (-1,2) {$x_2 \leftarrow$};
\node[left] at (-1,3) {$\vdots$};
\node[left] at (-1,4) {$\vdots$};
\node[left] at (-1,5) {$x_{m-m'} \leftarrow$};
\node[above] at (7,6) {$\cdots$};
\node[above] at (6,6) {$\cdots$};
\node[above] at (5,6) {$\cdots$};
\node[above] at (4,6) {\footnotesize$\bm{B}(2)$};
\node[above] at (3,6) {\footnotesize$\bm{B}(1)$};
\node[above] at (2,6) {\footnotesize$\bm{B}(0)$};
\node[below] at (7,0) {$\cdots$};
\node[below] at (6,0) {$\cdots$};
\node[below] at (5,0) {$\cdots$};
\node[below] at (4,0) {\footnotesize$\bm{A}(2)$};
\node[below] at (3,0) {\footnotesize$\bm{A}(1)$};
\node[below] at (2,0) {\footnotesize$\bm{A}(0)$};
\node[right] at (8,1) {$0$};
\node[right] at (8,2) {$0$};
\node[right] at (8,3) {$\vdots$};
\node[right] at (8,4) {$\vdots$};
\node[right] at (8,5) {$0$};
\node[left] at (1,1) {$m'+1$};
\node[left] at (1,2) {$m'+2$};
\node[left] at (1,3) {$\vdots$};
\node[left] at (1,4) {$\vdots$};
\node[left] at (1,5) {$m$};
}
\end{align}
where the states at the bottom and top of the lattice are given by \eqref{a-b-states}. Similarly to above, setting $m' = 0$ and $m = n$ reproduces the partition function representation \eqref{g-def} of $g_{\mu}(x_1,\dots,x_n)$.

\section{Branching rules}
\label{ssec:branch}

\begin{prop}
Fix two integers $m > m' \geq 0$ and let $\mu \in \mathcal{S}_{(1^m 0^{n-m})}$, $\nu \in \mathcal{S}_{(1^{m'} 0^{n-m'})}$ be rainbow compositions. Then for any integer $p$ such that $m' < p < m$, one has the branching identity
\begin{align}
\label{branch-f}
f_{\mu/\nu}(x_1,\dots,x_{m-m'})
&=
\sum_{\alpha}
f_{\alpha/\nu}(x_1,\dots,x_{p-m'})
f_{\mu/\alpha}(x_{p-m'+1},\dots,x_{m-m'}),
\end{align}
where the sum is taken over all rainbow compositions $\alpha \in \mathcal{S}_{(1^p 0^{n-p})}$. Similarly, taking $\mu \in \mathcal{S}_{(0^{m'}1^{n-m'})}$, $\nu \in \mathcal{S}_{(0^{m} 1^{n-m})}$ and $p$ such that $m' < p < m$, one has
\begin{align}
\label{branch-g}
g_{\mu/\nu}(x_1,\dots,x_{m-m'})
&=
\sum_{\beta}
g_{\mu/\beta}(x_1,\dots,x_{p-m'})
g_{\beta/\nu}(x_{p-m'+1},\dots,x_{m-m'}),
\end{align}
where the sum is taken over all rainbow compositions 
$\beta \in \mathcal{S}_{(0^{p} 1^{n-p})}$. 
\end{prop}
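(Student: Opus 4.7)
The plan is to prove both identities by inserting a resolution of the identity at the split point $p$. For the first identity \eqref{branch-f}, I start from the definition
\[
f_{\mu/\nu}(x_1,\dots,x_{m-m'}) = \bra{\nu} \C_{m'+1}(x_1)\cdots\C_m(x_{m-m'}) \ket{\mu},
\]
and separate the product of row operators into two consecutive blocks, $\C_{m'+1}(x_1)\cdots\C_p(x_{p-m'})$ and $\C_{p+1}(x_{p-m'+1})\cdots\C_m(x_{m-m'})$. Between them I would insert the sector-restricted completeness relation $\sum_\alpha \ket{\alpha}\bra{\alpha}$, where $\alpha$ ranges over rainbow compositions in $\mathcal{S}_{(1^p 0^{n-p})}$. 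The two resulting matrix elements are then by definition $f_{\alpha/\nu}(x_1,\dots,x_{p-m'})$ and $f_{\mu/\alpha}(x_{p-m'+1},\dots,x_{m-m'})$, yielding \eqref{branch-f}.

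The crucial input is that $\C_i(x)$ preserves the particle count of every colour $j \ne i$ and increases the count of colour $i$ by exactly one. This is immediate from the one-row partition function definition \eqref{C-row}: a single path of colour $i$ enters on the left edge, no paths exit on the right edge, and path conservation at each vertex forces exactly one extra path of colour $i$ to appear on the vertical edges while every other colour is transported through unchanged. Consequently $\C_p(x_{p-m'})\cdots\C_{m'+1}(x_1)\ket{\nu}$ belongs to $\mathbb{V}(1^p 0^{n-p})$, so the identity insertion may be restricted to that sector, and the vectors $\{\ket{\alpha}\}_{\alpha \in \mathcal{S}_{(1^p 0^{n-p})}}$ form an orthonormal basis of this sector with respect to the natural pairing $\langle \I | \J \rangle = \delta_{\I,\J}$ inherited tensor-factor-wise from each copy of $V$.

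For the dual identity \eqref{branch-g}, the argument is entirely parallel. Split the product $\B_{m'+1}(x_1)\cdots\B_m(x_{m-m'})$ at position $p$, insert the completeness relation over the intermediate sector $\mathbb{V}(0^p 1^{n-p})$, and recognize the resulting factors as $g_{\mu/\beta}(x_1,\dots,x_{p-m'})$ and $g_{\beta/\nu}(x_{p-m'+1},\dots,x_{m-m'})$. The analogous sector-preservation statement is that $\B_i(x)$ adds exactly one path of colour $i$ to the ket it acts on and leaves every other colour count invariant; this follows in the same way from \eqref{B-row} by path conservation for the dual vertex weights, since a path of colour $i$ exits on the left edge of the row while the right edge is unoccupied.

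Graphically, both identities amount to cutting the partition functions \eqref{f-sk-pf} and \eqref{g-sk-pf} by a horizontal line placed between rows $p-m'$ and $p-m'+1$ and summing over the colour content of the $n$ vertical edges crossed by the cut; the sector restriction encodes the fact that, with the chosen left and right boundary data, these crossed edges carry exactly one path of each of the colours $1,\dots,p$. I do not anticipate any genuine obstacle: the entire argument is bookkeeping based on the operator definitions of the skew functions, and does not require any of the intertwining or commutation relations established earlier in the chapter.
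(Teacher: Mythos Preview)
Your proposal is correct and follows essentially the same approach as the paper: insert a resolution of the identity between the operator blocks at position $p$, restrict the intermediate sum to the appropriate sector by colour-conservation, and identify the resulting matrix elements with the skew functions. Your justification of the sector restriction (via the one-colour-creation property of $\C_i$ and $\B_i$) is in fact more detailed than what the paper provides, but the argument is otherwise identical.
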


\begin{proof}
Starting from the expectation value \eqref{skew-f-HL}, we insert a resolution of the identity between the operators $\mathcal{C}_p(x_{p-m'})$ and $\mathcal{C}_{p+1}(x_{p-m'+1})$. In doing so, we may restrict the summation to the sector $\mathcal{S}_{(1^p 0^{n-p})}$. We obtain
\begin{multline*}
f_{\mu/\nu}(x_1,\dots,x_{m-m'})
\\
=
\sum_{\alpha \in \mathcal{S}_{(1^p 0^{n-p})}}
\bra{\nu} \C_{m'+1}(x_1) \dots \C_{p}(x_{p-m'}) \ket{\alpha}
\bra{\alpha} \C_{p+1}(x_{p-m'+1}) \dots \C_{m}(x_{m-m'}) \ket{\mu},
\end{multline*}
and the result \eqref{branch-f} is now immediate by converting the expectation values back into their functional form. The proof of \eqref{branch-g} follows by similar considerations applied to \eqref{skew-g-HL}.


\end{proof}

\begin{cor}
Fix a rainbow composition $\mu \in \mathcal{S}_{(1^n)}$. For any $1 \leq p < n$ one has the branching relations
\begin{align*}
f_{\mu}(x_1,\dots,x_n)
&=
\sum_{\alpha}
f_{\alpha}(x_1,\dots,x_p)
f_{\mu/\alpha}(x_{p+1},\dots,x_n),
\\
g_{\mu}(x_1,\dots,x_n)
&=
\sum_{\beta}
g_{\mu/\beta}(x_1,\dots,x_p)
g_{\beta}(x_{p+1},\dots,x_n),
\end{align*}
where the first sum is over rainbow compositions $\alpha \in \mathcal{S}_{(1^p 0^{n-p})}$, and the second is over rainbow compositions $\beta \in \mathcal{S}_{(0^{p} 1^{n-p})}$.
\end{cor}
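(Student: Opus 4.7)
The plan is to derive both identities as immediate specializations of the skew branching identities \eqref{branch-f} and \eqref{branch-g} just proved in the preceding proposition. The key observation is that $f_\mu$ and $g_\mu$ are exactly the skew functions with ``empty lower index'', once one makes the right choice of sectors: namely $f_\mu=f_{\mu/\varnothing}$ for $\mu\in\mathcal{S}_{(1^n)}$, corresponding to $m=n$, $m'=0$, $\kappa=(0^n)$ in \eqref{skew-f-HL}, and analogously $g_\mu=g_{\mu/\varnothing}$ for $\mu\in\mathcal{S}_{(1^n)}$ in \eqref{skew-g-HL}. Under these specializations the dual state $\bra{\nu}_\kappa$ in \eqref{sk-generic-f} reduces to $\bra{\varnothing}$ and the full row-operator product $\C_1(x_1)\cdots\C_n(x_n)$ appears, recovering Definition \eqref{f-HL}; similarly for the $g$ side with $\B$'s.

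First I would apply \eqref{branch-f} with the specialization $\nu=\varnothing$, $m'=0$, $m=n$, and general $1\le p<n$. The hypothesis $m'<p<m$ becomes $0<p<n$, which is exactly the hypothesis of the corollary. The resulting sum ranges over $\alpha\in\mathcal{S}_{(1^p0^{n-p})}$, which are by definition the rainbow compositions of length $p$. The factor $f_{\alpha/\nu}(x_1,\dots,x_{p-m'})$ becomes $f_{\alpha/\varnothing}(x_1,\dots,x_p)=f_\alpha(x_1,\dots,x_p)$, while $f_{\mu/\alpha}(x_{p-m'+1},\dots,x_{m-m'})$ becomes $f_{\mu/\alpha}(x_{p+1},\dots,x_n)$, giving the first identity verbatim.

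Next I would repeat the argument for $g$, applying \eqref{branch-g} with the analogous choices. Here the appropriate sector convention is $\kappa=(0^{n-m'}1^{n-m'})\to(0^n)$ so that $\ket{\nu}_\kappa=\ket{\varnothing}$, and one takes $\mu\in\mathcal{S}_{(0^01^n)}=\mathcal{S}_{(1^n)}$. The intermediate compositions $\beta$ then range over $\mathcal{S}_{(0^p1^{n-p})}$, and \eqref{branch-g} rewrites as $g_\mu(x_1,\dots,x_n)=\sum_\beta g_{\mu/\beta}(x_1,\dots,x_p)\,g_{\beta/\varnothing}(x_{p+1},\dots,x_n)$, which is precisely the second identity after recognizing $g_{\beta/\varnothing}=g_\beta$.

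There is essentially no obstacle here, since the proposition already does all the real work: the branching identities \eqref{branch-f}--\eqref{branch-g} follow from inserting the resolution of the identity in the appropriate sector between two row operators, and the corollary is just the case where the ``bottom'' boundary state is the vacuum. The only point requiring a moment of care is matching the sector labels $(1^p0^{n-p})$ for $\alpha$ and $(0^p1^{n-p})$ for $\beta$, which correspond respectively to paths of colours $\{1,\dots,p\}$ already present (for $f$) versus paths of colours $\{p+1,\dots,n\}$ still to be removed (for $g$); this reflects the left-to-right ordering of the operator products in Definitions \eqref{f-HL} and \eqref{g-HL}.
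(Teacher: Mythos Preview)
Your proposal is correct and follows essentially the same approach as the paper: the corollary is obtained from the skew branching identities \eqref{branch-f} and \eqref{branch-g} by setting $m'=0$, $m=n$, and $\nu=\varnothing$, whereupon $f_{\alpha/\varnothing}=f_{\alpha}$ and $g_{\beta/\varnothing}=g_{\beta}$. Your additional remarks about matching the sector labels are accurate and make explicit what the paper leaves implicit.
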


\begin{proof}
These relations follow as the $m' = 0$, $m=n$ reduction of \eqref{branch-f} and 
\eqref{branch-g}, when $\nu$ is replaced by the empty composition $\varnothing$.
\end{proof}


\section{Summation identities of Mimachi--Noumi type}
\label{ssec:mimachi}

Throughout the rest of the paper, the following alteration of the normalization of $g_{\mu}$ will be convenient:
\index{g1@$g^{*}_{\mu}$}
\begin{align}
\label{g-star}
g^{*}_{\mu}(x_1,\dots,x_n)
:=
\frac{q^{n(n+1)/2}}{(q-1)^n}
\cdot
g_{\mu}(x_1,\dots,x_n).
\end{align}

\begin{thm}
\label{thm:mimachi}
Let $(x_1,\dots,x_n)$ and $(y_1,\dots,y_n)$ be two sets of complex parameters such that
\begin{align}
\label{weight-condition2}
\left|
\frac{x_i-s}{1-sx_i}
\cdot
\frac{y_j-s}{1-sy_j}
\right|
<
1,
\qquad
\text{for all}\ 1 \leq i,j \leq n.
\end{align}
The non-symmetric spin Hall--Littlewood functions satisfy the identity
\begin{align}
\label{mimachi-id}
\sum_{\mu}
f_{\mu}(x_1,\dots,x_n)
g^{*}_{\mu}(y_1,\dots,y_n)
=
\prod_{i=1}^{n}
\frac{1}{1-x_i y_i}
\prod_{n \geq i>j \geq 1}
\frac{1-q x_i y_j}{1-x_i y_j},
\end{align}
where the summation is over all compositions $\mu$.
\end{thm}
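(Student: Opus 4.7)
The plan is to realise the left-hand side as a single operator matrix element and reduce it inductively. First, combining the definitions \eqref{f-HL}, \eqref{g-HL} and the normalisation \eqref{g-star} gives
\[
\sum_\mu f_\mu(x_1,\dots,x_n)\,g^*_\mu(y_1,\dots,y_n)=\frac{q^{n(n+1)/2}}{(q-1)^n}\sum_\mu \bra{\varnothing}\C_1(x_1)\cdots\C_n(x_n)\ket{\mu}\bra{\mu}\B_1(y_1)\cdots\B_n(y_n)\ket{\varnothing}.
\]
Each $\B_i(y_i)$ creates precisely one path of colour $i$ and each $\C_j(x_j)$ removes one, so both $\B_1(y_1)\cdots\B_n(y_n)\ket{\varnothing}$ and $\bra{\varnothing}\C_1(x_1)\cdots\C_n(x_n)$ lie in the rainbow sector $\V$ (resp.\ $\Vs$), and $\sum_\mu\ket{\mu}\bra{\mu}$ acts on them as the identity. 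The left-hand side therefore collapses to $\tfrac{q^{n(n+1)/2}}{(q-1)^n}\,I_n$, where $I_n:=\bra{\varnothing}\C_1(x_1)\cdots\C_n(x_n)\B_1(y_1)\cdots\B_n(y_n)\ket{\varnothing}$. The hypothesis \eqref{weight-condition2} guarantees absolute convergence of the $\mu$-sum and the applicability on $\mathbb{V}$ of the commutation relations \eqref{CB<}--\eqref{CB=}.

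Next I would obtain a recursion for $I_n$ by sliding the rightmost $\C$-operator past the $\B$'s. Relation \eqref{CB>} is factorised, so $\C_n(x_n)$ commutes through $\B_1(y_1),\dots,\B_{n-1}(y_{n-1})$ with the scalar prefactor $\prod_{j=1}^{n-1}\tfrac{1-qx_ny_j}{q(1-x_ny_j)}$. The remaining piece $\C_n(x_n)\B_n(y_n)\ket{\varnothing}$ is expanded via the equal-index relation \eqref{CB=}: every term $\B_k(y_n)\C_k(x_n)\ket{\varnothing}$ with $k\ge 1$ vanishes because $\C_k$ annihilates $\ket{\varnothing}$ whenever $k\ge 1$, and the term $\B_n(y_n)\C_n(x_n)\ket{\varnothing}$ likewise vanishes; only the $k=0$ contribution survives, and because $\C_0$ and $\B_0$ both act as the identity on $\ket{\varnothing}$ one obtains $\C_n(x_n)\B_n(y_n)\ket{\varnothing}=\tfrac{1-q^{-1}}{1-x_ny_n}\ket{\varnothing}$. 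This gives the recursion
\[
I_n=\Bigl(\prod_{j=1}^{n-1}\tfrac{1-qx_ny_j}{q(1-x_ny_j)}\Bigr)\cdot\tfrac{1-q^{-1}}{1-x_ny_n}\cdot I_{n-1}.
\]

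The base case $I_1=\tfrac{1-q^{-1}}{1-x_1y_1}$ is the same computation at $n=1$. Iterating yields $I_n=(q-1)^n\,q^{-n(n+1)/2}\prod_{i=1}^n\tfrac{1}{1-x_iy_i}\prod_{n\ge i>j\ge 1}\tfrac{1-qx_iy_j}{1-x_iy_j}$, and the overall prefactor $q^{n(n+1)/2}/(q-1)^n$ exactly converts this into the right-hand side of \eqref{mimachi-id}. The only non-routine observation is that the a priori complicated equal-index exchange \eqref{CB=} degenerates to a single factorised term when evaluated against $\ket{\varnothing}$ (the troublesome $\B_k\C_k$ terms being precisely those killed by the vacuum); everything else is scalar bookkeeping driven by the factorised relation \eqref{CB>}.
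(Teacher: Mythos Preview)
Your proof is correct and follows essentially the same approach as the paper: realise the sum as the matrix element $\bra{\varnothing}\C_1\cdots\C_n\B_1\cdots\B_n\ket{\varnothing}$, use the factorised commutation \eqref{CB>} together with the equal-index relation \eqref{CB=}, and exploit that the vacuum kills all but the $k=0$ term to obtain a one-step recursion. The only difference is cosmetic: the paper pushes $\B_1(y_1)$ leftward and uses $\bra{\varnothing}\B_k=0$ for $k\geq 1$ on the dual vacuum, whereas you push $\C_n(x_n)$ rightward and use $\C_k\ket{\varnothing}=0$ for $k\geq 1$ on the vacuum; the resulting recursions strip off $(x_1,y_1)$ versus $(x_n,y_n)$ but iterate to the same product.
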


\begin{proof}
We start by defining the expectation value
\begin{align*}
\mathcal{E}_n(x_1,\dots,x_n;y_1,\dots,y_n)
=
\bra{\varnothing}
\C_1(x_1) \dots \C_n(x_n)
\B_1(y_1) \dots \B_n(y_n)
\ket{\varnothing}.
\end{align*}
Inserting a complete set of states $\sum_{\mu} \ket{\mu} \bra{\mu}$ between the final $\C$ operator and the first $\B$ operator, we see that
\begin{align}
\nonumber
\mathcal{E}_n(x_1,\dots,x_n;y_1,\dots,y_n)
&=
\sum_{\mu}
\bra{\varnothing}
\C_1(x_1) \dots \C_n(x_n)
\ket{\mu}
\bra{\mu}
\B_1(y_1) \dots \B_n(y_n)
\ket{\varnothing}
\\
&=
\sum_{\mu}
f_{\mu}(x_1,\dots,x_n)
g_{\mu}(y_1,\dots,y_n).
\label{lhs-mn}
\end{align}
On the other hand, making repeated use of the commutation relation \eqref{CB>} (which we are able to do in view of the assumption \eqref{weight-condition2} on the rapidities, \cf\ \eqref{weight-condition}), we can transfer the operator $\B_1(y_1)$ to the left through $n-1$ of the $\C$ operators:
\begin{multline}
\label{thm4.7-0}
\mathcal{E}_n(x_1,\dots,x_n;y_1,\dots,y_n)
\\
=
q^{-n+1}
\prod_{i=2}^{n}
\left(
\frac{1-q x_i y_1}{1-x_i y_1}
\right)
\bra{\varnothing}
\C_1(x_1) \B_1(y_1) 
\C_2(x_2) \dots \C_n(x_n)
\B_2(y_2) \dots \B_n(y_n)
\ket{\varnothing}.
\end{multline}
We turn our attention to the first piece of this expectation value, namely  the quantity $\bra{\varnothing} \C_1(x_1) \B_1(y_1)$. Employing another commutation relation, \eqref{CB=}, we have
\begin{align}
\label{thm4.7-1}
\bra{\varnothing} \C_1(x_1) \B_1(y_1)
=
\frac{1-q^{-1}}{1-x_1 y_1}
\bra{\varnothing} \B_0(y_1) \C_0(x_1)
+
\bra{\varnothing} \B_1(y_1) \C_1(x_1)
-
\frac{(1-q)x_1 y_1}{1-x_1 y_1}
\sum_{k=2}^{n}
\bra{\varnothing} \B_k(y_1) \C_k(x_1).
\end{align}
Now we observe the following properties of the $\B$ and $\C$ operators when acting on the dual vacuum state $\bra{\varnothing}$:
\begin{align}
\label{simple-action}
\bra{\varnothing} \B_0(y) = \bra{\varnothing} \C_0(x) = \bra{\varnothing},
\qquad
\bra{\varnothing} \B_k(y) = 0, \quad \forall\ k \geq 1.
\end{align}
These relations follow trivially from the graphical interpretation of the row operators, and conservation of lattice paths through a vertex. Making use of the actions \eqref{simple-action} on the right hand side of \eqref{thm4.7-1}, we recover the simplified relation 
$\bra{\varnothing} \C_1(x_1) \B_1(y_1) = (1-q^{-1})/(1-x_1 y_1) \bra{\varnothing}$, which can now be substituted into \eqref{thm4.7-0} to yield
\begin{multline}
\label{thm4.7-2}
\mathcal{E}_n(x_1,\dots,x_n;y_1,\dots,y_n)
\\
=
q^{-n}
\left(
\frac{q-1}{1-x_1 y_1}
\right)
\prod_{i=2}^{n}
\left(
\frac{1-q x_i y_1}{1-x_i y_1}
\right)
\bra{\varnothing} 
\C_2(x_2) \dots \C_n(x_n)
\B_2(y_2) \dots \B_n(y_n)
\ket{\varnothing}.
\end{multline}
The final expectation value in \eqref{thm4.7-2} depends on the colours $\{2,\dots,n\}$, with colour $1$ never making an appearance. Shifting the value of all colours down by $1$ leaves the expectation value invariant; we thus recover the recurrence
\begin{align}
\label{E-recur}
\mathcal{E}_n(x_1,\dots,x_n;y_1,\dots,y_n)
=
q^{-n}
\left(
\frac{q-1}{1-x_1 y_1}
\right)
\prod_{i=2}^{n}
\left(
\frac{1-q x_i y_1}{1-x_i y_1}
\right)
\mathcal{E}_{n-1}(x_2,\dots,x_n;y_2,\dots,y_n),
\end{align}
whose solution is 
\begin{align}
\label{rhs-mn}
\mathcal{E}_n(x_1,\dots,x_n;y_1,\dots,y_n)
=
q^{-n(n+1)/2} (q-1)^n
\prod_{i=1}^{n}
\frac{1}{1-x_i y_i}
\prod_{n \geq i>j \geq 1}
\frac{1-q x_i y_j}{1-x_i y_j}.
\end{align}
Matching \eqref{lhs-mn} and \eqref{rhs-mn} yields the claim, \eqref{mimachi-id}.

\end{proof}


\section{Symmetric rational function $G_{\mu/\nu}$}
\label{ssec:G}

We will also consider a family of {\it symmetric} rational functions $G_{\mu/\nu}$, indexed by a pair of rainbow compositions $\mu,\nu$. These can be considered as coloured refinements of certain symmetric functions considered in \cite{Borodin,BorodinP2}, as we explain below.

\begin{defn}
Let $\mu = (\mu_1,\dots,\mu_n)$ and $\nu = (\nu_1,\dots,\nu_n)$ be a pair of rainbow compositions, and fix any positive integer $p$. We introduce the symmetric rational function\footnote{The function \eqref{G-def} is symmetric in the alphabet $(x_1,\dots,x_p)$, in view of the commutativity \eqref{BB=} of the $\mathcal{B}_0$ operators.}
\index{G@$G_{\mu/\nu}$; transfer matrix}
\begin{align}
\label{G-def}
G_{\mu/\nu}(x_1,\dots,x_p)
:=
\bra{\mu}
\mathcal{B}_0(x_1)
\dots
\mathcal{B}_0(x_p)
\ket{\nu},
\end{align}
where $\ket{\nu} \in \mathbb{V}(1^n)$ and $\bra{\mu} \in \mathbb{V}^{*}(1^n)$. In the special case $\nu = 0^n$, we shall write
\index{G@$G_{\mu}$}
\begin{align*}
G_{\mu/0^n}(x_1,\dots,x_p)
\equiv
G_{\mu}(x_1,\dots,x_p).
\end{align*}
\end{defn}

Graphically speaking, the function $G_{\mu/\nu}$ is given by the partition function
\begin{align}
\label{G-pf}
G_{\mu/\nu}(x_1,\dots,x_p)
&=
\tikz{0.8}{
\foreach\y in {1,...,5}{
\draw[lgray,line width=1.5pt,<-] (1,\y) -- (8,\y);
}
\foreach\x in {2,...,7}{
\draw[lgray,line width=4pt,->] (\x,0) -- (\x,6);
}
\node[left] at (0.5,1) {$x_1 \leftarrow$};
\node[left] at (0.5,2) {$x_2 \leftarrow$};
\node[left] at (0.5,3) {$\vdots$};
\node[left] at (0.5,4) {$\vdots$};
\node[left] at (0.5,5) {$x_p \leftarrow$};
\node[above] at (7,6) {$\cdots$};
\node[above] at (6,6) {$\cdots$};
\node[above] at (5,6) {$\cdots$};
\node[above] at (4,6) {\footnotesize$\bm{B}(2)$};
\node[above] at (3,6) {\footnotesize$\bm{B}(1)$};
\node[above] at (2,6) {\footnotesize$\bm{B}(0)$};
\node[below] at (7,0) {$\cdots$};
\node[below] at (6,0) {$\cdots$};
\node[below] at (5,0) {$\cdots$};
\node[below] at (4,0) {\footnotesize$\bm{A}(2)$};
\node[below] at (3,0) {\footnotesize$\bm{A}(1)$};
\node[below] at (2,0) {\footnotesize$\bm{A}(0)$};
\node[right] at (8,1) {$0$};
\node[right] at (8,2) {$0$};
\node[right] at (8,3) {$\vdots$};
\node[right] at (8,4) {$\vdots$};
\node[right] at (8,5) {$0$};
\node[left] at (1,1) {$0$};
\node[left] at (1,2) {$0$};
\node[left] at (1,3) {$\vdots$};
\node[left] at (1,4) {$\vdots$};
\node[left] at (1,5) {$0$};
}
\end{align}
where the states at the bottom and the top of the lattice are given by \eqref{a-b-states}. The difference between this partition function, and that of \eqref{g-sk-pf}, is that the particles which enter the lattice from its base must all exit from the top boundary, and not via the left boundary as can occur in \eqref{g-sk-pf}.


\begin{rmk}
\label{rmk:4.10}
One can consider colour-blind projections of \eqref{G-pf}, obtained by treating all paths as having the same colour. In that situation, the coloured compositions $\mu$, $\nu$ become partitions $\mu = (\mu_1 \geq \cdots \geq \mu_n \geq 0)$ and $\nu = (\nu_1 \geq \cdots \geq \nu_n \geq 0)$, and one can define
\index{G@$G^{\bullet}_{\mu/\nu}$; colour-blind transfer matrix}
\begin{align}
\label{G}
G^{\bullet}_{\mu/\nu}(x_1,\dots,x_p)
&:=
\tikz{0.85}{
\foreach\y in {1,...,5}{
\draw[lgray,line width=1.5pt,<-] (1,\y) -- (8,\y);
}
\foreach\x in {2,...,7}{
\draw[lgray,line width=4pt,->] (\x,0) -- (\x,6);
}
\foreach\y in {1,...,5}{
\foreach\x in {2,...,7}{
\node at (\x,\y) {$\bullet$};
}}
\node[left] at (0.5,1) {$x_1 \leftarrow$};
\node[left] at (0.5,2) {$x_2 \leftarrow$};
\node[left] at (0.5,3) {$\vdots$};
\node[left] at (0.5,4) {$\vdots$};
\node[left] at (0.5,5) {$x_p \leftarrow$};
\node[above] at (7,6) {$\cdots$};
\node[above] at (6,6) {$\cdots$};
\node[above] at (5,6) {$\cdots$};
\node[above] at (4,6) {\tiny$m_2(\nu)$};
\node[above] at (3,6) {\tiny$m_1(\nu)$};
\node[above] at (2,6) {\tiny$m_0(\nu)$};
\node[below] at (7,0) {$\cdots$};
\node[below] at (6,0) {$\cdots$};
\node[below] at (5,0) {$\cdots$};
\node[below] at (4,0) {\tiny$m_2(\mu)$};
\node[below] at (3,0) {\tiny$m_1(\mu)$};
\node[below] at (2,0) {\tiny$m_0(\mu)$};
\node[right] at (8,1) {$0$};
\node[right] at (8,2) {$0$};
\node[right] at (8,3) {$\vdots$};
\node[right] at (8,4) {$\vdots$};
\node[right] at (8,5) {$0$};
\node[left] at (1,1) {$0$};
\node[left] at (1,2) {$0$};
\node[left] at (1,3) {$\vdots$};
\node[left] at (1,4) {$\vdots$};
\node[left] at (1,5) {$0$};
}
\end{align}
where the states at the bottom and top of the lattice in \eqref{G-pf} are replaced by 
$\bm{A}(k) \mapsto m_k(\mu) \bm{e}_1$ and $\bm{B}(k) \mapsto m_k(\nu) \bm{e}_1$, for all $k \in \mathbb{N}$. The vertices within the lattice \eqref{G} are the rank-1 counterparts of the dual weights \eqref{dual-s-weights}, given by \eqref{rank1-dual-weights}. In the special case $\nu = 0^n$ (all particles leave the lattice via the top of the 0-th column), we write
\begin{align*}
G^{\bullet}_{\mu/0^n}(x_1,\dots,x_p)
\equiv
G^{\bullet}_{\mu}(x_1,\dots,x_p).
\end{align*}
\index{G@$G^{\bullet}_{\mu}$}
The partition function \eqref{G} appeared previously in \cite{Borodin,BorodinP2}. In the notation of those works, one has
\begin{align}
\label{G-sf}
G^{\bullet}_{\mu/\nu}(x_1,\dots,x_p)
=
q^{-np}
\cdot
\G_{\mu/\nu}(x_1,\dots,x_p).
\end{align}
\end{rmk}
\index{G@$\G_{\mu/\nu}$}

\begin{prop}
Let $\mu = (\mu_1,\dots,\mu_n)$ be a length-$n$ composition and let $\mu^{+}$ denote its partition ordering. The coloured partition function \eqref{G-pf} and its uncoloured counterpart \eqref{G} are equal when $\nu = 0^n$, \ie\ one has
\begin{align}
\label{GGbull}
G_{\mu}(x_1,\dots,x_p)
=
G^{\bullet}_{\mu^+}(x_1,\dots,x_p),
\quad
\forall\ p \geq 1.
\end{align}
More generally, summing over all length-$n$ compositions 
$\nu = (\nu_1,\dots,\nu_n)$ with fixed partition ordering $\nu^{+} = \kappa$, one has
\begin{align}
\label{GGbull2}
\sum_{\nu: \nu^{+} = \kappa}
G_{\mu/\nu}(x_1,\dots,x_p)
=
G^{\bullet}_{\mu^+/\kappa}(x_1,\dots,x_p),
\quad
\forall\ p \geq 1.
\end{align}
\end{prop}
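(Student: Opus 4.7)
The plan is to prove the more general identity \eqref{GGbull2} by iterated application of the colour-blindness identities \eqref{cb5}, \eqref{cb6} for the dual weights $M_x$; the special case \eqref{GGbull} then follows immediately, since $\nu = 0^n$ is the unique length-$n$ composition with partition ordering equal to $0^n$, collapsing the sum to a single term. The philosophical content is that summing the coloured partition function \eqref{G-pf} over all permissible colourings of its top boundary turns every coloured row into a colour-blind row, and the row after that sees only the weights of the state produced, so the cascade propagates all the way to the bottom.

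First I would interpret $G_{\mu/\nu}(x_1,\dots,x_p)$ as a sum over families of intermediate vertical states $\{\bm{C}^{(i)}(k)\in\mathbb{N}^n\}_{k\ge 0}$ for $0\le i\le p$, with $\bm{C}^{(0)}=\bm{A}$ (encoding $\mu$) and $\bm{C}^{(p)}=\bm{B}$ (encoding $\nu$), where between any two consecutive levels one has a row of $M_{x_i}$ vertices (with both horizontal boundary states equal to $0$). The key single-row lemma is: if the bottom states $\{\bm{C}^{(i-1)}(k)\}$ are fixed, then
$$
\sum_{\{\bm{C}^{(i)}(k)\}:|\bm{C}^{(i)}(k)|=c^{(i)}_k\ \forall k}\ \text{Row}_{x_i}\bigl(\bm{C}^{(i-1)},\bm{C}^{(i)}\bigr)
=\text{Row}^{\bullet}_{x_i}\bigl(|\bm{C}^{(i-1)}|,c^{(i)}\bigr),
$$
where $\text{Row}^{\bullet}_{x_i}$ is the corresponding row in the colour-blind lattice \eqref{G}. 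This follows by processing the columns of the row from right to left (the flow direction of the dual horizontal lines): at each column the right incoming horizontal state has already been reduced to a $0/1$ value $\theta(j)$, and \eqref{cb5} and \eqref{cb6} allow the sum over $\bm{C}^{(i)}(k)\in\mathcal{W}(c^{(i)}_k)$, split into cases according to whether the outgoing horizontal state is $0$ or nonzero, to be replaced by a single rank-$1$ weight $M^{\bullet}_{x_i}(|\bm{C}^{(i-1)}(k)|,\theta(j);c^{(i)}_k,\theta(\ell))$ depending only on $|\bm{C}^{(i-1)}(k)|$.

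Second, I would iterate the single-row lemma from the top row downwards. Summing the full partition function over $\nu$ with $\nu^+=\kappa$ is exactly summing over $\{\bm{B}(k)\}$ with $|\bm{B}(k)|=m_k(\kappa)$ for all $k\ge 0$. Applied to the top row, the lemma replaces that row by its colour-blind counterpart; the resulting expression depends on $\bm{C}^{(p-1)}$ only through the integer weights $|\bm{C}^{(p-1)}(k)|$. Because of this, I can now freely sum over the specific colourings of $\bm{C}^{(p-1)}$ with those weights fixed, thereby exposing the $(p-1)$-th row to exactly the same situation as the $p$-th row was, and apply the lemma again. Repeating all the way down, after $p$ applications the entire sum collapses to the rank-$1$ partition function \eqref{G} with bottom multiplicities $m_k(\mu^+)=m_k(\mu)$ and top multiplicities $m_k(\kappa)$; this is $G^{\bullet}_{\mu^+/\kappa}$.

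The only mildly subtle step is the single-row lemma: one must verify that the case split between $\ell=0$ (using \eqref{cb5}) and $\ell\geq 1$ (using \eqref{cb6}) can be threaded consistently along the entire row from right to left, so that at every vertex the incoming horizontal state is a fixed index whose value of $\theta(\cdot)$ has been pinned down by the preceding application. This is a direct induction on the column position once the rightmost column (with right boundary state $0$) is analyzed as the base case, and no further input beyond Proposition \ref{prop:colour-blind} is needed.
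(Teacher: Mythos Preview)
Your strategy matches the paper's: iterate the dual colour-blindness relations \eqref{cb5}, \eqref{cb6} to reduce the coloured lattice to its rank-$1$ counterpart row by row from the top down. The paper only sketches the $\nu=0^n$ case, observing that the top-left vertex satisfies $M_{x_p}(\I,j;1^n,0)=M^{\bullet}_{x_p}(|\I|,\theta(j);n,0)$ for every $\I,j$ and that this ``triggers'' the rest; you spell out the general statement \eqref{GGbull2} in more detail, but the approach is the same.

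There is one slip in your single-row argument: the column induction must run from left to right, not right to left. The relations \eqref{cb5} and \eqref{cb6} sum over the \emph{outgoing} states $\K$ and $\ell$ of $M_x(\I,j;\K,\ell)$, and the result depends on the incoming states only through $|\I|$ and $\theta(j)$. For the dual vertices $\ell$ sits on the left and $j$ on the right, so the base of the induction is the leftmost column, where the boundary condition $\ell=0$ lets you invoke \eqref{cb5} directly; having processed column $k$, its contribution depends on the edge state $h_k$ to its right only through $\theta(h_k)$, so when you move to column $k+1$ you may split the sum over its left-outgoing state $h_k$ into $h_k=0$ and $\sum_{h_k\geq 1}$ and apply \eqref{cb5}, \eqref{cb6} there. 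Processing from the right fails because for a specific colour $\ell\geq 1$ the quantity $\sum_{\K\in\mathcal{W}(c)}M_x(\I,j;\K,\ell)$ (a single term, by conservation) depends on the individual components $I_\ell$ and $\Is{\ell+1}{n}$ of $\I$, not merely on $\theta(\ell)$. With the direction corrected your argument is complete.
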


\begin{proof}
These identities follow by repeated application of the colour-blindness relations \eqref{cb5} and \eqref{cb6}. Let us sketch how the proof works in the case of \eqref{GGbull}. One begins by isolating the top-left vertex of the lattice \eqref{G-pf}, in the case $\nu = 0^n$. In any given configuration of the lattice, it has the form 
\begin{align}
M_{x_p}(\I,j;1^n,0)
=
\tikz{0.7}{
\draw[lgray,line width=1.5pt,<-] (-1,0) -- (1,0);
\draw[lgray,line width=4pt,->] (0,-1) -- (0,1);
\node[left] at (-1,0) {\tiny $0$};\node[right] at (1,0) {\tiny $j$};
\node[below] at (0,-1) {\tiny $\I$};\node[above] at (0,1) {\tiny $1^n$};
},
\quad
j \in \{0,1,\dots,n\},
\quad
\I \in \mathbb{N}^n.
\end{align}
By direct comparison with the dual rank-1 weights \eqref{rank1-dual-weights}, one sees that
\begin{align*}
M_{x_p}(\I,j;1^n,0)
=
M^{\bullet}_{x_p}(|\I|,\theta(j);n,0),
\end{align*}
for any $j \in \{0,1,\dots,n\}$ and $\I \in \mathbb{N}^n$. This relation serves as the trigger for deducing the colour-blindness of the whole lattice.
\end{proof}

\section{Cauchy identities}
\label{ssec:cauch}

Finally we turn to summation identities which arise by pairing the non-symmetric spin Hall--Littlewood functions with the symmetric functions \eqref{G-def}. They are close relatives of Cauchy-type relations for the spin Hall--Littlewood functions obtained in \cite{Borodin,BorodinP2}.

\begin{prop}
\label{prop:fG-skew}
Let $(x_1,\dots,x_n)$ and $(y_1,\dots,y_p)$ be two sets of complex parameters satisfying the constraints \eqref{weight-condition2}, and fix a composition $\nu = (\nu_1,\dots,\nu_n)$ of length $n$. Then one has the identity
\begin{align}
\label{fG-skew}
\sum_{\mu}
f_{\mu}(x_1,\dots,x_n)
G_{\mu / \nu}(y_1,\dots,y_p)
=
\prod_{i=1}^{n}
\prod_{j=1}^{p}
\frac{1-q x_i y_j}{q(1-x_i y_j)}
\cdot
f_{\nu}(x_1,\dots,x_n),
\end{align}
where the summation is taken over all length-$n$ compositions 
$\mu = (\mu_1,\dots,\mu_n)$.
\end{prop}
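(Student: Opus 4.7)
The plan is to recognize the left-hand side as a matrix element of a composite row-operator expression and then evaluate it by direct use of the commutation relations. First, I would insert a resolution of the identity $\sum_{\mu}\ket{\mu}\bra{\mu}$ (summing over length-$n$ rainbow compositions, which are the only states in the rainbow sector $\mathbb{V}(1^n)$ reached by either factor) between the two summands. Using the row-operator definitions \eqref{f-HL} and \eqref{G-def}, the LHS collapses to
$$\bra{\varnothing}\mathcal{C}_1(x_1)\cdots\mathcal{C}_n(x_n)\,\mathcal{B}_0(y_1)\cdots\mathcal{B}_0(y_p)\ket{\nu}.$$

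Next, I would push each $\mathcal{B}_0(y_j)$ leftward past the $\mathcal{C}_i(x_i)$ operators. Since every $\mathcal{C}_i$ has index $i\geq 1$ while $\mathcal{B}_0$ has index $0$, the applicable case of the commutation relations is always the $i>j$ case of \eqref{CB>}, i.e.\ $\mathcal{C}_i(x)\mathcal{B}_0(y) = q^{-1}\frac{1-qxy}{1-xy}\mathcal{B}_0(y)\mathcal{C}_i(x)$. This is a pure scalar exchange with no mixing of operator types, which is the reason the sum over $\mu$ closes so cleanly. Iterating through all $np$ crossings yields
$$\prod_{i=1}^{n}\prod_{j=1}^{p}\frac{1-qx_iy_j}{q(1-x_iy_j)}\cdot\bra{\varnothing}\mathcal{B}_0(y_1)\cdots\mathcal{B}_0(y_p)\mathcal{C}_1(x_1)\cdots\mathcal{C}_n(x_n)\ket{\nu}.$$

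Finally, I would invoke the vacuum action $\bra{\varnothing}\mathcal{B}_0(y)=\bra{\varnothing}$ from \eqref{simple-action}, applied $p$ times, to strip off the $\mathcal{B}_0$-string on the left. What remains is precisely $\bra{\varnothing}\mathcal{C}_1(x_1)\cdots\mathcal{C}_n(x_n)\ket{\nu} = f_\nu(x_1,\dots,x_n)$, giving the claimed right-hand side.

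The main obstacle is confirming that the infinite-volume commutation \eqref{CB>} is legitimately applicable at each of the $np$ steps. That relation was derived by truncating to an $(N+1)$-site lattice, padding by empty states, and sending $N\to\infty$; the vanishing of the ``wrong'' terms in that limit required the weight condition \eqref{weight-condition} on the pair of rapidities involved. Hypothesis \eqref{weight-condition2} is exactly the simultaneous version of that bound for every pair $(x_i,y_j)$, and all intermediate states appearing during the sequential commutation still involve only the original rapidities, so every application is covered. Once this validity is checked, the argument is purely algebraic and no bookkeeping of Boltzmann weights is needed.
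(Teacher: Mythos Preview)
Your proposal is correct and follows essentially the same approach as the paper: define the expectation value $\bra{\varnothing}\C_1(x_1)\cdots\C_n(x_n)\B_0(y_1)\cdots\B_0(y_p)\ket{\nu}$, expand it via a resolution of the identity to obtain the left-hand side, then commute all $\B_0$ operators to the left using \eqref{CB>} and annihilate them against $\bra{\varnothing}$ via \eqref{simple-action}. Your discussion of why the convergence hypothesis \eqref{weight-condition2} legitimizes each application of \eqref{CB>} is more explicit than the paper's, which simply invokes the relation repeatedly.
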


\begin{proof}
The proof proceeds along similar lines to that of Theorem \ref{thm:mimachi}. We define the expectation value
\begin{align}
\label{Enu}
\mathcal{E}_{\nu}(x_1,\dots,x_n; y_1,\dots,y_p)
=
\bra{\varnothing}
\C_1(x_1) \dots \C_n(x_n)
\B_0(y_1) \dots \B_0(y_p)
\ket{\nu},
\end{align}
which after insertion of the identity $\sum_{\mu} \ket{\mu} \bra{\mu}$ between $\C_n(x_n)$ and $\B_0(y_1)$ is given by
\begin{align*}
\mathcal{E}_{\nu}(x_1,\dots,x_n; y_1,\dots,y_p)
=
\sum_{\mu}
f_{\mu}(x_1,\dots,x_n)
G_{\mu / \nu}(y_1,\dots,y_p).
\end{align*}
Alternatively, by repeated use of the commutation relation \eqref{CB>} and the action \eqref{simple-action} of $\B_0$ on the dual vacuum $\bra{\varnothing}$, one clearly has
\begin{align*}
\mathcal{E}_{\nu}(x_1,\dots,x_n; y_1,\dots,y_p)
&=
q^{-np}
\prod_{i=1}^{n}
\prod_{j=1}^{p}
\frac{1-q x_i y_j}{1-x_i y_j}
\cdot
\bra{\varnothing}
\B_0(y_1) \dots \B_0(y_p)
\C_1(x_1) \dots \C_n(x_n)
\ket{\nu}
\\
&=
q^{-np}
\prod_{i=1}^{n}
\prod_{j=1}^{p}
\frac{1-q x_i y_j}{1-x_i y_j}
\cdot
\bra{\varnothing}
\C_1(x_1) \dots \C_n(x_n)
\ket{\nu}
\\
&=
q^{-np}
\prod_{i=1}^{n}
\prod_{j=1}^{p}
\frac{1-q x_i y_j}{1-x_i y_j}
\cdot
f_{\nu}(x_1,\dots,x_n).
\end{align*}
\end{proof}

\begin{rmk}
If one sums equation \eqref{fG-skew} over all compositions 
$\nu = (\nu_1,\dots,\nu_n)$ with fixed partition ordering $\nu^{+} = \kappa$, then by a combination of \eqref{GGbull2}, \eqref{G-sf} and \eqref{f-sum-F} one recovers
\begin{align*}
\sum_{\lambda}
\F^{\sf c}_{\lambda}(x_1,\dots,x_n)
\G_{\lambda/\kappa}(y_1,\dots,y_p)
=
\prod_{i=1}^{n}
\prod_{j=1}^{p}
\frac{1-q x_i y_j}{1-x_i y_j}
\cdot
\F^{\sf c}_{\kappa}(x_1,\dots,x_n),
\end{align*}
with the sum ranging over all partitions $\lambda$. This is then equivalent to the skew Cauchy identity of \cite[Equation (4.7)]{Borodin} \cite[Equation (4.16)]{BorodinP2}.
\end{rmk}

\begin{cor}
The $\nu = 0^n$ case of equation \eqref{fG-skew} reads
\begin{align}
\label{fG-cauchy}
\sum_{\mu}
f_{\mu}(x_1,\dots,x_n)
G_{\mu}(y_1,\dots,y_p)
&=
\sum_{\mu}
f_{\mu}(x_1,\dots,x_n)
G^{\bullet}_{\mu^{+}}(y_1,\dots,y_p)
\\
\nonumber
&=
\frac{(s^2;q)_n}{\prod_{i=1}^{n}(1-s x_i)}
\prod_{i=1}^{n}
\prod_{j=1}^{p}
\frac{1-q x_i y_j}{q(1-x_i y_j)},
\end{align}
with the summation taken over all compositions $\mu = (\mu_1,\dots,\mu_n)$.
\end{cor}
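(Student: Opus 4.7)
The plan is to obtain this as a direct specialization of Proposition \ref{prop:fG-skew}, combined with two facts already established in the excerpt: the colour-blindness identity \eqref{GGbull} and the explicit evaluation of $f_\delta$ at anti-dominant $\delta$ (Proposition \ref{prop:f-delta-intro}).

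First I would set $\nu = 0^n$ in \eqref{fG-skew}. The left-hand side becomes $\sum_\mu f_\mu(x_1,\dots,x_n)\, G_{\mu/0^n}(y_1,\dots,y_p)$, and since $G_{\mu/0^n} \equiv G_\mu$ by the convention introduced just after \eqref{G-def}, this is the first sum appearing in \eqref{fG-cauchy}. The equality with the second sum in \eqref{fG-cauchy} is then immediate from \eqref{GGbull}, which identifies $G_\mu(y_1,\dots,y_p) = G^\bullet_{\mu^+}(y_1,\dots,y_p)$ term by term; no further argument is needed here.

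Next I would evaluate the right-hand side produced by \eqref{fG-skew} at $\nu = 0^n$. The prefactor $\prod_{i,j}\frac{1-qx_iy_j}{q(1-x_iy_j)}$ is already in the desired form, so the only remaining task is to compute $f_{0^n}(x_1,\dots,x_n)$. The composition $0^n$ is anti-dominant, so Proposition \ref{prop:f-delta-intro} applies with $\delta = 0^n$; the monomial factor $\prod_i \big(\tfrac{x_i-s}{1-sx_i}\big)^{\delta_i}$ is identically $1$, and the multiplicity data reduces to $m_0(0^n) = n$ with $m_j(0^n)=0$ for $j\ge 1$. Hence
\begin{equation*}
f_{0^n}(x_1,\dots,x_n) = \frac{(s^2;q)_n}{\prod_{i=1}^n (1-sx_i)}.
\end{equation*}
Substituting this into the specialized form of \eqref{fG-skew} yields precisely the right-hand side of \eqref{fG-cauchy}.

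There is no real obstacle in this argument: the corollary is a two-line specialization once both Proposition \ref{prop:fG-skew} and Proposition \ref{prop:f-delta-intro} are in hand. The only thing worth double-checking is that the hypothesis \eqref{weight-condition2} (needed to justify the commutation-relation step in the proof of Proposition \ref{prop:fG-skew}) is still in force under the stated assumptions on $(x_1,\dots,x_n)$ and $(y_1,\dots,y_p)$, which it is, since the statement of the corollary inherits exactly the same hypotheses via \eqref{fG-skew}.
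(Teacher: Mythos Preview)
Your proof is correct and follows essentially the same approach as the paper: set $\nu=0^n$ in \eqref{fG-skew}, invoke the colour-blindness identity \eqref{GGbull} for the first equality, and evaluate $f_{0^n}$ explicitly for the second. The only cosmetic difference is that the paper reads off $f_{0^n}$ directly from its one-column partition function rather than citing Proposition~\ref{prop:f-delta}, but these amount to the same computation.
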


\begin{proof}
The first line of \eqref{fG-cauchy} follows from the colour-blindness property \eqref{GGbull}; the second uses \eqref{fG-skew} and the direct evaluation of $f_{0^n}(x_1,\dots,x_n)$. Indeed, in the case $\nu = 0^n$, $f_{\nu}(x_1,\dots,x_n)$ is given by a partition function consisting of a single non-trivial column, and is therefore factorized. One can read off its value as $f_{0^n}(x_1,\dots,x_n) = (s^2;q)_n \prod_{i=1}^{n} (1-sx_i)^{-1}$, and the statement then follows.
\end{proof}

\chapter{Recursive properties and symmetries}
\label{sec:properties}

The rational functions $f_{\mu}$ and $g_{\mu}$, as well as their permuted versions $f^{\sigma}_{\mu}$ and $g^{\sigma}_{\mu}$, have a rich interrelationship. The goal of this chapter is to document their various properties, and to prove them directly from the lattice definitions \eqref{f-def} and \eqref{g-def} of the functions. The heart of the chapter will be to examine the transformations of $f_{\mu}$ and $g_{\mu}$ under Hecke divided-difference operators. The exchange relations obtained (once supplemented by a suitable initial condition) allow us to conclude, in Section \ref{ssec:hl-reduce}, that $f_{\mu}$ and $g_{\mu}$ degenerate to non-symmetric Hall--Littlewood polynomials when $s=0$.

\section{Factorization of $f_{\delta}$}

The non-symmetric spin Hall--Littlewood functions are rather complicated combinatorial objects, expressible as a sum over many lattice configurations \eqref{f-def}. There is, however, one case in which they are particularly simple; namely, when the indexing composition $\mu$ is chosen to be {\it anti-dominant}, meaning that its parts are weakly increasing.
\begin{prop}
\label{prop:f-delta}
Let $\delta = (\delta_1 \leq \cdots \leq \delta_n)$ be an anti-dominant composition. The corresponding non-symmetric spin Hall--Littlewood function $f_{\delta}$ is completely factorized:
\begin{align}
\label{f-delta}
f_{\delta}(x_1,\dots,x_n)
\index{f3@$f_{\delta}$}
=
\frac{\prod_{j \geq 0} (s^2;q)_{m_j(\delta)}}{\prod_{i=1}^{n} (1-s x_i)}
\prod_{i=1}^{n} \left( \frac{x_i-s}{1-sx_i} \right)^{\delta_i},
\end{align}
where $m_j(\delta)$ is the number of parts of $\delta$ equal to $j$.
\end{prop}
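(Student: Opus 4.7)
The plan is to proceed by induction on $n$ (the length of $\delta$), using the skew branching rule \eqref{branch-f} to split off the top row. For the base case $n=1$, the partition function \eqref{f-def} consists of a single row where color $1$ enters on the left and can only exit by turning up at column $\delta_1$; this uniquely determined configuration has weight $(x_1-s)^{\delta_1}(1-s^2)/(1-sx_1)^{\delta_1+1}$, which matches the claimed factorization.

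For the inductive step, set $\delta' = (\delta_1,\ldots,\delta_{n-1})$ (still anti-dominant) and apply \eqref{branch-f} with $p=n-1$:
\[
f_\delta(x_1,\ldots,x_n)
=\sum_\alpha f_\alpha(x_1,\ldots,x_{n-1})\, f_{\delta/\alpha}(x_n),
\]
where $\alpha$ ranges over all length-$(n-1)$ compositions. The key claim is that only $\alpha = \delta'$ contributes. To see this, interpret $f_{\delta/\alpha}(x_n) = \bra{\alpha}\C_n(x_n)\ket{\delta}$ as a single-row partition function in which color $n$ enters on the left and, in the top state, must appear at column $\delta_n$. Scanning the allowed vertex types in \eqref{s-weights}, at each column the color-$n$ horizontal path can pass through, turn up, or swap with some color $i<n$ occupying the vertical edge below. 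However a swap at column $k$ forces color $n$ up at $k$ (so $\delta_n=k$) and ejects color $i$ rightward, so $\delta_i>k=\delta_n$, violating the anti-dominance inequality $\delta_i\le\delta_n$. If the ejected color $i$ undergoes further rightward swaps, the same obstruction recurs, since the final exit column of $i$ is necessarily $>k$. Hence color $n$ must pass through each column $0,\ldots,\delta_n-1$ with weight $(x_n-s)/(1-sx_n)$ (the factors $q^{I_n}$ and $q^{\Is{n+1}{n}}$ equal $1$ since no color $n$ has been deposited below), and then turn up at column $\delta_n$ with weight $(1-s^2q^{m_{\delta_n}(\delta)-1})/(1-sx_n)$, where $m_{\delta_n}(\delta)-1$ is the number of colors $i<n$ with $\delta_i=\delta_n$ already in the bottom state. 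Simultaneously, every color $i<n$ ascends straight through its own vertex, forcing $\alpha_i=\delta_i$ and hence $\alpha=\delta'$.

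Combining the resulting evaluation of $f_{\delta/\delta'}(x_n) = (x_n-s)^{\delta_n}(1-s^2q^{m_{\delta_n}(\delta)-1})/(1-sx_n)^{\delta_n+1}$ with the induction hypothesis for $f_{\delta'}(x_1,\ldots,x_{n-1})$ and the identity
\[
\prod_{j\ge 0} (s^2;q)_{m_j(\delta)}
= (1-s^2 q^{m_{\delta_n}(\delta)-1})\prod_{j\ge 0} (s^2;q)_{m_j(\delta')}
\]
(obtained from $m_{\delta_n}(\delta) = m_{\delta_n}(\delta')+1$ and $m_j(\delta) = m_j(\delta')$ otherwise) reproduces \eqref{f-delta}. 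The main obstacle is the rigorous exclusion of multi-swap configurations in the single-row partition function; the anti-dominance hypothesis is used precisely at this step, to guarantee that any color displaced horizontally by a swap is eventually pushed strictly past $\delta_n$, contradicting its prescribed exit column.
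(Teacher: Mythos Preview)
Your proof is correct and follows essentially the same route as the paper. The paper argues directly on the partition function \eqref{f-def} that colour $n$, entering in the top row, must occupy every horizontal edge up to column $\delta_n$ before turning up, which freezes the entire top row and yields the same recurrence
\[
f_{(\delta_1,\dots,\delta_n)}(x_1,\dots,x_n)=\frac{1-s^2q^{m_{\delta_n}(\delta)-1}}{1-sx_n}\left(\frac{x_n-s}{1-sx_n}\right)^{\delta_n}f_{(\delta_1,\dots,\delta_{n-1})}(x_1,\dots,x_{n-1}).
\]
Your invocation of the branching rule \eqref{branch-f} to isolate the top row is an equivalent (and slightly more formal) way of saying the same thing; you are in fact more careful than the paper in explicitly excluding the swap vertices via the anti-dominance condition, whereas the paper's claim that ``this forbids the other paths from taking a horizontal step in the $n$-th row'' leaves that check implicit.
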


\begin{proof}
This is most easily seen from the partition function representation \eqref{f-def} of $f_{\delta}$. Since $\delta$ has increasing parts, the colours $\{1,\dots,n\}$ exit the top of the partition function \eqref{f-def} in an ordered fashion: there will be a path of colour $i$ outgoing in column $\delta_i$ of the lattice, where the horizontal coordinates of the outgoing paths have the ordering $\delta_1 \leq \cdots \leq \delta_n$ (see Figure \ref{fig:factor}).

\begin{figure}
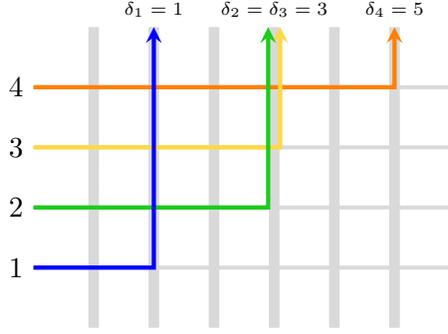

\tikz{0.8}{
\foreach\y in {2,...,5}{
\draw[lgray,line width=1.5pt] (1,\y) -- (8,\y);
}
\foreach\x in {2,...,7}{
\draw[lgray,line width=4pt] (\x,1) -- (\x,6);
}
\draw[ultra thick,orange,->] (1,5) -- (7,5) -- (7,6);
\draw[ultra thick,yellow,->] (1,4) -- (5.1,4) -- (5.1,6);
\draw[ultra thick,green,->] (1,3) -- (4.9,3) -- (4.9,6);
\draw[ultra thick,blue,->] (1,2) -- (3,2) -- (3,6);
\node[above] at (7,6) {\tiny $\delta_4 = 5$};
\node[above] at (5,6) {\tiny $\delta_2 = \delta_3 = 3$};
\node[above] at (3,6) {\tiny $\delta_1 = 1$};
\node[left] at (1,2) {$\tiny 1$};
\node[left] at (1,3) {$\tiny 2$};
\node[left] at (1,4) {$\tiny 3$};
\node[left] at (1,5) {$\tiny 4$};
}
\caption{In the case of an anti-dominant composition $\delta$, the colours $\{1,\dots,n\}$ leave the top of the lattice \eqref{f-def} in the same order (read left to right) as they entered it (read bottom to top). This leads to a unique lattice configuration and the total factorization of $f_{\delta}$. In the above example, one has $n=4$, $\delta = (1,3,3,5)$.}
\label{fig:factor}
\end{figure}

The path of colour $n$ travels at least as far, horizontally, as any other path to reach its outgoing destination; in so doing, it saturates the horizontal edges of the $n$-th row of the lattice all the way up to the column $\delta_n$. This forbids the other paths of colours $\{1,\dots,n-1\}$ from taking a horizontal step in the $n$-th row of the lattice, meaning that this row is frozen to a unique configuration. Reading off the Boltzmann weights of this frozen row, we obtain the recurrence
\begin{align*}
f_{(\delta_1,\dots,\delta_n)}(x_1,\dots,x_n)
=
\frac{1-s^2 q^{m_{\delta_n}(\delta)-1}}{1-sx_n}
\left( \frac{x_n-s}{1-s x_n} \right)^{\delta_n}
\cdot
f_{(\delta_1,\dots,\delta_{n-1})}(x_1,\dots,x_{n-1}),
\end{align*}
which is easily solved to yield \eqref{f-delta}.

\end{proof}

\section{Hecke algebra and its polynomial representation}
\label{ssec:hecke}

Let us recall the definition of the Hecke algebra of type $A_{n-1}$. It is the algebra generated by a family $T_1,\dots,T_{n-1}$, modulo the relations
\begin{align}
\label{hecke1}
(T_i - q)(T_i + 1) = 0,
\quad 
1 \leq i \leq n-1,
\qquad
T_i T_{i+1} T_i = T_{i+1} T_i T_{i+1},
\quad
1 \leq i \leq n-2,
\end{align}
as well as the commutativity property
\begin{align}
\label{hecke2}
[T_i,T_j] = 0,
\quad \forall\ i,j\ \text{such that}\ |i-j| > 1.
\end{align}
A well known representation of this algebra is its {\it polynomial representation}, which makes use of {\it Demazure--Lusztig operators}. Introduce firstly the simple transpositions $\mathfrak{s}_i$, acting on arbitrary functions $h$ of the alphabet 
$(x_1,\dots,x_n)$:
\index{s@$\mathfrak{s}_i$; elementary transposition}
\begin{align*}
\mathfrak{s}_i \cdot h(x_1,\dots,x_n)
:=
h(x_1,\dots,x_{i+1},x_i,\dots,x_n),
\quad
1 \leq i \leq n-1.
\end{align*}
Making use of these, one finds \cite[and references therein]{DescouensL} that the identification
\begin{align}
\label{hecke-poly}
\index{T@$T_i$; Hecke divided-difference operator}
T_i \mapsto q - \frac{x_i-q x_{i+1}}{x_i-x_{i+1}} (1-\mathfrak{s}_i),
\quad
1 \leq i \leq n-1,
\end{align}
provides a faithful representation of the relations \eqref{hecke1}, \eqref{hecke2}.\footnote{Note that the operators \eqref{hecke-poly} act to the {\it right}. This is in contradistinction with some authors, who prefer that Hecke generators act on the other side.} This representation is termed ``polynomial'' due to the fact that the operators \eqref{hecke-poly} act stably on polynomials, although in our setting we will be acting on rational functions in $(x_1,\dots,x_n)$. At all times, in writing $T_i$ we shall mean its operator form \eqref{hecke-poly}, rather than a generator of the abstract algebra.

It is easy to construct the inverses of the Hecke generators \eqref{hecke-poly}. To do that we define
\index{T@$\widehat{T}_i$; inverse Hecke}
\begin{align}
\label{hecke-inv}
\widehat{T}_i := T_i - q + 1 = 1 - \frac{x_i-q x_{i+1}}{x_i-x_{i+1}} (1-\mathfrak{s}_i),
\quad
1 \leq i \leq n-1,
\end{align}
and rewrite the first (quadratic) relation in \eqref{hecke1} in terms of $\widehat{T}_i$. We find that
\begin{align*}
(\widehat{T}_i - 1)(T_i + 1) = 0
\implies
\widehat{T}_i T_i - T_i + \widehat{T}_i - 1 = 0
\implies
\widehat{T}_i T_i = q 
\implies
\widehat{T}_i = q T_i^{-1}.
\end{align*}
In the coming sections we examine the action of the operators \eqref{hecke-poly} and \eqref{hecke-inv} on $f_{\mu}$ and $g_{\mu}$.

\section{Exchange relations for $f_{\mu}$}
\label{ssec:f-ex}

The goal of this section is to prove the following theorem:
\begin{thm}\label{thm:hecke-f}
Fix a composition $\mu = (\mu_1,\dots,\mu_n)$ and consider the corresponding non-symmetric spin Hall--Littlewood function $f_{\mu}(x_1,\dots,x_n)$. This function transforms under the action of \eqref{hecke-poly} and \eqref{hecke-inv} according to the rules
\begin{align}
\label{T-f}
T_i \cdot f_{\mu}(x_1,\dots,x_n) &= f_{\mathfrak{s}_i \cdot \mu}(x_1,\dots,x_n),
\quad
\text{if}\ \
\mu_i < \mu_{i+1},
\\
\label{That-f}
\widehat{T}_i \cdot f_{\mu}(x_1,\dots,x_n) &= f^{(i,i+1)}_{\mu}(x_1,\dots,x_n).
\end{align}
The first relation \eqref{T-f} holds for integers $i$ such that $\mu_i < \mu_{i+1}$, with $\mathfrak{s}_i \cdot \mu$ denoting the permutation $(\mu_1,\dots,\mu_{i+1},\mu_i,\dots,\mu_n)$ of the parts of $\mu$. The second relation \eqref{That-f} is valid for all $1 \leq i \leq n$, with $f^{(i,i+1)}_{\mu}$ denoting a permuted non-symmetric spin Hall--Littlewood function \eqref{sigma-f} corresponding to the permutation $\sigma =(i,i+1)$. 
\end{thm}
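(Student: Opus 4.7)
The plan is to treat the two claims in order of difficulty: \eqref{That-f} is a universal identity provable by direct algebraic manipulation using the commutation relations of Theorem~\ref{thm:CC}, whereas \eqref{T-f} is conditional on $\mu_i<\mu_{i+1}$ and will require an additional graphical input.

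For \eqref{That-f}, I would observe that the product $\mathcal{C}_{i+1}(x_i)\mathcal{C}_i(x_{i+1})$ sitting at positions $i,i+1$ in the expectation value defining $f^{(i,i+1)}_\mu$ has colour indices in decreasing order, so the commutation relation \eqref{CC>} applies directly with $x=x_i$, $y=x_{i+1}$. It rewrites this product as a linear combination of $\mathcal{C}_i(x_{i+1})\mathcal{C}_{i+1}(x_i)$ (natural colour order with swapped rapidities, contributing $\mathfrak{s}_i f_\mu$) and $\mathcal{C}_i(x_i)\mathcal{C}_{i+1}(x_{i+1})$ (the very factor inside $f_\mu$). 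The emerging coefficients $\frac{x_i-qx_{i+1}}{x_i-x_{i+1}}$ and $-\frac{(1-q)x_{i+1}}{x_i-x_{i+1}}$ are precisely those in the expansion $\widehat{T}_i = -\frac{(1-q)x_{i+1}}{x_i-x_{i+1}} + \frac{x_i-qx_{i+1}}{x_i-x_{i+1}}\mathfrak{s}_i$, so a one-line regrouping yields $\widehat{T}_i f_\mu = f^{(i,i+1)}_\mu$. Note that no condition on $\mu$ is used.

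For \eqref{T-f}, using $T_i = \widehat{T}_i + (q-1)$ and the above, the claim becomes equivalent to the partition-function identity
\begin{equation*}
f_{\mathfrak{s}_i \mu}(x_1,\dots,x_n) = f^{(i,i+1)}_\mu(x_1,\dots,x_n) + (q-1)\,f_\mu(x_1,\dots,x_n), \qquad \mu_i<\mu_{i+1}.
\end{equation*}
I would prove this by attaching an $R$-vertex of spectral parameter $x_{i+1}/x_i$ at the left boundary between rows $i$ and $i+1$ of the partition function for $f_{\mathfrak{s}_i\mu}$, then using repeated Yang--Baxter moves \eqref{graph-RLLa} to slide it rightward through the lattice. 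Since the external right edges are empty, the $R$-vertex evaluates trivially upon exiting; on the left, the weights $R_{x_{i+1}/x_i}(\cdot,\cdot\,;\cdot,\cdot)$ from \eqref{R-weights-bc} resolve the configuration into exactly two pieces, a ``straight-through'' one yielding $f_\mu$ and a ``crossing'' one yielding $f^{(i,i+1)}_\mu$. Solving the resulting three-term linear relation for $f_{\mathfrak{s}_i\mu}$ produces the required identity.

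The chief obstacle is the role of the hypothesis $\mu_i<\mu_{i+1}$. This condition is invisible at the level of the commutation algebra but essential for the sliding argument: it ensures that the top exit columns of colours $i$ and $i+1$ in the lattice \eqref{f-def} are ordered consistently with the entry rows, so that after swapping incoming colours on the left no contradictory configuration is forced on the fixed top boundary. If the direct graphical bookkeeping proves awkward, I would fall back on the pre-fused representation $\mathcal{F}_\mu$ of Section~\ref{ssec:prefused}, where $f_\mu$ is realised in the fundamental vertex model \eqref{fund-vert} with at most one path per edge, and the Yang--Baxter slide across a single pair of horizontal lines reduces to a transparent finite computation.
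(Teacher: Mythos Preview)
Your treatment of \eqref{That-f} is correct and essentially matches the paper: both routes use the commutation relation \eqref{CC>} to produce the three-term identity $T_i f_\mu = (q-1)f_\mu + f^{(i,i+1)}_\mu$ (the paper's equation \eqref{key1}), from which $\widehat{T}_i f_\mu = f^{(i,i+1)}_\mu$ follows at once.

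For \eqref{T-f} there is a genuine gap. The Yang--Baxter slide you describe --- attaching an $R$-vertex between rows $i$ and $i+1$ and dragging it through via \eqref{graph-RLLa} --- acts on the \emph{left} boundary of the lattice: it permutes the incoming colours and rapidities, and therefore produces exactly the commutation relations of Theorem~\ref{thm:CC} (equivalently, \eqref{That-f}) once more. It gives no relation involving $f_{\mathfrak{s}_i\mu}$, because that function differs from $f_\mu$ at the \emph{top} boundary, where colours $i$ and $i+1$ exit in swapped columns. The slide never touches the top boundary, so it cannot supply the missing identity $(q-1)f_\mu + f^{(i,i+1)}_\mu = f_{\mathfrak{s}_i\mu}$. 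The paper is explicit about this asymmetry: permuting left-boundary colours is local and easy, whereas permuting top-boundary exits is non-local and ``much more involved''.

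The paper's actual argument is combinatorial rather than Yang--Baxter-based. Working at the pre-fused level, it refines each of $\mathcal{F}_\mu$, $\mathcal{F}^{(i,i+1)}_\mu$, $\mathcal{F}_{\mathfrak{s}_i\mu}$ by the set of lattice points where paths $i$ and $i+1$ meet, shows (via an LGV-style involution and an inductive identity for the auxiliary quantity $p_k(a,b;c,d)$) that the three refined pieces are in a fixed ratio at each crossing pattern, and then sums over patterns. The hypothesis $\mu_i<\mu_{i+1}$ enters precisely to force at least one crossing in $\mathcal{F}_\mu$ (so that the zero-crossing term vanishes), which is a different mechanism from the boundary-consistency heuristic you describe. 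Your fallback to the pre-fused model is the right instinct, but what is needed there is this crossing-refinement argument, not a cleaner Yang--Baxter slide.
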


In spite of their visual similarity, the relations \eqref{T-f} and \eqref{That-f} express rather different transformative properties of the functions $f_{\mu}$. The second identity \eqref{That-f} involves the permutation of the colours $i$ and $i+1$ at the left boundary of the partition function \eqref{f-def}, and because of its local nature, is easily proved. Conversely, the first identity \eqref{T-f} involves the permutation of the colours $i$ and $i+1$ at the {\it top} boundary of \eqref{f-def}, which is (in general) a non-local change. The proof of \eqref{T-f} is, therefore, much more involved.

\begin{proof}
We begin by calculating the left hand side of \eqref{T-f} more explicitly:
\begin{align}
\nonumber
T_i \cdot f_{\mu}(x_1,\dots,x_n)
&=
q f_{\mu}(x_1,\dots,x_n)
-
\frac{x_i-qx_{i+1}}{x_i-x_{i+1}}
(1-\mathfrak{s}_i) f_{\mu}(x_1,\dots,x_n)
\\
\label{4.1-1}
&=
\frac{(q-1)x_i}{x_i-x_{i+1}} f_{\mu}(x_1,\dots,x_n)
+
\frac{x_i-qx_{i+1}}{x_i-x_{i+1}} f_{\mu}(x_1,\dots,x_{i+1},x_i,\dots,x_n).
\end{align}
Replacing the functions $f_{\mu}$ by their corresponding expectation values \eqref{f-HL}, equation \eqref{4.1-1} becomes
\begin{align}
\nonumber
T_i \cdot f_{\mu}(x_1,\dots,x_n)
&=
\frac{(q-1)x_i}{x_i-x_{i+1}}
\bra{\varnothing} \cdots \C_i(x_i) \C_{i+1}(x_{i+1}) \cdots \ket{\mu}
\\
\label{4.1-2}
&+
\frac{x_i-qx_{i+1}}{x_i-x_{i+1}}
\bra{\varnothing} \cdots \C_i(x_{i+1}) \C_{i+1}(x_i) \cdots \ket{\mu},
\end{align}
where we refrain from writing the $\mathcal{C}_j$ operators for $j \not= i,i+1$, since they play no role in the subsequent argument. In the final expectation value in \eqref{4.1-2}, the order of the variables $x_i,x_{i+1}$ is reversed; we can now use the commutation relation \eqref{CC>} with $i \mapsto i+1$, $j \mapsto i$, $y=x_{i+1}$, $x=x_{i}$ to restore the sequential ordering. Doing so, we find that
\begin{align*}
T_i \cdot f_{\mu}(x_1,\dots,x_n)
&=
\frac{(q-1)x_i}{x_i-x_{i+1}}
\bra{\varnothing} \cdots \C_i(x_i) \C_{i+1}(x_{i+1}) \cdots \ket{\mu}
+
\bra{\varnothing} \cdots \C_{i+1}(x_i) \C_{i}(x_{i+1}) \cdots \ket{\mu}
\\
&+
\frac{(1-q)x_{i+1}}{x_i-x_{i+1}}
\bra{\varnothing} \cdots \C_i(x_i) \C_{i+1}(x_{i+1}) \cdots \ket{\mu},
\end{align*}
which after combining the first and third terms reduces to
\begin{align}
\nonumber
T_i \cdot f_{\mu}(x_1,\dots,x_n)
&=
(q-1) \bra{\varnothing} \cdots \C_i(x_i) \C_{i+1}(x_{i+1}) \cdots \ket{\mu}
+
\bra{\varnothing} \cdots \C_{i+1}(x_i) \C_{i}(x_{i+1}) \cdots \ket{\mu}
\\
\label{key1}
&=
(q-1) f_{\mu}(x_1,\dots,x_n) + f^{(i,i+1)}_{\mu}(x_1,\dots,x_n).
\end{align}
This is the crucial identity behind the proof of Theorem \ref{thm:hecke-f}. Using the fact that $\widehat{T}_i = T_i - q + 1$, we rearrange \eqref{key1} to obtain an immediate proof of \eqref{That-f}. On the other hand, to complete the proof of \eqref{T-f}, we need to show that
\begin{align}
\label{key2}
(q-1) f_{\mu}(x_1,\dots,x_n) + f^{(i,i+1)}_{\mu}(x_1,\dots,x_n)
=
f_{\mathfrak{s}_i \cdot \mu}(x_1,\dots,x_n),
\qquad
\mu_i < \mu_{i+1}.
\end{align}
Our strategy will be to prove this statement at the level of the pre-fused non-symmetric spin Hall--Littlewood functions introduced in Definition \ref{def:prefus}; this turns out to be quite a bit easier than a direct proof of \eqref{key2}.\footnote{The reason why the proof is easier in the fundamental model \eqref{R-vert}, as opposed to the higher-spin version \eqref{generic-L}, is purely technical: our subsequent arguments are simplified by allowing at most one path to occupy horizontal {\it and vertical} lattice edges, which is not a valid assumption in the model \eqref{generic-L}.} Namely, we shall prove that
\begin{align}
\label{key3}
(q-1) \mathcal{F}_{\mu}\left(x_1,\dots,x_n; Y \right) 
+ 
\mathcal{F}^{(i,i+1)}_{\mu}\left(x_1,\dots,x_n; Y \right)
=
\mathcal{F}_{\mathfrak{s}_i \cdot \mu} \left(x_1,\dots,x_n; Y \right),
\qquad
\mu_i < \mu_{i+1},
\end{align}
where $\mathcal{F}_{\mu}(x_1,\dots,x_n;Y)$ denotes the partition function \eqref{f-fund} and $\mathcal{F}^{(i,i+1)}_{\mu}(x_1,\dots,x_n;Y)$ its permuted version \eqref{fsig-fund}, in the case of the permutation $\sigma = (i,i+1)$. 

We proceed towards the proof of \eqref{key3} with an important auxiliary definition and result. Let us focus on the four types of vertices in the model \eqref{R-vert} in which paths of colour $i$ and $i+1$ are the incoming states of a vertex (and its outgoing states, by conservation). We recall their weights below:
\begin{align}
\label{4vertices}
\begin{tabular}{|c|c|c|c|}
\hline
\tikz{0.7}{
\draw[blue,line width=1.5pt,->] (-1,0) -- (1,0);
\draw[green,line width=1.5pt,->] (0,-1) -- (0,1);
\node[left] at (-1,0) {\tiny $i$};\node[right] at (1,0) {\tiny $i$};
\node[below] at (0,-1) {\tiny $i+1$};\node[above] at (0,1) {\tiny $i+1$};
}
&
\tikz{0.7}{
\draw[green,line width=1.5pt,->] (-1,0) -- (1,0);
\draw[blue,line width=1.5pt,->] (0,-1) -- (0,1);
\node[left] at (-1,0) {\tiny $i+1$};\node[right] at (1,0) {\tiny $i+1$};
\node[below] at (0,-1) {\tiny $i$};\node[above] at (0,1) {\tiny $i$};
}
&
\tikz{0.7}{
\draw[blue,line width=1.5pt,->] (-1,0) -- (0,0) -- (0,1);
\draw[green,line width=1.5pt,->] (0,-1) -- (0,0) -- (1,0);
\node[left] at (-1,0) {\tiny $i$};\node[right] at (1,0) {\tiny $i+1$};
\node[below] at (0,-1) {\tiny $i+1$};\node[above] at (0,1) {\tiny $i$};
}
&
\tikz{0.7}{
\draw[green,line width=1.5pt,->] (-1,0) -- (0,0) -- (0,1);
\draw[blue,line width=1.5pt,->](0,-1) -- (0,0) -- (1,0);
\node[left] at (-1,0) {\tiny $i+1$};\node[right] at (1,0) {\tiny $i$};
\node[below] at (0,-1) {\tiny $i$};\node[above] at (0,1) {\tiny $i+1$};
}
\\
\hline
\rule{0pt}{4ex}
$ \dfrac{q(1-z)}{1-qz} $ & $ \dfrac{1-z}{1-qz} $ & $ \dfrac{1-q}{1-qz} $ & 
$ \dfrac{(1-q) z}{1-qz} $
\\
\hline
\end{tabular}
\end{align}

\begin{defn}
Let $(z_0,\dots,z_k)$ be $k+1$ arbitrary (complex) parameters. We introduce the sum \index{p99@$p_k(a,b;c,d)$}
\begin{align}
\label{pk-def}
p_k(a,b;c,d)
:=
\sum_{i_1,\dots,i_k \in \{a,b\}}\ 
\sum_{j_1,\dots,j_k \in \{a,b\}}
R_{z_0}(a,b;i_1,j_1)
R_{z_1}(j_1,i_1;i_2,j_2)
\dots
R_{z_k}(j_k,i_k;c,d).
\end{align}
The function $p_k(a,b;c,d)$ admits the following graphical description, and can be interpreted as a multi-vertex extension of the original model:
\begin{align}
\label{multivertex}
p_k(a,b;c,d)
=
\sum_{i_1,\dots,i_k \in \{a,b\}}\ 
\sum_{j_1,\dots,j_k \in \{a,b\}}
\tikz{0.7}{
\draw[densely dotted] (0.5,0) arc (0:90:0.5); \node at (0.2,0.2) {\tiny $z_0$};
\draw[densely dotted] (1.5,1) arc (0:90:0.5); \node at (1.2,1.2) {\tiny $z_1$};
\node[rotate=90] at (2,2) {\tiny $\ddots$};
\draw[densely dotted] (3.5,3) arc (0:90:0.5); \node at (3.2,3.2) {\tiny $z_k$};
\draw[lgray,line width=1.5pt,->,rounded corners] (-1,0) -- (1,0) -- (1,2);
\draw[lgray,line width=1.5pt,->,rounded corners] (0,-1) -- (0,1) -- (2,1);
\draw[lgray,line width=1.5pt,->,rounded corners] (2,3) -- (4,3);
\draw[lgray,line width=1.5pt,->,rounded corners] (3,2) -- (3,4);
\node[left] at (-1,0) {\tiny $b$};\node[right] at (1,0) {\tiny $j_1$};
\node[below] at (0,-1) {\tiny $a$};\node[above] at (0,1) {\tiny $i_1$};
\node[above] at (1,2) {\tiny $i_2$};\node[right] at (2,1) {\tiny $j_2$};
\node[above] at (2,3) {\tiny $i_k$};\node[right] at (3,2) {\tiny $j_k$};
\node[above] at (3,4) {\tiny $c$};\node[right] at (4,3) {\tiny $d$};
}
\end{align}
in which the indices on all internal edges are summed over the values $\{a,b\}$.
\end{defn}

\begin{prop}
The function $p_k(a,b;c,d)$ vanishes unless either $(a,b) = (c,d)$ or $(a,b) = (d,c)$. In the case $a=b=c=d$, one has $p_k(a,a;a,a) = 1$. Further, for $a<b$, one has the relations
\begin{gather}
\label{pk-1}
(q-1) \cdot p_k(a,b;a,b) + p_k(b,a;a,b) = p_k(a,b;b,a),
\\
\label{pk-2}
p_k(b,a;b,a) = q \cdot p_k(a,b;a,b),
\end{gather}
which hold for all $k \geq 0$.
\end{prop}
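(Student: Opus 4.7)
The proof splits into two parts. Part 1 (vanishing and trivial cases) is pure conservation: reading off \eqref{R-weights-a}--\eqref{R-weights-bc}, every $R$-vertex has the property that the multiset $\{i,j\}$ of incoming labels equals the multiset $\{k,\ell\}$ of outgoing labels (otherwise its weight is zero). Applying this inductively along the staircase in \eqref{multivertex} forces every internal summation variable $i_\ell,j_\ell$ to lie in $\{a,b\}$ and the outgoing pair to satisfy $\{c,d\}=\{a,b\}$, giving the vanishing claim. For $a=b$ the only contributing vertex is $R_{z_\ell}(a,a;a,a)=1$, so $p_k(a,a;a,a)=1$.

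Part 2 (the identities \eqref{pk-1}, \eqref{pk-2} for $a<b$) reorganises $p_k$ as a $2\times 2$ matrix product. Index the input pair (bottom-input, left-input) at each vertex by $ab$ or $ba$ and set, with rows and columns ordered $ab,ba$,
\[
R(z)=\frac{1}{1-qz}\begin{pmatrix}1-z & (1-q)z\\ 1-q & q(1-z)\end{pmatrix},\qquad S=\begin{pmatrix}0&1\\ 1&0\end{pmatrix}.
\]
The swap $S$ appears because the (top-out, right-out) pair emerging from vertex $\ell$, when relabelled as (bottom-in, left-in) of vertex $\ell+1$, undergoes a transposition of coordinates. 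Under this identification,
\[
p_k(a,b;c,d)=[M_k]_{ab,\,cd},\qquad M_k:=R(z_0)\,S\,R(z_1)\,S\cdots S\,R(z_k).
\]

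The two asserted identities are equivalent to the single claim that $M_k$ lies in the $2$-dimensional subspace $V=\mathrm{span}\{A,B\}\subset\mathrm{Mat}_{2\times 2}$, where
\[
A=\begin{pmatrix}1 & q-1\\ 0 & q\end{pmatrix},\qquad B=S,
\]
since $\alpha A+\beta B=\bigl(\begin{smallmatrix}\alpha & (q-1)\alpha+\beta\\ \beta & q\alpha\end{smallmatrix}\bigr)$ automatically encodes both \eqref{pk-1} and \eqref{pk-2} with $\alpha=p_k(a,b;a,b)$ and $\beta=p_k(b,a;a,b)$.

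The remaining step is a straightforward induction on $k$. The base case is the explicit decomposition $R(z_0)=\tfrac{1-z_0}{1-qz_0}\,A+\tfrac{1-q}{1-qz_0}\,B\in V$, which reproduces the $k=0$ verification already indicated in the paragraph preceding the proposition. For the inductive step it suffices, by linearity, to verify that $V$ is stable under right-multiplication by $S\,R(z)$; this reduces to the two $2\times 2$ identities $B\cdot S\,R(z)=R(z)\in V$ (immediate from the base case) and
\[
A\cdot S\,R(z)=\frac{(1-q)z}{1-qz}\,A+\frac{q(1-z)}{1-qz}\,B,
\]
which is the only calculation of any real substance. Checking this latter rational identity in $z$ and $q$ is the main (modest) obstacle in the proof; everything else is conceptual or immediate.
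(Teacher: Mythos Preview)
Your proof is correct. Both your argument and the paper's proceed by induction on $k$, peeling off the final $R$-vertex and using the $k=0$ case as the base; the underlying recurrences are identical.

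The difference is purely organizational, but it is worth noting. The paper treats the four scalars $p_k(a,b;a,b)$, $p_k(a,b;b,a)$, $p_k(b,a;a,b)$, $p_k(b,a;b,a)$ separately: it writes two recurrences (for $p_{m+1}(a,b;a,b)$ and $p_{m+1}(b,a;a,b)$) by expanding over the last summation indices, substitutes the inductive hypotheses \eqref{pk-1}--\eqref{pk-2} at level $m$, and verifies each identity at level $m+1$ by hand. Your repackaging as $M_k = R(z_0)\,S\,R(z_1)\cdots S\,R(z_k)$ and the observation that the two identities are exactly the statement $M_k\in\mathrm{span}\{A,B\}$ unifies everything into a single linear-algebra claim: closure of a two-dimensional subspace under right multiplication by $S\,R(z)$. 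This buys you a shorter inductive step (one $2\times2$ matrix identity for $A\cdot S\,R(z)$ instead of several scalar manipulations) and makes it transparent why two relations suffice to pin down all four entries. The paper's version has the minor advantage of being self-contained at the level of the original scalar quantities, with no reformulation required.
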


\begin{proof}
Since the states $(a,b)$ are the sole incoming colours in the partition function $p_k(a,b;c,d)$, and the states $(c,d)$ are correspondingly the sole outgoing colours, it follows that $p_k(a,b;c,d)$ vanishes unless $(a,b) = (c,d)$ or $(a,b) = (d,c)$. Similarly, the partition function $p_k(a,a;a,a)$ has all incoming and outgoing colours equal to the value $a$; this freezes each vertex in the sum \eqref{multivertex} to a vertex of the form \eqref{R-weights-a}, with weight $1$.

Let us now proceed to \eqref{pk-1} and \eqref{pk-2}, which we prove by induction on $k$. Beginning with the case $k=0$, \ie\ the vertex model \eqref{R-vert} itself, one checks that indeed
\begin{gather*}
(q-1) \cdot p_0(a,b;a,b) + p_0(b,a;a,b) = (q-1) \frac{1-z_0}{1-q z_0} + \frac{1-q}{1-q z_0}
=
\frac{z_0(1-q)}{1-q z_0}
=
p_0(a,b;b,a),
\\
p_0(b,a;b,a) = \frac{q (1-z_0)}{1-q z_0} = q \cdot p_0(a,b;a,b),
\end{gather*}
for $a<b$. This proves the relations \eqref{pk-1} and \eqref{pk-2} for $k=0$. Now assume that they hold when $k=m$, for some $m \geq 0$. Using the definition \eqref{pk-def} of $p_{m+1}(a,b;a,b)$ and $p_{m+1}(b,a;a,b)$, we decompose explicitly the summations over $i_{m+1}$ and $j_{m+1}$. This produces the following recurrences:
\begin{align}
\label{p-rec1}
p_{m+1}(a,b;a,b) = 
\left( \frac{1-q}{1-q z_{m+1}} \right) p_m(a,b;a,b) 
+ 
\left( \frac{1-z_{m+1}}{1-q z_{m+1}} \right)p_m(a,b;b,a),
\\
\label{p-rec2}
p_{m+1}(b,a;a,b) =
\left( \frac{1-q}{1-q z_{m+1}} \right) p_m(b,a;a,b) 
+ 
\left( \frac{1-z_{m+1}}{1-q z_{m+1}} \right)p_m(b,a;b,a).
\end{align}
We compute $(q-1) \cdot p_{m+1}(a,b;a,b) + p_{m+1}(b,a;a,b)$, as a linear combination of the right hand sides of \eqref{p-rec1} and \eqref{p-rec2}. From there, we can replace $p_m(b,a;a,b)$ by $p_m(a,b;b,a) + (1-q)\cdot p_m(a,b;a,b)$ (by virtue of \eqref{pk-1} at $k=m$), and $p_m(b,a;b,a)$ by $q \cdot p_m(a,b;a,b)$ (by virtue of \eqref{pk-2} at $k=m$). After simplifying, one obtains
\begin{multline*}
(q-1) \cdot p_{m+1}(a,b;a,b) + p_{m+1}(b,a;a,b)
\\
=
q \left( \frac{1-z_{m+1}}{1-q z_{m+1}} \right) p_m(a,b;a,b) 
+ 
z_{m+1} \left( \frac{1-q}{1-q z_{m+1}} \right) p_m(a,b;b,a)
=
p_{m+1} (a,b;b,a),
\end{multline*}
with the final equality being a valid recurrence for $p_{m+1}(a,b;b,a)$, that can also be deduced by decomposing the sums over $i_{m+1}$ and $j_{m+1}$ in its definition \eqref{pk-def}. This proves that \eqref{pk-1} holds for $k=m+1$.

To complete the proof we need to show, further, that \eqref{pk-2} holds for $k=m+1$. We begin by writing down the recurrence 
\begin{align*}
p_{m+1}(b,a;b,a)
=
q \left( \frac{1-z_{m+1}}{1-q z_{m+1}} \right) p_m(b,a;a,b) 
+ 
z_{m+1} \left( \frac{1-q}{1-q z_{m+1}} \right) p_m(b,a;b,a),
\end{align*}
which holds by similar considerations to those used to derive the recurrences above. Performing the replacements $p_m(b,a;a,b) \mapsto p_m(a,b;b,a) + (1-q)\cdot p_m(a,b;a,b)$ and $p_m(b,a;b,a) \mapsto q \cdot p_m(a,b;a,b)$, as previously, after simplification we read the relation
\begin{align*}
p_{m+1}(b,a;b,a)
=
q  \left( \frac{1-q}{1-q z_{m+1}} \right) p_m(a,b;a,b) 
+ 
q \left( \frac{1-z_{m+1}}{1-q z_{m+1}} \right)p_m(a,b;b,a)
=
q \cdot p_{m+1}(a,b;a,b),
\end{align*}
with the final equality being precisely that of \eqref{p-rec1}. This proves that \eqref{pk-2} holds for $k=m+1$, and both identities \eqref{pk-1}, \eqref{pk-2} now hold for arbitrary $k$ values, by induction.

\end{proof}

Let us return to the partition function $\mathcal{F}_{\mu}(x_1,\dots,x_n;Y)$ and apply the result \eqref{pk-1}. We will seek a refinement of \eqref{f-fund}, where we specify a collection of vertices in the lattice where the paths of colours $i$ and $i+1$ must cross or touch. 
\begin{defn}
\label{def:F-refine}
Fix an integer $k \geq 0$ and let $\{(j_1,\ell_1),\dots,(j_k,\ell_k)\}$ be a set of lattice coordinates, such that $0 \leq j_1 < \cdots < j_k$ and $1 \leq \ell_1 < \cdots < \ell_k \leq n$. We introduce the set $S_i((j_1,\ell_1),\dots,(j_k,\ell_k))$ of all lattice configurations such that one of the four vertices \eqref{4vertices} is present at each of the points $\{(j_1,\ell_1),\dots,(j_k,\ell_k)\}$, and at no other places in the lattice. By agreement, when $k=0$, $S_i(\varnothing)$ is the set of all configurations such that the four vertices \eqref{4vertices} are not present anywhere in the lattice. From this we define
\begin{align}
\label{F-refine}
\mathcal{F}_{\mu,i}\Big(x_1,\dots,x_n;Y;(j_1,\ell_1),\dots,(j_k,\ell_k)\Big)
&:=
\sum_{\substack{
\mathcal{C} : (1,\dots,n) \mapsto (\mu_1,\dots,\mu_n)
\\ \\
\mathcal{C} \in S_i((j_1,\ell_1),\dots,(j_k,\ell_k))
}}
W_{\mathcal{C}}\left(x_1,\dots,x_n;Y\right),
\\
\label{Fsig-refine}
\mathcal{F}_{\mu,i}^{\sigma}\Big(x_1,\dots,x_n;Y;(j_1,\ell_1),\dots,(j_k,\ell_k)\Big)
&:=
\sum_{\substack{
\mathcal{C} : (\sigma^{-1}(1),\dots,\sigma^{-1}(n)) \mapsto (\mu_1,\dots,\mu_n)
\\ \\
\mathcal{C} \in S_i((j_1,\ell_1),\dots,(j_k,\ell_k))
}}
W_{\mathcal{C}}\left(x_1,\dots,x_n;Y\right).
\end{align}
\end{defn}
The functions \eqref{F-refine} and \eqref{Fsig-refine} are clear refinements of \eqref{f-fund} and \eqref{fsig-fund}, in the sense that
\begin{align}
\label{refine1}
\mathcal{F}_{\mu} (x_1,\dots,x_n;Y)
&=
\sum_{k=0}^{n}
\sum_{\substack{j_1 < \cdots < j_k \\ \ell_1 < \cdots < \ell_k}}
\mathcal{F}_{\mu,i}\Big(x_1,\dots,x_n;Y;(j_1,\ell_1),\dots,(j_k,\ell_k)\Big),
\\
\label{refine2}
\mathcal{F}_{\mu}^{\sigma}(x_1,\dots,x_n;Y)
&=
\sum_{k=0}^{n}
\sum_{\substack{j_1 < \cdots < j_k \\ \ell_1 < \cdots < \ell_k}}
\mathcal{F}^{\sigma}_{\mu,i}\Big(x_1,\dots,x_n;Y;(j_1,\ell_1),\dots,(j_k,\ell_k)\Big),
\end{align}
where it is obviously sufficient to restrict the number $k$ of crossings/touchings of the paths $i$ and $i+1$ to at most $n$. Now we come to the key observation:
\begin{prop}
\label{prop:trio}
Fix two integers $1 \leq i \leq n-1$ and $k \geq 1$, a set of lattice coordinates $\{(j_1,\ell_1),\dots,(j_k,\ell_k)\}$ as in Definition \ref{def:F-refine}, and a composition 
$\mu$ such that $\mu_i < \mu_{i+1}$. Then we have the following trio of equal quantities:
\begin{multline}
\label{trio}
\frac{
\mathcal{F}_{\mu,i}\Big(x_1,\dots,x_n;Y;(j_1,\ell_1),\dots,(j_k,\ell_k)\Big)
}
{p_{k-1}(i,i+1;i,i+1)}
\\
=
\frac{
\mathcal{F}_{\mu,i}^{(i,i+1)}\Big(x_1,\dots,x_n;Y;(j_1,\ell_1),\dots,(j_k,\ell_k)\Big)
}
{p_{k-1}(i+1,i;i,i+1)}
\\
=
\frac{
\mathcal{F}_{\mathfrak{s}_i \cdot \mu,i}\Big(x_1,\dots,x_n;Y;(j_1,\ell_1),\dots,(j_k,\ell_k)\Big)
}
{p_{k-1}(i,i+1;i+1,i)},
\end{multline}
where the functions in the numerators are as given by Definition \ref{def:F-refine}, while the functions in the denominators are given by \eqref{pk-def} with $z_0,\dots,z_{k-1}$ identified with the spectral parameters of the vertices $(j_1,\ell_1),\dots,(j_k,\ell_k)$. In the case $k=0$ one has, more simply,
\begin{align}
\label{no-cross1}
\mathcal{F}_{\mu,i}\Big(x_1,\dots,x_n;Y;\varnothing\Big) &= 0,
\\
\label{no-cross2}
\mathcal{F}_{\mu,i}^{(i,i+1)}\Big(x_1,\dots,x_n;Y;\varnothing\Big)
&=
\mathcal{F}_{\mathfrak{s}_i \cdot \mu,i}
\Big(x_1,\dots,x_n;Y;\varnothing\Big).
\end{align}
\end{prop}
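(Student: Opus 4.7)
The proof is based on a \emph{locality principle}: the Boltzmann weight of any vertex of \eqref{R-vert} at which at most one of the two colours $\{i, i+1\}$ is present is invariant under the relabelling $i \leftrightarrow i+1$. This holds because, by \eqref{fund-vert}, vertex weights depend only on the relative ordering of the colours passing through a vertex, and interchanging the two consecutive labels $i$ and $i+1$ preserves their ordering relative to every other colour. This is the mechanism by which we decouple the $k$ prescribed interaction vertices from the bulk of the lattice.

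For the case $k=0$: each summand of $\mathcal{F}_{\mu,i}(\ldots;\varnothing)$ contains a path of colour $i$ joining $(-1,i)$ to the top of column $\mu_i$, and a path of colour $i+1$ joining $(-1,i+1)$ to the top of column $\mu_{i+1}$, with no shared vertex. A planarity argument for up-right lattice paths shows that with $\mu_i < \mu_{i+1}$ no such non-intersecting pair exists: the path of colour $i+1$ must cross column $\mu_i$, and the path of colour $i$ (which must occupy every vertex of column $\mu_i$ above its first arrival there in order to reach the top) is then forced to meet it. This yields \eqref{no-cross1}. For \eqref{no-cross2}, in both $\mathcal{F}_{\mu,i}^{(i,i+1)}(\ldots;\varnothing)$ and $\mathcal{F}_{\mathfrak{s}_i\cdot\mu,i}(\ldots;\varnothing)$ the colourless endpoint matching of the two strands is identical -- one strand joins $(-1,i)$ to the top of column $\mu_{i+1}$, the other joins $(-1,i+1)$ to the top of column $\mu_i$ -- only the colour labels differ. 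The global $i \leftrightarrow i+1$ relabelling thus gives a bijection between the two families of configurations, and since no $\{i,i+1\}$-interaction occurs, the locality principle ensures it is weight-preserving.

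For the case $k \geq 1$: I would decompose each partition function in the trio by jointly fixing (i) the configuration of all paths of colours in $\{1,\dots,n\} \setminus \{i,i+1\}$, and (ii) the colourless pair of strand routes for $\{i,i+1\}$ outside the prescribed interaction set. By the locality principle, the product of Boltzmann weights from vertices not in $\{(j_s,\ell_s)\}_{s=1}^k$ depends only on (i) and (ii), and is therefore identical across all three functions for matched choices of these data. The remaining freedom lies at the $k$ interaction vertices, where the local coloured configuration is one of the four in \eqref{4vertices}. Summing over these local choices -- subject to the constraint that colours propagate consistently along the strands and match the endpoint data of the function under consideration -- reproduces exactly the multi-vertex sum $p_{k-1}(a,b;c,d)$ of \eqref{pk-def}. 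Reading $(a,b;c,d)$ off the boundary colour data at the south-west and north-east extremities of the interaction chain gives $(i,i+1;i,i+1)$, $(i+1,i;i,i+1)$, and $(i,i+1;i+1,i)$ for $\mathcal{F}_{\mu,i}$, $\mathcal{F}_{\mu,i}^{(i,i+1)}$, and $\mathcal{F}_{\mathfrak{s}_i\cdot\mu,i}$ respectively, establishing \eqref{trio}.

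The main technical obstacle is the rigorous identification of $(a,b;c,d)$ for each function. One must show that at the south-westmost interaction vertex $(j_1,\ell_1)$ the strand entering from below is always the one issuing from the lower boundary source $(-1,i)$, and analogously that at the north-eastmost vertex $(j_k,\ell_k)$ the strand exiting upward is the one heading to the leftmost top-boundary sink. Both assertions follow from a Lindstr\"om--Gessel--Viennot-type statement: two non-intersecting up-right paths from two sources on a vertical line to two incomparable intermediate targets must match in the unique non-crossing manner. Turning this topological observation into a complete bookkeeping -- and verifying that the $4^k$ local colour choices at interaction vertices correspond bijectively to the terms of $p_{k-1}(a,b;c,d)$ -- is where the bulk of the work lies.
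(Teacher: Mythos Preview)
Your proposal is correct and takes essentially the same approach as the paper: both arguments fix the ``edge data'' (the configurations of all colours other than $i,i+1$, together with the uncoloured pair of strand routes for $\{i,i+1\}$), observe that the residual sum over colour assignments at the $k$ interaction vertices reproduces $p_{k-1}$ times a common factor $W_{\mathcal{D}}$, and handle $k=0$ via the colour-swap involution combined with a planarity obstruction. Your explicit Lindstr\"om--Gessel--Viennot-type identification of the boundary labels $(a,b;c,d)$ is somewhat more detailed than the paper's treatment; note only that the count of nonzero local configurations is $2^{k-1}$ rather than $4^k$, since colour conservation leaves a binary choice at each of the first $k-1$ interaction vertices while the output at the final vertex is fixed by the sink data.
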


\begin{proof}
Our proof is inspired by the typical approach to the Lindstr\"om--Gessel--Viennot Lemma (see, for example, \cite[Chapter 32]{Aigner-book}). We refine the function $\mathcal{F}_{\mu,i}(x_1,\dots,x_n;Y;(j_1,\ell_1),\dots,(j_k,\ell_k))$ even further. Namely we fix the configurations of the paths of colours $\{1,\dots,i-1,i+2,\dots,n\}$ completely, to some given choice; we also specify the full collection of edges within the lattice which must be covered by paths of colours $i$ or $i+1$. Collectively, we refer to all of these choices as {\it edge data} $\mathcal{D}$. Specifying $\mathcal{D}$ exhausts all extraneous degrees of freedom and leaves us with a sum over $2^{k-1}$ terms; they arise from the choice of two possible configurations at the intersection points $(j_1,\ell_1),\dots,(j_{k-1},\ell_{k-1})$ of paths $i$ and $i+1$.\footnote{Note that there is no choice at the final intersection point $(j_k,\ell_k)$, since the outgoing coordinates of the paths are fixed, which uniquely determines what happens at the final crossing/touching of paths $i$ and $i+1$.} See Figure \ref{fig:refined-configs} for an illustration.

\begin{figure}
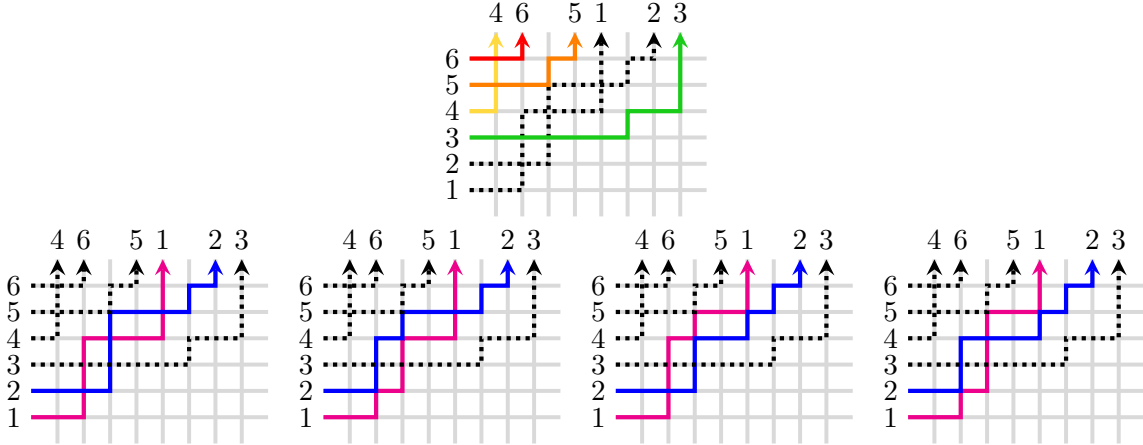

\tikz{0.35}{
\foreach\y in {0,...,5}{
\draw[lgray,line width=1.5pt] (0,\y) -- (9,\y);
}
\foreach\x in {1,...,8}{
\draw[lgray,line width=1.5pt] (\x,-1) -- (\x,6);
}
\draw[ultra thick,dotted,->] (0,0) -- (2,0) -- (2,3) -- (5,3) -- (5,6);
\draw[ultra thick,dotted,->] (0,1) -- (3,1) -- (3,4) -- (6,4) -- (6,5) -- (7,5) -- (7,6);
\draw[ultra thick,green,->] (0,2) -- (6,2) -- (6,3) -- (8,3) -- (8,6);
\draw[ultra thick,yellow,->] (0,3) -- (1,3) -- (1,6);
\draw[ultra thick,orange,->] (0,4) -- (3,4) -- (3,5) -- (4,5) -- (4,6);
\draw[ultra thick,red,->] (0,5) -- (2,5) -- (2,6);
\node[above] at (1,6) {$\tiny 4$};
\node[above] at (2,6) {$\tiny 6$};
\node[above] at (4,6) {$\tiny 5$};
\node[above] at (5,6) {$\tiny 1$};
\node[above] at (7,6) {$\tiny 2$};
\node[above] at (8,6) {$\tiny 3$};
\node[left] at (0,0) {$\tiny 1$};
\node[left] at (0,1) {$\tiny 2$};
\node[left] at (0,2) {$\tiny 3$};
\node[left] at (0,3) {$\tiny 4$};
\node[left] at (0,4) {$\tiny 5$};
\node[left] at (0,5) {$\tiny 6$};
}
\\
\tikz{0.35}{
\foreach\y in {0,...,5}{
\draw[lgray,line width=1.5pt] (0,\y) -- (9,\y);
}
\foreach\x in {1,...,8}{
\draw[lgray,line width=1.5pt] (\x,-1) -- (\x,6);
}
\draw[ultra thick,magenta,->] (0,0) -- (2,0) -- (2,3) -- (5,3) -- (5,6);
\draw[ultra thick,blue,->] (0,1) -- (3,1) -- (3,4) -- (6,4) -- (6,5) -- (7,5) -- (7,6);
\draw[ultra thick,dotted,->] (0,2) -- (6,2) -- (6,3) -- (8,3) -- (8,6);
\draw[ultra thick,dotted,->] (0,3) -- (1,3) -- (1,6);
\draw[ultra thick,dotted,->] (0,4) -- (3,4) -- (3,5) -- (4,5) -- (4,6);
\draw[ultra thick,dotted,->] (0,5) -- (2,5) -- (2,6);
\node[above] at (1,6) {$\tiny 4$};
\node[above] at (2,6) {$\tiny 6$};
\node[above] at (4,6) {$\tiny 5$};
\node[above] at (5,6) {$\tiny 1$};
\node[above] at (7,6) {$\tiny 2$};
\node[above] at (8,6) {$\tiny 3$};
\node[left] at (0,0) {$\tiny 1$};
\node[left] at (0,1) {$\tiny 2$};
\node[left] at (0,2) {$\tiny 3$};
\node[left] at (0,3) {$\tiny 4$};
\node[left] at (0,4) {$\tiny 5$};
\node[left] at (0,5) {$\tiny 6$};
}
\tikz{0.35}{
\foreach\y in {0,...,5}{
\draw[lgray,line width=1.5pt] (0,\y) -- (9,\y);
}
\foreach\x in {1,...,8}{
\draw[lgray,line width=1.5pt] (\x,-1) -- (\x,6);
}
\draw[ultra thick,magenta,->] (0,0) -- (2,0) -- (2,1) -- (3,1) -- (3,3) -- (5,3) -- (5,6);
\draw[ultra thick,blue,->] (0,1) -- (2,1) -- (2,3) -- (3,3) -- (3,4) -- (6,4) -- (6,5) -- (7,5) -- (7,6);
\draw[ultra thick,dotted,->] (0,2) -- (6,2) -- (6,3) -- (8,3) -- (8,6);
\draw[ultra thick,dotted,->] (0,3) -- (1,3) -- (1,6);
\draw[ultra thick,dotted,->] (0,4) -- (3,4) -- (3,5) -- (4,5) -- (4,6);
\draw[ultra thick,dotted,->] (0,5) -- (2,5) -- (2,6);
\node[above] at (1,6) {$\tiny 4$};
\node[above] at (2,6) {$\tiny 6$};
\node[above] at (4,6) {$\tiny 5$};
\node[above] at (5,6) {$\tiny 1$};
\node[above] at (7,6) {$\tiny 2$};
\node[above] at (8,6) {$\tiny 3$};

\node[left] at (0,0) {$\tiny 1$};
\node[left] at (0,1) {$\tiny 2$};
\node[left] at (0,2) {$\tiny 3$};
\node[left] at (0,3) {$\tiny 4$};
\node[left] at (0,4) {$\tiny 5$};
\node[left] at (0,5) {$\tiny 6$};
}
\tikz{0.35}{
\foreach\y in {0,...,5}{
\draw[lgray,line width=1.5pt] (0,\y) -- (9,\y);
}
\foreach\x in {1,...,8}{
\draw[lgray,line width=1.5pt] (\x,-1) -- (\x,6);
}
\draw[ultra thick,magenta,->] (0,0) -- (2,0) -- (2,3) -- (3,3) -- (3,4) -- (5,4) -- (5,6);
\draw[ultra thick,blue,->] (0,1) -- (3,1) -- (3,3) -- (5,3) -- (5,4) -- (6,4) -- (6,5) -- (7,5) -- (7,6);
\draw[ultra thick,dotted,->] (0,2) -- (6,2) -- (6,3) -- (8,3) -- (8,6);
\draw[ultra thick,dotted,->] (0,3) -- (1,3) -- (1,6);
\draw[ultra thick,dotted,->] (0,4) -- (3,4) -- (3,5) -- (4,5) -- (4,6);
\draw[ultra thick,dotted,->] (0,5) -- (2,5) -- (2,6);
\node[above] at (1,6) {$\tiny 4$};
\node[above] at (2,6) {$\tiny 6$};
\node[above] at (4,6) {$\tiny 5$};
\node[above] at (5,6) {$\tiny 1$};
\node[above] at (7,6) {$\tiny 2$};
\node[above] at (8,6) {$\tiny 3$};
\node[left] at (0,0) {$\tiny 1$};
\node[left] at (0,1) {$\tiny 2$};
\node[left] at (0,2) {$\tiny 3$};
\node[left] at (0,3) {$\tiny 4$};
\node[left] at (0,4) {$\tiny 5$};
\node[left] at (0,5) {$\tiny 6$};
}
\tikz{0.35}{
\foreach\y in {0,...,5}{
\draw[lgray,line width=1.5pt] (0,\y) -- (9,\y);
}
\foreach\x in {1,...,8}{
\draw[lgray,line width=1.5pt] (\x,-1) -- (\x,6);
}
\draw[ultra thick,magenta,->] (0,0) -- (2,0) -- (2,1) -- (3,1) -- (3,4) -- (5,4) -- (5,6);
\draw[ultra thick,blue,->] (0,1) -- (2,1) -- (2,3) -- (5,3) -- (5,4) -- (6,4) -- (6,5) -- (7,5) -- (7,6);
\draw[ultra thick,dotted,->] (0,2) -- (6,2) -- (6,3) -- (8,3) -- (8,6);
\draw[ultra thick,dotted,->] (0,3) -- (1,3) -- (1,6);
\draw[ultra thick,dotted,->] (0,4) -- (3,4) -- (3,5) -- (4,5) -- (4,6);
\draw[ultra thick,dotted,->] (0,5) -- (2,5) -- (2,6);
\node[above] at (1,6) {$\tiny 4$};
\node[above] at (2,6) {$\tiny 6$};
\node[above] at (4,6) {$\tiny 5$};
\node[above] at (5,6) {$\tiny 1$};
\node[above] at (7,6) {$\tiny 2$};
\node[above] at (8,6) {$\tiny 3$};
\node[left] at (0,0) {$\tiny 1$};
\node[left] at (0,1) {$\tiny 2$};
\node[left] at (0,2) {$\tiny 3$};
\node[left] at (0,3) {$\tiny 4$};
\node[left] at (0,4) {$\tiny 5$};
\node[left] at (0,5) {$\tiny 6$};
}
\caption{Top panel: An example for $n=6$, in which the configurations of paths $3,4,5,6$ are pre-determined, as is the set of edges covered by paths $1,2$; collectively we refer to these choices as the data $\mathcal{D}$. Bottom panel: With these restrictions, just four lattice configurations are possible; adding the weights of the four configurations, one obtains $p_2(1,2;1,2)$ times a factor which depends only on $\mathcal{D}$.}
\label{fig:refined-configs}
\end{figure}

Computing the resulting sum of $2^{k-1}$ terms, we find that it is equal to 
$p_{k-1}(i,i+1;i,i+1) W_{\mathcal{D}}$, where the factor $W_{\mathcal{D}}$ depends on our choice of $\mathcal{D}$. Indeed, the sum almost precisely replicates that of \eqref{multivertex}, the only difference being that the intersection points between paths $i$ and $i+1$ can now be spread through the lattice.

One can impose a similar refinement of the functions $\mathcal{F}_{\mu,i}^{(i,i+1)}(x_1,\dots,x_n;Y;(j_1,\ell_1),\dots,(j_k,\ell_k))$ and $\mathcal{F}_{\mathfrak{s}_i \cdot \mu,i}(x_1,\dots,x_n;Y;(j_1,\ell_1),\dots,(j_k,\ell_k))$. We again fix the configurations of the paths of colours $\{1,\dots,i-1,i+2,\dots,n\}$ completely and the full collection of edges within the lattice which must be covered by paths of colours $i$ or $i+1$, choosing precisely the same data $\mathcal{D}$ as above.\footnote{We are able to choose exactly the same data $\mathcal{D}$ because both of these partition functions differ from $\mathcal{F}_{\mu,i}(x_1,\dots,x_n;Y;(j_1,\ell_1),\dots,(j_k,\ell_k))$ only by permuting the incoming/outgoing positions of the paths $i$ and $i+1$.} This reduces both to a sum of $2^{k-1}$ terms; they can be computed as $p_{k-1}(i+1,i;i,i+1)W_{\mathcal{D}}$ and $p_{k-1}(i,i+1;i+1,i)W_{\mathcal{D}}$, respectively. Crucially, the overall factor $W_{\mathcal{D}}$ is the same in all three cases.

This establishes the result \eqref{trio} at the fully refined level; to prove \eqref{trio} as stated, one only needs to sum over all possible edge data $\mathcal{D}$.

The final two statements \eqref{no-cross1} and \eqref{no-cross2} are very easily proved. If $\mu_i < \mu_{i+1}$, there is no possible configuration in $\mathcal{F}_{\mu,i}(x_1,\dots,x_n;Y;\varnothing)$, as the paths $i$ and $i+1$ need to cross somewhere in the lattice to reach their final destinations; this yields \eqref{no-cross1}. For the proof of \eqref{no-cross2}, we define an involution on configurations in $\mathcal{F}_{\mu,i}^{(i,i+1)}(x_1,\dots,x_n;Y;\varnothing)$ that maps them to configurations in $\mathcal{F}_{\mathfrak{s}_i \cdot \mu,i}(x_1,\dots,x_n;Y;\varnothing)$; the involution is achieved by swapping the colours of the paths $i$ and $i+1$. Since the paths $i$ and $i+1$ are by assumption non-intersecting, it is straightforward to conclude that performing this involution on any configuration $\mathcal{C}$ preserves its Boltzmann weight. The statement \eqref{no-cross2} follows. 
\end{proof}

We are now in the position to prove \eqref{key3}. Using the refined formulae \eqref{refine1} and \eqref{refine2}, we compute
\begin{multline}
\label{eq-something}
(q-1) \mathcal{F}_{\mu}\left(x_1,\dots,x_n; Y \right) 
+ 
\mathcal{F}^{(i,i+1)}_{\mu}\left(x_1,\dots,x_n; Y \right)
=
\mathcal{F}_{\mu,i}^{(i,i+1)}\Big(x_1,\dots,x_n;Y;\varnothing\Big)
\\
+
\sum_{k=1}^{n}
\sum_{\substack{j_1 < \cdots < j_k \\ \ell_1 < \cdots < \ell_k}}
\mathcal{F}_{\mu,i}\Big(x_1,\dots,x_n;Y;(j_1,\ell_1),\dots,(j_k,\ell_k)\Big)
\left[
q-1 + \frac{p_{k-1}(i+1,i;i,i+1)}{p_{k-1}(i,i+1;i,i+1)}
\right],
\end{multline}
where we have split the sum into its $k=0$ and $k \geq 1$ constituents, used \eqref{no-cross1} to eliminate $(q-1) \mathcal{F}_{\mu,i}(x_1,\dots,x_n;Y;\varnothing)$, and combined terms inside the final summation using the first equality in \eqref{trio}. Now we employ the identity \eqref{no-cross2} to the first term in \eqref{eq-something}, and the relation \eqref{pk-1} to the summand; the result is
\begin{multline}
(q-1) \mathcal{F}_{\mu}\left(x_1,\dots,x_n; Y \right) 
+ 
\mathcal{F}^{(i,i+1)}_{\mu}\left(x_1,\dots,x_n; Y \right)
=
\mathcal{F}_{\mathfrak{s}_i \cdot \mu,i}\Big(x_1,\dots,x_n;Y;\varnothing\Big)
\\
+
\sum_{k=1}^{n}
\sum_{\substack{j_1 < \cdots < j_k \\ \ell_1 < \cdots < \ell_k}}
\mathcal{F}_{\mu,i}\Big(x_1,\dots,x_n;Y;(j_1,\ell_1),\dots,(j_k,\ell_k)\Big)
\frac{p_{k-1}(i,i+1;i+1,i)}{p_{k-1}(i,i+1;i,i+1)}.
\end{multline}
Finally, we use the second equality in \eqref{trio} to rewrite the above summation, leading to
\begin{multline}
(q-1) \mathcal{F}_{\mu}\left(x_1,\dots,x_n; Y \right) 
+ 
\mathcal{F}^{(i,i+1)}_{\mu}\left(x_1,\dots,x_n; Y \right)
\\
=
\mathcal{F}_{\mathfrak{s}_i \cdot \mu,i}\Big(x_1,\dots,x_n;Y;\varnothing\Big)
+
\sum_{k=1}^{n}
\sum_{\substack{j_1 < \cdots < j_k \\ \ell_1 < \cdots < \ell_k}}
\mathcal{F}_{\mathfrak{s}_i\cdot\mu,i}\Big(x_1,\dots,x_n;Y;(j_1,\ell_1),\dots,(j_k,\ell_k)\Big),
\end{multline}
where the final expression is nothing but the refined version of 
$\mathcal{F}_{\mathfrak{s}_i\cdot\mu}\left(x_1,\dots,x_n; Y \right)$, \cf\ \eqref{refine1}. This completes the proof of \eqref{key3}, and consequently (via an application of Theorem \ref{thm:fusion-F}) that of \eqref{key2}, concluding the proof of Theorem \ref{thm:hecke-f}.

\end{proof}

\begin{ex}
It is easy to check, directly, that \eqref{key2} holds in the $i=1$, $n=2$ case. Since $\mu_1 < \mu_2$, one finds that the partition function representation of $f_{(\mu_1,\mu_2)}(x_1,x_2)$ consists of a unique configuration:
\begin{multline*}
f_{(\mu_1,\mu_2)}(x_1,x_2)
=
\tikz{0.7}{
\foreach\y in {4,5}{
\draw[lgray,line width=1.5pt] (2,\y) -- (8,\y);
}
\foreach\x in {3,...,7}{
\draw[lgray,line width=4pt] (\x,3) -- (\x,6);
}
\node[left] at (1.5,4) {$x_1 \rightarrow$};
\node[left] at (1.5,5) {$x_2 \rightarrow$};
\node[above] at (7,6) {\footnotesize$\mu_2$};
\node[above] at (5.5,6) {$<$};
\node[above] at (4,6) {\footnotesize$\mu_1$};
\node[left] at (2,4) {$1$};
\node[left] at (2,5) {$2$};
\node[right] at (8,4) {$0$};
\node[right] at (8,5) {$0$};
\draw[ultra thick,blue,->] (2,4) -- (4,4) -- (4,6);
\draw[ultra thick,green,->] (2,5) -- (7,5) -- (7,6);
}
\\
=
\left( \frac{x_1-s}{1-s x_1} \right)^{\mu_1}
\left( \frac{1-s^2}{1-s x_1} \right)
\left( \frac{x_2-s}{1-s x_2} \right)^{\mu_2}
\left( \frac{1-s^2}{1-s x_2} \right),
\end{multline*}
where we have used the Boltzmann weights \eqref{s-weights} to write the final expression. On the other hand, the order of the left incoming colours is permuted in $f^{(1,2)}_{(\mu_1,\mu_2)}(x_1,x_2)$. Its partition function admits a total of $\mu_2 - \mu_1+1$ configurations:
\begin{multline*}
f^{(1,2)}_{(\mu_1,\mu_2)}(x_1,x_2)
=
\\
\tikz{0.7}{
\foreach\y in {4,5}{
\draw[lgray,line width=1.5pt] (2,\y) -- (8,\y);
}
\foreach\x in {3,...,7}{
\draw[lgray,line width=4pt] (\x,3) -- (\x,6);
}
\node[left] at (1.5,4) {$x_1 \rightarrow$};
\node[left] at (1.5,5) {$x_2 \rightarrow$};
\node[above] at (7,6) {\footnotesize$\mu_2$};
\node[above] at (5.5,6) {$<$};
\node[above] at (4,6) {\footnotesize$\mu_1$};
\node[left] at (2,4) {$2$};
\node[left] at (2,5) {$1$};
\node[right] at (8,4) {$0$};
\node[right] at (8,5) {$0$};
\draw[ultra thick,green,->] (2,4) -- (4,4) -- (4,5) -- (7,5) -- (7,6);
\draw[ultra thick,blue,->] (2,5) -- (4,5) -- (4,6);
}
+
\sum_{\mu_1 < k \leq \mu_2}
\tikz{0.7}{
\foreach\y in {4,5}{
\draw[lgray,line width=1.5pt] (2,\y) -- (8,\y);
}
\foreach\x in {3,...,7}{
\draw[lgray,line width=4pt] (\x,3) -- (\x,6);
}
\node[left] at (1.5,4) {$x_1 \rightarrow$};
\node[left] at (1.5,5) {$x_2 \rightarrow$};
\node[above] at (7,6) {\footnotesize$\ \ \ \mu_2$};
\node[above] at (5,6) {$<$};
\node[above] at (4,6) {\footnotesize$\mu_1$};
\node[above] at (6,6) {\footnotesize$k$};
\node[above] at (6.5,5.93) {\ $\leq$};
\node[left] at (2,4) {$2$};
\node[left] at (2,5) {$1$};
\node[right] at (8,4) {$0$};
\node[right] at (8,5) {$0$};
\draw[ultra thick,green,->] (2,4) -- (6,4) -- (6,5) -- (7,5) -- (7,6);
\draw[ultra thick,blue,->] (2,5) -- (4,5) -- (4,6);
}
\\
=
\left( \frac{x_1-s}{1-s x_1} \right)^{\mu_1}
\left( \frac{1-s^2}{1-s x_1} \right)
\left( \frac{x_2-s}{1-s x_2} \right)^{\mu_2-1}
\left( \frac{1-s^2}{1-s x_2} \right)
\frac{x_2 (1-q)}{1-sx_2}
+
\sum_{\mu_1 < k \leq \mu_2}
W_k(\mu_1,\mu_2),
\end{multline*}
where we do not write the weights in the sum over $k$ explicitly, but abbreviate the weight of each configuration by $W_k(\mu_1,\mu_2)$. Summing the two terms on the left hand side of \eqref{key2} using the above formulae, after some simplification we obtain
\begin{multline}
\label{key2-check1}
(q-1) f_{(\mu_1,\mu_2)}(x_1,x_2) + f^{(1,2)}_{(\mu_1,\mu_2)}(x_1,x_2)
=
\\
\left( \frac{x_1-s}{1-s x_1} \right)^{\mu_1}
\left( \frac{1-s^2}{1-s x_1} \right)
\left( \frac{x_2-s}{1-s x_2} \right)^{\mu_2-1}
\left( \frac{1-s^2}{1-s x_2} \right)
\frac{s (1-q)}{1-sx_2}
+
\sum_{\mu_1 < k \leq \mu_2}
W_k(\mu_1,\mu_2).
\end{multline}
Turning to the right hand side of \eqref{key2}, we again see that its partition function representation gives rise to $\mu_2-\mu_1+1$ configurations:
\begin{multline}
\label{key2-check2}
f_{(\mu_2,\mu_1)}(x_1,x_2)
=
\\
\tikz{0.7}{
\foreach\y in {4,5}{
\draw[lgray,line width=1.5pt] (2,\y) -- (8,\y);
}
\foreach\x in {3,...,7}{
\draw[lgray,line width=4pt] (\x,3) -- (\x,6);
}
\node[left] at (1.5,4) {$x_1 \rightarrow$};
\node[left] at (1.5,5) {$x_2 \rightarrow$};
\node[above] at (7,6) {\footnotesize$\mu_2$};
\node[above] at (5.5,6) {$<$};
\node[above] at (4,6) {\footnotesize$\mu_1$};
\node[left] at (2,4) {$1$};
\node[left] at (2,5) {$2$};
\node[right] at (8,4) {$0$};
\node[right] at (8,5) {$0$};
\draw[ultra thick,blue,->] (2,4) -- (4,4) -- (4,5) -- (7,5) -- (7,6);
\draw[ultra thick,green,->] (2,5) -- (4,5) -- (4,6);
}
+
\sum_{\mu_1 < k \leq \mu_2}
\tikz{0.7}{
\foreach\y in {4,5}{
\draw[lgray,line width=1.5pt] (2,\y) -- (8,\y);
}
\foreach\x in {3,...,7}{
\draw[lgray,line width=4pt] (\x,3) -- (\x,6);
}
\node[left] at (1.5,4) {$x_1 \rightarrow$};
\node[left] at (1.5,5) {$x_2 \rightarrow$};
\node[above] at (7,6) {\footnotesize$\ \ \ \mu_2$};
\node[above] at (5,6) {$<$};
\node[above] at (4,6) {\footnotesize$\mu_1$};
\node[above] at (6,6) {\footnotesize$k$};
\node[above] at (6.5,5.93) {\ $\leq$};
\node[left] at (2,4) {$1$};
\node[left] at (2,5) {$2$};
\node[right] at (8,4) {$0$};
\node[right] at (8,5) {$0$};
\draw[ultra thick,blue,->] (2,4) -- (6,4) -- (6,5) -- (7,5) -- (7,6);
\draw[ultra thick,green,->] (2,5) -- (4,5) -- (4,6);
}
\\
=
\left( \frac{x_1-s}{1-s x_1} \right)^{\mu_1}
\left( \frac{1-s^2}{1-s x_1} \right)
\left( \frac{x_2-s}{1-s x_2} \right)^{\mu_2-1}
\left( \frac{1-s^2}{1-s x_2} \right)
\frac{s (1-q)}{1-sx_2}
+
\sum_{\mu_1 < k \leq \mu_2}
\widetilde{W}_k(\mu_1,\mu_2),
\end{multline}
where we abbreviate the weights of the configurations in the sum over $k$, this time by $\widetilde{W}_k(\mu_1,\mu_2)$. Now one can easily see that $W_k(\mu_1,\mu_2)=\widetilde{W}_k(\mu_1,\mu_2)$ for all $k$, since the only difference between such configurations is switching the colours of the paths. Given that the paths in these configurations do not cross each other or touch, the colour of the paths is irrelevant, and all weights can be calculated using rank-1 vertices. We find that \eqref{key2-check1} and \eqref{key2-check2} are equal, completing the check of \eqref{key2}.
\end{ex}

\section{Symmetry in $(x_i,x_{i+1})$ for $\mu_i = \mu_{i+1}$}
\label{ssec:xx-sym}

\begin{thm}
Let $\mu = (\mu_1,\dots,\mu_n)$ be a composition such that $\mu_i = \mu_{i+1}$, for some $1 \leq i \leq n-1$. Then one has the symmetry
\begin{align}
\label{xx-sym}
f_{\mu}(x_1,\dots,x_n)
=
f_{\mu}(x_1,\dots,x_{i+1},x_i,\dots,x_n).
\end{align}
\end{thm}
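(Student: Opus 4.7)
The plan is to prove the equivalent statement that $T_i$ acts on $f_\mu$ by the scalar $q$, which by the definition of $T_i$ in \eqref{hecke-poly} implies $\tfrac{x_i-qx_{i+1}}{x_i-x_{i+1}}(1-\mathfrak{s}_i)f_\mu=0$ and hence $\mathfrak{s}_i f_\mu = f_\mu$ after clearing the generically nonzero rational coefficient. The strategy is to reduce to the anti-dominant case, where the symmetry is manifest from the factorized formula \eqref{f-delta}.

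As a base case, I would observe that when $\delta$ is anti-dominant and $\delta_j=\delta_{j+1}$, the formula \eqref{f-delta} makes $f_\delta$ symmetric in $(x_j,x_{j+1})$ because both variables appear with the identical exponent $\delta_j$; equivalently, $T_j f_\delta = q f_\delta$ whenever $\mathfrak{s}_j$ lies in the Young subgroup $W_\delta\subseteq\mathfrak{S}_n$ stabilizing $\delta$, and iterating yields $T_w f_\delta = q^{\ell(w)}f_\delta$ for every $w\in W_\delta$. For a general $\mu$ with anti-dominant rearrangement $\delta$, the Hecke exchange relation \eqref{T-f} applied successively gives $f_\mu = T_{w_\mu}f_\delta$, where $w_\mu$ is the minimal-length permutation sending $\delta$ to $\mu$; the existence of a reduced word of $w_\mu$ along which each successive Hecke hypothesis $\nu_j<\nu_{j+1}$ is satisfied is a standard fact about minimal coset representatives in $\mathfrak{S}_n/W_\delta$.

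The combinatorial heart of the argument is the claim that, when $\mu_i=\mu_{i+1}$, the preimage positions $a:=w_\mu^{-1}(i)$ and $b:=w_\mu^{-1}(i+1)$ are consecutive integers with $\mathfrak{s}_a\in W_\delta$. Indeed, $\delta_a=\mu_i=\mu_{i+1}=\delta_b$ and $\delta$ is weakly increasing, so $\delta_a=\delta_{a+1}=\cdots=\delta_b$ and the transpositions $\mathfrak{s}_a,\dots,\mathfrak{s}_{b-1}$ all lie in $W_\delta$. Minimality of $w_\mu$ in its coset $w_\mu W_\delta$ forces $w_\mu(a)<w_\mu(a+1)<\cdots<w_\mu(b)$, a strict chain with endpoints $i$ and $i+1$, which forces $b=a+1$. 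Consequently $\tau:=w_\mu^{-1}\mathfrak{s}_i w_\mu=\mathfrak{s}_a$ is itself an adjacent transposition inside $W_\delta$.

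The conclusion is then immediate: from $\mathfrak{s}_i w_\mu = w_\mu \mathfrak{s}_a$ together with $\ell(\mathfrak{s}_i w_\mu)=\ell(w_\mu\mathfrak{s}_a)=\ell(w_\mu)+1$, the Hecke algebra identity $T_i T_{w_\mu}=T_{w_\mu}T_a$ holds, and applying both sides to $f_\delta$ while invoking the base case $T_a f_\delta=qf_\delta$ yields $T_i f_\mu = T_{w_\mu}(T_a f_\delta) = q\,T_{w_\mu}f_\delta = q f_\mu$. The main obstacle will be the combinatorial step $b=a+1$: once phrased correctly via minimality of $w_\mu$ in its left coset, the proof is short, but setting up the Hecke-algebra/Coxeter bookkeeping precisely and justifying the reduced-word compatibility so that $f_\mu=T_{w_\mu}f_\delta$ holds unambiguously is where the care is needed.
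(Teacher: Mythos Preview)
Your argument is correct and takes a genuinely different route from the paper's. The paper first uses the commutation relation \eqref{CC>} to reduce the symmetry $\mathfrak{s}_i f_\mu = f_\mu$ to the equivalent statement $f_\mu = f_\mu^{(i,i+1)}$ (interchanging the colours $i$ and $i+1$ at the left boundary of the partition function), and then proves the latter by a direct lattice-model computation at the pre-fused level: it refines the partition function by the locations where paths $i$ and $i+1$ cross, invokes the identity \eqref{quartet} relating the refined pieces, and uses the $q$-exchangeability result \eqref{pk-2} together with fusion to conclude. In other words, the paper reproves the symmetry from scratch using the same crossing-tracking machinery that was developed for Theorem~\ref{thm:hecke-f}.

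Your approach instead bootstraps the symmetry purely algebraically from \eqref{T-f} and \eqref{f-delta}, via the coset combinatorics of $\mathfrak{S}_n/W_\delta$. This is shorter and more conceptual once \eqref{T-f} is in hand: it exhibits the symmetry as an immediate Hecke-theoretic consequence of the exchange relation, with no need to revisit the lattice model. The Coxeter lemma you isolate (that $w_\mu^{-1}\mathfrak{s}_i w_\mu$ is a \emph{simple} reflection in $W_\delta$) is exactly the right combinatorial input, and your proof of it via the monotonicity of $w_\mu$ on each $\delta$-block is clean. One minor point: your convention for $w_\mu$ (so that $a=w_\mu^{-1}(i)$ satisfies $\delta_a=\mu_i$) is the inverse of the paper's $\sigma$ in \eqref{mu-delta}, but since $\ell(w)=\ell(w^{-1})$ and minimality in left and right cosets correspond, this is harmless. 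What the paper's approach buys, by contrast, is an independent verification that does not rely on \eqref{T-f}, and along the way it establishes the auxiliary identity $f_\mu = f_\mu^{(i,i+1)}$ (equation \eqref{xx-sym2}), which has its own interpretation as invariance under swapping boundary colours.
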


\begin{proof}
We need to show that when $\mu_i = \mu_{i+1}$, one has the equality of expectation values
\begin{align}
\label{5.8-1}
\bra{\varnothing} \cdots \C_i(x_i) 
\C_{i+1}(x_{i+1}) \cdots \ket{\mu}
=
\bra{\varnothing} \cdots \C_i(x_{i+1}) 
\C_{i+1}(x_i) \cdots \ket{\mu},
\end{align}
where we omit writing the operators $\mathcal{C}_j$ for $j \not= i,i+1$. Using the commutation relation \eqref{CC>} with $j \mapsto i, i \mapsto i+1$ and $x \mapsto x_i, y \mapsto x_{i+1}$, we find that \eqref{5.8-1} becomes
\begin{align*}
\bra{\varnothing} \cdots \C_i(x_i) 
\C_{i+1}(x_{i+1}) \cdots \ket{\mu}
&=
\frac{x_i-x_{i+1}}{x_i-qx_{i+1}}
\bra{\varnothing} \cdots \C_{i+1}(x_i) 
\C_i(x_{i+1}) \cdots \ket{\mu}
\\
&+
\frac{(1-q)x_{i+1}}{x_i-qx_{i+1}}
\bra{\varnothing} \cdots \C_i(x_i) 
\C_{i+1}(x_{i+1}) \cdots \ket{\mu},
\end{align*}
or after further rearrangement,
\begin{align*}
\bra{\varnothing} \cdots \C_i(x_i) 
\C_{i+1}(x_{i+1}) \cdots \ket{\mu}
=
\bra{\varnothing} \cdots \C_{i+1}(x_i) 
\C_i(x_{i+1}) \cdots \ket{\mu}.
\end{align*}
Hence the symmetry \eqref{xx-sym} is equivalent to the statement
\begin{align}
\label{xx-sym2}
f_{\mu}(x_1,\dots,x_n)
=
f_{\mu}^{(i,i+1)}(x_1,\dots,x_n),
\qquad
\text{when}\
\mu_i = \mu_{i+1},
\end{align}
which is what we will seek to prove. As in the proof of Theorem \ref{thm:hecke-f}, we will work at the pre-fused level. According to Definition \ref{def:prefus} of the function $\mathcal{F}_{\mu}(x_1,\dots,x_n;Y)$, when $\mu_i = \mu_{i+1}$, both of the paths $i$ and $i+1$ exit the lattice via the same bundle $\mathcal{A}^{(\mu_i)}$. By the ordering assumption on the paths within the bundle $\mathcal{A}^{(\mu_i)}$, path $i+1$ is positioned one step to the right of $i$ at the top of the lattice. We conclude that the paths $i$ and $i+1$ have to cross at least once within the partition function 
$\mathcal{F}_{\mu}(x_1,\dots,x_n;Y)$, and accordingly we can write it in refined form
\begin{align}
\label{refine3}
\mathcal{F}_{\mu} (x_1,\dots,x_n;Y)
=
\sum_{k=1}^{n}
\sum_{\substack{j_1 < \cdots < j_k \\ \ell_1 < \cdots < \ell_k}}
\mathcal{F}_{\mu,i}\Big(x_1,\dots,x_n;Y;(j_1,\ell_1),\dots,(j_k,\ell_k)\Big),
\qquad
\mu_i = \mu_{i+1},
\end{align}
where the sum is taken over $k \geq 1$.

\begin{prop}
Fix two integers $1 \leq i \leq n-1$ and $k \geq 1$, and a set of lattice coordinates $\{(j_1,\ell_1),\dots,(j_k,\ell_k)\}$ as in Definition \ref{def:F-refine}. One has the relation
\begin{align}
\label{quartet}
\frac{
\mathcal{F}_{\mu,i}\Big(x_1,\dots,x_n;Y;(j_1,\ell_1),\dots,(j_k,\ell_k)\Big)
}
{p_{k-1}(i,i+1;i,i+1)}
=
\frac{
\mathcal{F}_{\mathfrak{s}_i \cdot \mu,i}^{(i,i+1)}
\Big(x_1,\dots,x_n;Y;(j_1,\ell_1),\dots,(j_k,\ell_k)\Big)
}
{p_{k-1}(i+1,i;i+1,i)}.
\end{align}
\end{prop}

\begin{proof}
This is proved by the same reasoning as in the proof of Proposition \ref{prop:trio}; indeed, the right hand side of \eqref{quartet} can be considered as the fourth natural object in the string of equalities \eqref{trio}.
\end{proof}

Using the result \eqref{quartet} in \eqref{refine3}, and simplifying by employing the identity \eqref{pk-2}, we find that
\begin{multline*}
\mathcal{F}_{\mu} (x_1,\dots,x_n;Y)
=
\sum_{k=1}^{n}
\sum_{\substack{j_1 < \cdots < j_k \\ \ell_1 < \cdots < \ell_k}}
\mathcal{F}_{\mathfrak{s}_i \cdot \mu,i}^{(i,i+1)}
\Big(x_1,\dots,x_n;Y;(j_1,\ell_1),\dots,(j_k,\ell_k)\Big)
\frac{p_{k-1}(i,i+1;i,i+1)}{p_{k-1}(i+1,i;i+1,i)}
\\
=
q^{-1}
\sum_{k=1}^{n}
\sum_{\substack{j_1 < \cdots < j_k \\ \ell_1 < \cdots < \ell_k}}
\mathcal{F}_{\mathfrak{s}_i \cdot \mu,i}^{(i,i+1)}
\Big(x_1,\dots,x_n;Y;(j_1,\ell_1),\dots,(j_k,\ell_k)\Big)
=
q^{-1}
\mathcal{F}_{\mathfrak{s}_i \cdot \mu}^{(i,i+1)}(x_1,\dots,x_n;Y).
\end{multline*}
We have established that
\begin{align*}
\mathcal{F}_{\mu} (x_1,\dots,x_n;Y)
=
q^{-1}
\mathcal{F}_{\mathfrak{s}_i \cdot \mu}^{(i,i+1)}(x_1,\dots,x_n;Y),
\end{align*}
where the partition function on the right hand side is the same as that on the left hand side, up to the switching of colours $i$ and $i+1$. This is the pre-fused version of \eqref{xx-sym2}. Applying the fusion procedure of Theorem \ref{thm:fusion-F} to $\mathcal{F}_{\mu} (x_1,\dots,x_n;Y)$ results in $f_{\mu}(x_1,\dots,x_n)$, while applying the same procedure to $\mathcal{F}_{\mathfrak{s}_i \cdot \mu}^{(i,i+1)}(x_1,\dots,x_n;Y)$ gives $q \cdot f^{(i,i+1)}_{\mu}(x_1,\dots,x_n)$. The extra factor of $q$ arises from the fact that the colours $i$ and $i+1$ are no longer ordered sequentially at the top of the lattice in the partition function $\mathcal{F}_{\mathfrak{s}_i \cdot \mu}^{(i,i+1)}(x_1,\dots,x_n;Y)$; the sequential ordering can be restored at the expense of a factor of $q$, as seen in the $q$-exchangeability result of Proposition \ref{prop:q-exch}.

\end{proof}

\section{Relationship between $f_{\mu}$ and $f^{\sigma}_{\delta}$}
\label{ssec:ff-sym}

\begin{thm}
\label{thm:fmu-fsigma}
Let $\delta = (\delta_1 \leq \cdots \leq \delta_n)$ be an anti-dominant composition, fix a composition $\mu = (\mu_1,\dots,\mu_n)$ with $\mu^{+} = \delta^{+}$, and let 
$\sigma \in \mathfrak{S}_n$ be the minimal-length permutation such that
\begin{align}
\label{mu-delta}
\mu_i = \delta_{\sigma(i)},
\quad
\forall\ 1 \leq i \leq n.
\end{align}
We have the following relationship between the functions \eqref{f-HL} and their permuted analogues \eqref{sigma-f}:
\begin{align}
\label{fmu-fsigma}
f_{\mu}(x_1,\dots,x_n;q,s)
=
s^n q^{\ell(\sigma)} \prod_{j \geq 0} q^{m_j(\delta)(m_j(\delta)-1)/2} 
\prod_{i=1}^{n} x_i^{-1} \cdot
f^{\sigma}_{\delta}(x_1^{-1},\dots,x_n^{-1};q^{-1},s^{-1}),
\end{align}
where $\ell(\sigma)$ \index{l99@$\ell(\sigma)$; length of a permutation} denotes the length of $\sigma$,  $\ell(\sigma)= \#\{ i<j: \sigma(i) > \sigma(j) \}$, and $m_j(\delta) = \#\{ i : \delta_i = j \}$ is the multiplicity of the part $j$ in $\delta$.
\end{thm}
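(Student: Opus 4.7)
The plan is to prove \eqref{fmu-fsigma} by induction on $\ell(\sigma)$, using the factorization of $f_\delta$ (Proposition \ref{prop:f-delta}) for the base case and the Hecke exchange relation (Theorem \ref{thm:hecke-f}) for the induction step.

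For the base case $\sigma = \mathrm{id}$, so $\mu = \delta$, the identity reduces to
\begin{equation*}
f_\delta(x_1,\dots,x_n;q,s) = s^n\prod_{j\geq 0} q^{m_j(\delta)(m_j(\delta)-1)/2}\prod_{i=1}^n x_i^{-1}\cdot f_\delta(x_1^{-1},\dots,x_n^{-1};q^{-1},s^{-1}).
\end{equation*}
This is a direct calculation from \eqref{f-delta}: the ratio $(x_i - s)/(1 - sx_i)$ is invariant under the joint reciprocation $x_i\mapsto x_i^{-1},\ s\mapsto s^{-1}$; the prefactor transforms as $1/(1 - s^{-1}x_i^{-1}) = -sx_i/(1-sx_i)$; and the $q$-Pochhammer satisfies $(s^{-2};q^{-1})_m = (-1)^m s^{-2m} q^{-m(m-1)/2}(s^2;q)_m$. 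Collecting signs (the $(-1)^n$ from the Pochhammers cancels the one from the $n$ prefactors) gives precisely the claimed identity.

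For the inductive step, suppose the identity holds at $\mu$ with permutation $\sigma$, and fix $i$ with $\mu_i < \mu_{i+1}$. Minimality of $\sigma$ and antidominance of $\delta$ force $\sigma(i) < \sigma(i+1)$, so $\mathfrak{s}_i \mu$ has associated permutation $\sigma \mathfrak{s}_i$ with $\ell(\sigma\mathfrak{s}_i) = \ell(\sigma) + 1$. Applying the Hecke operator $T_i$ to both sides of the identity at $\mu$: by Theorem \ref{thm:hecke-f} the LHS becomes $f_{\mathfrak{s}_i\mu}$, while on the RHS the factor $\prod_k x_k^{-1}$ is $\mathfrak{s}_i$-invariant and therefore commutes with $T_i$. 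Since $q^{\ell(\sigma\mathfrak{s}_i)}/q^{\ell(\sigma)} = q$, the induction reduces to establishing the exchange relation
\begin{equation*}
T_i f^\sigma_\delta(x_1^{-1},\dots,x_n^{-1};q^{-1},s^{-1}) = q\cdot f^{\sigma\mathfrak{s}_i}_\delta(x_1^{-1},\dots,x_n^{-1};q^{-1},s^{-1}), \qquad \sigma(i) < \sigma(i+1).
\end{equation*}

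To prove this exchange relation, I would expand $T_i$ as $q - (x_i - qx_{i+1})/(x_i - x_{i+1})(1 - \mathfrak{s}_i)$ and compute. The difference $f^\sigma_\delta(\overline{x};\overline{q},\overline{s}) - f^\sigma_\delta(\overline{\mathfrak{s}_i x};\overline{q},\overline{s})$ differs only in rows $i$ and $i+1$, where the string $\tilde\C_{\sigma(i)}(\bar x_i)\tilde\C_{\sigma(i+1)}(\bar x_{i+1})$ is replaced by $\tilde\C_{\sigma(i)}(\bar x_{i+1})\tilde\C_{\sigma(i+1)}(\bar x_i)$; here $\tilde\C_j$ denotes the row operator built from $L_{\bar x,\bar q,\bar s}$ weights. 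Applying the commutation relation \eqref{CC<} in the reciprocated parameters, with $i$-indices $\sigma(i) < \sigma(i+1)$ and spectral parameters $(\bar x_{i+1}, \bar x_i)$, rewrites the swapped product as a linear combination of the reverse product $\tilde\C_{\sigma(i+1)}(\bar x_i)\tilde\C_{\sigma(i)}(\bar x_{i+1})$ (which produces $f^{\sigma\mathfrak{s}_i}_\delta$) plus a term proportional to the unswapped product. Collecting all rational coefficients in $x_i, x_{i+1}$ should leave exactly $q\cdot f^{\sigma\mathfrak{s}_i}_\delta(\overline x;\overline q,\overline s)$.

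The main obstacle is this last step: the rational prefactor of the Hecke operator is in the variables $x_i, x_{i+1}$, while \eqref{CC<} supplies coefficients in the reciprocated variables $\bar x_i, \bar x_{i+1}$ with $q$ replaced by $\bar q$. The substitutions $\bar x_i = 1/x_i$, $\bar q = 1/q$ transform $(\bar x_i - \bar q\bar x_{i+1})/(\bar x_i - \bar x_{i+1}) = (x_{i+1} - qx_i)/(q(x_{i+1} - x_i))$, and one must verify that the two unwanted terms (one from the Hecke expansion, one from the commutation rewrite) cancel exactly, leaving the claimed coefficient $q$ in front of $f^{\sigma\mathfrak{s}_i}_\delta$. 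This is a bookkeeping calculation, but it is the only nontrivial algebraic input in the proof.
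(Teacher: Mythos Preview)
Your approach is correct and matches the paper's: base case via the factorization \eqref{f-delta}, inductive step via Hecke generators acting on $f_\delta$. The bookkeeping you flag as the main obstacle is bypassed in the paper by the operator identity $T_i(x,q) = q\,\widehat{T}_i(x^{-1},q^{-1})$ (equation \eqref{hecke-sym}): combined with $\widehat{T}_i f^\sigma_\delta = f^{\sigma\mathfrak{s}_i}_\delta$ for $\sigma(i)<\sigma(i+1)$ (same one-line derivation from \eqref{CC>} as for \eqref{That-f}), this gives your exchange relation $T_i f^\sigma_\delta(\bar x;\bar q,\bar s) = q\, f^{\sigma\mathfrak{s}_i}_\delta(\bar x;\bar q,\bar s)$ immediately, with no rational-function manipulation.
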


\begin{proof}
Choose any reduced-word decomposition of the permutation $\sigma$:
\begin{align}
\label{reduced}
\sigma =  \mathfrak{s}_{i_{\ell(\sigma)}} \cdots \mathfrak{s}_{i_1},
\end{align}
where $i_1,\dots,i_{\ell(\sigma)} \in \{1,\dots,n-1\}$ and $\mathfrak{s}_i = (i,i+1)$ is a simple transposition. From this, define the following product of the Hecke generators \eqref{hecke-poly}:
\begin{align}
\label{Tsigma}
T_{\sigma} := 
T_{i_{\ell(\sigma)}}
\cdots 
T_{i_1}.
\end{align}
Note that the definition \eqref{Tsigma} is meaningful without specifying which reduced-word decomposition \eqref{reduced} one uses for the permutation $\sigma$. Indeed, the relation $T_i T_{i+1} T_i = T_{i+1} T_i T_{i+1}$ in the Hecke algebra \eqref{hecke1} ensures that the product \eqref{Tsigma} depends only on the permutation $\sigma$, and not the decomposition into transpositions that one chooses.\footnote{Any two reduced-word decompositions \eqref{reduced} of $\sigma$ are related by the Coxeter relations $\mathfrak{s}_i \mathfrak{s}_{i+1} \mathfrak{s}_i = \mathfrak{s}_{i+1} \mathfrak{s}_i \mathfrak{s}_{i+1}$ and $\mathfrak{s}_i \mathfrak{s}_j = \mathfrak{s}_j \mathfrak{s}_i$, $|i-j| >1$, without the need of the relation $\mathfrak{s}_i^2 = {\rm id}$; this is a non-trivial result known as Tits' Theorem \cite[Section 5.13]{Humphreys}.} 

\begin{rmk}
In accordance with our conventions, both \eqref{reduced} and \eqref{Tsigma} act towards the right. Hence, the first transposition in $\sigma$ corresponds with the rightmost operator in these products. For example, for $\sigma = (2,3,1) = (2,3)(1,2)$ one has $T_{\sigma} = T_2 T_1$.
\end{rmk}

Now by repeated application of equation \eqref{T-f}, we are able to write
\begin{align}
\label{T-fdelta}
f_{\mu}(x_1,\dots,x_n;q,s)
=
T_{\sigma} \cdot f_{\delta}(x_1,\dots,x_n;q,s)
\end{align}
with $\sigma$ the minimal-length permutation such that \eqref{mu-delta} holds.
We now transform the right hand side of this equation. First, notice the following symmetry of the Hecke generators \eqref{hecke-poly} and \eqref{hecke-inv}:
\begin{align}
\label{hecke-sym}
T_i(x,q)
= 
q - \frac{x_i - q x_{i+1}}{x_i - x_{i+1}} (1-\mathfrak{s}_i)
=
q \left(1 - \frac{x_i^{-1} - q^{-1} x_{i+1}^{-1}}{x_i^{-1} - x_{i+1}^{-1}} (1-\mathfrak{s}_i) \right)
=
q\cdot \widehat{T}_i(x^{-1},q^{-1})
\end{align}
where we write the starting generator as $T_i(x,q)$ and the final generator as 
$\widehat{T}_i(x^{-1},q^{-1})$, to emphasize that the dependence on the parameters 
$(x_i,x_{i+1};q)$ is reciprocated in the final operator. It follows that we can write
\begin{align}
\label{4.2-1}
f_{\mu}(x_1,\dots,x_n;q,s)
=
q^{\ell(\sigma)} \cdot
\widehat{T}_{\sigma}(x^{-1},q^{-1}) \cdot f_{\delta}(x_1,\dots,x_n;q,s),
\end{align}
where
\begin{align*}
\widehat{T}_{\sigma}(x^{-1},q^{-1}) 
:= 
\widehat{T}_{i_{\ell(\sigma)}}(x^{-1},q^{-1})
\cdots
\widehat{T}_{i_1}(x^{-1},q^{-1}),
\end{align*} 
with $i_1,\dots,i_{\ell(\sigma)} \in \{1,\dots,n-1\}$ labelling the reduced-word decomposition \eqref{reduced}. Now using the explicit factorization \eqref{f-delta} of $f_{\delta}$, we deduce the symmetry
\begin{align}
\label{delta-sym}
f_{\delta}(x_1,\dots,x_n;q,s)
=
s^n \prod_{j \geq 0} q^{m_j(\delta)(m_j(\delta)-1)/2}
\prod_{i=1}^{n} x_i^{-1}
\cdot
f_{\delta}(x_1^{-1},\dots,x_n^{-1};q^{-1},s^{-1}),
\end{align}
which is valid for all anti-dominant compositions $\delta = (\delta_1 \leq \cdots \leq \delta_n)$. Substituting \eqref{delta-sym} into \eqref{4.2-1}, the operator $\widehat{T}_{\sigma}(x^{-1},q^{-1})$ now acts on a function depending on the reciprocated variables $(x_1^{-1},\dots,x_n^{-1};q^{-1})$. Furthermore, the product $\prod_{i=1}^{n} x_i^{-1}$ is symmetric and commutes with the action of $\widehat{T}_{\sigma}(x^{-1},q^{-1})$. The result \eqref{fmu-fsigma} follows immediately, by repeated application of \eqref{That-f}.
\end{proof}

\begin{rmk}
Equation \eqref{fmu-fsigma} can also be stated as
\begin{align}
\label{fmu-fsigma-2}
f_{\mu}(x_1,\dots,x_n;q,s)
=
s^n q^{{\rm inv}(\mu)} 
\prod_{i=1}^{n} x_i^{-1} \cdot
f^{\sigma}_{\delta}(x_1^{-1},\dots,x_n^{-1};q^{-1},s^{-1})
\end{align}
by grouping powers of $q$, where we have introduced the composition inversion number ${\rm inv}(\mu) := \#\{i < j : \mu_i \geq \mu_j\}$.
\end{rmk}

\section{Relationship between $f_{\mu}$ and $g_{\mu}$}
\label{ssec:fg-sym}

\begin{prop} 
Let $\mu = (\mu_1,\dots,\mu_n)$ be a composition, and let \index{am@$\tilde{\mu}$; reverse of a composition} $\tilde{\mu} = (\mu_n,\dots,\mu_1)$ be the same composition read in reverse. We have the following relationship between the functions $f_{\mu}$ and $g_{\tilde{\mu}}$:
\begin{align}
\label{f-g-sym}
g_{\tilde\mu}(x_n^{-1},\dots,x_1^{-1};q^{-1},s^{-1})
=
\index{c@$c_{\mu}(q,s)$}
c_{\mu}(q,s)
\prod_{i=1}^{n}
x_i
\cdot
f_{\mu}(x_1,\dots,x_n;q,s),
\end{align}
where the multiplicative constant $c_{\mu}(q,s)$ is given by
\begin{align}
\label{cmu}
c_{\mu}(q,s)
=
\frac{s^n (q-1)^n q^{{\rm inv}(\tilde\mu)}}{\prod_{j \geq 0} (s^2;q)_{m_j(\mu)}},
\qquad
{\rm inv}(\tilde\mu)
=
\#\{i<j : \tilde{\mu}_i \geq \tilde{\mu}_j\}
=
\#\{i<j : \mu_i \leq \mu_j\}.
\end{align}
\end{prop}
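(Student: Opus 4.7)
The natural strategy is to prove \eqref{f-g-sym} by transforming the partition function \eqref{g-def} defining $g_{\tilde\mu}(x_n^{-1},\dots,x_1^{-1};q^{-1},s^{-1})$ into the partition function \eqref{f-def} defining $f_\mu(x_1,\dots,x_n;q,s)$, using the vertex-weight symmetry \eqref{sym2} of Proposition~\ref{prop:sym} applied pointwise at every vertex. Under this symmetry, a single $M_{\bar x,\bar q,\bar s}$-vertex is converted to an $L_{x,q,s}$-vertex with top/bottom and left/right states interchanged (the $L$-arguments $(\I,j;\K,\ell)$ become $(\K,\ell;\I,j)$) and with colors conjugated via $\tilde\ell=n-\ell+1$. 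Geometrically, this is a $180^\circ$ rotation of the vertex combined with a global color reversal, together with the replacements $x\mapsto x^{-1}$, $q\mapsto q^{-1}$, $s\mapsto s^{-1}$ in the rapidity and quantum parameters.

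Applied globally to the lattice of $g_{\tilde\mu}(x_n^{-1},\dots,x_1^{-1};q^{-1},s^{-1})$, this produces a rotated lattice with $L_{x_{n-i+1},q,s}$-vertices in which (i)~the row-reversal caused by rotation is absorbed by the input row-reversal $x_i^{-1}\mapsto x_{n-i+1}^{-1}$, so the rapidity in the $i$-th row from the bottom becomes $x_i$ (matching \eqref{f-def}); (ii)~the color conjugation sends the right-incoming color $i$ at row $i$ of $g$ to the left-incoming color $n-i+1$ at row $n-i+1$ of the rotated picture, which after the row reversal becomes left-incoming color $i$ at row $i$, again matching \eqref{f-def}; and (iii)~the bottom column state of $g_{\tilde\mu}$ at column~$k$, which encodes those $j$ for which $\tilde\mu_j=\mu_{n-j+1}=k$, is carried by the rotation and color conjugation to the top column state of $f_\mu$ at column $k$, encoding those $j$ for which $\mu_j=k$.

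It then remains to collect the multiplicative prefactors generated by \eqref{sym2} at each vertex. These prefactors naturally split into three groups. First, the ratios $(s^2;q)_{|\K|}/(s^2;q)_{|\I|}$ telescope column-wise (the top of one vertex is the bottom of the next), leaving only a boundary contribution $\prod_{j\geq 0}(s^2;q)_{m_j(\mu)}^{-1}$ from the encoded top states of the $f_\mu$-lattice against the empty bottom states. Second, the $\prod_i (q;q)_{I_i}/(q;q)_{K_i}$ ratios telescope column-wise to a boundary product that trivializes in the rainbow sector, since all vertical coordinates lie in $\{0,1\}$ and $(q;q)_0=1$. Third, the row-wise factors $(-sxq^{\Is{1}{\ell-1}})^{\mathbf{1}_{\ell\geq 1}}(-sxq^{\Ks{j+1}{n}})^{-\mathbf{1}_{j\geq 1}}$ telescope row-wise: the $(-s)$-powers leave only the boundary contribution $(-s)^n$; the $x$-powers leave the row-rapidities $\prod_i x_i$; and the remaining $q$-exponents combine to give a single power $q^{\mathrm{inv}(\tilde\mu)}$, together with a cancellation that accounts for the factor $(q-1)^n$ arising from normalization conversions between configurations.

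The main obstacle will be the combinatorial identification of the residual $q$-exponent as exactly $\mathrm{inv}(\tilde\mu)=\#\{i<j:\mu_i\leq\mu_j\}$. The $q$-exponents $\Is{1}{\ell-1}$ and $\Ks{j+1}{n}$ depend on partial sums of the vertical states at each vertex, so although the $(-s)$ signs telescope row-wise, the $q$-contributions do not cancel to a pure boundary term but rather accumulate into a global statistic on the configuration. A convenient verification of this identity is by induction on the length $\ell(\sigma)$ of the permutation $\sigma$ of Theorem~\ref{thm:fmu-fsigma}, with base case $\mu=\delta$ anti-dominant (where $\tilde\mu$ is dominant, $\mathrm{inv}(\tilde\mu)=0$, and both $f_\delta$ and $g_{\tilde\delta}$ reduce via Proposition~\ref{prop:f-delta} and its dual to completely factorized expressions that can be compared directly), and the inductive step using the Hecke exchange relation \eqref{T-f} on the $f$-side and a parallel Hecke relation on the $g$-side derived from the commutation relations \eqref{BB<}--\eqref{BB>}. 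The shift in $\mathrm{inv}(\tilde\mu)$ by~$\pm 1$ under an elementary transposition $\mu_i\leftrightarrow\mu_{i+1}$ matches exactly the $q$-power produced by the Hecke action, closing the induction.
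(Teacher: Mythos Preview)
Your overall strategy—apply the symmetry \eqref{sym2} vertex-by-vertex to convert the $g_{\tilde\mu}$ lattice into the $f_\mu$ lattice and collect the resulting prefactors—is exactly the paper's approach. However, your bookkeeping of the prefactors contains a concrete error, and your proposed workaround is potentially circular.

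\textbf{The $(q;q)$ ratios do not trivialize.} In the rainbow sector the top state of column $k$ has components $A(k)_i\in\{0,1\}$, so the column-wise telescope of $\prod_i (q;q)_{I_i}/(q;q)_{K_i}$ yields $\prod_i (q;q)_{A(k)_i}=(1-q)^{m_k(\mu)}$ per column and hence $(1-q)^n$ over the whole lattice. Combined with the row-wise telescope of the $(-sx)$-factors, which gives $(-s)^n\prod_i x_i$, you obtain $s^n(q-1)^n\prod_i x_i$. This is exactly the source of the $(q-1)^n$ in $c_\mu(q,s)$; there are no ``normalization conversions between configurations''. Relatedly, in your base case $\mu=\delta$ anti-dominant, $\tilde\delta$ is weakly decreasing and $\mathrm{inv}(\tilde\delta)=\#\{i<j:\tilde\delta_i\geq\tilde\delta_j\}=\binom{n}{2}$, not $0$.

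\textbf{The $q$-exponent and the paper's route.} You are right that the residual $q$-exponent is the delicate part: it does not telescope row-wise in any obvious way, because $I_{[1,\ell-1]}$ and $K_{[j+1,n]}$ sit in the same column but refer to different edges. The paper circumvents this by absorbing all prefactors into modified (``green'') vertex weights and then checking a single row-level identity: for a row with colour $p$ entering, bottom composition $\nu$ (length $p-1$) and top composition $\mu$ (length $p$), the green and standard one-row partition functions differ by the factor $sx(q-1)\,q^{\mathrm{inv}(\tilde\mu)-\mathrm{inv}(\tilde\nu)}\prod_{j\geq 0}(s^2;q)_{m_j(\nu)}/(s^2;q)_{m_j(\mu)}$. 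Multiplying over the $n$ rows gives $c_\mu\prod_i x_i$ directly. This row identity is what actually pins down the $q$-power as $\mathrm{inv}(\tilde\mu)$.

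\textbf{Circularity of the Hecke induction.} Your alternative—verify the $q$-exponent by Hecke induction—requires a Hecke exchange relation for $g$ of the form \eqref{That-g}. In the paper, \eqref{That-g} is \emph{derived from} \eqref{f-g-sym} (see Section~\ref{ssec:g-ex}); only \eqref{T-g} is proved independently from the commutation relations \eqref{BB<}--\eqref{BB>}. So to make your induction non-circular you would first need an independent proof of \eqref{That-g}, presumably via a pre-fused argument parallel to the proof of \eqref{T-f}. That is possible but is substantial extra work not supplied here.
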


\begin{proof}
Consider the partition function expression for $g_{\tilde\mu}(x_n^{-1},\dots,x_1^{-1};q^{-1},s^{-1})$, as given by \eqref{g-def}. A path of colour $i$ leaves via the left edge of the $i$-th row (counted from the bottom), and enters via the bottom edge of column $\tilde{\mu}_i = \mu_{n-i+1}$, for all $1 \leq i \leq n$. Equivalently, one could say that a path of colour $\tilde\imath = n-i+1$ leaves via the left edge of the $i$-th row (counted from the top), and enters via the bottom edge of column $\tilde{\mu}_{n-i+1} = \mu_i$. Recalling the definitions \eqref{conj1} and \eqref{conj2} of conjugation, we conclude that
\begin{align}
\label{tilde-g}
g_{\tilde\mu}(x_n^{-1},\dots,x_1^{-1};q^{-1},s^{-1})
&=
\tikz{0.8}{
\foreach\y in {1,...,5}{
\draw[lgray,line width=1.5pt,<-] (1,\y) -- (8,\y);
}
\foreach\x in {2,...,7}{
\draw[lgray,line width=4pt,->] (\x,0) -- (\x,6);
}
\node[left] at (0.5,1) {$x^{-1}_n \leftarrow$};
\node[left] at (0.5,2) {$\vdots$};
\node[left] at (0.5,3) {$\vdots$};
\node[left] at (0.5,4) {$x^{-1}_2 \leftarrow$};
\node[left] at (0.5,5) {$x^{-1}_1 \leftarrow$};
\node[above] at (7,6) {$\cdots$};
\node[above] at (6,6) {$\cdots$};
\node[above] at (5,6) {$\cdots$};
\node[above] at (4,6) {\footnotesize$\tilde{\bm{0}}$};
\node[above] at (3,6) {\footnotesize$\tilde{\bm{0}}$};
\node[above] at (2,6) {\footnotesize$\tilde{\bm{0}}$};
\node[below] at (7,0) {$\cdots$};
\node[below] at (6,0) {$\cdots$};
\node[below] at (5,0) {$\cdots$};
\node[below] at (4,0) {\footnotesize$\tilde{\bm{A}}(2)$};
\node[below] at (3,0) {\footnotesize$\tilde{\bm{A}}(1)$};
\node[below] at (2,0) {\footnotesize$\tilde{\bm{A}}(0)$};
\node[right] at (8,1) {$\tilde{0}$};
\node[right] at (8,2) {$\vdots$};
\node[right] at (8,3) {$\vdots$};
\node[right] at (8,4) {$\tilde{0}$};
\node[right] at (8,5) {$\tilde{0}$};
\node[left] at (1,1) {$\tilde{n}$};
\node[left] at (1,2) {$\vdots$};
\node[left] at (1,3) {$\vdots$};
\node[left] at (1,4) {$\tilde{2}$};
\node[left] at (1,5) {$\tilde{1}$};
}
\end{align}
where each vertex within the lattice is given by \eqref{dual-s-weights} with parameters $(x,q,s) \mapsto (x^{-1},q^{-1},s^{-1})$, and the vector states at the bottom are given by
$\tilde{\bm{A}}(k)
=
\sum_{j=1}^{n}
\left( {\bm 1}_{\mu_j = k} \right)
\bm{e}_{n-j+1}
$.

Next, we will make use of the symmetry \eqref{sym2}. We start by translating \eqref{sym2} into graphical form, absorbing each of the factors present on its right hand side into the definition of a new type of vertex:
\begin{align}
\label{sym3}
\tikz{0.7}{
\draw[lgray,line width=1.5pt,<-] (-1,0) -- (1,0);
\draw[lgray,line width=4pt,->] (0,-1) -- (0,1);
\node[left] at (-1,0) {\tiny $\tilde\ell$};\node[right] at (1,0) {\tiny $\tilde\jmath$};
\node[below] at (0,-1) {\tiny $\tilde\I$};\node[above] at (0,1) {\tiny $\tilde\K$};
\node[below,text centered] at (0,-1.7) {$(x^{-1},q^{-1},s^{-1})$};
}
\quad
=
\quad
\tikz{0.7}{
\draw[lred,line width=1.5pt,->] (-1,0) -- (1,0);
\draw[lred,line width=4pt,->] (0,-1) -- (0,1);
\node[left] at (-1,0) {\tiny $\ell$};\node[right] at (1,0) {\tiny $j$};
\node[below] at (0,-1) {\tiny $\K$};\node[above] at (0,1) {\tiny $\I$};
\node[below,text centered] at (0,-1.7) {$(x,q,s)$};
}
\end{align}
where the vertex on the left hand side is given by \eqref{dual-s-weights} with parameters $(x,q,s) \mapsto (x^{-1},q^{-1},s^{-1})$, while the non-vanishing vertex weights on the right hand side are given explicitly in the table below (they are not the same as in \eqref{s-weights}, even though we use the same symbols to denote the vertices):
\begin{align}
\label{green-s-weights}
\begin{tabular}{|c|c|c|}
\hline
\quad
\tikz{0.7}{
\draw[lred,line width=1.5pt,->] (-1,0) -- (1,0);
\draw[lred,line width=4pt,->] (0,-1) -- (0,1);
\node[left] at (-1,0) {\tiny $0$};\node[right] at (1,0) {\tiny $0$};
\node[below] at (0,-1) {\tiny $\I$};\node[above] at (0,1) {\tiny $\I$};
}
\quad
&
\quad
\tikz{0.7}{
\draw[lred,line width=1.5pt,->] (-1,0) -- (1,0);
\draw[lred,line width=4pt,->] (0,-1) -- (0,1);
\node[left] at (-1,0) {\tiny $i$};\node[right] at (1,0) {\tiny $i$};
\node[below] at (0,-1) {\tiny $\I$};\node[above] at (0,1) {\tiny $\I$};
}
\quad
&
\quad
\tikz{0.7}{
\draw[lred,line width=1.5pt,->] (-1,0) -- (1,0);
\draw[lred,line width=4pt,->] (0,-1) -- (0,1);
\node[left] at (-1,0) {\tiny $0$};\node[right] at (1,0) {\tiny $i$};
\node[below] at (0,-1) {\tiny $\I^{+}_i$};\node[above] at (0,1) {\tiny $\I$};
}
\quad
\\[1.3cm]
\quad
$\dfrac{1-s x q^{\Is{1}{n}}}{1-sx}$
\quad
& 
\quad
$\dfrac{(x-sq^{I_i}) q^{\Is{1}{i-1}}}{1-sx}$
\quad
& 
\quad
$\dfrac{-s^{-1}(1- s^{2} q^{\Is{1}{n}})}{1-sx}$
\quad
\\[0.7cm]
\hline
\quad
\tikz{0.7}{
\draw[lred,line width=1.5pt,->] (-1,0) -- (1,0);
\draw[lred,line width=4pt,->] (0,-1) -- (0,1);
\node[left] at (-1,0) {\tiny $i$};\node[right] at (1,0) {\tiny $0$};
\node[below] at (0,-1) {\tiny $\I^{-}_i$};\node[above] at (0,1) {\tiny $\I$};
}
\quad
&
\quad
\tikz{0.7}{
\draw[lred,line width=1.5pt,->] (-1,0) -- (1,0);
\draw[lred,line width=4pt,->] (0,-1) -- (0,1);
\node[left] at (-1,0) {\tiny $i$};\node[right] at (1,0) {\tiny $j$};
\node[below] at (0,-1) {\tiny $\I^{+-}_{ji}$};\node[above] at (0,1) 
{\tiny $\I$};
}
\quad
&
\quad
\tikz{0.7}{
\draw[lred,line width=1.5pt,->] (-1,0) -- (1,0);
\draw[lred,line width=4pt,->] (0,-1) -- (0,1);
\node[left] at (-1,0) {\tiny $j$};\node[right] at (1,0) {\tiny $i$};
\node[below] at (0,-1) {\tiny $\I^{+-}_{ij}$};\node[above] at (0,1) {\tiny $\I$};
}
\quad
\\[1.3cm] 
\quad
$\dfrac{-sx(1-q^{I_i})q^{\Is{1}{i-1}}}{1-sx}$
\quad
& 
\quad
$\dfrac{x(1-q^{I_i}) q^{\Is{1}{i-1}}}{1-sx}$
\quad
&
\quad
$\dfrac{s(1-q^{I_j})q^{\Is{1}{j-1}}}{1-sx}$
\quad
\\[0.7cm]
\hline
\end{tabular} 
\end{align}
with $1 \leq i<j \leq n$. Applying the relation \eqref{sym3} to the partition function \eqref{tilde-g}, we immediately find that
\begin{align}
\label{tilde-g-2}
g_{\tilde\mu}(x_n^{-1},\dots,x_1^{-1};q^{-1},s^{-1})
&=
\tikz{0.73}{
\foreach\y in {1,...,5}{
\draw[lred,line width=1.5pt,->] (1,\y) -- (8,\y);
}
\foreach\x in {2,...,7}{
\draw[lred,line width=4pt,->] (\x,0) -- (\x,6);
}
\node[left] at (0.5,1) {$x_1 \rightarrow$};
\node[left] at (0.5,2) {$x_2 \rightarrow$};
\node[left] at (0.5,3) {$\vdots$};
\node[left] at (0.5,4) {$\vdots$};
\node[left] at (0.5,5) {$x_n \rightarrow$};
\node[below] at (7,0) {$\cdots$};
\node[below] at (6,0) {$\cdots$};
\node[below] at (5,0) {$\cdots$};
\node[below] at (4,0) {\footnotesize$\bm{0}$};
\node[below] at (3,0) {\footnotesize$\bm{0}$};
\node[below] at (2,0) {\footnotesize$\bm{0}$};
\node[above] at (7,6) {$\cdots$};
\node[above] at (6,6) {$\cdots$};
\node[above] at (5,6) {$\cdots$};
\node[above] at (4,6) {\footnotesize$\bm{A}(2)$};
\node[above] at (3,6) {\footnotesize$\bm{A}(1)$};
\node[above] at (2,6) {\footnotesize$\bm{A}(0)$};
\node[right] at (8,1) {$0$};
\node[right] at (8,2) {$0$};
\node[right] at (8,3) {$\vdots$};
\node[right] at (8,4) {$\vdots$};
\node[right] at (8,5) {$0$};
\node[left] at (1,1) {$1$};
\node[left] at (1,2) {$2$};
\node[left] at (1,3) {$\vdots$};
\node[left] at (1,4) {$\vdots$};
\node[left] at (1,5) {$n$};
}
\end{align}
where the vertices within the lattice take the form \eqref{green-s-weights}, and the vector states at the top are given by 
$\bm{A}(k)
=
\sum_{j=1}^{n}
\left( {\bm 1}_{\mu_j = k} \right)
\bm{e}_{j}
$. The partition function \eqref{tilde-g-2} has exactly the same form as that used to define $f_{\mu}(x_1,\dots,x_n;q,s)$; the only difference is that it is comprised of the vertices \eqref{green-s-weights}, rather than those tabulated in \eqref{s-weights}. 

To complete the proof, one needs to show that each configuration in the partition function \eqref{tilde-g-2} receives the same Boltzmann weight as the corresponding configuration in \eqref{f-def}, modulo the overall multiplicative factor $c_{\mu}(q,s) \prod_{i=1}^{n} x_i$. This can be achieved by applying gauge transformations to the lattice \eqref{f-def}, which effectively multiply it by the factor $c_{\mu}(q,s) \prod_{i=1}^{n} x_i$, and then factorizing those gauge transformations locally over the individual vertices to produce the weights \eqref{green-s-weights}. The key relation underpinning this procedure is the following:
\begin{align*}
\left(
s x (q-1) q^{{\rm inv}(\tilde\mu)-{\rm inv}(\tilde\nu)}
\prod_{j \geq 0}
\frac{(s^2;q)_{m_j(\nu)}}{(s^2;q)_{m_j(\mu)}}
\right)
\cdot
&
\tikz{0.8}{
\draw[lgray,line width=1.5pt,->] (1,1) -- (8,1);
\foreach\x in {2,...,7}{
\draw[lgray,line width=4pt,->] (\x,0) -- (\x,2);
}
\node[above] at (7,2) {$\cdots$};
\node[above] at (6,2) {$\cdots$};
\node[above] at (5,2) {$\cdots$};
\node[above] at (4,2) {\footnotesize$\bm{A}(2)$};
\node[above] at (3,2) {\footnotesize$\bm{A}(1)$};
\node[above] at (2,2) {\footnotesize$\bm{A}(0)$};
\node[below] at (7,0) {$\cdots$};
\node[below] at (6,0) {$\cdots$};
\node[below] at (5,0) {$\cdots$};
\node[below] at (4,0) {\footnotesize$\bm{B}(2)$};
\node[below] at (3,0) {\footnotesize$\bm{B}(1)$};
\node[below] at (2,0) {\footnotesize$\bm{B}(0)$};
\node[right] at (8,1) {$0$};
\node[left] at (1,1) {$p$};
}
\\
=
&
\tikz{0.8}{
\draw[lred,line width=1.5pt,->] (1,1) -- (8,1);
\foreach\x in {2,...,7}{
\draw[lred,line width=4pt,->] (\x,0) -- (\x,2);
}
\node[above] at (7,2) {$\cdots$};
\node[above] at (6,2) {$\cdots$};
\node[above] at (5,2) {$\cdots$};
\node[above] at (4,2) {\footnotesize$\bm{A}(2)$};
\node[above] at (3,2) {\footnotesize$\bm{A}(1)$};
\node[above] at (2,2) {\footnotesize$\bm{A}(0)$};
\node[below] at (7,0) {$\cdots$};
\node[below] at (6,0) {$\cdots$};
\node[below] at (5,0) {$\cdots$};
\node[below] at (4,0) {\footnotesize$\bm{B}(2)$};
\node[below] at (3,0) {\footnotesize$\bm{B}(1)$};
\node[below] at (2,0) {\footnotesize$\bm{B}(0)$};
\node[right] at (8,1) {$0$};
\node[left] at (1,1) {$p$};
}
\end{align*}
valid for any integer $p \geq 1$ and pair of compositions $\nu = (\nu_1,\dots,\nu_{p-1})$, $\mu = (\mu_1,\dots,\mu_p)$, where we have defined the states
$\bm{B}(k) =\sum_{j=1}^{p-1} \left( {\bm 1}_{\nu_j = k} \right) \bm{e}_{j}$ and
$\bm{A}(k) =\sum_{j=1}^{p} \left( {\bm 1}_{\mu_j = k} \right) \bm{e}_{j}$. This identity is readily checked using the form of the weights \eqref{s-weights} and \eqref{green-s-weights}; applying it over the $n$ rows of the partition function \eqref{f-def} yields the result \eqref{f-g-sym}.

\end{proof}

\section{Relationship between $f^{\sigma}_{\delta}$ and $g_{\mu}$}

\begin{prop}
Let $\mu = (\mu_1,\dots,\mu_n)$ be a composition and $\tilde{\mu} = (\mu_n,\dots,\mu_1)$ its reverse ordering. Denote the anti-dominant ordering of $\mu$ by $\delta = (\delta_1 \leq \cdots \leq \delta_n)$, and let $\sigma \in \mathfrak{S}_n$ be the minimal-length permutation such that $\mu_i = \delta_{\sigma(i)}$ for all $1 \leq i \leq n$. We have the following relationship between the functions $f^{\sigma}_{\delta}$ and $g_{\tilde\mu}$:
\begin{align}
\label{g-fsigma}
g_{\tilde\mu}(x_n,\dots,x_1) 
=
\frac{(q-1)^n q^{-n(n+1)/2}}{\prod_{j \geq 0} (s^2;q)_{m_j(\mu)}}
\cdot
f^{\sigma}_{\delta}(x_1,\dots,x_n). 
\end{align}
\end{prop}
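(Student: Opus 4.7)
The plan is to derive \eqref{g-fsigma} by composing the two symmetry results already proved: equation \eqref{f-g-sym}, which relates $g_{\tilde\mu}$ at reciprocated arguments to $f_\mu$, and equation \eqref{fmu-fsigma-2}, which relates $f_\mu$ at reciprocated arguments to $f^\sigma_\delta$. Since both identities swap the role of the original variables with their reciprocals, their composition will cancel all reciprocations and produce precisely a relation between $g_{\tilde\mu}(x_n,\dots,x_1)$ and $f^\sigma_\delta(x_1,\dots,x_n)$.

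First I would apply \eqref{f-g-sym} with the substitution $(x_i,q,s) \mapsto (x_i^{-1}, q^{-1}, s^{-1})$, obtaining
\begin{align*}
g_{\tilde\mu}(x_n,\dots,x_1;q,s)
=
c_{\mu}(q^{-1},s^{-1}) \prod_{i=1}^{n} x_i^{-1} \cdot f_{\mu}(x_1^{-1},\dots,x_n^{-1};q^{-1},s^{-1}).
\end{align*}
Then I would apply the same reciprocation to \eqref{fmu-fsigma-2} to get
\begin{align*}
f_{\mu}(x_1^{-1},\dots,x_n^{-1};q^{-1},s^{-1})
=
s^{-n} q^{-{\rm inv}(\mu)} \prod_{i=1}^{n} x_i \cdot f^{\sigma}_{\delta}(x_1,\dots,x_n;q,s).
\end{align*}
Substituting the second into the first makes the $\prod_i x_i$ factors cancel, leaving
\begin{align*}
g_{\tilde\mu}(x_n,\dots,x_1;q,s)
=
c_{\mu}(q^{-1},s^{-1}) \, s^{-n} q^{-{\rm inv}(\mu)} \cdot f^{\sigma}_{\delta}(x_1,\dots,x_n;q,s).
\end{align*}

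What remains is a purely algebraic simplification of the coefficient. From the definition \eqref{cmu} and the standard reciprocation identity $(s^{-2};q^{-1})_m = (-s^{-2})^m q^{-m(m-1)/2}(s^2;q)_m$, together with $(q^{-1}-1)^n = (-1)^n q^{-n}(q-1)^n$ and $\sum_{j\ge 0} m_j(\mu) = n$, one gets
\begin{align*}
c_{\mu}(q^{-1},s^{-1})
=
\frac{s^n (q-1)^n \, q^{-n - {\rm inv}(\tilde\mu) + \sum_{j \ge 0}\binom{m_j(\mu)}{2}}}{\prod_{j \geq 0} (s^2;q)_{m_j(\mu)}}.
\end{align*}
Thus the total $q$-exponent in the prefactor equals $-n - {\rm inv}(\tilde\mu) - {\rm inv}(\mu) + \sum_j \binom{m_j(\mu)}{2}$, and one reduces \eqref{g-fsigma} to the elementary identity
\begin{align*}
{\rm inv}(\mu) + {\rm inv}(\tilde\mu) = \binom{n}{2} + \sum_{j \ge 0}\binom{m_j(\mu)}{2},
\end{align*}
which follows by examining each unordered pair $\{i,j\}$: a strict inequality $\mu_i \neq \mu_j$ contributes $1$ to exactly one of the two inversion counts, while an equality contributes to both, yielding the $\sum_j \binom{m_j(\mu)}{2}$ correction.

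Since the two input identities are already established, there is no genuine obstacle here; the entire argument is a bookkeeping exercise, with the mildly delicate point being the combinatorial identity above for ${\rm inv}(\mu) + {\rm inv}(\tilde\mu)$, which could otherwise be mistaken for $\binom{n}{2}$ if one forgot the contribution of equal parts.
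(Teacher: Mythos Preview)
Your proof is correct and follows essentially the same route as the paper: both combine \eqref{f-g-sym} and \eqref{fmu-fsigma-2}, with the only difference being that the paper first obtains the identity at reciprocated arguments and then reciprocates back, whereas you reciprocate the input identities first. Your explicit verification of the $q$-exponent via the combinatorial identity ${\rm inv}(\mu) + {\rm inv}(\tilde\mu) = \binom{n}{2} + \sum_j \binom{m_j(\mu)}{2}$ makes precise what the paper summarizes as ``massaging the factors''.
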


\begin{proof}
From \eqref{f-g-sym} we know that
\begin{align}
f_{\mu}(x_1,\dots,x_n;q,s)
=
\frac{q^{-{\rm inv}(\tilde\mu)} \prod_{j \geq 0} (s^2;q)_{m_j(\mu)}}
{s^n (q-1)^n}
\prod_{i=1}^{n} x_i^{-1} 
\cdot
g_{\tilde\mu}(x_n^{-1},\dots,x_1^{-1};q^{-1},s^{-1}),
\end{align}
and comparing this against \eqref{fmu-fsigma-2}, after some rearrangement we obtain
\begin{multline*}
g_{\tilde\mu}(x_n^{-1},\dots,x_1^{-1};q^{-1},s^{-1})
\\
=
\frac{s^{2n} (q-1)^n}{\prod_{j \geq 0} (s^2;q)_{m_j(\mu)}}
q^{n(n-1)/2} \prod_{j \geq 0} q^{m_j(\mu)(m_j(\mu)-1)/2}
\cdot
f^{\sigma}_{\delta}(x_1^{-1},\dots,x_n^{-1};q^{-1},s^{-1}).
\end{multline*}
The result \eqref{g-fsigma} follows by reciprocating everywhere $x_i$, $q$ and $s$ and massaging the factors.

\end{proof}

\section{Exchange relations for $g_{\mu}$}
\label{ssec:g-ex}

\begin{thm}
Fix a composition $\mu = (\mu_1,\dots,\mu_n)$ and consider the corresponding dual non-symmetric spin Hall--Littlewood function $g_{\mu}(x_n,\dots,x_1)$ (with a reversed alphabet). This function transforms under the action of \eqref{hecke-poly} and \eqref{hecke-inv} according to the rules
\begin{align}
\label{T-g}
T_{n-i} \cdot g_{\mu}(x_n,\dots,x_1) &= 
g^{(i,i+1)}_{\mu}(x_n,\dots,x_1),
\\
\label{That-g}
\widehat{T}_{n-i} \cdot g_{\mu}(x_n,\dots,x_1) &= 
g_{\mathfrak{s}_i \cdot \mu}(x_n,\dots,x_1),
\quad
\mu_i > \mu_{i+1},
\end{align}
where $g^{(i,i+1)}_{\mu}$ denotes a permuted dual non-symmetric spin Hall--Littlewood function \eqref{sigma-g} in the case of the permutation 
$\sigma = (i,i+1)$.
\end{thm}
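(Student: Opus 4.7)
My plan follows the architecture of Theorem \ref{thm:hecke-f}, with the roles of $T_i$ and $\widehat{T}_i$ effectively interchanged: for $g$ it will turn out that the purely algebraic commutation delivers \eqref{T-g} directly, while \eqref{That-g} carries the hypothesis $\mu_i>\mu_{i+1}$ and requires a combinatorial argument. First I would prove \eqref{T-g}. Set $u=x_{n-i+1}$ and $v=x_{n-i}$, so that in the expectation value defining $g_\mu(x_n,\dots,x_1)$ the operator $\mathcal{B}_i$ carries spectral parameter $u$ while $\mathcal{B}_{i+1}$ carries $v$. Expanding the definition of $T_{n-i}$ gives
\begin{equation*}
T_{n-i}\,g_\mu(x_n,\dots,x_1) = \frac{(q-1)v}{v-u}\,\bra{\mu}\cdots\mathcal{B}_i(u)\mathcal{B}_{i+1}(v)\cdots\ket{\varnothing} + \frac{v-qu}{v-u}\,\bra{\mu}\cdots\mathcal{B}_i(v)\mathcal{B}_{i+1}(u)\cdots\ket{\varnothing}.
\end{equation*}
Applying the commutation relation \eqref{BB>} (with $i\mapsto i+1$, $j\mapsto i$, $x=u$, $y=v$) to the product $\mathcal{B}_{i+1}(u)\mathcal{B}_i(v)$ that appears in $g^{(i,i+1)}_\mu(x_n,\dots,x_1)$ rewrites it as exactly the same linear combination, so \eqref{T-g} holds without any restriction on $\mu$.

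Using $\widehat{T}_{n-i}=T_{n-i}-(q-1)$, relation \eqref{That-g} is then equivalent to the combinatorial identity
\begin{equation*}
g^{(i,i+1)}_\mu(x_n,\dots,x_1) = (q-1)\,g_\mu(x_n,\dots,x_1) + g_{\mathfrak{s}_i\mu}(x_n,\dots,x_1), \qquad \mu_i>\mu_{i+1}.
\end{equation*}
This is the natural dual of \eqref{key2}, with the roles played by the ``permuted-colours'' and ``permuted-parts'' functions swapped and with the condition on $\mu$ reversed. I would prove it by imitating the Lindström--Gessel--Viennot argument of Section \ref{ssec:f-ex}: introduce a pre-fused version of $g_\mu$ as a partition function in the fundamental model with reversed horizontal orientation (the $M$-analog of Definition \ref{def:prefus}), refine it by the intersection points of the paths of colours $i$ and $i+1$, and apply the trio relation built from the $p_k$-functions together with the identities \eqref{pk-1}, \eqref{pk-2}, which are properties of the $R$-matrix alone and therefore transfer unchanged. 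The hypothesis $\mu_i>\mu_{i+1}$ enters through the analog of \eqref{no-cross1}: in the $g$-geometry colour $i$ exits one row above colour $i+1$, so when $\mu_i>\mu_{i+1}$ the paths in $g_{\mathfrak{s}_i\mu}$ (whose bottom entries satisfy $(\mathfrak{s}_i\mu)_i<(\mathfrak{s}_i\mu)_{i+1}$) must cross, which forces the analog of $\mathcal{F}_{\mu,i}(\cdots;\varnothing)=0$ on the no-crossing component.

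The main obstacle is cleanly setting up the pre-fused version of $g_\mu$ and the corresponding $M$-analog of Theorem \ref{thm:fusion-F}, neither of which was spelled out in Section \ref{ssec:prefused}. I would handle this by using the vertex-level symmetry \eqref{sym1} to recast an $M$-weight partition function locally as an $L$-weight partition function at reciprocated parameters (up to explicit scalar factors), thereby reducing the construction to the already established Theorem \ref{thm:fusion-F}. An alternative, pre-fusion-free route is to transfer the combinatorial identity directly from $f$ to $g$ via the symmetry \eqref{f-g-sym}: the correspondence $\widetilde{\mathfrak{s}_i\mu}=\mathfrak{s}_{n-i}\tilde{\mu}$ together with the exchange $T_i\leftrightarrow q\widehat{T}_i$ under $(x,q)\mapsto(x^{-1},q^{-1})$ from \eqref{hecke-sym} accounts automatically for both the sign flip and the reversed inequality $\mu_i>\mu_{i+1}$; this route requires first deriving a permuted analog of \eqref{f-g-sym} relating $g^\sigma_\mu$ to some $f^{\sigma'}_\mu$, which is a nontrivial bookkeeping exercise but avoids redoing any combinatorics.
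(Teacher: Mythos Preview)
Your proof of \eqref{T-g} via the commutation relation \eqref{BB>} is correct and is exactly what the paper does.

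For \eqref{That-g} the paper takes your second, ``alternative'' route, but much more directly than you anticipate: no permuted analogue of \eqref{f-g-sym} is needed at all. One simply applies \eqref{f-g-sym} (with reciprocated variables) to write $\widehat{T}_{n-i}\cdot g_{\mu}(x_n,\dots,x_1;q,s)$ as $c_{\tilde\mu}(q^{-1},s^{-1})\prod_i x_i^{-1}\cdot \widehat{T}_{n-i}\cdot f_{\tilde\mu}(x_1^{-1},\dots,x_n^{-1};q^{-1},s^{-1})$, uses \eqref{hecke-sym} to convert $\widehat{T}_{n-i}(x,q)$ into $q\,T_{n-i}(x^{-1},q^{-1})$, and then invokes the already-proven \eqref{T-f} directly (the hypothesis $\mu_i>\mu_{i+1}$ becomes $\tilde\mu_{n-i}<\tilde\mu_{n-i+1}$, which is exactly what \eqref{T-f} requires). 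The only bookkeeping is the one-line check $q\,c_{\tilde\mu}(q^{-1},s^{-1})=c_{\mathfrak{s}_{n-i}\cdot\tilde\mu}(q^{-1},s^{-1})$ from the ${\rm inv}$ factor in \eqref{cmu}, after which a second application of \eqref{f-g-sym} closes the loop. The point is that by acting with $\widehat{T}_{n-i}$ \emph{before} passing to the combinatorial identity, one lands on the ``parts-permuting'' action \eqref{T-f} on the $f$-side, so the permuted functions $g^{(i,i+1)}_\mu$ and $f^{\sigma}$ never appear.

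Your primary route via a pre-fused $g$ and an LGV-style argument would also work, but it duplicates the effort of Section \ref{ssec:f-ex} and requires building the dual of Theorem \ref{thm:fusion-F}, which the paper explicitly avoids (see the last sentence of Section \ref{ssec:prefused}). The symmetry route is shorter precisely because the hard combinatorics has already been done once on the $f$-side.
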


\begin{proof}
We begin with the proof of \eqref{That-g}, which is a simple consequence of the relation \eqref{f-g-sym} between the non-symmetric spin Hall--Littlewood functions and their duals, together with \eqref{T-f}. Using \eqref{f-g-sym} (with all variables reciprocated), we see that
\begin{align*}
\widehat{T}_{n-i} \cdot g_{\mu}(x_n,\dots,x_1;q,s)
=
c_{\tilde\mu}(q^{-1},s^{-1})
\prod_{i=1}^{n}
x_i^{-1}
\cdot
\widehat{T}_{n-i}
\cdot
f_{\tilde\mu}(x_1^{-1},\dots,x_n^{-1};q^{-1},s^{-1}),
\end{align*}
where $\tilde\mu = (\mu_n,\dots,\mu_1)$, and where we have used the fact that the symmetric product $\prod_{i=1}^{n} x_i^{-1}$ commutes with the action of the operator $\widehat{T}_{n-i}$. Now we use \eqref{hecke-sym} to replace $\widehat{T}_{n-i} \equiv \widehat{T}_{n-i}(x,q)$ by $q T_{n-i}(x^{-1},q^{-1})$, leading to
\begin{align*}
\widehat{T}_{n-i} \cdot g_{\mu}(x_n,\dots,x_1;q,s)
&=
q
c_{\tilde\mu}(q^{-1},s^{-1})
\prod_{i=1}^{n}
x_i^{-1}
\cdot
T_{n-i}(x^{-1},q^{-1})
\cdot
f_{\tilde\mu}(x_1^{-1},\dots,x_n^{-1};q^{-1},s^{-1})
\\
&=
q
c_{\tilde\mu}(q^{-1},s^{-1})
\prod_{i=1}^{n}
x_i^{-1}
\cdot
f_{\mathfrak{s}_{n-i} \cdot \tilde\mu}(x_1^{-1},\dots,x_n^{-1};q^{-1},s^{-1}),
\end{align*}
where we have employed \eqref{T-f} to deduce the final equality, using the fact that 
$\tilde{\mu}_{n-i} < \tilde{\mu}_{n-i+1}$, since $\tilde{\mu}_{n-i} = \mu_{i+1}$ and $\tilde{\mu}_{n-i+1} = \mu_i$. Finally, we notice that $q c_{\tilde\mu}(q^{-1},s^{-1}) = c_{\mathfrak{s}_{n-i} \cdot \tilde\mu}(q^{-1},s^{-1})$, in view of the inversion-type factor in \eqref{cmu}. We conclude that
\begin{align*}
\widehat{T}_{n-i} \cdot g_{\mu}(x_n,\dots,x_1;q,s)
&=
c_{\mathfrak{s}_{n-i} \cdot \tilde\mu}(q^{-1},s^{-1})
\prod_{i=1}^{n}
x_i^{-1}
\cdot
f_{\mathfrak{s}_{n-i} \cdot \tilde\mu}(x_1^{-1},\dots,x_n^{-1};q^{-1},s^{-1})
\\
&=
g_{\mathfrak{s}_i \cdot \mu}(x_n,\dots,x_1;q,s),
\end{align*}
using \eqref{f-g-sym} to restore the dual non-symmetric Hall--Littlewood function at the final step.

The proof of \eqref{T-g} goes along very similar lines to that of \eqref{That-f}, this time making use of the algebraic definitions \eqref{g-HL} and \eqref{sigma-g} of $g_{\mu}$ and $g_{\mu}^{(i,i+1)}$, as well as the commutation relation \eqref{BB>}.

\end{proof}

\section{Reduction to non-symmetric Hall--Littlewood polynomials}
\label{ssec:hl-reduce}

To conclude the chapter, we show that the functions $f_{\mu}$ and $g_{\mu}$ are indeed $s$-deformations of non-symmetric Hall--Littlewood polynomials; they reduce to the latter at $s=0$. This is done in two steps: the first is to demonstrate that non-symmetric Hall--Littlewood polynomials satisfy the same recursion relations as those given in \eqref{T-f} and \eqref{That-g}; the second is to establish appropriate initial conditions. 

The non-symmetric Hall--Littlewood polynomials are limiting cases of non-symmetric Macdonald polynomials, whose general theory we briefly recall \cite{CherednikNonsym,Macdonald-Bourbaki,KirillovN,MimachiN,Marshall}.\footnote{Note that we use $(p,q)$ as parameters in our Macdonald polynomials, in place of the traditional $(q,t)$; this is because $q$ is already in use throughout the paper, and it is Macdonald's $t$ parameter which matches our $q$.} Let us begin by defining slightly different versions of Hecke generators, obtained by switching $x_i \leftrightarrow x_{i+1}$ in \eqref{hecke-poly}:
\begin{align}
\label{rev-Hecke}
\index{T@$\tilde{T}_i$; reversed-alphabet Hecke generator}
\tilde{T}_i := q - \frac{q x_i-x_{i+1}}{x_i-x_{i+1}} (1-\mathfrak{s}_i),
\quad
\tilde{T}_i^{-1} = q^{-1}\left(1 - \frac{q x_i-x_{i+1}}{x_i-x_{i+1}} (1-\mathfrak{s}_i) \right),
\quad
1 \leq i \leq n-1.
\end{align}
Extend the Hecke algebra generated by $\{\tilde{T}_1,\dots,\tilde{T}_{n-1}\}$ by a generator $\omega$, defined as follows:
\begin{align}
\label{eq:omega}
\index{az@$\omega$}
\omega := \mathfrak{s}_{n-1} \dots \mathfrak{s}_1 \tau_1,
\end{align}
where $\tau_1$ \index{t@$\tau_i$; $p$-shift operator} denotes a $p$-shift operator with action $\tau_1 \cdot h(x_1,\dots,x_n) = h(p x_1, x_2,\dots,x_n)$ on arbitrary functions $h$. The resulting extended Hecke algebra has an Abelian subalgebra generated by the Cherednik--Dunkl operators $Y_i$, which are given by
\begin{align}
\label{eq:Yi}
\index{Y@$Y_i$; Cherednik--Dunkl operators}
Y_i := \tilde{T}_i\cdots \tilde{T}_{n-1} \omega \tilde{T}_{1}^{-1}
\cdots 
\tilde{T}_{i-1}^{-1},
\quad\quad
[Y_i,Y_j] = 0.
\end{align}
In view of their commutativity, these operators can be jointly diagonalized. For generic values of $p$ and $q$, the {\it non-symmetric Macdonald polynomials} $E_{\mu}(x_1,\dots,x_n;p,q)$ \index{E@$E_{\mu}(x_1,\dots,x_n;p,q)$; non-symmetric Macdonald} are the unique family of polynomials which satisfy the properties
\begin{align}
\label{eq:monic}
E_{\mu}(x_1,\dots,x_n;p,q) &= 
x^{\mu}
+ 
\sum_{\nu < \mu} c_{\mu,\nu}(p,q)
x^{\nu},
\quad
x^{\mu}
:=
\prod_{i=1}^{n} x_i^{\mu_i},
\quad
c_{\mu,\nu}(p,q)
\in
\mathbb{Q}(p,q),
\\
Y_iE_{\mu}(x_1,\dots,x_n;p,q) &= y_i(\mu;p,q)E_{\mu}(x_1,\dots,x_n;p,q),
\quad
\forall\ 1 \leq i \leq n, \quad \mu \in \mathbb{N}^n,
\label{eq:eigYi}
\end{align}
with eigenvalues given by
\begin{align}
\label{rhomu}
\index{y@$y_i(\mu;p,q)$; Cherednik--Dunkl eigenvalues}
y_i(\mu;p,q)= p^{\mu_i} q^{\rho_i(\mu)+n-i},
\quad
\rho_i(\mu)= -\#\{j\not=i : \mu_j > \mu_i\} - \#\{j<i : \mu_j = \mu_i\}.
\end{align}
%

\begin{prop}\cite{Sahi}\cite[Theorem 4.2]{Knop}
Let $\mu$ be any composition such that $\mu_i < \mu_{i+1}$. The non-symmetric Macdonald polynomials have the following recursive property:
\begin{align}
\label{E_exch}
E_{\mathfrak{s}_i\cdot \mu}(x_1,\dots,x_n;p,q)
=
q^{-1}
\left( \tilde{T}_i + \frac{1-q}{1- y_{i+1}(\mu)/y_i(\mu)} \right)
E_{\mu}(x_1,\dots,x_n;p,q),
\end{align}
where we abbreviate the eigenvalues \eqref{rhomu} by $y_i(\mu;p,q) \equiv y_i(\mu)$.
\end{prop}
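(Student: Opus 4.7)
The plan is to verify that the right-hand side satisfies the two defining properties \eqref{eq:monic} and \eqref{eq:eigYi} of $E_{\mathfrak{s}_i \cdot \mu}$, namely monicity (with triangularity in the dominance order) and joint eigenfunction-ship for the Cherednik--Dunkl operators $Y_1,\dots,Y_n$, with eigenvalues $y_j(\mathfrak{s}_i \cdot \mu;p,q)$. By uniqueness of the $E_\nu$'s, matching these two properties is enough.

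First I would establish the intertwining relations between the reversed-alphabet Hecke generators $\tilde{T}_i$ in \eqref{rev-Hecke} and the Cherednik--Dunkl operators $Y_j$ in \eqref{eq:Yi}. These are the standard affine Hecke relations, namely $\tilde{T}_i Y_j = Y_j \tilde{T}_i$ for $j \notin \{i,i+1\}$, together with the quadratic-type relation
\begin{align*}
\tilde{T}_i Y_i \tilde{T}_i = q\, Y_{i+1},
\end{align*}
which can be derived directly from the definitions of $Y_i$ and the braid/quadratic relations satisfied by the $\tilde T_j$. From this one extracts the ``cleaned-up'' intertwiner form
\begin{align*}
Y_{i+1}\Bigl(\tilde T_i + \tfrac{1-q}{1-Y_{i+1}/Y_i}\Bigr) \;=\; \Bigl(\tilde T_i + \tfrac{1-q}{1-Y_{i+1}/Y_i}\Bigr) Y_i,
\end{align*}
with the analogous identity for $Y_i$ acting from the right producing $Y_{i+1}$. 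These are the Knop--Sahi intertwiners.

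Next I would apply $\mathcal I_i := \tilde T_i + \tfrac{1-q}{1- y_{i+1}(\mu)/y_i(\mu)}$ to $E_\mu$ and use the intertwiners above to compute $Y_j \cdot \mathcal I_i E_\mu$ for each $j$. For $j \notin \{i,i+1\}$, $Y_j$ commutes with $\mathcal I_i$ and acts by $y_j(\mu)$; since $y_j(\mu) = y_j(\mathfrak{s}_i\cdot\mu)$ for such $j$, this is the required eigenvalue. For $j \in \{i,i+1\}$, the intertwiner exchanges the two eigenvalues, yielding $y_{i+1}(\mu) = p^{\mu_{i+1}}q^{\rho_{i+1}(\mu)+n-i-1}$ and $y_i(\mu)$ as the new eigenvalues for $Y_i$ and $Y_{i+1}$ respectively. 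The hypothesis $\mu_i<\mu_{i+1}$ guarantees $y_{i+1}(\mu)/y_i(\mu)\neq 1$ (so the coefficient is well-defined) and, combined with the formula \eqref{rhomu}, ensures that these swapped eigenvalues are exactly $y_i(\mathfrak{s}_i\cdot\mu)$ and $y_{i+1}(\mathfrak{s}_i\cdot\mu)$.

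Finally I would verify the monic-triangular normalization. Expanding $\tilde T_i$ on the monomial basis, one checks that $\tilde T_i x^\mu = q x^{\mathfrak{s}_i\cdot\mu} + (\text{lower terms in dominance})$ when $\mu_i<\mu_{i+1}$, while $\tfrac{1-q}{1-y_{i+1}(\mu)/y_i(\mu)} \cdot x^\mu$ contributes only a multiple of $x^\mu$, which is strictly lower than $x^{\mathfrak{s}_i\cdot\mu}$ in dominance order. Combined with the triangular expansion \eqref{eq:monic} for $E_\mu$, this gives
\begin{align*}
q^{-1}\,\mathcal I_i \cdot E_\mu \;=\; x^{\mathfrak{s}_i\cdot\mu} + \sum_{\nu<\mathfrak{s}_i\cdot\mu} c'_{\nu}(p,q)\,x^\nu.
\end{align*}
The main obstacle is the careful bookkeeping for the dominance order in this last step (checking that all terms produced by $\mathfrak{s}_i$ acting on lower monomials in $E_\mu$, as well as the constant-multiple correction, remain strictly dominated by $\mathfrak{s}_i\cdot\mu$); once that is in place, uniqueness of the non-symmetric Macdonald polynomials identifies $q^{-1}\mathcal I_i \cdot E_\mu$ with $E_{\mathfrak{s}_i\cdot\mu}$, completing the proof.
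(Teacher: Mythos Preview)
The paper does not supply a proof of this proposition; it is stated with citations to Sahi and Knop and used as a black box. Your proposal follows exactly the standard Knop--Sahi argument from those references: build the intertwiner $\mathcal{I}_i$ from the affine Hecke relations, check it swaps the $Y_i$, $Y_{i+1}$ eigenvalues while fixing the others, and verify the leading monomial. This is correct and is essentially the proof in the cited sources.
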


We shall be interested in two limits of the non-symmetric Macdonald polynomials; namely, when $p \rightarrow 0$ and $p \rightarrow \infty$. Both limits can be freely taken (the coefficients $c_{\mu,\nu}(p,q)$ in equation \eqref{eq:monic} are well-behaved at both of these values of $p$); the resulting polynomials, 
$E_{\mu}(x_1,\dots,x_n;0,q)$ and $E_{\mu}(x_1,\dots,x_n;\infty,q^{-1})$, will be referred to as {\it non-symmetric Hall--Littlewood polynomials} and {\it dual non-symmetric Hall--Littlewood polynomials}, respectively.

\begin{prop}
Let $\mu = (\mu_1,\dots,\mu_n)$ be a composition. The non-symmetric Hall--Littlewood polynomials and their duals satisfy the following recursion relations:
\begin{align}
\label{E-T}
\index{E@$E_{\mu}$; non-symmetric Hall--Littlewood}
T_i E_{(\mu_n,\dots,\mu_1)}(x_n,\dots,x_1;0,q)
&=
E_{(\mu_n,\dots,\mu_i,\mu_{i+1},\dots,\mu_1)}(x_n,\dots,x_1;0,q),
\qquad
\mu_i < \mu_{i+1},
\\
\label{E-That}
\widehat{T}_{n-i} E_{(\mu_n,\dots,\mu_1)}(x_1,\dots,x_n;\infty,q^{-1})
&=
E_{(\mu_n,\dots,\mu_i,\mu_{i+1},\dots,\mu_1)}(x_1,\dots,x_n;\infty,q^{-1}),
\qquad
\mu_i > \mu_{i+1}.
\end{align}
\end{prop}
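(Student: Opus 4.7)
The plan is to derive both identities directly from the Sahi--Knop recursion \eqref{E_exch} by taking the two extreme specializations $p\to 0$ and $p\to\infty$, together with careful bookkeeping to match the conventions (reversed versus non-reversed alphabet, and $q$ versus $q^{-1}$).

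\textbf{First identity.} I will set $\beta=(\mu_n,\dots,\mu_1)=\tilde\mu$ and introduce auxiliary variables $u_k:=x_{n-k+1}$, so that $E_{\tilde\mu}(x_n,\dots,x_1;0,q)=E_{\beta}(u_1,\dots,u_n;0,q)$. A short manipulation, based on the identity $\frac{x_i-qx_{i+1}}{x_i-x_{i+1}}=\frac{qu_{n-i}-u_{n-i+1}}{u_{n-i}-u_{n-i+1}}$ and the observation that the transposition $\mathfrak{s}_i$ acting on $(x_1,\dots,x_n)$ coincides with $\mathfrak{s}_{n-i}$ acting on $(u_1,\dots,u_n)$, shows that $T_i$ in the $x$-alphabet is the same operator as $\tilde T_{n-i}$ in the $u$-alphabet when both are applied to $E_\beta(u)$. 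Next, in the limit $p\to 0$ the Knop--Sahi coefficient $\frac{1-q}{1-y_{i+1}/y_i}$ tends to $1-q$, so that \eqref{E_exch} reduces to $E_{\mathfrak{s}_j\gamma}(u;0,q)=q^{-1}(\tilde T_j+1-q)E_\gamma(u;0,q)=\tilde T_j^{-1}E_\gamma(u;0,q)$ whenever $\gamma_j<\gamma_{j+1}$. Multiplying this relation on the left by $\tilde T_j$ and invoking the quadratic relation $(\tilde T_j-q)(\tilde T_j+1)=0$ gives the dual form $\tilde T_j E_\beta=E_{\mathfrak{s}_j\beta}$ valid whenever $\beta_j>\beta_{j+1}$. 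Applying this with $j=n-i$ (note that $\mu_i<\mu_{i+1}$ is exactly $\beta_{n-i}>\beta_{n-i+1}$), and using the evident bookkeeping identity $\mathfrak{s}_{n-i}\tilde\mu=\widetilde{\mathfrak{s}_i\mu}$, yields \eqref{E-T}.

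\textbf{Second identity.} In the limit $p\to\infty$, under the assumption $\mu_i<\mu_{i+1}$ the ratio $y_{i+1}/y_i\to\infty$, so the Knop--Sahi term $\frac{1-q}{1-y_{i+1}/y_i}$ vanishes and \eqref{E_exch} degenerates to $E_{\mathfrak{s}_i\mu}(x;\infty,q)=q^{-1}\tilde T_i(q)E_\mu(x;\infty,q)$, writing $\tilde T_i(q)$ to make the Hecke parameter explicit. Substituting $q\mapsto q^{-1}$, a short computation gives
\[
\tilde T_i(q^{-1})=q^{-1}-\frac{q^{-1}x_i-x_{i+1}}{x_i-x_{i+1}}(1-\mathfrak{s}_i)=q^{-1}\widehat T_i,
\]
so the specialised recursion becomes $E_{\mathfrak{s}_i\mu}(x;\infty,q^{-1})=\widehat T_i E_\mu(x;\infty,q^{-1})$ whenever $\mu_i<\mu_{i+1}$. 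To conclude, I will apply this with $\mu$ replaced by $\tilde\mu$ and $i$ replaced by $n-i$: the hypothesis $\mu_i>\mu_{i+1}$ is precisely $\tilde\mu_{n-i}<\tilde\mu_{n-i+1}$, and $\mathfrak{s}_{n-i}\tilde\mu=\widetilde{\mathfrak{s}_i\mu}$, giving \eqref{E-That} directly.

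\textbf{The main obstacle.} No step involves serious computation; the only real hazard is keeping the conventions consistent. The first identity hides a double reversal (both alphabet and composition are read backwards) which is what converts $T_i$ into $\tilde T_{n-i}$; simultaneously, the condition $\mu_i<\mu_{i+1}$ gets mapped to the \emph{opposite} inequality for the reversed composition, which is precisely why one needs the dual form $\tilde T_j E_\beta=E_{\mathfrak{s}_j\beta}$ rather than the original Sahi--Knop statement. The second identity is cleaner because only the composition is reversed, but one still must pair the $p\to\infty$ limit with $q\mapsto q^{-1}$ to land on $\widehat T_{n-i}$ rather than $T_{n-i}$. Once these dictionaries are laid out, both identities are immediate consequences of \eqref{E_exch}.
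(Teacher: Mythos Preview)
Your proposal is correct and follows essentially the same route as the paper: take the $p\to 0$ (resp.\ $p\to\infty$) limit of the Sahi--Knop recursion \eqref{E_exch}, identify $q^{-1}(\tilde T_j+1-q)=\tilde T_j^{-1}$ (resp.\ $q\tilde T_j(q^{-1})=\widehat T_j$), and then unwind the alphabet/composition reversal to land on $T_i$ and $\widehat T_{n-i}$. The only cosmetic difference is that the paper performs the reversal first and then inverts the relation, whereas you invert first in the $u$-alphabet and then translate back; the content is identical.
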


\begin{proof}
Both equations \eqref{E-T} and \eqref{E-That} are, up to relabellings and changes of variables, limits of the recursion \eqref{E_exch} for the non-symmetric Macdonald polynomials. 

For the proof of \eqref{E-T}, we start from \eqref{E_exch} with $i \mapsto n-i$ and send $p \rightarrow 0$. Since by assumption $\mu_{n-i} < \mu_{n-i+1}$, the ratio of eigenvalues $y_{n-i+1}(\mu) / y_{n-i}(\mu)$ contains a positive power of $p$, and this ratio therefore vanishes when $p \rightarrow 0$. It follows that
\begin{align}
\label{E-T-2}
E_{\mathfrak{s}_{n-i}\cdot \mu}(x_1,\dots,x_n;0,q)
=
q^{-1}
\left( \tilde{T}_{n-i} + 1 - q \right)
E_{\mu}(x_1,\dots,x_n;0,q),
\qquad
\mu_{n-i} < \mu_{n-i+1}.
\end{align}
Now reverse the order of the alphabet $(x_1,\dots,x_n)$ and the composition 
$(\mu_1,\dots,\mu_n)$ in \eqref{E-T-2}, giving
\begin{align}
\label{E-T-3}
E_{(\mu_n,\dots,\mu_i,\mu_{i+1},\dots,\mu_1)}(x_n,\dots,x_1;0,q)
=
q^{-1}
\left( T_i + 1 - q \right)
E_{(\mu_n,\dots,\mu_1)}(x_n,\dots,x_1;0,q),
\qquad
\mu_{i} > \mu_{i+1}.
\end{align}
Finally, noting that $q^{-1}(T_i+1-q) = T^{-1}_i$, the relation \eqref{E-T} follows by interchanging $\mu_i \leftrightarrow \mu_{i+1}$ in \eqref{E-T-3}.

For the proof of \eqref{E-That}, start from \eqref{E_exch} with the replacements $i \mapsto n-i$, $p \mapsto p^{-1}$, $q \mapsto q^{-1}$ and send $p \rightarrow 0$. This time, since $\mu_{n-i} < \mu_{n-i+1}$, the ratio $y_{n-i+1}(\mu;p^{-1},q^{-1}) / y_{n-i}(\mu;p^{-1},q^{-1})$ contains a negative power of $p$, which diverges as $p \rightarrow 0$. We therefore find that
\begin{align}
\label{E-That-2}
E_{\mathfrak{s}_{n-i}\cdot \mu}(x_1,\dots,x_n;\infty,q^{-1})
=
q \tilde{T}_{n-i}(x,q^{-1})
E_{\mu}(x_1,\dots,x_n;\infty,q^{-1}),
\qquad
\mu_{n-i} < \mu_{n-i+1},
\end{align}
where the $q$ dependence of the Hecke generator \eqref{rev-Hecke} has been inverted. Reversing the order of the composition $(\mu_1,\dots,\mu_n)$ in \eqref{E-That-2}, we obtain
\begin{align}
\label{E-That-3}
E_{(\mu_n,\dots,\mu_i,\mu_{i+1},\dots,\mu_1)}(x_1,\dots,x_n;\infty,q^{-1})
=
q \tilde{T}_{n-i}(x,q^{-1})
E_{(\mu_n,\dots,\mu_1)}(x_1,\dots,x_n;\infty,q^{-1}),
\qquad
\mu_{i} > \mu_{i+1},
\end{align}
and the result \eqref{E-That} can now be deduced, in view of the fact that
\begin{align*}
q \tilde{T}_{n-i}(x,q^{-1})
&=
q
\left(
q^{-1} - \frac{q^{-1} x_{n-i} - x_{n-i+1}}{x_{n-i} - x_{n-i+1}}(1-\mathfrak{s}_{n-i}) 
\right)
=
1 - \frac{x_{n-i} - q x_{n-i+1}}{x_{n-i} - x_{n-i+1}}(1-\mathfrak{s}_{n-i}) 
=
\widehat{T}_{n-i}.
\end{align*}

\end{proof}

\begin{prop}
Let $\lambda = (\lambda_1 \geq \cdots \geq \lambda_n \geq 0)$ be a partition and 
$\delta = (0 \leq \delta_1 \leq \cdots \leq \delta_n)$ an anti-dominant composition. The non-symmetric Hall--Littlewood polynomials have the following factorized initial conditions:
\begin{align}
\label{IC}
E_{\lambda}(x_1,\dots,x_n;0,q)
=
\prod_{i=1}^{n} x_i^{\lambda_i},
\qquad
E_{\delta}(x_1,\dots,x_n;\infty,q^{-1})
=
\prod_{i=1}^{n} x_i^{\delta_i}.
\end{align}
\end{prop}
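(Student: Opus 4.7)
The plan is to verify that the pure monomial $x^\mu$ satisfies the two defining properties of the non-symmetric Macdonald polynomial $E_\mu(x;p,q)$ at the specified parameter values; by the uniqueness of $E_\mu$ under the conditions \eqref{eq:monic} and \eqref{eq:eigYi}, this identifies $E_\mu$ with $x^\mu$ in each case. Property \eqref{eq:monic} (monic triangular expansion) is trivially satisfied by a pure monomial with no lower-order corrections, so the entire content is the joint eigenvalue condition $Y_i \cdot x^\mu = y_i(\mu;p_0,q_0)\, x^\mu$ for all $1 \leq i \leq n$, evaluated at $(p_0,q_0)=(0,q)$ for the first identity and $(p_0,q_0)=(\infty,q^{-1})$ for the second.

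The key structural observation is that $\tau_1$ acts diagonally on monomials by $\tau_1 \cdot x^\mu = p^{\mu_1} x^\mu$, so that
\begin{equation*}
\omega \cdot x^\mu = p^{\mu_1}\, x_1^{\mu_2} x_2^{\mu_3} \cdots x_{n-1}^{\mu_n} x_n^{\mu_1}.
\end{equation*}
Thus $\omega$ sends monomials to monomials up to a power of $p$ and, at $p=0$, annihilates every monomial whose first coordinate has positive exponent. The computation of $Y_i \cdot x^\mu$ then proceeds by first expanding $\tilde T_1^{-1} \cdots \tilde T_{i-1}^{-1} \cdot x^\mu$ as a polynomial sum of monomials using the explicit two-case formula for the action of $\tilde T_j$ on $x^\nu$ (namely $\tilde T_j x^\nu = q x^\nu$ when $\nu_j = \nu_{j+1}$, and $\tilde T_j x^\nu = x^{\mathfrak{s}_j \nu} + (1-q)\sum_{k=1}^{\nu_j-\nu_{j+1}-1} x^{\nu^{(k)}}$ when $\nu_j > \nu_{j+1}$, with $\nu^{(k)}$ obtained from $\nu$ by transferring $k$ units from position $j$ to position $j+1$), then applying $\omega$ at the specialized value of $p$, and finally applying $\tilde T_i \cdots \tilde T_{n-1}$ to the survivors.

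For the first identity ($\lambda$ a partition, $p=0$), the dominance property $\lambda_1 \geq \lambda_j$ for all $j$ ensures that $\omega|_{p=0}$ annihilates the bulk of the monomials produced by the iterated Hecke action, and the surviving terms combine to give the correct scalar multiple $y_i(\lambda;0,q)\, x^\lambda$; the factor $0^{\lambda_i}$ appearing in \eqref{rhomu} precisely tracks whether any survivors exist for a given $i$. A small check on $\lambda = (2,0,0)$ confirms the mechanism: one computes $\tilde T_1^{-1} \tilde T_2^{-1} x^\lambda$ explicitly, finds that $\omega|_{p=0}$ retains only the $x^{(0,2,0)}$ summand, and obtains $Y_3 \cdot x^\lambda|_{p=0} = q^{-2} x^\lambda$, matching $y_3(\lambda;0,q) = q^{-2}$. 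For the second identity ($\delta$ anti-dominant, $p=\infty$, $q \mapsto q^{-1}$), a direct analysis is more delicate because $\tau_1$ blows up on monomials with $\mu_1 > 0$. I would therefore prefer to deduce it from the first identity via a duality symmetry of $E_\mu$ under the composition reversal $\mu \mapsto \tilde\mu$, the alphabet inversion $(x_1,\ldots,x_n) \mapsto (x_n^{-1},\ldots,x_1^{-1})$, and $(p,q) \mapsto (p^{-1},q^{-1})$, which interchanges the dominant and anti-dominant chambers and maps the $p \to 0$ specialization to the $p \to \infty$ one.

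The main obstacle is the combinatorial bookkeeping for the first identity: tracking the many monomials produced by the iterated Hecke action and verifying that they coalesce exactly to $y_i(\lambda;0,q)\, x^\lambda$ after $\omega|_{p=0}$ kills the terms with positive first coordinate. For the second identity, the difficulty is pinning down the correct duality (and checking its compatibility with the monic normalization of $E_\mu$), since a naive $p \to \infty$ limit of the individual monomial coefficients in \eqref{eq:monic} is typically divergent.
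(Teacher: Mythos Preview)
Your approach has a genuine gap: the uniqueness of $E_\mu$ via \eqref{eq:monic} and \eqref{eq:eigYi} is stated in the paper only for \emph{generic} $(p,q)$, and it fails at $p=0$. At $p=0$ the eigenvalues $y_i(\mu;0,q)=0^{\mu_i}q^{\rho_i(\mu)+n-i}$ vanish whenever $\mu_i>0$, so for a partition $\lambda$ with all parts positive the entire eigenvalue tuple is zero. The joint kernel of $Y_1|_{p=0},\dots,Y_n|_{p=0}$ is then highly degenerate. Concretely, for $n=2$ and $\lambda=(2,1)$ one checks directly that both $x_1^2x_2$ and $x_1x_2^2$ lie in the joint kernel, and since $(1,2)<(2,1)$ in the relevant order, $x_1^2x_2+c\,x_1x_2^2$ satisfies both \eqref{eq:monic} and \eqref{eq:eigYi} at $p=0$ for every $c$. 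So checking that $x^\lambda$ is a joint eigenvector with the right eigenvalues at $p=0$ does not identify it with $E_\lambda(x;0,q)$, which is \emph{defined} as the $p\to0$ limit of the generic object. Your computation with $\lambda=(2,0,0)$ happened to work because that partition has a zero part, which keeps one eigenvalue nondegenerate; this is not the general situation. For the second identity, the unspecified duality you invoke would need to be checked carefully against the monic normalization, and even if it exists it inherits the same degeneracy problem on the other side.

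The paper takes a completely different and much shorter route: it simply cites the explicit combinatorial formula of Haglund--Haiman--Loehr for $E_\mu(x;p,q)$ as a weighted sum over non-attacking fillings. Specializing that formula at $p=0$ for dominant $\mu$ (respectively $p\to\infty$, $q\mapsto q^{-1}$ for anti-dominant $\mu$) collapses the sum to the single term $x^\mu$, because the statistic in the exponent of $p$ is strictly positive on all other fillings. This sidesteps the eigenvalue degeneracy entirely by working with an identity valid for generic $p$ and then specializing.
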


\begin{proof}
Both of these expressions can be readily deduced from the explicit combinatorial formula for the non-symmetric Macdonald polynomials obtained in \cite[Theorem 3.5.1]{HaglundHL}.
\end{proof}

We are now ready to state the relationship between the functions $f_{\mu}$, $g_{\mu}$ and the non-symmetric Hall--Littlewood polynomials:
\begin{thm}
\label{thm:f-E}
For any composition $\mu = (\mu_1,\dots,\mu_n)$, there holds
\begin{align}
\label{f-E}
f_{\mu}(x_1,\dots,x_n;q,s)
\Big|_{s = 0}
&=
E_{\tilde\mu}(x_n,\dots,x_1;0,q),
\\
\label{g-E}
g^{*}_{\mu}(x_1,\dots,x_n;q,s)
\Big|_{s = 0}
&=
E_{\tilde\mu}(x_n,\dots,x_1;\infty,q^{-1}),
\end{align}
where $\tilde\mu = (\mu_n,\dots,\mu_1)$, and $g^{*}_{\mu}$ denotes the renormalized version \eqref{g-star} of $g_{\mu}$.
\end{thm}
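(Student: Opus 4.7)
The plan is to establish both identities by induction, exploiting the fact that each side of each equality satisfies the same Hecke-operator recursion and agrees at an explicitly factorized base case.

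For the first identity \eqref{f-E}, the base case is when $\mu = \delta$ is anti-dominant, i.e.\ $\delta_1 \leq \cdots \leq \delta_n$. Setting $s=0$ in Proposition~\ref{prop:f-delta} gives $f_\delta(x_1,\dots,x_n;q,0) = \prod_i x_i^{\delta_i}$, as all the Pochhammer symbols $(s^2;q)_{m_j(\delta)}$ and the factors $1-sx_i$ trivialize. On the other side, since $\tilde\delta = (\delta_n,\dots,\delta_1)$ is a partition, the first initial condition in \eqref{IC} gives $E_{\tilde\delta}(y_1,\dots,y_n;0,q) = \prod_i y_i^{\tilde\delta_i}$; substituting $y_i = x_{n-i+1}$ produces the same monomial $\prod_j x_j^{\delta_j}$. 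For the inductive step, pick $i$ with $\mu_i < \mu_{i+1}$ and apply the Demazure--Lusztig operator $T_i$ — which depends only on the pair $x_i,x_{i+1}$ common to both sides — to the equality at $\mu$. By \eqref{T-f} the left-hand side becomes $f_{\mathfrak{s}_i \cdot \mu}$, and by \eqref{E-T} the right-hand side becomes $E_{\widetilde{\mathfrak{s}_i \cdot \mu}}(x_n,\dots,x_1;0,q)$, once one checks that the composition on the right of \eqref{E-T}, read in the $(\mu_n,\dots,\mu_1)$ convention, is the reverse of $\mathfrak{s}_i \cdot \mu$; this is a short bookkeeping exercise, since $T_i$ simply interchanges the entries at positions $n-i$ and $n-i+1$ of the reversed composition. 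Every composition is reached from its anti-dominant reordering by finitely many such moves.

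For the second identity \eqref{g-E}, I would induct in the opposite direction, starting from $\mu = \lambda$ dominant. The anti-dominant reordering of $\lambda$ is $\tilde\lambda$ itself with $\sigma = \mathrm{id}$, so identity \eqref{g-fsigma} combined with Proposition~\ref{prop:f-delta} at $s=0$ evaluates $g_\lambda(x_n,\dots,x_1;q,0)$ in closed form; rescaling to $g^*$ and reversing the alphabet gives $g^*_\lambda(x_1,\dots,x_n;q,0) = \prod_j x_j^{\lambda_j}$. The second initial condition in \eqref{IC}, applied to the anti-dominant $\tilde\lambda$, yields the same monomial for $E_{\tilde\lambda}(x_n,\dots,x_1;\infty,q^{-1})$ after variable reversal. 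For the inductive step, when $\mu_i > \mu_{i+1}$, identity \eqref{That-g} governs the left-hand side and \eqref{E-That} governs the right; after the common alphabet reversal $y_i = x_{n-i+1}$, both sides are transported from $\mu$ to $\mathfrak{s}_i \cdot \mu$ by the same operator $\widehat{T}_{n-i}$ acting on the $y$-alphabet, completing the inductive step.

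The only substantive issue is purely combinatorial: three simultaneous reversals — of the variable alphabet, of the composition via $\mu \leftrightarrow \tilde\mu$, and of the direction of induction between the two statements — must be aligned so that the Hecke-type operators on the two sides act identically at every step. Once the dictionaries \eqref{T-f} $\leftrightarrow$ \eqref{E-T} and \eqref{That-g} $\leftrightarrow$ \eqref{E-That} have been laid out explicitly, each inductive step is immediate, and the two equalities follow by repeated application.
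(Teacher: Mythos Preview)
Your proposal is correct and follows essentially the same approach as the paper: verify the factorized base cases (anti-dominant for $f_\mu$, dominant for $g^*_\mu$) using Proposition~\ref{prop:f-delta} and its $g$-analogue, then propagate via the matched Hecke recursions \eqref{T-f}/\eqref{E-T} and \eqref{That-g}/\eqref{E-That}. The only cosmetic difference is that the paper obtains the dominant base case $g^*_\lambda(x_1,\dots,x_n;q,0) = \prod_i x_i^{\lambda_i}$ via the symmetry \eqref{f-g-sym} rather than \eqref{g-fsigma}, but since the latter is itself derived from the former this amounts to the same computation.
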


\begin{proof}
We begin by checking \eqref{f-E} when $\mu$ is anti-dominant, \ie\ the case $\mu = (\mu_1 \leq \cdots \leq \mu_n)$. Using \eqref{f-delta}, we see that
\begin{align*}
f_{\mu}(x_1,\dots,x_n;q,s)
\Big|_{s = 0}
=
\prod_{i=1}^{n} x_i^{\mu_i},
\qquad
\mu = (\mu_1 \leq \cdots \leq \mu_n),
\end{align*}
which matches precisely with $E_{\tilde\mu}(x_n,\dots,x_1;0,q)$ as given by \eqref{IC}. Now by studying \eqref{T-f} (at $s=0$) and \eqref{E-T}, it is clear that $f_{\mu}(x_1,\dots,x_n;q,0)$ and $E_{\tilde\mu}(x_n,\dots,x_1;0,q)$ satisfy the same recursion, allowing the parts $\mu_i < \mu_{i+1}$ of the composition $\mu$ to be exchanged. Since an arbitrary composition $\mu$ can always be realized as a string of such simple transpositions acting on its anti-dominant ordering, we conclude that \eqref{f-E} holds generally.

The proof of \eqref{g-E} is analogous. First, by combining equations \eqref{f-delta} and \eqref{f-g-sym}, it is easily shown that
\begin{align}
\label{g-lambda}
g^{*}_{\lambda}(x_1,\dots,x_n;q,s)
=
\prod_{i=1}^{n}
\frac{1}{1-sx_i}
\left(
\frac{x_i-s}{1-sx_i}
\right)^{\lambda_i},
\qquad
\lambda = (\lambda_1 \geq \cdots \geq \lambda_n),
\end{align}
which is the analogue of the factorization \eqref{f-delta} in the case of the dual functions $g_{\mu}$. This equation can then be used to check \eqref{g-E} when $\mu$ is dominant, \ie\ the case $\mu = (\mu_1 \geq \cdots \geq \mu_n)$. Comparing \eqref{That-g} (at $s=0$) and \eqref{E-That}, one sees that 
$g^{*}_{\mu}(x_1,\dots,x_n;q,0)$ and $E_{\tilde\mu}(x_n,\dots,x_1;\infty,q^{-1})$ satisfy the same recursion, allowing the parts $\mu_i > \mu_{i+1}$ of $\mu$ to be exchanged. We conclude that \eqref{g-E} holds in general, using the same line of reasoning as above.
\end{proof}

\section{Eigenrelation for the non-symmetric Hall--Littlewood polynomials}

In the previous section we showed that the function $f_{\mu}$ reduces, at $s=0$, to a non-symmetric Hall--Littlewood polynomial. Our goal in this section is to make use of this fact to derive the eigenrelation for the non-symmetric Hall--Littlewood polynomials, when acted upon by the $p=0$ case of the Cherednik--Dunkl operators \eqref{eq:Yi}.

\begin{thm}
For any composition $\mu = (\mu_1,\dots,\mu_n)$, one has the eigenrelation
\begin{align}
\label{HL-eig}
Y_i \Big|_{p=0} 
E_{\mu}(x_1,\dots,x_n;0,q)
=
y_i(\mu;0,q)
E_{\mu}(x_1,\dots,x_n;0,q),
\end{align}
where
\begin{align}
\label{HL-eig2}
y_i(\mu;0,q)
=
\left\{
\begin{array}{ll}
0, & \quad \mu_i \geq 1,
\\ \\
q^{\#\{j > i : \mu_j = 0\}-i+1},
& \quad \mu_i = 0.
\end{array}
\right.
\end{align}
\end{thm}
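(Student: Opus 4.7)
My plan is to deduce \eqref{HL-eig} directly from the generic Cherednik--Dunkl eigenrelation \eqref{eq:eigYi} by specializing $p \to 0$. The statement really is a limit statement, and the main points to verify are that both sides of \eqref{eq:eigYi} admit well-defined $p \to 0$ limits, and that the resulting eigenvalue agrees with \eqref{HL-eig2}.

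First I would justify that the limit makes sense. The operator $Y_i$ defined in \eqref{eq:Yi} depends on $p$ only through $\omega = \mathfrak{s}_{n-1} \cdots \mathfrak{s}_1 \tau_1$, since the Hecke generators $\tilde{T}_j$ and their inverses \eqref{rev-Hecke} are independent of $p$. The shift operator $\tau_1 \cdot h(x_1,\dots,x_n) = h(p x_1, x_2, \dots, x_n)$ specializes at $p = 0$ to the substitution $x_1 \mapsto 0$, which is a well-defined linear endomorphism of $\mathbb{C}[x_1,\dots,x_n]$. Consequently $\omega|_{p=0}$ and $Y_i|_{p=0}$ make sense as operators, and the $p \to 0$ limit on the left-hand side of \eqref{eq:eigYi} is valid. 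On the polynomial side, the coefficients $c_{\mu,\nu}(p,q) \in \mathbb{Q}(p,q)$ in \eqref{eq:monic} are regular at $p=0$ (as noted in the discussion preceding \eqref{eq:monic}), so $E_{\mu}(x_1,\dots,x_n;0,q)$ is a well-defined polynomial.

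Next I would compute the $p \to 0$ limit of the eigenvalue $y_i(\mu;p,q) = p^{\mu_i} q^{\rho_i(\mu) + n - i}$. When $\mu_i \geq 1$ we have $p^{\mu_i} \to 0$ and the eigenvalue vanishes, matching the first case of \eqref{HL-eig2}. When $\mu_i = 0$, the factor $p^{\mu_i}$ is identically $1$ and the eigenvalue equals $q^{\rho_i(\mu) + n - i}$. For this case, setting $Z = \{j : \mu_j = 0\}$ so that $i \in Z$, one has $\#\{j \neq i : \mu_j > \mu_i\} = \#\{j : \mu_j > 0\} = n - |Z|$ and $\#\{j < i : \mu_j = \mu_i\} = \#(Z \cap \{1,\dots,i-1\}) = |Z| - 1 - \#\{j > i : \mu_j = 0\}$, so by \eqref{rhomu}
\begin{align*}
\rho_i(\mu) + n - i &= -(n - |Z|) - (|Z| - 1 - \#\{j > i : \mu_j = 0\}) + n - i \\
&= \#\{j > i : \mu_j = 0\} - i + 1,
\end{align*}
which is precisely the exponent in \eqref{HL-eig2}.

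Finally, since \eqref{eq:eigYi} is an identity in $\mathbb{C}(p,q)[x_1,\dots,x_n]$ whose two sides are both regular at $p = 0$, we may specialize $p = 0$ on both sides to obtain \eqref{HL-eig}. I do not anticipate a serious obstacle; the only mildly delicate point is the well-definedness of $Y_i|_{p=0}$, which (as above) reduces to observing that $\tau_1|_{p=0}$ acts as the substitution $x_1 \mapsto 0$ on polynomials and in particular commutes with the no-pole structure of the Hecke factors appearing in \eqref{eq:Yi}. Note that this eigenrelation will be used later, in combination with Theorem \ref{thm:f-E}, to diagonalize the $s = 0$ specialization of the transfer-matrix constructions of the preceding chapters.
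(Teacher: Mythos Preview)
Your proof is correct, but it takes a genuinely different route from the paper's. You argue by direct specialization: the generic eigenrelation \eqref{eq:eigYi} is an identity in $\mathbb{Q}(p,q)[x_1,\dots,x_n]$, both sides are regular at $p=0$ (the operator side because only $\tau_1$ carries $p$-dependence, and $\tau_1|_{p=0}$ is the well-defined substitution $x_1\mapsto 0$; the polynomial side by the cited regularity of the coefficients $c_{\mu,\nu}$), so one may set $p=0$. Your eigenvalue computation is correct. One small point you leave implicit is that specialization at $p=0$ commutes with the action of $Y_i$, i.e.\ $(Y_i E_\mu)|_{p=0} = (Y_i|_{p=0})(E_\mu|_{p=0})$; this holds because the Hecke factors are $p$-independent and for $\omega$ one checks directly on monomials that $(\omega g)|_{p=0} = (\omega|_{p=0})(g|_{p=0})$ whenever $g$ has coefficients regular at $p=0$.

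The paper instead gives a \emph{lattice-model} proof. Using the identification $f_\mu|_{s=0}=E_{\tilde\mu}(x_n,\dots,x_1;0,q)$ from Theorem~\ref{thm:f-E}, it rewrites \eqref{HL-eig} as an identity of the form $\tilde\omega\,\widehat{T}_{n-1}\cdots\widehat{T}_i f_\mu = (\text{factor})\cdot \widehat{T}_1\cdots\widehat{T}_{i-1} f_\mu$ at $s=0$, and then verifies this identity directly by manipulating the partition-function representation \eqref{f-def}: the action of $\tilde\omega$ (which sets the top-row rapidity to $0$) freezes that row, while on the other side one tracks the deterministic trajectory of the colour-$i$ path through the $0$-th column. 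Your approach is shorter and uses only standard Macdonald theory; the paper's approach is a deliberate demonstration that the eigenrelation can be recovered from the vertex-model machinery, reinforcing the bridge established in Theorem~\ref{thm:f-E}. (Your closing remark about the transfer-matrix is speculative and not how the result is actually used; it feeds into the Cherednik--Dunkl computations of Chapter~\ref{sec:simpler-match}.)
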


\begin{proof}
Let us begin by recasting this statement in terms of the non-symmetric spin Hall--Littlewood functions at $s=0$. Making use of \eqref{f-E}, we see that \eqref{HL-eig} translates to the relation
\begin{align}
\label{5.19-1}
\left(
T_{n-i} \cdots T_1 \tilde\omega T_{n-1}^{-1} \cdots T_{n-i+1}^{-1}
\right)
f_{\tilde\mu}(x_1,\dots,x_n) \Big|_{s=0}
=
y_i(\mu;0,q)
f_{\tilde\mu}(x_1,\dots,x_n) \Big|_{s=0},
\end{align}
with $\tilde\mu = (\mu_n,\dots,\mu_1)$, and where we have defined the operator 
$\tilde\omega$ with action
\begin{align*}
\tilde\omega \cdot h(x_1,\dots,x_n)
:=
h(x_2,\dots,x_n,0)
\end{align*}
on arbitrary functions $h$ of the alphabet $(x_1,\dots,x_n)$. Transferring the string of operators $T_{n-i} \cdots T_1$ to the right hand side of \eqref{5.19-1}, and replacing all inverse Hecke generators $T_k^{-1}$ by $q^{-1} \widehat{T}_k$, we then read
\begin{align*}
\tilde\omega 
\widehat{T}_{n-1} \cdots \widehat{T}_{n-i+1}
f_{\tilde\mu}(x_1,\dots,x_n) \Big|_{s=0}
=
\left(
{\bm 1}_{\mu_i = 0}
q^{\#\{j > i : \mu_j = 0\}+i-n}
\right)
\widehat{T}_1 \cdots \widehat{T}_{n-i}
f_{\tilde\mu}(x_1,\dots,x_n) \Big|_{s=0},
\end{align*}
which can be brought into a nicer form by performing the replacement $i \mapsto n-i+1$ and dropping the tilde from $\tilde\mu$, since it ends up being irrelevant in the final expression:
\begin{align}
\nonumber
\tilde\omega 
\widehat{T}_{n-1} \cdots \widehat{T}_{i}
f_{\mu}(x_1,\dots,x_n) \Big|_{s=0}
&=
\left(
{\bm 1}_{\mu_i = 0}
q^{\#\{j < i : \mu_j = 0\}-i+1}
\right)
\widehat{T}_1 \cdots \widehat{T}_{i-1}
f_{\mu}(x_1,\dots,x_n) \Big|_{s=0}
\\
&=
\left(
{\bm 1}_{\mu_i = 0}
q^{-\#\{j < i : \mu_j > 0\}}
\right)
\widehat{T}_1 \cdots \widehat{T}_{i-1}
f_{\mu}(x_1,\dots,x_n) \Big|_{s=0}.
\label{5.19-2}
\end{align}
We will now prove \eqref{5.19-2} directly, for all compositions $\mu$ and $1 \leq i \leq n$.

Making use of the algebraic expression \eqref{f-HL} for $f_{\mu}$, as well as repeated actions \eqref{That-f} of the Hecke generators $\widehat{T}_k$, we see that the left and right hand sides of \eqref{5.19-2} can be computed as follows:
\begin{align}
\nonumber
\tilde\omega 
\widehat{T}_{n-1} \cdots \widehat{T}_{i}
f_{\mu}(x_1,\dots,x_n) \Big|_{s=0}
&=
\tilde\omega
\bra{\varnothing}
\C_1(x_1) \dots \C_{i-1}(x_{i-1})
\C_{i+1}(x_i) \dots \C_{n}(x_{n-1})
\C_i(x_n)
\ket{\mu}
\Big|_{s=0}
\\
&=
\bra{\varnothing}
\C_1(x_2) \dots \C_{i-1}(x_i)
\C_{i+1}(x_{i+1}) \dots \C_{n}(x_n)
\C_i(0)
\ket{\mu}
\Big|_{s=0},
\label{lhs-eig}
\end{align}

\begin{align}
\label{rhs-eig}
\widehat{T}_1 \cdots \widehat{T}_{i-1}
f_{\mu}(x_1,\dots,x_n) \Big|_{s=0}
=
\bra{\varnothing}
\C_i(x_1)
\C_1(x_2) \dots \C_{i-1}(x_i)
\C_{i+1}(x_{i+1}) \dots \C_{n}(x_n)
\ket{\mu}
\Big|_{s=0}.
\end{align}
Let us now compare \eqref{lhs-eig} and \eqref{rhs-eig} as partition functions. Translating the right hand side of equation \eqref{lhs-eig} into graphical language, we find that
\begin{align}
\label{lhs-pic}
\tilde\omega 
\widehat{T}_{n-1} \cdots \widehat{T}_{i}
f_{\mu}(x_1,\dots,x_n) \Big|_{s=0}
=
\tikz{0.8}{
\foreach\y in {2,...,6}{
\draw[lgray,line width=1.5pt] (1,\y) -- (8,\y);
}
\foreach\x in {2,...,7}{
\draw[lgray,line width=4pt] (\x,1) -- (\x,7);
}
\draw[ultra thick,yellow,->] (1,6) -- (1.9,6) -- (1.9,7);
\draw[ultra thick,red,->] (1,5) -- (5.1,5) -- (5.1,7);
\draw[ultra thick,orange,->] (1,4) -- (2.1,4) -- (2.1,7);
\draw[ultra thick,green,->] (1,3) -- (4.9,3) -- (4.9,7);
\draw[ultra thick,blue,->] (1,2) -- (4,2) -- (4,4) -- (6,4) -- (6,5) -- (7,5) -- (7,7);
\node[above] at (2,7) {\tiny ${\bm A}$};
\node[below] at (1.9,6) {\tiny ${\bm A}^{-}_i$};
\node at (2.5,6) {\tiny $0$};
\node[left] at (-0.5,2) {$x_2 \rightarrow$}; \node[left] at (1,2) {$1$};
\node[left] at (-0.5,3) {$x_i \rightarrow$}; \node[left] at (1,3) {$i-1$};
\node[left] at (-0.5,4) {$x_{i+1} \rightarrow$}; \node[left] at (1,4) {$i+1$};
\node[left] at (-0.5,5) {$x_n \rightarrow$}; \node[left] at (1,5) {$n$};
\node[left] at (-0.5,6) {$x_1=0 \rightarrow$}; \node[left] at (1,6) {$i$};
\node at (-1,2.6) {\tiny $\vdots$}; \node at (1.5,2.6) {\tiny $\vdots$};
\node at (-1,4.6) {\tiny $\vdots$}; \node at (1.5,4.6) {\tiny $\vdots$};
\draw[densely dotted,thick] (0,5.4) -- (8.5,5.4);
}
\end{align}
where we have chosen a sample configuration of the lattice, and indicated the rapidity associated to every row, as well as the colour which enters each row via its left boundary. The state ${\bm A}$ shown at the top of the 0-th column is given by ${\bm A} = \sum_{k=1}^{n} {\bm 1}_{\mu_k = 0}{\bm e}_k$. Since both the parameter $s$ and the rapidity of the top row are set equal to $0$, by consulting the weights \eqref{s-weights} one sees that the only allowed vertices in the top row of \eqref{lhs-pic} are those of the form 
\tikz{0.5}{
\draw[lgray,line width=1.5pt,->] (-1,0) -- (1,0);
\draw[lgray,line width=4pt,->] (0,-1) -- (0,1);
\node[left] at (-1,0) {\tiny $i$};\node[right] at (1,0) {\tiny $0$};
\node[below] at (0,-1) {\tiny ${\bm A}^{-}_i$};\node[above] at (0,1) {\tiny ${\bm A}$};
}
and
\tikz{0.5}{
\draw[lgray,line width=1.5pt,->] (-1,0) -- (1,0);
\draw[lgray,line width=4pt,->] (0,-1) -- (0,1);
\node[left] at (-1,0) {\tiny $0$};\node[right] at (1,0) {\tiny $0$};
\node[below] at (0,-1) {\tiny ${\bm B}$};\node[above] at (0,1) {\tiny ${\bm B}$};
}, where the state ${\bm B}$ can be generic. This means that the partition function is non-vanishing only if $\mu_i = 0$. In the case where this holds, all vertices above the dotted line are frozen with weight $1$, and we can delete them to obtain the equation
\begin{align}
\label{lhs-}
\tilde\omega 
\widehat{T}_{n-1} \cdots \widehat{T}_{i}
f_{\mu}(x_1,\dots,x_n) \Big|_{s=0}
=
{\bm 1}_{\mu_i = 0}
\cdot
\tikz{0.8}{
\foreach\y in {2,...,5}{
\draw[lgray,line width=1.5pt] (1,\y) -- (8,\y);
}
\foreach\x in {2,...,7}{
\draw[lgray,line width=4pt] (\x,1) -- (\x,6);
}
\draw[ultra thick,red,->] (1,5) -- (5.1,5) -- (5.1,6);
\draw[ultra thick,orange,->] (1,4) -- (2,4) -- (2,6);
\draw[ultra thick,green,->] (1,3) -- (4.9,3) -- (4.9,6);
\draw[ultra thick,blue,->] (1,2) -- (4,2) -- (4,4) -- (6,4) -- (6,5) -- (7,5) -- (7,6);
\node[above] at (2,6) {\tiny ${\bm A}^{-}_i$};
\node[left] at (-0.5,2) {$x_2 \rightarrow$}; \node[left] at (1,2) {$1$};
\node[left] at (-0.5,3) {$x_i \rightarrow$}; \node[left] at (1,3) {$i-1$};
\node[left] at (-0.5,4) {$x_{i+1} \rightarrow$}; \node[left] at (1,4) {$i+1$};
\node[left] at (-0.5,5) {$x_n \rightarrow$}; \node[left] at (1,5) {$n$};
\node at (-1,2.6) {\tiny $\vdots$}; \node at (1.5,2.6) {\tiny $\vdots$};
\node at (-1,4.6) {\tiny $\vdots$}; \node at (1.5,4.6) {\tiny $\vdots$};
}
\end{align}
where the partition function is now truncated to $n-1$ rows, and 
${\bm A}^{-}_i = \sum_{k\not=i}^{n} {\bm 1}_{\mu_k = 0}{\bm e}_k$. This already proves \eqref{5.19-2} in the situation $\mu_i \geq 1$. 

To complete the proof, we turn to the graphical representation of \eqref{rhs-eig} when $\mu_i = 0$. It takes the form
\begin{align}
\label{rhs-pic}
\widehat{T}_1 \cdots \widehat{T}_{i-1}
f_{\mu}(x_1,\dots,x_n) \Big|_{s=0}
=
\tikz{0.8}{
\foreach\y in {2,...,6}{
\draw[lgray,line width=1.5pt] (1,\y) -- (8,\y);
}
\foreach\x in {2,...,7}{
\draw[lgray,line width=4pt] (\x,1) -- (\x,7);
}
\draw[ultra thick,red,->] (1,6) -- (5.1,6) -- (5.1,7);
\draw[ultra thick,orange,->] (1,5) -- (2.1,5) -- (2.1,7);
\draw[ultra thick,green,->] (1,4) -- (4.9,4) -- (4.9,7);
\draw[ultra thick,blue,->] (1,3) -- (4,3) -- (4,5) -- (6,5) -- (6,6) -- (7,6) -- (7,7);
\draw[ultra thick,yellow,->] (1,2) -- (1.9,2) -- (1.9,7);
\node[above] at (2,7) {\tiny ${\bm A}$};
\node[left] at (-0.5,2) {$x_1 \rightarrow$}; \node[left] at (1,2) {$i$};
\node[left] at (-0.5,3) {$x_2 \rightarrow$}; \node[left] at (1,3) {$1$};
\node[left] at (-0.5,4) {$x_i \rightarrow$}; \node[left] at (1,4) {$i-1$};
\node[left] at (-0.5,5) {$x_{i+1} \rightarrow$}; \node[left] at (1,5) {$i+1$};
\node[left] at (-0.5,6) {$x_n \rightarrow$}; \node[left] at (1,6) {$n$};
\node at (-1,3.6) {\tiny $\vdots$}; \node at (1.5,3.6) {\tiny $\vdots$};
\node at (-1,5.6) {\tiny $\vdots$}; \node at (1.5,5.6) {\tiny $\vdots$};
\draw[densely dotted,thick] (0,2.5) -- (8.5,2.5);
}
\end{align}
where we have drawn the same sample configuration as in \eqref{lhs-pic}, but with the path of colour $i$ now incoming in the lowest row of the lattice, and all other paths shifted upwards by a single unit. Because $\mu_i = 0$ by assumption, the path of colour $i$ is forced to make an upward turn as soon as it enters the lattice, and then propagates straight to the top of the 0-th column. Given that the trajectory of the $i$-th path is completely determined, we can consider the effect of deleting this path altogether, and truncating the lattice above the dotted line in \eqref{rhs-pic}. We are able to perform these operations and the partition function remains invariant, modulo an overall multiplicative factor $q^{\#\{j<i:\mu_j>0\}}$. The origin of this factor is easily explained: any path of colour 
$j < i$ that does {\it not} leave the lattice \eqref{rhs-pic} through ${\bm A}$ must exit the 0-th column via a vertex of the form 
\tikz{0.5}{
\draw[lgray,line width=1.5pt,->] (-1,0) -- (1,0);
\draw[lgray,line width=4pt,->] (0,-1) -- (0,1);
\node[left] at (-1,0) {\tiny $j$};\node[right] at (1,0) {\tiny $j$};
\node[below] at (0,-1) {\tiny ${\bm B}$};\node[above] at (0,1) {\tiny ${\bm B}$};
}, where ${\bm B}$ denotes a generic state positioned along the 0-th column. Since this vertex introduces the weight $x_j q^{{\bm B}_{[j+1,n]}}$ (recalling that $s=0$), it will have an extra factor of $q$ when the path $i$ is retained in the lattice compared with when it is deleted. It follows that, when $\mu_i = 0$, one has
\begin{align}
\label{rhs-}
\widehat{T}_1 \cdots \widehat{T}_{i-1}
f_{\mu}(x_1,\dots,x_n) \Big|_{s=0}
=
q^{\#\{j<i : \mu_j > 0\}}
\cdot
\tikz{0.8}{
\foreach\y in {2,...,5}{
\draw[lgray,line width=1.5pt] (1,\y) -- (8,\y);
}
\foreach\x in {2,...,7}{
\draw[lgray,line width=4pt] (\x,1) -- (\x,6);
}
\draw[ultra thick,red,->] (1,5) -- (5.1,5) -- (5.1,6);
\draw[ultra thick,orange,->] (1,4) -- (2,4) -- (2,6);
\draw[ultra thick,green,->] (1,3) -- (4.9,3) -- (4.9,6);
\draw[ultra thick,blue,->] (1,2) -- (4,2) -- (4,4) -- (6,4) -- (6,5) -- (7,5) -- (7,6);
\node[above] at (2,6) {\tiny ${\bm A}^{-}_i$};
\node[left] at (-0.5,2) {$x_2 \rightarrow$}; \node[left] at (1,2) {$1$};
\node[left] at (-0.5,3) {$x_i \rightarrow$}; \node[left] at (1,3) {$i-1$};
\node[left] at (-0.5,4) {$x_{i+1} \rightarrow$}; \node[left] at (1,4) {$i+1$};
\node[left] at (-0.5,5) {$x_n \rightarrow$}; \node[left] at (1,5) {$n$};
\node at (-1,2.6) {\tiny $\vdots$}; \node at (1.5,2.6) {\tiny $\vdots$};
\node at (-1,4.6) {\tiny $\vdots$}; \node at (1.5,4.6) {\tiny $\vdots$};
}
\end{align}
where we have dropped the path of colour $i$ and truncated to $n-1$ rows, by deleting everything below the dotted line. Matching \eqref{lhs-} and \eqref{rhs-} then yields the desired result \eqref{5.19-2} for $\mu_i = 0$.

\end{proof}

\chapter{Monomial expansions: permutation graphs}

The goal of this chapter is to derive a summation formula for the non-symmetric spin Hall--Littlewood functions $f_{\mu}$ and $g_{\mu}$, expanding them in terms of the monomials
\begin{align}
\label{xi}
\index{an@$\xi_{\mu}$; rational monomials}
\xi_{\mu}(x_1,\dots,x_n)
:=
\prod_{i=1}^{n}
\frac{1}{1-sx_i}
\left(
\frac{x_i-s}{1-sx_i}
\right)^{\mu_i}.
\end{align}
Such expansions are a typical result in the theory of the (symmetric) spin Hall--Littlewood functions, see \cite{Borodin,BorodinP1,BorodinP2}, and are essential in the proof of orthogonality statements. 

The monomial expansions obtained in \cite{Borodin,BorodinP1,BorodinP2} consist of a single sum over the symmetric group, with the coefficient of each monomial \eqref{xi} being completely factorized. In the case of the non-symmetric spin Hall--Littlewood functions, the situation is more complicated: while they can also be expressed as single sum over the symmetric group, the coefficients of the monomials \eqref{xi} are not factorized, and need to be calculated combinatorially in terms of what we call ``permutation graphs''. Our formulas, listed in Corollary \ref{cor:mon-exp}, are directly related to Takeyama's algebraic algorithm for constructing eigenfunctions of the multi-species $q$-boson system \cite{Takeyama}.

\section{Warm-up: $F$-matrices for two-site spin chains}
\label{ssec:F2}

Recall the definition \eqref{Rmat} of the $U_q(\wh{\mathfrak{sl}_{n+1}})$ $R$-matrix, and write it as $R_{12}(z) \in {\rm End}(W_1 \otimes W_2)$ (\ie\ we label the auxiliary spaces numerically). A two-site $F$-matrix, \index{F@$F$-matrix} $F_{12}(z) \in {\rm End}(W_1 \otimes W_2)$, is an invertible $(n+1)^2 \times (n+1)^2$ matrix solution of the equation
\begin{align}
\label{FR-F}
F_{21}(\b{z}) R_{12}(z)
=
F_{12}(z),
\qquad
\b{z} := z^{-1},
\end{align}
where $F_{21}(\b{z}) = P_{12} F_{12}(\b{z}) P_{12}$, with $P_{12} = R_{12}(1)$ \index{P@$P$-matrix} denoting the permutation operator on $W_1 \otimes W_2$. Equivalently, one has
\begin{align}
\label{R-FF}
R_{12}(z)
=
F^{-1}_{21}(\b{z})
F_{12}(z),
\end{align}
thereby providing a factorization of the $R$-matrix. We will not be concerned with classifying all possible solutions of \eqref{FR-F}, but instead focus on a particular solution, given below.

\begin{prop}\cite{SantosM,AlbertBFR}
\label{prop:FR-F}
A solution of \eqref{FR-F} is given by
\begin{align}
\label{Fmat}
F_{12}(z)
=
\left(
\sum_{0 \leq k \leq l \leq n}
E^{(kk)}_1 E^{(ll)}_2
\right)
+
\left(
\sum_{0 \leq k < l \leq n}
E^{(ll)}_1 E^{(kk)}_2
\right)
R_{12}(z),
\end{align}
where $E^{(kk)}_1$ and $E^{(ll)}_2$ denote $(n+1) \times (n+1)$ elementary matrices, as in Section \ref{ssec:fundamental}.
\end{prop}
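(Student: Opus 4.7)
The plan is to verify the identity $F_{21}(\bar z)R_{12}(z)=F_{12}(z)$ by direct computation, reducing it to a statement about how the $R$-matrix acts on the ``color-diagonal'' subspace. First I would write out $F_{21}(\bar z)$ explicitly by applying $P_{12}(\cdot)P_{12}$ to $F_{12}(\bar z)$: since $E^{(kk)}_1 E^{(ll)}_2$ commutes into $E^{(ll)}_1 E^{(kk)}_2$ under the swap (the $E$'s act on different tensor factors), one obtains
\[
F_{21}(\bar z)=\sum_{0\le k\le l\le n}E^{(ll)}_1 E^{(kk)}_2 \;+\; \sum_{0\le k<l\le n}E^{(kk)}_1 E^{(ll)}_2 \cdot R_{21}(\bar z).
\]
Reindexing, this takes the form $A'+C\cdot R_{21}(\bar z)$ with $A'=\sum_{k\ge l}E^{(kk)}_1 E^{(ll)}_2$ and $C=\sum_{k<l}E^{(kk)}_1 E^{(ll)}_2$, while $F_{12}(z)=A+B\cdot R_{12}(z)$ with $A=\sum_{k\le l}E^{(kk)}_1 E^{(ll)}_2$ and $B=\sum_{k<l}E^{(ll)}_1 E^{(kk)}_2=\sum_{k>l}E^{(kk)}_1 E^{(ll)}_2$ after reindexing.

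Next I would invoke unitarity \eqref{unitarity} in the form $R_{21}(\bar z)R_{12}(z)=1$, which collapses
\[
F_{21}(\bar z)R_{12}(z)=A'R_{12}(z)+C.
\]
Equating this to $F_{12}(z)=A+BR_{12}(z)$ rearranges to $(A'-B)R_{12}(z)=A-C$. A direct count gives $A'-B=A-C=\Delta$, where $\Delta:=\sum_{0\le k\le n}E^{(kk)}_1 E^{(kk)}_2$ is the projector onto the color-diagonal subspace spanned by $\{|k,k\rangle\}$. So everything reduces to proving the single identity
\[
\Delta\cdot R_{12}(z)=\Delta.
\]

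The final step is the one place where one uses structural input about $R$: from the explicit list of nonzero entries \eqref{R-weights-a}--\eqref{R-weights-bc}, every nonzero matrix element of $R_{12}(z)$ preserves the multiset of color labels on the two sites. In particular, the only states $|i,j\rangle$ for which $\langle k,k|R_{12}(z)|i,j\rangle\neq 0$ are those with $i=j=k$, and in that case the weight is $R_z(k,k;k,k)=1$ by \eqref{R-weights-a}. Hence $\Delta R_{12}(z)=\sum_k |k,k\rangle\langle k,k|R_{12}(z)=\sum_k|k,k\rangle\langle k,k|=\Delta$, closing the argument.

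I don't anticipate a substantive obstacle: the proof is essentially bookkeeping once one isolates the color-diagonal projector. The only thing to be careful about is the swap conventions between spaces $1$ and $2$ when computing $F_{21}(\bar z)$, and the observation that $A'-B$ and $A-C$ both telescope to $\Delta$ because $A$ and $C$ (respectively $A'$ and $B$) differ exactly on the diagonal $k=l$. Everything else follows from unitarity plus the ``conservation of colors'' built into the $U_q(\wh{\mathfrak{sl}_{n+1}})$ $R$-matrix.
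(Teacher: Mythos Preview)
Your proof is correct and follows essentially the same approach as the paper: both use unitarity to collapse $F_{21}(\bar z)R_{12}(z)$ to $A'R_{12}(z)+C$, and both reduce the remaining verification to the identity $\Delta R_{12}(z)=\Delta$ for the diagonal projector $\Delta=\sum_k E^{(kk)}_1 E^{(kk)}_2$, proved via colour conservation and $R_z(k,k;k,k)=1$. The only cosmetic difference is that the paper phrases the last step as transferring the $k=l$ terms between the two sums rather than explicitly naming $A'-B=A-C=\Delta$.
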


\begin{proof}
Calculating the left hand side of \eqref{FR-F}, we have
\begin{align}
\nonumber
F_{21}(\b{z}) R_{12}(z)
&=
\left(
\sum_{0 \leq k \leq l \leq n}
E^{(kk)}_2 E^{(ll)}_1
\right)
R_{12}(z)
+
\left(
\sum_{0 \leq k < l \leq n}
E^{(ll)}_2 E^{(kk)}_1
\right)
R_{21}(\b{z})
R_{12}(z)
\\
\label{prop6.1-1}
&=
\left(
\sum_{0 \leq k \leq l \leq n}
E^{(ll)}_1 E^{(kk)}_2
\right)
R_{12}(z)
+
\left(
\sum_{0 \leq k < l \leq n}
E^{(kk)}_1 E^{(ll)}_2
\right),
\end{align}
invoking the unitarity property $R_{21}(\b{z}) R_{12}(z) = 1$ of the $R$-matrices. Finally, we note the relation
\begin{align}
\label{a-vertex}
\left( \sum_{k=0}^{n} E^{(kk)}_1 E^{(kk)}_2 \right)
R_{12}(z)
=
\left( \sum_{k=0}^{n} E^{(kk)}_1 E^{(kk)}_2 \right),
\end{align}
which arises from the fact that if we fix the two incoming states of the vertex \eqref{R-vert} to the same value $k \in \{0,1,\dots,n\}$ then the two outgoing states of the vertex must also assume the value $k$, and the resulting vertex \eqref{R-weights-a} has weight $1$. Employing the relation \eqref{a-vertex} in \eqref{prop6.1-1}, we transfer the $k = l$ terms from one sum to the other; the result is
\begin{align*}
\nonumber
F_{21}(\b{z}) R_{12}(z)
=
\left(
\sum_{0 \leq k < l \leq n}
E^{(ll)}_1 E^{(kk)}_2
\right)
R_{12}(z)
+
\left(
\sum_{0 \leq k \leq l \leq n}
E^{(kk)}_1 E^{(ll)}_2
\right)
=
F_{12}(z).
\end{align*}
\end{proof}
The $F$-matrix \eqref{Fmat} is a {\it Drinfeld twist} \cite{Drinfeld,SantosM}, from the theory of quasi-triangular Hopf algebras. It is invertible, with inverse given explicitly by
\begin{align}
\label{Finv}
F^{-1}_{12}(z)
=
\left[
\left(
\sum_{n \geq k > l \geq 0}
E^{(kk)}_1 E^{(ll)}_2
\right)
+
R_{21}(\b{z})
\left(
\sum_{n \geq k \geq l \geq 0}
E^{(ll)}_1 E^{(kk)}_2
\right)
\right]
\Delta_{12}(z),
\end{align}
where $\Delta_{12}(z)$ is a diagonal $(n+1)^2 \times (n+1)^2$ matrix of the form
\begin{align}
\label{Delta}
\Delta_{12}(z)
=
\sum_{0 \leq k,l \leq n}
b_{k,l}(z)
E^{(kk)}_1 E^{(ll)}_2,
\qquad
b_{k,l}(z) 
:= 
\left\{
\begin{array}{ll}
1, & \quad k = l,
\\ \\
\dfrac{1-q\b{z}}{1-\b{z}}, & \quad k<l,
\\ \\
\dfrac{1-q z}{1-z}, & \quad k>l.
\end{array}
\right.
\end{align}
Combining the matrices \eqref{Fmat} and \eqref{Finv} in equation \eqref{R-FF} yields an explicit $(\text{upper} \cdot \text{diagonal} \cdot \text{lower})$ factorization of the $R$-matrix, whose main value is in the following result:
\begin{prop}
\label{prop:Qsym}
Fix complex parameters $x_1,x_2$ and a linear operator $Q_{12}(x_1,x_2) \in {\rm End}(W_1 \otimes W_2)$ which satisfies the equation
\begin{align}
\label{RQ}
R_{12}(x_2/x_1) Q_{12}(x_1,x_2)
=
Q_{21}(x_2,x_1) R_{12}(x_2/x_1),
\qquad
Q_{21}(x_2,x_1) = P_{12} Q_{12}(x_2,x_1) P_{12}.
\end{align}
Define a ``twisted'' version of $Q_{12}$ as follows:
\begin{align*}
\widetilde{Q}_{12}(x_1,x_2)
:=
F_{12}(x_2/x_1) Q_{12}(x_1,x_2) F^{-1}_{12}(x_2/x_1).
\end{align*}
Then $\widetilde{Q}_{12}(x_1,x_2)$ is symmetric under simultaneous exchange of its variables and spaces; \ie\ one has
\begin{align}
\label{Q-sym}
\widetilde{Q}_{12}(x_1,x_2) = \widetilde{Q}_{21}(x_2,x_1).
\end{align}
\end{prop}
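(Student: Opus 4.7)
The plan is to prove \eqref{Q-sym} as a direct algebraic consequence of the two defining relations already at our disposal: the twist equation \eqref{FR-F}, namely $F_{21}(\bar z) R_{12}(z) = F_{12}(z)$, and the $R$-matrix intertwining property \eqref{RQ} of $Q_{12}$. No case analysis on the structure of $F_{12}$ from \eqref{Fmat} will be needed; the argument works for any solution of \eqref{FR-F}, which is one of the main points of introducing the $F$-matrix.

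First I would set $z = x_2/x_1$ (so that $\bar z = x_1/x_2$) and expand the right-hand side of \eqref{Q-sym} directly from the definition, obtaining
\begin{align*}
\widetilde Q_{21}(x_2,x_1) = F_{21}(\bar z)\, Q_{21}(x_2,x_1)\, F_{21}^{-1}(\bar z).
\end{align*}
Next I would use the hypothesis \eqref{RQ}, rewritten as $Q_{21}(x_2,x_1) = R_{12}(z)\, Q_{12}(x_1,x_2)\, R_{12}(z)^{-1}$, to substitute for the middle factor. This gives
\begin{align*}
\widetilde Q_{21}(x_2,x_1)
= \bigl[F_{21}(\bar z) R_{12}(z)\bigr]\, Q_{12}(x_1,x_2)\,
\bigl[R_{12}(z)^{-1} F_{21}^{-1}(\bar z)\bigr].
\end{align*}

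At this point the twist equation \eqref{FR-F} collapses both bracketed expressions: the left bracket is exactly $F_{12}(z)$, and taking the inverse of \eqref{FR-F} gives $R_{12}(z)^{-1} F_{21}^{-1}(\bar z) = F_{12}^{-1}(z)$, so the right bracket is $F_{12}^{-1}(z)$. Therefore
\begin{align*}
\widetilde Q_{21}(x_2,x_1) = F_{12}(z)\, Q_{12}(x_1,x_2)\, F_{12}^{-1}(z) = \widetilde Q_{12}(x_1,x_2),
\end{align*}
which is the desired symmetry.

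There is no serious obstacle here: the proof is essentially a one-line manipulation once the substitutions are lined up correctly. The only subtlety worth double-checking is the invertibility of $R_{12}(z)$ in the intermediate step — but this is guaranteed by the unitarity relation $R_{12}(z) R_{21}(\bar z) = 1$ already recorded in Proposition \ref{prop:YB}, which also ensures that \eqref{FR-F} can be legitimately inverted to yield $R_{12}(z)^{-1} F_{21}^{-1}(\bar z) = F_{12}^{-1}(z)$. Hence the proposition follows without any explicit use of the formulas \eqref{Fmat} or \eqref{Finv}, and depends only on $F_{12}$ being a twist satisfying \eqref{FR-F}.
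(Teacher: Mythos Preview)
Your proof is correct and is essentially the same as the paper's: both arguments combine the twist relation \eqref{FR-F} (equivalently the factorization $R_{12}(z)=F_{21}^{-1}(\bar z)F_{12}(z)$) with the intertwining relation \eqref{RQ} to conjugate $Q_{12}$ into $Q_{21}$. The paper substitutes the factorization of $R$ into \eqref{RQ} and then rearranges, whereas you start from $\widetilde Q_{21}$ and collapse $F_{21}(\bar z)R_{12}(z)$ to $F_{12}(z)$; the two computations are the same manipulation read in opposite directions.
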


\begin{proof}
Start from \eqref{RQ} and apply the factorization \eqref{R-FF} of the $R$-matrix, which yields
\begin{align*}
F^{-1}_{21}(x_1/x_2) F_{12}(x_2/x_1) Q_{12}(x_1,x_2)
=
Q_{21}(x_2,x_1) F^{-1}_{21}(x_1/x_2) F_{12}(x_2/x_1),
\end{align*}
and after some rearrangement,
\begin{align*}
F_{12}(x_2/x_1) Q_{12}(x_1,x_2) F^{-1}_{12}(x_2/x_1)
=
F_{21}(x_1/x_2) Q_{21}(x_2,x_1) F^{-1}_{21}(x_1/x_2),
\end{align*}
which is precisely the claim \eqref{Q-sym}.
\end{proof}

\section{$N$-site $R$-matrices}

In what follows we require a natural $N$-site analogue of the $R$-matrix \eqref{Rmat} which we shall denote by \index{R@$R^{\sigma}_{\rho}$} $R^{\sigma}_{\rho}$, where $\sigma, \rho \in \mathfrak{S}_N$. It reduces to \eqref{Rmat} at $N=2$ by choosing 
$\sigma = (2,1)$ and $\rho = (1,2)$:  
\begin{align*}
R^{21}_{12}
&:=
R_{12}(x_2/x_1)
\in 
{\rm End}(W_1 \otimes W_2).
\end{align*}
Slightly more generally, we may keep $N$ generic, choose $\sigma = \mathfrak{s}_i = (1,\dots,i+1,i,\dots,N)$ and $\rho = {\rm id} = 
(1,\dots,N)$:
\begin{align*}
R^{\mathfrak{s}_i}_{{\rm id}}
&:=
R_{i(i+1)}(x_{i+1}/x_i)
\in 
{\rm End}(W_1 \otimes \cdots \otimes W_N),
\end{align*}
where the $R$-matrix acts non-trivially only on the $W_i \otimes W_{i+1}$ subspace of $W_1 \otimes \cdots \otimes W_N$. We then extend the construction to generic permutations $\sigma$ and $\rho$ by setting
\begin{align*}
R^{\rm \sigma}_{\sigma} := 1,
\qquad
R^{\mathfrak{s}_i \circ \sigma}_{\sigma}
:=
R_{\sigma(i)\sigma(i+1)}\left(x_{\sigma(i+1)}/x_{\sigma(i)}\right),
\qquad
\ \forall\ \sigma \in \mathfrak{S}_N,
\end{align*}
as well as by making the recursive definition
\begin{align}
\label{Rsigma}
R^{\mathfrak{s}_i \circ \sigma}_{\rho}
:=
R^{\mathfrak{s}_i \circ \sigma}_{\sigma}
R^{\sigma}_{\rho},
\qquad
\forall\ \sigma,\rho \in \mathfrak{S}_N.
\end{align}
The recursion \eqref{Rsigma} allows one to build $R^{\sigma}_{\rho}$ for arbitrary permutations $\sigma,\rho \in \mathfrak{S}_N$, starting from the identity $R^{\rho}_{\rho}$: all that is needed is a word decomposition of $\sigma \circ \rho^{-1}$ into simple transpositions. Note that the definition \eqref{Rsigma} is unambiguous without specifying the precise word decomposition; any word decomposition of $\sigma \circ \rho^{-1}$ (even non-reduced ones) will yield the same result, by virtue of the Yang--Baxter and unitarity relations for the two-site $R$-matrix \eqref{Rmat}. The resulting $N$-site $R$-matrix $R^{\sigma}_{\rho}$ depends, in principle, on the full alphabet $(x_1,\dots,x_N)$.

\begin{ex}
\label{ex:321}
Let $N=3$ and take $\sigma = \mathfrak{s}_2 \circ \mathfrak{s}_1 \circ \mathfrak{s}_2 = (3,2,1)$, $\rho = {\rm id} = (1,2,3)$. By repeated use of \eqref{Rsigma} we have
\begin{align*}
R^{\mathfrak{s}_2 \circ \mathfrak{s}_1 \circ \mathfrak{s}_2}_{\rm id}
=
R^{\mathfrak{s}_2 \circ \mathfrak{s}_1 \circ \mathfrak{s}_2}_{\mathfrak{s}_1 \circ \mathfrak{s}_2}
R^{\mathfrak{s}_1 \circ \mathfrak{s}_2}_{\rm id}
=
R^{\mathfrak{s}_2 \circ \mathfrak{s}_1 \circ \mathfrak{s}_2}_{\mathfrak{s}_1 \circ \mathfrak{s}_2}
R^{\mathfrak{s}_1 \circ \mathfrak{s}_2}_{\mathfrak{s}_2}
R^{\mathfrak{s}_2}_{\rm id},
\end{align*}
or in terms of one-line notation for the permutations,
\begin{align}
\label{321-lhs}
R^{321}_{123}
=
R^{321}_{312}
R^{312}_{123}
=
R^{321}_{312}
R^{312}_{132}
R^{132}_{123}
=
R_{12}(x_2/x_1)
R_{13}(x_3/x_1)
R_{23}(x_3/x_2).
\end{align}
Notice that we could have also used the decomposition $\sigma = \mathfrak{s}_1 \circ \mathfrak{s}_2 \circ \mathfrak{s}_1 = (3,2,1)$, leading instead to the product
\begin{align*}
R^{\mathfrak{s}_1 \circ \mathfrak{s}_2 \circ \mathfrak{s}_1}_{\rm id}
=
R^{\mathfrak{s}_1 \circ \mathfrak{s}_2 \circ \mathfrak{s}_1}_{\mathfrak{s}_2 \circ \mathfrak{s}_1}
R^{\mathfrak{s}_2 \circ \mathfrak{s}_1}_{\rm id}
=
R^{\mathfrak{s}_1 \circ \mathfrak{s}_2 \circ \mathfrak{s}_1}_{\mathfrak{s}_2 \circ \mathfrak{s}_1}
R^{\mathfrak{s}_2 \circ \mathfrak{s}_1}_{\mathfrak{s}_1}
R^{\mathfrak{s}_1}_{\rm id},
\end{align*}
whose one-line form reads
\begin{align}
\label{321-rhs}
R^{321}_{123}
=
R^{321}_{231}
R^{231}_{123}
=
R^{321}_{231}
R^{231}_{213}
R^{213}_{123}
=
R_{23}(x_3/x_2)
R_{13}(x_3/x_1)
R_{12}(x_2/x_1).
\end{align}
The consistency of the two results \eqref{321-lhs} and \eqref{321-rhs} is precisely the Yang--Baxter equation \eqref{YB}.
\end{ex}

Alternatively, one may define
\begin{align*}
R^{\rho}_{\mathfrak{s}_i \circ \rho}
:=
R_{\rho(i+1)\rho(i)}\left(x_{\rho(i)}/x_{\rho(i+1)}\right),
\qquad
\ \forall\ \rho \in \mathfrak{S}_N,
\end{align*}
and then perform the recursion on the lower index instead, leading to the definition
\begin{align}
\label{Rsigma-rev}
R^{\sigma}_{\mathfrak{s}_i \circ \rho}
:=
R^{\sigma}_{\rho}
R^{\rho}_{\mathfrak{s}_i \circ \rho},
\qquad
\forall\ \sigma,\rho \in \mathfrak{S}_N.
\end{align}
It is clear that the two definitions \eqref{Rsigma} and \eqref{Rsigma-rev} are consistent with one another, since they both lead to word decompositions of $R^{\sigma}_{\rho}$, and every such decomposition produces the same answer. 

\begin{ex}
Let $N=4$ and take $\sigma = \mathfrak{s}_1 \circ \mathfrak{s}_3 = (2,1,4,3)$, $\rho = \mathfrak{s}_2 = (1,3,2,4)$. Using the recursive definition \eqref{Rsigma}, we obtain
\begin{align*}
R^{\mathfrak{s}_1 \circ \mathfrak{s}_3}_{\mathfrak{s}_2}
=
R^{\mathfrak{s}_1 \circ \mathfrak{s}_3}_{\mathfrak{s}_3}
R^{\mathfrak{s}_3}_{\mathfrak{s}_2}
=
R^{\mathfrak{s}_1 \circ \mathfrak{s}_3}_{\mathfrak{s}_3}
R^{\mathfrak{s}_3}_{\rm id}
R^{\rm id}_{\mathfrak{s}_2},
\end{align*}
while the alternative definition \eqref{Rsigma-rev} yields
\begin{align*}
R^{\mathfrak{s}_1 \circ \mathfrak{s}_3}_{\mathfrak{s}_2}
=
R^{\mathfrak{s}_1 \circ \mathfrak{s}_3}_{\rm id}
R^{\rm id}_{\mathfrak{s}_2}
=
R^{\mathfrak{s}_1 \circ \mathfrak{s}_3}_{\mathfrak{s}_3}
R^{\mathfrak{s}_3}_{\rm id}
R^{\rm id}_{\mathfrak{s}_2},
\end{align*}
which is the same result.
\end{ex}

\section{Permutation graphs}

In this section we focus on $N$-site $R$-matrices of the form \index{R@$R^{\sigma}_{1\dots N}$} $R^{\sigma}_{\rm id} \equiv R^{\sigma}_{1\dots N}$. We denote the components of $R^{\sigma}_{1\dots N} \in {\rm End}(W_1 \otimes \cdots \otimes W_N)$ by $R^{\sigma}_{1\dots N}(i_1, \dots, i_N; j_1, \dots, j_N)$, where $i_k,j_k \in \{0,1,\dots,n\}$ for all $1 \leq k \leq N$. In other words, if $\ket{j_1,\dots,j_N}$ denotes a canonical basis state in $W_1 \otimes \cdots \otimes W_N$, then
\begin{align}
\label{Rsig-comp}
R^{\sigma}_{1\dots N}
\ket{j_1,\dots,j_N}
=
\sum_{i_1,\dots,i_N}
R^{\sigma}_{1\dots N}(i_1, \dots, i_N; j_1, \dots, j_N)
\ket{i_1,\dots,i_N}.
\end{align}
It is useful to think of the components of $R^{\sigma}_{1\dots N}$ as certain partition functions in the vertex model \eqref{fund-vert}, as follows. Let $\sigma \in \mathfrak{S}_N$ be a permutation. Draw a column of $N$ nodes, attaching to them the labels $\{j_1,\dots,j_N\}$, where the $k$-th node (counted from the {\it bottom}) receives label $j_k$. To its left, draw a second column of $N$ nodes, attaching to them the labels $\{i_1,\dots,i_N\}$, where the $k$-th node (again, counted from the bottom) receives label $i_{\sigma(k)}$. Now connect the node labelled $i_k$ to that labelled $j_k$ via a line oriented from left to right, and carrying rapidity variable $x_k$, doing this for all $1 \leq k \leq N$ in such a way that no three or more lines intersect at a point. The spectral parameter at each crossing is set to the ratio of rapidities of the corresponding lines, with the numerator being the rapidity of the line that is lower before (on the left of) the crossing. In this way one obtains a partition function 
$\mathcal{X}^{\sigma}_{1\dots N}(i_1,\dots,i_N;j_1,\dots,j_N)$ \index{X1@$\mathcal{X}^{\sigma}_{1\dots N}$; permutation graph} in the model \eqref{fund-vert}; its lattice is the collection of line crossings induced by drawing the graph of $\sigma$, and its incoming/outgoing states (read from bottom to top) are 
$\{i_{\sigma(1)},\dots,i_{\sigma(N)}\}$/$\{j_1,\dots,j_N\}$, respectively. One then has the relation
\begin{align}
\label{R=X}
R^{\sigma}_{1\dots N}(i_1,\dots,i_N;j_1,\dots,j_N)
=
\mathcal{X}^{\sigma}_{1\dots N}(i_1,\dots,i_N;j_1,\dots,j_N).
\end{align}
See Figure \ref{fig:Rgraph} (left panel) for an illustration of this construction.
\begin{figure}
\begin{tikzpicture}[scale=0.7,baseline=(mid.base)]
\node (mid) at (1,4) {};
\draw[lgray,line width=1.5pt,->] plot [smooth] coordinates {(2,6.5) (2.5,6.5) (5.5,5.5) (7.5,3.5) (8.5,3.5)};
\draw[lgray,line width=1.5pt,->] plot [smooth] coordinates {(2,5.5) (2.5,5.5) (5.5,4.5) (7.5,2.5) (8.5,2.5)};
\draw[lgray,line width=1.5pt,->] plot [smooth] coordinates {(2,4.5) (2.5,4.5) (5.5,6.5) (7.5,4.5) (8.5,4.5)};
\draw[lgray,line width=1.5pt,->] plot [smooth] coordinates {(2,3.5) (2.5,3.5) (5.5,2.5) (7.5,5.5) (8.5,5.5)};
\draw[lgray,line width=1.5pt,->] plot [smooth] coordinates {(2,2.5) (2.5,2.5) (5.5,3.5) (7.5,6.5) (8.5,6.5)};
\draw[lgray,line width=1.5pt,->] (2,1.5) -- (8.5,1.5);
\node at (1.3,6.5) {$\ss i_{\sigma(6)}\ $}; 
\node at (1.3,5.5) {$\ss i_{\sigma(5)}\ $}; 
\node at (1.3,4.5) {$\ss i_{\sigma(4)}\ $};
\node at (1.3,3.5) {$\ss i_{\sigma(3)}\ $}; 
\node at (1.3,2.5) {$\ss i_{\sigma(2)}\ $};
\node at (1.3,1.5) {$\ss i_{\sigma(1)}\ $};
\node at (9,6.5) {$\ss j_6$};
\node at (9,5.5) {$\ss j_5$};
\node at (9,4.5) {$\ss j_4$};
\node at (9,3.5) {$\ss j_3$};
\node at (9,2.5) {$\ss j_2$};
\node at (9,1.5) {$\ss j_1$};
\end{tikzpicture}
\quad
\begin{tikzpicture}[scale=0.7,baseline=(mid.base)]
\node (mid) at (1,4) {};
\draw[lgray,line width=1.5pt,->] plot [smooth] coordinates {(2,6.5)(8.5,6.5)};
\draw[lgray,line width=1.5pt,->] plot [smooth] coordinates {(2,5.5) (2.5,5.5) (5.5,4.5) (7.5,2.5) (8.5,2.5)};
\draw[lgray,line width=1.5pt,->] plot [smooth] coordinates {(2,4.5) (2.5,4.5) (5.5,4.5) (7.5,4.5) (8.5,4.5)};
\draw[lgray,line width=1.5pt,->] plot [smooth] coordinates {(2,3.5) (2.5,3.5) (5.5,3) (7.5,5.5) (8.5,5.5)};
\draw[lgray,line width=1.5pt,->] plot [smooth] coordinates {(2,2.5) (2.5,2.5) (5.5,1.5)(8.5,1.5)};
\draw[lgray,line width=1.5pt,->] plot [smooth] coordinates {(2,1.5) (3.5,2.5) (5.5,2.5) (8.5,3.5)};
\node at (1.3,6.5) {$\ss i_{6} $}; 
\node at (1.3,5.5) {$\ss i_{5} $}; 
\node at (1.3,4.5) {$\ss i_{4} $};
\node at (1.3,3.5) {$\ss i_{3} $}; 
\node at (1.3,2.5) {$\ss i_{2} $};
\node at (1.3,1.5) {$\ss i_{1} $};
\node at (9,6.5) {$\ss\ \ j_{\sigma(6)}$};
\node at (9,5.5) {$\ss\ \ j_{\sigma(5)}$};
\node at (9,4.5) {$\ss\ \ j_{\sigma(4)}$};
\node at (9,3.5) {$\ss\ \ j_{\sigma(3)}$};
\node at (9,2.5) {$\ss\ \ j_{\sigma(2)}$};
\node at (9,1.5) {$\ss\ \ j_{\sigma(1)}$};
\end{tikzpicture}
\caption{Examples in the case $N=6$. Each crossing of lattice lines is to be interpreted as a vertex \eqref{R-vert}. Left panel: the permutation graph associated to $\sigma = (1,6,5,4,2,3)$, interpreted as the partition function $\mathcal{X}^{\sigma}_{1\dots N}(i_1,\dots,i_N;j_1,\dots,j_N)$. Right panel: the reversed permutation graph for $\sigma = (2,5,1,4,3,6)$, interpreted as the partition function 
$\mathcal{X}_{\sigma}^{1\dots N}(i_1,\dots,i_N;j_1,\dots,j_N)$.}
\label{fig:Rgraph}
\end{figure}
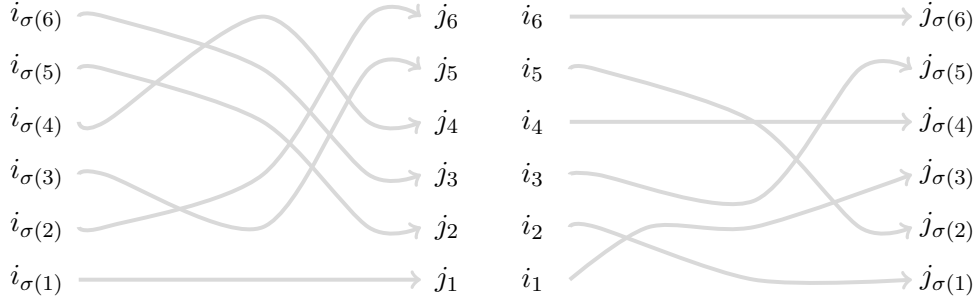

\begin{ex}
Let $N=3$ and $\sigma = (3,2,1)$, as in Example \ref{ex:321}. Following the prescription described above, one has
\begin{align*}
R^{321}_{123}(i_1,i_2,i_3;j_1,j_2,j_3)
=
\begin{tikzpicture}[scale=0.7,baseline=(mid.base)]
\node (mid) at (1,2) {};
\draw[lgray,line width=1.5pt,->] (2,3.5) -- (4,2.5) -- (6,1.5) -- (8,1.5);
\draw[lgray,line width=1.5pt,->] (2,2.5) -- (4,3.5) -- (6,3.5) -- (8,2.5);
\draw[lgray,line width=1.5pt,->] (2,1.5) -- (4,1.5) -- (6,2.5) -- (8,3.5);
\node at (0.7,3.5) {$\ss i_{\sigma(3)} = i_1\ $}; 
\node at (0.7,2.5) {$\ss i_{\sigma(2)} = i_2\ $};
\node at (0.7,1.5) {$\ss i_{\sigma(1)} = i_3\ $};
\node at (8.7,3.5) {$\ss j_3$};
\node at (8.7,2.5) {$\ss j_2$};
\node at (8.7,1.5) {$\ss j_1$};
\end{tikzpicture}
\end{align*}
which is consistent with the decomposition $R^{321}_{123} = R^{321}_{312} R^{312}_{132} R^{132}_{123}$, given in \eqref{321-lhs}.
\end{ex}

\section{Reversed permutation graphs}


In this section we focus on $N$-site $R$-matrices of the form $R^{\rm id}_{\sigma} \equiv R^{1 \dots N}_{\sigma}$. The components of the matrix $R_{\sigma}^{1\dots N}$ can also be realized via a partition function in the model \eqref{fund-vert}. One essentially repeats the graphical construction outlined above, drawing two columns of nodes; the difference is that this time the $k$-th node in the left column receives label $i_k$, while the $k$-th node in the right column is labelled $j_{\sigma(k)}$. Connecting up $i_k$ and $j_k$ for all $1 \leq k \leq N$ produces a partition function \index{X2@$\mathcal{X}_{\sigma}^{1\dots N}$; reversed permutation graph} $\mathcal{X}_{\sigma}^{1\dots N}(i_1,\dots,i_N;j_1,\dots,j_N)$, and one then has 
\begin{align}
\label{R=Xrev}
R_{\sigma}^{1\dots N}(i_1,\dots,i_N;j_1,\dots,j_N)
=
\mathcal{X}_{\sigma}^{1\dots N}(i_1,\dots,i_N;j_1,\dots,j_N),
\end{align}
in direct analogy with \eqref{R=X}. See Figure \ref{fig:Rgraph} (right panel) for an example of this construction.

\section{$F$-matrices for spin chains of generic length}

Following \cite{AlbertBFR}, we now extend the results of Section \ref{ssec:F2} to a tensor product of $N$ auxiliary spaces, \ie\ to $W_1 \otimes \cdots \otimes W_N$. The starting point is to introduce an $N$-site analogue of equation \eqref{FR-F}. Namely, we will seek solutions \index{F@$F_{1\dots N}$} $F_{1\dots N}(x_1,\dots,x_N) \in {\rm End}(W_1 \otimes \cdots \otimes W_N)$ of the equation
\begin{align}
\label{NFR-F}
F_{\sigma(1) \dots \sigma(N)}
\left(x_{\sigma(1)},\dots,x_{\sigma(N)}\right)
R^{\sigma}_{1\dots N}
=
F_{1 \dots N}(x_1,\dots,x_N),
\qquad
\sigma \in \mathfrak{S}_N,
\end{align}
where
\begin{align}
F_{\sigma(1) \dots \sigma(N)}
\left(y_1,\dots,y_N\right)
:=
P^{\sigma}_{1\dots N} 
F_{1\dots N}
\left(y_1,\dots,y_N\right)
P^{\sigma^{-1}}_{1\dots N},
\end{align}
with permutation operators on $W_1 \otimes \cdots \otimes W_N$ prescribed in the standard way,
\begin{align*}
\index{P@$P^{\sigma}_{1 \dots N}$}
P^{\sigma}_{1 \dots N} :
\ket{v_1}_1 \otimes \cdots \otimes \ket{v_N}_N
\mapsto
\ket{v_1}_{\sigma(1)} \otimes \cdots \otimes \ket{v_N}_{\sigma(N)},
\qquad
\forall\ v_1,\dots,v_N \in \{0,1,\dots,n\}.
\end{align*}

\begin{prop}
A solution of \eqref{NFR-F} is given by
\begin{align}
\label{NFmat}
F_{1\dots N}(x_1,\dots,x_N)
=
\sum_{\rho \in \mathfrak{S}_N}
\sum_{(k_1,\dots,k_N) \in \mathcal{I}(\rho)}
\left(
\prod_{i=1}^{N} E^{(k_i k_i)}_{\rho(i)}
\right)
R^{\rho}_{1\dots N},
\end{align}
where the sum is over $N$-tuples in the set $\mathcal{I}(\rho)$, defined as
\begin{align*}
\mathcal{I}(\rho)
=
\Big\{(0 \leq k_1 \leq \cdots \leq k_N \leq n) : k_i < k_{i+1}\ \text{if}\ \rho(i) > \rho(i+1) 
\Big\}.
\end{align*}
\end{prop}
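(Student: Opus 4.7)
I would prove \eqref{NFR-F} by induction on the Coxeter length $\ell(\sigma)$. The base case $\sigma = \mathrm{id}$ is immediate since $R^{\mathrm{id}}_{1\dots N} = 1$ by convention. For the inductive step, write $\sigma = \mathfrak{s}_j \circ \sigma'$ with $\ell(\sigma') = \ell(\sigma) - 1$ and factor $R^{\sigma}_{1\dots N} = R^{\mathfrak{s}_j \circ \sigma'}_{\sigma'} \cdot R^{\sigma'}_{1\dots N}$ using \eqref{Rsigma}, where $R^{\mathfrak{s}_j \circ \sigma'}_{\sigma'} = R_{\sigma'(j)\sigma'(j+1)}(x_{\sigma'(j+1)}/x_{\sigma'(j)})$. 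Assuming the claim for $\sigma'$, what remains after conjugating out $\sigma'$ is a purely two-site statement: concretely, it suffices to prove that for every $1 \leq i \leq N-1$,
\begin{equation}
F_{1,\dots,i+1,i,\dots,N}(x_1,\dots,x_{i+1},x_i,\dots,x_N)\, R_{i(i+1)}(x_{i+1}/x_i) = F_{1\dots N}(x_1,\dots,x_N).
\label{eq:plan-key}
\end{equation}

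To prove \eqref{eq:plan-key}, I would organize the sum defining $F_{1\dots N}$ by the relative order of $a := \rho^{-1}(i)$ and $b := \rho^{-1}(i+1)$. Each $\rho$ with $a < b$ is paired with its image $\rho \circ (a\,b)$ under interchanging these two positions, and within each pair the tuples in $\mathcal{I}(\rho)$ and $\mathcal{I}(\rho \circ (a\,b))$ that agree outside positions $a, b$ are grouped together. The contribution from positions $a, b$ produces exactly the two pieces of the two-site $F$-matrix $F_{12}$ from Proposition \ref{prop:FR-F}: the $k_a \leq k_b$ part gives the diagonal sum, while the $k_a < k_b$ part gives the sum multiplied by an $R$-matrix, both acting in the spaces $W_i \otimes W_{i+1}$. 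Meanwhile, the remaining $R$-matrix factor in $R^{\rho}_{1\dots N}$ can be factored using the Yang--Baxter equation (equivalently, using the graphical interpretation \eqref{R=X}) so as to isolate the crossing of the lines connected to labels $i$ and $i+1$. The left-hand side of \eqref{eq:plan-key} admits the analogous decomposition, and the extra $R_{i(i+1)}(x_{i+1}/x_i)$ combines with the two-site piece to yield precisely $F_{12}(x_{i+1}/x_i)$ by the defining relation \eqref{FR-F}.

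The main obstacle will be bookkeeping: one must verify that the strictness conditions defining $\mathcal{I}(\rho)$ (strict increase at descents of $\rho$) correspond correctly to the strict-versus-weak inequalities in the two-site identity, and that the Yang--Baxter manipulations that isolate the $(i,i+1)$ crossing inside $R^{\rho}_{1\dots N}$ leave the surrounding vertices undisturbed. Once this combinatorial correspondence is set up cleanly, the reduction is a mechanical application of Proposition \ref{prop:FR-F} inside the $W_i \otimes W_{i+1}$ factor of the larger tensor product, and the induction closes.
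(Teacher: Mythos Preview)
The paper does not actually prove this proposition; it defers entirely to \cite{AlbertBFR,McAteerW}. Your reduction of \eqref{NFR-F} to the simple-transposition case \eqref{eq:plan-key} via induction on $\ell(\sigma)$ is correct and standard.

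The gap is in your mechanism for proving \eqref{eq:plan-key}. Your pairing $\rho \leftrightarrow \rho\circ(a\,b)$ does \emph{not} yield the two-site $F$-matrix with a common ``rest'' factor when $a=\rho^{-1}(i)$ and $b=\rho^{-1}(i+1)$ are not adjacent. Take $N=3$, $i=1$, and the pair $(\rho,\rho')=(132,231)$, for which $a=1$, $b=3$. One has $\mathcal{I}(132)=\mathcal{I}(231)=\{k_1\leq k_2<k_3\}$, while $R^{132}_{123}=R_{23}(x_3/x_2)$ and $R^{231}_{123}=R_{13}(x_3/x_1)R_{12}(x_2/x_1)$. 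The projectors on $W_1\otimes W_2$ are $E^{(k_1)}_1 E^{(k_3)}_2$ and $E^{(k_3)}_1 E^{(k_1)}_2$ respectively, but the constraint forces $k_1<k_3$ strictly (the intermediate $k_2$ sits between them), so the diagonal piece of $F_{12}$ is absent. Worse, after stripping $R_{12}$ from the $231$ term, the residual $R$-factors are $R_{23}$ versus $R_{13}$: there is no common ``rest'' to factor out. The crux is that the inequalities on $k_a,k_b$ in $\mathcal{I}(\rho)$ are mediated by the list-neighbours $k_{a\pm1},k_{b\pm1}$ rather than by each other, and the non-$(i,i+1)$ crossings in $R^{\rho}$ and $R^{\rho\circ(a\,b)}$ are genuinely different permutation graphs.

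The proofs in \cite{AlbertBFR,McAteerW} avoid this by exploiting a recursive factorisation of $F_{1\dots N}$ into ``partial'' $F$-matrices (roughly, one peels off the line carrying $x_1$ and inducts on $N$), which reduces \eqref{NFR-F} to a cocycle identity for those simpler factors. If you want to salvage a direct pair-by-pair argument, you would need to work globally over all pairs simultaneously and track how the diagonal contributions and mismatched $R$-factors redistribute among them --- which is essentially what the recursive factorisation packages cleanly.
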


\begin{proof}
We refer the reader to \cite{AlbertBFR,McAteerW} for details.
\end{proof}

\begin{prop}
The $F$-matrix \eqref{NFmat} is lower triangular with non-zero diagonal entries; its inverse is given by
\begin{align}
\label{NFinv}
F_{1\dots N}^{-1}(x_1,\dots,x_N)
=
\left[
\sum_{\rho \in \mathfrak{S}_N}
\sum_{(k_1,\dots,k_N) \in \mathcal{I}'(\rho)}
R^{1\dots N}_{\rho}
\left(
\prod_{i=1}^{N} E^{(k_i k_i)}_{\rho(i)}
\right)
\right]
\Delta_{1\dots N},
\end{align}
where the sum is over $N$-tuples in the set $\mathcal{I}'(\rho)$, defined as
\begin{align*}
\mathcal{I}'(\rho)
=
\Big\{(n \geq k_1 \geq \cdots \geq k_N \geq 0) : 
k_i > k_{i+1}\ \text{if}\ \rho(i) < \rho(i+1) 
\Big\},
\end{align*}
and where $\Delta_{1\dots N}$ is a diagonal matrix with components given by
\begin{align*}
\Delta_{1\dots N}(i_1,\dots,i_N; j_1,\dots,j_N)
=
\prod_{a=1}^{N}
\delta_{i_a,j_a}
\prod_{1 \leq a < b \leq N}
b_{j_a,j_b}(x_b/x_a).
\end{align*}

\end{prop}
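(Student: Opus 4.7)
The plan is to prove both assertions (lower-triangularity with nonzero diagonal and the explicit form of $F_{1\dots N}^{-1}$) together, by first unpacking the combinatorial structure of the matrix elements of \eqref{NFmat} and then verifying $F \cdot F^{-1}=\mathrm{id}$ by induction on $N$.

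First I would analyze the components of \eqref{NFmat}: expanding the product $\prod_i E^{(k_i k_i)}_{\rho(i)}$ acting on a basis vector $\ket{\vec j}$ shows that the contribution of a term labeled by $(\rho,\vec k)$ vanishes unless $i_{\rho(a)}=k_a$ for every $a$, i.e.\ unless the sorted tuple $(i_{\rho(1)} \le \dots \le i_{\rho(N)})$ coincides with $\vec k$. Combined with the colour-conservation of the $R$-matrix \eqref{Rmat}, this forces $\vec i$ and $\vec j$ to lie in the same rearrangement class. The constraint $\vec k \in \mathcal{I}(\rho)$, which demands strict inequality $k_a < k_{a+1}$ whenever $\rho(a)>\rho(a+1)$, is precisely the condition that $\rho$ be the \emph{stable} sorting permutation of $\vec i$ — ties among equal entries are broken by original position. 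Thus for each $\vec i$ in the rearrangement class of $\vec j$ there is a unique pair $(\rho,\vec k)$ contributing, and
\begin{equation*}
F_{1\dots N}(\vec i\,;\vec j) \;=\; R^{\,\rho(\vec i)}_{1\dots N}(\vec i\,;\vec j),
\end{equation*}
where $\rho(\vec i)$ denotes the stable sort. So $F_{1\dots N}$ is block-diagonal with respect to rearrangement classes, and within each class I would order basis vectors by the number of inversions (relative to the sorted representative). Using the permutation-graph presentation of $R^{\rho(\vec i)}_{1\dots N}$ from \eqref{R=X}, I would then show that a non-vanishing configuration exists only when $\vec j$ has at least as many inversions as $\vec i$, with the diagonal case $\vec i=\vec j$ corresponding to the unique non-crossing configuration of the permutation graph; the resulting weight is a nonempty product of entries of type \eqref{R-weights-a}--\eqref{R-weights-bc}, all of which are nonzero rational functions. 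This establishes lower-triangularity and invertibility.

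For the inverse formula I would proceed by induction on $N$. The base case $N=2$ is exactly Proposition~\ref{prop:FR-F} together with \eqref{Finv}, where the matrix $\Delta_{12}$ of \eqref{Delta} is the two-site specialization of $\Delta_{1\dots N}$. For the inductive step, the key observation is that both \eqref{NFmat} and \eqref{NFinv} are compatible with the recursive construction \eqref{Rsigma} and \eqref{Rsigma-rev}: one may isolate the $N$-th site by splitting each sum according to where the position $\rho^{-1}(N)$ lands in $\rho$, which factors $R^{\rho}_{1\dots N}$ through an $(N-1)$-site $R$-matrix times a chain of two-site $R$-matrices bringing site $N$ into position. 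An analogous decomposition holds for \eqref{NFinv}. After this factorization, the verification $F_{1\dots N}F_{1\dots N}^{-1}=\mathrm{id}$ reduces to the $N{=}2$ identity applied one crossing at a time, combined with Yang--Baxter moves \eqref{YB} and unitarity \eqref{unitarity} among the coupling $R$-matrices; the diagonal factor $\Delta_{1\dots N}$ precisely compensates for the asymmetric normalization $R_{z}(j,i;j,i) \neq R_{z}(i,j;i,j)$ of the $R$-matrix when unitarity is invoked.

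The main obstacle will be the combinatorial bookkeeping for the sums over $\mathfrak{S}_N$ subject to the constraints $\mathcal{I}(\rho)$ and $\mathcal{I}'(\rho)$: the two index sets are not related by an obvious involution such as $\rho\mapsto \rho^{-1}$ or $\rho\mapsto w_0\rho$, and showing that the products of the many partial sums telescope to $\mathrm{id}$ requires careful use of the stable-sort characterization to match terms in the product. A cleaner conceptual route is to recognize \eqref{NFmat} and \eqref{NFinv} as instances of the Drinfeld twist construction on $U_q(\widehat{\mathfrak{sl}_{n+1}})$ of Albert--Boos--Flume--Ruhlig, where the inverse formula is a general algebraic statement; but the inductive graphical argument sketched above is self-contained and stays within the vertex-model formalism already developed in the paper.
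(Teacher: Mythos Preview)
The paper does not give its own argument here: its entire ``proof'' is a reference to \cite{AlbertBFR,McAteerW}. So there is no in-text proof to compare against, and your ``cleaner conceptual route'' via the Albert--Boos--Flume--Ruhlig Drinfeld twist is precisely what the paper invokes. Your independent sketch is therefore more than the paper offers, and the component analysis you give is exactly right --- it reproduces \eqref{Fcomp-1} and \eqref{Fcomp-2}, which the paper states a few lines later without derivation.

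That said, a few points in the triangularity argument need sharpening. First, the direction of the inequality is reversed: from the permutation-graph picture with sorted colours entering on the left, one finds that $F_{1\dots N}(\vec i;\vec j)\neq 0$ forces ${\rm inv}(\vec j)\leq{\rm inv}(\vec i)$, not $\geq$. (Try $\vec i=(2,0,1)$ in $N=3$: the reachable $\vec j$ are $(2,0,1),(0,2,1),(1,0,2),(0,1,2)$ with $2,1,1,0$ inversions.) Second, ``the unique non-crossing configuration'' is a misnomer --- the graph of $\rho(\vec i)$ certainly has crossings; what you mean is the unique configuration in which every crossing is of pass-through type \eqref{R-weights-bc}, and one must separately argue that this is indeed the only configuration achieving $\vec j=\vec i$ (this follows because any swap at a crossing strictly decreases the inversion count of the running colour sequence relative to the line ordering). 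Third, ordering by inversion number alone is only a partial order, so to get genuine lower-triangularity you must also check that distinct $\vec i,\vec j$ with equal inversion counts give $F(\vec i;\vec j)=0$; this is true, but it requires the observation that the reachable set from $\vec i$ lies in a Bruhat-type down-set, not merely below in length.

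For the inverse formula, your inductive scheme is plausible but the sentence ``reduces to the $N{=}2$ identity applied one crossing at a time'' hides essentially all of the work. The difficulty you name --- matching terms between $\mathcal{I}(\rho)$ and $\mathcal{I}'(\rho)$ across the two sums over $\mathfrak{S}_N$ --- is real, and in the references the argument is organised rather differently (one shows directly that the bracketed operator in \eqref{NFinv} times $F_{1\dots N}$ collapses, via repeated use of \eqref{NFR-F}, to the diagonal operator $\Delta_{1\dots N}^{-1}$). If you want a self-contained vertex-model proof, that direct computation is more tractable than the site-by-site induction you propose.
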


\begin{proof}
We refer the reader to \cite{AlbertBFR,McAteerW} for details.
\end{proof}

\begin{rmk}
It is easy to verify that the $N$-site $F$-matrices \eqref{NFmat} and \eqref{NFinv} reduce to \eqref{Fmat} and \eqref{Finv}, respectively, at $N=2$.
\end{rmk}

Although the $F$-matrix \eqref{NFmat} and its inverse \eqref{NFinv} have a seemingly very complicated form, their components can be expressed via appropriate permutation graphs:
\begin{prop}
Let $(i_1,\dots,i_N)$, $(j_1,\dots,j_N)$ be two sets of integers taking values in 
$\{0,1,\dots,n\}$, and denote the components of the matrices \eqref{NFmat} and \eqref{NFinv} by $F_{1\dots N}(i_1,\dots,i_N;j_1,\dots,j_N)$ and 
$F^{-1}_{1\dots N}(i_1,\dots,i_N;j_1,\dots,j_N)$, respectively. Then one has the relations
\begin{align}
\label{Fcomp-1}
F_{1\dots N}(i_1,\dots,i_N;j_1,\dots,j_N)
&=
R_{1\dots N}^{\sigma}(i_1,\dots,i_N;j_1,\dots,j_N),
\\
\label{Fcomp-2}
F^{-1}_{1\dots N}(i_1,\dots,i_N;j_1,\dots,j_N)
&=
R_{\rho}^{1\dots N}(i_1,\dots,i_N;j_1,\dots,j_N)
\prod_{1 \leq a<b \leq N} b_{j_a,j_b}(x_b/x_a),
\end{align}
where $\sigma, \rho \in \mathfrak{S}_N$ are any permutations such that $i_{\sigma(1)} \leq \cdots \leq i_{\sigma(N)}$ and $j_{\rho(1)} \geq \cdots \geq j_{\rho(N)}$.
\end{prop}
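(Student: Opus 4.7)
The plan is to analyze the components of the explicit expressions \eqref{NFmat} and \eqref{NFinv} and show that, for any fixed incoming state $(i_1,\dots,i_N)$ (respectively outgoing state $(j_1,\dots,j_N)$), the double sum collapses to a single permutation graph. I would then argue that the identity of this permutation graph does not depend on which of the several possible weakly sorting permutations we choose, because the $R$-matrix is trivial on equal incoming states.

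First I would evaluate $F_{1\dots N}(i_1,\dots,i_N;j_1,\dots,j_N)$ directly from \eqref{NFmat}. The product of projectors $\prod_i E^{(k_ik_i)}_{\rho(i)}$ acts on the left slot; it is non-zero on $(i_1,\dots,i_N)$ precisely when $k_i=i_{\rho(i)}$ for all $i$. So for each $\rho\in\mathfrak{S}_N$ the tuple $(k_1,\dots,k_N)$ is forced, and the membership condition $(k_1,\dots,k_N)\in\mathcal{I}(\rho)$ translates into the requirement that $i_{\rho(1)}\le\cdots\le i_{\rho(N)}$, with strict inequality $i_{\rho(a)}<i_{\rho(a+1)}$ whenever $\rho(a)>\rho(a+1)$. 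Equivalently, equalities $i_{\rho(a)}=i_{\rho(a+1)}$ are allowed only when $\rho(a)<\rho(a+1)$. This condition is satisfied by a unique permutation $\rho_\star$, namely the sorting permutation that breaks ties by placing equal $i$-values in increasing positional order. Hence the matrix element reduces to the single term $R^{\rho_\star}_{1\dots N}(i_1,\dots,i_N;j_1,\dots,j_N)$.

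The remaining content of \eqref{Fcomp-1} is that the same matrix element is produced by $R^{\sigma}_{1\dots N}$ for \emph{any} permutation $\sigma$ with $i_{\sigma(1)}\le\cdots\le i_{\sigma(N)}$. Any two such sorting permutations $\sigma,\rho_\star$ differ by a composition of simple transpositions $\mathfrak{s}_a$ acting on positions with equal $i$-values. Using the graphical interpretation \eqref{R=X}, one passes from the permutation graph for $\sigma$ to the one for $\rho_\star$ by inserting or removing crossings of lines that carry the same incoming label $v$. Each such crossing is a vertex \eqref{R-vert} with both incoming indices equal to $v$, and by \eqref{R-weights-a} the only non-vanishing configuration of such a vertex has both outgoing indices equal to $v$ and weight $1$. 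Therefore adding or removing crossings of equal-labelled lines does not alter the partition function $\mathcal{X}^\sigma_{1\dots N}(i_1,\dots,i_N;j_1,\dots,j_N)$, and \eqref{Fcomp-1} follows. (More formally, one can iterate the recursion \eqref{Rsigma} using $R^{\mathfrak{s}_a\circ\sigma}_{\sigma}=R_{\sigma(a)\sigma(a+1)}(x_{\sigma(a+1)}/x_{\sigma(a)})$ and observe that this factor acts as the identity on the relevant tensor components when the $\sigma(a)$-th and $\sigma(a+1)$-th incoming indices coincide.)

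The proof of \eqref{Fcomp-2} is structurally identical. One first extracts the diagonal factor $\prod_{1\le a<b\le N}b_{j_a,j_b}(x_b/x_a)$ from $\Delta_{1\dots N}$ acting on the right slot $(j_1,\dots,j_N)$. The projectors $\prod_i E^{(k_ik_i)}_{\rho(i)}$ then force $k_i=j_{\rho(i)}$, and the condition $(k_1,\dots,k_N)\in\mathcal{I}'(\rho)$ selects the unique permutation $\rho_\star$ that sorts $(j_1,\dots,j_N)$ weakly decreasingly, with ties broken by decreasing positional order. This reduces the sum to the single term $R^{1\dots N}_{\rho_\star}(i_1,\dots,i_N;j_1,\dots,j_N)\prod_{a<b}b_{j_a,j_b}(x_b/x_a)$, and the same invariance argument applied to the reversed permutation graph \eqref{R=Xrev} (now for equal outgoing labels) shows that $\rho_\star$ may be replaced by any $\rho$ satisfying $j_{\rho(1)}\geq\cdots\geq j_{\rho(N)}$. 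The main obstacle in the argument is the justification of this invariance under reordering of equal-valued lines, and the cleanest way to handle it is graphically, via the observation $R_z(v,v;v,v)=1$ together with the vanishing of all other entries with both incoming indices equal to $v$.
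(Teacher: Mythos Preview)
Your proof is correct and follows exactly the approach the paper has in mind; the paper's own proof is a single sentence (``These relations follow immediately from the explicit form \eqref{NFmat}, \eqref{NFinv} of the $F$-matrices''), and what you have written is a careful unpacking of that sentence. Your identification of the unique contributing permutation $\rho_\star$ via the tie-breaking rule encoded in $\mathcal{I}(\rho)$ (respectively $\mathcal{I}'(\rho)$), together with the invariance argument based on $R_z(v,v;v,v)=1$, is precisely the content that the paper leaves implicit.

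One small remark on the invariance step: your graphical phrasing (``crossings of lines that carry the same incoming label $v$'') is slightly imprecise, since the state on a line can change at intermediate vertices. The clean version is the algebraic one you give in parentheses: going from $\rho_\star$ to any other sorting permutation $\sigma$, one can choose a path of adjacent transpositions entirely within blocks of equal $i$-values (since the sorted sequence is the same for both), so each factor $R_{\sigma'(a)\sigma'(a+1)}$ in $R^{\sigma}_{\rho_\star}$ acts on two tensor slots whose $\langle i|$-components coincide, and hence acts as the identity on that bra. This is the argument you should foreground; the graphical description is best viewed as intuition for it.
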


\begin{proof}
These relations follow immediately from the explicit form \eqref{NFmat}, \eqref{NFinv} of the $F$-matrices.
\end{proof}

Finally, we come to the $N$-site analogue of Proposition \ref{prop:Qsym}, which will be key in deriving our monomial expansion of the non-symmetric spin Hall--Littlewood functions.

\begin{prop}
\label{prop:Q1N}
Fix complex parameters $x_1,\dots,x_N$ and a linear operator 
$Q_{1\dots N}(x_1,\dots,x_N) \in {\rm End}(W_1 \otimes \cdots \otimes W_N)$ which satisfies the equation
\begin{align}
\label{RQ-QR}
R_{1\dots N}^{\sigma}
Q_{1\dots N}(x_1,\dots,x_N)
=
Q_{\sigma(1)\dots\sigma(N)}\left(x_{\sigma(1)},\dots,x_{\sigma(N)}\right)
R_{1\dots N}^{\sigma},
\quad\quad
\text{for all}\ \sigma \in \mathfrak{S}_N.
\end{align}
We define a twisted version of $Q_{1\dots N}$ by conjugating by the $N$-site $F$-matrix:
\begin{align*}
\widetilde{Q}_{1\dots N}(x_1,\dots,x_N)
:=
F_{1\dots N}(x_1,\dots,x_N)
Q_{1\dots N}(x_1,\dots,x_N)
F^{-1}_{1\dots N}(x_1,\dots,x_N).
\end{align*}
Then the operator $\widetilde{Q}_{1\dots N}(x_1,\dots,x_N)$ is completely symmetric under the simultaneous permutation of its variables and spaces, \ie\ one has
\begin{align*}
\widetilde{Q}_{1\dots N}(x_1,\dots,x_N)
=
\widetilde{Q}_{\sigma(1)\dots\sigma(N)}\left(x_{\sigma(1)},\dots,x_{\sigma(N)}\right),
\quad\quad
\text{for all}\ \sigma \in \mathfrak{S}_N.
\end{align*}
\end{prop}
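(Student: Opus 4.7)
The plan is to imitate the proof of Proposition \ref{prop:Qsym} almost verbatim, treating the defining property \eqref{NFR-F} of the $N$-site $F$-matrix as the natural generalization of the factorization \eqref{R-FF}. First I would rewrite \eqref{NFR-F} as an explicit factorization of $R^\sigma_{1\dots N}$: since the $F$-matrix \eqref{NFmat} is lower triangular with non-zero diagonal entries and hence invertible, equation \eqref{NFR-F} can be rearranged to
\begin{equation*}
R^\sigma_{1\dots N}
=
F^{-1}_{\sigma(1)\dots\sigma(N)}\!\left(x_{\sigma(1)},\dots,x_{\sigma(N)}\right)
F_{1\dots N}(x_1,\dots,x_N),
\qquad \sigma \in \mathfrak{S}_N.
\end{equation*}

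Next, I would substitute this factorization into the intertwining hypothesis \eqref{RQ-QR}, yielding
\begin{equation*}
F^{-1}_{\sigma(\cdot)}\!\left(x_{\sigma(\cdot)}\right) F_{1\dots N}(x) Q_{1\dots N}(x)
=
Q_{\sigma(\cdot)}\!\left(x_{\sigma(\cdot)}\right) F^{-1}_{\sigma(\cdot)}\!\left(x_{\sigma(\cdot)}\right) F_{1\dots N}(x),
\end{equation*}
where I abbreviate the permuted index/variable sets in the obvious way. Multiplying on the left by $F_{\sigma(1)\dots\sigma(N)}(x_{\sigma(1)},\dots,x_{\sigma(N)})$ and on the right by $F^{-1}_{1\dots N}(x_1,\dots,x_N)$ then gives
\begin{equation*}
F_{1\dots N}(x) Q_{1\dots N}(x) F^{-1}_{1\dots N}(x)
=
F_{\sigma(\cdot)}\!\left(x_{\sigma(\cdot)}\right) Q_{\sigma(\cdot)}\!\left(x_{\sigma(\cdot)}\right) F^{-1}_{\sigma(\cdot)}\!\left(x_{\sigma(\cdot)}\right),
\end{equation*}
which is precisely $\widetilde{Q}_{1\dots N}(x_1,\dots,x_N) = \widetilde{Q}_{\sigma(1)\dots\sigma(N)}(x_{\sigma(1)},\dots,x_{\sigma(N)})$, as required.

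Essentially the only substantive point is that the single-step factorization \eqref{R-FF} used in the $N=2$ case generalizes cleanly to the $N$-site $R^\sigma$ for every $\sigma$, which is exactly what \eqref{NFR-F} asserts. The main (mild) obstacle is verifying that the rearrangement is legal—i.e.\ that $F_{1\dots N}(x_1,\dots,x_N)$ is invertible on the nose with explicit inverse \eqref{NFinv}, and that \eqref{NFR-F} indeed holds for \emph{all} $\sigma\in\mathfrak{S}_N$ (not merely simple transpositions). Both are already supplied above, so no additional machinery is needed. Consequently the proof reduces to one line of algebraic rearrangement, and I would expect it to take under half a page.
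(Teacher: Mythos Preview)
Your proof is correct and follows essentially the same approach as the paper: both substitute the factorization $R^{\sigma}_{1\dots N} = F^{-1}_{\sigma(1)\dots\sigma(N)} F_{1\dots N}$ (obtained from \eqref{NFR-F}) into \eqref{RQ-QR} and rearrange, exactly mirroring the $N=2$ argument of Proposition~\ref{prop:Qsym}. The paper's version is simply terser, but the content is identical.
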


\begin{proof}
The result follows by replacing $R^{\sigma}_{1\dots N}$ in \eqref{RQ-QR} by its factorized form,
\begin{align*}
R^{\sigma}_{1\dots N} = F^{-1}_{\sigma(1)\dots\sigma(N)} F_{1\dots N},
\end{align*}
then rearrangement of the matrix multiplication, exactly as in the proof of Proposition \ref{prop:Qsym}.
\end{proof}

\section{Column operators}

Previously, we constructed the non-symmetric spin Hall--Littlewood functions by means of the row operators considered in Section \ref{ssec:row-ops}. We now introduce another algebraic formulation, in which we decompose the partition function \eqref{f-def} not by rows, but by columns. 

In what follows, let $\AA \in \mathbb{N}^n$ be a composition. Our key object will be the column operator \index{Q@$Q_{1\dots n}(\AA)$; column operators} $Q_{1\dots n}(\AA) \in {\rm End}(W_1 \otimes \cdots \otimes W_n)$,\footnote{Throughout this section, we shall take $N$ (the number of factors in the tensor product of vector spaces $W_i$) equal to $n$, the rank of the vertex models in Sections \ref{ssec:fundamental} and \ref{ssec:models}.} whose matrix elements are given as one-column partition functions in the model \eqref{s-weights}. More precisely, as in \eqref{Rsig-comp} let $\ket{j_1,\dots,j_n}$  be a canonical basis state in $W_1 \otimes \cdots \otimes W_n$, and write the action of $Q_{1\dots n}(\AA)$ on this state as
\begin{align*}
Q_{1\dots n}(\AA)
\ket{j_1,\dots,j_n}
=
\sum_{i_1,\dots,i_n}
Q_{1\dots n}(i_1, \dots, i_n; \AA; j_1, \dots, j_n)
\ket{i_1,\dots,i_n}.
\end{align*}
Then we define the matrix elements 
$Q_{1\dots n}(i_1, \dots, i_n; \AA; j_1, \dots, j_n)$ to be given by
\begin{align}
Q_{1\dots n}(i_1, \dots, i_n; \AA; j_1, \dots, j_n)
=
\tikz{0.8}{
\foreach\y in {1,...,5}{
\draw[lgray,line width=1.5pt,->] (1,\y) -- (3,\y);
}
\foreach\x in {2}{
\draw[lgray,line width=4pt,->] (\x,0) -- (\x,6);
}
\node[left] at (0,1) {$x_1 \rightarrow$};
\node[left] at (0,2) {$x_2 \rightarrow$};
\node[left] at (0,3) {$\vdots$};
\node[left] at (0,4) {$\vdots$};
\node[left] at (0,5) {$x_n \rightarrow$};
\node[below] at (2,0) {\footnotesize$\bm{0}$};
\node[above] at (2,6) {\footnotesize$\bm{A}$};
\node[right] at (3,1) {$j_1$};
\node[right] at (3,2) {$j_2$};
\node[right] at (3,3) {$\vdots$};
\node[right] at (3,4) {$\vdots$};
\node[right] at (3,5) {$j_n$};
\node[left] at (1,1) {$i_1$};
\node[left] at (1,2) {$i_2$};
\node[left] at (1,3) {$\vdots$};
\node[left] at (1,4) {$\vdots$};
\node[left] at (1,5) {$i_n$};
}
\qquad
i_k, j_k \in \{0,1,\dots,n\},
\quad
\forall\ 1 \leq k \leq n.
\end{align}
In terms of these operators, we have
\begin{align}
\label{f-col}
f_{\mu}(x_1,\dots,x_n)
=
\Big\langle 1,\dots,n \Big|
\prod_{i \geq 0}^{\rightarrow}
Q_{1\dots n}\left(\AA(i)\right)
\Big| 0,\dots,0 \Big\rangle,
\end{align}
and more generally,
\begin{align}
\label{fsig-col}
f^{\sigma}_{\mu}(x_1,\dots,x_n)
=
\Big\langle \sigma(1),\dots,\sigma(n) \Big|
\prod_{i \geq 0}^{\rightarrow}
Q_{1\dots n}\left(\AA(i)\right)
\Big| 0,\dots,0 \Big \rangle,
\quad
\sigma \in \mathfrak{S}_n.
\end{align}
In both cases \eqref{f-col}, \eqref{fsig-col}, the index $i$ increases as one reads the products from left to right, and the compositions $\AA(i)$ are given by $\AA(i) = \sum_{k: \mu_k = i} \bm{e}_k$. These formulae follow immediately from the representation of $f_{\mu}$ and $f_{\mu}^{\sigma}$ as partition functions; for example, reading the partition function \eqref{f-def} from its left boundary edges and proceeding column-by-column to the right, one recovers precisely \eqref{f-col}. Equation \eqref{fsig-col} follows similarly, except that the left boundary states are now a permutation of $(1,\dots,n)$, which is reflected in the use of the covector $\bra{\sigma(1),\dots,\sigma(n)}$. 

\section{Monomial expansions}

We come to the main goal of this chapter, namely expanding the non-symmetric spin Hall--Littlewood functions in terms of the monomials \eqref{xi}.

\begin{thm}
\label{thm:fsig-sym}
Fix a permutation $\sigma \in \mathfrak{S}_n$ and a composition 
$\delta = (\delta_1 \leq \cdots \leq \delta_n)$. One has the expansion
\begin{align}
\label{fsig-sym}
f_{\delta}^{\sigma}(x_1,\dots,x_n)
=
\prod_{i \geq 0} (s^2;q)_{m_i(\delta)}
\sum_{\rho \in \mathfrak{S}_n}
\prod_{1 \leq a < b \leq n}
\left(
\frac{x_{\rho(b)} - q x_{\rho(a)}}{x_{\rho(b)} - x_{\rho(a)}}
\right)
Z_{\rho}^{\sigma}(x_1,\dots,x_n)
\xi_{\delta}\left(x_{\rho(1)},\dots,x_{\rho(n)}\right),
\end{align}
where \index{Z1@$Z_{\rho}^{\sigma}$} $Z_{\rho}^{\sigma}(x_1,\dots,x_n)$ is the partition function (depicted in Figure \ref{fig:Zgraph}) obtained from the following permutation graph:
\begin{itemize}
\item Draw two columns of $n$ nodes, the left column bearing the labels $\sigma(k)$ (with $k$ increasing from from bottom to top) and the right column bearing the labels $k$ (with $k$ increasing from top to bottom);
\item Connect the left node with label $\sigma(\rho(k))$ to the right node with label $k$, via a left-to-right oriented line with rapidity $x_{\rho(k)}$, for all $1 \leq k \leq n$;
\item The spectral parameter at each crossing is the ratio of rapidities of the two intersecting lines, with numerator corresponding to the one entering lower.
\end{itemize}
\end{thm}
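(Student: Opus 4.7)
The plan is to use the Drinfeld-twist machinery of this chapter applied to the column-operator decomposition of $f_\delta^\sigma$, reducing the bulk computation to Proposition~\ref{prop:f-delta}'s explicit factorization for anti-dominant $\delta$. Starting from \eqref{fsig-col},
\[
f_\delta^\sigma(x_1,\dots,x_n) = \big\langle \sigma(1),\dots,\sigma(n)\big|\prod_{i\ge 0}^{\rightarrow}Q_{1\dots n}(\bm{A}(i))\big|0,\dots,0\big\rangle.
\]
Iterating the Yang--Baxter relation \eqref{RLLa} across the $n$ rows of a single column shows that $Q_{1\dots n}(\bm{A})$ satisfies the intertwining hypothesis \eqref{RQ-QR} of Proposition~\ref{prop:Q1N}, so the twisted operators $\widetilde Q_{1\dots n}(\bm{A}) := F_{1\dots n}Q_{1\dots n}(\bm{A})F^{-1}_{1\dots n}$ are symmetric under simultaneous permutation of variables and auxiliary spaces.

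Since $F_{1\dots n}|0,\dots,0\rangle = |0,\dots,0\rangle$ (visible from the lower-triangular structure of \eqref{NFmat}) and $\langle 1,\dots,n|F_{1\dots n} = \langle 1,\dots,n|$ (a direct consequence of \eqref{Fcomp-1} with $\sigma=\mathrm{id}$), I would conjugate inside the matrix element to get
\[
f_\delta^\sigma = \big\langle \sigma(1),\dots,\sigma(n)\big|F^{-1}_{1\dots n}\,\prod_{i\ge 0}^{\rightarrow}\widetilde Q_{1\dots n}(\bm{A}(i))\,\big|0,\dots,0\big\rangle.
\]
Expanding $\prod_i \widetilde Q(\bm{A}(i))|0,\dots,0\rangle = \sum_{\rho\in\mathfrak{S}_n} c_\rho(x_1,\dots,x_n)\,|\rho(1),\dots,\rho(n)\rangle$ (only permutation-type basis vectors appear, by conservation of each colour), the $\mathfrak{S}_n$-symmetry of $\widetilde Q$ (applied to the permutation-invariant vector $|0,\dots,0\rangle$) forces $c_\rho(x_1,\dots,x_n) = c_{\mathrm{id}}(x_{\rho(1)},\dots,x_{\rho(n)})$, while the identity coefficient is computed directly via $\langle 1,\dots,n|F_{1\dots n} = \langle 1,\dots,n|$ and Proposition~\ref{prop:f-delta}:
\[
c_{\mathrm{id}}(x_1,\dots,x_n) = \prod_j(s^2;q)_{m_j(\delta)}\,\xi_\delta(x_1,\dots,x_n).
\]

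It remains to evaluate the boundary matrix element $\langle\sigma(1),\dots,\sigma(n)|F^{-1}_{1\dots n}|\rho(1),\dots,\rho(n)\rangle$. By \eqref{Fcomp-2} this equals the reversed permutation-graph component $R^{1\dots n}_\rho(\sigma(1),\dots,\sigma(n);\rho(1),\dots,\rho(n))$ times the diagonal factor $\prod_{a<b}b_{\rho(a),\rho(b)}(x_b/x_a)$ from \eqref{Delta}. Under the graphical identification \eqref{R=Xrev}, the former is identified with the partition function $Z_\rho^\sigma(x_1,\dots,x_n)$ of the theorem; using that $\rho$ is a permutation (so $\rho(a)\neq\rho(b)$) the latter collapses to $\prod_{1\le a<b\le n}(x_{\rho(b)}-qx_{\rho(a)})/(x_{\rho(b)}-x_{\rho(a)})$. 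Combining the three ingredients and summing over $\rho\in\mathfrak{S}_n$ gives exactly \eqref{fsig-sym}.

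The principal obstacle is the bookkeeping of permutation-graph conventions: one must carefully verify that the reversed $R$-matrix component \eqref{R=Xrev} with the specified boundary data $(\sigma(1),\dots,\sigma(n);\rho(1),\dots,\rho(n))$ is precisely the partition function $Z_\rho^\sigma$ described in the theorem (whose graphical prescription involves subtle reversals of the rapidity and colour assignments), and that the simultaneous variable/space permutation symmetry of $\widetilde Q$ interacts with these boundary data in the stated way. A useful sanity check is $\sigma=\mathrm{id}$: the permutation graph $Z_\rho^{\mathrm{id}}$ together with the prefactor $\prod_{a<b}(x_{\rho(b)}-qx_{\rho(a)})/(x_{\rho(b)}-x_{\rho(a)})$ should annihilate all contributions outside the stabilizer of $\delta$, so that the right-hand side of \eqref{fsig-sym} collapses back to Proposition~\ref{prop:f-delta}'s formula $\prod_j(s^2;q)_{m_j(\delta)}\xi_\delta(x_1,\dots,x_n) = f_\delta$.
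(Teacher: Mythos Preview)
Your approach is correct and essentially the same as the paper's: both insert the $F$-matrix into the column-operator representation \eqref{fsig-col}, reduce the bulk computation to $f_\delta(x_{\rho(1)},\dots,x_{\rho(n)})$ (the paper by explicitly threading $R^\rho_{1\dots n}$ through the column operators via the identity $\langle\rho^{-1}(1),\dots,\rho^{-1}(n)|F_{1\dots n}=\langle 1,\dots,n|_{\rho(1)\dots\rho(n)}R^\rho_{1\dots n}$, you by invoking the $\widetilde Q$-symmetry of Proposition~\ref{prop:Q1N}), and then evaluate the remaining matrix element $\langle\sigma(1),\dots,\sigma(n)|F^{-1}|\cdot\rangle$ via \eqref{Fcomp-2}. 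The paper's resolution of identity runs over $|\rho^{-1}(1),\dots,\rho^{-1}(n)\rangle$ rather than your $|\rho(1),\dots,\rho(n)\rangle$, which is precisely the indexing ambiguity you flag; with that relabelling (and the resulting $\rho\circ\omega$ appearing as the lower index of the reversed permutation graph) your computation matches the paper's line for line.
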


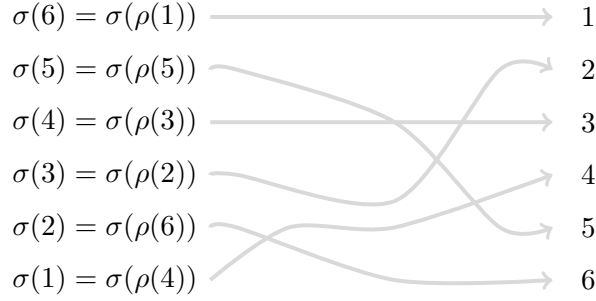
\begin{figure}
\begin{tikzpicture}[scale=0.7,baseline=(mid.base)]
\node (mid) at (1,4) {};
\draw[lgray,line width=1.5pt,->] plot [smooth] coordinates {(2,6.5)(8.5,6.5)};
\draw[lgray,line width=1.5pt,->] plot [smooth] coordinates {(2,5.5) (2.5,5.5) (5.5,4.5) (7.5,2.5) (8.5,2.5)};
\draw[lgray,line width=1.5pt,->] plot [smooth] coordinates {(2,4.5) (2.5,4.5) (5.5,4.5) (7.5,4.5) (8.5,4.5)};
\draw[lgray,line width=1.5pt,->] plot [smooth] coordinates {(2,3.5) (2.5,3.5) (5.5,3) (7.5,5.5) (8.5,5.5)};
\draw[lgray,line width=1.5pt,->] plot [smooth] coordinates {(2,2.5) (2.5,2.5) (5.5,1.5)(8.5,1.5)};
\draw[lgray,line width=1.5pt,->] plot [smooth] coordinates {(2,1.5) (3.5,2.5) (5.5,2.5) (8.5,3.5)};
\node at (0,6.5) {$\ss \sigma(6)=\sigma(\rho(1)) $}; 
\node at (0,5.5) {$\ss \sigma(5)=\sigma(\rho(5)) $}; 
\node at (0,4.5) {$\ss \sigma(4)=\sigma(\rho(3)) $};
\node at (0,3.5) {$\ss \sigma(3)=\sigma(\rho(2)) $}; 
\node at (0,2.5) {$\ss \sigma(2)=\sigma(\rho(6)) $};
\node at (0,1.5) {$\ss \sigma(1)=\sigma(\rho(4)) $};
\node at (9,6.5) {$\ss\ \ 1$};
\node at (9,5.5) {$\ss\ \ 2$};
\node at (9,4.5) {$\ss\ \ 3$};
\node at (9,3.5) {$\ss\ \ 4$};
\node at (9,2.5) {$\ss\ \ 5$};
\node at (9,1.5) {$\ss\ \ 6$};
\end{tikzpicture}
\caption{The partition function $Z_{\rho}^{\sigma}(x_1,\dots,x_6)$ in the case $\rho = (6,3,4,1,5,2)$.}
\label{fig:Zgraph}
\end{figure}

\begin{proof}
We start from the algebraic expression \eqref{fsig-col} for $f_{\delta}^{\sigma}(x_1,\dots,x_n)$ with $\delta = (\delta_1 \leq \cdots \leq \delta_n)$, and perform a series of elementary manipulations on it, which will turn it into an expansion in the monomials \eqref{xi}. Let us firstly insert the identity matrix $1 = F_{1\dots n}^{-1} F_{1\dots n}$ to the left of the first $Q_{1\dots n}$ operator. We obtain
\begin{align}
\nonumber
f^{\sigma}_{\delta}(x_1,\dots,x_n)
&=
\Big\langle \sigma(1),\dots,\sigma(n) \Big|
F^{-1}_{1\dots n}(x_1,\dots,x_n) F_{1\dots n}(x_1,\dots,x_n)
\prod_{i \geq 0}^{\rightarrow}
Q_{1\dots n}\left(\AA(i)\right)
\Big| 0,\dots,0 \Big \rangle,
\\
\nonumber
&=
\sum_{\rho \in \mathfrak{S}_n}
\langle \sigma(1),\dots,\sigma(n) |
F^{-1}_{1\dots n}(x_1,\dots,x_n)
| \rho^{-1}(1),\dots,\rho^{-1}(n) \rangle
\\
\label{second-exp}
&\times
\Big\langle \rho^{-1}(1),\dots,\rho^{-1}(n) \Big| 
F_{1\dots n}(x_1,\dots,x_n)
\prod_{i \geq 0}^{\rightarrow}
Q_{1\dots n}\left(\AA(i)\right)
\Big| 0,\dots,0 \Big \rangle,
\end{align}
where the second equality follows from a further resolution of the identity inserted between the $F$-matrix and its inverse.\footnote{Since the partition function $f_{\delta}^{\sigma}(x_1,\dots,x_n)$ has one copy of each of the colours $\{1,\dots,n\}$ entering via its left boundary, and given that $F^{-1}_{1\dots n}$ conserves colours (as can be seen from the form \eqref{Fcomp-2} of its components), it is clear that we can resolve the identity by summing over all permutations of $\{1,\dots,n\}$.
}
Next, we will simplify the second expectation value in \eqref{second-exp}. Using \eqref{Fcomp-1}, we may write
\begin{align}
\label{F-acts}
\langle \rho^{-1}(1),\dots,\rho^{-1}(n) |_{1\dots n} 
F_{1\dots n}(x_1,\dots,x_n)
=
\langle 1,\dots,n |_{\rho(1)\dots\rho(n)} 
R^{\rho}_{1\dots n},
\end{align}
where the two covectors appearing in this equation are simply rewritings of each other:
\begin{align*}
\langle \rho^{-1}(1),\dots,\rho^{-1}(n) |_{1\dots n}
\equiv
\bigotimes_{i=1}^{n}
\bra{\rho^{-1}(i)}_i
=
\bigotimes_{i=1}^{n}
\bra{i}_{\rho(i)}
\equiv
\langle 1,\dots,n |_{\rho(1)\dots\rho(n)}.
\end{align*}
Making use of equation \eqref{F-acts} in \eqref{second-exp}, we obtain
\begin{align}
\nonumber
f^{\sigma}_{\delta}(x_1,\dots,x_n)
&=
\sum_{\rho \in \mathfrak{S}_n}
\langle \sigma(1),\dots,\sigma(n) |
F^{-1}_{1\dots n}(x_1,\dots,x_n)
| \rho^{-1}(1),\dots,\rho^{-1}(n) \rangle
\\
\label{6.6-1}
&\times
\Big\langle 1,\dots,n \Big|_{\rho(1)\dots\rho(n)} 
\prod_{i \geq 0}^{\rightarrow}
Q_{\rho(1)\dots \rho(n)}\left(\AA(i);x_{\rho(1)},\dots,x_{\rho(n)}\right)
\Big| 0,\dots,0 \Big \rangle_{\rho(1)\dots \rho(n)},
\end{align}
where we have used the relation \eqref{RQ-QR} in Proposition \ref{prop:Q1N} to thread $R^{\rho}_{1\dots n}$ through the infinite product of 
$Q_{1\dots n}(\AA(i))$ operators, converting them into the permuted form $Q_{\rho(1)\dots\rho(n)}(\AA(i))$, and made use of the trivial relation
\begin{align*}
R^{\rho}_{1\dots n}
|0,\dots,0 \rangle_{1\dots n}
=
|0,\dots,0 \rangle_{\rho(1)\dots\rho(n)}
\end{align*}
to ultimately eliminate $R^{\rho}_{1\dots n}$. Now observe that (up to an irrelevant relabelling of the vector spaces which participate) the second expectation value in \eqref{6.6-1} takes the form of the function $f^{\rm id}_{\delta} \equiv f_{\delta}$. We have thus derived the identity
\begin{align}
\label{f-sym}
f^{\sigma}_{\delta}(x_1,\dots,x_n)
&=
\sum_{\rho \in \mathfrak{S}_n}
\langle \sigma(1),\dots,\sigma(n) |
F^{-1}_{1\dots n}(x_1,\dots,x_n)
| \rho^{-1}(1),\dots,\rho^{-1}(n) \rangle
\cdot
f_{\delta}\left(x_{\rho(1)},\dots,x_{\rho(n)}\right),
\end{align}
where $f_{\delta}$ is given by \eqref{f-delta} and has the explicit factorized form
\begin{align*}
f_{\delta}\left(x_{\rho(1)},\dots,x_{\rho(n)}\right)
=
\prod_{i \geq 0} (s^2;q)_{m_i(\delta)}
\cdot
\xi_{\delta}\left(x_{\rho(1)},\dots,x_{\rho(n)}\right).
\end{align*}
It remains to specify the surviving expectation value in \eqref{f-sym} more precisely. We can calculate it, using \eqref{R=Xrev} and \eqref{Fcomp-2}, in terms of a reversed permutation graph; the result is
\begin{multline*}
\langle \sigma(1),\dots,\sigma(n) |
F^{-1}_{1\dots n}(x_1,\dots,x_n)
| \rho^{-1}(1),\dots,\rho^{-1}(n) \rangle
\\
=
\prod_{1 \leq a < b \leq n}
b_{\rho^{-1}(a),\rho^{-1}(b)}(x_b/x_a)
\cdot
\mathcal{X}_{\rho \circ \omega}^{1 \dots n}
(\sigma(1),\dots,\sigma(n);
\rho^{-1}(1),\dots,\rho^{-1}(n)),
\end{multline*}
where $\omega := (n,\dots,1)$ is the longest permutation in $\mathfrak{S}_n$. This can be written in a slightly nicer form by noting that
\begin{align}
\prod_{1 \leq a < b \leq n}
b_{\rho^{-1}(a),\rho^{-1}(b)}(x_b/x_a)
=
\prod_{1 \leq a < b \leq n}
\frac{x_{\rho(b)} - q x_{\rho(a)}}{x_{\rho(b)} - x_{\rho(a)}},
\end{align}
which is an easy consequence of the definition \eqref{Delta}. Combining everything, and using the graphical interpretation of $\mathcal{X}_{\rho \circ \omega}^{1 \dots n}$, we obtain the result \eqref{fsig-sym}.

%
\end{proof}

Using \eqref{fsig-sym} in conjunction with the relation \eqref{g-fsigma}, one can now write down a monomial expansion for the function $g^{*}_{\mu}$. An analogous formula for $f_{\tilde\mu}$ can also be obtained using the relation \eqref{fmu-fsigma-2}. We include both formulae for completeness, although it is the expression for $g^{*}_{\mu}$ which will be most important to us in our subsequent proof of orthogonality statements.
\begin{cor}
\label{cor:mon-exp}
Fix a composition $\mu$, and let $\delta$ be the unique anti-dominant composition and $\sigma \in \mathfrak{S}_n$ the minimal-length permutation such that $\mu_{n-i+1} = \delta_{\sigma(i)}$ for all $1 \leq i \leq n$. We then have
\begin{align}
g^{*}_{\mu}(x_1,\dots,x_n)
=
\sum_{\rho \in \mathfrak{S}_n}
\prod_{1 \leq a < b \leq n}
\left(
\frac{x_{\rho(b)} - q x_{\rho(a)}}{x_{\rho(b)} - x_{\rho(a)}}
\right)
Z_{\tilde\rho}^{\sigma}(x_n,\dots,x_1;q)
\xi_{\delta}\left(x_{\rho(1)},\dots,x_{\rho(n)}\right),
\label{g-monom}
\end{align}
where we have defined the conjugated permutation $\tilde{\rho}$ with parts $\tilde{\rho}(i) := n-\rho(i)+1$. Similarly,
\begin{multline}
\label{f-monom}
f_{\tilde\mu}(x_1,\dots,x_n)
=
q^{{\rm inv}(\mu)}
\prod_{j \geq 0}
(s^2;q)_{m_j(\mu)}
\\
\times
\sum_{\rho \in \mathfrak{S}_n}
\prod_{1 \leq a < b \leq n}
\left(
\frac{x_{\rho(b)} - q x_{\rho(a)}}{x_{\rho(b)} - x_{\rho(a)}}
\right)
Z_{\rho}^{\sigma}(x^{-1}_1,\dots,x^{-1}_n;q^{-1})
\xi_{\delta}\left(x_{\rho(1)},\dots,x_{\rho(n)}\right),
\end{multline}
where $\tilde\mu = (\mu_n,\dots,\mu_1)$ and ${\rm inv}(\mu) = \#\{i < j : \mu_i \geq \mu_j\}$, as usual.

\end{cor}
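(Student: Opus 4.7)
The plan is to deduce both expansions directly from Theorem \ref{thm:fsig-sym}, using the relations already established in Sections \ref{ssec:fg-sym} and the alternate normalization \eqref{g-star} for the dual function. In each case, the strategy is to rewrite the target function as $f^\sigma_\delta$ (possibly with inverted arguments and reversed alphabet), insert the monomial expansion \eqref{fsig-sym}, and reconcile the resulting prefactors with the stated form.

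For the identity \eqref{g-monom}, I would first apply \eqref{g-fsigma} with the roles of $\mu$ and $\tilde\mu$ interchanged: the reverse of $\tilde\mu$ is $\mu$, and the minimal-length permutation attached to $\tilde\mu$ coincides with the $\sigma$ specified in the corollary, giving
\begin{equation*}
g_{\mu}(x_n,\dots,x_1) = \frac{(q-1)^n q^{-n(n+1)/2}}{\prod_{j\geq 0}(s^2;q)_{m_j(\mu)}}\, f^\sigma_\delta(x_1,\dots,x_n).
\end{equation*}
Multiplying by $q^{n(n+1)/2}/(q-1)^n$ converts the left side into $g^*_\mu(x_n,\dots,x_1)$ via \eqref{g-star}. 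Substituting the expansion of $f^\sigma_\delta$ from Theorem \ref{thm:fsig-sym}, the factor $\prod_{i\geq 0}(s^2;q)_{m_i(\delta)}$ cancels exactly against the denominator (since $m_i(\delta)=m_i(\mu)$). To arrive at the stated formula for $g^*_\mu(x_1,\dots,x_n)$, I would then perform the substitution $x_i\mapsto x_{n-i+1}$, and reindex the sum over $\mathfrak{S}_n$ by $\rho\mapsto\tilde\rho$ (using $\tilde{\tilde\rho}=\rho$), which converts the arguments $x_{\rho(b)}$ appearing in the rational prefactor and in $\xi_\delta$ into the form written in \eqref{g-monom}, while leaving the partition function $Z^\sigma_{\tilde\rho}$ evaluated at the reversed alphabet $(x_n,\dots,x_1)$.

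For \eqref{f-monom}, I would apply \eqref{fmu-fsigma-2} to $\tilde\mu$ and substitute Theorem \ref{thm:fsig-sym} with all parameters inverted. The simplification requires three elementary computations: the $q$-Pochhammer identity $(s^{-2};q^{-1})_m = (-1)^m s^{-2m} q^{-m(m-1)/2}(s^2;q)_m$; the transformation $\xi_\delta(x_{\rho(1)}^{-1},\dots,x_{\rho(n)}^{-1};s^{-1})=(-s)^n\prod_{i}x_i\cdot\xi_\delta(x_{\rho(1)},\dots,x_{\rho(n)};s)$; and the rational identity $\prod_{a<b}(x_{\rho(b)}^{-1}-q^{-1}x_{\rho(a)}^{-1})/(x_{\rho(b)}^{-1}-x_{\rho(a)}^{-1}) = q^{-\binom{n}{2}}\prod_{a<b}(x_{\rho(b)}-qx_{\rho(a)})/(x_{\rho(b)}-x_{\rho(a)})$. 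Collecting all of the resulting scalar factors and invoking the elementary identity $\mathrm{inv}(\mu)+\mathrm{inv}(\tilde\mu) = \binom{n}{2}+\sum_j\binom{m_j(\mu)}{2}$ to reconcile the exponents of $q$, one obtains the stated prefactor $q^{\mathrm{inv}(\mu)}\prod_j(s^2;q)_{m_j(\mu)}$. The only real obstacle is bookkeeping: keeping track of the alphabet reversals, the inversion $q,s\mapsto q^{-1},s^{-1}$, the sign contributions from $q$-Pochhammer symbols at reciprocated arguments, and the interplay between $\mathrm{inv}(\mu)$ and $\mathrm{inv}(\tilde\mu)$. No new idea is required beyond the two symmetries \eqref{g-fsigma} and \eqref{fmu-fsigma-2} together with Theorem \ref{thm:fsig-sym}.
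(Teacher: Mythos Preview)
Your approach is correct and matches the paper's own (the paper offers no details beyond pointing to \eqref{g-fsigma} and \eqref{fmu-fsigma-2} together with Theorem \ref{thm:fsig-sym}). One caution on the bookkeeping for \eqref{f-monom}: the $q$-power you actually collect is ${\rm inv}(\tilde\mu)-\binom{n}{2}-\sum_{j}\binom{m_j(\mu)}{2}$, and your identity ${\rm inv}(\mu)+{\rm inv}(\tilde\mu)=\binom{n}{2}+\sum_j\binom{m_j(\mu)}{2}$ then yields $-{\rm inv}(\mu)$, not $+{\rm inv}(\mu)$; you should re-examine that sign (the discrepancy may reflect a misprint in the stated exponent rather than an error in your method, since \eqref{f-monom} is not used elsewhere).
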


\begin{rmk}
Note that equations \eqref{g-monom} and \eqref{f-monom} effectively separate the magnitudes of the parts of $\mu$ from their ordering; the magnitudes affect the $\xi$ monomials, while the ordering of the parts affects the partition function $Z$. 
\end{rmk}

Although \eqref{g-monom} and \eqref{f-monom} are explicit monomial expansions of $g^{*}_{\mu}$ and $f_{\tilde\mu}$, they continue to have a combinatorial flavour, due to the partition function present in their summands. Below we give some important results about this partition function.

%
\begin{prop}
Let $\sigma,\rho \in \mathfrak{S}_n$ be two permutations. Then one has
\begin{align}
\label{vanish-Z}
Z_{\rho}^{\sigma}(x_1,\dots,x_n)
=
0,
\qquad
\text{if}\ \ell(\rho) > \ell(\sigma);
\end{align}
\ie\ the partition function $Z_{\rho}^{\sigma}(x_1,\dots,x_n)$ vanishes if $\rho$ has more inversions than $\sigma$.
\end{prop}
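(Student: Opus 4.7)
My plan is to prove the vanishing via a crossing-count argument on the permutation graph of Figure \ref{fig:Zgraph} that defines $Z_\rho^\sigma$. I would balance two numerical invariants against each other: the number of geometric crossings of the graph (controlled by $\rho$) and the inversion number of the colour routing forced by the boundary (controlled by $\sigma$).

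First I would count crossings. The line of rapidity $x_{\rho(k)}$ joins left-height $\rho(k)$ to right-height $n-k+1$ (both measured from the bottom). Two lines indexed $k_1<k_2$ cross iff they appear in opposite vertical orders at their two ends; since on the right $n-k_1+1>n-k_2+1$, this happens iff $\rho(k_1)<\rho(k_2)$. Hence the total number of crossing vertices in the graph is
\[
\#\{k_1<k_2:\rho(k_1)<\rho(k_2)\}\;=\;\binom{n}{2}-\ell(\rho).
\]

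Second I would identify the colour routing forced by the boundary. The left column carries label $\sigma(k)$ at height $k$, while the right column carries label $k$ at height $n-k+1$. Colour conservation at each vertex of \eqref{col-vert-intro} forces each colour $c$ to travel from left-height $\sigma^{-1}(c)$ to right-height $n-c+1$; the induced colour-routing permutation is $\pi=\omega\sigma$, where $\omega$ is the longest permutation of $\mathfrak{S}_n$, and a direct count gives $\ell(\pi)=\binom{n}{2}-\ell(\sigma)$.

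The heart of the argument is that every non-vanishing configuration must realise $\pi$ as the composite of the transpositions produced by the swap-type vertices of \eqref{col-vert-intro}, read in the natural left-to-right spatial order. A pass-through vertex contributes the identity and a swap vertex the elementary transposition of the two currently carried colours; because in the generic permutation graph each unordered pair of lines meets at most once, each such transposition can appear at most once in the composite. The realised permutation is therefore a product of distinct transpositions whose word length equals the number of swap vertices, and this word length is at least $\ell(\pi)$ by the definition of $\ell$. Combining everything,
\[
\binom{n}{2}-\ell(\sigma)\;=\;\ell(\pi)\;\le\;\#\{\text{swap vertices}\}\;\le\;\#\{\text{vertices}\}\;=\;\binom{n}{2}-\ell(\rho),
\]
which rearranges to $\ell(\rho)\le\ell(\sigma)$. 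Contrapositively, if $\ell(\rho)>\ell(\sigma)$ no configuration can contribute and $Z_\rho^\sigma\equiv 0$.

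The main obstacle will be the careful bookkeeping in the third step: one must verify, directly from the form of the vertices \eqref{col-vert-intro}, that the composite of the swap-transpositions read in spatial order really is $\pi$, and that the word-length bound holds with no cancellations thanks to the genericity of the permutation graph. Both verifications amount to tracking colour labels segment-by-segment along the geometric lines and using that each ordered pair of lines meets only once, so that no transposition can contribute twice.
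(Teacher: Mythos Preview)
Your crossing-count strategy is a valid alternative to the paper's inductive argument (which peels off one crossing at a time and inducts downward on $\ell(\rho)$), but there is a gap in your ``heart of the argument''. You want to bound the number of swap vertices below by $\ell(\pi)$ via the definition of Coxeter length. The trouble is that the transposition $(a\ b)$ of ``the two currently carried colours'' is in general \emph{not} a simple reflection, and $\ell(\pi)$ only lower-bounds word length in simple reflections $s_i=(i\ i{+}1)$, not in arbitrary transpositions; for instance $(1\ 3)\in\mathfrak{S}_3$ is a single transposition but has $\ell=3$. The observation that each pair of lines meets at most once does not rescue this.

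The repair is to track colours by \emph{height} rather than by line. At the instant of any crossing the two lines occupy adjacent heights $h,h{+}1$ (nothing can lie strictly between them in a generic graph). If the colours stay on their geometric lines --- your ``pass-through'' vertices $R_z(i,j;i,j)$ --- then the colours' heights swap, contributing the simple reflection $s_h$; if the colours swap lines --- your ``swap'' vertices --- then each colour stays at its height, contributing the identity. (So the roles of swap and pass-through are reversed relative to your writeup.) The height-to-colour map therefore evolves from $\sigma$ to $\omega$ by right-multiplication by one $s_h$ per pass-through vertex, expressing $\sigma^{-1}\omega$ as a word in at most $\binom{n}{2}-\ell(\rho)$ simple reflections; hence $\binom{n}{2}-\ell(\sigma)=\ell(\sigma^{-1}\omega)\le\binom{n}{2}-\ell(\rho)$ and $\ell(\rho)\le\ell(\sigma)$ as desired.
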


\begin{proof}
We prove this by a simple inductive argument. In the case of the longest permutation $\rho = \omega = (n,\dots,1)$, the permutation graph $Z^{\sigma}_{\omega}(x_1,\dots,x_n)$ is trivial: it consists of $n$ non-crossing lines which directly connect left label $\sigma(n-i+1)$ to right label $i$, for $1 \leq i \leq n$. Since the lines do not cross anywhere, colour conservation leads to the constraint $\sigma = \omega$, or else $Z^{\sigma}_{\omega}(x_1,\dots,x_n)$ vanishes. Hence we see that \eqref{vanish-Z} holds in the case $\ell(\rho) = n(n-1)/2$.

Now assume that \eqref{vanish-Z} holds for all permutations $\rho$ such that $\ell(\rho) = L$, where $1 \leq L \leq n(n-1)/2$ is some fixed positive integer. Let $\rho'$ be a permutation of length $\ell(\rho') = L-1$. It is always possible to select integers $1 \leq i \leq n-1$ and $1 \leq j < k \leq n$ such that $\rho'(j) = i, \rho'(k) = i+1$ and $\rho' = \mathfrak{s}_i \circ \rho$, where $\rho$ is a permutation with length $\ell(\rho) = L$. 

We then consider the partition function $Z^{\sigma'}_{\rho'}(x_1,\dots,x_n)$, which can be subdivided as in Figure \ref{fig:subdivide}. The left half of this subdivision consists of a single vertex
\begin{align*}
\tikz{0.4}{
\draw[lgray,line width=1.5pt,->] (-1,0) -- (1,0);
\draw[lgray,line width=1.5pt,->] (0,-1) -- (0,1);
\node[left] at (-1,0) {\tiny $\sigma'(i+1)$};\node[right] at (1,0) {\tiny $\sigma(i)$};
\node[below] at (0,-1) {\tiny $\sigma'(i)$};\node[above] at (0,1) {\tiny $\sigma(i+1)$};
},
\end{align*}
while the right half is equal to $Z^{\sigma}_{\rho}(x_1,\dots,x_n)$, and by assumption vanishes unless $\ell(\sigma) \geq L$. Given that the edge states $\sigma'(1),\dots,\sigma'(n)$ and $\sigma(1),\dots,\sigma(n)$ are separated only by the above vertex, and it admits two possible cases, namely $\sigma'(i) = \sigma(i), \sigma'(i+1) = \sigma(i+1)$ or $\sigma'(i) = \sigma(i+1), \sigma'(i+1) = \sigma(i)$, we clearly have the bounds $\ell(\sigma')-1 \leq \ell(\sigma) \leq \ell(\sigma')+1$. It follows that 
$Z^{\sigma'}_{\rho'}(x_1,\dots,x_n)$ vanishes unless $\ell(\sigma') \geq L-1$, which is the required inductive step. We conclude that \eqref{vanish-Z} holds generally.

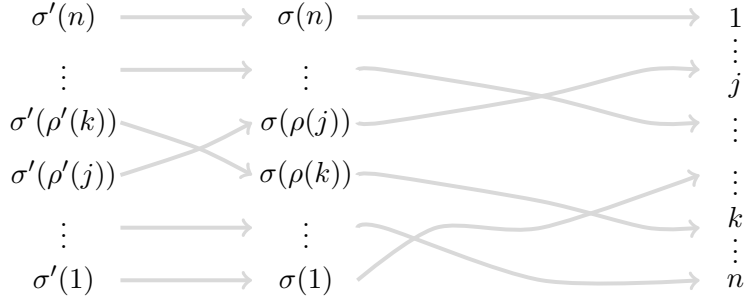
\begin{figure}
\begin{tikzpicture}[scale=0.7,baseline=(mid.base)]
\node (mid) at (1,4) {};
%
\draw[lgray,line width=1.5pt,->] plot [smooth] coordinates {(-2.5,6.5)(0,6.5)};
\draw[lgray,line width=1.5pt,->] plot [smooth] coordinates {(-2.5,5.5) (0,5.5)};
\draw[lgray,line width=1.5pt,->] plot [smooth] coordinates {(-2.5,4.5) (-1,4) (0,3.5)};
\draw[lgray,line width=1.5pt,->] plot [smooth] coordinates {(-2.5,3.5) (-1,4) (0,4.5)};
\draw[lgray,line width=1.5pt,->] plot [smooth] coordinates {(-2.5,2.5) (0,2.5)};
\draw[lgray,line width=1.5pt,->] plot [smooth] coordinates {(-2.5,1.5) (0,1.5)};
\draw[lgray,line width=1.5pt,->] plot [smooth] coordinates {(2,6.5)(8.5,6.5)};
\draw[lgray,line width=1.5pt,->] plot [smooth] coordinates {(2,5.5) (2.5,5.5) (5.5,5) (7.5,4.5) (8.5,4.5)};
\draw[lgray,line width=1.5pt,->] plot [smooth] coordinates {(2,4.5) (2.5,4.5) (5.5,5) (7.5,5.5) (8.5,5.5)};
\draw[lgray,line width=1.5pt,->] plot [smooth] coordinates {(2,3.5) (2.5,3.5) (5.5,3) (7.5,2.5) (8.5,2.5)};
\draw[lgray,line width=1.5pt,->] plot [smooth] coordinates {(2,2.5) (2.5,2.5) (5.5,1.5)(8.5,1.5)};
\draw[lgray,line width=1.5pt,->] plot [smooth] coordinates {(2,1.5) (3.5,2.5) (5.5,2.5) (8.5,3.5)};
\node at (-3.3,6.5) {$\ss \sigma'(n)\ \ \ $}; 
\node at (-3.3,5.5) {$\ss \vdots\ \ \ $}; 
\node at (-3.3,4.5) {$\ss \sigma'(\rho'(k))\ \ \ $};
\node at (-3.3,3.5) {$\ss \sigma'(\rho'(j))\ \ \ $}; 
\node at (-3.3,2.5) {$\ss \vdots\ \ \ $};
\node at (-3.3,1.5) {$\ss \sigma'(1)\ \ \ $};
\node at (1.3,6.5) {$\ss \sigma(n)\ \ \ $}; 
\node at (1.3,5.5) {$\ss \vdots\ \ \ $}; 
\node at (1.3,4.5) {$\ss \sigma(\rho(j))\ \ \ $};
\node at (1.3,3.5) {$\ss \sigma(\rho(k))\ \ \ $}; 
\node at (1.3,2.5) {$\ss \vdots\ \ \ $};
\node at (1.3,1.5) {$\ss \sigma(1)\ \ \ $};
\node at (9,6.5) {$\ss\ \ 1$};
\node at (9,6) {$\ss\ \ \vdots$};
\node at (9,5.2) {$\ss\ \ j$};
\node at (9,4.5) {$\ss\ \ \vdots$};
\node at (9,3.5) {$\ss\ \ \vdots$};
\node at (9,2.7) {$\ss\ \ k$};
\node at (9,2.2) {$\ss\ \ \vdots$};
\node at (9,1.5) {$\ss\ \ n$};
\end{tikzpicture}
\caption{The partition function $Z_{\rho'}^{\sigma'}(x_1,\dots,x_n)$, explicitly showing the decomposition $\rho' = \mathfrak{s}_i \circ \rho$, with summation taken over the intermediate states $\sigma(1),\dots,\sigma(n)$.}
\label{fig:subdivide}
\end{figure}

\end{proof}

\begin{prop}
\label{prop:singular}
All poles of $Z_{\tilde\rho}^{\sigma}(x_n,\dots,x_1)$ are known explicitly; one has
\begin{align}
\label{singular-structure}
Z_{\tilde\rho}^{\sigma}(x_n,\dots,x_1)
=
\prod_{a<b: \rho(a) > \rho(b)}
\frac{1}{x_{\rho(b)} - q x_{\rho(a)}}
\,
\mathcal{P}_{\rho}^{\sigma}(x_1,\dots,x_n),
\end{align}
where $\mathcal{P}_{\rho}^{\sigma}(x_1,\dots,x_n)$ is a homogeneous polynomial of total degree $\ell(\rho)$. The degree of $\mathcal{P}_{\rho}^{\sigma}(x_1,\dots,x_n)$ in the individual variable $x_i$ is sufficiently small such that $\lim_{x_i \rightarrow \infty} Z_{\tilde\rho}^{\sigma}(x_n,\dots,x_1)$ exists, for all $1 \leq i \leq n$.

\end{prop}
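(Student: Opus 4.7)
The plan is to combine a direct vertex-level inspection of the grid \eqref{Z-sigma-intro} with the algebraic identification of $Z_{\tilde\rho}^{\sigma}(x_n,\dots,x_1)$, up to explicit diagonal factors, as a matrix element of $R^{1\dots n}_{\rho}$ that was essentially extracted in the proof of Theorem~\ref{thm:fsig-sym}.

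First, by inspection of the table \eqref{col-vert-intro}, I would record that the trivial colour-preserving vertex has weight $1$ while every non-trivial weight is of the form $P(x_{\rho(i)},x_j)/(x_{\rho(i)} - q x_j)$, with a homogeneous degree-$1$ numerator $P$ (after absorbing the horizontal rapidity into the denominator). Hence each configuration of the grid contributes a rational function whose denominator is a product of linear factors $x_{\rho(i)} - q x_j$, and summing over configurations preserves this overall form, giving $Z_{\tilde\rho}^{\sigma} = \mathcal{P}/\mathcal{Q}$ with $\mathcal{Q}$ a product of such linear factors. The essential content of the proposition is then (i) that only inversion factors $x_{\rho(b)} - q x_{\rho(a)}$ with $a<b$, $\rho(a)>\rho(b)$ actually remain after the sum, and (ii) the degree count.

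Next I would invoke the recursive decomposition \eqref{Rsigma-rev} of $R^{1\dots n}_{\rho}$ into a product of exactly $\ell(\rho)$ two-site $R$-matrices, one per inversion of $\rho$, whose spectral parameters are precisely the ratios $x_{\rho(a)}/x_{\rho(b)}$ for inversions $(a,b)$. Since each such factor is an entry of \eqref{R-weights-a}--\eqref{R-weights-bc}, with numerator of degree at most $1$ in the spectral parameter and denominator exactly $1 - q x_{\rho(a)}/x_{\rho(b)}$, the claims about the pole locations and about the total degree $\ell(\rho)$ of $\mathcal{P}_{\rho}^{\sigma}$ both follow. The finite limit as $x_i \to \infty$ is immediate from the same description, because each individual two-site entry $R_{ab}(z)$ stays bounded as $z \to 0$ or $z \to \infty$, and a product of finitely many such entries inherits this boundedness; summing over internal states does not spoil it.

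The main obstacle is cleanly identifying $Z_{\tilde\rho}^{\sigma}(x_n,\dots,x_1)$ with the $F^{-1}$-matrix element of \eqref{Fcomp-2}: the conjugation by $\omega$ hidden in the subscript $\tilde\rho$, together with the diagonal factors $b_{\rho^{-1}(a),\rho^{-1}(b)}(x_b/x_a)$ appearing in \eqref{Fcomp-2}, must be reconciled with the grid picture so that any stray pole factors coming from the $b$-matrix cancel against matching factors in the reversed-permutation-graph entry, leaving precisely the inversion poles of $\rho$. Once this bookkeeping is settled, the two structural statements follow essentially by inspection of the two-site $R$-matrix entries.
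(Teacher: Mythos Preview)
Your middle paragraph contains the paper's argument, but you have wrapped it in two unnecessary detours. The paper works directly from the permutation-graph definition of $Z^\sigma_{\tilde\rho}(x_n,\dots,x_1)$ (Figure~\ref{fig:Zgraph}), not from the $n\times n$ domain-wall grid \eqref{Z-sigma-intro} and not from the $F^{-1}$ identification. That graph consists of $n$ lines, and for $a<b$ the lines carrying rapidities $x_{\rho(a)}$ and $x_{\rho(b)}$ cross if and only if $\rho(a)>\rho(b)$, giving exactly $\ell(\rho)$ crossings. Each crossing is a single vertex of the form \eqref{R-weights-bc} with denominator proportional to $x_{\rho(b)} - q x_{\rho(a)}$. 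Because each colour $1,\dots,n$ appears exactly once among the external edges, no vertex can be of the trivial type \eqref{R-weights-a} with weight $1$; hence every one of the $\ell(\rho)$ vertices carries its common denominator, and the pole structure \eqref{singular-structure} together with the total degree $\ell(\rho)$ of the numerator are immediate. The existence of $\lim_{x_i\to\infty}$ is then read off from the individual weights \eqref{R-weights-bc}, all of which have finite limits as the spectral parameter tends to $0$ or $\infty$.

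Your first paragraph, starting from the $n\times n$ grid, cannot directly yield \eqref{singular-structure}: that grid has $n^2$ vertices and hence $n^2$ candidate denominators, and their reduction to the $\ell(\rho)$ inversion factors only happens via the splitting/unitarity argument of Proposition~\ref{prop:Z-sigma}, which is precisely what recovers the permutation graph. Your third paragraph's ``main obstacle'' --- reconciling the diagonal $b$-factors from \eqref{Fcomp-2} --- is a phantom: $Z^\sigma_{\tilde\rho}$ \emph{is} the permutation graph by its definition in Theorem~\ref{thm:fsig-sym} (it arose there as $\mathcal{X}^{1\dots n}_{\rho\circ\omega}$ with the $b$-factors already stripped off), so there is no bookkeeping to do. The object you call $R^{1\dots n}_\rho$ in your second paragraph is also not quite the right one; the permutation whose reduced-word decomposition governs the crossings is $\rho\circ\omega$ rather than $\rho$, though the inversion count works out to $\ell(\rho)$ as you say.
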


\begin{proof}
Given that we have exactly one copy of each of the colours $\{1,\dots,n\}$ entering/leaving the $n$ lines which constitute $Z_{\tilde\rho}^{\sigma}(x_n,\dots,x_1)$, there cannot be any vertices of the form
\tikz{0.3}{
\draw[lgray,line width=1.5pt,->] (-1,0) -- (1,0);
\draw[lgray,line width=1.5pt,->] (0,-1) -- (0,1);
\node[left] at (-1,0) {\tiny $i$};\node[right] at (1,0) {\tiny $i$};
\node[below] at (0,-1) {\tiny $i$};\node[above] at (0,1) {\tiny $i$};
}
for all $0 \leq i \leq n$. All vertices are therefore of the form \eqref{R-weights-bc}, which have a common denominator. It follows that, for each pair of integers $1\leq i<j \leq n$, $Z_{\tilde\rho}^{\sigma}(x_n,\dots,x_1)$ contains a common factor of 
$(1-qx_j/x_i)^{-1}$ if and only if the lines bearing the rapidities $x_i$, $x_j$ cross. The crossings of the lines are easy to determine; for $a<b$ the lines with rapidities $x_{n-\tilde\rho(a)+1}$ and $x_{n-\tilde\rho(b)+1}$ cross if and only if $\tilde\rho(a) < \tilde\rho(b)$, or said another way, the lines with rapidities $x_{\rho(a)}$ and $x_{\rho(b)}$ cross if and only if $\rho(a) > \rho(b)$. The statement \eqref{singular-structure} follows immediately.

The second part of the proposition, namely the degree statement, follows from the fact that individual vertex weights within the lattice satisfy such a limiting criterion; the whole partition function must also, therefore, satisfy it.
\end{proof}

The final result of this chapter gives a representation of the coefficients $Z^{\sigma}_{\tilde\rho}$ as partition functions of ``domain-wall'' type:
\begin{prop}
\label{prop:Z-sigma}
Let $(x_1,\dots,x_n)$, $(y_1,\dots,y_n)$ be two sets of variables, and fix a permutation $\sigma\in \mathfrak{S}_n$. We define the following partition function in the model \eqref{fund-vert}:
\begin{align}
\label{Z-sigma}
Z^{\sigma}(x_n,\dots,x_1;y_1,\dots,y_n)
:=
\tikz{0.7}{
\foreach\y in {1,...,5}{
\draw[lgray,line width=1.5pt,->] (0,\y) -- (6,\y);
}
\foreach\x in {1,...,5}{
\draw[lgray,line width=1.5pt,->] (\x,0) -- (\x,6);
}
\node[below] at (1,0) {\scriptsize $\sigma(n)$};
\node[below] at (2,0) {\scriptsize $\cdots$};
\node[below] at (3,0) {\scriptsize $\cdots$};
\node[below] at (4,0) {\scriptsize $\sigma(2)$};
\node[below] at (5,0) {\scriptsize $\sigma(1)$};
\node[above] at (1,6) {\scriptsize $0$};
\node[above] at (2,6) {\scriptsize $\cdots$};
\node[above] at (3,6) {\scriptsize $\cdots$};
\node[above] at (4,6) {\scriptsize $0$};
\node[above] at (5,6) {\scriptsize $0$};
\node[left] at (0,1) {\scriptsize $0$};
\node[left] at (0,2) {\scriptsize $\vdots$};
\node[left] at (0,3) {\scriptsize $\vdots$};
\node[left] at (0,4) {\scriptsize $0$};
\node[left] at (0,5) {\scriptsize $0$};
\node[right] at (6,1) {\scriptsize $n$};
\node[right] at (6,2) {\scriptsize $\vdots$};
\node[right] at (6,3) {\scriptsize $\vdots$};
\node[right] at (6,4) {\scriptsize $2$};
\node[right] at (6,5) {\scriptsize $1$};
\node[left] at (-1,1) {$y_n$};
\node[left] at (-1,2) {$\vdots$};
\node[left] at (-1,3) {$\vdots$};
\node[left] at (-1,4) {$y_2$};
\node[left] at (-1,5) {$y_1$};
\node[below] at (1,-1) {$x_1$};
\node[below] at (2,-1) {$\cdots$};
\node[below] at (3,-1) {$\cdots$};
\node[below] at (4,-1) {$x_{n-1}$};
\node[below] at (5,-1) {$x_n$};
}
\end{align}
in which the $i$-th horizontal line (counted from the top) carries rapidity $y_i$, left external edge state $0$ and right external edge state $i$, while the $j$-th vertical line (counted from the left) carries rapidity $x_j$, bottom external edge state 
$\sigma(n-j+1)$ and top external edge state $0$; the spectral parameter of each vertex is the ratio of the vertical and horizontal rapidities. Fixing a further permutation $\rho \in \mathfrak{S}_n$, one then has the relation
\begin{align}
\label{square}
Z_{\tilde\rho}^{\sigma}(x_n,\dots,x_1)
=
Z^{\sigma}\left(x_n,\dots,x_1;y_1 = x_{\rho(1)},\dots,y_n = x_{\rho(n)}\right). 
\end{align}
\end{prop}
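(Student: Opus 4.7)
The main observation underpinning the proof is that the specialization $y_i = x_{\rho(i)}$ makes the vertex at lattice position $(i,\rho(i))$ in the square of \eqref{Z-sigma} have spectral parameter $z=1$. Evaluating the $R$-matrix weights in \eqref{R-weights-bc} at $z=1$, one finds that only the pure ``turn'' configurations $R_1(j,i;i,j) = R_1(i,j;j,i) = 1$ (together with the diagonal $R_1(k,k;k,k) = 1$) are non-vanishing, while the distinct-colour ``straight-through'' weights $R_1(j,i;j,i)$ and $R_1(i,j;i,j)$ vanish identically. Each diagonal vertex therefore freezes into an L-bend that joins horizontal line $i$ with vertical line $\rho(i)$: any colour entering from below is routed deterministically to the right edge, and any colour entering from the left is routed to the top edge.

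My plan is to argue by induction on $n$ via a train argument. For $n=1$ the identity is tautological. For the inductive step, I would use the Yang-Baxter equation of Proposition~\ref{prop:YB} to successively move the column indexed by $\rho(n)$ so that, relative to horizontal line $n$, the $z=1$ vertex comes to sit at the bottom-right corner of the square. Each such Yang-Baxter move locally rearranges a triple of crossings without altering the partition function; after the moves the corner freezing completely decouples horizontal line $n$ and vertical line $\rho(n)$ from the remainder of the lattice as a single oriented bent line. What remains is an $(n-1)\times(n-1)$ sub-lattice whose row rapidities are $(x_{\rho(1)},\ldots,x_{\rho(n-1)})$ and whose column rapidities are the $x_j$ for $j\ne \rho(n)$; its diagonal-freezing pattern matches the restriction $\rho'\in\mathfrak{S}_{n-1}$ of $\rho$. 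By the inductive hypothesis the sub-lattice equals the $(n-1)$-permutation graph for $\rho'$, and reinstating the extracted bent line adds exactly one further line whose crossings with the earlier lines occur at positions $(n,\rho(i))$ for indices $i<n$ with $\rho(i)>\rho(n)$, which is precisely the inversion-crossing pattern dictated by the permutation-graph construction of Theorem~\ref{thm:fsig-sym}.

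The principal obstacle is the extraction step: summing over all local configurations along horizontal line $n$ yields many terms, not merely the single frozen L-bend, and one must demonstrate that these sum to the extracted factor. Already for $n=2$ with $\sigma=\omega$ and $\rho=\mathrm{id}$, the two admissible configurations contribute individual weights $-q(x_1-x_2)^2/[(x_1-qx_2)(x_2-qx_1)]$ and $(1-q)^2 x_1 x_2/[(x_1-qx_2)(x_2-qx_1)]$ which add to $1$ via the algebraic identity $-q(x_1-x_2)^2+(1-q)^2 x_1 x_2 = (x_1-qx_2)(x_2-qx_1)$, matching the crossing-free permutation graph on the right. This cancellation should be a low-dimensional shadow of a systematic use of the unitarity relation \eqref{unitarity} applied to the spurious $R$-vertices introduced during the Yang-Baxter transports. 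A potentially cleaner alternative is to recast both sides as matrix elements of $R^{\rho'}_{1\cdots n}$ for suitable $\rho'\in\mathfrak{S}_n$ via \eqref{R=X}; in that formulation the equality becomes the statement that any two reduced-word decompositions of the same permutation produce equal products of $R$-matrices, which is itself a direct consequence of the Yang-Baxter equation.
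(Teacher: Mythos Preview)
Your initial observation is exactly right and matches the paper: the specialization $y_i=x_{\rho(i)}$ forces spectral parameter $1$ at the vertex where horizontal line $i$ meets vertical line $\rho(i)$, and at $z=1$ the $R$-vertex degenerates to a pure ``splitting'' (the paper's equation \eqref{R-split}). You also correctly identify unitarity as the tool for handling the resulting picture.

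Where you diverge from the paper is in the organization of the argument. The paper does not proceed by induction or by a train argument peeling off one line at a time. Instead it performs all $n$ splittings simultaneously: after the specialization, every one of the $n$ matched vertices splits, and the square lattice decomposes into a tangle of $2n$ L-shaped lines. One then invokes unitarity \eqref{unitarity} globally to freely reposition these lines (holding endpoints fixed), which disentangles the picture into two disjoint pieces. The upper-left piece carries only $0$ states on all external edges and hence freezes to weight $1$; the lower-right piece is precisely the permutation graph $Z^{\sigma}_{\tilde\rho}(x_n,\dots,x_1)$.

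The obstacle you identify in your inductive approach---that extracting a single line produces many terms requiring a nontrivial cancellation---is entirely bypassed by this global splitting. There is no sum to collapse: once all diagonal vertices split, the two halves are literally disconnected (after untangling), and the empty half is trivially $1$ because every vertex in it is of type $R_z(0,0;0,0)=1$. Your $n=2$ computation, where two configurations conspire to give $1$, is doing by hand what the global splitting plus unitarity does automatically. Your alternative suggestion via matrix elements of $R^{\rho'}_{1\cdots n}$ is closer in spirit but still less direct than the paper's pictorial factorization.
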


\begin{proof}
The proof makes use of a basic property of the $R$-matrix \eqref{Rmat}; namely, when its spectral parameter is set to $1$, it reduces to a permutation matrix. More precisely, analyzing the matrix entries \eqref{R-weights-a} and \eqref{R-weights-bc}, we see that both $R_z(j,i;j,i)$ and $R_z(i,j;i,j)$ vanish at $z=1$, while all remaining entries assume the value $1$ at $z=1$. From a graphical point of view, this can be understood as a ``splitting'' of the vertex:
\begin{align}
\label{R-split}
R_{1}(i,j; k,\ell)
=
{\bm 1}_{i=\ell}
\cdot
{\bm 1}_{j=k}
=
\tikz{0.7}{
\draw[lgray,line width=1.5pt,->] plot coordinates {(-1,0)(-0.25,0)(0,0.25)(0,1)};
\draw[lgray,line width=1.5pt,->] plot coordinates {(0,-1)(0,-0.25)(0.25,0)(1,0)};
\node[left] at (-1,0) {\tiny $j$};\node[right] at (1,0) {\tiny $\ell$};
\node[below] at (0,-1) {\tiny $i$};\node[above] at (0,1) {\tiny $k$};
},
\quad
i,j,k,\ell \in \{0,1,\dots,n\}.
\end{align}
Applying the specializations listed in \eqref{square} to the partition function \eqref{Z-sigma}, one sees that they induce a splitting of the vertex at the intersection of the $i$-th horizontal line and the $\rho(i)$-th vertical line, for all $1 \leq i \leq n$:
\begin{align}
\label{Z-sigma-split}
Z^{\sigma}\left(x_n,\dots,x_1;x_{\rho(1)},\dots,x_{\rho(n)}\right)
=
\tikz{0.7}{
\draw[lgray,line width=1.5pt,->] (0,5) -- (3.75,5) -- (4,5.25) -- (4,6);
\draw[lgray,line width=1.5pt,->] (0,4) -- (0.75,4) -- (1,4.25) -- (1,6);
\draw[lgray,line width=1.5pt,->] (0,3) -- (4.75,3) -- (5,3.25) -- (5,6);
\draw[lgray,line width=1.5pt,->] (0,2) -- (1.75,2) -- (2,2.25) -- (2,6);
\draw[lgray,line width=1.5pt,->] (0,1) -- (2.75,1) -- (3,1.25) -- (3,6);
\draw[lgray,line width=1.5pt,->] (1,0) -- (1,3.75) -- (1.25,4) -- (6,4);
\draw[lgray,line width=1.5pt,->] (2,0) -- (2,1.75) -- (2.25,2) -- (6,2);
\draw[lgray,line width=1.5pt,->] (3,0) -- (3,0.75) -- (3.25,1) -- (6,1);
\draw[lgray,line width=1.5pt,->] (4,0) -- (4,4.75) -- (4.25,5) -- (6,5);
\draw[lgray,line width=1.5pt,->] (5,0) -- (5,2.75) -- (5.25,3) -- (6,3);
\node[below] at (1,0) {\scriptsize $\sigma(n)$};
\node[below] at (2,0) {\scriptsize $\cdots$};
\node[below] at (3,0) {\scriptsize $\cdots$};
\node[below] at (4,0) {\scriptsize $\sigma(2)$};
\node[below] at (5,0) {\scriptsize $\sigma(1)$};
\node[above] at (1,6) {\scriptsize $0$};
\node[above] at (2,6) {\scriptsize $\cdots$};
\node[above] at (3,6) {\scriptsize $\cdots$};
\node[above] at (4,6) {\scriptsize $0$};
\node[above] at (5,6) {\scriptsize $0$};
\node[left] at (0,1) {\scriptsize $0$};
\node[left] at (0,2) {\scriptsize $\vdots$};
\node[left] at (0,3) {\scriptsize $\vdots$};
\node[left] at (0,4) {\scriptsize $0$};
\node[left] at (0,5) {\scriptsize $0$};
\node[right] at (6,1) {\scriptsize $n$};
\node[right] at (6,2) {\scriptsize $\vdots$};
\node[right] at (6,3) {\scriptsize $\vdots$};
\node[right] at (6,4) {\scriptsize $2$};
\node[right] at (6,5) {\scriptsize $1$};
\node[left] at (-1,1) {$x_{\rho(n)}$};
\node[left] at (-1,2) {$\vdots$};
\node[left] at (-1,3) {$\vdots$};
\node[left] at (-1,4) {$x_{\rho(2)}$};
\node[left] at (-1,5) {$x_{\rho(1)}$};
\node[below] at (1,-1) {$x_1$};
\node[below] at (2,-1) {$\cdots$};
\node[below] at (3,-1) {$\cdots$};
\node[below] at (4,-1) {$x_{n-1}$};
\node[below] at (5,-1) {$x_n$};
}
\end{align}
The split lattice \eqref{Z-sigma-split} can then be factorized into two disjoint pieces, by making repeated use of the unitarity relation \eqref{unitarity}. Indeed, as long as the beginning and endpoint of each line is held fixed, the braid relations \eqref{YB}, \eqref{unitarity} allow us to freely reposition the lines in \eqref{Z-sigma-split}. We thus read off the factorization
\begin{align}
\label{Z-sigma-split2}
Z^{\sigma}\left(x_n,\dots,x_1;x_{\rho(1)},\dots,x_{\rho(n)}\right)
=
\tikz{0.7}{
\draw[lgray,line width=1.5pt,->] (-6,5) -- (-2.25,5) -- (-2,5.25) -- (-2,6);
\draw[lgray,line width=1.5pt,->] (-6,4) -- (-5.25,4) -- (-5,4.25) -- (-5,6);
\draw[lgray,line width=1.5pt,->] (-6,3) -- (-1.25,3) -- (-1,3.25) -- (-1,6);
\draw[lgray,line width=1.5pt,->] (-6,2) -- (-4.25,2) -- (-4,2.25) -- (-4,6);
\draw[lgray,line width=1.5pt,->] (-6,1) -- (-3.25,1) -- (-3,1.25) -- (-3,6);
\node at (0,3) {$\times$};
\draw[lgray,line width=1.5pt,->] (1,0) -- (1,3.75) -- (1.25,4) -- (6,4);
\draw[lgray,line width=1.5pt,->] (2,0) -- (2,1.75) -- (2.25,2) -- (6,2);
\draw[lgray,line width=1.5pt,->] (3,0) -- (3,0.75) -- (3.25,1) -- (6,1);
\draw[lgray,line width=1.5pt,->] (4,0) -- (4,4.75) -- (4.25,5) -- (6,5);
\draw[lgray,line width=1.5pt,->] (5,0) -- (5,2.75) -- (5.25,3) -- (6,3);
\node[below] at (1,0) {\scriptsize $\sigma(n)$};
\node[below] at (2,0) {\scriptsize $\cdots$};
\node[below] at (3,0) {\scriptsize $\cdots$};
\node[below] at (4,0) {\scriptsize $\sigma(2)$};
\node[below] at (5,0) {\scriptsize $\sigma(1)$};
\node[above] at (-5,6) {\scriptsize $0$};
\node[above] at (-4,6) {\scriptsize $\cdots$};
\node[above] at (-3,6) {\scriptsize $\cdots$};
\node[above] at (-2,6) {\scriptsize $0$};
\node[above] at (-1,6) {\scriptsize $0$};
\node[left] at (-6,1) {\scriptsize $0$};
\node[left] at (-6,2) {\scriptsize $\vdots$};
\node[left] at (-6,3) {\scriptsize $\vdots$};
\node[left] at (-6,4) {\scriptsize $0$};
\node[left] at (-6,5) {\scriptsize $0$};
\node[right] at (6,1) {\scriptsize $n$};
\node[right] at (6,2) {\scriptsize $\vdots$};
\node[right] at (6,3) {\scriptsize $\vdots$};
\node[right] at (6,4) {\scriptsize $2$};
\node[right] at (6,5) {\scriptsize $1$};
\node[left] at (-7,1) {$x_{\rho(n)}$};
\node[left] at (-7,2) {$\vdots$};
\node[left] at (-7,3) {$\vdots$};
\node[left] at (-7,4) {$x_{\rho(2)}$};
\node[left] at (-7,5) {$x_{\rho(1)}$};
\node[below] at (1,-1) {$x_1$};
\node[below] at (2,-1) {$\cdots$};
\node[below] at (3,-1) {$\cdots$};
\node[below] at (4,-1) {$x_{n-1}$};
\node[below] at (5,-1) {$x_n$};
}
\end{align}
The partition function on the left has no colours flowing through it at all, since only $0$ states enter and exit; accordingly, it factorizes into a product of the vertices \eqref{R-weights-a}, and has weight $1$. The remaining partition function is precisely $Z_{\tilde\rho}^{\sigma}(x_n,\dots,x_1)$ (note that the list of variables is reversed).

\end{proof}

\chapter{Monomial expansions: degenerations of nested Bethe vectors}

The nested Bethe Ansatz is a generalization of the standard algebraic Bethe Ansatz to the setting of higher-rank spin-chains. Just as in the case of the XXZ spin-$\frac{1}{2}$ Heisenberg chain (based on $U_q(\wh{\mathfrak{sl}_{2}})$), one of the primary goals in the $U_q(\wh{\mathfrak{sl}_{n+1}})$ Heisenberg spin-chain is to diagonalize the transfer matrix of the model. This is done in three basic steps: {\bf 1.} Identifying a family of monodromy matrix operators which satisfy the Yang--Baxter algebra; {\bf 2.} Constructing states within the physical space of the model, called {\it Bethe vectors}, which are obtained via the action of the monodromy matrix operators on a highest weight vector; {\bf 3.} Proving that the Bethe vectors thus constructed are genuine eigenstates of the transfer matrix. This is done by means of the commutation relations of the Yang--Baxter algebra, and by assuming that certain variables which parametrize the Bethe vectors satisfy the {\it Bethe equations}.

The goal of this chapter is to study the basic combinatorial structure of the nested Bethe vectors, and to give explicit symmetrization formulae for their components, the {\it wavefunctions}. Our motivation is to show that the non-symmetric spin Hall--Littlewood functions \eqref{f-HL} can be recovered as a reduction of certain specific wavefunctions. While interesting in its own right, we also inherit from this correspondence an explicit symmetrization-type identity for the functions $f_{\mu}$. Our approach is somewhat tangential to the usual nested Bethe Ansatz, since we neither make reference to the transfer matrix of the $U_q(\wh{\mathfrak{sl}_{n+1}})$ spin-chain, nor to the Bethe equations.

For more information on the nested Bethe Ansatz and the algebraic structure of the Bethe vectors, we refer the reader to \cite{BelliardR,TarasovV}.

\section{Bethe vector blocks}
\label{ssec:blocks}

We begin by introducing certain partition functions in the model \eqref{fund-vert} which we refer to as {\it blocks}. These partition functions are used in the construction of the components of the nested Bethe vectors. In what follows we fix $n+1$ nonnegative integers $\{N_0,N_1,\dots,N_n\}$ which satisfy $N_0 \geq N_1 \geq \cdots \geq N_n$, and $n+1$ sets of complex variables $\{x^{(0)},x^{(1)},\dots,x^{(n)}\}$ with cardinalities given by $|x^{(i)}| = N_i$. For all $0 \leq i \leq n-1$, the block \index{Z2@$Z^{(i)}$; Bethe vector block} $Z^{(i)}$ is defined as follows:
\begin{align}
\label{Zblock}
Z^{(i)} \left( x^{(i)}/x^{(i+1)} ; k^{(i+1)}, k^{(i)} \right)
:=
\tikz{0.6}{
\foreach\y in {2,...,5}{
\draw[lgray,line width=1.5pt,->] (1,\y) -- (8,\y);
}
\foreach\x in {2,...,7}{
\draw[lgray,line width=1.5pt,->] (\x,1) -- (\x,6);
}
\node[above] at (7,6) {$\tiny k^{(i)}_{N_i}$};
\node[above] at (5,6) {$\tiny \cdots$};
\node[above] at (4,6) {$\tiny \cdots$};
\node[above] at (2,6) {$\tiny k^{(i)}_1$};
\node[left] at (-1,2) {$\tiny x^{(i+1)}_{N_{i+1}} \rightarrow$}; 
\node[left] at (1,2) {$\tiny k^{(i+1)}_{N_{i+1}}$};
\node[left] at (0.5,3.3) {$\tiny \vdots$};
\node[left] at (0.5,4.3) {$\tiny \vdots$};
\node[left] at (-1,5) {$\tiny x^{(i+1)}_{1} \rightarrow$};
\node[left] at (1,5) {$\tiny k^{(i+1)}_{1}$};
\node[below] at (7,-0.5) {$\tiny x^{(i)}_{N_i}$};
\node at (7,-0.3) {$\tiny \uparrow$};
\node[below] at (7,1) {$\tiny i$};
\node[below] at (5,1) {$\tiny \cdots$};
\node[below] at (4,1) {$\tiny \cdots$};
\node[below] at (2,-0.5) {$\tiny x^{(i)}_1$};
\node at (2,-0.3) {$\tiny \uparrow$};
\node[below] at (2,1) {$\tiny i$};
\node[right] at (8,2) {$\tiny i$};
\node[right] at (8,3.3) {$\tiny \vdots$};
\node[right] at (8,4.3) {$\tiny \vdots$};
\node[right] at (8,5) {$\tiny i$};
}
\end{align}
where the vectors $k^{(j)} = \left(k^{(j)}_1,\dots,k^{(j)}_{N_j}\right)$, $j \in \{0,1,\dots,n\}$ label the states at the left and top edges of the lattice, and it is assumed that
\begin{align}
\label{k-restrict}
k^{(j)}_a \in \{j,j+1,\dots,n\},
\qquad
\forall\ 1 \leq a \leq N_j.
\end{align} 
The $a$-th horizontal line (counted from the top) carries the rapidity $x^{(i+1)}_a$, while the $b$-th vertical line (counted from the left) comes with rapidity $x^{(i)}_b$. The spectral parameter assigned to the vertex at the intersection of the $a$-th horizontal and $b$-th vertical line is assumed to be $x^{(i)}_b/x^{(i+1)}_a$.

Let us introduce a further set of indices $m^{(j)}_i$ with $i,j \in \{0,1,\dots,n\}$, which are the colour multiplicities of the vector $k^{(j)}$:
\begin{align*}
m_i^{(j)}
=
\#\{a : k^{(j)}_a = i\}.
\end{align*}
In view of the restriction \eqref{k-restrict} we clearly have $m_i^{(j)} = 0$ for all $i<j$, while colour-conservation through the lattice \eqref{Zblock} imposes the relations
\begin{align}
\label{m-restrict}
m_i^{(i)} = N_i - N_{i+1},
\quad\quad
m_i^{(j)} = m_i^{(j+1)},\ \ i > j.
\end{align}
The conditions \eqref{m-restrict} are necessary to ensure that the partition function \eqref{Zblock} is non-vanishing.

\begin{prop}
\label{prop:block-reduce}
After setting $x^{(i+1)}_a = x^{(i)}_a$ for all $1 \leq a \leq N_{i+1}$, 
the block $Z^{(i)}$ has the following reductive property:
\begin{align}
\label{block-reduce}
Z^{(i)}
\Big|_{x_1^{(i+1)} = x_1^{(i)}}
\cdots
\Big|_{x_{N_{i+1}}^{(i+1)} = x_{N_{i+1}}^{(i)}}
=
\prod_{a=1}^{N_{i+1}}
\left(
{\bm 1}_{k^{(i)}_a = k^{(i+1)}_a}
\right)
\prod_{N_{i+1} < b \leq N_i}
\left(
{\bm 1}_{k^{(i)}_b = i}
\right).
\end{align}
\end{prop}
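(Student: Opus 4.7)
The proof uses the same splitting mechanism that underlies Proposition \ref{prop:Z-sigma}: when a vertex in the model \eqref{fund-vert} has spectral parameter equal to $1$, the $R$-matrix entry $R_1(i,j;k,\ell) = \bm{1}_{i=\ell}\bm{1}_{j=k}$ forces its top state to equal its left state and its right state to equal its bottom state. Under the specialization $x^{(i+1)}_a = x^{(i)}_a$, the $N_{i+1}$ diagonal vertices $(a,a)$ all acquire spectral parameter $1$ and split in this way.

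The plan is to combine these $N_{i+1}$ simultaneous splittings with the Yang--Baxter equation \eqref{YB} and the unitarity relation \eqref{unitarity} to decouple the partition function into a collection of non-interacting ``bent paths''. After splitting, the colour $k^{(i+1)}_a$ entering at the left of row $a$ is bent at $(a,a)$ and traverses column $a$ up to the top boundary through vertices $(a-1,a),\ldots,(1,a)$, while the colour $i$ entering at the bottom of column $a$ is bent at $(a,a)$ and traverses row $a$ from column $a+1$ to the right boundary. A direct combinatorial check of which vertices lie on each bent path shows that pairs of bent paths intersect only at pairs of vertices $(a,b)$ and $(b,a)$, for each $1 \leq b < a \leq N_{i+1}$, and that the two vertices in each such pair carry the reciprocal spectral parameters $x^{(i)}_b/x^{(i)}_a$ and $x^{(i)}_a/x^{(i)}_b$. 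The plan is then to slide each paired crossing together using repeated applications of \eqref{YB}, and to apply \eqref{unitarity} to each pair, thereby eliminating the crossings. After all $\binom{N_{i+1}}{2}$ such cancellations, the bent paths are pairwise non-intersecting: the one of colour $k^{(i+1)}_a$ emerges at the top of column $a$, forcing $k^{(i)}_a = k^{(i+1)}_a$, and the one of colour $i$ emerges on the right of row $a$, consistent with the boundary condition.

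For the remaining vertical lines $b > N_{i+1}$, the bottom state is $i$ and the only vertices on the line above are intersections with the horizontal portions of the bent colour-$i$ paths from columns $1, \ldots, N_{i+1}$ (which propagate rightward along rows $1, \ldots, N_{i+1}$). Each such intersection is a vertex with both incoming states equal to $i$ and therefore has trivial weight $1$ by \eqref{R-weights-a}, so the colour-$i$ state propagates unchanged to the top boundary, yielding $k^{(i)}_b = i$ for $N_{i+1} < b \leq N_i$. Assembling these two sources of indicators produces the right-hand side of \eqref{block-reduce}.

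The main obstacle is handling the simultaneous nature of the $N_{i+1}$ splittings and $\binom{N_{i+1}}{2}$ paired cancellations rigorously. One clean way to carry this out is by induction on $N_{i+1}$: at each stage peel off the bent path through $(1,1)$, using the split at $(1,1)$ together with the Yang--Baxter moves \eqref{YB} that transport row $1$ and column $1$ out to the boundary, followed by $N_{i+1}-1$ applications of \eqref{unitarity} to the pairs $(1,b)$ and $(b,1)$ for $b = 2, \ldots, N_{i+1}$. This reduces the problem to a block $Z^{(i)}$ of smaller size to which the inductive hypothesis applies.
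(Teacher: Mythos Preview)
Your proof is correct and follows essentially the same approach as the paper: split the $N_{i+1}$ diagonal vertices using $R_1(i,j;k,\ell) = \bm{1}_{i=\ell}\bm{1}_{j=k}$, then use unitarity \eqref{unitarity} (with Yang--Baxter moves as needed) to separate the left-to-top bent lines from the bottom-to-right bent lines, after which the first group enforces $k^{(i)}_a = k^{(i+1)}_a$ and the second group, together with the remaining vertical lines $b > N_{i+1}$, freezes to weight $1$ with $k^{(i)}_b = i$. Your explicit identification of the paired crossings at $(a,b)$ and $(b,a)$ with reciprocal spectral parameters, and your inductive peeling scheme, spell out the mechanics of the separation a bit more carefully than the paper does, but the argument is the same.
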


\begin{proof}
We shall again employ the ``splitting'' property of an $R$-vertex when its spectral parameter is set to $1$. Making use of \eqref{R-split} in the definition \eqref{Zblock} of $Z^{(i)}$, we see that
\begin{align}
\label{Z-split}
Z^{(i)}
\Big|_{x_1^{(i+1)} = x_1^{(i)}}
\cdots
\Big|_{x_{N_{i+1}}^{(i+1)} = x_{N_{i+1}}^{(i)}}
=
\tikz{0.6}{
\foreach\y in {2,...,5}{
\draw[lgray,line width=1.5pt,->] plot coordinates 
{(1,\y) (7-\y-0.25,\y) (7-\y,\y+0.25) (7-\y,6)};
}
\foreach\x in {2,...,5}{
\draw[lgray,line width=1.5pt,->] plot coordinates
{(\x,1) (\x,7-\x-0.25) (\x+0.25,7-\x) (8,7-\x)};
}
\foreach\x in {6,...,7}{
\draw[lgray,line width=1.5pt,->] (\x,1) -- (\x,6);
}
\node[above] at (7,6) {$\tiny k^{(i)}_{N_i}$};
\node[above] at (5,6) {$\tiny \cdots$};
\node[above] at (4,6) {$\tiny \cdots$};
\node[above] at (2,6) {$\tiny k^{(i)}_1$};
\node[left] at (-1,2) {$\tiny x^{(i)}_{N_{i+1}} \rightarrow$}; 
\node[left] at (1,2) {$\tiny k^{(i+1)}_{N_{i+1}}$};
\node[left] at (0.5,3.3) {$\tiny \vdots$};
\node[left] at (0.5,4.3) {$\tiny \vdots$};
\node[left] at (-1,5) {$\tiny x^{(i)}_{1} \rightarrow$};
\node[left] at (1,5) {$\tiny k^{(i+1)}_{1}$};
\node[below] at (7,-0.5) {$\tiny x^{(i)}_{N_i}$};
\node at (7,-0.3) {$\tiny \uparrow$};
\node[below] at (7,1) {$\tiny i$};
\node[below] at (5,1) {$\tiny \cdots$};
\node[below] at (4,1) {$\tiny \cdots$};
\node[below] at (2,-0.5) {$\tiny x^{(i)}_1$};
\node at (2,-0.3) {$\tiny \uparrow$};
\node[below] at (2,1) {$\tiny i$};
\node[right] at (8,2) {$\tiny i$};
\node[right] at (8,3.3) {$\tiny \vdots$};
\node[right] at (8,4.3) {$\tiny \vdots$};
\node[right] at (8,5) {$\tiny i$};
}
\end{align}
where the vertex splitting happens at each of the $N_{i+1}$ locations where horizontal and vertical rapidities coincide. Now by successive applications of the unitarity relation \eqref{unitarity} of the model, it is easy to see that all lines which form the shape $\tikz{0.4}{\draw[lgray,line width=1.5pt] (-1,0) -- (-0.25,0) -- (0,0.25) -- (0,1);}$ can be dissociated entirely from lines with the shape 
$\tikz{0.4}{\draw[lgray,line width=1.5pt] (0,-1) -- (0,-0.25) -- (0.25,0) -- (1,0);}$. The partition function \eqref{Z-split} can thus be factorized into two pieces:
\begin{align*}
Z^{(i)}
\Big|_{x_1^{(i+1)} = x_1^{(i)}}
\cdots
\Big|_{x_{N_{i+1}}^{(i+1)} = x_{N_{i+1}}^{(i)}}
=
\tikz{0.6}{
\foreach\y in {2,...,5}{
\draw[lgray,line width=1.5pt,->] plot coordinates 
{(1,\y) (7-\y-0.25,\y) (7-\y,\y+0.25) (7-\y,6)};
}
\node at (6.5,4) {$\times$};
\foreach\x in {2,...,5}{
\draw[lgray,line width=1.5pt,->] plot coordinates
{(\x+6,1) (\x+6,7-\x-0.25) (\x+6+0.25,7-\x) (8+6,7-\x)};
}
\foreach\x in {6,...,7}{
\draw[lgray,line width=1.5pt,->] (\x+6,1) -- (\x+6,6);
}
\node[above] at (12.5,6) {$\overbrace{k^{(i)}_b}^{N_{i+1} < b \leq N_i}$};
\node[above] at (5,6) {$\tiny k^{(i)}_{N_{i+1}}$};
\node[above] at (3.5,6) {$\tiny \cdots$};
\node[above] at (2,6) {$\tiny k^{(i)}_1$};
\node[left] at (1,2) {$\tiny k^{(i+1)}_{N_{i+1}}$};
\node[left] at (0.5,3.3) {$\tiny \vdots$};
\node[left] at (0.5,4.3) {$\tiny \vdots$};
\node[left] at (1,5) {$\tiny k^{(i+1)}_{1}$};
\node[below] at (7+6,-0.5) {$\tiny x^{(i)}_{N_i}$};
\node at (7+6,-0.3) {$\tiny \uparrow$};
\node[below] at (7+6,1) {$\tiny i$};
\node[below] at (5+6,1) {$\tiny \cdots$};
\node[below] at (4+6,1) {$\tiny \cdots$};
\node[below] at (2+6,-0.5) {$\tiny x^{(i)}_1$};
\node at (2+6,-0.3) {$\tiny \uparrow$};
\node[below] at (2+6,1) {$\tiny i$};
\node[right] at (8+6,2) {$\tiny i$};
\node[right] at (8+6,3.3) {$\tiny \vdots$};
\node[right] at (8+6,4.3) {$\tiny \vdots$};
\node[right] at (8+6,5) {$\tiny i$};
}
\end{align*}
The first of these pieces just enforces the condition that $k^{(i)}_a = k^{(i+1)}_a$ for all $1 \leq a \leq N_{i+1}$, while the second piece is a partition function whose incoming states are all equal to $i$. By colour conservation arguments, this constrains all outgoing states of the second lattice to be equal to $i$, and the resulting partition function is frozen into a product of vertices \eqref{R-weights-a} of weight $1$. Collecting all of these facts together, we recover the right hand side of \eqref{block-reduce}.

\end{proof}

\section{Bethe vector components}

In this section we give the generic form of the 
nested Bethe vectors of the $U_q(\wh{\mathfrak{sl}_{n+1}})$ Heisenberg spin-chain of length $N_0$. They depend on $n$ sets of ``auxiliary'' rapidities $\{x^{(1)},\dots,x^{(n)}\}$, with cardinalities $|x^{(i)}| = N_i$, as well as on a set of ``quantum'' or ``inhomogeneity'' parameters $x^{(0)} \equiv \left( y_1,\dots,y_{N_0} \right)$. Fixing a vector of nonnegative integers $k^{(0)} = \left(k^{(0)}_1,\dots,k^{(0)}_{N_0} \right) \in \{0,1,\dots,n\}^{N_0}$, the components of the nested Bethe vectors are given by
\begin{multline}
\label{bethe-comp}
\index{aY@$\Psi(x^{(1)},\dots,x^{(n)} ; k^{(0)})$; Bethe vector components}
\Psi \left( x^{(1)},\dots,x^{(n)} ; k^{(0)}\right)
=
\sum_{k^{(1)}}
\cdots
\sum_{k^{(n)}}
\\
Z^{(n-1)} \left( x^{(n-1)} / x^{(n)} ; k^{(n)}, k^{(n-1)} \right)
\cdots
Z^{(1)} \left( x^{(1)} / x^{(2)} ; k^{(2)}, k^{(1)} \right)
Z^{(0)} \left( x^{(0)} / x^{(1)} ; k^{(1)}, k^{(0)} \right)
\end{multline}
where the summation is over vectors $k^{(i)} \in \{i,i+1,\dots,n\}^{N_i}$, and each block $Z^{(i)}$ is given by \eqref{Zblock}. Note that the final sum over $k^{(n)}$ is in fact trivial: in view of the restriction \eqref{k-restrict} placed on the elements of the vectors $k^{(i)}$, one sees that $k^{(n)} = (n,\dots,n)$ necessarily. Furthermore, by virtue of the restrictions \eqref{m-restrict} one finds that the colour multiplicities of the vectors $k^{(0)},k^{(1)},\dots,k^{(n-1)}$ are completely determined, and in particular one has
\begin{align}
\label{col-mult}
m^{(0)}_i = N_i-N_{i+1},
\qquad
i \in \{0,1,\dots,n\}.
\end{align}
It is easy to translate the expression \eqref{bethe-comp} into graphical form. Indeed, by replacing each block $Z^{(i)}$ appearing in \eqref{bethe-comp} by its partition function representation \eqref{Zblock}, we arrive at a picture of the type shown in Figure \ref{fig:bethe}.
\begin{figure}
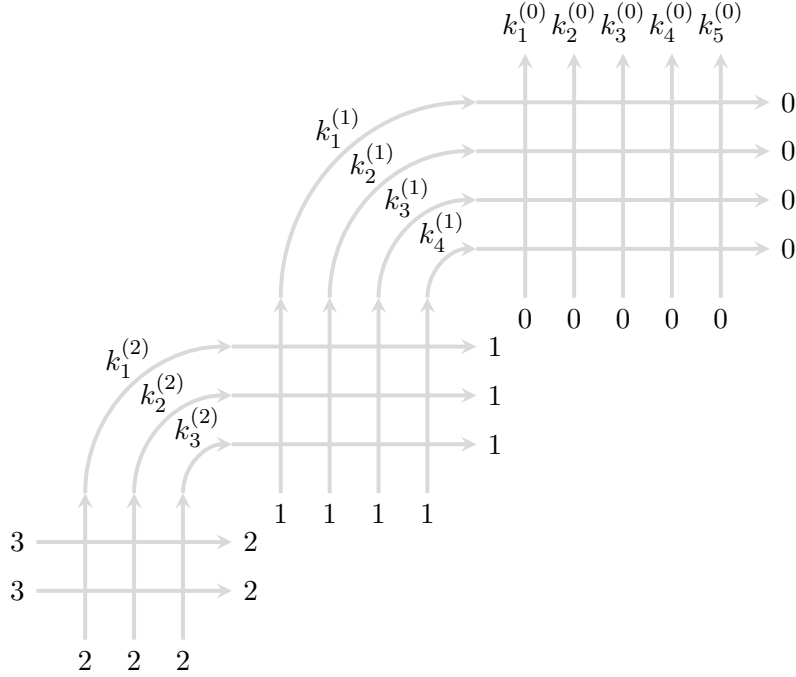

\tikz{0.65}{
\begin{scope}[rotate=180]
\foreach\x in {1,...,4}{
\draw[lgray,line width=1.5pt,<-] (2,\x) node[right] {\color{black} $0$} -- (8,\x);
\draw[lgray,line width=1.5pt,<-] (8,\x) arc (-90:0:5-\x) node[midway,above] {\color{black} $k^{(1)}_{\x}$};
\draw[lgray,line width=1.5pt,<-] (\x+8,5) -- (\x+8,9) node[below] {\color{black} $1$};
}
\foreach\x in {1,...,3}{
\draw[lgray,line width=1.5pt,<-] (8,\x+5) node[right] {\color{black} $1$} -- (13,\x+5);
\draw[lgray,line width=1.5pt,<-] (13,\x+5) arc (-90:0:4-\x) node[midway,above] {\color{black} $k^{(2)}_{\x}$};
\draw[lgray,line width=1.5pt,<-] (\x+13,9) -- (\x+13,12) node[below] {\color{black} $2$};
}
\foreach\x in {1,...,2}{
\draw[lgray,line width=1.5pt,<-] (13,\x+9) node[right] {\color{black} $2$} -- (17,\x+9) node[left] {\color{black} $3$};
}
\foreach\y in {1,...,5}{
\draw[lgray,line width=1.5pt,<-] (8-\y,0) node[above] {\color{black} $k^{(0)}_{\y}$} -- (8-\y,5) node[below] {\color{black} $0$};
}
\end{scope}
}
\caption{The graphical form of the Bethe vectors in the case $n=3$. Here the set cardinalities are chosen to be $(N_0,N_1,N_2,N_3) = (5,4,3,2)$. The vector $k^{(0)}$ selects the components of the Bethe vector, and in this case has colour multiplicities $m^{(0)} = m^{(1)} = m^{(2)} = 1$, $m^{(3)} = 2$. The vectors $k^{(1)}$ and $k^{(2)}$ are situated on internal edges, and are summed over all possible values.}
\label{fig:bethe}
\end{figure}


\section{Symmetrization formula for the Bethe vector components}

In models solvable by the coordinate Bethe Ansatz, it is well known that the components of Bethe vectors can be written explicitly in terms of summations over the symmetric group. The goal of this section will be to write down such a symmetrization formula for the Bethe vector components \eqref{bethe-comp}. We begin with some auxiliary definitions.

\begin{defn}
\label{def:coord}
Let $k^{(0)} = \left( k^{(0)}_1,\dots,k^{(0)}_{N_0} \right)$ be a vector of integers labelling the components of a Bethe vector \eqref{bethe-comp}, with colour multiplicities $m^{(0)}_i = N_i-N_{i+1}$ for all $i \in \{0,1,\dots,n\}$. Define a series of subvectors $p^{(i)} = \left( p^{(i)}_1,\dots,p^{(i)}_{N_i} \right)$, obtained from $k^{(0)}$ by deleting all elements $k^{(0)}_b < i$. By agreement, we take $p^{(0)} \equiv k^{(0)}$. For all $1 \leq i \leq n$, we introduce ordered coordinate sets $a^{(i)} = \left\{ a^{(i)}_1 < \cdots < a^{(i)}_{N_i} \right\}$ given by
\begin{align}
\label{a-sets}
a^{(i)} = \{ b: p^{(i-1)}_b \geq i \}.
\end{align}
\end{defn}

\begin{ex}
Let $(N_0,N_1,N_2,N_3) = (5,4,3,1)$ and $k^{(0)} = (2,1,3,0,2)$. In this case we have
\begin{align*}
p^{(0)} = (2,1,3,0,2),
\quad
p^{(1)} = (2,1,3,2),
\quad
p^{(2)} = (2,3,2),
\end{align*}
and accordingly, the coordinate sets are given by
\begin{align*}
a^{(1)} = \{1,2,3,5\},
\quad
a^{(2)} = \{1,3,4\},
\quad
a^{(3)} = \{2\}.
\end{align*}
\end{ex}

\begin{defn}[Single particle wavefunction]
Let $M \geq 1$ be a positive integer. Fix another integer $a \in \{1,\dots,M\}$ and a set of variables $(y_1,\dots,y_M)$. We introduce a single particle wavefunction as follows:
\begin{align}
\label{single-particle}
\index{ay@$\psi_a( x; y_1,\dots,y_M)$; single particle wavefunction}
\psi_a \left( x; y_1,\dots,y_M \right)
=
\frac{(1-q) y_a}{x - q y_a}
\prod_{j=1}^{a-1}
\left( \frac{x-y_j}{x-q y_j} \right).
\end{align}
We also define a multivariate extension of this. For any set of integers $\{a_1,\dots,a_m\}$ such that $1 \leq a_1 < \cdots < a_m \leq M$, and two sets of variables $(x_1,\dots,x_m)$ and $(y_1,\dots,y_M)$, we write
\begin{align}
\label{m-particle}
\index{ay@$\psi_{\{a_1,\dots,a_m\}}\left( x_1,\dots,x_m; y_1,\dots,y_M \right)$}
\psi_{\{a_1,\dots,a_m\}}\left( x_1,\dots,x_m; y_1,\dots,y_M \right)
=
\prod_{i=1}^{m}
\psi_{a_i} \left( x_i; y_1,\dots,y_M \right)
=
\prod_{i=1}^{m}
\left[
\frac{(1-q) y_{a_i}}{x_i - q y_{a_i}}
\prod_{j=1}^{a_i-1}
\left( \frac{x_i-y_j}{x_i-q y_j} \right)
\right].
\end{align}
\end{defn}

\begin{thm}
\label{thm:generic-bv}
Fix a vector of integers $k^{(0)} = \left( k^{(0)}_1,\dots,k^{(0)}_{N_0} \right) \in \{0,1,\dots,n\}^{N_0}$ and associate to it coordinate sets $a^{(1)},\dots,a^{(n)}$, in the same way as in Definition \ref{def:coord}. The Bethe vector components \eqref{bethe-comp} are given by the explicit formula
\begin{multline}
\label{generic-bv}
\Psi \left( x^{(1)},\dots,x^{(n)} ; k^{(0)}\right)
=
\sum_{\sigma^{(1)} \in \mathfrak{S}_{N_1}}
\cdots
\sum_{\sigma^{(n)} \in \mathfrak{S}_{N_n}}
\\
\sigma^{(1)} \cdots \sigma^{(n)} \cdot
\left(
\prod_{b=1}^{n}
\psi_{\left\{a^{(b)}_1,\dots,a^{(b)}_{N_b}\right\}}
\left(
x^{(b)}_1,\dots,x^{(b)}_{N_b} ; x^{(b-1)}_1,\dots,x^{(b-1)}_{N_{b-1}}
\right)
\prod_{1 \leq i < j \leq N_b}
\frac{q x^{(b)}_i-x^{(b)}_j}{x^{(b)}_i-x^{(b)}_j}
\right),
\end{multline}
where the permutation $\sigma^{(b)}$ is understood to act on the set of variables $x^{(b)}$; \ie\ one has
\begin{align*}
\sigma^{(b)} \cdot h\left( x^{(b)}_1,\dots,x^{(b)}_{N_b} \right)
=
h\left( x^{(b)}_{\sigma^{(b)}(1)},\dots,x^{(b)}_{\sigma^{(b)}(N_b)} \right),
\end{align*}
where $h$ denotes an arbitrary function.
\end{thm}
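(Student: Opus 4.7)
The plan is to prove \eqref{generic-bv} by induction on the rank $n$, exploiting the layered structure of the partition function \eqref{bethe-comp}. For the base case $n=1$, $\Psi$ reduces to the single block $Z^{(0)}$ with all left-edge states equal to $1$, and the claim becomes the classical symmetrization formula for the off-shell Bethe wavefunction of the rank-$1$ (six-vertex-type) model. One proves this by a subsidiary induction on $N_1$: peeling off the bottommost horizontal row, one uses the row-to-row commutation relations derived from the Yang--Baxter equation \eqref{YB} (of the same type as those exploited in Theorem \ref{thm:CC}) to permute the corresponding rapidity through the lattice, generating one $\psi_{a}$ factor and a product of Hecke-type rational functions $(qx^{(1)}_i - x^{(1)}_j)/(x^{(1)}_i - x^{(1)}_j)$.

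For the inductive step, decompose $\Psi$ along the outermost block:
\begin{align*}
\Psi(x^{(1)},\dots,x^{(n)}; k^{(0)})
=
\sum_{k^{(1)}}
Z^{(0)}(x^{(0)}/x^{(1)}; k^{(1)}, k^{(0)}) \,
\Psi'(x^{(2)},\dots,x^{(n)}; k^{(1)}),
\end{align*}
where $\Psi'$ is the rank-$(n-1)$ analogue of $\Psi$, with the colours in $k^{(1)}$ shifted down by one. Apply the inductive hypothesis to $\Psi'$, which produces the symmetrization over $\sigma^{(2)}, \dots, \sigma^{(n)}$ together with the factors indexed by $b = 2, \dots, n$. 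Separately, invoke an extension of the base case to the block $Z^{(0)}$ with an arbitrary (not all-$1$) left boundary $k^{(1)} \in \{1,\dots,n\}^{N_1}$; this extension is proved by essentially the same peeling argument, and yields the sum over $\sigma^{(1)} \in \mathfrak{S}_{N_1}$ together with the $b=1$ factor $\psi_{\{a^{(1)}_1,\dots,a^{(1)}_{N_1}\}}(x^{(1)}; x^{(0)})$ and the matching Hecke product.

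The main obstacle is a combinatorial bookkeeping step. In the target formula, the coordinate sets $a^{(b)}$ for all $b \geq 1$ are defined uniformly in terms of $k^{(0)}$, whereas applying the inductive hypothesis to $\Psi'(\,\cdot\,;k^{(1)})$ produces coordinate sets determined by the specific $k^{(1)}$ appearing in the summation. For the canonical choice $k^{(1)} = p^{(1)}$, one checks directly that the inductive coordinate sets for $\Psi'$ coincide with $a^{(b)}$ for $b \geq 2$; the remaining terms of the sum over $k^{(1)}$, together with the permutations produced by the $Z^{(0)}$-expansion, must then be repackaged as the $\sigma^{(1)}$ symmetrization acting simultaneously on the rapidities $x^{(1)}$ and on the coordinate sets of $\Psi'$. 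Establishing this matching---that rearranging the incoming colours in $k^{(1)}$ is equivalent, up to the Hecke-type factors, to applying a corresponding permutation in $\mathfrak{S}_{N_1}$ to the arguments of the inductively-expanded $\Psi'$---is the heart of the argument.
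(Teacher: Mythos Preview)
Your overall strategy—induction on $n$ via the outermost block—matches the paper's, but the mechanism you propose for the inductive step is different and contains a real gap.

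The step that does not go through is the ``extension of the base case to $Z^{(0)}$ with an arbitrary left boundary $k^{(1)} \in \{1,\dots,n\}^{N_1}$''. In the rank-$1$ base case the peeling argument works because all incoming colours on the left are equal, so the operators being commuted are of the same type and satisfy \eqref{CC=}. Once the left boundary carries distinct colours, the weights \eqref{R-weights-bc} distinguish their ordering, the relevant commutation relations become the non-symmetric ones \eqref{CC<}--\eqref{CC>}, and ``essentially the same peeling argument'' no longer yields a clean symmetrization formula for each fixed $k^{(1)}$. Indeed, the $b=1$ factor you write down does not depend on $k^{(1)}$ at all, whereas $Z^{(0)}(k^{(1)},k^{(0)})$ genuinely does; so your claimed expansion of $Z^{(0)}$ cannot be correct as stated. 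You are right that the subsequent recombination is ``the heart of the argument'', but no mechanism is proposed, and there is an internal tension: if $Z^{(0)}$ already produced a full sum over $\mathfrak{S}_{N_1}$, combining it with the further sum over $k^{(1)}$ would overcount.

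The paper sidesteps this difficulty entirely by using Lagrange interpolation in the $x^{(0)}$ variables rather than a direct decomposition. Writing $u_i = x^{(0)}_{a^{(1)}_i}$, one specializes $u_1 = x^{(1)}_1,\dots,u_{N_1} = x^{(1)}_{N_1}$ one at a time; each such specialization sets a spectral parameter in $Z^{(0)}$ to $1$ and splits the corresponding vertex (cf.\ \eqref{R-split} and Proposition~\ref{prop:block-reduce}), so that after all $N_1$ specializations the block $Z^{(0)}$ is eliminated and one lands on an explicit multiple of the rank-$(n-1)$ quantity $\Psi(x^{(2)},\dots,x^{(n)};p^{(1)}-1)$. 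Combined with polynomial degree bounds in each $u_i$, symmetry in the remaining $x^{(1)}$ variables, and vanishing at $u_i=0$, this supplies enough interpolation data to determine $\Psi$ uniquely; one then verifies that the explicit formula \eqref{generic-bv} satisfies the same properties. The point is that one never needs a closed formula for $Z^{(0)}$ with arbitrary coloured boundary—only its values at the splitting points, where it factorizes.
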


\begin{proof}
We will not give the full details of this proof, as it is rather technical, and instead sketch the basic method behind it. The three essential ideas are: {\bf 1.} The assumption that the formula holds in the rank-$(n-1)$ case, for some $n \geq 2$; {\bf 2.}  Recursion relations that explicitly relate $\Psi\left(x^{(1)},\dots,x^{(n)};p^{(0)}\right)$ and $\Psi\left(x^{(2)},\dots,x^{(n)};p^{(1)}-1\right)$ via a sequence of interpolating points in the variables $x^{(0)}$, where $p^{(1)}-1 \equiv 
\left( p^{(1)}_1-1,\dots,p^{(1)}_{N_1}-1 \right)$; {\bf 3.} Showing that 
\eqref{generic-bv} satisfies these recursion relations, thus establishing the formula for rank-$n$, and in general, by induction on $n$.

Let us say a few words about each of these. When $n=1$, equation \eqref{generic-bv} becomes
\begin{multline*}
\Psi \left( x^{(1)} ; k^{(0)}\right)
=
\sum_{\sigma^{(1)} \in \mathfrak{S}_{N_1}}
\sigma^{(1)} \cdot
\left(
\psi_{\left\{a^{(1)}_1,\dots,a^{(1)}_{N_1}\right\}}
\left(
x^{(1)}_1,\dots,x^{(1)}_{N_1} ; x^{(0)}_1,\dots,x^{(0)}_{N_{0}}
\right)
\prod_{1 \leq i < j \leq N_1}
\frac{q x^{(1)}_i-x^{(1)}_j}{x^{(1)}_i-x^{(1)}_j}
\right)
\end{multline*}
where $N_0 \geq N_1$ and $a^{(1)}_1 < \cdots < a^{(1)}_{N_1}$ are the coordinates of ones in the vector $k^{(0)} \in \{0,1\}^{N_0}$. This is a classical Bethe Ansatz formula for the rank-$1$ wavefunctions; there are many references to such an expression in the literature, but we note that in our present conventions it can be recovered from \cite[Equations (4.22), (4.23)]{BorodinP1} under the identifications $M = N_1$, $\mu_{M-i+1} = a^{(1)}_i-1$, $u_{M-i+1} = x^{(1)}_i$, $\xi_k = q^{-1/2}/x^{(0)}_{k+1}$, ${\sf s}_k = q^{-1/2}$. Hence the assumption {\bf 1} is indeed valid.

As far as {\bf 2} is concerned, the strategy is to start from \eqref{bethe-comp} and begin setting the variables $x^{(0)}_{a^{(1)}_i}$ to appropriate values. To ease notation in what follows, let us write $u_i := x^{(0)}_{a^{(1)}_i}$ for each $1 \leq i \leq N_1$. We set up the following chain of progressively specialized functions:
\begin{align}
\label{chain}
\Upsilon_0 := \Psi \left( x^{(1)},\dots,x^{(n)} ; k^{(0)}\right),
\qquad
\Upsilon_i := \Upsilon_{i-1} \Big|_{u_i = x^{(1)}_i}
\quad
\text{for all}\ \ 
1 \leq i \leq N_1.
\end{align}
These functions have the following properties, which descend from the definition \eqref{bethe-comp} of the Bethe vector components: 

\medskip
\noindent{\bf (a)} For all $1 \leq i \leq N_1$, the quantity
\begin{align}
\label{upsilon-norm}
\prod_{\ell = i}^{N_1}
\left( x^{(1)}_\ell - q u_i \right)
\cdot
\Upsilon_{i-1}
\end{align}
is a polynomial in $u_i$ of degree $\leq N_1-i+1$, and has symmetric dependence on $\left( x^{(1)}_i,\dots,x^{(1)}_{N_1} \right)$;

\medskip
\noindent{\bf (b)} In addition to the recursion \eqref{chain}, one has the vanishing property
\begin{align}
\label{upsilon-vanish}
\Upsilon_{i-1}
\Big|_{u_i = 0}
=
0,
\qquad
\forall\ 1 \leq i \leq N_1;
\end{align}

\medskip
\noindent{\bf (c)} After specializing all variables $u_i$, the block $Z^{(0)}$ is eliminated from \eqref{bethe-comp} and one recovers a rank-$(n-1)$ quantity. More specifically, there holds
\begin{multline}
\label{bethe-comp2}
\Upsilon_{N_1}
=
\prod_{\substack{j \not \in a^{(1)}}}^{N_0}
\prod_{i = \beta_j+1}^{N_1}
\frac{x^{(1)}_i-x^{(0)}_j}{x^{(1)}_i-q x^{(0)}_j}
\sum_{k^{(2)}}
\cdots
\sum_{k^{(n)}}
Z^{(n-1)} \left( x^{(n-1)} / x^{(n)} ; k^{(n)}, k^{(n-1)} \right)
\\
\times
\cdots
Z^{(1)} \left( x^{(1)} / x^{(2)} ; k^{(2)}, p^{(1)} \right),
\end{multline}
where we have defined $\beta_j = \#\{i : a^{(1)}_i < j \}$ and with the summation taken over all vectors $k^{(i)} \in \{i,i+1,\dots,n\}^{N_i}$, as previously. Now by shifting the entries of all vectors 
$p^{(1)}$, $k^{(i)}$ down by $1$ (which leaves \eqref{bethe-comp2} invariant), we obtain the relation
\begin{align}
\label{bethe-comp3}
\Upsilon_{N_1}
=
\prod_{\substack{j \not \in a^{(1)}}}^{N_0}
\prod_{i = \beta_j+1}^{N_1}
\frac{x^{(1)}_i-x^{(0)}_j}{x^{(1)}_i-q x^{(0)}_j}
\cdot
\Psi \left( x^{(2)},\dots,x^{(n)} ; p^{(1)}-1\right),
\end{align}
where dependence on the alphabet $x^{(1)}$ is now implicit in the right hand side.

The properties {\bf (a)}, {\bf (b)} and {\bf (c)} uniquely characterize $\Psi \left( x^{(1)},\dots,x^{(n)} ; k^{(0)}\right)$, assuming that $\Psi \left( x^{(2)},\dots,x^{(n)} ; p^{(1)}-1\right)$ is already known. This can be seen by induction on the index $i$ in $\Upsilon_i$. Using \eqref{bethe-comp3} as the basis for this induction, we may assume that $\Upsilon_i$ is already uniquely determined for some $1 \leq i \leq N_1$. Then the renormalized version \eqref{upsilon-norm} of $\Upsilon_{i-1}$ is a polynomial in $u_i$ of degree $\leq N_1-i+1$, and invoking its symmetry in $\left( x^{(1)}_i,\dots,x^{(1)}_{N_1} \right)$, together with \eqref{chain} and \eqref{upsilon-vanish}, we know its value at $N_1-i+2$ points. Hence, $\Upsilon_{i-1}$ is also uniquely determined. This argument extends all the way to 
$\Upsilon_0$, by induction on $i$.

Finally, it is not difficult to deal with item {\bf 3}, since the explicit formula \eqref{generic-bv} can be readily degenerated as in \eqref{chain}, while respecting the properties {\bf (a)}, {\bf (b)} and {\bf (c)}. 

\end{proof}

\section{Degenerations}

In this section we will show how the pre-fused functions \eqref{f-fund} introduced in Section \ref{ssec:prefused} can be recovered as reductions of the Bethe vector components \eqref{bethe-comp}. Following the language of Section \ref{ssec:blocks}, we will take the top cardinality $N_0$ to be unbounded (this corresponds with the fact that the lattice used to construct $\mathcal{F}_{\mu}$ is unbounded in the horizontal direction), and group the resulting semi-infinite block $Z^{(0)}$ into bundles with cardinalities $J_i$, $i \geq 0$. The alphabet $x^{(0)}$ will be identified with the union $Y = y^{(0)} \cup y^{(1)} \cup y^{(2)} \cup \cdots$, where $y^{(i)} = \left(y^{(i)}_1,\dots,y^{(i)}_{J_i}\right)$ is the set of variables associated with the $i$-th bundle of the lattice, in exactly the same way as in Section \ref{ssec:prefused}.

For the remaining cardinalities, we shall make the choice $N_i = n-i+1$, 
$1 \leq i \leq n$. By \eqref{col-mult} this constrains the colour multiplicities of $k^{(0)}$ to be $m^{(0)}_i = 1$ for all $1 \leq i \leq n$, and accordingly, each colour $i$ appears exactly once along the top boundary of $Z^{(0)}$. These exiting colours will be assumed to be ordered in increasing fashion within each bundle of the lattice, following the same prescription as in Section \ref{ssec:prefused}.

\begin{prop}
Let $\Psi(x^{(1)},\dots,x^{(n)}; k^{(0)})$ be the function \eqref{bethe-comp} with $N_0$ unbounded, $N_i = n-i+1$ for all $1 \leq i \leq n$ and $x^{(0)} = Y$, as above.
Fix a composition $\mu$ and following the same rules as in Section \ref{ssec:prefused}, associate to it a collection of sets 
$\{\mathcal{A}^{(0)},\mathcal{A}^{(1)},\dots\}$, where $i \in \mathcal{A}^{(\mu_i)}$ for all $1 \leq i \leq n$, $|\mathcal{A}^{(j)}| = J_j$ and each set $\mathcal{A}^{(j)}$ is in increasing order. Let $\mathcal{F}_{\mu}(x_1,\dots,x_n; Y)$ be the pre-fused function \eqref{f-fund}. We will consider the specialization of variables
\begin{align}
\label{x-spec}
x^{(1)}_j = \cdots = x^{(n-j+1)}_j = x_j,
\qquad
1 \leq j \leq n,
\end{align}
which sends each of the $n(n+1)/2$ variables $x^{(i)}_j$ to a member of the alphabet $(x_1,\dots,x_n)$, and denote it by $\tau$. We claim that
\begin{align}
\label{reduce-to-F}
\Psi \left(x^{(1)},\dots,x^{(n)} ; 
k^{(0)} = \mathcal{A}^{(0)} \cup \mathcal{A}^{(1)} \cup \cdots\right) \Big|_{\tau}
=
\mathcal{F}_{\mu}(x_n,\dots,x_1; Y),
\end{align}
where we note that the alphabet appearing in $\mathcal{F}_{\mu}$ is reversed.
\end{prop}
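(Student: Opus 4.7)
The plan is to reduce the Bethe vector component $\Psi$ to the single block $Z^{(0)}$ by iteratively collapsing the blocks $Z^{(1)}, \dots, Z^{(n-1)}$ via Proposition \ref{prop:block-reduce}. Under the specialization $\tau$, we have $x^{(i+1)}_a = x_a = x^{(i)}_a$ for every $1 \le a \le N_{i+1}$ and every $1 \le i \le n-1$, which is precisely the hypothesis needed to apply \eqref{block-reduce} to the block $Z^{(i)}$. Each such reduction forces $k^{(i)}_a = k^{(i+1)}_a$ for $a \le N_{i+1}$ and $k^{(i)}_{N_i} = i$. Starting from the unique choice $k^{(n)} = (n)$ (since $N_n = 1$ and entries must be $\ge n$) and iterating downward in $i$, one finds $k^{(i)} = (n, n-1, \dots, i)$ for each $1 \le i \le n$; in particular $k^{(1)} = (n, n-1, \dots, 1)$ is completely determined and all intermediate summations in \eqref{bethe-comp} collapse.

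After these reductions, the only surviving factor is $Z^{(0)}\bigl( x^{(0)}/x^{(1)} ; k^{(1)} = (n, n-1, \dots, 1), k^{(0)} \bigr)$. Consulting the pictorial definition \eqref{Zblock} with $i = 0$: the $a$-th horizontal line (counted from the top) carries rapidity $x^{(1)}_a = x_a$, and has left incoming state $k^{(1)}_a = n - a + 1$; the $b$-th vertical line carries rapidity $x^{(0)}_b$, which is the $b$-th entry of $Y = y^{(0)} \cup y^{(1)} \cup \cdots$, with bottom state $0$ and top states prescribed by $k^{(0)} = \mathcal{A}^{(0)} \cup \mathcal{A}^{(1)} \cup \cdots$. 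Reindexing rows from the bottom up (by $i = n - a + 1$), row $i$ carries rapidity $x_{n-i+1}$ and has a single path of colour $i$ entering from the left, while within the $j$-th bundle of vertical lines the outgoing colours read (from left to right) the elements of $\mathcal{A}^{(j)}$ in increasing order. This is exactly the prescription of Definition \ref{def:prefus} for $\mathcal{F}_\mu(x_n, \dots, x_1; Y)$, since by construction $i \in \mathcal{A}^{(\mu_i)}$ for each $1 \le i \le n$. The vertex weights inside both lattices are from the fundamental model \eqref{fund-vert} with spectral parameters given by the ratio of the relevant column and row rapidities, which match after the reindexing.

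The only real obstacle is bookkeeping: one must carefully reconcile the top-to-bottom row convention of \eqref{Zblock} with the bottom-to-top convention of Definition \ref{def:prefus}, and verify that after the reductions the left-boundary labels are automatically sorted so that colour $i$ enters at row $i$ from the bottom. This reversal is precisely what accounts for the reversed alphabet $(x_n, \dots, x_1)$ on the right-hand side of \eqref{reduce-to-F}, relative to the natural $(x_1, \dots, x_n)$ ordering imposed by $\tau$. With this matching of conventions in place, the identity \eqref{reduce-to-F} is immediate from Proposition \ref{prop:block-reduce}.
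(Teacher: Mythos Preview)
Your proof is correct and follows essentially the same approach as the paper: repeated application of Proposition \ref{prop:block-reduce} to collapse the blocks $Z^{(1)},\dots,Z^{(n-1)}$, yielding $k^{(i)}=(n,n-1,\dots,i)$ and leaving only $Z^{(0)}$, which is then identified with $\mathcal{F}_\mu(x_n,\dots,x_1;Y)$ after the row-reindexing you describe. Your discussion of the bookkeeping between the top-to-bottom convention of \eqref{Zblock} and the bottom-to-top convention of Definition \ref{def:prefus} is slightly more explicit than the paper's, but the argument is the same.
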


\begin{proof}
The reduction \eqref{reduce-to-F} has a natural graphical interpretation; see Figure \ref{fig:reduce-to-F}.\footnote{Note that the reduction \eqref{reduce-to-F} is different from that described in the proof of Theorem \ref{thm:generic-bv}. The former proceeds by eliminating the blocks $Z^{(n-1)}, \dots, Z^{(1)}$ in that order; the latter eliminates the blocks in the opposite order.}

\begin{figure}
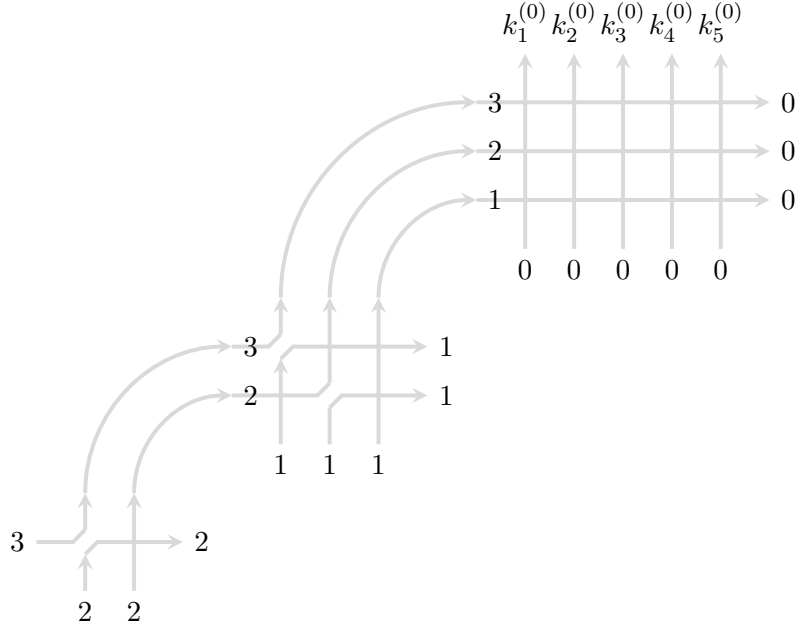

\tikz{0.65}{
\begin{scope}[rotate=180]
\foreach\x in {1,...,1}{
\draw[lgray,line width=1.5pt,<-] (2,\x) node[right] {\color{black} $0$} -- (8,\x) node[right] {\color{black} $3$};
\draw[lgray,line width=1.5pt,<-] (8,\x) arc (-90:0:5-\x);
\draw[lgray,line width=1.5pt,<-] (\x+9,5) -- (\x+9,8) node[below] {\color{black} $1$};
}
\foreach\x in {2,...,2}{
\draw[lgray,line width=1.5pt,<-] (2,\x) node[right] {\color{black} $0$} -- (8,\x) node[right] {\color{black} $2$};
\draw[lgray,line width=1.5pt,<-] (8,\x) arc (-90:0:5-\x);
\draw[lgray,line width=1.5pt,<-] (\x+9,5) -- (\x+9,6.75);
\draw[lgray,line width=1.5pt] (\x+9,7.25) -- (\x+9,8) node[below] {\color{black} $1$};
}
\foreach\x in {3,...,3}{
\draw[lgray,line width=1.5pt,<-] (2,\x) node[right] {\color{black} $0$} -- (8,\x) node[right] {\color{black} $1$};
\draw[lgray,line width=1.5pt,<-] (8,\x) arc (-90:0:5-\x);
\draw[lgray,line width=1.5pt,<-] (\x+9,5) -- (\x+9,5.75); 
\draw[lgray,line width=1.5pt,<-] (\x+9,6.25) -- (\x+9,8) node[below] {\color{black} $1$};
}
\foreach\x in {1,...,1}{
\draw[lgray,line width=1.5pt,<-] (9,\x+5) node[right] {\color{black} $1$} -- (11.75,\x+5) -- (12,\x+5.25);
\draw[lgray,line width=1.5pt] (12,\x+4.75) -- (12.25,\x+5) -- (13,\x+5) node[right] {\color{black} $3$};
\draw[lgray,line width=1.5pt,<-] (13,\x+5) arc (-90:0:4-\x);
\draw[lgray,line width=1.5pt,<-] (\x+14,9) -- (\x+14,11) node[below] {\color{black} $2$};
}
\foreach\x in {2,...,2}{
\draw[lgray,line width=1.5pt,<-] (9,\x+5) node[right] {\color{black} $1$} -- (10.75,\x+5) -- (11,\x+5.25);
\draw[lgray,line width=1.5pt] (11,\x+4.75) -- (11.25,\x+5) -- (13,\x+5) node[right] {\color{black} $2$};
\draw[lgray,line width=1.5pt,<-] (13,\x+5) arc (-90:0:4-\x);
\draw[lgray,line width=1.5pt,<-] (\x+14,9) -- (\x+14,9.75); 
\draw[lgray,line width=1.5pt,<-] (\x+14,10.25) --(\x+14,11) node[below] {\color{black} $2$};
}
\foreach\x in {1,...,1}{
\draw[lgray,line width=1.5pt,<-] (14,\x+9) node[right] {\color{black} $2$} --
(15.75,\x+9) -- (16,\x+9.25); 
\draw[lgray,line width=1.5pt] (16,\x+8.75) -- (16.25,\x+9) -- (17,\x+9) node[left] 
{\color{black} $3$};
}
\foreach\y in {1,...,5}{
\draw[lgray,line width=1.5pt,<-] (8-\y,0) node[above] {\color{black} $k^{(0)}_{\y}$} -- (8-\y,4) node[below] {\color{black} $0$};
}
\end{scope}
}
\caption{The graphical interpretation of the reduction \eqref{reduce-to-F}, in the case $n=3$. Specializing rapidities according to \eqref{x-spec} causes vertex splitting at $n(n-1)/2$ points; these splittings are sufficient to freeze the configurations of all blocks $Z^{(1)}, \dots, Z^{(n-1)}$, and we recover equation \eqref{fully-reduced}.}
\label{fig:reduce-to-F}
\end{figure}

The proof makes repeated use of the reductive property \eqref{block-reduce}, applied to the formula \eqref{bethe-comp}. Indeed, one can see that the specialization \eqref{x-spec} achieves the aim of identifying horizontal and vertical rapidities within each of the blocks $Z^{(1)},\dots,Z^{(n-1)}$ present in 
\eqref{bethe-comp}, meaning that each block reduces according to \eqref{block-reduce}. We thus read the equation
\begin{align}
\label{reduced-sum}
\Psi \left( x^{(1)},\dots,x^{(n)} ; k^{(0)}\right)
\Big|_{\tau}
=
\sum_{k^{(1)}}
\cdots
\sum_{k^{(n)}}
\prod_{i=1}^{n-1}
\left( {\bm 1}_{k^{(i)}_{n-i+1} = i} \right)
\prod_{a=1}^{n-i}
\left( {\bm 1}_{k^{(i)}_a = k^{(i+1)}_a} \right)
Z^{(0)} \left( Y / x^{(1)} ; k^{(1)}, k^{(0)} \right),
\end{align}
which takes an even simpler form than the reduction \eqref{block-reduce} in view of the fact that $N_i = n-i+1$ for all $1 \leq i \leq n$, and where we have used the fact that $x^{(0)} = Y$. All of the sums present in \eqref{reduced-sum} now trivialize: $k^{(n)} = \left(k^{(n)}_1\right) = (n)$ by agreement, and the indicator functions ensure that the remaining vectors $k^{(i)}$ are also uniquely determined as $k^{(i)} = \left(k^{(i)}_1,k^{(i)}_2,\dots,k^{(i)}_{n-i+1}\right) = (n,n-1,\dots,i)$, for all $1 \leq i \leq n$. One therefore recovers the result
\begin{align}
\label{fully-reduced}
\Psi \left( x^{(1)},\dots,x^{(n)} ; k^{(0)}\right)
\Big|_{\tau}
=
Z^{(0)} \left( Y / x^{(1)} ; (n,\dots,1), k^{(0)} \right).
\end{align}
Finally, noting that $x^{(1)}_j = x_j$ for all $1 \leq j \leq n$, after choosing $k^{(0)} = \mathcal{A}^{(0)} \cup \mathcal{A}^{(1)} \cup \cdots$ the block $Z^{(0)}$ on the right hand side of \eqref{fully-reduced} has precisely the form of $\mathcal{F}_{\mu}(x_n,\dots,x_1;Y)$. 
\end{proof}

\section{Symmetrization formula for $f_{\mu}$}

Equipped with the results \eqref{generic-bv} and \eqref{reduce-to-F}, we are now in the position to obtain a symmetrization formula for the non-symmetric spin Hall--Littlewood functions. All that is needed for this is to perform the required principal specializations and analytic continuation, as described in Theorem \ref{thm:fusion-F}, of equation \eqref{reduce-to-F}.

\begin{thm}
\label{thm:f-mu-sym}
Fix a composition $\mu = (\mu_1,\dots,\mu_n)$ and let $\delta = (\delta_1 \leq \cdots \leq \delta_n)$ be its anti-dominant reordering. Associate to this a vector $\gamma(\mu)$, where 
\begin{align*}
\gamma(\mu) 
= 
(\gamma_1,\gamma_2,\dots,\gamma_n)
= 
w_{\mu} \cdot (1,2,\dots,n),
\end{align*}
with $w_{\mu} \in \mathfrak{S}_n$ the minimal-length permutation such that $w_{\mu} \cdot \mu = \delta$. Let us define, further, the vectors
\begin{align*}
p^{(1)} = \gamma(\mu),
\qquad
p^{(i+1)} = p^{(i)} \backslash \{i\},
\ \
1 \leq i \leq n-1,
\end{align*}
as well as $n-1$ strictly increasing integer sequences $a^{(2)},\dots,a^{(n)}$, given by \eqref{a-sets}. Then the non-symmetric spin Hall--Littlewood functions are given by the sum formula
\begin{multline}
\label{f-mu-sym}
f_{\mu}(x_n,\dots,x_1)
=
\sum_{\sigma^{(1)} \in \mathfrak{S}_n}
\cdots
\sum_{\sigma^{(n-1)} \in \mathfrak{S}_2}
f_{\delta}\Big( x_{\sigma^{(1)}(1)}, \dots, x_{\sigma^{(1)}(n)} \Big)
\prod_{1 \leq i < j \leq n}
\frac{q x_{\sigma^{(1)}(i)}-x_{\sigma^{(1)}(j)}}
{x_{\sigma^{(1)}(i)}-x_{\sigma^{(1)}(j)}}
\\
\times
\prod_{b=2}^{n}
\psi_{\left\{a^{(b)}_1,\dots,a^{(b)}_{n-b+1}\right\}}
\Big(
\sigma^{(b)} \cdot (x_1,\dots,x_{n-b+1}); 
\sigma^{(b-1)} \cdot (x_1,\dots,x_{n-b+2})
\Big)
\prod_{1 \leq i < j \leq n-b+1}
\frac{q x_{\sigma^{(b)}(i)}-x_{\sigma^{(b)}(j)}}
{x_{\sigma^{(b)}(i)}-x_{\sigma^{(b)}(j)}},
\end{multline}
where by agreement $\sigma^{(n)}$ denotes the trivial permutation $\sigma^{(n)} = (1) \in \mathfrak{S}_1$.
\end{thm}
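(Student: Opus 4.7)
The plan is to combine three already-established ingredients: the explicit symmetrization formula \eqref{generic-bv} for the Bethe vector components $\Psi$ (Theorem \ref{thm:generic-bv}), the reduction \eqref{reduce-to-F} identifying the $\tau$-specialized $\Psi$ with the pre-fused function $\mathcal{F}_\mu$, and the fusion procedure of Theorem \ref{thm:fusion-F} passing from $\mathcal{F}_\mu$ to $f_\mu$. Since \eqref{generic-bv} is already a symmetrization identity at the generic Bethe-vector level, the work consists in propagating it through these two reductions and then verifying that the single $Y$-dependent block collapses correctly.

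First I would take the Bethe-vector data as in the set-up preceding \eqref{reduce-to-F}: $N_i = n-i+1$ for $1 \le i \le n$, $x^{(0)} = Y$, and $k^{(0)} = \mathcal{A}^{(0)} \cup \mathcal{A}^{(1)} \cup \cdots$ encoding the exit positions of the colours $1,\dots,n$ across the bundles. A bookkeeping step will be to verify that the coordinate sequences $a^{(b)}$ produced from $k^{(0)}$ via Definition \ref{def:coord} coincide with the $a^{(b)}$ in the statement of the theorem: both arise from $\gamma(\mu)$ by iterated deletion, the former by removing entries less than $b$ and the latter by removing the value $b-1$, which yield the same subvectors $p^{(b)}$. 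Applying $\tau$ to \eqref{generic-bv}, the right-hand side of \eqref{reduce-to-F} becomes $\mathcal{F}_\mu(x_n,\dots,x_1;Y)$. On the left-hand side the $\sigma^{(n)}$-sum trivialises, and for each $b \ge 2$ the $\psi$-block reduces to exactly $\psi_{\{a^{(b)}_1,\dots,a^{(b)}_{n-b+1}\}}\bigl(\sigma^{(b)}\cdot(x_1,\dots,x_{n-b+1});\,\sigma^{(b-1)}\cdot(x_1,\dots,x_{n-b+2})\bigr)$, matching the corresponding factor in \eqref{f-mu-sym}; similarly the inversion prefactors specialise to $\prod_{i<j}(qx_{\sigma^{(b)}(i)}-x_{\sigma^{(b)}(j)})/(x_{\sigma^{(b)}(i)}-x_{\sigma^{(b)}(j)})$. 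Only the $b=1$ block retains dependence on $Y$.

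Next I would invoke Theorem \ref{thm:fusion-F}. The right-hand side fuses to $f_\mu(x_n,\dots,x_1)$ up to the explicit constant $(-s)^{|\mu|}(1-q)^n \big/ \prod_{k \ge 0}(s^{-2};q^{-1})_{m_k(\mu)}$, while on the left only the $b=1$ block is sensitive to the specialization $y^{(k)}_j = s q^{J_k-j}$ and the analytic continuation $q^{J_k}\mapsto s^{-2}$. The remaining task is to show that the fused image of $\psi_{\{a^{(1)}_1,\dots,a^{(1)}_n\}}\bigl(\sigma^{(1)}\cdot(x_1,\dots,x_n);\,Y\bigr)$ equals $f_\delta(x_{\sigma^{(1)}(1)},\dots,x_{\sigma^{(1)}(n)})$ times the reciprocal of the above constant, so that after clearing the overall factor one recovers \eqref{f-mu-sym} precisely.

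The hard part will be this last collapse. Under the principal specialization, each telescoping product $\prod_{j=1}^{a^{(1)}_i-1}(x_i-y_j)/(x_i-qy_j)$ splits bundle by bundle: a complete bundle contributes a $q$-Pochhammer ratio that continues analytically to a power of $(x_i-s)/(1-sx_i)$, while the partial ``leading edge'' bundle combines with the prefactor $(1-q)y_{a^{(1)}_i}/(x_i-qy_{a^{(1)}_i})$ to supply the remaining power of $(x_i-s)/(1-sx_i)$ together with an $(s^2;q)$-Pochhammer piece contributing to the overall normalisation. Matching this against the factorised formula \eqref{f-delta} for $f_\delta$ and the constant from Theorem \ref{thm:fusion-F} is the principal computational verification; since the calculation factorises both over the parts of $\delta$ and over the index $i$, it is enough to perform it for a single variable $x_i$ paired with a single bundle, where it reduces to a standard $q$-Pochhammer telescoping.
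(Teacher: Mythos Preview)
Your proposal is correct and follows essentially the same approach as the paper: combine \eqref{generic-bv}, \eqref{reduce-to-F}, and Theorem~\ref{thm:fusion-F}, observe that only the $b=1$ block carries $Y$-dependence, and then collapse that block via bundle-by-bundle telescoping under the principal specialization and analytic continuation to produce $f_\delta$ times the inverse of the fusion constant. The paper carries out exactly this computation (including the explicit single-particle and multi-particle $\psi$ calculations), so there is nothing substantive to add.
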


\begin{proof}
We will make the specializations $y^{(i)}_j = s q^{J_i-j}$, together with analytic continuation $q^{J_i} \mapsto s^{-2}$, of equation \eqref{reduce-to-F}. By virtue of \eqref{prefused-f}, we know that this converts the right hand side of \eqref{reduce-to-F} to
\begin{align*}
\frac{(-s)^{|\mu|}(1-q)^n}{\prod_{k \geq 0} (s^{-2};q^{-1})_{m_k}}
f_{\mu}(x_n,\dots,x_1),
\qquad
m_k \equiv m_k(\mu),
\end{align*}
which is our desired final form, up to overall normalization. 

Let us now investigate what happens to the left hand side of \eqref{reduce-to-F}. We begin with the generic expression \eqref{generic-bv} with $N_i = n-i+1$ and separate the $b=1$ case of the product in the summand from the $2 \leq b \leq n$ cases, yielding
\begin{multline}
\label{b=1-sep}
\Psi \left(x^{(1)},\dots,x^{(n)} ; 
\mathcal{A}^{(0)} \cup \mathcal{A}^{(1)} \cup \cdots\right) \Big|_{\tau}
\\
=
\sum_{\sigma^{(1)} \in \mathfrak{S}_{n}}
\cdots
\sum_{\sigma^{(n-1)} \in \mathfrak{S}_{2}}
\psi_{\left\{a^{(1)}_1,\dots,a^{(1)}_{n}\right\}}
\Big(
\sigma^{(1)} \cdot (x_1,\dots,x_n); Y \Big)
\prod_{1 \leq i < j \leq n}
\frac{q x_{\sigma^{(1)}(i)}-x_{\sigma^{(1)}(j)}}
{x_{\sigma^{(1)}(i)}-x_{\sigma^{(1)}(j)}}
\\
\times
\prod_{b=2}^{n}
\psi_{\left\{a^{(b)}_1,\dots,a^{(b)}_{n-b+1}\right\}}
\Big(
\sigma^{(b)} \cdot (x_1,\dots,x_{n-b+1}); 
\sigma^{(b-1)} \cdot (x_1,\dots,x_{n-b+2})
\Big)
\prod_{1 \leq i < j \leq n-b+1}
\frac{q x_{\sigma^{(b)}(i)}-x_{\sigma^{(b)}(j)}}
{x_{\sigma^{(b)}(i)}-x_{\sigma^{(b)}(j)}}.
\end{multline}
All dependence on the alphabet $Y = \left(y^{(0)}_1,\dots,y^{(0)}_{J_0}\right) \cup \left(y^{(1)}_1,\dots,y^{(1)}_{J_1}\right) \cup \cdots$ enters via the function 
$\psi_{\{a^{(1)}_1,\dots,a^{(1)}_n\}}$, and accordingly it is this function that we need to study in order to perform the required principal specializations and analytic continuation. We lighten the notation slightly, by writing $\{a_1^{(1)},\dots,a_n^{(1)}\} \equiv \{a_1,\dots,a_n\}$ in everything that follows below.

Analyzing the single particle wavefunction \eqref{single-particle}, assuming that $a = J_0 + J_1 + \cdots + J_{k-1} + K$ with $1 \leq K \leq J_k$ (so that the coordinate $a$ corresponds with the $K$-th element of the $k$-th bundle), we have
\begin{align}
\psi_a(x; Y)
=
\prod_{i=0}^{k-1}
\prod_{j=1}^{J_i}
\left( \frac{x-y^{(i)}_j}{x-q y^{(i)}_j} \right)
\cdot
\prod_{j=1}^{K-1}
\left( \frac{x-y^{(k)}_j}{x-q y^{(k)}_j} \right)
\frac{(1-q)y^{(k)}_K}{x-q y^{(k)}_K}.
\end{align}
Taking the principal specializations \eqref{geom-spec}, this turns into
\begin{align*}
\psi_a(x; Y)
\Big|_{y^{(i)}_j \mapsto s q^{J_i-j}}
&=
\prod_{i=0}^{k-1}
\prod_{j=1}^{J_i}
\left( \frac{x-s q^{J_i-j}}{x-s q^{J_i-j+1}} \right)
\cdot
\prod_{j=1}^{K-1}
\left( \frac{x-s q^{J_k-j}}{x-s q^{J_k-j+1}} \right)
\frac{(1-q) s q^{J_k-K}}{x-s q^{J_k-K+1}},
\\
&=
\prod_{i=0}^{k-1}
\left( \frac{x-s}{x-s q^{J_i}} \right)
\cdot
\frac{(1-q) s q^{J_k-K}}{x-s q^{J_k}}.
\end{align*}
More generally, in the case of the $n$-variable extension \eqref{m-particle} of \eqref{single-particle}, we have
\begin{align*}
\psi_{\{a_1,\dots,a_n\}}(x_1,\dots,x_n; Y)
\Big|_{y^{(i)}_j \mapsto s q^{J_i-j}}
&=
\prod_{j=1}^{n}
\left(
\prod_{i=0}^{\delta_j-1}
\left( \frac{x_j-s}{x_j-s q^{J_i}} \right)
\cdot
\frac{(1-q) s q^{J_{\delta_j}-K_j}}{x_j-s q^{J_{\delta_j}}}
\right),
\end{align*}
where we assume that $a_j = J_0 + J_1 + \cdots + J_{\delta_j-1} + K_j$ with $1 \leq K_j \leq J_{\delta_j}$, for each $1 \leq j \leq n$. If we assume that when several of the coordinates $a_j$ are members of the same bundle they must be situated as far right as possible within that bundle, we can simplify the previous expression as follows: 
\begin{multline*}
\psi_{\{a_1,\dots,a_n\}}(x_1,\dots,x_n; Y)
\Big|_{y^{(i)}_j \mapsto s q^{J_i-j}}
\\
=
(1-q)^n s^n
\prod_{k \geq 0}
q^{m_k(\delta)(m_k(\delta)-1)/2}
\prod_{j=1}^{n}
\left(
\prod_{i=0}^{\delta_j-1}
\left( \frac{x_j-s}{x_j-s q^{J_i}} \right)
\cdot
\frac{1}{x_j-s q^{J_{\delta_j}}}
\right),
\end{multline*}
where $m_k(\delta)$ denotes, as usual, the multiplicity of part $k$ within the anti-dominant composition $(\delta_1 \leq \cdots \leq \delta_n)$. Finally we analytically continue in the variables $q^{J_i}$, sending $q^{J_i} \mapsto s^{-2}$ for all $i \geq 0$, which results in the equation
\begin{multline*}
\psi_{\{a_1,\dots,a_n\}}(x_1,\dots,x_n; Y)
\Big|_{y^{(i)}_j \mapsto s q^{J_i-j}}
\
\Big|_{q^{J_i} \mapsto s^{-2}}
\\
=
(-1)^{|\delta|+n}
(1-q)^n
s^{|\delta|+2n}
\prod_{k \geq 0}
q^{m_k(\delta)(m_k(\delta)-1)/2}
\prod_{j=1}^{n}
\left( 
\frac{x_j-s}{1-s x_j} 
\right)^{\delta_j}
\frac{1}{1-sx_j},
\end{multline*}
or in terms of the function \eqref{f-delta},
\begin{align*}
&
\psi_{\{a_1,\dots,a_n\}}(x_1,\dots,x_n; Y)
\Big|_{y^{(i)}_j \mapsto s q^{J_i-j}}
\
\Big|_{q^{J_i} \mapsto s^{-2}}
\\
&=
(-1)^{|\delta|+n}
(1-q)^n
s^{|\delta|+2n}
\prod_{k \geq 0}
q^{m_k(\delta)(m_k(\delta)-1)/2}
\frac{f_{\delta}(x_1,\dots,x_n)}{\prod_{k \geq 0}(s^2;q)_{m_k(\delta)}}
\\
&=
(1-q)^n
(-s)^{|\delta|}
\frac{f_{\delta}(x_1,\dots,x_n)}{\prod_{k \geq 0}(s^{-2};q^{-1})_{m_k(\delta)}}.
\end{align*}
Substituting this result into \eqref{b=1-sep} leads to the desired formula \eqref{f-mu-sym}, after cancelling off spurious normalization factors.

\end{proof}

\chapter{Orthogonality}

\section{Scalar product}

We begin by introducing the scalar product used in the statement of our orthogonality results.
\begin{defn}[Admissible contours]
\label{def:admiss}
Let $\{C_1,\dots,C_n\}$ be a collection of contours in the complex plane, and fix two complex parameters $q,s \in \mathbb{C}$. We say that the set $\{C_1,\dots,C_n\}$ is admissible with respect to $(q,s)$ if the following conditions are met:
\begin{itemize}
\item The contours $\{C_1,\dots,C_n\}$ are closed, positively oriented and pairwise non-intersecting.
\item The contours $C_i$ and $q \cdot C_i$ are both contained within contour $C_{i+1}$ for all $1 \leq i \leq n-1$, where $q \cdot C_i$ denotes the image of $C_i$ under multiplication by $q$.
\item All contours surround the point $s$.
\end{itemize}
An illustration of such admissible contours is given in Figure \ref{fig:contours}.
\end{defn}

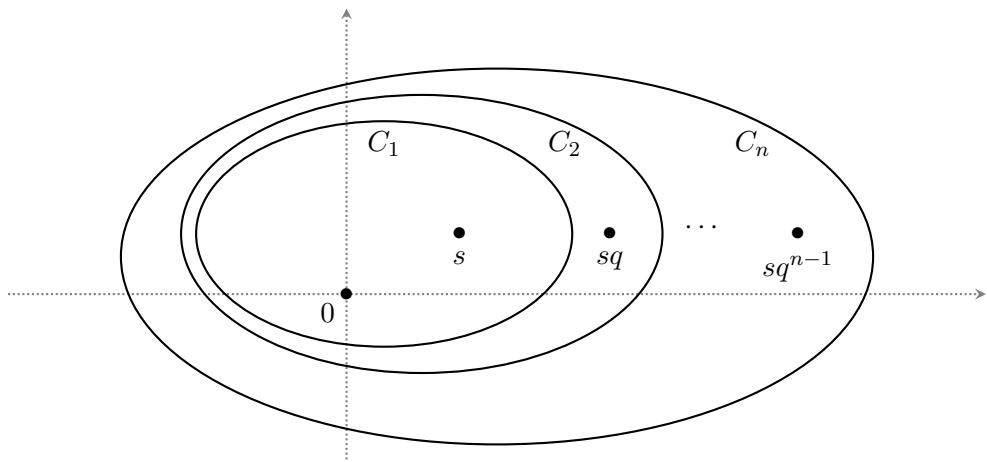
\begin{figure}
\begin{tikzpicture}[>=stealth]
\draw[gray,densely dotted,thick,->] (-1,2.2) -- (12,2.2);
\draw[gray,densely dotted,thick,->] (3.5,0) -- (3.5,6);
\draw[thick] (5.5,2.7) ellipse [x radius=5cm, y radius=2.5cm];
\draw[thick] (4.5,3) ellipse [x radius=3.2cm, y radius=1.85cm];
\draw[thick] (4,3) ellipse [x radius=2.5cm, y radius=1.5cm];
\node at (3.25,1.95) {$0$};
\node at (8.9,4.2) {$C_n$};
\node at (6.4,4.2) {$C_2$};
\node at (4,4.2) {$C_1$};
\foreach\x in {5,7,9.5}{
\node at (\x,3) {$\bullet$};
}
\node at (3.5,2.2) {$\bullet$};
\node[below,text centered] at (5,2.9) {$s$};
\node[below,text centered] at (7,2.9) {$sq$};
\node[below,text centered] at (8.25,3.3) {$\cdots$};
\node[below,text centered] at (9.5,2.9) {$sq^{n-1}$};
\end{tikzpicture}
\caption{Admissible contours $\{C_1,\dots,C_n\}$ with respect to $(q,s)$.}
\label{fig:contours}
\end{figure}
In all the cases below where we integrate rational functions over $\{C_1,\dots,C_n\}$, the integrals can also be computed as sums of residues of the integrand inside the contours. Such sums also make sense for values of parameters that prevent admissible contours existing, and thus the integrals could also be defined via the residue sums. Therefore, in what follows we tacitly assume that we perform such a replacement should the admissible contours not exist.

\index{1@$\langle \cdot,\cdot \rangle_{q,s}$; scalar product}
\begin{defn}[Scalar product]
\label{def:scalar}
Let $\Psi : \mathbb{C}^n \rightarrow \mathbb{C}$ and $\Phi : \mathbb{C}^n \rightarrow \mathbb{C}$ be arbitrary functions in $n$ complex variables $(x_1,\dots,x_n)$, and let $q,s \in \mathbb{C}$ be two fixed complex parameters. We introduce a scalar product (with the usual notation $\b{x}_i = x_i^{-1}$)
\begin{align}
\label{scalar}
\langle \Psi, \Phi \rangle_{q,s} 
:=
\left( \frac{1}{2\pi\sqrt{-1}} \right)^n
\oint_{C_1}
\frac{dx_1}{x_1}
\cdots 
\oint_{C_n}
\frac{dx_n}{x_n}
\prod_{1 \leq i<j \leq n}
\left(
\frac{x_j-x_i}{x_j-q x_i}
\right)
\Psi(\b{x}_1,\dots,\b{x}_n)
\Phi(x_1,\dots,x_n),
\end{align}
where the set of integration contours $\{C_1,\dots,C_n\}$ is admissible with respect to $(q,s)$.
\end{defn}

One of the key properties of the scalar product \eqref{scalar} is that it renders the Hecke generators \eqref{hecke-poly} and their reversed-variable versions \eqref{rev-Hecke} adjoint to each other:
\begin{prop}
Let $\Psi(x_1,\dots,x_n)$ and $\Phi(x_1,\dots,x_n)$ be two rational functions which are non-singular at the values $q x_i = x_j$ for all $i \not= j$. There holds  
\begin{align}
\label{adjoint}
\langle T_k \cdot \Psi, \Phi \rangle_{q,s}
=
\langle \Psi, \tilde{T}_k \cdot \Phi \rangle_{q,s},
\qquad
\forall\ 1 \leq k \leq n-1,
\end{align}
where $T_k$ and $\tilde{T}_k$ are Hecke generators given by \eqref{hecke-poly} and \eqref{rev-Hecke}, respectively.
\end{prop}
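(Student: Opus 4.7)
The plan is to show that the difference $\langle T_k\Psi,\Phi\rangle_{q,s} - \langle\Psi,\tilde T_k\Phi\rangle_{q,s}$ vanishes by computing the pointwise difference of the integrands and then exploiting an antisymmetry that opens up once a key factor of the kernel is cancelled.

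First I would compute
$(T_k\Psi)(\bar x_1,\dots,\bar x_n)\,\Phi(x) - \Psi(\bar x)\,(\tilde T_k\Phi)(x).$
Substituting $x_i\mapsto\bar x_i$ in \eqref{hecke-poly}, the coefficient $(\bar x_k-q\bar x_{k+1})/(\bar x_k-\bar x_{k+1})$ simplifies to $(x_{k+1}-qx_k)/(x_{k+1}-x_k)$, which is exactly the coefficient appearing in $\tilde T_k$ from \eqref{rev-Hecke}. The scalar $q$-terms and the non-swapped terms therefore cancel, leaving
\[
(T_k\Psi)(\bar x)\Phi(x) - \Psi(\bar x)(\tilde T_k\Phi)(x) \;=\; B(x)\,H(x),
\]
where $B(x) := (x_{k+1}-qx_k)/(x_{k+1}-x_k)$ and $H(x) := (\mathfrak{s}_k\Psi)(\bar x)\Phi(x) - \Psi(\bar x)(\mathfrak{s}_k\Phi)(x)$. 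Note that $H$ is antisymmetric under $\mathfrak{s}_k$.

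Next, I would combine $B(x)$ with the symmetrization kernel $F(x):=\prod_{i<j}(x_j-x_i)/(x_j-qx_i)$ present in Definition \ref{def:scalar}. The product $G(x):=F(x)B(x)$ is obtained from $F(x)$ by deleting the $(i,j)=(k,k+1)$ factor, so $G$ is manifestly symmetric in $x_k,x_{k+1}$: the pairs of factors involving $x_k$ on the one hand and $x_{k+1}$ on the other (but not both) have identical functional form and merely swap under $\mathfrak{s}_k$. Consequently the difference of scalar products reduces to
\[
\Big\langle T_k\Psi,\Phi\Big\rangle_{q,s} - \Big\langle\Psi,\tilde T_k\Phi\Big\rangle_{q,s} \;=\; \left(\tfrac{1}{2\pi\sqrt{-1}}\right)^n\!\oint_{C_1}\!\!\!\cdots\!\oint_{C_n} G(x)\,H(x)\prod_{i=1}^n\frac{dx_i}{x_i},
\]
whose integrand is antisymmetric in $(x_k,x_{k+1})$.

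The decisive step is then to deform the contour $C_k$ outward onto $C_{k+1}$ without picking up residues. Viewed as a function of $x_k$, $G(x)$ only has simple poles at $x_k=qx_i$ for $i<k$ (all inside $C_k$, since $qx_i\in qC_i\subset C_k$) and at $x_k=x_j/q$ for $j>k+1$ (all outside $C_{k+1}$, since $qC_{k+1}\subset C_j$ forces $x_j/q$ outside $C_{k+1}$); the dangerous poles at $x_k=x_{k+1}$ and $x_k=x_{k+1}/q$ have disappeared precisely because the $(k,k+1)$ factor of $F$ was absorbed into $B$. The hypothesis that $\Psi$ and $\Phi$ (and hence their swapped versions) are non-singular at $qx_i=x_j$ guarantees that $H(x)$ contributes no poles in the annular region either, and the factor $1/x_k$ is regular there. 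After deformation both $x_k$ and $x_{k+1}$ are integrated on the common contour $C_{k+1}$, and renaming the dummy variables $x_k\leftrightarrow x_{k+1}$ leaves the integration domain invariant while turning the antisymmetric integrand $GH\prod dx_i/x_i$ into its negative; the integral thus equals its own negative and vanishes.

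The main obstacle will be the contour-deformation step, i.e.\ the careful verification that no poles of the integrand lie in the annulus between $C_k$ and $C_{k+1}$ after the cancellation; once this pole bookkeeping is in place, the renaming/antisymmetry trick finishes the proof in one line.
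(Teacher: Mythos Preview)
Your argument is correct. The paper's proof uses the same core mechanism --- cancelling the $(k,k+1)$ factor $(x_{k+1}-x_k)/(x_{k+1}-qx_k)$ of the kernel against the Hecke coefficient so that the contours $C_k$ and $C_{k+1}$ can be exchanged --- but organizes the computation differently: it expands $(T_k\Psi)(\bar x)$, splits the resulting integral as $I_1+I_2$, performs the contour swap and relabelling $x_k\leftrightarrow x_{k+1}$ only in $I_2$ (where the obstructing pole at $qx_k=x_{k+1}$ has been cancelled), and then recombines $I_1$ with the transformed $I_2$ to recognise $\tilde T_k\Phi$ acting on the second slot. Your difference-then-antisymmetry packaging is a cleaner way to the same conclusion: once the $(k,k+1)$ factor is gone the residual kernel $G$ is $\mathfrak{s}_k$-symmetric, the full integrand is $\mathfrak{s}_k$-antisymmetric, and the common-contour integral vanishes. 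Both routes rest on identical pole bookkeeping; yours makes the reason for vanishing (antisymmetry) more transparent, while the paper's makes the passage from $T_k$ to $\tilde T_k$ more explicit.
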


\begin{proof}
Begin with the left hand side of \eqref{adjoint}, writing explicitly the action of $T_k$:
\begin{multline}
\langle T_k \cdot \Psi, \Phi \rangle_{q,s}
=
\left( \frac{1}{2\pi\sqrt{-1}} \right)^n
\oint_{C_1}
\frac{dx_1}{x_1}
\cdots 
\oint_{C_n}
\frac{dx_n}{x_n}
\prod_{1 \leq i<j \leq n}
\left(
\frac{x_j-x_i}{x_j-q x_i}
\right)
\\
\times
\left[
\frac{(q-1)\b{x}_k}{\b{x}_k-\b{x}_{k+1}} \Psi(\b{x}_1,\dots,\b{x}_n)
+
\frac{\b{x}_k-q\b{x}_{k+1}}{\b{x}_k-\b{x}_{k+1}} \Psi(\b{x}_1,\dots,\b{x}_{k+1},\b{x}_k,\dots,\b{x}_n)
\right]
\Phi(x_1,\dots,x_n).
\end{multline}
Opening the brackets inside the integrand, we write this as a sum of two integrals:
\begin{align*}
\langle T_k \cdot \Psi, \Phi \rangle_{q,s}
=
I_1(k) + I_2(k),
\end{align*}
where we have defined
\begin{multline*}
I_1(k)
=
\left( \frac{1}{2\pi\sqrt{-1}} \right)^n
\oint_{C_1}
\frac{dx_1}{x_1}
\cdots 
\oint_{C_n}
\frac{dx_n}{x_n}
\\
\times
\prod_{1 \leq i<j \leq n}
\left(
\frac{x_j-x_i}{x_j-q x_i}
\right)
\Psi(\b{x}_1,\dots,\b{x}_n)
\left[
\frac{(q-1) x_{k+1}}{x_{k+1}-x_k}
\right]
\Phi(x_1,\dots,x_n),
\end{multline*}
\begin{multline*}
I_2(k)
=
\left( \frac{1}{2\pi\sqrt{-1}} \right)^n
\oint_{C_1}
\frac{dx_1}{x_1}
\cdots 
\oint_{C_n}
\frac{dx_n}{x_n}
\\
\times
\prod_{1 \leq i<j \leq n}
\left(
\frac{x_j-x_i}{x_j-q x_i}
\right)
\Psi(\b{x}_1,\dots,\b{x}_{k+1},\b{x}_k,\dots,\b{x}_n)
\left[
\frac{x_{k+1}-q x_k}{x_{k+1}-x_k}
\right]
\Phi(x_1,\dots,x_n).
\end{multline*}
Because of the presence of its extra rational factor, the integrand of $I_2(k)$ is clearly non-singular at $q x_k = x_{k+1}$. This means that we can freely move the contours $C_k$ and $C_{k+1}$ past each other, without crossing any singularities. Interchanging these contours and relabelling $C_{k} \leftrightarrow C_{k+1}$, as well as $x_k \leftrightarrow x_{k+1}$, we find that
\begin{multline*}
I_2(k)
=
\left( \frac{1}{2\pi\sqrt{-1}} \right)^n
\oint_{C_1}
\frac{dx_1}{x_1}
\cdots 
\oint_{C_n}
\frac{dx_n}{x_n}
\\
\times
\prod_{1 \leq i<j \leq n}
\left(
\frac{x_j-x_i}{x_j-q x_i}
\right)
\Psi(\b{x}_1,\dots,\b{x}_n)
\left[
\frac{x_{k+1}-q x_k}{x_{k+1}-x_k}
\right]
\Phi(x_1,\dots,x_{k+1},x_k,\dots,x_n).
\end{multline*}
Now we recombine $I_1(k)$ and $I_2(k)$ to a single integral, yielding
\begin{multline}
\label{eq151}
\langle T_k \cdot \Psi, \Phi \rangle_{q,s}
=
\left( \frac{1}{2\pi\sqrt{-1}} \right)^n
\oint_{C_1}
\frac{dx_1}{x_1}
\cdots 
\oint_{C_n}
\frac{dx_n}{x_n}
\prod_{1 \leq i<j \leq n}
\left(
\frac{x_j-x_i}{x_j-q x_i}
\right)
\Psi(\b{x}_1,\dots,\b{x}_n)
\\
\times
\left[
\frac{(q-1) x_{k+1}}{x_{k+1}-x_k} \Phi(x_1,\dots,x_n)
+
\frac{x_{k+1}-qx_k}{x_{k+1}-x_k} \Phi(x_1,\dots,x_{k+1},x_k,\dots,x_n)
\right].
\end{multline}
The term in brackets within the integrand is nothing but the expanded form of $\tilde{T}_k \cdot \Phi(x_1,\dots,x_n)$; it follows that the right hand side of \eqref{eq151} is equal to $\langle \Psi, \tilde{T}_k \cdot \Phi \rangle_{q,s}$, completing the proof.
\end{proof}

\section{Orthogonality of $f_{\mu}$ and $g^{*}_{\nu}$}

The main result of this chapter follows below. We prove that the functions $f_{\mu}$ and $g^{*}_{\nu}$ are orthonormal with respect to the scalar product of Definition \ref{def:scalar}. This result extends a previously known orthogonality statement on the non-symmetric Hall--Littlewood polynomials \cite{Cherednik2,MimachiN} to arbitrary values of the spin parameter $s$.

\begin{thm}
\label{thm:orthog}
Let $\mu = (\mu_1,\dots,\mu_n)$ and $\nu = (\nu_1,\dots,\nu_n)$ be two compositions. We have the following orthonormality of non-symmetric spin Hall--Littlewood functions:
\begin{align}
\langle f_{\mu}, g^{*}_{\nu} \rangle_{q,s} 
=
{\bm 1}_{\mu = \nu},
\end{align}
or more explicitly,
\begin{multline}
\label{f-g-orthog}
\left( \frac{1}{2\pi\sqrt{-1}} \right)^n
\oint_{C_1}
\frac{dx_1}{x_1}
\cdots 
\oint_{C_n}
\frac{dx_n}{x_n}
\prod_{1 \leq i<j \leq n}
\left(
\frac{x_j-x_i}{x_j-q x_i}
\right)
f_{\mu}(\b{x}_1,\dots,\b{x}_n)
g^{*}_{\nu}(x_1,\dots,x_n)
=
{\bm 1}_{\mu = \nu}.
\end{multline}
\end{thm}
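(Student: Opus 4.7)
The plan is to exploit the adjointness \eqref{adjoint} of the Hecke generators $T_k,\tilde T_k$ with respect to $\langle\cdot,\cdot\rangle_{q,s}$, together with the exchange relations of Chapter \ref{sec:properties}, in order to reduce \eqref{f-g-orthog} to a base case that can be evaluated directly by a contour integral computation.

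First I would use the exchange relation $f_\mu = T_i\cdot f_{s_i\mu}$ from Theorem \ref{thm:hecke-f}, valid whenever $\mu_i<\mu_{i+1}$, applied in the reciprocated alphabet $\bar x$ via the symmetry \eqref{hecke-sym} between $T_i(\bar x,q)$ and $\widehat T_i(x,q^{-1})$, to move a Hecke operator across the pairing: $\langle f_\mu,g^*_\nu\rangle_{q,s}=\langle f_{s_i\mu},\tilde T_i\cdot g^*_\nu\rangle_{q,s}$. The action of $\tilde T_i$ on $g^*_\nu(x_1,\dots,x_n)$ is governed by the companion exchange relations \eqref{T-g}, \eqref{That-g} of Section \ref{ssec:g-ex} (applied in the reversed alphabet) together with the rescaling \eqref{g-star}. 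A careful bookkeeping of how the index $i$ acts on $\nu$ converts the right-hand pairing into either $\langle f_{s_i\mu},g^*_{s_i\nu}\rangle_{q,s}$ (when $\nu_i<\nu_{i+1}$, matching the step applied on the $f$ side) or into a scalar multiple of $\langle f_{s_i\mu},g^*_\nu\rangle_{q,s}$ that is compatible with the Kronecker delta in the final answer. Iterating this procedure, the statement for arbitrary $\mu$ is reduced to the statement for anti-dominant $\mu=\delta$, with a correspondingly rearranged $\nu$.

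For the base case $\mu=\delta$ anti-dominant, the function $f_\delta$ is completely factorized by Proposition \ref{prop:f-delta}. Substituting this into \eqref{f-g-orthog} and expanding $g^*_\nu$ via the monomial formula \eqref{g-monom} of Corollary \ref{cor:mon-exp} reduces the scalar product to a sum over $\rho\in\mathfrak S_n$ of contour integrals of explicit rational functions. These integrals are then evaluated by iterated residue computation: the admissibility condition on $\{C_1,\dots,C_n\}$ forces us to pick up residues at $x_i=s$ (coming from the factors $(1-sx_i)^{-(\delta_i+1)}$ that arise after rewriting $f_\delta(\bar x_1,\dots,\bar x_n)$ in terms of $x_i$) and at $x_j=qx_i$ (from the Cauchy-type kernel in \eqref{scalar} combined with the denominators of \eqref{g-monom}), with the behaviour at $x_i=\infty$ controlled by the polynomial degree bound in Proposition \ref{prop:singular}. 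After telescoping the residues, the double permutation sum (over $\rho$ and over the permutation $\sigma$ appearing in the expansion of $g^*_\nu$) should collapse to $\mathbf 1_{\nu=\delta}$.

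The main obstacle will be the final residue computation. One must show that, after integrating out the $x_i$'s in the prescribed order, the cross-terms involving the factorized monomials $\xi_\delta$ from $f_\delta$ and $\xi_{\delta'}$ from the expansion of $g^*_\nu$ (with $\delta'$ the anti-dominant reordering of $\nu$) recombine via the permutation-graph weights $Z^{\sigma}_{\tilde\rho}$ to produce the Kronecker delta $\mathbf 1_{\nu=\delta}$. At $s=0$ this degenerates to the classical Mimachi--Noumi orthogonality \cite{MimachiN} for non-symmetric Hall--Littlewood polynomials (via Theorem \ref{thm:f-E} and \eqref{g-E}), which provides both a consistency check and a conceptual template; however, the $s$-deformation introduces additional poles at $x_i=s$ and additional factors from $(s^2;q)_{m_j(\delta)}$, making the residue bookkeeping considerably more delicate than in the polynomial case.
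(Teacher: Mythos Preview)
Your overall architecture coincides with the paper's: reduce generic $\mu$ to anti-dominant $\delta$ via Hecke adjointness, and handle the base case $\mu=\delta$ by combining the factorized form \eqref{f-delta} with the monomial expansion \eqref{g-monom}. However, there are two places where your sketch underestimates the difficulty.

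\textbf{Hecke reduction.} Moving $T_i$ across the pairing does not simply turn $g^*_\nu$ into $g^*_{\mathfrak s_i\nu}$. One has to derive the full trichotomy
\[
\tilde T_i\cdot g^*_\nu
=\begin{cases}
q\,g^*_{\mathfrak s_i\nu}, & \nu_i<\nu_{i+1},\\
q\,g^*_\nu, & \nu_i=\nu_{i+1},\\
(q-1)\,g^*_\nu+g^*_{\mathfrak s_i\nu}, & \nu_i>\nu_{i+1},
\end{cases}
\]
so that in the third case the image is a \emph{two-term} combination. The paper handles this by setting up an induction on the length $\ell(\sigma)$ of the permutation carrying $\delta$ to $\mu$ (the proposition labelled $\mathfrak P_\ell$ in Step 3). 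Your ``scalar multiple'' clause does not cover this case; you need the inductive argument, not a single step.

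\textbf{Base case.} This is where the genuine work lies, and ``after telescoping the residues, the double permutation sum should collapse'' is not a proof. The paper separates the base case into two pieces. For the \emph{vanishing} when $\nu\neq\delta$, the integrand is not evaluated residue-by-residue; instead one proves a key lemma (Lemma~\ref{lem:xixi}) showing that for each $\rho$ the contour integral of $\mathcal P^\sigma_\rho\cdot\xi_\delta(\bar x)\,\xi_\epsilon(x_\rho)$ is zero unless $\delta=\epsilon$ and $\rho$ stabilizes $\delta$. That lemma rests on a combinatorial analysis of which contours can be shrunk to $s$ or $s^{-1}$ given the inversion set of $\rho$ (ultimately borrowed from \cite{BorodinCPS}), followed by a length-counting argument on $\tilde\rho$ versus $\sigma$. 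You have not identified this mechanism. For the \emph{normalization} $\langle f_\delta,g^*_\delta\rangle=1$, the paper explicitly avoids the monomial expansion of $g^*_\delta$ and instead multiplies by $f_\nu(y)$, sums over $\nu$ using the Mimachi--Noumi identity \eqref{mimachi-id}, and then evaluates a much simpler integral whose only inside-contour poles are at $x_i=\bar y_i$. Trying to extract the normalization directly from \eqref{g-monom}, as you suggest, would require summing the residue contributions over all $\rho$ in the stabilizer of $\delta$ with their permutation-graph weights $Z^\sigma_{\tilde\rho}$, which is substantially harder.
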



\begin{proof}
The proof of \eqref{f-g-orthog} is in three stages. The first (and most difficult) stage is a direct proof of \eqref{f-g-orthog} in the case 
$\mu = \delta = (\delta_1 \leq \cdots \leq \delta_n)$, with $\nu \not= \delta$ but otherwise arbitrary, when the integral must vanish. The second stage is the proof of \eqref{f-g-orthog} in the case $\mu = \nu = \delta = (\delta_1 \leq \cdots \leq \delta_n)$, when the integral must produce the correct normalization. In the third stage, we use the results of the first two steps to infer that \eqref{f-g-orthog} holds for general $\mu,\nu$.

\medskip
\underline{\bf Step 1: The case $\mu = \delta$, $\nu \not= \delta$.}

We will attempt to compute $\langle f_{\delta}, g^{*}_{\nu} \rangle_{q,s}$ directly. We do this making use of two of our earlier results; the explicit factorized form \eqref{f-delta} of $f_{\delta}$, and the expansion \eqref{g-monom} of $g^{*}_{\nu}$ in terms of the monomials \eqref{xi}. Substituting \eqref{f-delta} and \eqref{g-monom} into the scalar product \eqref{f-g-orthog}, we obtain
\begin{multline}
\label{7.2-1}
K_{\delta}
\cdot
\langle f_{\delta}, g^{*}_{\nu} \rangle_{q,s}
=
\sum_{\rho \in \mathfrak{S}_n}
\oint_{C_1}
\frac{dx_1}{x_1}
\cdots 
\oint_{C_n}
\frac{dx_n}{x_n}
\prod_{1 \leq i<j \leq n}
\left(
\frac{x_j-x_i}{x_j-q x_i}
\right)
\left(
\frac{x_{\rho(j)} - q x_{\rho(i)}}{x_{\rho(j)} - x_{\rho(i)}}
\right)
\\
\times
Z_{\tilde\rho}^{\sigma}(x_n,\dots,x_1)
\xi_{\delta}(\b{x}_1,\dots,\b{x}_n)
\xi_{\epsilon}(x_{\rho(1)},\dots,x_{\rho(n)}),
\end{multline}
with $\tilde{\rho}(i) := n-\rho(i)+1$ for all $1 \leq i \leq n$. Here we have collected all irrelevant multiplicative factors into the constant
\begin{align*}
K_{\delta}
:=
\frac{(2\pi\sqrt{-1})^n}{\prod_{i \geq 0} (s^2;q)_{m_i(\delta)}},
\end{align*}
and $\nu$ is expressed in terms of the unique anti-dominant composition $\epsilon = (\epsilon_1 \leq \cdots \leq \epsilon_n)$ and minimal-length permutation $\sigma \in \mathfrak{S}_n$ such that
\begin{align*}
\nu_{n-i+1} = \epsilon_{\sigma(i)},
\qquad
\forall\ 1 \leq i \leq n.
\end{align*} 
Our strategy will be to show that every term in the sum over $\rho \in \mathfrak{S}_n$ in \eqref{7.2-1} vanishes in its own right, unless $\nu = \delta$. In order to achieve this, it is necessary to identify all potential poles in the integrand of \eqref{7.2-1}. Recalling the structure \eqref{singular-structure} of the singularities in $Z^{\sigma}_{\tilde\rho}$ and using the vanishing property \eqref{vanish-Z}, we see that
\begin{multline*}
K_{\delta}
\cdot
\langle f_{\delta}, g^{*}_{\nu} \rangle_{q,s}
=
\sum_{\rho \in \mathfrak{S}_n : \ell(\tilde\rho) \leq \ell(\sigma)}
\oint_{C_1}
\frac{dx_1}{x_1}
\cdots 
\oint_{C_n}
\frac{dx_n}{x_n}
\prod_{1 \leq i<j \leq n}
\left(
\frac{x_j-x_i}{x_j-q x_i}
\right)
\left(
\frac{x_{\rho(j)} - q x_{\rho(i)}}{x_{\rho(j)} - x_{\rho(i)}}
\right)
\\
\times
\prod_{i<j : \rho(i) > \rho(j)}
\left(
\frac{1}{x_{\rho(j)}-q x_{\rho(i)}}
\right)
\mathcal{P}_{\rho}^{\sigma}(x_1,\dots,x_n)
\xi_{\delta}(\b{x}_1,\dots,\b{x}_n)
\xi_{\epsilon}(x_{\rho(1)},\dots,x_{\rho(n)}),
\end{multline*}
where $\mathcal{P}_{\rho}^{\sigma}(x_1,\dots,x_n)$ is the polynomial specified at the end of Proposition \ref{prop:singular}, or after simple rearrangement of the factors in the integrand,
\begin{multline}
\label{7.2-2}
K_{\delta}
\cdot
\langle f_{\delta}, g^{*}_{\nu} \rangle_{q,s}
=
\sum_{\rho \in \mathfrak{S}_n : \ell(\tilde\rho) \leq \ell(\sigma)}
{\rm sgn}(\rho)
\oint_{C_1}
\frac{dx_1}{x_1}
\cdots 
\oint_{C_n}
\frac{dx_n}{x_n}
\prod_{i<j: \rho(i) > \rho(j)}
\left(
\frac{1}{x_{\rho(i)}-q x_{\rho(j)}}
\right)
\\
\times
\mathcal{P}_{\rho}^{\sigma}(x_1,\dots,x_n)
\xi_{\delta}(\b{x}_1,\dots,\b{x}_n)
\xi_{\epsilon}(x_{\rho(1)},\dots,x_{\rho(n)}).
\end{multline}
To proceed further, we need the following result:
\begin{lem}
\label{lem:xixi}
Let $\{C_1,\dots,C_n\}$ be a collection of integration contours as in Definition \ref{def:admiss}, and fix two anti-dominant compositions $\delta, \epsilon$, as well as a permutation $\rho \in \mathfrak{S}_n$. In addition, let $\mathcal{P}(x_1,\dots,x_n)$ be a polynomial in $(x_1,\dots,x_n)$ of sufficiently small degree such that
\begin{align}
\label{deg-constr}
\lim_{x_a \rightarrow \infty}
\left[
\frac{\mathcal{P}(x_1,\dots,x_n)}
{
\prod_{i<j: \rho(i) > \rho(j)} \left(x_{\rho(i)}-q x_{\rho(j)}\right)
}
\right]
\quad
\text{exists for all}\ 1 \leq a \leq n.
\end{align}
We then have
\begin{align}
\label{Pxixi}
\oint_{C_1}
\frac{dx_1}{x_1}
\cdots 
\oint_{C_n}
\frac{dx_n}{x_n}
\frac{\mathcal{P}(x_1,\dots,x_n)}
{
\prod_{i<j: \rho(i) > \rho(j)}
\left(x_{\rho(i)}-q x_{\rho(j)}\right)
}
\xi_{\delta}(\b{x}_1,\dots,\b{x}_n)
\xi_{\epsilon}(x_{\rho(1)},\dots,x_{\rho(n)})
=
0,
\end{align}
unless $\delta = \epsilon$ and $\rho$ is chosen such that $\delta_{\rho(i)} = \delta_i$ for all $1 \leq i \leq n$.
\end{lem}

\begin{proof}
This lemma has a similar flavour to analogous statements in \cite{BorodinCPS,BorodinP2}. A full proof requires a non-trivial combinatorial argument related to permutations, which can be found in \cite{BorodinCPS}; for our purposes it will be sufficient to quote from that previous work.

To illustrate, let us begin by considering what happens to the integral \eqref{Pxixi} when $\rho = {\rm id} = (1,\dots,n)$. In that case the product over $i<j : \rho(i) > \rho(j)$ is empty and the polynomial $\mathcal{P}$ is constrained, by its degree restrictions \eqref{deg-constr}, to be a constant, so the integral reduces to the form
\begin{align}
\label{triv-integral}
\oint_{C_1}
\frac{dx_1}{x_1}
\cdots 
\oint_{C_n}
\frac{dx_n}{x_n}
\xi_{\delta}(\b{x}_1,\dots,\b{x}_n)
\xi_{\epsilon}(x_1,\dots,x_n)
=
\oint_{C_1}
dx_1
\cdots 
\oint_{C_n}
dx_n
\prod_{i=1}^{n}
\frac{(x_i-s)^{\epsilon_i-\delta_i-1}}{(1-s x_i)^{\epsilon_i-\delta_i+1}}.
\end{align}
This can be considered as $n$ independent integrations; for $i \not= j$, we can freely deform the contours $C_i$ and $C_j$ past each other, since there are no longer any cross-terms which are singular under such exchanges. If for some $1 \leq i \leq n$ one has $\epsilon_i > \delta_i$, the integrand of \eqref{triv-integral} is non-singular at $x_i = s$, and shrinking $C_i$ to that point, the integral vanishes. On the other hand if for the same $i$ one has $\epsilon_i < \delta_i$, the integrand of \eqref{triv-integral} is non-singular at $x_i = s^{-1}$, and shrinking $C_i$ to that point (noting that the pole at infinity has zero residue\footnote{Shrinking takes place on $\mathbb{C} \cup \{\infty\}$, \ie\ on the Riemann sphere.}), the integral is again null. We conclude that \eqref{triv-integral} is non-vanishing only if $\delta = \epsilon$.

The argument is similar for generic $\rho$: once again we would like to shrink each contour $C_i$ to surround either $x_i = s$ or $x_i = s^{-1}$, but this time we must be mindful of potential singularities arising at the points 
$x_{\rho(a)} = q x_{\rho(b)}$, $a < b$. 

\begin{itemize}
\item Let us suppose that for some $1 \leq a \leq n$ we have $\rho(a) < {\rm min}(\rho(a+1),\dots,\rho(n))$. Then one can easily see that none of the factors 
$(x_{\rho(a)} - q x_1), \dots, (x_{\rho(a)} - q x_{\rho(a)-1})$ are present in the product $\prod_{i<j : \rho(i) > \rho(j)} (x_{\rho(i)} - q x_{\rho(j)})$. This means that the contour $C_{\rho(a)}$ can be moved past the contours $C_1,\dots,C_{\rho(a)-1}$ without crossing any singularities, and shrunk to the point $x_{\rho(a)} = s$. By similar reasoning as above, we then see that \eqref{Pxixi} vanishes unless $\epsilon_a \leq \delta_{\rho(a)}$.

\medskip
\item Alternatively, let us suppose that for some $1 \leq b \leq n$ one has $\rho(b) > {\rm max}(\rho(1),\dots,\rho(b-1))$. Then none of the factors $(x_n - q x_{\rho(b)}),\dots,(x_{\rho(b)+1} - q x_{\rho(b)})$ are present in the product 
$\prod_{i<j : \rho(i) > \rho(j)} (x_{\rho(i)} - q x_{\rho(j)})$. This allows the contour $C_{\rho(b)}$ to be moved past the contours $C_{\rho(b)+1},\dots,C_n$ without crossing any singularities\footnote{It is straightforward to check that the pole at infinity again has zero residue, which is ensured by the existence of the limits \eqref{deg-constr}.}, and shrunk to the point $x_{\rho(b)} = s^{-1}$. The integration over $x_{\rho(b)}$ can then be directly performed, and we conclude that \eqref{Pxixi} vanishes unless $\epsilon_b \geq \delta_{\rho(b)}$.
\end{itemize}
To summarize, we have shown that the integral in \eqref{Pxixi} vanishes unless 
$\delta, \epsilon$ and  $\rho \in \mathfrak{S}_n$ are chosen such that
\begin{align}
\label{perm-constraints}
\rho(a) < {\rm min}(\rho(a+1),\dots,\rho(n)) \implies \epsilon_a \leq \delta_{\rho(a)},
\quad
\rho(b) > {\rm max}(\rho(1),\dots,\rho(b-1)) \implies \epsilon_b \geq \delta_{\rho(b)}.
\end{align}
This boils the non-vanishing of \eqref{Pxixi} down to the purely combinatorial question of classifying solutions of \eqref{perm-constraints}. It turns out that the only possible solutions of the constraints \eqref{perm-constraints} are to choose $\delta = \epsilon$ and $\rho$ such that $\delta_{\rho(i)} = \delta_i$, \ie\ $\rho$ must live in the following subgroup of $\mathfrak{S}_n$: 
\begin{align}
\label{subgroup}
\rho 
\in 
\mathfrak{S}_{m_0(\delta)} 
\times 
\mathfrak{S}_{m_1(\delta)} 
\times \cdots \times  
\mathfrak{S}_{m_{\delta_n}(\delta)}.
\end{align}
The statement of this fact also appeared in \cite[Section 7]{BorodinP2}; for its proof, we refer the reader to \cite[Section 3]{BorodinCPS}. This completes the proof of equation \eqref{Pxixi}.

\end{proof}

Applying the result of Lemma \ref{lem:xixi} to equation \eqref{7.2-2}, we see that the right hand side vanishes unless $\delta = \epsilon$. This tells us that $\nu$ must at the least be a permutation of the parts of $\delta$. Further to this, even when $\delta = \epsilon$, we require that $\delta_{\rho(i)} = \delta_i$ in order for the permutation $\rho$ to contribute to the sum \eqref{7.2-2}. We now show that if $\nu \not= \delta$, there are no permutations $\rho$ in the sum \eqref{7.2-2} which satisfy this criterion. To do that, note that any permutation $\rho$ in the subgroup \eqref{subgroup} has the length constraint
\begin{align}
\ell(\rho) \leq \sum_{i=0}^{\delta_n} m_i(\delta)(m_i(\delta)-1)/2,
\end{align}
and accordingly, the length of the corresponding conjugated permutation $\tilde\rho$ must satisfy
\begin{align}
\label{ineq-rho}
\ell(\tilde\rho)
\geq
n(n-1)/2
-
\sum_{i=0}^{\delta_n} m_i(\delta)(m_i(\delta)-1)/2
=
\sum_{0 \leq i < j \leq \delta_n} m_i(\delta) m_j(\delta).
\end{align}
On the other hand, given that $\sigma$ is the minimal permutation such that $\nu_{n-i+1} = \delta_{\sigma(i)}$, an upper bound (worst case scenario) on the length of $\sigma$ is determined by considering the case $\nu = \delta$, when $\delta_{n-i+1} = \delta_{\sigma(i)}$ for all $1\leq i \leq n$. From this, we see that
\begin{align}
\label{ineq-sig}
\ell(\sigma) \leq \sum_{0 \leq i < j \leq \delta_n} m_i(\delta) m_j(\delta).
\end{align}
Using the inequalities \eqref{ineq-rho} and \eqref{ineq-sig}, we see that the only permutations $\rho$ and $\sigma$ which respect the constraint $\ell(\tilde\rho) \leq \ell(\sigma)$ of the sum \eqref{7.2-2} are the maximal possible ones; $\rho$ being the longest element in \eqref{subgroup} and $\sigma$ the shortest permutation which arranges $\delta$ in decreasing order. However, this case coincides precisely with the case $\nu = \delta$. We conclude, finally, that
\begin{align}
\label{delta-nu}
\langle f_{\delta}, g^{*}_{\nu} \rangle_{q,s}
=
0,
\qquad
\delta = (\delta_1 \leq \cdots \leq \delta_n),
\qquad
\nu \not= \delta.
\end{align}

\medskip
\underline{\bf Step 2: The case $\mu = \nu = \delta$.}

The next step of the proof is to fix the normalization of our scalar product \eqref{f-g-orthog}, which we do by computing $\langle f_{\delta}, g^{*}_{\delta} \rangle_{q,s}$. For this purpose we do not resort to the monomial expansion \eqref{g-monom} of $g^{*}_{\delta}$, but instead make use of the summation identity \eqref{mimachi-id} of Mimachi--Noumi type. From \eqref{delta-nu}, we know that
\begin{align}
\label{delta-nu-explicit}
\left( \frac{1}{2\pi\sqrt{-1}} \right)^n
\oint_{C_1}
\frac{dx_1}{x_1}
\cdots 
\oint_{C_n}
\frac{dx_n}{x_n}
\prod_{1 \leq i<j \leq n}
\left(
\frac{x_j-x_i}{x_j-q x_i}
\right)
f_{\delta}(\b{x}_1,\dots,\b{x}_n)
g^{*}_{\nu}(x_1,\dots,x_n)
=
\left( {\bm 1}_{\delta = \nu} \right)
\kappa_{\delta}(q,s),
\end{align}
for all compositions $\nu$, with the constant $\kappa_{\delta}(q,s)$ as yet undetermined. Let us multiply \eqref{delta-nu-explicit} by $f_{\nu}(y_1,\dots,y_n)$, where the variables $(\b{y}_1,\dots,\b{y}_n)$ are assumed to lie outside of the integration contours, and sum over all $\nu$. Computing this sum using \eqref{mimachi-id}\footnote{The assumption that $(\b{y}_1,\dots,\b{y}_n)$ lie outside of the integration contours is consistent with the convergence criteria \eqref{weight-condition2}; hence the use of \eqref{mimachi-id} is justified.}, we obtain the equation
\begin{multline}
\left( \frac{1}{2\pi\sqrt{-1}} \right)^n
\oint_{C_1}
\frac{dx_1}{x_1}
\cdots 
\oint_{C_n}
\frac{dx_n}{x_n}
\prod_{1 \leq i<j \leq n}
\left(
\frac{x_j-x_i}{x_j-q x_i}
\right)
\left(
\frac{1-q x_i y_j}{1-x_i y_j}
\right)
\prod_{i=1}^{n}
\left(
\frac{1}{1-x_i y_i}
\right)
f_{\delta}(\b{x}_1,\dots,\b{x}_n)
\\
=
\kappa_{\delta}(q,s)
f_{\delta}(y_1,\dots,y_n),
\end{multline}
or replacing $f_{\delta}$ by its factorized expression \eqref{f-delta}, one has more explicitly
\begin{multline}
\label{kappa-eq}
\kappa_{\delta}(q,s)
=
\prod_{i=1}^{n}
\frac{(1-sy_i)^{\delta_i+1}}{(y_i-s)^{\delta_i}}
\\
\times
\left( \frac{1}{2\pi\sqrt{-1}} \right)^n
\oint_{C_1}
dx_1
\cdots 
\oint_{C_n}
dx_n
\prod_{1 \leq i<j \leq n}
\left(
\frac{x_j-x_i}{x_j-q x_i}
\right)
\left(
\frac{1-q x_i y_j}{1-x_i y_j}
\right)
\prod_{i=1}^{n}
\frac{(1-sx_i)^{\delta_i}}{(1-x_i y_i)(x_i-s)^{\delta_i+1}}.
\end{multline}
Equation \eqref{kappa-eq} can now be used to determine $\kappa_{\delta}(q,s)$. The integrals on the right hand side of \eqref{kappa-eq} can be easily computed, starting with the contour $C_n$ and working towards $C_1$. Indeed, we can shrink the contour $C_n$ (in the Riemann sphere) to surround the single pole at $x_n = \b{y}_n$ (with negative orientation); this works because the residue of the integrand at $x_n = \infty$ is zero, and no other potential singularities are crossed during the shrinking. Computing the residue at $x_n = \b{y}_n$ leads to a nice telescoping of the factors in the integrand; we obtain
\begin{multline*}
\kappa_{\delta}(q,s)
=
\prod_{i=1}^{n-1}
\frac{(1-sy_i)^{\delta_i+1}}{(y_i-s)^{\delta_i}}
\cdot
\left( \frac{1}{2\pi\sqrt{-1}} \right)^{n-1}
\oint_{C_1}
dx_1
\cdots 
\oint_{C_{n-1}}
dx_{n-1}
\\
\times
\prod_{1 \leq i<j \leq n-1}
\left(
\frac{x_j-x_i}{x_j-q x_i}
\right)
\left(
\frac{1-q x_i y_j}{1-x_i y_j}
\right)
\prod_{i=1}^{n-1}
\frac{(1-sx_i)^{\delta_i}}{(1-x_i y_i)(x_i-s)^{\delta_i+1}}.
\end{multline*}
The remaining integrals can then be computed sequentially in exactly the same way, leading to the final result
\begin{align*}
\kappa_{\delta}(q,s)
=
1,
\qquad
\text{for all}\ \ \delta = (\delta_1 \leq \cdots \leq \delta_n).
\end{align*}
We have thus shown that
\begin{align}
\label{delta-delta}
\langle f_{\delta}, g^{*}_{\delta} \rangle_{q,s}
=
1,
\qquad
\delta = (\delta_1 \leq \cdots \leq \delta_n).
\end{align}

\medskip
\underline{\bf Step 3: Generic $\mu$, $\nu$.} 

In the last stage of the proof, we combine the previously established facts \eqref{delta-nu} and \eqref{delta-delta}, together with the adjointness property \eqref{adjoint} of the Hecke generators with respect to the scalar product. Turning to the case of generic compositions $\mu$, $\nu$ we write, as usual,
\begin{align}
\label{delta-epsilon}
\mu_i = \delta_{\sigma(i)},
\qquad
\forall\ 1 \leq i \leq n,
\end{align}
where $\delta$ is an anti-dominant composition and $\sigma \in \mathfrak{S}_n$ is the minimal-length permutation for which these relations are obeyed. Next, we use the recursive property \eqref{T-f} of the non-symmetric spin Hall--Littlewood functions to write
\begin{align}
\label{eq156}
\langle f_{\mu}, g^{*}_{\nu} \rangle_{q,s}
=
\langle T_{\sigma} \cdot f_{\delta}, g^{*}_{\nu} \rangle_{q,s},
\end{align}
where $\delta$ and $\sigma$ are as given by \eqref{delta-epsilon}, and $T_{\sigma} = T_{i_{\ell(\sigma)}} \cdots T_{i_1}$ is a product of Hecke generators corresponding to a reduced-word decomposition of $\sigma$; see equations \eqref{reduced} and \eqref{Tsigma}. Making use of the adjointness relation \eqref{adjoint}, \eqref{eq156} becomes
\begin{align}
\label{expr1}
\langle f_{\mu}, g^{*}_{\nu} \rangle_{q,s}
=
\langle f_{\delta}, \tilde{T}_{\sigma} \cdot g^{*}_{\nu} \rangle_{q,s},
\qquad
\text{with}\ \ 
\tilde{T}_{\sigma} =  \tilde{T}_{i_1} \cdots \tilde{T}_{i_{\ell(\sigma)}}.
\end{align}
Hence we return to the situation where the first argument of the scalar product is $f_{\delta}$, when we know how to compute it. 

Now let $\mathfrak{P}_{\ell}$ denote the proposition that 
\begin{align}
\label{P-ell}
\langle f_{\delta}, \tilde{T}_{\sigma} \cdot g^{*}_{\nu} \rangle_{q,s}
=
\bm{1}_{\sigma\cdot\delta=\nu},
\end{align}
where $\ell \equiv \ell(\sigma)$ is the length of the permutation appearing in \eqref{delta-epsilon}. We have already shown that $\mathfrak{P}_{0}$ is true, by virtue of \eqref{delta-nu} and \eqref{delta-delta}. When $\ell=1$, we have $\sigma = \mathfrak{s}_i$ for some $1 \leq i \leq n-1$, and $\mathfrak{P}_{1}$ becomes the statement that
\begin{align}
\label{P-ell=1}
\langle f_{\delta}, \tilde{T}_{i} \cdot g^{*}_{\nu} \rangle_{q,s}
=
\bm{1}_{\mathfrak{s}_i\cdot\delta=\nu},
\qquad
\delta_i < \delta_{i+1},
\end{align}
where we can eliminate the possibility that $\delta_i = \delta_{i+1}$, since in that case $\delta_{\sigma(i)} = \delta_i$, contradicting the minimality requirement of 
$\sigma$. 

In order to prove \eqref{P-ell=1}, we derive three relations involving the action of Hecke generators $\tilde{T}_i$ on $g^{*}_{\alpha}$, $\alpha \in \mathbb{N}^n$. Consider the recursion \eqref{That-g} with $x_j \mapsto x_{n-j+1}$ for all $1 \leq j \leq n$, when it explicitly reads
\begin{align}
\label{7.2-3}
\left(
1 - \frac{x_{i+1}-q x_i}{x_{i+1}-x_i} (1-\mathfrak{s}_i)
\right)
g^{*}_{\alpha}(x_1,\dots,x_n)
=
g^{*}_{\mathfrak{s}_i \cdot \alpha}(x_1,\dots,x_n),
\quad
\alpha_i > \alpha_{i+1}.
\end{align}
Recalling the definition \eqref{rev-Hecke} of $\tilde{T}_i$ and its inverse, we recognise \eqref{7.2-3} in the form
\begin{align*}
q \cdot \tilde{T}_i^{-1} \cdot g^{*}_{\alpha}(x_1,\dots,x_n)
=
g^{*}_{\mathfrak{s}_i \cdot \alpha}(x_1,\dots,x_n),
\qquad
\alpha_i > \alpha_{i+1},
\end{align*}
or equivalently,
\begin{align}
\label{tildeT-g<}
\tilde{T}_i \cdot g^{*}_{\alpha}(x_1,\dots,x_n)
&=
q\cdot g^{*}_{\mathfrak{s}_i \cdot \alpha}(x_1,\dots,x_n),
\qquad
\alpha_i < \alpha_{i+1}.
\end{align}
Equation \eqref{tildeT-g<} can be used to evaluate the action of $\tilde{T}_i$ on $g^{*}_{\alpha}$ in the case $\alpha_i < \alpha_{i+1}$, however we also need to know how to act in the cases $\alpha_i = \alpha_{i+1}$ and $\alpha_i > \alpha_{i+1}$. In the case $\alpha_i = \alpha_{i+1}$, $g^{*}_{\alpha}$ is symmetric under the interchange of $x_i$ and $x_{i+1}$. Acting by $(1-\mathfrak{s}_i)$ will thus annihilate $g^{*}_{\alpha}$, and it follows that
\begin{align}
\label{tildeT-g=}
\tilde{T}_i \cdot g^{*}_{\alpha}(x_1,\dots,x_n)
=
q\cdot g^{*}_{\alpha}(x_1,\dots,x_n),
\quad
\alpha_i = \alpha_{i+1}.
\end{align}
To deduce what happens in the case $\alpha_i > \alpha_{i+1}$, we act on \eqref{tildeT-g<} by a further application of $\tilde{T}_i$. The reversed Hecke generators \eqref{rev-Hecke} satisfy the same quadratic relation as the one stated in \eqref{hecke1}: namely, $(\tilde{T}_i-q)(\tilde{T}_i+1) = 0$. This allows us to compute
\begin{align*}
\tilde{T}_i^2 \cdot g^{*}_{\alpha}(x_1,\dots,x_n)
=
(q-1) \tilde{T}_i \cdot g^{*}_{\alpha}(x_1,\dots,x_n)
+ 
q \cdot g^{*}_{\alpha}(x_1,\dots,x_n)
=
q\cdot \tilde{T}_i \cdot g^{*}_{\mathfrak{s}_i \cdot \alpha}(x_1,\dots,x_n).
\end{align*}
where $\alpha_i < \alpha_{i+1}$. Using once again \eqref{tildeT-g<} to calculate $\tilde{T}_i \cdot g^{*}_{\alpha}$, and relabelling $\alpha_i \leftrightarrow \alpha_{i+1}$, after further simplification we obtain
\begin{align}
\label{tildeT-g>}
\tilde{T}_i \cdot g^{*}_{\alpha}(x_1,\dots,x_n)
=
(q-1) g^{*}_{\alpha}(x_1,\dots,x_n)
+ 
g^{*}_{\mathfrak{s}_i \cdot \alpha}(x_1,\dots,x_n),
\quad
\alpha_i > \alpha_{i+1}.
\end{align}

Coming back to the proof of $\mathfrak{P}_1$, we combine \eqref{tildeT-g<}, \eqref{tildeT-g=} and \eqref{tildeT-g>} to write 
\begin{align}
\label{outcomes}
\tilde{T}_i \cdot g^{*}_{\nu}
=
\left\{
\begin{array}{ll}
q \cdot g^{*}_{\mathfrak{s}_i \cdot \nu},
\qquad\quad &
\nu_i \leq \nu_{i+1},
\\
\\
(q-1) \cdot g^{*}_{\nu} + g^{*}_{\mathfrak{s}_i \cdot \nu},
\qquad\quad &
\nu_i > \nu_{i+1}.
\end{array}
\right.
\end{align}
Thanks to the validity of $\mathfrak{P}_0$, the only function which pairs non-trivially with $f_{\delta}$ in the scalar product \eqref{P-ell=1} is $g^{*}_{\delta}$. Accordingly, we use \eqref{outcomes} to compute
\begin{align}
\label{outcomes2}
\langle f_{\delta}, \tilde{T}_{i} \cdot g^{*}_{\nu} \rangle_{q,s}
=
q \cdot \bm{1}_{\delta = \mathfrak{s}_i \cdot \nu} \cdot \bm{1}_{\nu_i \leq \nu_{i+1}}
+
(q-1) \cdot \bm{1}_{\delta = \nu} \cdot \bm{1}_{\nu_i > \nu_{i+1}}
+
\bm{1}_{\delta =  \mathfrak{s}_i \cdot \nu}
\cdot
\bm{1}_{\nu_i > \nu_{i+1}}.
\end{align}
In the case $\nu_i \leq \nu_{i+1}$, $\mathfrak{s}_i \cdot \nu$ can never equal $\delta$, since the former is either not anti-dominant or has its $i$-th and $(i+1)$-th parts equal, both of which are contradictory to our assumptions on $\delta$. Similarly, when $\nu_i > \nu_{i+1}$, $\nu$ cannot be equal to $\delta$. The only possible non-zero outcome on the right hand side of \eqref{outcomes2} is for 
$\nu_i > \nu_{i+1}$ and $\mathfrak{s}_i \cdot \nu = \delta$, when we recover precisely \eqref{P-ell=1}.

The general case $\mathfrak{P}_{\ell}$ can be proved by induction on $\ell$, where the inductive step takes a very similar form to the calculation laid out in \eqref{outcomes2}.

\end{proof}

\chapter{Plancherel isomorphisms}

The goal of this chapter is to develop a Plancherel theory based on the properties of the non-symmetric spin Hall--Littlewood functions. Much of the material extends previous results \cite{BorodinCPS,Borodin,BorodinP1,BorodinP2}, obtained in the context of the symmetric spin Hall--Littlewood functions, to the non-symmetric setting. 

Setting up this Plancherel theory involves finding a transform $\mathfrak{G}$ that maps functions valued on the discrete set $\mathbb{Z}^n$ to functions of the continuous variables $(x_1,\dots,x_n)$, as well as an inverse transform $\mathfrak{F}$ which pulls functions on $(x_1,\dots,x_n)$ back to functions on $\mathbb{Z}^n$; we require that the two maps compose as the identity. The tools which allow us to define such transforms are the summation identity \eqref{mimachi-id}, the orthogonality relation \eqref{f-g-orthog}, as well as a way of extending the definition of $f_{\mu}$ and $g_{\mu}$ to compositions with negative parts (see Section \ref{ssec:negative-mu}).

\section{Extending to compositions with negative parts}
\label{ssec:negative-mu}

\begin{prop}
\label{prop:f-shift}
Fix a positive integer $k$, and let $k^n = (k,\dots,k)$ be the composition with $n$ equal parts of size $k$. Then for any composition $\mu \in \mathbb{N}^n$, the non-symmetric spin Hall--Littlewood functions satisfy the shift property
\begin{align}
\label{f-shift}
f_{\mu+k^n}(x_1,\dots,x_n)
&=
\prod_{i=1}^{n}
\left( \frac{x_i-s}{1-sx_i} \right)^k
f_{\mu}(x_1,\dots,x_n),
\\
\label{g-shift}
g^{*}_{\mu+k^n}(x_1,\dots,x_n)
&=
\prod_{i=1}^{n}
\left( \frac{x_i-s}{1-sx_i} \right)^k
g^{*}_{\mu}(x_1,\dots,x_n).
\end{align}
\end{prop}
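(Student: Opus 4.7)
The plan is to establish both identities by first verifying them in the extremal case (anti-dominant $\mu$ for \eqref{f-shift} and dominant $\mu$ for \eqref{g-shift}), where everything reduces to the explicit product formulas \eqref{f-delta} and \eqref{g-lambda}, and then bootstrapping to general $\mu$ via the Hecke exchange relations \eqref{T-f} and \eqref{That-g}. The key structural observation driving the argument is that the shift factor $P := \prod_{i=1}^{n} \left( \frac{x_i-s}{1-sx_i} \right)^k$ is symmetric in $(x_1,\dots,x_n)$ and therefore commutes with every operator $T_i$ and $\widehat{T}_i$: for any $\mathfrak{s}_i$-invariant function $P$ one computes $T_i(Ph) = qPh - \frac{x_i-qx_{i+1}}{x_i-x_{i+1}}(Ph - P\,\mathfrak{s}_i h) = P \cdot T_i(h)$ (and similarly for $\widehat{T}_i$), because $\mathfrak{s}_i(Ph) = P\cdot \mathfrak{s}_i(h)$.

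For the base case of \eqref{f-shift} I take $\mu = \delta$ with $\delta_1 \leq \cdots \leq \delta_n$; then $\delta+k^n$ is also anti-dominant, and the part-multiplicities satisfy $m_j(\delta+k^n) = m_{j-k}(\delta)$, so the Pochhammer factor in \eqref{f-delta} is unchanged while the geometric factor picks up precisely $P$. The analogous one-line computation using \eqref{g-lambda} for a dominant $\lambda$ settles the base case of \eqref{g-shift}.

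For general $\mu$, let $\delta = \mu^-$ be the anti-dominant reordering of $\mu$ and choose a reduced word $\mathfrak{s}_{i_\ell}\cdots \mathfrak{s}_{i_1}$ for the minimal-length permutation sending $\delta$ to $\mu$. Translating a composition by the constant vector $k^n$ preserves every strict inequality between its parts, so the same reduced word sends $\delta+k^n$ to $\mu+k^n$, and at every step the current intermediate composition satisfies the hypothesis $\nu_{i_j}<\nu_{i_j+1}$ of Theorem \ref{thm:hecke-f}. Iterating \eqref{T-f} therefore yields simultaneously $f_{\mu} = T_{i_\ell}\cdots T_{i_1}\, f_{\delta}$ and $f_{\mu+k^n} = T_{i_\ell}\cdots T_{i_1}\, f_{\delta+k^n}$; substituting the base case and pulling the symmetric prefactor $P$ through the Hecke string gives \eqref{f-shift}. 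The derivation of \eqref{g-shift} is structurally identical: I start from the dominant reordering $\lambda = \mu^+$ of $\mu$, use \eqref{That-g} with $\widehat{T}_{n-i}$ in place of $T_i$, and observe that $P$ is equally symmetric in the reversed alphabet $(x_n,\dots,x_1)$ and so commutes with each $\widehat{T}_{n-i}$. The only genuine ingredient beyond the already-established Hecke recursions is the elementary observation that the $k^n$-shift preserves inversion patterns, so there is no real obstacle to overcome.
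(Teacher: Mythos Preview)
Your proof is correct, but it takes a genuinely different route from the paper. The paper argues directly from the partition function representation \eqref{f-def}: shifting $\mu \mapsto \mu + k^n$ translates every outgoing path $k$ columns to the right, which freezes the $k$ leftmost columns of the lattice into products of the pass-through vertices
$\tikz{0.4}{
\draw[lgray,line width=1.5pt,->] (-1,0) -- (1,0);
\draw[lgray,line width=3pt,->] (0,-1) -- (0,1);
\node[left] at (-1,0) {\tiny $i$};\node[right] at (1,0) {\tiny $i$};
\node[below] at (0,-1) {\tiny $\bm{0}$};\node[above] at (0,1) {\tiny $\bm{0}$};
}$
with weight $(x_i-s)/(1-sx_i)$, so the factor $P$ peels off and the remainder of the lattice reproduces $f_\mu$ exactly (and likewise for $g^*_\mu$ via \eqref{g-def}). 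This is a one-paragraph combinatorial argument that uses nothing beyond the definition of the Boltzmann weights.

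Your approach instead leverages the Hecke machinery of Chapter~\ref{sec:properties}: you verify the identity on the factorized extremal compositions via \eqref{f-delta} and \eqref{g-lambda}, and then propagate it to arbitrary $\mu$ by pulling the symmetric prefactor $P$ through the string $T_\sigma$ (resp.\ $\widehat{T}_\sigma$) furnished by \eqref{T-fdelta} and \eqref{That-g}. This is perfectly valid and logically non-circular (the Hecke relations are established well before Proposition~\ref{prop:f-shift}), and it has the virtue of making the shift property a formal consequence of the recursive structure rather than a lattice computation. The paper's proof is shorter and more self-contained; yours is a good illustration that once the Hecke recursion is in hand, many identities for $f_\mu$ and $g^*_\mu$ reduce to checking them on a single factorized seed.
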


\begin{proof}
Let us take the partition function representation \eqref{f-def} of $f_{\mu}(x_1,\dots,x_n)$, and shift all parts of the composition $\mu$ by $k$. At the level of the partition function, this shifting corresponds to translating the outgoing coordinates of all lattice paths right by $k$ units. After performing this operation, one sees that the $k$ leftmost columns of the partition function are frozen to vertices of the form 
$\tikz{0.4}{
\draw[lgray,line width=1.5pt,->] (-1,0) -- (1,0);
\draw[lgray,line width=3pt,->] (0,-1) -- (0,1);
\node[left] at (-1,0) {\tiny $i$};\node[right] at (1,0) {\tiny $i$};
\node[below] at (0,-1) {\tiny $\bm{0}$};\node[above] at (0,1) {\tiny $\bm{0}$};
}$. Since these vertices have the Boltzmann weight $(x_i-s)/(1-sx_i)$ (they occur in the $i$-th row of the lattice), it follows that the $k$ leftmost columns produce the overall common factor $\prod_{i=1}^{n} (x_i-s)^k / (1-s x_i)^k$. The remaining columns reproduce $f_{\mu}(x_1,\dots,x_n)$ as previously; we thus deduce the relation \eqref{f-shift}.

The proof of \eqref{g-shift} follows in an analogous way by shifting the paths in the partition function representation \eqref{g-def} of $g_{\mu}(x_1,\dots,x_n)$ (see also \eqref{g-star}).
\end{proof}

The relations \eqref{f-shift} and \eqref{g-shift} allow us to extend the definition of the non-symmetric spin Hall--Littlewood functions to compositions with negative parts. To do so, one only needs to relax the constraint that $\mu \in \mathbb{N}^n$ to the situation $\mu \in \mathbb{Z}^n$. Proposition \ref{prop:f-shift} shows that the following definition is unambiguous.
\begin{defn}
Let $\mu$ be a composition in $\mathbb{Z}^n$, and choose a positive integer $k$ such that $\mu + k^n \in \mathbb{N}^n$. We let
\begin{align}
\label{f-shift2}
f_{\mu}(x_1,\dots,x_n)
:=
\prod_{i=1}^{n}
\left( \frac{1-sx_i}{x_i-s} \right)^k
f_{\mu + k^n}(x_1,\dots,x_n),
\\
\label{g-shift2}
g^{*}_{\mu}(x_1,\dots,x_n)
:=
\prod_{i=1}^{n}
\left( \frac{1-sx_i}{x_i-s} \right)^k
g^{*}_{\mu + k^n}(x_1,\dots,x_n).
\end{align}
\end{defn}

\section{Function spaces}

Before giving the Fourier-like transforms, let us firstly define the spaces on which they will act. It would be possible to relax the constraints on these spaces, to allow our transforms to act on a wider class of functions, but we shall refrain from making the most general statements available.

\index{C@$\mathcal{C}^n$; finite composition space}
\begin{defn}[Finitely supported functions on compositions]
We say that a function $\alpha : \mathbb{Z}^n \rightarrow \mathbb{C}$ is finitely supported if there exists a positive integer $N$ such that $\alpha(\mu) = 0$ for all compositions $\mu \in \mathbb{Z}^n$ with $\max(\mu) > N$ or $\min(\mu) < -N$. We denote the space of such functions by $\mathcal{C}^n$.
\end{defn}

\index{L@$\mathcal{L}^n$; Laurent polynomial space}
\begin{defn}[Laurent polynomial space]
We let $\mathcal{L}^n$ denote the space of all functions $\Phi : \mathbb{C}^n \rightarrow \mathbb{C}$ such that
\begin{itemize}
\item $\Phi(x_1,\dots,x_n)$ is a Laurent polynomial in each of the variables
\begin{align}
\label{laur-pol}
\frac{x_i-s}{1-sx_i},
\qquad
1 \leq i \leq n;
\end{align}

\item $\Phi(x_1,\dots,x_n)$ satisfies the vanishing constraints
\begin{align}
\label{vanish}
\lim_{x_i \rightarrow \infty} \Phi(x_1,\dots,x_n) = 0,
\qquad 
\text{for all}
\ \  
1 \leq i \leq n.
\end{align}
\end{itemize}
\end{defn}

As functions of $\mu$, neither of the non-symmetric spin Hall--Littlewood functions $f_{\mu}$ or $g^{*}_{\mu}$ live in the space $\mathcal{C}^n$, since the composition $\mu$ can be made to have arbitrarily large parts, and neither $f_{\mu}$ or 
$g^{*}_{\mu}$ will vanish. However, as functions of $(x_1,\dots,x_n)$, they both live in $\mathcal{L}^n$, as we now show.

\begin{prop}
\label{prop:laurent}
Fixing a composition $\mu \in \mathbb{Z}^n$, we have $f_{\mu}(x_1,\dots,x_n), g^{*}_{\mu}(x_1,\dots,x_n) \in \mathcal{L}^n$.
\end{prop}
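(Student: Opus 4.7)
The plan is to reduce the statement to the explicitly factorized ``base cases'' $f_\delta$ (anti-dominant $\delta$) and $g^*_\lambda$ (dominant $\lambda$), and then propagate membership in $\mathcal{L}^n$ to arbitrary $\mu\in\mathbb{Z}^n$ using the Hecke divided-difference recursions of Chapter \ref{sec:properties} together with the shift relations \eqref{f-shift2}--\eqref{g-shift2}. The first step will be to switch to the coordinates $z_i:=(x_i-s)/(1-sx_i)$, in which $x_i=(z_i+s)/(1+sz_i)$ and the specialisation $x_i\to\infty$ becomes $z_i\to -1/s$. A direct computation gives
\begin{equation*}
\frac{1}{1-sx_i}=\frac{1+sz_i}{1-s^2}, \qquad x_i-x_{i+1}=\frac{(1-s^2)(z_i-z_{i+1})}{(1+sz_i)(1+sz_{i+1})},
\end{equation*}
and (after clearing the common denominator) the numerator of $x_i-qx_{i+1}$ is again a polynomial in $z_i,z_{i+1}$. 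In these coordinates, $\mathcal{L}^n$ is precisely the space of Laurent polynomials $\Phi\in\mathbb{C}[z_1^{\pm1},\dots,z_n^{\pm1}]$ that vanish on each hyperplane $z_i=-1/s$, equivalently those divisible by every $(1+sz_i)$.

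The base cases lie in $\mathcal{L}^n$ by inspection: Proposition \ref{prop:f-delta} gives $f_\delta=C_\delta(1-s^2)^{-n}\prod_i(1+sz_i)z_i^{\delta_i}$ for anti-dominant $\delta\in\mathbb{N}^n$, and equation \eqref{g-lambda} yields the analogous expression for $g^*_\lambda$ with dominant $\lambda\in\mathbb{N}^n$. The main technical lemma will be that the Hecke operators $T_i$ and $\widehat{T}_i=T_i-q+1$ preserve $\mathcal{L}^n$. Laurent polynomiality of $T_i\Phi$ follows because $(1-\mathfrak{s}_i)\Phi$ is divisible by $(z_i-z_{i+1})$, which cancels the corresponding factor in the denominator of $1/(x_i-x_{i+1})$; then multiplying by $x_i-qx_{i+1}$ annihilates the remaining $(1+sz_i)(1+sz_{i+1})$ factors in the denominator, leaving an honest Laurent polynomial. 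For the vanishing condition on $T_i\Phi$, the case $j\notin\{i,i+1\}$ is immediate since $T_i$ acts trivially on $z_j$, and for $j\in\{i,i+1\}$ one uses the observation that $(\mathfrak{s}_i\Phi)|_{z_i=-1/s}$ equals $\Phi$ evaluated with its $(i+1)$-th argument at $-1/s$, which vanishes by hypothesis; all pieces of $T_i\Phi$ then evaluate to zero at $z_j=-1/s$.

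With Hecke-invariance in hand, Theorem \ref{thm:hecke-f} lets me write any $f_\mu$ for $\mu\in\mathbb{N}^n$ as $T_{\sigma}f_\delta$, where $\sigma$ is the minimal-length permutation sorting $\delta$ into $\mu$; this places $f_\mu$ in $\mathcal{L}^n$. Analogously, a bubble-sort argument starting from the dominant $\lambda$ and iterating \eqref{That-g} at each descent position shows $g^*_\mu\in\mathcal{L}^n$ for all $\mu\in\mathbb{N}^n$. Finally, for $\mu\in\mathbb{Z}^n$ with negative parts, the definitions \eqref{f-shift2}, \eqref{g-shift2} express $f_\mu$ and $g^*_\mu$ as $\prod_i z_i^{-k}$ times their nonnegative-shifted counterparts, and since $z_i^{-k}$ is a Laurent polynomial whose value at $z_i=-1/s$ is finite (and nonzero), both conditions defining $\mathcal{L}^n$ are preserved. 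The hardest part of the argument will be the Hecke-invariance lemma, in particular the simultaneous check of Laurent polynomiality and vanishing at each $z_j=-1/s$; everything else is essentially bookkeeping.
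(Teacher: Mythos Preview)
Your argument is correct, but it takes a genuinely different route from the paper's. The paper proceeds by direct inspection of the Boltzmann weights \eqref{s-weights}: using the identity $\frac{1}{1-sx}=\frac{1}{1-s^2}\bigl(1+s\cdot\frac{x-s}{1-sx}\bigr)$, each of the six weight types is rewritten as a linear polynomial in $(x-s)/(1-sx)$, so the partition function \eqref{f-def} is manifestly a polynomial in the variables $z_i$. For the vanishing at $x_i\to\infty$, the paper observes that each weight has a well-defined limit and that the vertex $\tikz{0.35}{\draw[lgray,line width=1pt,->] (-1,0) -- (1,0);\draw[lgray,line width=2.5pt,->] (0,-1) -- (0,1);\node[left] at (-1,0) {\tiny $i$};\node[right] at (1,0) {\tiny $0$};\node[below] at (0,-1) {\tiny $\bm{I}$};\node[above] at (0,1) {\tiny $\bm{I}^{+}_i$};}$ has weight $(1-s^2q^{\Is{1}{n}})/(1-sx)\to 0$; since every row of the lattice must contain at least one such vertex, $f_\mu$ vanishes as $x_i\to\infty$. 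The shift relations then handle $\mu\in\mathbb{Z}^n$, and the case of $g^*_\mu$ is declared analogous.

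Your approach trades this direct combinatorial analysis for the algebraic machinery of Chapter~\ref{sec:properties}: factorized base cases plus a Hecke-invariance lemma for $\mathcal{L}^n$. This is sound (your observation that the space $\mathcal{L}^n$ is invariant under permuting the $x_i$ is what makes the reversed-alphabet recursion \eqref{That-g} usable for $g^*_\mu$), and it has the virtue of treating $f_\mu$ and $g^*_\mu$ on equal footing without re-examining the dual weights. The paper's route, on the other hand, is more self-contained: it does not invoke Theorem~\ref{thm:hecke-f} or \eqref{That-g}, whose proofs are considerably more involved than the weight computation, and it gives slightly sharper information (polynomiality rather than Laurent polynomiality in the $z_i$ for $\mu\in\mathbb{N}^n$). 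Both arguments are valid; the paper's is more elementary, yours more structural.
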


\begin{proof}
Let us focus on proving this statement for $f_{\mu}(x_1,\dots,x_n)$; the proof is similar in the case of $g^{*}_{\mu}(x_1,\dots,x_n)$. For the first of the properties \eqref{laur-pol}, it suffices to demonstrate this at the level of the Boltzmann weights \eqref{s-weights}. Using the identity
\begin{align*}
\frac{1}{1-sx}
=
\frac{1}{1-s^2}
\left(
1+s\cdot \frac{x-s}{1-sx}
\right),
\end{align*}
we find that the six weights tabulated in \eqref{s-weights} can be rewritten as
\begin{gather*}
\tikz{0.6}{
\draw[lgray,line width=1.5pt,->] (-1,0) -- (1,0);
\draw[lgray,line width=4pt,->] (0,-1) -- (0,1);
\node[left] at (-1,0) {\tiny $0$};\node[right] at (1,0) {\tiny $0$};
\node[below] at (0,-1) {\tiny $\I$};\node[above] at (0,1) {\tiny $\I$};
}
=
\frac{1}{1-s^2}
\left(
1-s^2 q^{\Is{1}{n}}
+
s(1-q^{\Is{1}{n}})
\cdot
\frac{x-s}{1-sx}
\right),
\\
\tikz{0.6}{
\draw[lgray,line width=1.5pt,->] (-1,0) -- (1,0);
\draw[lgray,line width=4pt,->] (0,-1) -- (0,1);
\node[left] at (-1,0) {\tiny $i$};\node[right] at (1,0) {\tiny $i$};
\node[below] at (0,-1) {\tiny $\I$};\node[above] at (0,1) {\tiny $\I$};
}
=
\frac{q^{\Is{i+1}{n}}}{1-s^2}
\left(
s-sq^{I_i}
+
(1-s^2 q^{I_i})
\cdot
\frac{x-s}{1-sx}
\right),
\\
\tikz{0.6}{
\draw[lgray,line width=1.5pt,->] (-1,0) -- (1,0);
\draw[lgray,line width=4pt,->] (0,-1) -- (0,1);
\node[left] at (-1,0) {\tiny $0$};\node[right] at (1,0) {\tiny $i$};
\node[below] at (0,-1) {\tiny $\I$};\node[above] at (0,1) {\tiny $\I^{-}_i$};
}
=
\frac{(1-q^{I_i})q^{\Is{i+1}{n}}}{1-s^2}
\left(
s
+
\frac{x-s}{1-sx}
\right),
\quad
\tikz{0.6}{
\draw[lgray,line width=1.5pt,->] (-1,0) -- (1,0);
\draw[lgray,line width=4pt,->] (0,-1) -- (0,1);
\node[left] at (-1,0) {\tiny $i$};\node[right] at (1,0) {\tiny $0$};
\node[below] at (0,-1) {\tiny $\I$};\node[above] at (0,1) {\tiny $\I^{+}_i$};
}
=
\frac{1-s^2 q^{\Is{1}{n}}}{1-s^2}
\left(
1
+
s\cdot
\frac{x-s}{1-sx}
\right),
\\
\tikz{0.6}{
\draw[lgray,line width=1.5pt,->] (-1,0) -- (1,0);
\draw[lgray,line width=4pt,->] (0,-1) -- (0,1);
\node[left] at (-1,0) {\tiny $i$};\node[right] at (1,0) {\tiny $j$};
\node[below] at (0,-1) {\tiny $\I$};\node[above] at (0,1) {\tiny $\I^{+-}_{ij}$};
}
=
\frac{(1-q^{I_j})q^{\Is{j+1}{n}}}{1-s^2}
\left(
s
+
\frac{x-s}{1-sx}
\right),
\quad
\tikz{0.6}{
\draw[lgray,line width=1.5pt,->] (-1,0) -- (1,0);
\draw[lgray,line width=4pt,->] (0,-1) -- (0,1);
\node[left] at (-1,0) {\tiny $j$};\node[right] at (1,0) {\tiny $i$};
\node[below] at (0,-1) {\tiny $\I$};\node[above] at (0,1) {\tiny $\I^{+-}_{ji}$};
}
=
\frac{s(1-q^{I_i})q^{\Is{i+1}{n}}}{1-s^2}
\left(
1
+
s\cdot
\frac{x-s}{1-sx}
\right).
\end{gather*}
It follows that for $\mu \in \mathbb{N}^n$ the function $f_{\mu}(x_1,\dots,x_n)$, which is built directly out of these weights, is a polynomial in $(x_i-s)/(1-sx_i)$ for each $1 \leq i \leq n$. Relaxing this to the case $\mu \in \mathbb{Z}^n$ using \eqref{f-shift2}, we clearly obtain the Laurent polynomial statement \eqref{laur-pol}.

The second statement, \eqref{vanish}, also follows from examination of the weights \eqref{s-weights}. Each of the six weights \eqref{s-weights} has a well defined limit as $x \rightarrow \infty$, and in particular, 
$\tikz{0.4}{
\draw[lgray,line width=1.5pt,->] (-1,0) -- (1,0);
\draw[lgray,line width=3pt,->] (0,-1) -- (0,1);
\node[left] at (-1,0) {\tiny $i$};\node[right] at (1,0) {\tiny $0$};
\node[below] at (0,-1) {\tiny $\bm{I}$};\node[above] at (0,1) {\tiny $\bm{I}^{+}_i$};
}$ 
tends to zero in this limit. Using the partition function representation \eqref{f-def} of $f_{\mu}(x_1,\dots,x_n)$, one readily sees that every row of the lattice must contain at least one vertex of the form $\tikz{0.4}{
\draw[lgray,line width=1.5pt,->] (-1,0) -- (1,0);
\draw[lgray,line width=3pt,->] (0,-1) -- (0,1);
\node[left] at (-1,0) {\tiny $i$};\node[right] at (1,0) {\tiny $0$};
\node[below] at (0,-1) {\tiny $\bm{I}$};\node[above] at (0,1) {\tiny $\bm{I}^{+}_i$};
}$, and accordingly the partition function will vanish under any of the limits $x_i \rightarrow \infty$, $1 \leq i \leq n$. This proves the property \eqref{vanish}.

\end{proof}

\section{Forward transform $\mathfrak{G}$ and inverse transform $\mathfrak{F}$}

\begin{defn}
Fix a function $\alpha \in \mathcal{C}^n$ and let $(x_1,\dots,x_n)$ be a collection of $n$ complex parameters. We define a forward transform 
$\mathfrak{G} : \mathcal{C}^n \rightarrow \mathcal{L}^n$ as follows:
\begin{align}
\label{forward-trans}
\index{G@$\mathfrak{G}$; forward transform}
\mathfrak{G}[\alpha](x_1,\dots,x_n)
:=
\sum_{\mu \in \mathbb{Z}^n}
\alpha(\mu)
g^{*}_{\mu}(x_1,\dots,x_n),
\end{align}
where the right hand side is in fact a finite sum, given the finiteness of the support of $\alpha$. The fact that the right hand side lives in $\mathcal{L}^n$ is manifest, given Proposition \ref{prop:laurent}.
\end{defn}

\begin{defn}
\label{defn:inverse-trans}
Fix a function $\Phi \in \mathcal{L}^n$, and let $\mu \in \mathbb{Z}^n$ be an integer composition. We define an inverse transform $\mathfrak{F} : \mathcal{L}^n \rightarrow \mathcal{C}^n$ as follows: 
\begin{align}
\label{inverse-trans}
\index{F@$\mathfrak{F}$; inverse transform}
\mathfrak{F}[\Phi](\mu)
&:=
\langle f_{\mu}, \Phi \rangle_{q,s}
\\
&=
\left( \frac{1}{2\pi\sqrt{-1}} \right)^n
\oint_{C_1}
\frac{dx_1}{x_1}
\cdots 
\oint_{C_n}
\frac{dx_n}{x_n}
\prod_{1 \leq i<j \leq n}
\left(
\frac{x_j-x_i}{x_j-q x_i}
\right)
f_{\mu}(\b{x}_1,\dots,\b{x}_n)
\Phi(x_1,\dots,x_n),
\nonumber
\end{align}
where $\{C_1,\dots,C_n\}$ is an admissible set of contours, in the sense of Definition \ref{def:admiss}. The fact that the right hand side of \eqref{inverse-trans} lives in $\mathcal{C}^n$ needs some further explanation:
\end{defn}

\begin{prop}
\label{prop:finite-mu}
With the same assumptions as in Definition \ref{defn:inverse-trans}, the right hand side of equation \eqref{inverse-trans} is a finitely supported function on compositions $\mu \in \mathbb{Z}^n$.
\end{prop}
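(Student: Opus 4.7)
The plan is to establish finite support of $\mu\mapsto\mathfrak F[\Phi](\mu)$ by first expanding $\Phi$ in terms of the monomials $\xi_\beta$, then using the shift property of $f_\mu$ to normalize $\beta$, and finally invoking the monomial expansion of $f_\mu$ together with Lemma \ref{lem:xixi} to bound the support.

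First I would expand $\Phi$ in $\xi$-monomials. Since $\Phi\in\mathcal L^n$ is a Laurent polynomial in $y_i=(x_i-s)/(1-sx_i)$ that vanishes as $x_i\to\infty$ (equivalently at $y_i=-1/s$), and since $1-sx_i=(1-s^2)/(1+sy_i)$, the vanishing condition is exactly that the underlying Laurent polynomial in the $y_i$'s is divisible by $\prod_i(1+sy_i)$. Matching this against $\xi_\beta=(1-s^2)^{-n}\prod_i(y_i^{\beta_i}+sy_i^{\beta_i+1})$ gives an expansion $\Phi=(1-s^2)^n\sum_{\beta\in S}d_\beta\,\xi_\beta$ with $S\subset\mathbb Z^n$ finite. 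By linearity, it suffices to show that $\mu\mapsto\langle f_\mu,\xi_\beta\rangle_{q,s}$ is finitely supported for every fixed $\beta$.

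Next I would use the shift property \eqref{f-shift2}. Combining it with $y_i(\bar x)=1/y_i(x)$ gives $f_{\mu+k^n}(\bar x)=\prod_i y_i(x)^{-k}f_\mu(\bar x)$, while $\prod_i y_i(x)^{-k}\xi_\beta(x)=\xi_{\beta-k^n}(x)$. Together these yield the translation invariance
\[
\langle f_{\mu+k^n},\xi_\beta\rangle_{q,s}=\langle f_\mu,\xi_{\beta-k^n}\rangle_{q,s},\qquad k\in\mathbb Z,
\]
so the support of $\mu\mapsto\langle f_\mu,\xi_\beta\rangle_{q,s}$ only shifts when $\beta$ is translated; we may therefore assume $\beta\in\mathbb N^n$.

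The key step is then to substitute the monomial expansion \eqref{f-monom} for $f_{\tilde\mu}$ (rewritten for $f_\mu(\bar x)$), which expresses $f_\mu(\bar x)$ as a finite sum over $\rho\in\mathfrak S_n$ of rational prefactors times $\xi_\delta(\bar x_{\rho(1)},\dots,\bar x_{\rho(n)})$, where $\delta$ is the anti-dominant rearrangement of $\mu$. Relabelling the integration variables $x_{\rho(i)}\mapsto x_i$ inside each summand (and correspondingly re-indexing the nested contours, carefully tracking residue contributions at poles $x_j=qx_i$ whose structure is given exactly by Proposition \ref{prop:singular}) converts each summand into an integral of the form treated in Lemma \ref{lem:xixi}, with $\xi_\beta(x)$ rewritten as $\xi_\epsilon(x_{\rho'(\cdot)})$ for $\epsilon=\beta^+$ the anti-dominant reordering of $\beta$ and $\rho'$ an appropriate permutation. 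The polynomial degree hypothesis of Lemma \ref{lem:xixi} is satisfied thanks to the degree bound in Proposition \ref{prop:singular}, and the lemma then forces $\delta=\epsilon$, i.e., $\mu$ must have the same multiset of parts as $\beta$. There are only finitely many such $\mu$, so $\mathfrak F[\xi_\beta](\mu)$ is finitely supported.

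The main obstacle I anticipate is the bookkeeping required to justify the contour rearrangement in the last step: the nested contours $\{C_1,\dots,C_n\}$ are not symmetric under permutation, so relabelling integration variables produces residue contributions at crossings of poles of the form $x_j=qx_i$, and one must show that these extra integrals either vanish directly by Lemma \ref{lem:xixi} or recombine in such a way that the only surviving contribution corresponds to $\mu$ being a permutation of $\beta$.
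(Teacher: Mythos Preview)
Your approach is genuinely different from the paper's, and the obstacle you flag at the end is a real gap, not just bookkeeping. When you substitute the monomial expansion \eqref{f-monom} into $f_\mu(\bar x_1,\dots,\bar x_n)$, the monomials $\xi_\delta$ appear with \emph{permuted reciprocated} arguments $\xi_\delta(\bar x_{\rho(1)},\dots,\bar x_{\rho(n)})$. Lemma~\ref{lem:xixi}, however, is stated for $\xi_\delta(\bar x_1,\dots,\bar x_n)$ with no permutation on the reciprocated side; the permutation there sits on the $\xi_\epsilon(x_{\rho(\cdot)})$ factor. To force a match you propose relabelling the integration variables, but the nested contours $C_1\subset\cdots\subset C_n$ are not permutation-invariant, and deforming them back to admissible position generates residue integrals at $x_j=qx_i$ of strictly lower dimension. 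Those residue terms no longer have the shape required by Lemma~\ref{lem:xixi}, and there is no reason the degree hypothesis \eqref{deg-constr} or the pole structure of Proposition~\ref{prop:singular} survives the substitution $x_j=qx_i$. So the reduction to Lemma~\ref{lem:xixi} is not justified, and it is unclear that the residue contributions conspire to vanish unless $\mu^{+}=\beta^{+}$.

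The paper sidesteps this entirely. It first treats the anti-dominant case $\mu=\delta$, where $f_\delta(\bar x)$ is fully factorized and the integrand becomes $\prod_i(1-sx_i)^{\delta_i}/(x_i-s)^{\delta_i+1}$ times $\Phi$ and the Vandermonde-type kernel. A direct pole inspection then shows vanishing: if the smallest part $\delta_1$ is too negative the pole at $x_1=s$ disappears and one shrinks $C_1$ inward; if the largest part $\delta_n$ is too large the pole at $x_n=s^{-1}$ disappears and one shrinks $C_n$ outward (using $\Phi\to 0$ at infinity). This bounds the support for anti-dominant $\delta$ with no contour relabelling at all. The passage to general $\mu$ is then handled by the Hecke adjointness \eqref{adjoint}: writing $f_\mu=T_\sigma f_\delta$ gives $\langle f_\mu,\Phi\rangle=\langle f_\delta,\tilde T_\sigma\Phi\rangle$, and since $\tilde T_\sigma$ preserves $\mathcal L^n$ the anti-dominant bound applies. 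This is considerably shorter and avoids the monomial expansion of $f_\mu$ altogether.
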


\begin{proof}
Let us begin by proving this in the case of anti-dominant compositions; namely, we restrict to the situation $\mu = \delta = (\delta_1 \leq \cdots \leq \delta_n) \in \mathbb{Z}^n$. Recall the explicit factorized form \eqref{f-delta} of $f_{\delta}$, and observe that this expression continues to hold for anti-dominant compositions $\delta \in \mathbb{Z}^n$, which is easily seen from the shift formula \eqref{f-shift2}. We thus have
\begin{multline}
\label{delta-finite}
\mathfrak{F}[\Phi](\delta)
=
\prod_{j \geq 0} (s^2;q)_{m_j(\delta)}
\left( \frac{1}{2\pi\sqrt{-1}} \right)^n
\\
\times
\oint_{C_1}
dx_1
\cdots 
\oint_{C_n}
dx_n
\prod_{1 \leq i<j \leq n}
\left(
\frac{x_j-x_i}{x_j-q x_i}
\right)
\prod_{i=1}^{n}
\frac{(1-sx_i)^{\delta_i}}{(x_i-s)^{\delta_i+1}}\,
\Phi(x_1,\dots,x_n).
\end{multline}
Given that $\Phi \in \mathcal{L}^n$, there exists a sufficiently large positive integer $M$ such that for all $k\geq M$ the expression $\prod_{i=1}^{n} (x_i-s)^k/(1-sx_i)^k \Phi(x_1,\dots,x_n)$ is polynomial in the variables $(x_i-s)/(1-sx_i)$, 
$1 \leq i \leq n$. Then for any anti-dominant composition $\delta$ with smallest part $\delta_1 < -M$, we see that the integrand of \eqref{delta-finite} has no pole at $x_1 = s$, and therefore the integration over contour $C_1$ vanishes.

Similarly, by virtue of the fact that $\Phi \in \mathcal{L}^n$, there exists a sufficiently large positive integer $N$ such that for all $k\geq N$ the expression $\prod_{i=1}^{n} (1-sx_i)^k/(x_i-s)^k \Phi(x_1,\dots,x_n)$ is polynomial in the variables $(1-sx_i)/(x_i-s)$, $1 \leq i \leq n$. Then for any composition $\delta$ with largest part $\delta_n \geq N$, the integrand of \eqref{delta-finite} has no pole at $x_n = s^{-1}$. Shrinking the contour $C_n$ to the point $x_n = s^{-1}$, noting that there is no residue at infinity (this follows from the fact that $\Phi \rightarrow 0$ as $x_n \rightarrow \infty$), we find that the integral again vanishes.

We conclude that that $\mathfrak{F}[\Phi](\delta)$ is finitely supported on anti-dominant compositions $\delta$. To pass to generic compositions $\mu$, let us recall the formulae \eqref{reduced}--\eqref{T-fdelta}. They continue to apply in the case of compositions $\mu \in \mathbb{Z}^n$, given that the product 
$\prod_{i=1}^{n} (1-s x_i)^k/(x_i-s)^k$ used to effect the shift \eqref{f-shift2} is symmetric in $(x_1,\dots,x_n)$, and therefore commutes with the action of Hecke generators. Using \eqref{adjoint} we can then write
\begin{align}
\label{8.3-1}
\mathfrak{F}[\Phi](\mu)
=
\langle f_{\mu}, \Phi \rangle_{q,s}
=
\langle T_{\sigma} \cdot f_{\delta}, \Phi \rangle_{q,s}
=
\langle f_{\delta}, \tilde{T}_{\sigma} \cdot \Phi \rangle_{q,s},
\end{align}
where $T_{\sigma} = T_{i_{\ell(\sigma)}} \cdots T_{i_1}$ and $\tilde{T}_{\sigma} = \tilde{T}_{i_1} \cdots \tilde{T}_{i_{\ell(\sigma)}}$, $\sigma$ is the minimal-length permutation such that $\mu_i = \delta_{\sigma(i)}$ for all $1 \leq i \leq n$, and where $\sigma$ has the reduced-word decomposition \eqref{reduced}. Now one can verify that the action of any generator $\tilde{T}_i$ preserves the property of being in $\mathcal{L}^n$; it follows that $\tilde{T}_{\sigma} \cdot \Phi \in \mathcal{L}^n$, and accordingly, \eqref{8.3-1} vanishes unless $\delta$ is within a finite interval. This proves that $\mathfrak{F}[\Phi](\mu) = 0$ unless $\mu$ has bounded parts. 
\end{proof}

\section{Plancherel isomorphisms}

\begin{thm}
\label{thm-plancherel}
The maps $\mathfrak{F} \circ \mathfrak{G} : \mathcal{C}^n \rightarrow \mathcal{C}^n$ and $\mathfrak{G} \circ \mathfrak{F} : \mathcal{L}^n \rightarrow \mathcal{L}^n$ both act as the identity; \ie\ there holds
\begin{align}
\label{compose1}
\mathfrak{F} \circ \mathfrak{G} 
=
{\rm id}
\in 
{\rm End}(\mathcal{C}^n),
\end{align}
where the identity acts on the space $\mathcal{C}^n$, and similarly,
\begin{align}
\label{compose2}
\mathfrak{G} \circ \mathfrak{F} 
=
{\rm id}
\in 
{\rm End}(\mathcal{L}^n),
\end{align}
where the identity acts on the space $\mathcal{L}^n$.
\end{thm}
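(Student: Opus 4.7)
I prove the two identities separately. The first is a direct corollary of the orthogonality of Theorem \ref{thm:orthog}, while the second uses a residue computation combined with the Mimachi--Noumi identity of Theorem \ref{thm:mimachi}.

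\emph{Direction $\mathfrak{F} \circ \mathfrak{G} = \mathrm{id}$.} For $\alpha \in \mathcal{C}^n$, linearity and finite support give $\mathfrak{F}[\mathfrak{G}[\alpha]](\mu) = \sum_{\nu \in \mathbb{Z}^n} \alpha(\nu) \langle f_\mu, g^*_\nu\rangle_{q,s}$. The orthogonality \eqref{f-g-orthog} reads $\langle f_\mu, g^*_\nu\rangle = \mathbf{1}_{\mu=\nu}$ for $\mu,\nu \in \mathbb{N}^n$, and I extend this to $\mathbb{Z}^n$ by choosing $k\geq 0$ large enough that $\mu+k^n,\nu+k^n \in \mathbb{N}^n$ and invoking the shift relations \eqref{f-shift2}--\eqref{g-shift2}. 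Since $\frac{\bar{x}-s}{1-s\bar{x}} = \left(\frac{x-s}{1-sx}\right)^{-1}$, the factors induced by shifting $f_\mu(\bar x)$ and $g^*_\nu(x)$ in the scalar-product integrand are reciprocal and cancel, whence $\langle f_\mu, g^*_\nu\rangle_{q,s} = \mathbf{1}_{\mu=\nu}$ throughout $\mathbb{Z}^n$. This yields $\mathfrak{F}[\mathfrak{G}[\alpha]] = \alpha$.

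\emph{Direction $\mathfrak{G} \circ \mathfrak{F} = \mathrm{id}$.} The plan is to use Theorem \ref{thm:mimachi} as a reproducing kernel. For $\Phi \in \mathcal{L}^n$, I first establish the residue identity
\[
\Phi(y_1,\ldots,y_n) = \frac{1}{(2\pi\sqrt{-1})^n} \oint_{C_1}\!\!\cdots\!\oint_{C_n} \prod_{i<j} \frac{x_j-x_i}{x_j-qx_i} \cdot K(\bar{x},y) \cdot \Phi(x)\, \frac{dx_1\cdots dx_n}{x_1\cdots x_n},
\]
where $K(\bar{x},y) := \prod_i \frac{x_i}{x_i-y_i} \prod_{i>j} \frac{x_i-qy_j}{x_i-y_j}$ is the $\bar x$-substitution in the Mimachi--Noumi kernel from \eqref{mimachi-id} and $y$ lies inside $C_1$. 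The proof is by iterated residues: inside $C_1$, the only pole of the integrand in $x_1$ is at $x_1=y_1$ (admissibility keeps the $x_j-qx_1$ zeros outside $C_1$, and $\Phi\in\mathcal{L}^n$ is regular there); computing that residue telescopes the cross-factors, after which the $(n-1)$-variable integrand has the same form and induction closes the argument. Next, choosing $y$ close enough to $s$ that $\left|\frac{y_j-s}{1-sy_j}\right| < \left|\frac{x_i-s}{1-sx_i}\right|$ uniformly on each $C_i$, Theorem \ref{thm:mimachi} (with $x \mapsto \bar x$) yields the absolutely convergent expansion $K(\bar{x},y) = \sum_{\mu \in \mathbb{N}^n} f_\mu(\bar{x}) g^*_\mu(y)$ on the contours. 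To accommodate possible negative parts in the support of $\mathfrak{F}[\Phi]$, I first perform the preliminary shift $\Phi \mapsto \prod_i \left(\frac{x_i-s}{1-sx_i}\right)^N \Phi$ for $N$ large, which by \eqref{f-shift2}--\eqref{g-shift2} commutes with both transforms. Substituting the expansion under the integral and exchanging sum and integral by uniform convergence,
\[
\Phi(y) = \sum_\mu g^*_\mu(y)\, \langle f_\mu, \Phi\rangle_{q,s} = \sum_\mu \mathfrak{F}[\Phi](\mu)\, g^*_\mu(y) = \mathfrak{G}[\mathfrak{F}[\Phi]](y),
\]
where the sum is in fact finite by Proposition \ref{prop:finite-mu}. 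Both sides lying in $\mathcal{L}^n$, their agreement on an open set of $y$'s extends to all $y$.

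\emph{Main obstacle.} The hardest step is the iterated residue calculation establishing the reproducing-kernel identity above: one must track carefully which potential singularities (from $(x_j-qx_i)^{-1}$ cross-factors and from the $K$-kernel) are actually enclosed by the nested contours, and verify that the residues telescope into the same structural form in one fewer variable. A secondary technical point is the interchange of summation and integration when expanding $K(\bar{x},y)$ via Mimachi--Noumi, which requires uniform convergence along the contours and forces the use of $y$ close to $s$ before extending by analyticity.
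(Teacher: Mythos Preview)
Your proof is correct and follows essentially the same approach as the paper: the first direction via orthogonality plus the shift trick, and the second via the Mimachi--Noumi kernel together with an iterated residue computation picking out $x_i=y_i$. The only difference is order of presentation --- the paper starts from $\mathfrak{G}\circ\mathfrak{F}[\Phi]$, swaps sum and integral, then applies Mimachi--Noumi inside the integral, whereas you first prove the reproducing-kernel integral identity and then expand the kernel.

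One small presentational slip: in your residue argument you assert that the only pole of the integrand inside $C_1$ is at $x_1=y_1$, but a general $\Phi\in\mathcal{L}^n$ can have a pole at $x_1=s$ (from negative powers of $(x_1-s)/(1-sx_1)$), and $s$ lies inside $C_1$. Your preliminary shift $\Phi\mapsto \prod_i\bigl(\frac{x_i-s}{1-sx_i}\bigr)^N\Phi$ is exactly what kills this pole, so it needs to be applied \emph{before} the residue identity, not after --- this is how the paper handles it as well (their $k$ in the shift is chosen large enough that the shifted $\Phi$ is regular at $s$). Once the steps are reordered, the argument is complete.
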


\begin{proof}
We begin by proving the first of these statements, \eqref{compose1}. Taking a finitely supported function $\alpha \in \mathcal{C}^n$, we act on it with $\mathfrak{F} \circ \mathfrak{G}$, producing
\begin{multline*}
\mathfrak{F} \circ \mathfrak{G} [\alpha](\mu)
=
\mathfrak{F}\left[  
\sum_{\nu \in \mathbb{Z}^n}
\alpha(\nu)
g^{*}_{\nu}(x_1,\dots,x_n) \right](\mu)
\\
=
\left( \frac{1}{2\pi\sqrt{-1}} \right)^n
\oint_{C_1}
\frac{dx_1}{x_1}
\cdots 
\oint_{C_n}
\frac{dx_n}{x_n}
\prod_{1 \leq i<j \leq n}
\left(
\frac{x_j-x_i}{x_j-q x_i}
\right)
f_{\mu}(\b{x}_1,\dots,\b{x}_n)
\sum_{\nu \in \mathbb{Z}^n}
\alpha(\nu)
g^{*}_{\nu}(x_1,\dots,x_n).
\end{multline*}
We may interchange the (finite) summation and integration, leading to
\begin{multline}
\label{8.7-1}
\mathfrak{F} \circ \mathfrak{G} [\alpha](\mu)
\\
=
\sum_{\nu \in \mathbb{Z}^n}
\alpha(\nu)
\left( \frac{1}{2\pi\sqrt{-1}} \right)^n
\oint_{C_1}
\frac{dx_1}{x_1}
\cdots 
\oint_{C_n}
\frac{dx_n}{x_n}
\prod_{1 \leq i<j \leq n}
\left(
\frac{x_j-x_i}{x_j-q x_i}
\right)
f_{\mu}(\b{x}_1,\dots,\b{x}_n)
g^{*}_{\nu}(x_1,\dots,x_n).
\end{multline}
To conclude, we would like to use the orthogonality relation \eqref{f-g-orthog} to collapse the summation in \eqref{8.7-1} to a single term, namely $\nu = \mu$. The only potential point of subtlety is that the compositions $\mu,\nu$ in \eqref{8.7-1} now live in $\mathbb{Z}^n$, whereas \eqref{f-g-orthog} was proved only for the case $\mu, \nu \in \mathbb{N}^n$. To resolve this issue, note that there exists a positive integer $k$ such that both $\mu + k^n, \nu + k^n \in \mathbb{N}^n$ for all $\nu$ in \eqref{8.7-1}, due to the finiteness of the support of $\alpha$. We may then apply the shift formulae \eqref{f-shift2} and \eqref{g-shift2} to write
\begin{multline}
\label{8.7-2}
\mathfrak{F} \circ \mathfrak{G} [\alpha](\mu)
\\
=
\sum_{\nu \in \mathbb{Z}^n}
\alpha(\nu)
\left( \frac{1}{2\pi\sqrt{-1}} \right)^n
\oint_{C_1}
\frac{dx_1}{x_1}
\cdots 
\oint_{C_n}
\frac{dx_n}{x_n}
\prod_{1 \leq i<j \leq n}
\left(
\frac{x_j-x_i}{x_j-q x_i}
\right)
f_{\mu+k^n}(\b{x}_1,\dots,\b{x}_n)
g^{*}_{\nu+k^n}(x_1,\dots,x_n),
\end{multline}
where we have made use of the cancellation
\begin{align*}
\prod_{i=1}^{n}
\left(
\frac{1-s\b{x}_i}{\b{x}_i-s}
\right)^k
\left(
\frac{1-sx_i}{x_i-s}
\right)^k
=1.
\end{align*}
We may then freely apply \eqref{f-g-orthog} to \eqref{8.7-2}, yielding
\begin{align*}
\mathfrak{F} \circ \mathfrak{G} [\alpha](\mu)
=
\sum_{\nu \in \mathbb{Z}^n}
\alpha(\nu)
{\bm 1}_{\mu = \nu}
=
\alpha(\mu),
\end{align*}
which proves \eqref{compose1}. 

The proof of the second statement, \eqref{compose2}, is slightly more intricate. Taking a function $\Phi \in \mathcal{L}^n$, we act on it with $\mathfrak{G} \circ \mathfrak{F}$, which yields
\begin{multline*}
\mathfrak{G} \circ \mathfrak{F} [\Phi]
(y_1,\dots,y_n)
\\
=
\mathfrak{G} \left[
\left( \frac{1}{2\pi\sqrt{-1}} \right)^n
\oint_{C_1}
\frac{dx_1}{x_1}
\cdots 
\oint_{C_n}
\frac{dx_n}{x_n}
\prod_{1 \leq i<j \leq n}
\left(
\frac{x_j-x_i}{x_j-q x_i}
\right)
f_{\mu}(\b{x}_1,\dots,\b{x}_n)
\Phi(x_1,\dots,x_n)
\right]
(y_1,\dots,y_n)
\\
=
\left( \frac{1}{2\pi\sqrt{-1}} \right)^n
\sum_{\mu \in \mathbb{Z}^n}
g^{*}_{\mu}(y_1,\dots,y_n)
\oint_{C_1}
\frac{dx_1}{x_1}
\cdots 
\oint_{C_n}
\frac{dx_n}{x_n}
\prod_{1 \leq i<j \leq n}
\left(
\frac{x_j-x_i}{x_j-q x_i}
\right)
f_{\mu}(\b{x}_1,\dots,\b{x}_n)
\Phi(x_1,\dots,x_n).
\end{multline*}
Given that the $n$ integrations return a finitely-supported function
\begin{align*}
\alpha(\mu)
=
\left( \frac{1}{2\pi\sqrt{-1}} \right)^n
\oint_{C_1}
\frac{dx_1}{x_1}
\cdots 
\oint_{C_n}
\frac{dx_n}{x_n}
\prod_{1 \leq i<j \leq n}
\left(
\frac{x_j-x_i}{x_j-q x_i}
\right)
f_{\mu}(\b{x}_1,\dots,\b{x}_n)
\Phi(x_1,\dots,x_n)
\in
\mathcal{C}^n,
\end{align*}
we can truncate the summation over $\mu \in \mathbb{Z}^n$, and instead sum each part $\mu_i$ over $-k \leq \mu_i < \infty$, for an appropriately chosen positive integer $k$. Making this truncation, and switching the order of the summation and integration, we read
\begin{multline*}
\mathfrak{G} \circ \mathfrak{F} [\Phi]
(y_1,\dots,y_n)
=
\left( \frac{1}{2\pi\sqrt{-1}} \right)^n
\\
\times
\oint_{C_1}
\frac{dx_1}{x_1}
\cdots 
\oint_{C_n}
\frac{dx_n}{x_n}
\prod_{1 \leq i<j \leq n}
\left(
\frac{x_j-x_i}{x_j-q x_i}
\right)
\Phi(x_1,\dots,x_n)
\sum_{\mu: \mu + k^n \in \mathbb{N}^n}
f_{\mu}(\b{x}_1,\dots,\b{x}_n)
g^{*}_{\mu}(y_1,\dots,y_n).
\end{multline*}
Assuming that the variables $(y_1,\dots,y_n)$ are all enclosed within 
$C_1$ (and hence all other contours), we can ensure that
\begin{align*}
\left|
\frac{1-sx_i}{x_i-s}
\cdot
\frac{y_j-s}{1-sy_j}
\right|
<
1,
\qquad
\forall\ 
1 \leq i,j \leq n,
\end{align*}
which are the necessary convergence requirements for the sum inside the integrand.\footnote{Other values of $(y_1,\dots,y_n)$ can be reached by analytic continuation, once the result is attained.} Applying the shifts \eqref{f-shift2}, \eqref{g-shift2} and the summation identity \eqref{mimachi-id}, we thus conclude that
\begin{multline*}
\mathfrak{G} \circ \mathfrak{F} [\Phi]
(y_1,\dots,y_n)
=
\left( \frac{1}{2\pi\sqrt{-1}} \right)^n
\oint_{C_1}
dx_1
\cdots 
\oint_{C_n}
dx_n
\\
\times
\prod_{1 \leq i<j \leq n}
\left(
\frac{x_j-x_i}{x_j-q x_i}
\right)
\Phi(x_1,\dots,x_n)
\prod_{i=1}^{n}
\left( \frac{x_i-s}{1-sx_i} \right)^k
\left( \frac{1-sy_i}{y_i-s} \right)^k
\frac{1}{x_i-y_i}
\prod_{n \geq i > j \geq 1}
\frac{x_i - q y_j}{x_i - y_j}.
\end{multline*}
We are finally able to directly compute the integrals, following a very similar procedure to the computation in Step 2 of the proof of Theorem \ref{thm:orthog}. Beginning with the integration over $C_1$, because of the fact that $\Phi(x_1,\dots,x_n) \in \mathcal{L}^n$, if $k$ is large enough we easily see that the only pole enclosed by $C_1$ is at $x_1 = y_1$. Computing its residue, we see that
\begin{multline*}
\mathfrak{G} \circ \mathfrak{F} [\Phi]
(y_1,\dots,y_n)
=
\left( \frac{1}{2\pi\sqrt{-1}} \right)^{n-1}
\oint_{C_2}
dx_2
\cdots 
\oint_{C_n}
dx_n
\\
\times
\prod_{2 \leq i<j \leq n}
\left(
\frac{x_j-x_i}{x_j-q x_i}
\right)
\Phi(y_1,x_2,\dots,x_n)
\prod_{i=2}^{n}
\left( \frac{x_i-s}{1-sx_i} \right)^k
\left( \frac{1-sy_i}{y_i-s} \right)^k
\frac{1}{x_i-y_i}
\prod_{n \geq i > j \geq 2}
\frac{x_i - q y_j}{x_i - y_j},
\end{multline*}
after cancelling out some factors in the integrand. This computation can clearly be iterated over the remaining contours; at each stage we only encounter a pole at $x_i = y_i$, whose residue can be taken in the same way. The final answer is
\begin{align*}
\mathfrak{G} \circ \mathfrak{F} [\Phi]
(y_1,\dots,y_n)
=
\Phi(y_1,\dots,y_n),
\end{align*}
which completes the proof of \eqref{compose2}.

\end{proof}

\section{An integral formula for $G_{\mu/\nu}$}

The symmetric functions $G_{\mu/\nu}$ introduced in Section \ref{ssec:G} also have a natural extension to integer compositions $\mu,\nu \in \mathbb{Z}^n$: it can be obtained simply by padding the partition function \eqref{G-pf} on its left with infinitely many empty columns. With this understanding of $G_{\mu/\nu}$, we observe that for all $\mu \in \mathbb{Z}^n$ one has
\begin{align*}
\sum_{\nu}
(-s)^{|\nu|-|\mu|}
G_{\mu/\nu}(x_1,\dots,x_p)
=1,
\end{align*}
where the sum is taken over all compositions $\nu \in \mathbb{Z}^n$ such that $\nu_i \leq \mu_i$, for all $1 \leq i \leq n$. Indeed, this readily follows from repeated application of the stochasticity property \eqref{stoch-wt}, \eqref{LM-stoch} of $\tilde{M}_x$ (where the introduced factor $(-s)^{|\nu|-|\mu|}$ implements the required stochastic gauge transformation of $G_{\mu/\nu}$). Proceeding along similar lines to \cite[Section 6]{BorodinP2}, it is thus possible to view $(-s)^{|\nu|-|\mu|} G_{\mu/\nu}$ as a multivariate Markov kernel, giving the transition probability from $\mu \rightarrow \nu$.

Now as an application of the preceding Plancherel theory, let us write down an integral formula for the matrix entries $G_{\mu/\nu}$; namely, we give its spectral decomposition. Our starting point is the skew Cauchy identity \eqref{fG-skew} (extended to integer compositions), written with a reciprocated alphabet $(\b{x}_1,\dots,\b{x}_n)$:
\begin{align}
\label{7.3-1}
\sum_{\kappa}
f_{\kappa}(\b{x}_1,\dots,\b{x}_n)
G_{\kappa/\nu}(y_1,\dots,y_p)
=
q^{-np}
\prod_{i=1}^{n}
\prod_{j=1}^{p}
\frac{x_i - q y_j}{x_i - y_j}
f_{\nu}(\b{x}_1,\dots,\b{x}_n),
\end{align} 
where $\nu$ is an arbitrary composition. Fixing a further composition $\mu$, we multiply both sides of \eqref{7.3-1} by $g^{*}_{\mu}(x_1,\dots,x_n)$ before integrating against the same measure and over the same contours as in the definition of the scalar product \eqref{scalar}.\footnote{Note that one is able to choose the contours $C_1,\dots,C_n$ such that the left hand side of \eqref{7.3-1} continues to converge; a sufficient choice would be that of Figure \ref{fig:contours}, with all points $(y_1,\dots,y_p)$ inside $C_1$ and close to the point $s$, when one has \begin{align*} \left| \frac{1-sx_i}{x_i-s} \cdot \frac{y_j-s}{1-sy_j} \right| < 1,
\qquad \forall\ 1 \leq i \leq n, \ \ 1 \leq j \leq p. \end{align*}}  The result of the calculation is
\begin{multline*}
\sum_{\kappa}
G_{\kappa/\nu}(y_1,\dots,y_p)
\langle f_{\kappa},g^{*}_{\mu} \rangle_{q,s}
\\
=
\frac{q^{-np}}{(2\pi\sqrt{-1})^n}
\oint_{C_1}
\frac{dx_1}{x_1}
\cdots 
\oint_{C_n}
\frac{dx_n}{x_n}
\prod_{1 \leq i<j \leq n}
\left(
\frac{x_j-x_i}{x_j-q x_i}
\right)
\prod_{i=1}^{n}
\prod_{j=1}^{p}
\frac{x_i - q y_j}{x_i - y_j}
f_{\nu}(\b{x}_1,\dots,\b{x}_n)
g^{*}_{\mu}(x_1,\dots,x_n),
\end{multline*}
and using the orthogonality result \eqref{f-g-orthog} to collapse the sum on the left hand side,
\begin{multline}
\label{Gmunu-int}
G_{\mu/\nu}(y_1,\dots,y_p)
\\
=
\frac{q^{-np}}{(2\pi\sqrt{-1})^n}
\oint_{C_1}
\frac{dx_1}{x_1}
\cdots 
\oint_{C_n}
\frac{dx_n}{x_n}
\prod_{1 \leq i<j \leq n}
\left(
\frac{x_j-x_i}{x_j-q x_i}
\right)
\prod_{i=1}^{n}
\prod_{j=1}^{p}
\frac{x_i - q y_j}{x_i - y_j}
f_{\nu}(\b{x}_1,\dots,\b{x}_n)
g^{*}_{\mu}(x_1,\dots,x_n),
\end{multline}
where the integration contours $C_1,\dots,C_n$ are assumed to be admissible. Note that the steps we have just followed can be viewed as nothing other than computing the action of $\mathfrak{F} \circ \mathfrak{G}$ on $G_{\mu/\nu}$.

Appropriate limits of the (multivariate) transfer matrix $(-s)^{|\nu|-|\mu|} G_{\mu/\nu}$ lead to the generators for a variety of continuous time Markov processes, such as the multi-species asymmetric simple exclusion process (mASEP). We expect that the formula \eqref{Gmunu-int} would be very helpful to extract expressions for averages of suitable observables in such processes, given appropriate initial data, such as those recently obtained in \cite{Kuan2}. This is certainly an interesting topic for further investigation, but we will not pursue it here.

\begin{rmk}
If we choose $\mu = (\mu_1 \geq \dots \geq \mu_n)$ and $\nu = 0^n$, then by virtue of \eqref{f-delta} and \eqref{g-lambda} the integral formula \eqref{Gmunu-int} simplifies to
\begin{multline}
G_{\mu}(y_1,\dots,y_p)
=
\frac{q^{-np} (s^2;q)_n}{(2\pi\sqrt{-1})^n}
\\
\times
\oint_{C_1}
dx_1
\cdots 
\oint_{C_n}
dx_n
\prod_{1 \leq i<j \leq n}
\left(
\frac{x_j-x_i}{x_j-q x_i}
\right)
\prod_{i=1}^{n}
\prod_{j=1}^{p}
\frac{x_i - q y_j}{x_i - y_j}
\prod_{i=1}^{n}
\frac{1}{(x_i-s)(1-sx_i)}
\left(
\frac{x_i-s}{1-sx_i}
\right)^{\mu_i},
\end{multline}
which matches the expression for ${\sf G}_{\mu}(y_1,\dots,y_p)$ obtained in \cite[Corollary 7.14]{BorodinP2}, up to inversion of integration variables and contours.

\end{rmk}

\begin{rmk}
\label{rmk:exchange}
Another interesting, slightly more general reduction of \eqref{Gmunu-int} is obtained by choosing $\mu = (\mu_1 \geq \cdots \geq \mu_n)$ and $\nu = (\nu_1 \leq \cdots \leq \nu_n)$, when we have
\begin{multline}
G_{\mu/\nu}(y_1,\dots,y_p)
=
\frac{q^{-np}}{(2\pi\sqrt{-1})^n}
\prod_{j \geq 0} (s^2;q)_{m_j(\nu)}
\\
\times
\oint_{C_1}
dx_1
\cdots 
\oint_{C_n}
dx_n
\prod_{1 \leq i<j \leq n}
\left(
\frac{x_j-x_i}{x_j-q x_i}
\right)
\prod_{i=1}^{n}
\prod_{j=1}^{p}
\frac{x_i - q y_j}{x_i - y_j}
\prod_{i=1}^{n}
\frac{1}{(x_i-s)(1-sx_i)}
\left(
\frac{x_i-s}{1-sx_i}
\right)^{\mu_i-\nu_i}.
\end{multline}
Physically, $(-s)^{|\nu|-|\mu|}G_{\mu/\nu}$ gives the probability that particles $\{1,\dots,n\}$ which enter the base of the lattice \eqref{G-pf} with coordinates $\mu_1 \geq \cdots \geq \mu_n$ exit it with coordinates $\nu_1 \leq \cdots \leq \nu_n$, \ie\ it refers to the event that the particles totally reverse their ordering. Similar examples of ``total exchange'' probabilities in a two-species model were studied very recently in \cite{ChenGHS}.

\end{rmk}

\chapter{Matching distributions}

In this chapter we will begin by considering two different partition functions, in unfused vertex models (meaning that there is at most one lattice path per horizontal and vertical edge). Our aim is to prove a direct match between these quantities. To our best knowledge this correspondence is new, and its conceptual origin remains mysterious to us.

Having established this result, it is straightforward to extend it to the higher-spin setting (by fusion), and to then take degenerations to various one-dimensional interacting particle systems. This will be the subject of Chapter \ref{sec:reduction}.

\section{Path distributions in the stochastic six-vertex model, $\mathbb{P}_{{\rm 6v}}(\mathcal{I},\mathcal{J})$}
\label{ssec:Z_MN}

Our first random object arises in the stochastic six-vertex model. The stochastic six-vertex model is recovered as the $n=1$ version of the model \eqref{fund-vert}, when the configuration space of an edge becomes two-dimensional (the set of states is $\{{\rm unoccupied,occupied}\}$ or, equivalently, $\{0,1\}$). In this case, we obtain the weights tabulated below: 
\begin{align}
\label{six-vert}
\begin{tabular}{|c|c|c|}
\hline
\quad
\tikz{0.6}{
\draw[lgray,line width=1.5pt,->] (-1,0) -- (1,0);
\draw[lgray,line width=1.5pt,->] (0,-1) -- (0,1);
\node[left] at (-1,0) {\tiny $0$};\node[right] at (1,0) {\tiny $0$};
\node[below] at (0,-1) {\tiny $0$};\node[above] at (0,1) {\tiny $0$};
}
\quad
&
\quad
\tikz{0.6}{
\draw[lgray,line width=1.5pt,->] (-1,0) -- (1,0);
\draw[lgray,line width=1.5pt,->] (0,-1) -- (0,1);
\node[left] at (-1,0) {\tiny $0$};\node[right] at (1,0) {\tiny $0$};
\node[below] at (0,-1) {\tiny $1$};\node[above] at (0,1) {\tiny $1$};
}
\quad
&
\quad
\tikz{0.6}{
\draw[lgray,line width=1.5pt,->] (-1,0) -- (1,0);
\draw[lgray,line width=1.5pt,->] (0,-1) -- (0,1);
\node[left] at (-1,0) {\tiny $0$};\node[right] at (1,0) {\tiny $1$};
\node[below] at (0,-1) {\tiny $1$};\node[above] at (0,1) {\tiny $0$};
}
\quad
\\[1.3cm]
\quad
$1$
\quad
& 
\quad
$\dfrac{q(1-xy)}{1-qxy}$
\quad
& 
\quad
$\dfrac{1-q}{1-qxy}$
\quad
\\[0.7cm]
\hline
\quad
\tikz{0.6}{
\draw[lgray,line width=1.5pt,->] (-1,0) -- (1,0);
\draw[lgray,line width=1.5pt,->] (0,-1) -- (0,1);
\node[left] at (-1,0) {\tiny $1$};\node[right] at (1,0) {\tiny $1$};
\node[below] at (0,-1) {\tiny $1$};\node[above] at (0,1) {\tiny $1$};
}
\quad
&
\quad
\tikz{0.6}{
\draw[lgray,line width=1.5pt,->] (-1,0) -- (1,0);
\draw[lgray,line width=1.5pt,->] (0,-1) -- (0,1);
\node[left] at (-1,0) {\tiny $1$};\node[right] at (1,0) {\tiny $1$};
\node[below] at (0,-1) {\tiny $0$};\node[above] at (0,1) {\tiny $0$};
}
\quad
&
\quad
\tikz{0.6}{
\draw[lgray,line width=1.5pt,->] (-1,0) -- (1,0);
\draw[lgray,line width=1.5pt,->] (0,-1) -- (0,1);
\node[left] at (-1,0) {\tiny $1$};\node[right] at (1,0) {\tiny $0$};
\node[below] at (0,-1) {\tiny $0$};\node[above] at (0,1) {\tiny $1$};
}
\quad
\\[1.3cm]
\quad
$1$
\quad
& 
\quad
$\dfrac{1-xy}{1-qxy}$
\quad
&
\quad
$\dfrac{(1-q)xy}{1-qxy}$
\quad 
\\[0.7cm]
\hline
\end{tabular}
\end{align}
where $x^{-1}$ is the rapidity associated to the horizontal line, and $y$ is the rapidity associated to the vertical line. Notice that in this chapter we invert horizontal rapidities, to produce weights \eqref{six-vert} in which $x$ and $y$ appear on a symmetric footing. 

We will consider the stochastic six-vertex model in a finite rectangular region of the first quadrant; namely, inside of the $M \times N$ lattice whose vertices are labelled by coordinate pairs $(i,j)$ with $1 \leq i \leq M$ (horizontal coordinate) and $1 \leq j \leq N$ (vertical coordinate). Edges are represented by an arrow linking two vertices; $(i,j) \rightarrow (i+1,j)$ indicates a horizontal edge, while $(i,j) \rightarrow (i,j+1)$ denotes a vertical edge. 

In view of the stochastic property \eqref{stoch} of the vertex weights \eqref{six-vert}, following \cite[Section 2]{BorodinCG} one can construct a discrete-time Markov process in the quadrant. We firstly specify the initial conditions for the process. Fix the left external horizontal edges of the quadrant, $(0,j) \rightarrow (1,j)$ for all $1 \leq j \leq N$, to be occupied by incoming paths, and similarly, fix the bottom external vertical edges, $(i,0) \rightarrow (i,1)$ for all $1 \leq i \leq M$, to be unoccupied; these are the so-called {\it domain wall boundary conditions.} Next, assume that we already have a probability distribution on path configurations restricted to the vertices $(a,b)$ such that $a+b < k$, for some $k \geq 2$. For each vertex $(i,j)$ such that $i+j = k$, a configuration on the vertices $\{(a,b)\}_{a+b < k}$ (together with the domain wall boundary conditions) specifies the states on the left and bottom edges of the vertex $(i,j)$. That information allows us to fill out the right and top edges of the vertex, sampling from the Bernoulli distribution induced by the weights \eqref{six-vert}. In this way we extend our distribution to configurations on the vertices $\{(a,b)\}_{a+b \leq k}$, and to the whole rectangular region, by induction on $k$.

Let us now describe the discrete random variables that will be our focus.  Fix two ordered sets $\mathcal{I} = \{1 \leq I_1 < \cdots < I_k \leq M\}$ and $\mathcal{J} = \{1 \leq J_1 < \cdots < J_{\ell} \leq N\}$, whose cardinalities satisfy 
$|\mathcal{I}| + |\mathcal{J}| = k+\ell = N$. We will be interested in the probability, \index{P1@$\mathbb{P}_{\rm 6v}(\mathcal{I},\mathcal{J})$} $\mathbb{P}_{{\rm 6v}}(\mathcal{I},\mathcal{J})$, that a configuration of the model on the $M \times N$ lattice has an outgoing vertical path situated at the horizontal coordinates $\mathcal{I}$ (counted from left to right) and an outgoing horizontal path situated at the vertical coordinates $\mathcal{J}$ (counted from top to bottom).

This probability can be evaluated in terms of a partition function \index{Z3@$Z_{M,N}(\mathcal{I},\mathcal{J})$} $Z_{M,N}(\mathcal{I},\mathcal{J})$ in the $M \times N$ quadrant, whose boundary conditions are chosen as follows: {\bf 1.} There is an incoming horizontal path at the edge $(0,j)\rightarrow (1,j)$ for all $1 \leq j \leq N$; {\bf 2.} The edge $(i,0) \rightarrow(i,1)$ is devoid of a path, for all $1 \leq i \leq M$; {\bf 3.} There is an outgoing horizontal path at the edge $(M,N-J_a+1) \rightarrow (M+1,N-J_a+1)$ for all $1 \leq a \leq \ell$; {\bf 4.} There is an outgoing vertical path at the edge $(I_b,N) \rightarrow(I_b,N+1)$ for all $1 \leq b \leq k$. See Figure \ref{fig:6v-quad} for an illustration of $Z_{M,N}(\mathcal{I},\mathcal{J})$. One has $\mathbb{P}_{{\rm 6v}}(\mathcal{I},\mathcal{J}) = Z_{M,N}(\mathcal{I},\mathcal{J})$, and in what follows we treat the distribution and partition function interchangeably.

\begin{figure}
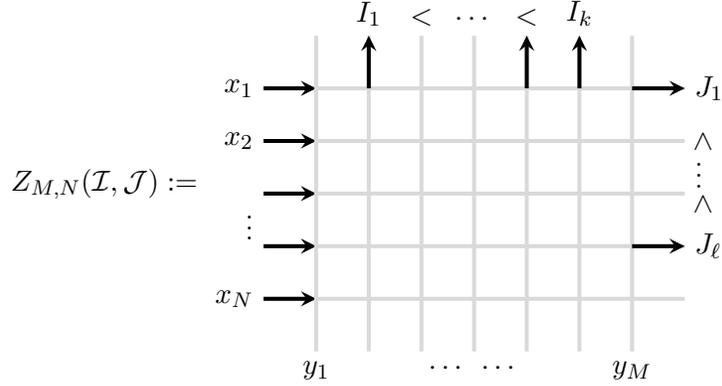

\begin{align*}
Z_{M,N}(\mathcal{I},\mathcal{J})
:=
\tikz{0.7}{
\foreach\y in {1,...,5}{
\draw[lgray,line width=1.5pt] (0,\y) -- (8,\y);
\draw[line width=1.5pt,->] (0,\y) -- (1,\y);
}
\foreach\x in {1,...,7}{
\draw[lgray,line width=1.5pt] (\x,0) -- (\x,6);
}
\foreach\x in {2,5,6}{
\draw[line width=1.5pt,->] (\x,5) -- (\x,6);
}
\foreach\y in {2,5}{
\draw[line width=1.5pt,->] (7,\y) -- (8,\y);
}
\node[above] at (6,6) {$I_k$};
\node[above] at (5,6) {$<$};
\node[above] at (4,6) {$\cdots$};
\node[above] at (3,6) {$<$};
\node[above] at (2,6) {$I_1$};
\node[below] at (7,0) {$y_M$};
\node[below] at (3.5,0) {$\cdots$};
\node[below] at (4.5,0) {$\cdots$};
\node[below] at (1,0) {$y_1$};
\node[left] at (0,1) {$x_N$};
\node[left] at (0,2.5) {$\vdots$};
\node[left] at (0,4) {$x_2$};
\node[left] at (0,5) {$x_1$};
\node[right] at (8,2) {$J_{\ell}$};
\node[right] at (8,2.8) {$\rotgr$};
\node[right] at (8,3.5) {$\vdots$};
\node[right] at (8,4) {$\rotgr$};
\node[right] at (8,5) {$J_1$};
}
\end{align*}
\caption{$Z_{M,N}(\mathcal{I},\mathcal{J})$ is the weighted sum over all configurations in which paths terminate at horizontal coordinates $\mathcal{I} = \{I_1,\dots,I_k\}$ and vertical coordinates $\mathcal{J} = \{J_1,\dots,J_{\ell}\}$. Note that the index of rapidities assigned to horizontal lines increases as one goes from top to bottom.}
\label{fig:6v-quad}
\end{figure}

\section{Coloured path distributions in the quadrant, $\mathbb{P}_{{\rm col}}(\mathcal{I},\mathcal{J})$}

The second random object to be considered arises in the model \eqref{fund-vert} with $n = N$, where $N$ is (as before) the vertical dimension of the lattice. This is a higher-rank analogue of the stochastic six-vertex model, and we recall that one can group the possible types of vertices into five categories:
\begin{align}
\label{col-vert}
\begin{tabular}{|c|c|c|}
\hline
\quad
\tikz{0.6}{
\draw[lgray,line width=1.5pt,->] (-1,0) -- (1,0);
\draw[lgray,line width=1.5pt,->] (0,-1) -- (0,1);
\node[left] at (-1,0) {\tiny $i$};\node[right] at (1,0) {\tiny $i$};
\node[below] at (0,-1) {\tiny $i$};\node[above] at (0,1) {\tiny $i$};
}
\quad
&
\quad
\tikz{0.6}{
\draw[lgray,line width=1.5pt,->] (-1,0) -- (1,0);
\draw[lgray,line width=1.5pt,->] (0,-1) -- (0,1);
\node[left] at (-1,0) {\tiny $i$};\node[right] at (1,0) {\tiny $i$};
\node[below] at (0,-1) {\tiny $j$};\node[above] at (0,1) {\tiny $j$};
}
\quad
&
\quad
\tikz{0.6}{
\draw[lgray,line width=1.5pt,->] (-1,0) -- (1,0);
\draw[lgray,line width=1.5pt,->] (0,-1) -- (0,1);
\node[left] at (-1,0) {\tiny $i$};\node[right] at (1,0) {\tiny $j$};
\node[below] at (0,-1) {\tiny $j$};\node[above] at (0,1) {\tiny $i$};
}
\quad
\\[1.3cm]
\quad
$1$
\quad
& 
\quad
$\dfrac{q(1-xy)}{1-qxy}$
\quad
& 
\quad
$\dfrac{1-q}{1-qxy}$
\quad
\\[0.7cm]
\hline
&
\quad
\tikz{0.6}{
\draw[lgray,line width=1.5pt,->] (-1,0) -- (1,0);
\draw[lgray,line width=1.5pt,->] (0,-1) -- (0,1);
\node[left] at (-1,0) {\tiny $j$};\node[right] at (1,0) {\tiny $j$};
\node[below] at (0,-1) {\tiny $i$};\node[above] at (0,1) {\tiny $i$};
}
\quad
&
\quad
\tikz{0.6}{
\draw[lgray,line width=1.5pt,->] (-1,0) -- (1,0);
\draw[lgray,line width=1.5pt,->] (0,-1) -- (0,1);
\node[left] at (-1,0) {\tiny $j$};\node[right] at (1,0) {\tiny $i$};
\node[below] at (0,-1) {\tiny $i$};\node[above] at (0,1) {\tiny $j$};
}
\quad
\\[1.3cm]
& 
\quad
$\dfrac{1-xy}{1-qxy}$
\quad
&
\quad
$\dfrac{(1-q)xy}{1-qxy}$
\quad 
\\[0.7cm]
\hline
\end{tabular}
\end{align}
where $0 \leq i < j \leq N$. Notice that the completely unoccupied and completely occupied vertices of \eqref{six-vert} should be considered as a single type of a vertex, which is the reason that the vertices \eqref{col-vert} split into five categories, rather than six.

Similarly to the previous section, we consider the model \eqref{col-vert} inside an $M \times N$ lattice, and define a coloured analogue of domain wall boundary conditions: for all $1 \leq j \leq N$, we choose the left external edge $(0,j) \rightarrow (1,j)$ to be occupied by a path of colour $j$, while the external bottom edge $(i,0) \rightarrow (i,1)$ is unoccupied for all $1 \leq i \leq M$. Since the model \eqref{col-vert} continues to enjoy the stochastic property \eqref{stoch}, one can then define a discrete-time Markov process in the very same way as previously; namely, by filling out the anti-diagonals of the lattice incrementally, sampling from the distribution induced by the weights \eqref{col-vert}. Note, however, that we reverse the order of the row rapidities $(x_1,\dots,x_N)$.

Turning to random variables within the model of coloured paths, we again fix two ordered sets $\mathcal{I} = \{1 \leq I_1 < \cdots < I_k \leq M\}$ and 
$\mathcal{J} = \{1 \leq J_1 < \cdots < J_{\ell} \leq N\}$, whose cardinalities satisfy 
$|\mathcal{I}| + |\mathcal{J}| = k+\ell = N$. This time we will be interested in the probability, \index{P2@$\mathbb{P}_{\rm col}(\mathcal{I},\mathcal{J})$} $\mathbb{P}_{{\rm col}}(\mathcal{I},\mathcal{J})$, that a configuration of coloured paths on the $M \times N$ lattice has outgoing vertical paths {\it of any colour} situated at the horizontal coordinates $\mathcal{I}$, and outgoing horizontal paths of colours $\mathcal{J}$ {\it situated at any vertical coordinates.} More simply, we keep track of positions $\mathcal{I}$ of paths that leave the lattice via its top edge, and the colours $\mathcal{J}$ of paths that leave the lattice via its right edge.

The probability $\mathbb{P}_{{\rm col}}(\mathcal{I},\mathcal{J})$ can be evaluated via a partition function \index{X3@$X_{M,N}(\mathcal{I},\mathcal{J})$} $X_{M,N}(\mathcal{I},\mathcal{J})$, which is defined as the sum over all configurations that satisfy the following constraints: {\bf 1.} There is an incoming horizontal path of colour $j$ at the edge $(0,j) \rightarrow (1,j)$ for all $1 \leq j \leq N$; {\bf 2.} The edge $(i,0) \rightarrow (i,1)$ is devoid of a path, for all $1 \leq i \leq M$; {\bf 3.} The collection of edges $(M,j) \rightarrow (M+1,j)$, $1 \leq j \leq N$, features a total of $\ell$ outgoing horizontal paths, of colours $\{J_1,\dots,J_{\ell}\}$; {\bf 4.} There is an outgoing vertical path, of unspecified colour, at the edge $(I_b,N) \rightarrow (I_b,N+1)$ for all $1 \leq b \leq k$. See Figure \ref{fig:col-quad}.

\begin{figure}[H]
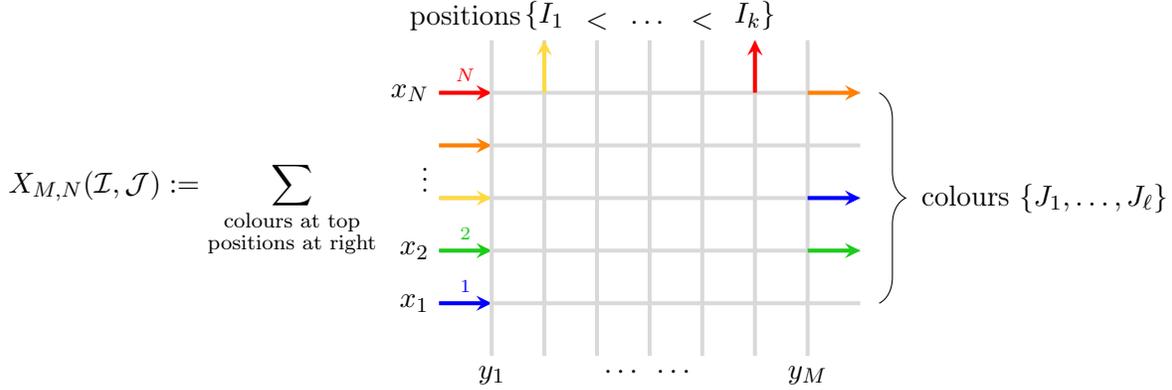

\begin{align*}
X_{M,N}(\mathcal{I},\mathcal{J})
:=
\sum_{\substack{\text{colours at top} \\ \text{positions at right}}}
\tikz{0.7}{
\foreach\y in {1,...,5}{
\draw[lgray,line width=1.5pt] (0,\y) -- (8,\y);
}
\draw[line width=1.5pt,red,->] (0,5) -- (1,5) node[midway,above] {\tiny $N$};
\draw[line width=1.5pt,orange,->] (0,4) -- (1,4);
\draw[line width=1.5pt,yellow,->] (0,3) -- (1,3);
\draw[line width=1.5pt,green,->] (0,2) -- (1,2) node[midway,above] {\tiny $2$};
\draw[line width=1.5pt,blue,->] (0,1) -- (1,1) node[midway,above] {\tiny $1$};
\foreach\x in {1,...,7}{
\draw[lgray,line width=1.5pt] (\x,0) -- (\x,6);
}
\draw[line width=1.5pt,yellow,->] (2,5) -- (2,6);
\draw[line width=1.5pt,red,->] (6,5) -- (6,6);
\draw[line width=1.5pt,green,->] (7,2) -- (8,2);
\draw[line width=1.5pt,blue,->] (7,3) -- (8,3);
\draw[line width=1.5pt,orange,->] (7,5) -- (8,5);
\node[above] at (6,6) {$I_k$\}};
\node[above] at (5,6) {$<$};
\node[above] at (4,6) {$\cdots$};
\node[above] at (3,6) {$<$};
\node[above] at (2,6) {\{$I_1$};
\node[above,text centered] at (0.5,6) {positions};
\node[below] at (7,0) {$y_M$};
\node[below] at (3.5,0) {$\cdots$};
\node[below] at (4.5,0) {$\cdots$};
\node[below] at (1,0) {$y_1$};
\node[left] at (0,1) {$x_1$};
\node[left] at (0,2) {$x_2$};
\node[left] at (0,3.5) {$\vdots$};
\node[left] at (0,5) {$x_N$};
\draw [decorate,decoration={brace,amplitude=10pt},xshift=-4pt,yshift=0pt] (8.5,5) -- (8.5,1) 
node [black,midway,xshift=2.2cm] {colours\ $\{J_1,\dots,J_{\ell}\}$};
}
\end{align*}
\caption{$X_{M,N}(\mathcal{I},\mathcal{J})$ is the weighted sum over all configurations in which paths terminate at horizontal coordinates $\mathcal{I} = \{I_1,\dots,I_k\}$ along the top boundary and the paths of colours $\mathcal{J} = \{J_1,\dots,J_{\ell}\}$ terminate at the right boundary. The index of horizontal rapidities increases from bottom to top, which is the opposite convention to the definition of $Z_{M,N}(\mathcal{I}, \mathcal{J})$.}
\label{fig:col-quad}
\end{figure}
In contrast to the uncoloured partition function $Z_{M,N}(\mathcal{I},\mathcal{J})$, the constraints {\bf 1}--{\bf 4} do not specify the boundaries of the lattice uniquely; the top and right boundaries should be considered as summed over all configurations of paths which respect conditions {\bf 3} and {\bf 4}. More precisely, the right boundary is summed over all $\frac{N!}{(N-\ell)!}$ ways of distributing outgoing colours $\{J_1,\dots,J_{\ell}\}$ along the $N$ horizontal edges, while the top boundary is summed over all $k!$ ways of assigning the $k$ remaining colours to the edges 
$(I_b,N) \rightarrow (I_b,N+1)$, $1 \leq b \leq k$.

Analogously to the uncoloured case, one has $\mathbb{P}_{\rm col}(\mathcal{I},\mathcal{J}) = X_{M,N}(\mathcal{I},\mathcal{J})$, allowing us to think of our probability distribution as a purely combinatorial quantity.

\section{Coloured Hall--Littlewood processes and the distribution $\mathbb{P}_{\rm cHL}(\mathcal{I},\mathcal{J})$}

The third type of random object takes us back to the realm of non-symmetric Hall--Littlewood polynomials. We begin by defining a class of probability measures in this setting, which can be viewed as a refinement of ascending (symmetric) Hall--Littlewood processes. In what follows \index{P@$P_{\lambda/\nu}$; skew Hall--Littlewood} $P_{\lambda/\nu}(x)$ and \index{Q@$Q_{\lambda/\nu}$; skew Hall--Littlewood} $Q_{\lambda/\nu}(x)$ will denote skew Hall--Littlewood polynomials in one variable, given by
\begin{align*}
P_{\lambda/\nu}(x)
=
\left\{
\begin{array}{ll}
x^{|\lambda|-|\nu|}
\prod_{k: m_k(\lambda)+1 = m_k(\nu)}
\left(1-q^{m_k(\nu)}\right),
&\quad
\lambda \succ \nu,
\\ \\
0,
&\quad
{\rm otherwise},
\end{array}
\right.
\end{align*}

\begin{align*}
Q_{\lambda/\nu}(x)
=
\left\{
\begin{array}{ll}
x^{|\lambda|-|\nu|}
\prod_{k: m_k(\lambda) = m_k(\nu)+1}
\left(1-q^{m_k(\lambda)}\right),
&\quad
\lambda \succ \nu,
\\ \\
0,
&\quad
{\rm otherwise},
\end{array}
\right.
\end{align*}
where $\lambda \succ \nu$ indicates the interlacing property $\lambda_1 \geq \nu_1 \geq \lambda_2 \geq \nu_2 \geq \cdots$. As previously, $E_{\mu}(y_1,\dots,y_M)$ denotes the non-symmetric Hall--Littlewood polynomial associated to composition $(\mu_1,\dots,\mu_M)$, \cf\ Section \ref{ssec:hl-reduce}.

\begin{defn}
Fix a positive integer $N \geq 1$. A Gelfand--Tsetlin pattern \index{al@$\bm{\lambda}$; Gelfand--Tsetlin patterns} $\bm{\lambda} = \left\{\lambda^{(1)},\dots,\lambda^{(N)}\right\}$ of length $N-1$ is a sequence of partitions such that
\begin{align*}
\lambda^{(1)} \succ \cdots \succ \lambda^{(N-1)} \succ \lambda^{(N)} \equiv \varnothing.
\end{align*}
Because neighbouring partitions interlace in the above sequence, one has 
\begin{align*}
\ell(\lambda^{(j)}) - \ell(\lambda^{(j+1)}) \in \{0,1\},\quad
\forall\ 1 \leq j \leq N-1,
\end{align*}
where $\ell(\lambda)$ denotes the length of a partition $\lambda$.
\end{defn}

\begin{prop}
Fix two positive integers $M,N \geq 1$, a composition $\mu = (\mu_1,\dots,\mu_M)$ of length $M$ and a Gelfand--Tsetlin pattern $\bm{\lambda} = \left\{\lambda^{(1)},\dots,\lambda^{(N)}\right\}$ of length $N-1$. Associate to this pair of objects the weight 
\begin{align}
\label{asc-HL}
\mathbb{W}_{M,N}
(\mu,\bm{\lambda})
\index{W@$\mathbb{W}_{M,N}(\mu,\bm{\lambda})$; coloured Hall--Littlewood measure}
=
E_{\tilde\mu}(y_M,\dots,y_1)
\cdot
Q_{\mu^{+}/\lambda^{(1)}}(x_1)
\cdot
\prod_{j=2}^{N}
Q_{\lambda^{(j-1)}/\lambda^{(j)}}(x_j)
\cdot
\prod_{i=1}^{N}
\prod_{j=1}^{M}
\frac{1-x_i y_j}{1-q x_i y_j},
\end{align}
where we recall that $\tilde{\mu}_i = \mu_{M-i+1}$, $1 \leq i \leq M$. This defines a probability measure; namely, one has the sum-to-unity relation
\begin{align*}
\sum_{\mu}
\sum_{\bm{\lambda}}
\mathbb{W}_{M,N}
(\mu,\bm{\lambda})
=
1,
\end{align*}
and one can ensure that 
$0 \leq \mathbb{W}_{M,N}(\mu,\bm{\lambda}) \leq 1$ by choosing $0 \leq x_i < 1$, $0 \leq y_j < 1$ for all $1 \leq i \leq N$ and $1 \leq j \leq M$. 
\end{prop}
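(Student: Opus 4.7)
The plan is to establish the sum-to-unity relation via a three-step reduction: first collapse the non-symmetric polynomial $E_{\tilde\mu}$ (after summing over $\mu$ with fixed partition sorting) into a symmetric Hall--Littlewood $P_\lambda$, then collapse the sum over the Gelfand--Tsetlin pattern $\bm{\lambda}$ using the branching/concatenation formula for skew $Q$'s, and finally apply the classical Hall--Littlewood Cauchy identity to cancel against the explicit product prefactor in \eqref{asc-HL}. The non-negativity will follow separately from inspection of each factor.

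First I would group the sum $\sum_{\mu}\sum_{\bm{\lambda}} \mathbb{W}_{M,N}(\mu,\bm{\lambda})$ by the sorted shape $\mu^+$. Fixing a partition $\lambda$, all factors except $E_{\tilde\mu}(y_M,\ldots,y_1)$ depend on $\mu$ only through $\mu^+=\lambda$, so the inner sum reduces to $\sum_{\mu^+=\lambda} E_{\tilde\mu}(y_M,\ldots,y_1)$. This is the symmetrization of non-symmetric Hall--Littlewood polynomials (the $s=0$ specialization of the colour-blindness statement in Proposition \ref{prop:f-sum-F}, combined with the identification $f_\mu|_{s=0} = E_{\tilde\mu}$ from Theorem \ref{thm:f-E}), which yields the symmetric Hall--Littlewood polynomial $P_\lambda(y_1,\ldots,y_M)$ (up to a normalization absorbed into the definition, which one must track carefully).

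Next I would execute the telescoping sum over the Gelfand--Tsetlin pattern. Setting $\lambda^{(0)}=\mu^+=\lambda$, the factor
\[
Q_{\lambda^{(0)}/\lambda^{(1)}}(x_1)\prod_{j=2}^N Q_{\lambda^{(j-1)}/\lambda^{(j)}}(x_j)
\]
is precisely one term in the branching decomposition
\[
Q_\lambda(x_1,\ldots,x_N) = \sum_{\bm{\lambda}:\lambda^{(0)}=\lambda,\,\lambda^{(N)}=\varnothing}\; Q_{\lambda^{(0)}/\lambda^{(1)}}(x_1)\prod_{j=2}^N Q_{\lambda^{(j-1)}/\lambda^{(j)}}(x_j),
\]
so summing over $\bm{\lambda}$ produces $Q_\lambda(x_1,\ldots,x_N)$. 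At this point the double sum has collapsed to $\sum_\lambda P_\lambda(y_1,\ldots,y_M)\, Q_\lambda(x_1,\ldots,x_N)$, and the Hall--Littlewood Cauchy identity gives $\prod_{i,j}\frac{1-qx_iy_j}{1-x_iy_j}$, which cancels exactly against the prefactor $\prod_{i,j}\frac{1-x_iy_j}{1-qx_iy_j}$ in \eqref{asc-HL} to produce $1$.

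For non-negativity, when $0\le x_i,y_j<1$ (and $0\le q<1$, tacitly assumed), the prefactor $\prod_{i,j}\frac{1-x_iy_j}{1-qx_iy_j}$ lies in $(0,1]$, the skew $Q$-factors are manifestly non-negative from their explicit definition in terms of products $\prod(1-q^{m_k})$ with $q\in[0,1)$, and the non-symmetric Hall--Littlewood $E_{\tilde\mu}(y_M,\ldots,y_1)\ge 0$ because it has a non-negative combinatorial expansion in the $y_j$'s (which can also be read off from the partition function expression for $f_\mu|_{s=0}$). The main subtlety will be pinning down the exact multiplicative constant in the symmetrization $\sum_{\mu^+=\lambda} E_{\tilde\mu}\to P_\lambda$ so that it is consistent with the normalization of $Q_{\mu^+/\lambda^{(1)}}$ used in \eqref{asc-HL}; this is bookkeeping rather than a conceptual obstacle, and amounts to verifying that the conventions chosen in the paper for $E$, $P$, $Q$ mesh so that the telescoping and the Cauchy identity combine cleanly with unit constant.
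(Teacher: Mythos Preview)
Your proposal is correct and follows essentially the same approach as the paper, with the only difference being the order of the first two reductions: the paper first collapses the Gelfand--Tsetlin sum via the branching rule to obtain $Q_{\mu^+}(x_1,\dots,x_N)$ and then symmetrizes over $\mu$ with fixed $\mu^+$, whereas you symmetrize first and telescope second. The paper also states the symmetrization identity directly as $\sum_{\mu:\mu^+=\nu} E_{\tilde\mu}(y_M,\dots,y_1)=P_\nu(y_1,\dots,y_M)$ with no extra constant, so your bookkeeping concern about normalization is not an issue.
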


\begin{proof}
The sum over Gelfand--Tsetlin patterns can be performed using the branching rule for the Hall--Littlewood polynomials \cite[Chapter III]{Macdonald}:
\begin{align*}
Q_{\lambda}(x_1,\dots,x_N)
=
\sum_{\nu \prec \lambda}
Q_{\lambda/\nu}(x_1)
Q_{\nu}(x_2,\dots,x_N).
\end{align*}
Using this identity $N-1$ times, one obtains
\begin{align}
\label{Wsum}
\sum_{\mu}
\sum_{\bm{\lambda}=
\lambda^{(1)} \succ \cdots \succ \lambda^{(N-1)} \succ \varnothing}
\mathbb{W}_{M,N}
(\mu,\bm{\lambda})
=
\sum_{\mu}
E_{\tilde\mu}(y_M,\dots,y_1) Q_{\mu^{+}}(x_1,\dots,x_N)
\cdot
\prod_{i=1}^{N}
\prod_{j=1}^{M}
\frac{1-x_i y_j}{1-q x_i y_j}.
\end{align}
The latter sum can be taken using the fact that
\begin{align*}
\sum_{\mu: \mu^{+} = \nu}
E_{\tilde{\mu}}(y_M,\dots,y_1)
=
P_{\nu}(y_M,\dots,y_1)
=
P_{\nu}(y_1,\dots,y_M),
\end{align*}
where $P_{\nu}(y_1,\dots,y_M)$ is the symmetric Hall--Littlewood polynomial associated to the partition $\nu$. Applying this symmetrization formula to \eqref{Wsum}, one obtains
\begin{align*}
\sum_{\mu}
\sum_{\bm{\lambda}}
\mathbb{W}_{M,N}
(\mu,\bm{\lambda})
=
\sum_{\nu}
P_{\nu}(y_1,\dots,y_M)
Q_{\nu}(x_1,\dots,x_N)
\cdot
\prod_{i=1}^{N}
\prod_{j=1}^{M}
\frac{1-x_i y_j}{1-q x_i y_j}
=
1,
\end{align*}
by the Cauchy identity for Hall--Littlewood polynomials \cite[Chapter III]{Macdonald}. Alternatively one may compute \eqref{Wsum} using the $s=0$ case of the Cauchy identity \eqref{fG-cauchy}; to do so, one should substitute $f_{\mu}(y_1,\dots,y_M)|_{s=0} = E_{\tilde\mu}(y_M,\dots,y_1)$, $G^{\bullet}_{\mu^{+}}(x_1,\dots,x_N) = q^{-MN} Q_{\mu^{+}}(x_1,\dots,x_N)$ into that sum.

\end{proof}

We refer to \eqref{asc-HL} as a {\it coloured} Hall--Littlewood process; it refines ascending Hall--Littlewood processes by assigning a probability weight to compositions paired with Gelfand--Tsetlin patterns, rather than partitions paired with Gelfand--Tsetlin patterns, as in the usual symmetric setting. Replacing $E_{\tilde\mu}(y_M,\dots,y_1)$ by the symmetric Hall--Littlewood polynomial $P_{\mu^{+}}(y_1,\dots,y_M)$ returns \eqref{asc-HL} to the standard ascending Hall--Littlewood process.

In a similar vein to \cite{BorodinBW}, let us now define a class of observables associated to coloured Hall--Littlewood processes. As usual, fix two positive integers 
$M,N \geq 1$ and two ordered sets $\mathcal{I} = \{1 \leq I_1 < \cdots < I_k \leq M\}$ and $\mathcal{J} = \{1 \leq J_1 < \cdots < J_{\ell} \leq N\}$, whose cardinalities satisfy $|\mathcal{I}| + |\mathcal{J}| = k+\ell = N$. We will also require the complement of the set $\mathcal{I}$, defined as 
\begin{align*}
\b{\mathcal{I}} &= \{1 \leq \b{I}_1 < \cdots < \b{I}_{M-k} \leq M\} 
= 
\{1,\dots,M\} \backslash \mathcal{I}.
\end{align*}
The observables that will interest us will be the {\it zero set} $z(\mu)$ \index{z6@$z(\mu)$} of the composition $(\mu_1,\dots,\mu_M)$, defined as
\begin{align}
\label{z-mu}
z(\mu) = \{1 \leq i \leq M : \mu_i = 0\},
\end{align}
and a further set
\begin{align}
\label{zeta-mu}
\zeta(\mu,\bm{\lambda})
\index{af@$\zeta(\mu,\bm{\lambda})$}
=
\{1 \leq j \leq N: \ell(\lambda^{(j-1)}) - \ell(\lambda^{(j)}) = 0\},
\qquad
\lambda^{(0)} \equiv \mu^{+},
\qquad
\lambda^{(N)} \equiv \varnothing,
\end{align}
which records the instances where neighbouring partitions in the extended Gelfand--Tsetlin pattern $\mu^{+} \succ \lambda^{(1)} \succ \cdots \succ \lambda^{(N-1)} \succ \varnothing$ have the same length. In particular, we define
\begin{align}
\label{PcHL}
\mathbb{P}_{\rm cHL}(\mathcal{I},\mathcal{J})
\index{P3@$\mathbb{P}_{\rm cHL}(\mathcal{I},\mathcal{J})$}
=
\sum_{\mu}
\sum_{\bm{\lambda}}
\mathbb{W}_{M,N}
(\mu,\bm{\lambda})
\cdot
\bm{1}_{z(\mu)=\b{\mathcal{I}}}
\cdot
\bm{1}_{\zeta(\mu,\bm{\lambda}) = \mathcal{J}},
\end{align}
which is the joint distribution of the random variables $z(\mu)$, $\zeta(\mu,\bm{\lambda})$ in the pair $(\mu,\bm{\lambda})$ sampled with respect to the coloured Hall--Littlewood process \eqref{asc-HL}.

\section{Equivalence of $\mathbb{P}_{\rm 6v}(\mathcal{I},\mathcal{J})$ and $\mathbb{P}_{\rm col}(\mathcal{I},\mathcal{J})$}
\label{sec:2=3}

\begin{thm}
\label{thm:2=3}
Fix two integers $M,N \geq 1$ and two sets $\mathcal{I} = \{1 \leq I_1 < \cdots < I_k \leq M\}$ and 
$\mathcal{J} = \{1 \leq J_1 < \cdots < J_{\ell} \leq N\}$ whose cardinalities satisfy $k+\ell=N$. The following equality of distributions holds:
\begin{align}
\label{2=3}
\mathbb{P}_{\rm 6v}(\mathcal{I},\mathcal{J})
=
\mathbb{P}_{\rm col}(\mathcal{I},\mathcal{J}).
\end{align}
\end{thm}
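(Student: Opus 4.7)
My plan is to prove \eqref{2=3} by showing that the partition functions $X_{M,N}(\mathcal{I},\mathcal{J})$ and $Z_{M,N}(\mathcal{I},\mathcal{J})$ satisfy a common recursion, and then to induct on the dimensions of the lattice (say, on $N$, or jointly on $M+N$). This follows the broad philosophy of \cite{FodaW,WheelerZ}; in fact, the case $\mathcal{J}=\varnothing$ is exactly the statement proved in \cite{FodaW} and will serve as one anchor for the induction. A second anchor is the case $N=1$, when the equality is a direct computation using the single-row vertex weights \eqref{six-vert} and \eqref{col-vert}; in both models only one path is present, and the two configurations are in obvious bijection with matching weights.

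For the inductive step, I would peel off the row of lowest rapidity in each model. In $X_{M,N}$ this is the bottom row, carrying the incoming path of colour~$1$; in $Z_{M,N}$ this is the top row (since rapidities are numbered top-to-bottom), carrying the incoming indistinguishable path. In both cases the trajectory of the row's left-incoming path splits into two types of behaviour: it may exit through the right boundary, or it may turn upward at some column $c$ and then propagate through the rest of the lattice. The ``exit at right'' branch gives a frozen row whose weight is a completely factorized product
\begin{align*}
\prod_{j=1}^{M}\frac{1-x_1 y_j}{1-q x_1 y_j},
\end{align*}
identically in the two models; removing that row leaves a lattice of dimensions $M\times(N-1)$ with $\mathcal{J}$ reduced by its smallest element (appropriately relabelled) and $\mathcal{I}$ unchanged, to which the inductive hypothesis applies.

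The subtle part of the recursion is the ``turn up'' branch. In $X_{M,N}$, one must follow colour $1$ as it ascends through rows $2,\dots,N$, interacting with all paths of higher colour via vertices of types 4 and 5 in \eqref{col-vert}; the colour reshuffling at each crossing must be tracked. In $Z_{M,N}$ the analogous branch is simpler since paths are indistinguishable, but one inherits a row of extra vertices whose weights encode the same rational factors. My plan for matching the two is to sum, for each fixed $c$, over the remaining configurations compatible with a turn-up at column $c$ and then verify that the resulting weighted sum agrees on both sides after invoking colour-blindness (Proposition \ref{prop:colour-blind}) on the truncated coloured partition function, together with Yang--Baxter moves that allow the ascending colour-$1$ line to be transported across the remainder of the lattice. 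The algebra here is essentially that of rearranging a row operator through a product of other row operators, which is why one expects the identity ultimately to follow from the commutation relations of Chapter 3.

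The main obstacle, and the reason the authors describe the argument as ``not particularly illuminating,'' is the bookkeeping in this last step: one must carefully sum contributions over (i) the column $c$ at which colour $1$ turns up, (ii) the full trajectory of colour $1$ thereafter, and (iii) the implied relabelling of $\mathcal{I}$, $\mathcal{J}$, and the incoming colour pattern in the reduced lattice. I expect that a clean formulation will require introducing an auxiliary refined partition function (conditioning on the ``position of colour $1$ in the remaining rows'' in $X_{M,N}$, and on a matched local statistic in $Z_{M,N}$) so that the recursion closes on a slightly enlarged family; the desired equality then follows once this enlarged family is shown to satisfy identical recursions with a common base case. A more conceptually transparent proof, presumably via $F$-matrices or through the companion identity $\mathbb{P}_{\rm col}=\mathbb{P}_{\rm cHL}$ of Theorem \ref{thm:2=1}, seems plausible but I would pursue the direct combinatorial route first since it requires only the local Yang--Baxter and stochasticity relations already established.
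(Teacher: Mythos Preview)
Your proposal follows a different route from the paper, and the hard part you flag---the ``turn up'' branch---is exactly where the paper avoids a direct attack. Peeling off the row of rapidity $x_1$ and tracking colour $1$ as it threads through $N-1$ rows of higher-coloured paths does not produce a recursion that closes on the family $X_{M,N}(\mathcal{I},\mathcal{J})$: after colour $1$ interacts with the other paths, the residual boundary data is no longer of rainbow type, and the auxiliary refined family you anticipate needing is not specified. As written this is a genuine gap, not just bookkeeping.

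The paper's argument sidesteps this in two ways. First, it works with rapidity $x_N$ rather than $x_1$, i.e.\ with colour $N$ (the highest) in $X_{M,N}$ and the bottom row in $Z_{M,N}$. Colour $N$ enters at the top of the coloured lattice, so if $N\in\mathcal{J}$ it must exit right in that same row, freezing it; if $N\notin\mathcal{J}$, one specializes $x_N=1/y_M$, which splits the top-right vertex \eqref{R-split}, and then \emph{stochasticity} (summing outgoing states to $1$) erases the entire top row and rightmost column simultaneously---no path-following is needed. Together with the polynomiality in $x_N$, the symmetry in $(y_1,\dots,y_M)$, and vanishing at $x_N=0$, this gives a Lagrange-interpolation argument that uniquely pins down both partition functions, but only in the special case $\mathcal{I}=\{1,\dots,M\}$ (all top edges occupied). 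Second, to pass from this special case to arbitrary $\mathcal{I}$, the paper does not recurse further but instead applies Hecke generators $T_j(y)$ in the column rapidities: both $Z_{M,N}$ and $X_{M,N}$ satisfy the same exchange relation \eqref{hecke-1}--\eqref{hecke-2}, which slides an occupied top edge one step to the left.

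So the two missing ideas relative to your plan are (i) interpolation at $x_N=1/y_j$ combined with vertex-splitting and stochasticity, which replaces your ``turn up'' analysis entirely, and (ii) the use of Hecke operators on the vertical rapidities to reach general $\mathcal{I}$ from the maximal one. Your row-peeling intuition is not wrong in spirit, but without these devices it does not close.
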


\begin{rmk}
\label{rmk:fw}
A weaker version of this theorem, in the special case $M=N$, $\mathcal{I} = \{1,\dots,N\}$, $\mathcal{J} = \varnothing$, was previously obtained in \cite{FodaW}. In that situation the proof goes through without great difficulty, in view of the fact that one can invoke colour-blindness of the partition function $\mathbb{P}_{\rm col}(\{1,\dots,N\},\varnothing)$ that arises for this choice of boundary conditions.
\end{rmk}

\begin{proof}
The proof of this result is long; we will split it into several steps.

\medskip

\underline{A partial result.}

\medskip
 
We will begin by proving that, when $k=M$ and $\ell = N-M \geq 0$, one has
\begin{align}
\label{PF-match}
Z_{M,N}(\mathcal{J})
=
X_{M,N}(\mathcal{J}),
\end{align}
where we have introduced the notation
\begin{align*}
Z_{M,N}(\mathcal{J})
:=
Z_{M,N}(\{1,\dots,M\},\mathcal{J}),
\qquad
X_{M,N}(\mathcal{J})
:=
X_{M,N}(\{1,\dots,M\},\mathcal{J})
\end{align*}
for the special case $k=M$, $\ell = N-M$, in which necessarily $\mathcal{I} = \{1,\dots,M\}$. The proof of \eqref{PF-match} proceeds using Lagrange interpolation techniques for the analysis of partition functions in solvable lattice models (see, for example, \cite{FodaW,WheelerZ}). Namely, we will show that both partition functions $Z_{M,N}(\mathcal{J})$, $X_{M,N}(\mathcal{J})$ satisfy a certain list of properties. Because these properties turn out to be uniquely-determining, the two partition functions must be equal.

\medskip

\underline{Properties of $Z_{M,N}(\mathcal{J})$.}

\medskip
Let us start with the uncoloured partition function $Z_{M,N}(\mathcal{J})$. It satisfies the following properties:
\begin{enumerate}
\item[1.] $\prod_{j=1}^{M} (1-q x_N y_j) Z_{M,N}(\mathcal{J})$ is a polynomial in $x_N$ of degree $\leq M$.

\medskip

\item[{\it Proof of 1.}] The partition function $Z_{M,N}(\mathcal{J})$ depends on $x_N$ only via the lowest row of the lattice. Multiplying by the factor $\prod_{j=1}^{M} (1-q x_N y_j)$ effectively normalizes the Boltzmann weights in that row of the lattice, so that they become degree $1$ polynomials in $x_N$; the claim is then immediate.

\medskip

\item[2.] $Z_{M,N}(\mathcal{J})$ is symmetric in the variables $(y_1,\dots,y_M)$.

\medskip

\item[{\it Proof of 2.}] This follows by a standard argument, identical to that used to prove the symmetry of the domain wall partition function \cite{Korepin} in its rapidity variables. The bottom edges of the partition function $Z_{M,N}(\mathcal{J})$ are devoid of paths; one can therefore attach an empty $R$-vertex at the base of the lattice, which intertwines the $a$-th and $(a+1)$-th vertical lines, for some $1 \leq a \leq M-1$. Using the Yang--Baxter equation \eqref{YB}, this $R$-vertex can be threaded through the lattice until it emerges from the top boundary. Since the top boundary is completely saturated by paths, one can then delete the emerging completely occupied $R$-vertex. The result of the computation is thus the switching of the $a$-th and $(a+1)$-th vertical lines; symmetry in $(y_1,\dots,y_M)$ follows by compositions of such transpositions.

\medskip

\item[3.] If $J_{N-M} = N$, one has 
\begin{align}
\label{Zrec1}
Z_{M,N}(\mathcal{J})
=
\prod_{j=1}^{M}
\left(
\frac{1-x_N y_j}{1-q x_N y_j}
\right)
Z_{M,N-1}(\mathcal{J} \backslash \{J_{N-M}\}).
\end{align}

\medskip

\item[{\it Proof of 3.}] If $J_{N-M} = N$, it means that a path leaves the lattice via the right external edge of the lowest horizontal line. This path must necessarily enter the lattice via the left external edge of the same line; configurations on this line are, therefore, completely frozen, and result in the overall weight 
$\prod_{j=1}^{M} (1-x_N y_j) / (1-q x_N y_j)$ (see Figure \ref{fig:Zrec1}). The remaining $N-1$ lines give rise to the partition function 
$Z_{M,N-1}(\mathcal{J} \backslash \{J_{N-M}\})$, and the factorization \eqref{Zrec1} follows immediately.

\begin{figure}
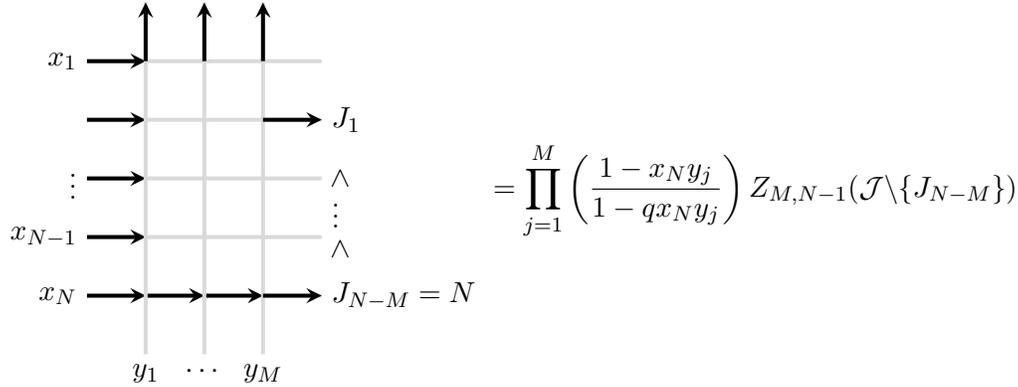

\begin{align*}
\tikz{0.78}{
\foreach\y in {1,...,5}{
\draw[lgray,line width=1.5pt] (0,\y) -- (4,\y);
\draw[line width=1.5pt,->] (0,\y) -- (1,\y);
}
\draw[line width=1.5pt,->] (1,1) -- (2,1);
\draw[line width=1.5pt,->] (2,1) -- (3,1);
\foreach\x in {1,...,3}{
\draw[lgray,line width=1.5pt] (\x,0) -- (\x,6);
}
\foreach\x in {1,2,3}{
\draw[line width=1.5pt,->] (\x,5) -- (\x,6);
}
\foreach\y in {1,4}{
\draw[line width=1.5pt,->] (3,\y) -- (4,\y);
}
\node[below] at (3,0) {$y_M$};
\node[below] at (2,0) {$\cdots$};
\node[below] at (1,0) {$y_1$};
\node[left] at (0,1) {$x_N$};
\node[left] at (0,2) {$x_{N-1}$};
\node[left] at (0,3) {$\vdots$};
\node[left] at (0,5) {$x_1$};
\node[right] at (4,1) {$J_{N-M} = N$};
\node[right] at (4,1.8) {$\rotgr$};
\node[right] at (4,2.5) {$\vdots$};
\node[right] at (4,3) {$\rotgr$};
\node[right] at (4,4) {$J_1$};
}
=
\prod_{j=1}^{M}
\left(
\frac{1-x_N y_j}{1-q x_N y_j}
\right)
Z_{M,N-1}(\mathcal{J} \backslash \{J_{N-M}\})
\end{align*}
\caption{Reduction of $Z_{M,N}(\mathcal{J})$ to $Z_{M,N-1}(\mathcal{J} \backslash \{J_{N-M}\})$ when $J_{N-M} = N$. The bottom row of the lattice is completely frozen, and can be deleted at the expense of an overall multiplicative factor.}
\label{fig:Zrec1}
\end{figure}

\medskip

\item[4.] If $J_{N-M} < N$, $Z_{M,N}(\mathcal{J})$ vanishes at $x_N = 0$ and satisfies the recursion relation
\begin{align}
\label{Zrec2}
Z_{M,N}(\mathcal{J}) \Big|_{x_N = 1/y_M}
=
Z_{M-1,N-1}(\mathcal{J}).
\end{align}

\medskip

\item[{\it Proof of 4.}] If $J_{N-M} < N$, the right external edge of the bottom horizontal line is unoccupied. It follows that in any legal configuration of the bottom horizontal line, the vertex 
\tikz{0.3}{
\draw[lgray,line width=1.5pt,->] (-1,0) -- (1,0);
\draw[lgray,line width=1.5pt,->] (0,-1) -- (0,1);
\node[left] at (-1,0) {\tiny $1$};\node[right] at (1,0) {\tiny $0$};
\node[below] at (0,-1) {\tiny $0$};\node[above] at (0,1) {\tiny $1$};
}
must appear somewhere; since this vertex carries a weight proportional to $x_N$, the vanishing property is apparent. 

To prove the recursion \eqref{Zrec2}, one should first transfer the vertical lattice line carrying the rapidity $y_M$ all the way to the left of the lattice; this transformation leaves $Z_{M,N}(\mathcal{J})$ invariant due to its symmetry in $(y_1,\dots,y_M)$. Next, one studies the effect of setting $x_N = 1/y_M$; it affects the vertex present at the intersection of the bottom horizontal and leftmost vertical lines, preventing it from taking the form
\tikz{0.3}{
\draw[lgray,line width=1.5pt,->] (-1,0) -- (1,0);
\draw[lgray,line width=1.5pt,->] (0,-1) -- (0,1);
\node[left] at (-1,0) {\tiny $1$};\node[right] at (1,0) {\tiny $1$};
\node[below] at (0,-1) {\tiny $0$};\node[above] at (0,1) {\tiny $0$};
}
and thus forcing it to take the form
\tikz{0.3}{
\draw[lgray,line width=1.5pt,->] (-1,0) -- (1,0);
\draw[lgray,line width=1.5pt,->] (0,-1) -- (0,1);
\node[left] at (-1,0) {\tiny $1$};\node[right] at (1,0) {\tiny $0$};
\node[below] at (0,-1) {\tiny $0$};\node[above] at (0,1) {\tiny $1$};
}
with weight $1$. The freezing of this vertex causes the entire bottom horizontal and leftmost vertical lines to freeze to a unique configuration, with weight $1$ (see Figure \ref{fig:Zrec2}). The recursion \eqref{Zrec2} follows immediately.

\begin{figure}
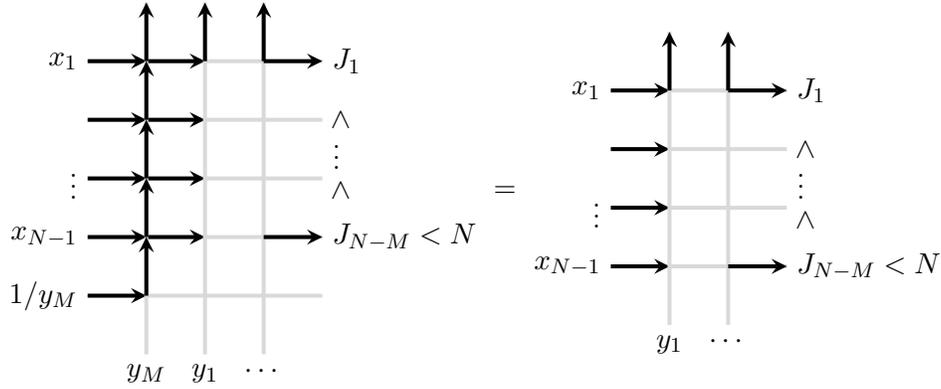

\begin{align*}
\tikz{0.78}{
\foreach\y in {1,...,5}{
\draw[lgray,line width=1.5pt] (0,\y) -- (4,\y);
\draw[line width=1.5pt,->] (0,\y) -- (1,\y);
}
\draw[line width=1.5pt,->] (1,2) -- (2,2);
\draw[line width=1.5pt,->] (1,3) -- (2,3);
\draw[line width=1.5pt,->] (1,4) -- (2,4);
\draw[line width=1.5pt,->] (1,5) -- (2,5);
\foreach\x in {1,...,3}{
\draw[lgray,line width=1.5pt] (\x,0) -- (\x,6);
}
\draw[line width=1.5pt,->] (1,1) -- (1,2);
\draw[line width=1.5pt,->] (1,2) -- (1,3);
\draw[line width=1.5pt,->] (1,3) -- (1,4);
\draw[line width=1.5pt,->] (1,4) -- (1,5);
\foreach\x in {1,2,3}{
\draw[line width=1.5pt,->] (\x,5) -- (\x,6);
}
\foreach\y in {2,5}{
\draw[line width=1.5pt,->] (3,\y) -- (4,\y);
}
\node[below] at (3,0) {$\cdots$};
\node[below] at (2,0) {$y_1$};
\node[below] at (1,0) {$y_M$};
\node[left] at (0,1) {$1/y_M$};
\node[left] at (0,2) {$x_{N-1}$};
\node[left] at (0,3) {$\vdots$};
\node[left] at (0,5) {$x_1$};
\node[right] at (4,2) {$J_{N-M} < N$};
\node[right] at (4,2.8) {$\rotgr$};
\node[right] at (4,3.5) {$\vdots$};
\node[right] at (4,4) {$\rotgr$};
\node[right] at (4,5) {$J_1$};
}
=
\tikz{0.78}{
\foreach\y in {2,...,5}{
\draw[lgray,line width=1.5pt] (1,\y) -- (4,\y);
}
\draw[line width=1.5pt,->] (1,2) -- (2,2);
\draw[line width=1.5pt,->] (1,3) -- (2,3);
\draw[line width=1.5pt,->] (1,4) -- (2,4);
\draw[line width=1.5pt,->] (1,5) -- (2,5);
\foreach\x in {2,...,3}{
\draw[lgray,line width=1.5pt] (\x,1) -- (\x,6);
}
\foreach\x in {2,3}{
\draw[line width=1.5pt,->] (\x,5) -- (\x,6);
}
\foreach\y in {2,5}{
\draw[line width=1.5pt,->] (3,\y) -- (4,\y);
}
\node[below] at (3,1) {$\cdots$};
\node[below] at (2,1) {$y_1$};
\node[left] at (1,2) {$x_{N-1}$};
\node[left] at (1,3) {$\vdots$};
\node[left] at (1,5) {$x_1$};
\node[right] at (4,2) {$J_{N-M} < N$};
\node[right] at (4,2.8) {$\rotgr$};
\node[right] at (4,3.5) {$\vdots$};
\node[right] at (4,4) {$\rotgr$};
\node[right] at (4,5) {$J_1$};
}
\end{align*}
\caption{Recursion relation for $Z_{M,N}(\mathcal{J})$ when $J_{N-M} < N$. After transferring the vertical line carrying parameter $y_M$ to the far left, and setting $x_N = 1/y_M$, the bottom horizontal and leftmost vertical lines freeze. The remaining non-trivial piece computes precisely $Z_{M-1,N-1}(\mathcal{J})$.}
\label{fig:Zrec2}
\end{figure}

\medskip

\item[5.] One has the initial condition
\begin{align}
\label{Zinit}
Z_{1,1}(\varnothing)
=
\frac{(1-q)x_1 y_1}{1-q x_1 y_1}.
\end{align}

\medskip

\item[{\it Proof of 5.}] $Z_{1,1}(\varnothing)$ is nothing but the vertex
\tikz{0.3}{
\draw[lgray,line width=1.5pt,->] (-1,0) -- (1,0);
\draw[lgray,line width=1.5pt,->] (0,-1) -- (0,1);
\node[left] at (-1,0) {\tiny $1$};\node[right] at (1,0) {\tiny $0$};
\node[below] at (0,-1) {\tiny $0$};\node[above] at (0,1) {\tiny $1$};
},
whose weight is simply given by \eqref{Zinit}.

\end{enumerate}

\medskip

\underline{Properties of $X_{M,N}(\mathcal{J})$.}

\medskip

Now let us move on to the coloured partition function $X_{M,N}(\mathcal{J})$. We will show that it satisfies the very same list of properties:
\begin{enumerate}
\item[$1'$.] $\prod_{j=1}^{M} (1-q x_N y_j) X_{M,N}(\mathcal{J})$ is a polynomial in $x_N$ of degree $\leq M$.

\medskip

\item[{\it Proof of $1'$.}] The partition function $X_{M,N}(\mathcal{J})$ depends on $x_N$ via the top row of the lattice. Multiplying by the factor $\prod_{j=1}^{M} (1-q x_N y_j)$ normalizes the Boltzmann weights in that row of the lattice, so that they become degree $1$ polynomials in $x_N$; the claim follows.

\medskip

\item[$2'$.] $X_{M,N}(\mathcal{J})$ is symmetric in the variables $(y_1,\dots,y_M)$.

\medskip

\item[{\it Proof of $2'$.}] This follows by a very similar argument to that used to establish property $2$ of $Z_{M,N}(\mathcal{J})$: namely, the insertion of an $R$-vertex at the base of the $a$-th and $(a+1)$-th columns, followed by its transfer to the top of the lattice using the Yang--Baxter equation \eqref{YB}. The top boundary of $X_{M,N}(\mathcal{J})$ is summed over all possible ways of distributing the colours $\{1,\dots,N\} \backslash \mathcal{J}$ over the $M$ available sites. This means that the $R$-vertex which emerges from the top of the lattice has its two outgoing edges summed over all possible states; by the stochasticity \eqref{stoch} of the $R$-vertex, this sum thus has weight $1$ and can be deleted. The symmetry property then goes through as before.

\medskip

\item[$3'$.] If $J_{N-M} = N$, one has 
\begin{align}
\label{Xrec1}
X_{M,N}(\mathcal{J})
=
\prod_{j=1}^{M}
\left(
\frac{1-x_N y_j}{1-q x_N y_j}
\right)
X_{M,N-1}(\mathcal{J} \backslash \{J_{N-M}\}).
\end{align}

\medskip

\item[{\it Proof of $3'$.}] If $J_{N-M} = N$, the colour $N$ is among those that exit the partition function $X_{M,N}(\mathcal{J})$ via its right edge. To compute $X_{M,N}(\mathcal{J})$, one should sum over all right external edges where the path of colour $N$ can exit the lattice. However, there is only one possible choice: namely, the colour $N$ must exit via the right external edge of the top horizontal line, since it enters the lattice via the left external edge of the same line. This effectively freezes the top row of $X_{M,N}(\mathcal{J})$: the path of colour $N$ saturates all horizontal edges within this row of vertices, so that all paths (of smaller colour) which enter this row from below must pass straight through it vertically (see Figure \ref{fig:Xrec1}). The weight of this row is thus given by a product of vertices of the form
\tikz{0.3}{
\draw[lgray,line width=1.5pt,->] (-1,0) -- (1,0);
\draw[lgray,line width=1.5pt,->] (0,-1) -- (0,1);
\node[left] at (-1,0) {\tiny $N$};\node[right] at (1,0) {\tiny $N$};
\node[below] at (0,-1) {\tiny $i$};\node[above] at (0,1) {\tiny $i$};
},
where $1 \leq i < N$; hence it produces the overall factor $\prod_{j=1}^{M} (1-x_N y_j)/(1-q x_N y_j)$, and the factorization \eqref{Xrec1} can now be deduced.

\begin{figure}
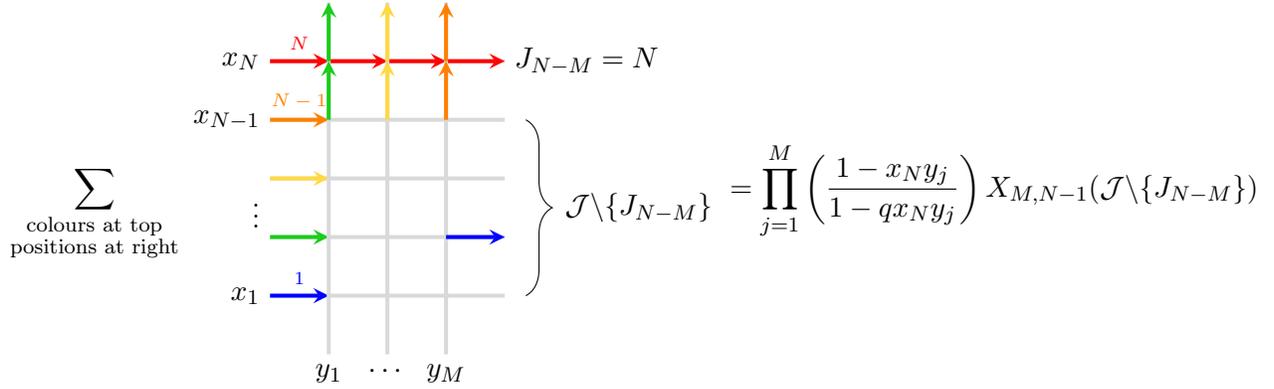

\begin{align*}
\sum_{\substack{\text{colours at top} \\ \text{positions at right}}}
\tikz{0.78}{
\foreach\y in {1,...,5}{
\draw[lgray,line width=1.5pt] (0,\y) -- (4,\y);
}
\draw[line width=1.5pt,red,->] (0,5) -- (1,5) node[midway,above] {\tiny $N$};
\draw[line width=1.5pt,orange,->] (0,4) -- (1,4) node[midway,above] {\tiny $N-1$};
\draw[line width=1.5pt,yellow,->] (0,3) -- (1,3);
\draw[line width=1.5pt,green,->] (0,2) -- (1,2);
\draw[line width=1.5pt,blue,->] (0,1) -- (1,1) node[midway,above] {\tiny $1$};
\draw[line width=1.5pt,red,->] (1,5) -- (2,5);
\draw[line width=1.5pt,red,->] (2,5) -- (3,5);
\draw[line width=1.5pt,red,->] (3,5) -- (4,5);
\foreach\x in {1,...,3}{
\draw[lgray,line width=1.5pt] (\x,0) -- (\x,6);
}
\draw[line width=1.5pt,green,->] (1,5) -- (1,6);
\draw[line width=1.5pt,green,->] (1,4) -- (1,5);
\draw[line width=1.5pt,yellow,->] (2,5) -- (2,6);
\draw[line width=1.5pt,yellow,->] (2,4) -- (2,5);
\draw[line width=1.5pt,orange,->] (3,5) -- (3,6);
\draw[line width=1.5pt,orange,->] (3,4) -- (3,5);
\draw[line width=1.5pt,blue,->] (3,2) -- (4,2);
\node[below] at (3,0) {$y_M$};
\node[below] at (2,0) {$\cdots$};
\node[below] at (1,0) {$y_1$};
\node[left] at (0,1) {$x_1$};
\node[left] at (0,2.5) {$\vdots$};
\node[left] at (0,4) {$x_{N-1}$};
\node[left] at (0,5) {$x_N$};
\node[right] at (4,5) {$J_{N-M} = N$};
\draw [decorate,decoration={brace,amplitude=10pt},xshift=-4pt,yshift=0pt] (4.5,4) -- (4.5,1) 
node [black,midway,xshift=1.5cm] {$\mathcal{J} \backslash \{J_{N-M}\}$};
}
=
\prod_{j=1}^{M}
\left(
\frac{1-x_N y_j}{1-q x_N y_j}
\right)
X_{M,N-1}(\mathcal{J} \backslash \{J_{N-M}\})
\end{align*}
\caption{When $J_{N-M} = N$, the colour $N$ exits via the right edge of the partition function $X_{M,N}(\mathcal{J})$, and it must do so in the top row of the lattice. The top row thus freezes into a product of vertices of the form $R_{x_N y_j}(i,N;i,N)$, with $1 \leq i < N$ and $1 \leq j \leq M$.}
\label{fig:Xrec1}
\end{figure}

\medskip

\item[$4'$.] If $J_{N-M} < N$, $X_{M,N}(\mathcal{J})$ vanishes at $x_N=0$ and satisfies the recursion relation
\begin{align}
\label{Xrec2}
X_{M,N}(\mathcal{J}) \Big|_{x_N = 1/y_M}
=
X_{M-1,N-1}(\mathcal{J}).
\end{align}

\medskip

\item[{\it Proof of $4'$.}] Setting $x_N = 1/y_M$ causes the vertex in the top right corner of the partition function $X_{M,N}(\mathcal{J})$ to split, in the same way as in equation \eqref{R-split}. We thus obtain the partition function shown in Figure \ref{fig:Xrec2}. This partition function can be simplified even further: this can be seen from the fact that the $M$ outgoing edges of its top row are summed over all ways of permuting the colours in the set $\{1,\dots,N\} \backslash \mathcal{J}$ over the $M$ available sites, while the $N$ outgoing edges of its rightmost column are summed over all ways of assigning the $N-M$ colours $\mathcal{J}$ to the $N$ available sites. The stochasticity \eqref{stoch} of the model then allows us to delete the vertices in the top row and rightmost column, and we recover the partition function $X_{M-1,N-1}(\mathcal{J})$.

\begin{figure}
\begin{align*}
\sum_{\substack{\text{colours at top} \\ \text{positions at right}}}
\begin{tikzpicture}[scale=0.78,baseline=2cm,>=stealth]
\foreach\y in {1,...,4}{
\draw[lgray,line width=1.5pt] (0,\y) -- (4,\y);
\node at (3.5,\y) {$\star$};
}
\draw[lgray,line width=1.5pt] (0,5) -- (2.8,5) -- (3,5.2) -- (3,6);
\node at (3,5.5) {$\star$};
\draw[line width=1.5pt,red,->] (0,5) -- (1,5) node[midway,above] {\tiny $N$};
\draw[line width=1.5pt,orange,->] (0,4) -- (1,4) node[midway,above] {\tiny $N-1$};
\draw[line width=1.5pt,yellow,->] (0,3) -- (1,3);
\draw[line width=1.5pt,green,->] (0,2) -- (1,2);
\draw[line width=1.5pt,blue,->] (0,1) -- (1,1) node[midway,above] {\tiny $1$};
\foreach\x in {1,...,2}{
\draw[lgray,line width=1.5pt] (\x,0) -- (\x,6);
\node at (\x,5.5) {$\star$};
}
\draw[lgray, line width=1.5pt] (3,0) -- (3,4.8) -- (3.2,5) -- (4,5);
\node at (3.5,5) {$\star$};
\node[below] at (3,0) {$y_M$};
\node[below] at (2,0) {$\cdots$};
\node[below] at (1,0) {$y_1$};
\node[left] at (0,1) {$x_1$};
\node[left] at (0,2.5) {$\vdots$};
\node[left] at (0,4) {$x_{N-1}$};
\node[left] at (0,5) {$1/y_M$};
\draw [decorate,decoration={brace,amplitude=10pt},xshift=0pt,yshift=-4pt] (1,6.5) -- (3,6.5) 
node [black,midway,yshift=1cm] {$\substack{\text{colours}\\ \\ \{1,\dots,N\} \backslash \mathcal{J}}$};
\draw [decorate,decoration={brace,amplitude=10pt},xshift=-4pt,yshift=0pt] (4.5,5) -- (4.5,1) 
node [black,midway,xshift=1cm] {$\substack{\text{colours} \\ \\ \mathcal{J}}$};
\end{tikzpicture}
=
\sum_{\substack{\text{colours at top} \\ \text{positions at right}}}
\begin{tikzpicture}[scale=0.78,baseline=1.7cm,>=stealth]
\foreach\y in {1,...,4}{
\draw[lgray,line width=1.5pt] (0,\y) -- (3,\y);
\node at (2.5,\y) {$\star$};
}
\draw[line width=1.5pt,orange,->] (0,4) -- (1,4) node[midway,above] {\tiny $N-1$};
\draw[line width=1.5pt,yellow,->] (0,3) -- (1,3);
\draw[line width=1.5pt,green,->] (0,2) -- (1,2);
\draw[line width=1.5pt,blue,->] (0,1) -- (1,1) node[midway,above] {\tiny $1$};
\foreach\x in {1,...,2}{
\draw[lgray,line width=1.5pt] (\x,0) -- (\x,5);
\node at (\x,4.7) {$\star$};
}
\node[below] at (2,0) {$\cdots$};
\node[below] at (1,0) {$y_1$};
\node[left] at (0,1) {$x_1$};
\node[left] at (0,2.5) {$\vdots$};
\node[left] at (0,4) {$x_{N-1}$};
\draw [decorate,decoration={brace,amplitude=10pt},xshift=0pt,yshift=-4pt] (0.5,5.5) -- (2.5,5.5) 
node [black,midway,yshift=1cm] {$\substack{\text{colours}\\ \\ \{1,\dots,N-1\} \backslash \mathcal{J}}$};
\draw [decorate,decoration={brace,amplitude=10pt},xshift=-4pt,yshift=0pt] (3.5,4) -- (3.5,1) 
node [black,midway,xshift=1cm] {$\substack{\text{colours} \\ \\ \mathcal{J}}$};
\end{tikzpicture}
\end{align*}
\caption{Choosing $x_N = 1/y_M$ causes the top right vertex of $X_{M,N}(\mathcal{J})$ to split, and the edges marked by $\star$ are summed over permutations of the colours shown. Making repeated use of the stochasticity property \eqref{stoch}, we may suppress the vertices of the top row and rightmost column, leading directly to the partition function $X_{M-1,N-1}(\mathcal{J})$.}
\label{fig:Xrec2}
\end{figure}
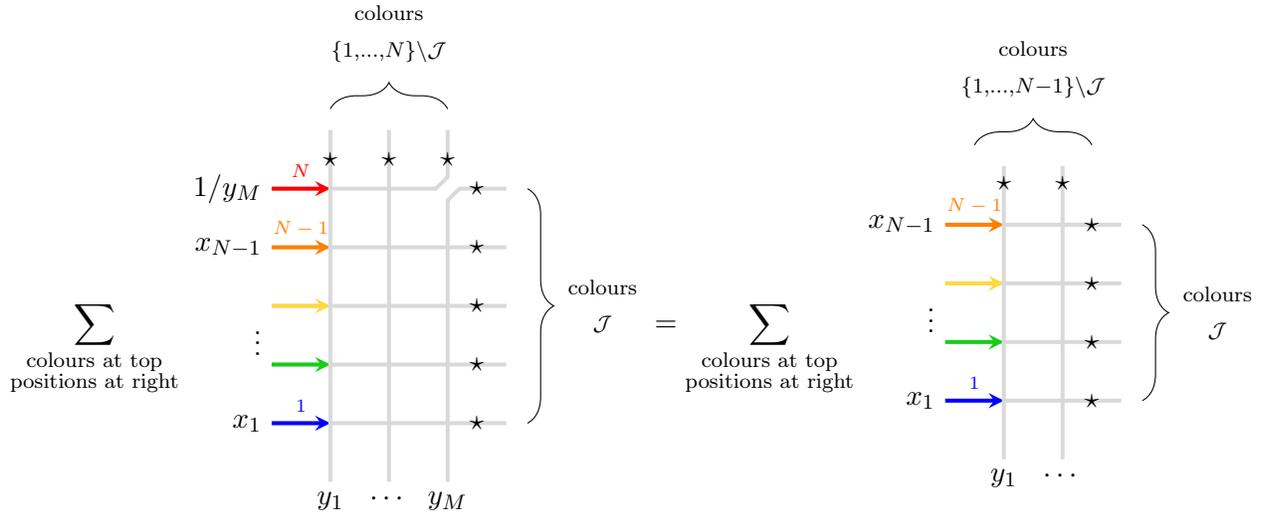

\medskip

\item[$5'$.] One has the initial condition
\begin{align}
\label{Xinit}
X_{1,1}(\varnothing)
=
\frac{(1-q)x_1 y_1}{1-q x_1 y_1}.
\end{align}

\medskip

\item[{\it Proof of $5'$.}] As in the case of $Z_{1,1}(\varnothing)$, $X_{1,1}(\varnothing)$ is equal to the rank-1 vertex
\tikz{0.3}{
\draw[lgray,line width=1.5pt,->] (-1,0) -- (1,0);
\draw[lgray,line width=1.5pt,->] (0,-1) -- (0,1);
\node[left] at (-1,0) {\tiny $1$};\node[right] at (1,0) {\tiny $0$};
\node[below] at (0,-1) {\tiny $0$};\node[above] at (0,1) {\tiny $1$};
},
whose weight is given by \eqref{Xinit}.

\end{enumerate}

\medskip

\underline{Uniqueness.}

\medskip

We have established that $Z_{M,N}(\mathcal{J})$ and $X_{M,N}(\mathcal{J})$ obey the same list of properties; it remains to demonstrate that this necessarily renders them equal for all $1 \leq M \leq N$. To do this, let us define the difference of the previous two quantities:
\begin{align*}
\Delta_{M,N}(\mathcal{J})
=
Z_{M,N}(\mathcal{J}) - X_{M,N}(\mathcal{J}),
\qquad
|\mathcal{J}| = N-M.
\end{align*}
By virtue of properties $5$ and $5'$, we clearly have $\Delta_{1,1}(\varnothing) = 0$. This allows us to assume that there exists an integer $N \geq 2$ such that $\Delta_{m,n}(\mathcal{J}) = 0$ for all $1 \leq m \leq n \leq N-1$ and sets $\mathcal{J}$ with $|\mathcal{J}| = n-m$. 

Now consider the quantity $\prod_{j=1}^{m} (1-q x_N y_j) \Delta_{m,N}(\mathcal{J})$, where $1 \leq m \leq N$ and $|\mathcal{J}| = N-m$. Properties $1$ and $1'$ tell us that this quantity is a polynomial in $x_N$ of degree $\leq m$. If $J_{N-m} = N$, properties $3$ and $3'$ (together with the inductive assumption) imply that $\Delta_{m,N}(\mathcal{J}) = 0$ for all $1 \leq m \leq N-1$.\footnote{Note that $m$ cannot assume the value $m=N$ in this case, since this would imply the set $\mathcal{J}$ is empty.} 

On the other hand, if $J_{N-m} < N$, properties $4$ and $4'$ (as well as the symmetry of $2$ and $2'$) determine $\prod_{j=1}^{m} (1-q x_N y_j) \Delta_{m,N}(\mathcal{J})$ at $m+1$ points; namely the points $x_N = 1/y_j$, $1 \leq j \leq m$ and $x_N = 0$, and for all of these values it vanishes (again by the inductive assumption). Since a non-trivial polynomial of degree $\leq m$ cannot vanish at $m+1$ distinct points\footnote{We have tacitly assumed that the points in the set 
$\{y_1,\dots,y_m\} \cup \{0\}$ are pairwise distinct.}, we conclude that $\Delta_{m,N}(\mathcal{J}) = 0$ for all $1 \leq m \leq N$. 

We have thus shown that $\Delta_{m,n}(\mathcal{J}) = 0$ for all $1 \leq m \leq n \leq N$ and sets $\mathcal{J}$ with $|\mathcal{J}| = n-m$; this statement can be deduced in full generality by induction on $N$. This completes the proof of \eqref{PF-match}.

\medskip

\underline{General result.}

\medskip

So far we have proved that
\begin{align}
\label{Z=X}
Z_{k,N}(\{1,\dots,k\},\mathcal{J})
=
X_{k,N}(\{1,\dots,k\},\mathcal{J}),
\quad
\forall\
\mathcal{J} = \{1 \leq J_1 < \cdots < J_{N-k} \leq N\},
\quad 
1 \leq k \leq N,
\end{align}
where we have relabelled $M \mapsto k$ compared with \eqref{PF-match}. Now let us generalize the result \eqref{Z=X} slightly, by extending the horizontal size of the lattice used in the construction of $Z_{k,N}$ and $X_{k,N}$. 

\begin{prop}
For all $k \leq M \leq N$ and $\mathcal{J} = \{1 \leq J_1 < \cdots < J_{N-k} \leq N\}$ there holds
\begin{align}
\label{Z=X-2}
Z_{M,N}(\{M-k+1,\dots,M\},\mathcal{J})
=
X_{M,N}(\{M-k+1,\dots,M\},\mathcal{J}).
\end{align}
\end{prop}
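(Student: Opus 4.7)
The plan is to prove \eqref{Z=X-2} by induction on $M - k \geq 0$, with the base case $M = k$ being exactly the already-established equality \eqref{Z=X}. For the inductive step I assume the claim for width $M-1$ and take $M > k$. The key observation is that $1 \notin \{M-k+1,\dots,M\}$ whenever $M > k$, so the leftmost column has no outgoing path at its top in either partition function. I will show that this alone forces the leftmost column to be frozen into a unique configuration in each model, contributing the same explicit multiplicative prefactor.

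For $Z_{M,N}$ the argument is direct. Since paths only move up or right, any path going upward inside the leftmost column would eventually need to exit its top boundary; but no top exit is allowed at column $1$, so propagating downward from the top forces every vertex in that column to have empty top and bottom edges. Each such vertex must then be of type
\tikz{0.35}{\draw[lgray,line width=1.5pt,->] (-1,0) -- (1,0); \draw[lgray,line width=1.5pt,->] (0,-1) -- (0,1); \node[left] at (-1,0) {\tiny $1$}; \node[right] at (1,0) {\tiny $1$}; \node[below] at (0,-1) {\tiny $0$}; \node[above] at (0,1) {\tiny $0$};},
carrying weight $(1-x_j y_1)/(1-qx_jy_1)$ in the row of rapidity $x_j$. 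For $X_{M,N}$ the same conclusion must be reached at the level of colours: at the topmost vertex, colour $N$ enters horizontally from the left and the top edge is empty, so by colour conservation the bottom edge must also be empty and the right edge carries colour $N$ (no weight in \eqref{col-vert} allows a larger colour entering horizontally to be swapped away when the top output is empty). Proceeding row by row downward, each vertex inherits an empty top edge from the frozen vertex above and then the same argument gives an empty bottom edge and horizontal pass-through of that row's incoming colour, with weight $(1-x_j y_1)/(1-qx_jy_1)$ by the $i=0$ row of \eqref{col-vert}.

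Stripping the leftmost column off in each case yields
\begin{align*}
Z_{M,N}(\{M-k+1,\dots,M\},\mathcal{J}) &= \prod_{j=1}^N \frac{1-x_jy_1}{1-qx_jy_1}\, Z_{M-1,N}(\{M-k,\dots,M-1\},\mathcal{J}), \\
X_{M,N}(\{M-k+1,\dots,M\},\mathcal{J}) &= \prod_{j=1}^N \frac{1-x_jy_1}{1-qx_jy_1}\, X_{M-1,N}(\{M-k,\dots,M-1\},\mathcal{J}),
\end{align*}
where the reduced partition functions live on $(M-1)\times N$ lattices with column rapidities $(y_2,\dots,y_M)$, and the exit set $\{M-k,\dots,M-1\}=\{(M-1)-k+1,\dots,M-1\}$ is once again of the required form so the inductive hypothesis applies and equates the two reduced partition functions. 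The common prefactor then completes the inductive step.

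The main obstacle is the colour-conservation argument in the $X$ case: unlike the uncoloured setting, where flux balance alone freezes the leftmost column, one must rule out any nontrivial permutation of colours inside that column. This is precisely where the top-down order of the argument is essential — the absence of a top exit pins down the highest colour at the top vertex first, and this then cascades downward through the column.
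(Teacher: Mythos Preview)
Your proof is correct and follows essentially the same approach as the paper: both freeze the leftmost column(s) by exploiting that their top edges are unoccupied, then reduce to the already-established case \eqref{Z=X}. The only cosmetic difference is that the paper strips off all $M-k$ frozen columns at once (yielding \eqref{Z-triv-col} and \eqref{X-triv-col} directly), whereas you peel off one column at a time via induction; your top-down freezing argument for the coloured column is also slightly more explicit than the paper's ``by exactly the same reasoning''.
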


\begin{proof}
Examining the top edges of the partition function $Z_{M,N}(\{M-k+1,\dots,M\},\mathcal{J})$, we see that the first $M-k$ of them are unoccupied by paths. This implies that the $N$ paths which are incoming from the left edges of this partition function must propagate horizontally for at least their first $M-k$ steps, and accordingly, the leftmost $M-k$ columns of $Z_{M,N}(\{M-k+1,\dots,M\},\mathcal{J})$ are frozen to a product of the vertices 
\tikz{0.3}{
\draw[lgray,line width=1.5pt,->] (-1,0) -- (1,0);
\draw[lgray,line width=1.5pt,->] (0,-1) -- (0,1);
\node[left] at (-1,0) {\tiny $1$};\node[right] at (1,0) {\tiny $1$};
\node[below] at (0,-1) {\tiny $0$};\node[above] at (0,1) {\tiny $0$};
}. 
We thus infer the relation
\begin{multline}
\label{Z-triv-col}
Z_{M,N}(\{M-k+1,\dots,M\},\mathcal{J};y_1,\dots,y_M)
\\
=
\prod_{i=1}^{N}
\prod_{j=1}^{M-k}
\frac{1-x_i y_j}{1-q x_i y_j}
\cdot
Z_{k,N}(\{1,\dots,k\},\mathcal{J};y_{M-k+1},\dots,y_M),
\end{multline}
where we have explicitly shown the dependence of both partition functions on their set of vertical rapidities, since this alphabet is shifted by $M-k$ on the right hand side. 

By exactly the same reasoning, one can easily see that this relation also applies in the case of the coloured partition function:
\begin{multline}
\label{X-triv-col}
X_{M,N}(\{M-k+1,\dots,M\},\mathcal{J};y_1,\dots,y_M)
\\
=
\prod_{i=1}^{N}
\prod_{j=1}^{M-k}
\frac{1-x_i y_j}{1-q x_i y_j}
\cdot
X_{k,N}(\{1,\dots,k\},\mathcal{J};y_{M-k+1},\dots,y_M).
\end{multline}
Now the right hand sides of \eqref{Z-triv-col} and \eqref{X-triv-col} can be matched, by virtue of \eqref{Z=X}, which yields the result \eqref{Z=X-2}.
\end{proof}

Equation \eqref{Z=X-2} now forms the basis for proving the general result \eqref{2=3}. Indeed, we may write \eqref{Z=X-2} in the form
\begin{align}
\label{Z=X-init}
Z_{M,N}(\mathcal{I},\mathcal{J})
=
X_{M,N}(\mathcal{I},\mathcal{J}),
\quad
\mathcal{I} = \{I_1,\dots,I_k\} = \{M-k+1,\dots,M\},
\end{align}
in which the elements of $\mathcal{I}$ are constrained to assume their maximal possible values. We would like to devise means to relax this constraint, and allow generic values for the set $\mathcal{I}$.

\begin{prop}
Let us denote by $T_j(y)$ the Hecke generator \eqref{hecke-poly} with $(x_j,x_{j+1}) \mapsto (y_j,y_{j+1})$:
\begin{align*}
T_j(y) = q - \frac{y_j-q y_{j+1}}{y_j-y_{j+1}} (1-\mathfrak{s}_j),
\quad
1 \leq j \leq M-1.
\end{align*}
Fix the set $\mathcal{I} = \{1 \leq I_1 < \cdots < I_k \leq M\}$ and let $1 \leq i \leq k$ be a positive integer such that $I_{i-1} < I_i-1$, where $I_0 = 0$ by agreement. We then have the pair of relations
\begin{align}
\label{hecke-1}
T_{I_i-1}(y) \cdot Z_{M,N}(\mathcal{I},\mathcal{J})
&=
Z_{M,N}(\{I_1,\dots,I_{i-1},I_{i}-1,I_{i+1},\dots,I_k\},\mathcal{J}),
\\
\label{hecke-2}
T_{I_i-1}(y) \cdot X_{M,N}(\mathcal{I},\mathcal{J})
&=
X_{M,N}(\{I_1,\dots,I_{i-1},I_{i}-1,I_{i+1},\dots,I_k\},\mathcal{J}).
\end{align}
\end{prop}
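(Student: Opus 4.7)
The goal is a pair of local Hecke-type exchange relations, one for the uncoloured partition function $Z_{M,N}$ and one for the coloured $X_{M,N}$. My plan is to prove both by a single Yang--Baxter argument: insert an auxiliary $R$-vertex intertwining columns $j = I_i - 1$ and $j+1$ at one boundary of the lattice, then push it through to the opposite boundary via repeated applications of YBE. This is in the same spirit as the proof of Property $2$ (symmetry in the $y_a$'s), except that the emerging $R$-vertex now faces an asymmetric boundary, producing a non-trivial linear combination instead of collapsing to a single term.

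More precisely, for \eqref{hecke-1} I would insert the $R$-vertex at the \emph{top} of the lattice. Because $I_{i-1} < I_i - 1 = j$, we have $j \notin \mathcal{I}$ and $j+1 = I_i \in \mathcal{I}$, so the top external edges at positions $j$ and $j+1$ carry the asymmetric states $(0,1)$. Summing over the two internal path conservation-compatible configurations $(a,b) \in \{(0,1),(1,0)\}$ connecting the lattice top to the $R$-vertex gives an expansion
\[
\mathrm{PF} \;=\; R_z(0,1;0,1)\, Z_{M,N}(\mathcal{I},\mathcal{J}) \;+\; R_z(1,0;0,1)\, Z_{M,N}(\mathcal{I}',\mathcal{J})
\]
(with appropriate bookkeeping of which internal configuration corresponds to which $\mathcal{I}$ vs $\mathcal{I}'$). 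Repeated application of YBE (of the form used to prove \eqref{RLLa}) moves the $R$-vertex downward through all $N$ rows, producing a partition function in which the column rapidities $y_j$ and $y_{j+1}$ have been interchanged throughout the lattice. At the bottom, the $R$-vertex faces the uniform domain-wall state $(0,0)$ and contributes a trivial factor of $1$, yielding $\mathrm{PF} = Z_{M,N}(\mathcal{I},\mathcal{J};\mathfrak{s}_j y)$. With spectral parameter $z = y_{j+1}/y_j$ (as dictated by YBE consistency with the row-column vertex weights $R_{x_i y_j}$), the explicit entries $R_z(0,1;0,1) = \frac{y_j-y_{j+1}}{y_j-qy_{j+1}}$ and $R_z(1,0;0,1) = \frac{(1-q)y_j}{y_j-qy_{j+1}}$ recognise the resulting identity as precisely the rearrangement
\[
(y_j - q y_{j+1})\, Z_{M,N}(\mathcal{I},\mathcal{J};\mathfrak{s}_j y) \;=\; (y_j - y_{j+1})\, Z_{M,N}(\mathcal{I}',\mathcal{J}) \;+\; (1-q) y_j\, Z_{M,N}(\mathcal{I},\mathcal{J})
\]
of the Hecke action \eqref{hecke-1}.

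For \eqref{hecke-2} the same argument applies verbatim, now inserting the rank-$n$ $R$-vertex \eqref{Rmat} between columns $j$ and $j+1$ of the lattice underlying $X_{M,N}$. The crucial point is that the summation over outgoing colours at the top boundary of $X_{M,N}$ is compatible with the $R$-vertex insertion because colour is conserved at the $R$-vertex; by the colour-blindness identities of Proposition \ref{prop:colour-blind}, the two non-trivial contributions sum over colour to give exactly the rank-$1$ weights $R_z(0,1;0,1)$ and $R_z(1,0;0,1)$ of the uncoloured argument. Pushing the rank-$n$ $R$-vertex through the $N$ rows via the rank-$n$ YBE of Proposition \ref{prop:YB} again swaps $y_j \leftrightarrow y_{j+1}$ and contributes trivially at the bottom, producing the identical Hecke relation for $X_{M,N}$.

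\textbf{Main obstacle.} The overall strategy is standard, but the bookkeeping demands care: one must fix a convention identifying which edge of the auxiliary $R$-vertex corresponds to which physical column edge, verify that the convention is consistent with the YBE of Proposition \ref{prop:YB} so that pushing through a row of $L$-weights genuinely swaps the column rapidities, and then match the resulting $R$-matrix entries against the Hecke coefficients in the polynomial representation \eqref{hecke-poly}. The numerical relation $\frac{1-q}{1-qz}\big|_{z=y_{j+1}/y_j} = \frac{(1-q)y_j}{y_j - qy_{j+1}}$ is precisely what links the $R$-matrix transmission weight to the Hecke coefficient $\frac{y_j-qy_{j+1}}{y_j-y_{j+1}}$ after rearrangement, and getting this identification correct (together with the correct pairing of $Z(\mathcal{I})$ vs $Z(\mathcal{I}')$ with the two $R$-vertex configurations) is the only non-routine part of the proof.
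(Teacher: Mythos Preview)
Your approach is correct and essentially identical to the paper's: insert an $R$-vertex intertwining columns $j=I_i-1$ and $j+1$ at the top boundary (where the states are the asymmetric pair $(0,1)$), push it to the bottom via repeated applications of the Yang--Baxter equation, where it meets the empty domain-wall boundary $(0,0)$ and evaluates to $1$, yielding $\mathfrak{s}_j\cdot Z_{M,N}(\mathcal{I},\mathcal{J})$ as a two-term linear combination of $Z_{M,N}(\mathcal{I},\mathcal{J})$ and $Z_{M,N}(\mathcal{I}',\mathcal{J})$; then rearrange to the Hecke form. The paper records this as the local identity \eqref{yb-column} and reads off \eqref{hecke1-proof}, which is exactly your final displayed rearrangement.

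Two small bookkeeping points. First, your intermediate expansion $\mathrm{PF}=R_z(0,1;0,1)\,Z(\mathcal{I})+R_z(1,0;0,1)\,Z(\mathcal{I}')$ has the pairing backwards relative to your (correct) final displayed identity: the configuration with internal states $(0,1)$ leaves the lattice top at $(0,1)$, i.e.\ exit at column $j+1=I_i$, which is $Z(\mathcal{I})$, and carries weight $R_z(0,1;0,1)=\frac{1-z}{1-qz}$; but comparing with your final line shows $Z(\mathcal{I}')$ must carry the $\frac{1-z}{1-qz}$ coefficient. You already flagged this as the obstacle, and indeed the paper's \eqref{hecke1-proof} has the pairing the other way round. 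Second, for \eqref{hecke-2} you do not need the full colour-blindness machinery of Proposition~\ref{prop:colour-blind} (which concerns summing outgoing states of an $L$-vertex). The paper's argument is more elementary: for any colour $c\geq 1$, the rank-$n$ $R$-vertex entries $R_z(0,c;0,c)$ and $R_z(c,0;0,c)$ are literally equal to the rank-$1$ entries $R_z(0,1;0,1)$ and $R_z(1,0;0,1)$, so the identity \eqref{yb-column} holds verbatim in the coloured model with the outgoing colour $c$ summed over $\{1,\dots,N\}\backslash\mathcal{J}$, and \eqref{hecke2-proof} follows with identical coefficients.
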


\begin{proof}
Let us begin with the proof of \eqref{hecke-1}. As with the proofs of other relations involving Hecke generators (discussed throughout Chapter \ref{sec:properties}), the key step is to establish a suitable exchange relation for operators used in the algebraic construction of $Z_{M,N}(\mathcal{I},\mathcal{J})$. In particular, let us note the following relation:
\begin{align}
\label{yb-column}
\tikz{0.7}{
\foreach\y in {1,...,4}{
\draw[lgray,line width=1.5pt] (0,\y) -- (3,\y);
\node at (0.5,\y) {$\star$};
\node at (2.5,\y) {$\star$};
}
\draw[lgray,line width=1.5pt] (1,0) -- (1,5) -- (2,6);
\draw[lgray,line width=1.5pt] (2,0) -- (2,5) -- (1,6);
\node[above] at (1,6) {\tiny $0$};
\node[above] at (2,6) {\tiny $1$};
\node[below] at (2,-0.5) {$y_{j+1}$};
\node[below] at (1,-0.5) {$y_j$};
\node[below] at (1,0) {\tiny $0$};
\node[below] at (2,0) {\tiny $0$};
\node[left] at (0,1) {$x_N$};
\node[left] at (0,2.7) {$\vdots$};
\node[left] at (0,4) {$x_1$};
\draw[line width=1.5pt,->] (1,4) -- (1,5) -- (2,6);
}
+
\tikz{0.7}{
\foreach\y in {1,...,4}{
\draw[lgray,line width=1.5pt] (0,\y) -- (3,\y);
\node at (0.5,\y) {$\star$};
\node at (2.5,\y) {$\star$};
}
\draw[lgray,line width=1.5pt] (1,0) -- (1,5) -- (2,6);
\draw[lgray,line width=1.5pt] (2,0) -- (2,5) -- (1,6);
\node[above] at (1,6) {\tiny $0$};
\node[above] at (2,6) {\tiny $1$};
\node[below] at (2,-0.5) {$y_{j+1}$};
\node[below] at (1,-0.5) {$y_j$};
\node[below] at (1,0) {\tiny $0$};
\node[below] at (2,0) {\tiny $0$};
\node[left] at (0,1) {$x_N$};
\node[left] at (0,2.7) {$\vdots$};
\node[left] at (0,4) {$x_1$};
\draw[line width=1.5pt,->] (2,4) -- (2,5) -- (1.5,5.5) -- (2,6);
}
=
\tikz{0.7}{
\foreach\y in {1,...,4}{
\draw[lgray,line width=1.5pt] (0,\y) -- (3,\y);
\node at (0.5,\y) {$\star$};
\node at (2.5,\y) {$\star$};
}
\draw[lgray,line width=1.5pt] (2,-1) -- (1,0) -- (1,5);
\draw[lgray,line width=1.5pt] (1,-1) -- (2,0) -- (2,5);
\node[above] at (1,5) {\tiny $0$};
\node[above] at (2,5) {\tiny $1$};
\node[below] at (2,-1.5) {$y_{j+1}$};
\node[below] at (1,-1.5) {$y_j$};
\node[below] at (1,-1) {\tiny $0$};
\node[below] at (2,-1) {\tiny $0$};
\node[left] at (0,1) {$x_N$};
\node[left] at (0,2.7) {$\vdots$};
\node[left] at (0,4) {$x_1$};
\draw[line width=1.5pt,->] (2,4) -- (2,5);
}
\end{align}
This identity is a direct consequence of the Yang--Baxter equation \eqref{YB}, and holds for any fixed values of the external horizontal edges, marked by $\star$ on both sides of \eqref{yb-column}. Observe that the assumption $I_{i-1} < I_i-1$ ensures that the $I_i$-th top external edge of $Z_{M,N}(\mathcal{I},\mathcal{J})$ is occupied by a path, while the $(I_i-1)$-th top external edge is unoccupied, allowing us to apply the relation \eqref{yb-column} to the $(I_i-1)$-th and $I_i$-th columns of $Z_{M,N}(\mathcal{I},\mathcal{J})$. Reading off the Boltzmann weight of the diagonally inserted $R$-vertices in \eqref{yb-column}, we are thus led to the equation
\begin{multline}
\label{hecke1-proof}
\left(
\frac{1-y_{j+1}/y_j}{1-q y_{j+1}/y_j}
\right)
\cdot
Z_{M,N}(\{I_1,\dots,I_{i-1},I_{i}-1,I_{i+1},\dots,I_k\},\mathcal{J})
\\
+
\left(
\frac{1-q}{1-q y_{j+1}/y_j}
\right)
\cdot
Z_{M,N}(\mathcal{I},\mathcal{J})
=
\mathfrak{s}_j \cdot Z_{M,N}(\mathcal{I},\mathcal{J}),
\end{multline}
where we have identified $j \equiv I_i-1$, and where $\mathfrak{s}_j$ interchanges $y_j$ and $y_{j+1}$ in $Z_{M,N}(\mathcal{I},\mathcal{J})$. After rearrangement, one finds that \eqref{hecke1-proof} is equivalent to the desired identity \eqref{hecke-1}.

The proof of \eqref{hecke-2} is almost identical: once again the assumption $I_{i-1} < I_i-1$ ensures that the $I_i$-th top external edge of $X_{M,N}(\mathcal{I},\mathcal{J})$ is occupied by a path, while the $(I_i-1)$-th top external edge is unoccupied; the difference being that in this case the $I_i$-th edge should be summed over all possible colourings $\{1,\dots,N\} \backslash \mathcal{J}$. This summation effects little practical change of the previous arguments. Indeed, one can write another version of the exchange relation \eqref{yb-column} in the coloured vertex model \eqref{col-vert}, in which the outgoing path at the top of the lattice is summed over all possible colourings $\{1,\dots,N\} \backslash \mathcal{J}$. Since the Boltzmann weights of the diagonally inserted $R$-vertices are invariant under such re-colourings of the occupied edges, we arrive at the identity
\begin{multline}
\label{hecke2-proof}
\left(
\frac{1-y_{j+1}/y_j}{1-q y_{j+1}/y_j}
\right)
\cdot
X_{M,N}(\{I_1,\dots,I_{i-1},I_{i}-1,I_{i+1},\dots,I_k\},\mathcal{J})
\\
+
\left(
\frac{1-q}{1-q y_{j+1}/y_j}
\right)
\cdot
X_{M,N}(\mathcal{I},\mathcal{J})
=
\mathfrak{s}_j \cdot X_{M,N}(\mathcal{I},\mathcal{J}),
\end{multline}
with $j \equiv I_i-1$. This can be rearranged to yield \eqref{hecke-2}.
\end{proof}

Equations \eqref{hecke-1} and \eqref{hecke-2} allow us to inductively complete the proof of \eqref{2=3}. We have already demonstrated the equality \eqref{Z=X-init} in the case of ``maximal'' $\mathcal{I}$, and now assume that $Z_{M,N}(\mathcal{I},\mathcal{J}) = X_{M,N}(\mathcal{I},\mathcal{J})$ for some $\mathcal{I} = \{1 \leq I_1 < \cdots < I_k \leq M\}$ such that $\exists\ 1 \leq i \leq k$ with $I_{i-1} < I_i-1$. The left hand sides of \eqref{hecke-1} and \eqref{hecke-2} then match, and we obtain as a result
\begin{align}
Z_{M,N}(\{I_1,\dots,I_{i-1},I_{i}-1,I_{i+1},\dots,I_k\},\mathcal{J})
=
X_{M,N}(\{I_1,\dots,I_{i-1},I_{i}-1,I_{i+1},\dots,I_k\},\mathcal{J}).
\end{align}
We may now iterate this argument to yield the equality $Z_{M,N}(\mathcal{I},\mathcal{J}) = X_{M,N}(\mathcal{I},\mathcal{J})$ for all $\mathcal{I}$. This achieves the proof of \eqref{2=3}.

\end{proof}

\section{Equivalence of $\mathbb{P}_{\rm 6v}(\mathcal{I},\mathcal{J})$ and $\mathbb{P}_{\rm cHL}(\mathcal{I},\mathcal{J})$}
\label{ssec:bbw-type-equiv}

\begin{thm}
\label{thm:2=1}
Fix two integers $M,N \geq 1$ and two sets $\mathcal{I} = \{1 \leq I_1 < \cdots < I_k \leq M\}$ and $\mathcal{J} = \{1 \leq J_1 < \cdots < J_{\ell} \leq N\}$ whose cardinalities satisfy $k+\ell=N$. The following equality of distributions holds:
\begin{align}
\label{2=1}
\mathbb{P}_{\rm 6v}(\mathcal{I},\mathcal{J})
=
\mathbb{P}_{\rm cHL}(\mathcal{I},\mathcal{J}).
\end{align}
\end{thm}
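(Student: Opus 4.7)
The plan is to interpret $\mathbb{P}_{\rm cHL}(\mathcal{I},\mathcal{J})$ as a two-dimensional lattice partition function and then use the Yang--Baxter equation to graphically transform it into the stochastic six-vertex partition function $Z_{M,N}(\mathcal{I},\mathcal{J})=\mathbb{P}_{\rm 6v}(\mathcal{I},\mathcal{J})$. First, by Theorem \ref{thm:f-E} we have $E_{\tilde\mu}(y_M,\dots,y_1)=f_{\mu}(y_1,\dots,y_M)|_{s=0}$, so this non-symmetric Hall--Littlewood factor admits a partition-function representation as an $M$-row semi-infinite lattice in the coloured vertex model of rank $M$ at $s=0$, with a coloured path of colour $i$ entering through the $i$-th left-boundary edge and the composition $\mu$ encoded along the top. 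Each skew factor $Q_{\lambda^{(j-1)}/\lambda^{(j)}}(x_j)$ is a one-row partition function in the rank-$1$ colour-blind vertex model at $s=0$; stacking these $N$ rows on top of the coloured region, with the partitions $\lambda^{(j)}$ encoded as the column-content at the interface, produces a bipartite semi-infinite lattice whose total Boltzmann weight reproduces $\mathbb{W}_{M,N}(\mu,\bm\lambda)$ up to the global Cauchy prefactor $\prod_{i,j}(1-x_iy_j)/(1-qx_iy_j)$.

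Next, the indicator conditions must be converted into boundary conditions. The event $\{z(\mu)=\b{\mathcal{I}}\}$ forces the columns of the coloured region indexed by $\b{\mathcal{I}}$ to be empty at the interface and those indexed by $\mathcal{I}$ to carry exiting coloured paths; by the colour-blindness identities of Proposition \ref{prop:colour-blind} we can sum over the colours of these exiting paths, collapsing the top of the coloured region into a purely colour-blind boundary carrying paths at positions $\mathcal{I}$. The event $\{\zeta(\mu,\bm\lambda)=\mathcal{J}\}$ translates into the requirement that the number of paths on the horizontal line separating partitions $\lambda^{(j-1)}$ and $\lambda^{(j)}$ stays constant precisely for $j\in\mathcal{J}$, which in the partition-function language fixes which rows of the colour-blind stack carry exiting horizontal paths on the right boundary. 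Combined with the Cauchy prefactor, which supplies the stochastic normalization converting the bare weights into their six-vertex counterparts \eqref{six-vert}, the indicator conditions and the bipartite lattice together give a partition function with the correct boundary data on top and right.

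Finally, the Yang--Baxter equation \eqref{YB} is applied iteratively to braid the $N$ horizontal lines with rapidities $x_1,\dots,x_N$ through the $M$ vertical lines with rapidities $y_1,\dots,y_M$, merging the coloured and colour-blind regions into a single $M\times N$ rectangle. Because the left-boundary coloured paths are fully summed out by the colour-blindness reduction performed above, the braided lattice reduces to a standard colour-blind stochastic six-vertex configuration with domain-wall boundary data on the left and bottom, and prescribed exit edges along the top (at positions $\mathcal{I}$) and right (at rows $\mathcal{J}$); this is by definition $Z_{M,N}(\mathcal{I},\mathcal{J})$.

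The main obstacle will be executing the Yang--Baxter braiding while carefully tracking how the two indicator conditions translate into the target boundary data, and synchronizing the colour-blind reduction of the bottom region with the stochastic normalization supplied by the Cauchy prefactor. In particular, one must sequence the Yang--Baxter moves so that the emerging partition function is literally $Z_{M,N}(\mathcal{I},\mathcal{J})$ rather than some gauge-transformed variant, and the semi-infinite (rather than finite) extent of the starting lattice requires a stability argument similar to those used for the infinite-volume commutation relations in Chapter \ref{sec:models}. As in related partition-function proofs of Cauchy-type identities, this bookkeeping is the bulk of the technical work.
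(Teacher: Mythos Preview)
Your approach is essentially the paper's: represent $\mathbb{P}_{\rm cHL}$ via the partition-function formula for $f_\mu|_{s=0}$ stacked with rank-$1$ rows for the skew $Q$'s, freeze the $0$-th column at $s=0$, apply colour-blindness to collapse the coloured region to a product of $P$-skews (equation \eqref{skewPP}), and then identify the resulting refined ascending Hall--Littlewood process \eqref{BBW-match} with $Z_{M,N}(\mathcal{I},\mathcal{J})$; the paper quotes \cite[Theorem~5.5]{BorodinBW} for this last step rather than carrying out the Yang--Baxter zipper itself. One correction to your geometric picture of that final step: in the bipartite lattice both the $x_j$-lines and the $y_i$-lines are \emph{horizontal} (each crossing the same semi-infinite family of bosonic columns), and the Yang--Baxter argument of \cite{WheelerZ,BorodinBW} commutes the $N$ horizontal $x$-lines through the $M$ horizontal $y$-lines, generating a fundamental $R$-vertex at each of the $MN$ crossings---it is these $R$-vertices that assemble into $Z_{M,N}$, not a braid of horizontal lines through pre-existing vertical ones.
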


\begin{proof}
Our starting point will be the Cauchy summation formula \eqref{fG-cauchy}, using slightly different conventions for the alphabets and their cardinalities. We write
\begin{align}
\label{eta}
\eta(s)
&:=
q^{MN}
\sum_{\mu} 
f_{\mu}(y_1,\dots,y_M)
G^{\bullet}_{\mu^{+}}(x_1,\dots,x_N)
=
\frac{(s^2;q)_M}{\prod_{j=1}^{M} (1-sy_j)}
\prod_{i=1}^{N}
\prod_{j=1}^{M}
\frac{1-q x_i y_j}{1- x_i y_j}.
\end{align}
Recalling the definitions \eqref{z-mu}, \eqref{zeta-mu} of $z(\mu)$ and 
$\zeta(\mu,\bm{\lambda})$, we now refine the summation \eqref{eta} by defining
\begin{align}
\label{eta'}
\eta(\mathcal{I},\mathcal{J};s)
:=
q^{MN}
\sum_{\mu}
\sum_{\bm{\lambda}}
f_{\mu}(y_1,\dots,y_M)
G^{\bullet}_{\mu^{+}/\lambda^{(1)}}(x_1)
\cdot
\prod_{j=2}^{N}
G^{\bullet}_{\lambda^{(j-1)}/\lambda^{(j)}}(x_j)
\cdot
\bm{1}_{z(\mu)=\b{\mathcal{I}}}
\cdot
\bm{1}_{\zeta(\mu,\bm{\lambda}) = \mathcal{J}},
\end{align}
where the second sum is over all Gelfand--Tsetlin patterns $\bm{\lambda} = \lambda^{(1)} \succ \cdots \succ \lambda^{(N-1)} \succ \varnothing$, $f_{\mu}$ is given by \eqref{f-def} as usual, and the one-variable functions $G^{\bullet}_{\lambda^{(j-1)}/\lambda^{(j)}}$ are given by \eqref{G}. Our interest in the quantities \eqref{eta} and \eqref{eta'} comes from the fact that, when $s=0$, one has
\begin{align}
\label{eta=PcHL}
\frac{\eta(\mathcal{I},\mathcal{J};0)}{\eta(0)}
=
\mathbb{P}_{\rm cHL}(\mathcal{I},\mathcal{J}),
\end{align}
which is manifest from the definition \eqref{asc-HL}, \eqref{PcHL} of $\mathbb{P}_{\rm cHL}(\mathcal{I},\mathcal{J})$ and the reductive properties of $f_{\mu}$ and $G^{\bullet}_{\lambda^{(j-1)}/\lambda^{(j)}}$ at $s=0$; \cf\ Theorem \ref{thm:f-E} for the former, and Remark \ref{rmk:4.10} and \cite[Section 8.1]{Borodin} for the latter.

Let us now separately analyze the two sums appearing in \eqref{eta'}. In what follows, we take $\kappa = (\kappa_1 \geq \cdots \geq \kappa_k \geq 1)$ to be a fixed partition of length $k$. We have the relation
\begin{align}
\label{f'1}
\sum_{\mu: \mu^{+} = \kappa}
\left(
\bm{1}_{z(\mu)=\b{\mathcal{I}}}
\right)
f_{\mu}(y_1,\dots,y_M)
=
\tikz{0.7}{
\foreach\y in {1,...,5}{
\draw[lgray,line width=1.5pt,->] (1,\y) -- (8,\y);
}
\foreach\x in {2,...,7}{
\draw[lgray,line width=4pt,->] (\x,0) -- (\x,6);
}
\node[left] at (0.3,1) {$y_1 \rightarrow$};
\node[left] at (0.3,2) {$y_2 \rightarrow$};
\node[left] at (0.3,3) {$\vdots$};
\node[left] at (0.3,4) {$\vdots$};
\node[left] at (0.3,5) {$y_M \rightarrow$};
\node[below] at (7,0) {$\cdots$};
\node[below] at (6,0) {$\cdots$};
\node[below] at (5,0) {$\cdots$};
\node[below] at (4,0) {\footnotesize$\bm{0}$};
\node[below] at (3,0) {\footnotesize$\bm{0}$};
\node[below] at (2,0) {\footnotesize$\bm{0}$};
\node[above] at (5,6) {$\overbrace{\phantom{........................}}^{\substack{
\text{colours}\ \mathcal{I} 
\\ 
\text{at coordinates}\ \kappa}}$};
\node[above] at (2,6) {$\overbrace{}^{\text{colours}\ \b{\mathcal{I}}}$};
\node[right] at (8,1) {\tiny $0$};
\node[right] at (8,2) {\tiny $0$};
\node[right] at (8,3) {\tiny $\vdots$};
\node[right] at (8,4) {\tiny $\vdots$};
\node[right] at (8,5) {\tiny $0$};
\node[left] at (1,1) {\tiny $1$};
\node[left] at (1,2) {\tiny $2$};
\node[left] at (1,3) {\tiny $\vdots$};
\node[left] at (1,4) {\tiny $\vdots$};
\node[left] at (1,5) {\tiny $M$};
}
\end{align}
in which the colours $\b{\mathcal{I}} = \{\b{I}_1,\dots,\b{I}_{M-k}\}$ leave via the top of the 0-th column, while the remaining $k$ colours $\mathcal{I} = \{I_1,\dots,I_k\}$ exit the lattice with collective horizontal coordinates $(\kappa_1 \geq \cdots \geq \kappa_k)$, but with a sum taken over all possible ways of distributing the colours over those sites. If we now set $s=0$, we find that the configuration of the $0$-th column in \eqref{f'1} becomes completely frozen: to see this, note that any colour $i \in \mathcal{I}$ (which enters the $0$-th column via the $i$-th left edge of the column) does not exit the column via its top edge. It must therefore leave the column via one of the $i$-th, $(i+1)$-th, \dots, or $M$-th right edges. However, we can also rule out the possibility that it leaves the column via the $j$-th right edge, for $i+1 \leq j \leq M$, since this would give rise to the vertex $\tikz{0.4}{
\draw[lgray,line width=1.5pt,->] (-1,0) -- (1,0);
\draw[lgray,line width=3pt,->] (0,-1) -- (0,1);
\node[left] at (-1,0) {\tiny $j$};\node[right] at (1,0) {\tiny $i$};
\node[below] at (0,-1) {\tiny $\bm{A}$};\node[above] at (0,1) 
{\tiny $\bm{A}^{+-}_{ji}$};
}$
for some state $\bm{A} \in \mathbb{N}^n$, which has a vanishing Boltzmann weight at $s=0$. We conclude that the colours $\mathcal{I}$ must each propagate horizontally straight across the $0$-th column, while the colours $\b{\mathcal{I}}$ exit via the top of the column; the weight of the whole column (recall that $s$ is set to $0$) can be computed as $\prod_{i \in \mathcal{I}} y_i$. Suppressing the frozen $0$-th column and drawing the partition function from its 1st column onwards, we conclude that
\begin{align}
\label{f'2}
\sum_{\mu: \mu^{+} = \kappa}
\left(
\bm{1}_{z(\mu)=\b{\mathcal{I}}}
\right)
f_{\mu}(y_1,\dots,y_M)
\Big|_{s=0}
=
\prod_{i \in \mathcal{I}} y_i
\cdot
\tikz{0.7}{
\foreach\y in {1,...,5}{
\draw[lgray,line width=1.5pt,->] (2,\y) -- (8,\y);
}
\foreach\x in {3,...,7}{
\draw[lgray,line width=4pt,->] (\x,0) -- (\x,6);
}
\node[left] at (0.1,1) {$y_1 \rightarrow$};
\node[left] at (0.1,2) {$y_2 \rightarrow$};
\node[left] at (0.1,3) {$\vdots$};
\node[left] at (0.1,4) {$\vdots$};
\node[left] at (0.1,5) {$y_M \rightarrow$};
\node[below] at (7,0) {$\cdots$};
\node[below] at (6,0) {$\cdots$};
\node[below] at (5,0) {\footnotesize$\bm{0}$};
\node[below] at (4,0) {\footnotesize$\bm{0}$};
\node[below] at (3,0) {\footnotesize$\bm{0}$};
\node[above] at (5,6) {$\overbrace{\phantom{........................}}^{\substack{
\text{colours}\ \mathcal{I} 
\\ 
\text{at coordinates}\ \kappa}}$};
\node[right] at (8,1) {\tiny $0$};
\node[right] at (8,2) {\tiny $0$};
\node[right] at (8,3) {\tiny $\vdots$};
\node[right] at (8,4) {\tiny $\vdots$};
\node[right] at (8,5) {\tiny $0$};
\node[left] at (2,1) {\tiny $1 \cdot \bm{1}_{1 \in \mathcal{I}}$};
\node[left] at (2,2) {\tiny $2 \cdot \bm{1}_{2 \in \mathcal{I}}$};
\node[left] at (2,3) {$\vdots$};
\node[left] at (2,4) {$\vdots$};
\node[left] at (2,5) {\tiny $M \cdot \bm{1}_{M \in \mathcal{I}}$};
}
\end{align}
where the state that enters the left edge of the $i$-th row is now $i$ if $i \in \mathcal{I}$, and $0$ otherwise.
Because the states at the top of the lattice \eqref{f'2} are summed over all ways of distributing the colours $\mathcal{I}$ over the sites $(\kappa_1 \geq \cdots \geq \kappa_k)$, we can now invoke the colour-blindness results outlined in Section \ref{ssec:colour-blind}; namely, we can replace each of the vertices in the lattice \eqref{f'2} by their rank-1 counterparts \eqref{rank1-weights} evaluated at $s=0$. This yields the expression
\begin{align*}
\sum_{\mu: \mu^{+} = \kappa}
\left(
\bm{1}_{z(\mu)=\b{\mathcal{I}}}
\right)
f_{\mu}(y_1,\dots,y_M)
\Big|_{s=0}
=
\prod_{i \in \mathcal{I}} y_i
\cdot
\tikz{0.7}{
\foreach\y in {1,...,5}{
\draw[lgray,line width=1.5pt,->] (2,\y) -- (8,\y);
}
\foreach\x in {3,...,7}{
\draw[lgray,line width=4pt,->] (\x,0) -- (\x,6);
}
\foreach\y in {1,...,5}{
\foreach\x in {3,...,7}{
\node at (\x,\y) {$\bullet$};
}}
\node[left] at (0.9,1) {$y_1 \rightarrow$};
\node[left] at (0.9,2) {$y_2 \rightarrow$};
\node[left] at (0.9,3) {$\vdots$};
\node[left] at (0.9,4) {$\vdots$};
\node[left] at (0.9,5) {$y_M \rightarrow$};
\node[below] at (7,0) {$\cdots$};
\node[below] at (6,0) {$\cdots$};
\node[below] at (5,0) {\tiny $0$};
\node[below] at (4,0) {\tiny $0$};
\node[below] at (3,0) {\tiny $0$};
\node[above] at (7,6) {$\cdots$};
\node[above] at (6,6) {$\cdots$};
\node[above] at (5,6) {\tiny $m_3$};
\node[above] at (4,6) {\tiny $m_2$};
\node[above] at (3,6) {\tiny $m_1$};
\node[right] at (8,1) {\tiny $0$};
\node[right] at (8,2) {\tiny $0$};
\node[right] at (8,3) {\tiny $\vdots$};
\node[right] at (8,4) {\tiny $\vdots$};
\node[right] at (8,5) {\tiny $0$};
\node[left] at (2,1) {\tiny $\bm{1}_{1 \in \mathcal{I}}$};
\node[left] at (2,2) {\tiny $\bm{1}_{2 \in \mathcal{I}}$};
\node[left] at (2,3) {$\vdots$};
\node[left] at (2,4) {$\vdots$};
\node[left] at (2,5) {\tiny $\bm{1}_{M \in \mathcal{I}}$};
}
\end{align*}
where the left incoming state of the $i$-th row is either 1 or 0, depending on whether $i \in \mathcal{I}$ or not, and $m_j \equiv m_j(\kappa)$ are the multiplicities of $\kappa$. The partition function that we have arrived at has a known form; it can be expressed as a sum over a product of one-variable skew Hall--Littlewood polynomials, \cf\ \cite[Section 8.1]{Borodin}:
\begin{align}
\label{skewPP}
\sum_{\mu: \mu^{+} = \kappa}
\left(
\bm{1}_{z(\mu)=\b{\mathcal{I}}}
\right)
f_{\mu}(y_1,\dots,y_M)
\Big|_{s=0}
=
\sum_{\bm{\nu}}
\prod_{i=1}^{M-1}
P_{\nu^{(i)} / \nu^{(i-1)}}(y_i)
\cdot
P_{\kappa / \nu^{(M-1)}}(y_M)
\cdot
\bm{1}_{\zeta(\kappa,\bm{\nu}) = \b{\mathcal{I}}},
\end{align}
where the sum is taken over all Gelfand--Tsetlin patterns 
$\bm{\nu} = \nu^{(M-1)} \succ \cdots \succ \nu^{(1)} \succ \varnothing$ of length $M-1$, with
\begin{align}
\zeta(\kappa,\bm{\nu})
=
\{ 1 \leq i \leq M:
\ell(\nu^{(i)}) - \ell(\nu^{(i-1)})
=
0 \},
\qquad
\nu^{(M)} \equiv \kappa,
\qquad
\nu^{(0)} \equiv \varnothing.
\end{align}
On the other hand, the $s=0$ case of the second sum in \eqref{eta'} reads
\begin{multline}
\label{skewQQ}
q^{MN}\
\sum_{\bm{\lambda}}
G^{\bullet}_{\kappa/\lambda^{(1)}}(x_1)
\cdot
\prod_{j=2}^{N}
G^{\bullet}_{\lambda^{(j-1)}/\lambda^{(j)}}(x_j)
\cdot
\bm{1}_{\zeta(\kappa,\bm{\lambda}) = \mathcal{J}}
\Big|_{s=0}
\\
=
\sum_{\bm{\lambda}}
Q_{\kappa/\lambda^{(1)}}(x_1)
\cdot
\prod_{j=2}^{N}
Q_{\lambda^{(j-1)}/\lambda^{(j)}}(x_j)
\cdot
\bm{1}_{\zeta(\kappa,\bm{\lambda}) = \mathcal{J}}.
\end{multline}
Combining equations \eqref{skewPP} and \eqref{skewQQ}, we have shown that
\begin{multline}
\label{BBW-match}
\frac{\eta(\mathcal{I},\mathcal{J};0)}{\eta(0)}
=
\prod_{i=1}^{N}
\prod_{j=1}^{M}
\frac{1-x_i y_j}{1-q x_i y_j}
\cdot
\sum_{\kappa}
\sum_{\bm{\nu}}
\sum_{\bm{\lambda}} 
\left(
\bm{1}_{\zeta(\kappa,\bm{\nu}) = \b{\mathcal{I}}}
\right)
\cdot
\left(
\bm{1}_{\zeta(\kappa,\bm{\lambda}) = \mathcal{J}}
\right)
\\
\times
\prod_{i=1}^{M-1}
P_{\nu^{(i)} / \nu^{(i-1)}}(y_i)
\cdot
P_{\kappa / \nu^{(M-1)}}(y_M)
\cdot
Q_{\kappa/\lambda^{(1)}}(x_1)
\cdot
\prod_{j=2}^{N}
Q_{\lambda^{(j-1)}/\lambda^{(j)}}(x_j),
\end{multline}
with the sums taken over $\bm{\nu} = \nu^{(M-1)} \succ \cdots \succ \nu^{(1)} \succ \varnothing$ and $\lambda^{(1)} \succ \cdots \succ \lambda^{(N-1)} \succ \varnothing$. The refined summation thus obtained has been studied in \cite{BorodinBW}, where it was shown that such quantities are given by partition functions of the six-vertex model in the quadrant. The technique for proving this correspondence comes from the work of \cite{WheelerZ}, and exploits the Yang--Baxter integrability of the $q$-boson model. We will not repeat these arguments here, but instead quote Theorem 5.5 of \cite{BorodinBW}, which tells us immediately that \eqref{BBW-match} translates into
\begin{align}
\label{eta=Z}
\frac{\eta(\mathcal{I},\mathcal{J};0)}{\eta(0)}
=
Z_{M,N}(\mathcal{I},\mathcal{J}).
\end{align}
Matching the right hand sides of \eqref{eta=PcHL} and \eqref{eta=Z} now yields the claim, \eqref{2=1}.
\end{proof}

\chapter{Alternative proof of Theorem \ref{thm:2=1} via Cherednik--Dunkl operators}
\label{sec:simpler-match}

The purpose of this chapter is to outline another way of proving Theorem \ref{thm:2=1}. We do this in several stages, firstly introducing less refined versions of the distributions $\mathbb{P}_{\rm 6v}$ and $\mathbb{P}_{\rm cHL}$ that depend on only one random subset $\mathcal{I}$, and then showing (via the theory of Cherednik--Dunkl operators) that these distributions match. Finally, we illustrate how to restore the second random subset $\mathcal{J}$ via the action of first-order Macdonald difference operators, thus recovering the result of Theorem \ref{thm:2=1}. 

This alternative proof is technically more involved than the previous one, but we include it for two reasons: {\bf 1.} This is how we discovered the matching \eqref{2=1} initially; and {\bf 2.} We expect that this approach will be useful for extracting information for the joint distribution of colours under several specified lattice locations (as opposed to the one-point situation, $(M,N)$, that we deal with in the current work).

\section{The distribution $\mathbb{P}_{\rm 6v}(\mathcal{I})$}
\index{P11@$\mathbb{P}_{\rm 6v}(\mathcal{I})$}

Consider the stochastic six-vertex model in the $M \times N$ quadrant as previously, with a path entering via each of the left external edges $(0,j) \rightarrow (1,j)$, $1 \leq j \leq N$, and no paths entering via the bottom external edges $(i,0) \rightarrow (i,1)$, $1 \leq i \leq M$. The $j$-th horizontal line (counted from the top) carries rapidity $x_j$ and the $i$-th vertical line carries rapidity $y_i$, in the same way as in Section \ref{ssec:Z_MN}. Let $\mathcal{I} \subset \{1,\dots,M\}$ be any subset of $\{1,\dots,M\}$, whose cardinality we do not need to specify, and write $\b{\mathcal{I}} = \{1,\dots,M\} \backslash \mathcal{I}$ for its complement. We define the following probability distribution:
\begin{align}
\label{6v-I}
\mathbb{P}_{\rm 6v}(\mathcal{I})
=
{\rm Prob}
\Big\{
\text{there is no path at the outgoing vertical edge}\ (i,N) \rightarrow (i,N+1),\ \forall\ i \in \b{\mathcal{I}}
\Big\},
\end{align}
where a random configuration in the quadrant is sampled with probability given by the product of its vertex weights. Unlike $\mathbb{P}_{\rm 6v}(\mathcal{I},\mathcal{J})$, the distribution $\mathbb{P}_{\rm 6v}(\mathcal{I})$ is not concerned with paths that leave the lattice via right external edges $(M,j) \rightarrow (M+1,j)$, $1 \leq j \leq N$. It can thus be obtained from the more general distribution $\mathbb{P}_{\rm 6v}(\mathcal{I},\mathcal{J})$ as follows:
\begin{align}
\label{6v-I.2}
\mathbb{P}_{\rm 6v}(\mathcal{I})
=
\sum_{
\substack{
\mathcal{I}' \subset \mathcal{I},
\mathcal{J} \subset \{1,\dots,N\}
\\ \vspace{-0.05cm} \\
|\mathcal{I}'| +|\mathcal{J}| = N 
}}
\mathbb{P}_{\rm 6v}(\mathcal{I}',\mathcal{J}),
\end{align}
where the sum is taken over all $\mathcal{J} \subset \{1,\dots,N\}$ and $\mathcal{I}' \subset \{1,\dots,M\}$, such that $|\mathcal{I}'|+|\mathcal{J}| = N$ and $\mathcal{I}' \subset \mathcal{I}$. Therefore, any previous results concerning $\mathbb{P}_{\rm 6v}(\mathcal{I},\mathcal{J})$ can be translated to results for $\mathbb{P}_{\rm 6v}(\mathcal{I})$, by summing appropriately.

Note that, by virtue of the correspondence \eqref{BBW-match}, \eqref{eta=Z} described at the end of Section \ref{ssec:bbw-type-equiv}, \eqref{6v-I.2} also has an interpretation purely at the level of (skew) Hall--Littlewood polynomials. Substituting $\mathbb{P}_{\rm 6v}(\mathcal{I}',\mathcal{J}) = \eta(\mathcal{I}',\mathcal{J};0)/\eta(0)$ into \eqref{6v-I.2} and computing directly the sum over $\mathcal{I}'$ and $\mathcal{J}$, one has
\begin{align}
\label{P-6v-sym}
\mathbb{P}_{\rm 6v}(\mathcal{I})
=
\prod_{i=1}^{N}
\prod_{j=1}^{M}
\frac{1-x_i y_j}{1-q x_i y_j}
\cdot
\sum_{\kappa}
\sum_{\bm{\nu}}
\bm{1}_{\zeta(\kappa,\bm{\nu}) \supset \b{\mathcal{I}}}
\cdot
\prod_{i=1}^{M-1}
P_{\nu^{(i)} / \nu^{(i-1)}}(y_i)
\cdot
P_{\kappa / \nu^{(M-1)}}(y_M)
\cdot
Q_{\kappa}(x_1,\dots,x_N),
\end{align}
where the two sums are taken over all partitions $\kappa$ and all length-$(M-1)$ Gelfand--Tsetlin patterns $\bm{\nu} = \nu^{(M-1)} \succ \cdots \succ \nu^{(1)} \succ \nu^{(0)} \equiv \varnothing$, respectively.

\section{The distribution $\mathbb{P}_{\rm cHL}(\mathcal{I})$}
\index{P33@$\mathbb{P}_{\rm cHL}(\mathcal{I})$}

We shall also require a less refined version of the coloured Hall--Littlewood measure \eqref{asc-HL}, associated to compositions $\mu$ (without an attached Gelfand--Tsetlin pattern $\bm{\lambda}$):
\begin{align}
\label{W-MN-mu}
\mathbb{W}_{M,N}(\mu)
\index{W@$\mathbb{W}_{M,N}(\mu)$; coloured Hall--Littlewood measure}
=
E_{\tilde{\mu}}(y_M,\dots,y_1)
Q_{\mu^{+}}(x_1,\dots,x_N)
\cdot
\prod_{i=1}^{N}
\prod_{j=1}^{M}
\frac{1-x_i y_j}{1-q x_i y_j},
\qquad
\tilde{\mu} = (\mu_M,\dots,\mu_1),
\end{align}
which satisfies the sum-to-unity condition $\sum_{\mu} \mathbb{W}_{M,N}(\mu) = 1$. From this measure we define
\begin{align}
\label{cHL-I}
\mathbb{P}_{\rm cHL}(\mathcal{I})
=
{\rm Prob}\{
\text{the random composition}\ \mu\ \text{satisfies}\ \mu_i=0,\ \forall\ i \in \b{\mathcal{I}}
\},
\end{align}
where a random composition $\mu$ is chosen with probability $\mathbb{W}_{M,N}(\mu)$. In contrast to $\mathbb{P}_{\rm cHL}(\mathcal{I},\mathcal{J})$, the distribution $\mathbb{P}_{\rm cHL}(\mathcal{I})$ contains no information about lengths of partitions in an underlying Gelfand--Tsetlin pattern, since the latter object has effectively been summed away. Indeed, one has
\begin{align}
\label{P-cHL-sym}
\mathbb{P}_{\rm cHL}(\mathcal{I})
=
\prod_{i=1}^{N}
\prod_{j=1}^{M}
\frac{1-x_i y_j}{1-q x_i y_j}
\cdot
\sum_{\mu}
E_{\tilde{\mu}}(y_M,\dots,y_1)
Q_{\mu^{+}}(x_1,\dots,x_N)
\cdot
\bm{1}_{z(\mu) \supset \b{\mathcal{I}}},
\end{align}
and writing $Q_{\mu^{+}}(x_1,\dots,x_N)$ as a sum of products of skew one-variable Hall--Littlewood polynomials, one finds that
\begin{align}
\mathbb{P}_{\rm cHL}(\mathcal{I})
&=
\sum_{
\substack{
\mathcal{I}' \subset \mathcal{I},
\mathcal{J} \subset \{1,\dots,N\}
\\ \vspace{-0.05cm} \\
|\mathcal{I}'| +|\mathcal{J}| = N 
}}
\left(
\sum_{\mu}
\sum_{\bm{\lambda}}
\mathbb{W}_{M,N}
(\mu,\bm{\lambda})
\cdot
\bm{1}_{z(\mu)=\{1,\dots,M\}  \backslash \mathcal{I}'}
\cdot
\bm{1}_{\zeta(\mu,\bm{\lambda}) = \mathcal{J}}
\right)
\nonumber
\\
&=
\sum_{
\substack{
\mathcal{I}' \subset \mathcal{I},
\mathcal{J} \subset \{1,\dots,N\}
\\ \vspace{-0.05cm} \\
|\mathcal{I}'| +|\mathcal{J}| = N 
}}
\mathbb{P}_{\rm cHL}(\mathcal{I}',\mathcal{J}),
\label{cHL-I.2}
\end{align}
where the sum is again taken over all $\mathcal{J} \subset \{1,\dots,N\}$ and $\mathcal{I}' \subset \{1,\dots,M\}$, such that $|\mathcal{I}'|+|\mathcal{J}| = N$ and $\mathcal{I}' \subset \mathcal{I}$.

\section{A simpler match of distributions}

Our aim, in what follows, will be to show that
\begin{align}
\label{simpler-match}
\mathbb{P}_{\rm 6v}(\mathcal{I})
=
\mathbb{P}_{\rm cHL}(\mathcal{I}),
\qquad
\forall\ \mathcal{I} \subset \{1,\dots,M\}.
\end{align}
Note that \eqref{simpler-match} is already implied by Theorem \ref{thm:2=1} and the equations \eqref{6v-I.2}, \eqref{cHL-I.2}; however we will seek a different proof that takes us through the general theory of Cherednik--Dunkl operators \eqref{eq:Yi}.

\section{Setting up notations}

The main ingredient of our alternative proof will be the action of the Cherednik--Dunkl operators on the Cauchy reproducing kernel for Macdonald polynomials. Results in this direction were previously obtained by Kirillov--Noumi \cite{KirillovN}, and we will refer to that paper for a number of formulae that we require. Ultimately we will be interested in the Hall--Littlewood specialization $p=0$, but the results of \cite{KirillovN} apply for generic $p$, so we will preserve this parameter for as long as possible.

We will make frequent use of the Macdonald reproducing kernel  
\begin{align}
\label{pi-xy}
\Pi(x;y)
\index{aP@$\Pi(x;y)$; Cauchy reproducing kernel}
:=
\prod_{i=1}^{N}
\prod_{j=1}^{M}
\frac{(qx_i y_j;p)}{(x_i y_j;p)},
\qquad
(x;p)
:=
\prod_{k=0}^{\infty}
(1-p^k x),
\end{align}
as well as Cherednik--Dunkl operators which act on the alphabet $(y_1,\dots,y_M)$:
\begin{align}
\label{Y-tilde}
\tilde{Y}_i
\index{Y@$\tilde{Y}_i$; Cherednik--Dunkl operators}
:=
q^{i-1} \tilde{T}_i\cdots \tilde{T}_{M-1} \omega 
\tilde{T}_{1}^{-1} \cdots \tilde{T}_{i-1}^{-1},
\qquad
1 \leq i \leq M,
\end{align}
where
\begin{align*}
\tilde{T}_i
\index{T@$\tilde{T}_i$; reversed-alphabet Hecke generator}
=
q - \frac{qy_i-y_{i+1}}{y_i-y_{i+1}} (1-\mathfrak{s}_i),
\qquad
\omega
=
\mathfrak{s}_{M-1} \dots \mathfrak{s}_1 \tau_1.
\end{align*}
All operators $\mathfrak{s}_i$ and $\tau_1$ are defined as previously, but are now assumed to act on the variables $(y_1,\dots,y_M)$. Note that the operators $\tilde{Y}_i$ also differ from their previous incarnations \eqref{eq:Yi} by an overall factor of $q^{i-1}$. They have the following eigenaction on the non-symmetric Macdonald polynomials:
\index{am@$\langle \mu \rangle_i$; Cherednik--Dunkl eigenvalues}
\begin{align*}
\tilde{Y}_i
E_{\mu}(y_1,\dots,y_M;p,q)
=
\langle \mu \rangle_i
E_{\mu}(y_1,\dots,y_M;p,q),
\qquad
\langle \mu \rangle_i
=
p^{\mu_i} q^{\rho_i(\mu)+M-1},
\end{align*}
where $\mu = (\mu_1,\dots,\mu_M)$ is a composition of length $M$ and 
$\rho_i(\mu)$ is given by equation \eqref{rhomu}.

\section{Expectations from the action of $\tilde{Y}_i$ on the Cauchy kernel}

Let us consider a non-symmetric Macdonald measure
on compositions $\mu \in \mathbb{N}^M$, defined by
\begin{align}
\label{nsM}
\mathbb{P}_{\rm nsM}(\mu)
\index{P@$\mathbb{P}_{\rm nsM}(\mu)$; non-symmetric Macdonald measure}
:=
\frac{1}{\Pi(x;y)}
\cdot
d_{\mu}(p,q)
E_{\mu}(y_1,\dots,y_M;p,q)
Q_{\mu^{+}}(x_1,\dots,x_N;p,q),
\end{align}
where $Q_{\mu^{+}}(x_1,\dots,x_N;p,q)$ denotes a symmetric Macdonald $Q$-polynomial (we follow the same conventions as in \cite[Chapter VI]{Macdonald}) and $d_{\mu}(p,q)$ is a simple, combinatorially defined constant (it can be found, for example, in \cite[Lemma 2.4]{Marshall}).
The precise form of $d_{\mu}(p,q)$ will not be relevant to us, but let us note that it satisfies $d_{\mu}(0,q) = 1$ for all compositions $\mu$. For a fixed partition $\lambda = (\lambda_1 \geq \cdots \geq \lambda_M \geq 0)$ one has
\begin{align*}
\sum_{\mu: \mu^{+} = \lambda} 
d_{\mu}(p,q) E_{\mu}(y_1,\dots,y_M;p,q) 
= 
P_{\lambda}(y_1,\dots,y_M;p,q),
\end{align*} 
where $P_{\lambda}(y_1,\dots,y_M;p,q)$ denotes a symmetric Macdonald $P$-polynomial, and we conclude that 
\begin{align}
\label{nsM-sum}
\sum_{\mu} \mathbb{P}_{\rm nsM}(\mu)
=
\frac{1}{\Pi(x;y)}
\sum_{\lambda}
P_{\lambda}(y_1,\dots,y_M;p,q)
Q_{\lambda}(x_1,\dots,x_N;p,q)
=
1,
\end{align}
by virtue of the Cauchy identity for symmetric Macdonald polynomials \cite[Chapter VI]{Macdonald}. Furthermore, \cf\ \cite{BorodinC}, the weights \eqref{nsM} are nonnegative and the series \eqref{nsM-sum} converges if $x_i,y_j \geq 0$ and $x_i y_j < 1$ for all $1 \leq i \leq N$, $1 \leq j \leq M$. Hence, \eqref{nsM} can be viewed as an honest probability measure on length-$M$ compositions.

\begin{prop}
We have the following expression for the expected value of the observable $\langle \mu \rangle_i$ with respect to the probability measure \eqref{nsM}:
\begin{align}
\label{act-Y_i}
\mathbb{E}_{\rm nsM} \left[ \langle \mu \rangle_i \right]
=
\frac{1}{\Pi(x;y)}
\cdot
\tilde{Y}_i\, \Pi(x;y),
\end{align}
and more generally, for any $1 \leq I_1 \leq \cdots \leq I_k \leq M$,
\begin{align}
\label{act-Y_iii}
\mathbb{E}_{\rm nsM}
\left[ 
\langle \mu \rangle_{I_1}
\cdots
\langle \mu \rangle_{I_k}
\right]
=
\frac{1}{\Pi(x;y)}
\cdot
\tilde{Y}_{I_1} \cdots \tilde{Y}_{I_k}\, \Pi(x;y).
\end{align}
Note that since the operators $\tilde{Y}_{I_j}$ commute, their order on the right hand side of \eqref{act-Y_iii} is inessential.

\end{prop}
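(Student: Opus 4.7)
The plan is to derive both formulas directly from the Cauchy identity expansion of $\Pi(x;y)$, using the fact that the non-symmetric Macdonald polynomials $E_\mu$ are joint eigenfunctions of the Cherednik--Dunkl operators $\tilde{Y}_i$.

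First, I would recall (or deduce, as in the proof of \eqref{nsM-sum}) the non-symmetric form of the Macdonald Cauchy identity, namely
\begin{align*}
\Pi(x;y)
=
\sum_{\mu}
d_{\mu}(p,q)
E_{\mu}(y_1,\dots,y_M;p,q)
Q_{\mu^{+}}(x_1,\dots,x_N;p,q),
\end{align*}
which is obtained from the symmetric Cauchy identity combined with the relation $\sum_{\mu:\mu^+=\lambda} d_\mu(p,q) E_\mu = P_\lambda$. This rewrites the definition \eqref{nsM} as $\mathbb{P}_{\rm nsM}(\mu) = d_{\mu} E_{\mu}(y) Q_{\mu^+}(x)/\Pi(x;y)$.

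Second, I would apply the operator $\tilde{Y}_i$ to the above identity. Since $\tilde{Y}_i$ is a $q$-difference-permutation operator in the alphabet $(y_1,\dots,y_M)$ alone, and since the summand is a termwise product of a function of $y$ and a function of $x$, one can move $\tilde{Y}_i$ inside the sum. Using the eigenrelation $\tilde{Y}_i E_{\mu}(y;p,q) = \langle \mu \rangle_i E_{\mu}(y;p,q)$, this yields
\begin{align*}
\tilde{Y}_i\, \Pi(x;y)
=
\sum_{\mu}
\langle \mu \rangle_i\,
d_{\mu}(p,q)
E_{\mu}(y_1,\dots,y_M;p,q)
Q_{\mu^{+}}(x_1,\dots,x_N;p,q).
\end{align*}
Dividing by $\Pi(x;y)$ and recognizing the non-symmetric Macdonald measure on the right-hand side gives \eqref{act-Y_i} immediately.

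Third, for the multi-point formula \eqref{act-Y_iii}, I would iterate the above argument. Commutativity of the $\tilde{Y}_{I_j}$'s (which is stated just before \eqref{act-Y_iii}, and which follows from the extended Hecke algebra relations for the operators \eqref{eq:Yi} that differ from \eqref{Y-tilde} only by scalar factors and the reversal of the alphabet) ensures that their product acts diagonally on each $E_\mu$ with eigenvalue $\langle \mu \rangle_{I_1}\cdots\langle \mu \rangle_{I_k}$, and the same termwise-differentiation step produces the claim.

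The main obstacle, though largely technical rather than conceptual, is justifying the termwise application of $\tilde{Y}_i$ to the infinite series defining $\Pi(x;y)$. The operator $\tilde{Y}_i$ involves the shift $\tau_1 : y_1 \mapsto p y_1$, so one needs to know that the series expansion of $\Pi(x;y)$ in the summands $d_\mu E_\mu(y) Q_{\mu^+}(x)$ converges absolutely and uniformly on a region stable under $(y_1,\dots,y_M)\mapsto(py_1,y_2,\dots,y_M)$ and under the simple transpositions $\mathfrak{s}_i$. This is guaranteed in the range $x_i,y_j\geq 0$, $x_iy_j<1$ used for \eqref{nsM-sum}, possibly after shrinking the parameters slightly so that $p y_j$ remains in the convergence region; the general case then follows by analytic continuation in the parameters, since both sides of \eqref{act-Y_iii} are rational in $(x_i,y_j)$.
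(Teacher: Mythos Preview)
Your proof is correct and follows essentially the same approach as the paper: expand $\Pi(x;y)$ in its de-symmetrized Cauchy form, apply $\tilde{Y}_i$ termwise using the eigenrelation for $E_\mu$, and iterate for the multi-point case. The paper handles convergence with a one-line parenthetical remark that $|x_i y_j|<1$ suffices, so your more careful discussion of the termwise action is if anything more thorough than what the paper provides.
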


\begin{proof}
Writing the Cauchy kernel \eqref{pi-xy} in de-symmetrized form
\begin{align*}
\Pi(x;y)
=
\sum_{\mu}
d_{\mu}(p,q) E_{\mu}(y_1,\dots,y_M;p,q)
Q_{\mu^{+}}(x_1,\dots,x_N;p,q)
\end{align*}
and then applying $\tilde{Y}_i$, we obtain
\begin{align}
\tilde{Y}_i \,
\Pi(x;y)
=
\sum_{\mu}
\langle \mu \rangle_i
d_{\mu}(p,q) E_{\mu}(y_1,\dots,y_M;p,q)
Q_{\mu^{+}}(x_1,\dots,x_N;p,q)
=
\sum_{\mu}
\langle \mu \rangle_i
\Pi(x;y)
\mathbb{P}_{\rm nsM}(\mu).
\end{align}
The result \eqref{act-Y_i} follows, while \eqref{act-Y_iii} is just repeated applications of the above reasoning (it is easy to see that convergence of the series causes no issues as long as $|x_i y_j| < 1$ for all $1 \leq i \leq N$ and $1 \leq j \leq M$).
\end{proof}

\section{Integral expression for expectations}

We now use \cite[Lemma 6.2]{KirillovN} to evaluate the right hand side of equation \eqref{act-Y_iii}. With the conventions $\mathcal{I} = \{I_1 < \cdots < I_k\}$, $\b{\mathcal{I}} = \{1,\dots,M\} \backslash \mathcal{I}$, it reads
\begin{align}
\label{HK-sum}
\frac{1}{\Pi(x;y)}
\cdot
\tilde{Y}_{I_1} \cdots \tilde{Y}_{I_k} \Pi(x;y)
=
\sum_{m=0}^{k}\ 
\sum_{\substack{
\mathcal{H} \subset \mathcal{I}, \mathcal{K} \subset \{1,\dots,N\} 
\\ \vspace{-0.06cm} \\
|\mathcal{H}| = |\mathcal{K}| = m
}}
q^{-\ell(\b{\mathcal{I}}; \mathcal{H}) + (M-N)k - ||\mathcal{I}||}
\cdot
a_{\mathcal{K}}(x)
\cdot
h^{\mathcal{H}}_{\mathcal{K}}(x;y),
\end{align}
with the second sum taken over all sets $\mathcal{H} = \{H_1 < \cdots < H_m\}$ and 
$\mathcal{K} = \{K_1< \cdots < K_m\}$ whose cardinalities satisfy $|\mathcal{H}| = |\mathcal{K}| = m$, and where we have defined
\begin{align*}
h^{\mathcal{H}}_{\mathcal{K}}(x;y)
=
\sum_{\sigma \in \mathfrak{S}_m}
\left[
\prod_{i=1}^{m}
\left(
\frac{q-1}{1-q x_{K_{\sigma(i)}} y_{H_i}}
\prod_{j = H_i+1}^{M}
\frac{q(1- x_{K_{\sigma(i)}} y_j)}{1- q x_{K_{\sigma(i)}} y_j}
\right)
\prod_{1\leq i<j \leq m}
\frac{q x_{K_{\sigma(i)}}-x_{K_{\sigma(j)}}}
{x_{K_{\sigma(i)}}-x_{K_{\sigma(j)}}}
\right]
\end{align*}
as in \cite[Equation (4.2)]{KirillovN}, and
\begin{align*}
a_{\mathcal{K}}(x)
=
\prod_{i \in \mathcal{K}, j \not\in \mathcal{K}}
\frac{q x_i - x_j}{x_i - x_j},
\qquad
\text{\cite[Equation (5.4)]{KirillovN}},
\end{align*}
\begin{align*}
\ell(\mathcal{A};\mathcal{B}) = \#\{ (i,j) \in \mathcal{A} \times \mathcal{B}: i>j\},
\qquad
\text{\cite[Equation (6.18)]{KirillovN}},
\end{align*}
as well as $||\mathcal{I}|| = \sum_{i=1}^{k} I_i$.

\begin{prop}
There holds
\begin{multline}
\label{Y_iii-integral}
\frac{1}{\Pi(x;y)}
\cdot
\tilde{Y}_{I_1} \cdots \tilde{Y}_{I_k} \Pi(x;y)
=
\frac{q^{\frac{k(k-1)}{2}+(M-N)k - ||\mathcal{I}||}}{(2\pi \sqrt{-1})^k}
\\
\times
\oint_{C_1} \cdots \oint_{C_k}
\prod_{1\leq \alpha < \beta \leq k}
\frac{w_{\alpha}-w_{\beta}}{w_{\alpha}-qw_{\beta}}
\prod_{\ell=1}^{k}
\left(
\frac{1}{1-q y_{I_{\ell}} w_{\ell}}
\cdot
\prod_{j=I_{\ell}+1}^{M}
\frac{1-y_j w_{\ell}}{1-q y_j w_{\ell}}
\cdot
\prod_{r=1}^{N}
\frac{x_r-qw_{\ell}}{x_r-w_{\ell}}
\right)
\frac{dw_{\ell}}{w_{\ell}}
\end{multline}
where for each $1 \leq i \leq k$, the (positively oriented) contour $C_i$ consists of two disjoint pieces: a small circle $c[x]$ around the set $\{x_r\}_{r=1}^{N}$, and a small circle $c[i]$ around $0$ such that $q \cdot c[i] \subset c[i] \subset q \cdot c[i+1]$ (see Figure \ref{fig:two-pieces}).
\end{prop}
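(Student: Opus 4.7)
The plan is to derive the contour integral formula \eqref{Y_iii-integral} directly from the Kirillov--Noumi sum \eqref{HK-sum} by an iterated residue calculation: I will set up the $k$-fold integral so that, when evaluated by residues, it reproduces each term of \eqref{HK-sum} indexed by a pair of subsets $(\mathcal{H},\mathcal{K})$, and I will verify the matching of combinatorial coefficients.

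First I would decompose each contour as $C_\ell = c[x] \sqcup c[\ell]$, so that the $k$-fold integral splits into $2^k$ pieces indexed by subsets $S \subset \{1,\dots,k\}$: for $\ell \in S$ the variable $w_\ell$ is integrated around $c[x]$, while for $\ell \notin S$ it is integrated around the nested circle $c[\ell]$ about $0$. Under the identification $\mathcal{H} = \{I_\ell : \ell \in S\} \subset \mathcal{I}$ with $|\mathcal{H}| = m$, this gives a natural bijection with the outer sum over $\mathcal{H}$ in \eqref{HK-sum}.

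Next I would compute the iterated residues. For $\ell \notin S$, only the pole at $w_\ell = 0$ lies inside $c[\ell]$; evaluating at $w_\ell = 0$ makes $1/(1-qy_{I_\ell}w_\ell) \to 1$, the products $\prod_{j>I_\ell}(1-y_j w_\ell)/(1-qy_j w_\ell)$ and $\prod_r (x_r - qw_\ell)/(x_r - w_\ell)$ both become $1$, and the cross-terms $\prod_{\alpha<\beta}(w_\alpha-w_\beta)/(w_\alpha-qw_\beta)$ simplify to explicit powers of $q$. For $\ell \in S$ I would pick up the residue at $w_\ell = x_{K_{\sigma(\ell)}}$ for some $K \in \{1,\dots,N\}$ and a permutation $\sigma \in \mathfrak{S}_m$ ordering the chosen poles; summing over all such assignments reproduces the inner sum defining $h^{\mathcal{H}}_{\mathcal{K}}(x;y)$, together with the Vandermonde-like factor $a_{\mathcal{K}}(x)$ generated by the products $\prod_r (x_r - qw)/(x_r - w)$ at the chosen poles.

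The main obstacle should be the careful bookkeeping of $q$-powers. Every pair $\alpha<\beta$ contributes a factor to $\prod_{\alpha<\beta}(w_\alpha-w_\beta)/(w_\alpha-qw_\beta)$, and at a mixed residue configuration (one at $0$, one at some $x_r$, or both at $0$, as permitted by the nested contour condition $q\cdot c[\alpha] \subset c[\alpha] \subset q\cdot c[\alpha+1]$) this factor collapses to either $1/q$ or $1$. One then has to verify that summing these contributions, together with the overall prefactor $q^{k(k-1)/2 + (M-N)k - ||\mathcal{I}||}$ and the $(q-1)$ factors from the $\ell \in S$ residues, exactly recovers the Kirillov--Noumi coefficient $q^{-\ell(\b{\mathcal{I}};\mathcal{H}) + (M-N)k - ||\mathcal{I}||}$. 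This step requires translating the count of $1/q$ factors indexed by $\{1,\dots,k\}$ into the cross-inversion statistic $\ell(\b{\mathcal{I}};\mathcal{H})$ defined on $\{1,\dots,M\}$, via the enumeration $I_1 < \cdots < I_k$ of $\mathcal{I}$. Once this combinatorial identity is established, both sides of \eqref{Y_iii-integral} agree term by term, completing the proof.
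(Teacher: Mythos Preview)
Your proposal is correct and follows essentially the same route as the paper: decompose each contour into its two pieces, index the resulting $2^k$ terms by the subset of variables taken around $c[x]$ (the paper calls this $\mathcal{L}$ rather than $S$), evaluate the residues at $0$ first (picking up the $q$-powers from the cross-factors), then take residues at the $x_r$ to produce the sums over $\mathcal{K}$ and $\sigma \in \mathfrak{S}_m$, and finally verify the $q$-exponent identity \eqref{q-exponent} matching $km-||\mathcal{L}||$ to the Kirillov--Noumi statistic $-\ell(\bar{\mathcal{I}};\mathcal{H})$. The only refinement the paper adds is the explicit order of shrinking the $0$-contours (innermost first, as dictated by $q\cdot c[i]\subset c[i]\subset q\cdot c[i+1]$), which makes the computation of the $q$-power from each $w_{P_j}=0$ residue, namely $q^{P_j-k}$, immediate.
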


\begin{figure}
\begin{tikzpicture}[scale=1.3,>=stealth]
\draw[gray,densely dotted,thick,->] (-4,0) -- (8,0);
\draw[gray,densely dotted,thick,->] (0,-3.5) -- (0,3.5);
\draw[thick,dashed] (0,0) ellipse [x radius=0.8cm, y radius=0.8cm];
\draw[thick] (0,0) ellipse [x radius=1.3cm, y radius=1.3cm];
\draw[thick,dashed] (0,0) ellipse [x radius=1.8cm, y radius=1.8cm];
\draw[thick] (0,0) ellipse [x radius=2.3cm, y radius=2.3cm];
\draw[thick] (0,0) ellipse [x radius=3.3cm, y radius=3.3cm];
\draw[thick] (6.45,0) ellipse [x radius=1.3cm, y radius=1.3cm];
\node[left] at (0.85,-0.2) {\tiny $q w_1$};
\node[left] at (1.25,0.4) {\tiny $w_1$};
\node[left] at (1.85,-0.2) {\tiny $q w_2$};
\node[left] at (2.35,0.4) {\tiny $w_2$};
\node[text centered] at (2.7,0.4) {$\cdots$};
\node[left] at (3.35,0.4) {\tiny $w_k$};
\node[above] at (6.45,1.3) {\tiny $w_1,\dots,w_k$};
\node at (0,0) {$\bullet$};
\node[below left] at (0,0) {$0$};
\foreach\x in {6,6.3,6.6,6.9}{
\node at (\x,0) {$\bullet$};
}
\node[below] at (6.45,-0.1) {$\{x_r\}_{r=1}^{N}$};
\end{tikzpicture}
\caption{Integration contours for equation \eqref{Y_iii-integral}. Each contour $C_i$ is the union of the small circle $c[x]$ surrounding the points $\{x_1,\dots,x_N\}$, and a small circle $c[i]$ surrounding the origin. The circles surrounding the origin are $q$-nested; one has $q \cdot c[i] \subset c[i] \subset q \cdot c[i+1]$ for all $1 \leq i \leq k-1$.}
\label{fig:two-pieces}
\end{figure}
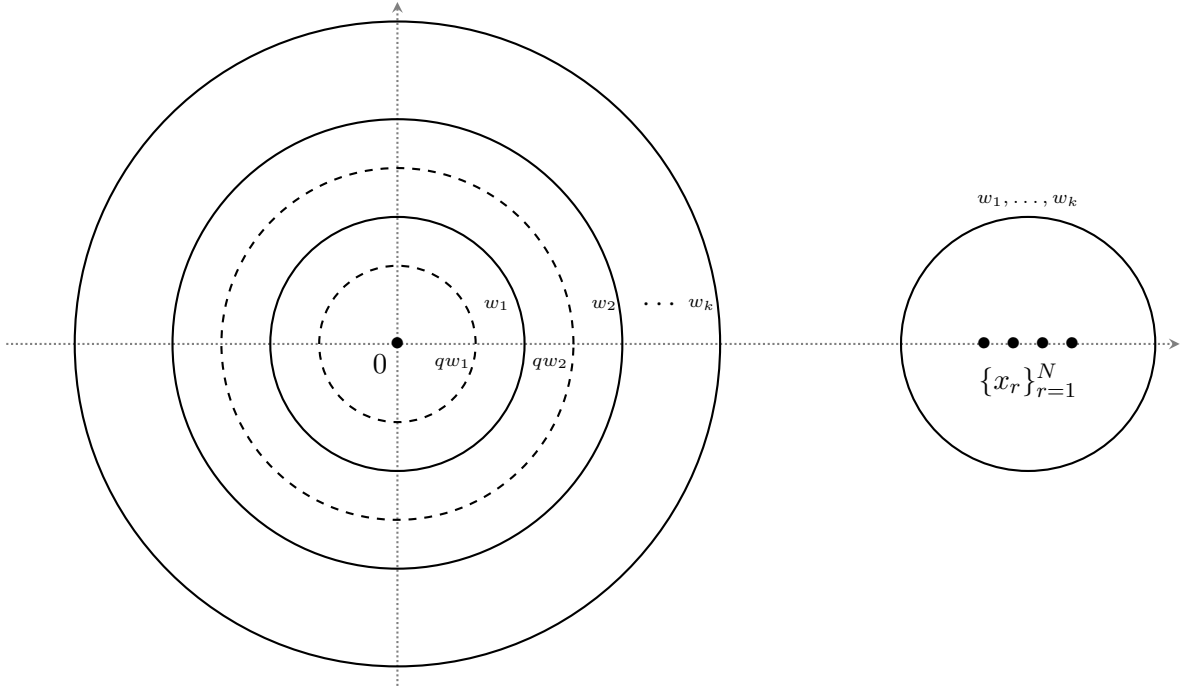

\begin{proof}
We will evaluate the integral \eqref{Y_iii-integral} by computing the sum of all residues and matching it to the expression \eqref{HK-sum}. The foregoing computation closely follows the material after equation (9.8) in \cite{BorodinP1}. Let us consider the following integral, which is equal to the right hand side of \eqref{Y_iii-integral} after suppressing the irrelevant factor $q^{(M-N)k-||\mathcal{I}||}$, that is already common to both \eqref{HK-sum} and \eqref{Y_iii-integral}:
\begin{align}
\label{int-1}
\Upsilon(I_1,\dots,I_k)
:=
\frac{q^{\frac{k(k-1)}{2}}}{(2\pi \sqrt{-1})^k}
\oint_{C_1} \cdots \oint_{C_k}
\prod_{1 \leq \alpha < \beta \leq k}
\frac{w_{\alpha}-w_{\beta}}{w_{\alpha}-qw_{\beta}}
\prod_{\ell=1}^{k}
\frac{f_{I_{\ell}}(w_{\ell})}{w_{\ell}}
dw_{\ell},
\end{align}
with
\begin{align*}
f_i(w)
:=
\frac{1}{1-q y_i w}
\prod_{j=i+1}^{M}
\frac{1- y_j w}{1-q y_j w}
\prod_{r=1}^{N}
\frac{x_r-qw}{x_r-w},
\end{align*}
and observe that $f_i(0) = 1$. Let us now split the integral $\Upsilon$ into a sum of $2^k$ integrals indexed by subsets $\mathcal{L} \subset \{1,\dots,k\}$ of cardinality $|\mathcal{L}| = m$, with the condition that 
$w_{\ell}$ for $\ell \not\in \mathcal{L}$ are integrated around $0$, while $w_{\ell}$ for $\ell \in \mathcal{L}$ are integrated around $\{x_r\}_{r=1}^N$. We shall write $\mathcal{L} = \{L_1 < \cdots < L_m\}$, $||\mathcal{L}|| = \sum_{i=1}^{m} L_i$, $\{1,\dots,k\} \backslash \mathcal{L} = \{P_1 < \cdots < P_{k-m}\}$. 

Because of their $q$-nesting property, the contours around $0$ corresponding to $\{w_{P_1},\dots,w_{P_{k-m}}\}$ can be shrunk to $0$ in the order $w_{P_1},\dots,w_{P_{k-m}}$ without crossing any poles (other than the pole at $0$ itself). Computing residues at $0$, each such integration over $w_{P_j}$ produces the factor $q^{P_j-k}$, coming from the product $\prod_{1 \leq \alpha < \beta \leq k}(w_{\alpha} - w_{\beta})/(w_{\alpha} - q w_{\beta})$. Renaming $w_{L_j} = z_j$, the integral \eqref{int-1} thus becomes
\begin{align}
\label{int-2}
\Upsilon(I_1,\dots,I_k)
=
\sum_{m=0}^{k}
\sum_{\substack{\mathcal{L} \subset \{1,\dots,k\} \\ |\mathcal{L}| = m}}
q^{km - ||\mathcal{L}||}
\cdot
\oint_{c[x]} \cdots \oint_{c[x]}
\prod_{1 \leq \alpha < \beta \leq m}
\frac{z_{\alpha}-z_{\beta}}{z_{\alpha}-qz_{\beta}}
\prod_{i=1}^{m}
\frac{f_{I_{L_i}}(z_i)}{z_i}
\frac{dz_i}{2\pi\sqrt{-1}}.
\end{align}
Now we compute the remaining $m$ integrals by taking residues at $z_i = x_r$ for various $1 \leq r \leq N$. Taking these residues produces a sum over subsets $\mathcal{K} = \{K_1 < \cdots < K_m\} \subset \{1,\dots,N\}$ (which label the poles $\{x_{K_1},\dots,x_{K_m}\}$ that get selected\footnote{Because of the product 
$\prod_{1 \leq \alpha < \beta \leq m} (z_{\alpha} - z_{\beta})$ present in the integrand of \eqref{int-2}, the selected poles are necessarily pairwise distinct.}) and a sum over permutations $\sigma \in \mathfrak{S}_m$ (which specify the order in which those poles are chosen). We thus obtain
\begin{multline}
\label{int-3}
\Upsilon(I_1,\dots,I_k)
\\
=
\sum_{m=0}^{k}
\sum_{\substack{\mathcal{L} \subset \{1,\dots,k\} \\ |\mathcal{L}| = m}}
q^{km - ||\mathcal{L}||}
\sum_{\substack{\mathcal{K} \subset \{1,\dots,N\} \\ |\mathcal{K}| = m}}
\sum_{\sigma \in \mathfrak{S}_m}
\prod_{1\leq \alpha < \beta \leq m}
\frac{x_{K_{\sigma(\alpha)}}-x_{K_{\sigma(\beta)}}}
{x_{K_{\sigma(\alpha)}}-q x_{K_{\sigma(\beta)}}}
\prod_{i=1}^{m}
\frac{{\rm Res}_{z=x_{K_{\sigma(i)}}}\left[ f_{I_{L_i}}(z) \right]}{x_{K_{\sigma(i)}}},
\end{multline}
where the residue present in the summand of \eqref{int-3} is given by
\begin{align*}
\frac{1}{x_K}
{\rm Res}_{z=x_K}
\left[
f_I(z)
\right]
=
\frac{q-1}{1-q x_K y_I}
\cdot
\prod_{j=I+1}^{M}
\frac{1-x_K y_j}{1-q x_K y_j}
\cdot
\prod_{\substack{r=1 \\ r\not= K}}^{N}
\frac{q x_K-x_r}{x_K-x_r}.
\end{align*}
We can now directly match \eqref{int-3} with \eqref{HK-sum}. If we introduce the set $\mathcal{H} \subset \mathcal{I}$ via the correspondence
\begin{align}
\label{HL-id}
\mathcal{H} = \{H_1 < \cdots < H_m\} = \{I_{L_1} < \cdots < I_{L_m}\},
\end{align}
one readily checks that \eqref{int-3} and \eqref{HK-sum} agree in all factors, apart from potential discrepancies in the powers of $q$. For this, one needs to check that
\begin{align}
\label{q-exponent}
\ell(\b{\mathcal{I}}; \mathcal{H})
-
\sum_{i=1}^{m}
(M-H_i)
=
||\mathcal{L}|| - km,
\end{align}
which allows all $q$ exponents to be matched. We notice that $\ell(\b{\mathcal{I}}; \mathcal{H})$ equals $\sum_{i=1}^{m} ((M-I_{L_i})-(k-L_i))$, since $(M-I_{L_i})$ is the number of elements of $\{1,\dots,M\}$ greater than $H_i \equiv I_{L_i}$, and $(k-L_i)$ is the number of elements of $\{I_1,\dots,I_k\}$ greater than $H_i \equiv I_{L_i}$. Substituting this into the left hand side of \eqref{q-exponent}, there is some direct cancellation, and we obtain a match with the right hand side. The proof of \eqref{Y_iii-integral} is complete.
\end{proof}

\section{Matching with the six-vertex model height function}

Let us now compare the contour integral \eqref{Y_iii-integral} with Theorem 9.8 of \cite{BorodinP1}. We begin by recalling the latter result:

\begin{defn}[Height function]
Let $\mathcal{C}$ be a configuration of the stochastic six-vertex model 
\eqref{six-vert} in the positive quadrant $\mathbb{N}^2$. Fixing two positive integers $M,N \geq 1$, we define the height function $\mathfrak{h}(M,N)$ \index{h@$\mathfrak{h}(M,N)$; height function} as the number of paths in $\mathcal{C}$ which pass through one of the horizontal edges $(M,j) \rightarrow (M+1,j)$, $1 \leq j \leq N$. In other words, $\mathfrak{h}(M,N)$ is the number of paths which pass through or below the point $(M,N)$.
\end{defn}
\begin{thm}
Consider the stochastic six-vertex model \eqref{six-vert} with horizontal rapidities $x_i$ (where the index $i$ can assume any order), vertical rapidities $v_j$ (with $j$ increasing from left to right), and let $\{\mathfrak{h}(\theta_1,N),\dots,\mathfrak{h}(\theta_k,N)\} \equiv \{\mathfrak{h}(\theta_1),\dots,\mathfrak{h}(\theta_k)\}$ denote the height function of the model measured at horizontal coordinates $\theta_1 \geq \cdots \geq \theta_k \geq 1$ along the $N$-th horizontal line. One has
\begin{align}
\label{height-exp}
\mathbb{E}_{\rm 6v}
\prod_{i=1}^{k}
q^{\mathfrak{h}(\theta_i)}
=
\frac{q^{\frac{k(k-1)}{2}}}{(2\pi \sqrt{-1})^k}
\oint_{C_1}
\cdots
\oint_{C_k}
\prod_{1\leq \alpha < \beta \leq k}
\frac{w_{\alpha}-w_{\beta}}{w_{\alpha}-q w_{\beta}}
\prod_{i=1}^{k}
\left(
\prod_{j=1}^{\theta_i-1}
\frac{1-v_j w_i}{1- q v_j w_i}
\prod_{r=1}^{N}
\frac{x_r-q w_i}{x_r-w_i}
\right)
\frac{dw_i}{w_i}
\end{align}
with the same integration contours as in Figure \ref{fig:two-pieces}. In the notation of \cite[Theorem 9.8]{BorodinP1}, we have chosen ${\sf s}_j = q^{-1/2}$, $\xi_j = v_j^{-1} q^{-1/2}$ and $u_r = x_r^{-1}$ in order to write down \eqref{height-exp}.

\end{thm}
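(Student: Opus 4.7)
The plan is to obtain \eqref{height-exp} as a direct consequence of \cite[Theorem 9.8]{BorodinP1}, which provides a $q$-moment formula for the height function of the (colour-blind) higher-spin stochastic six-vertex model with inhomogeneous horizontal rapidities $u_r$, vertical rapidities $\xi_j$ and spin parameters ${\sf s}_j$. First I would recall that the stochastic six-vertex model considered here corresponds to the unfused (spin-$\tfrac12$) specialisation ${\sf s}_j = q^{-1/2}$ of that higher-spin model, so that vertical edges can host at most one path. Under this specialisation the vertex weights of \cite{BorodinP1} reduce to those in \eqref{six-vert}, up to a reparametrisation of the rapidities which we record below.

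Second, I would translate between the two sets of parameters. The weights of \cite{BorodinP1} depend on products of the form $u_r \xi_j$, while ours depend on products $x_i y_j$; matching the two requires $u_r = x_r^{-1}$ and, after accounting for the ${\sf s}_j = q^{-1/2}$ normalisation, $\xi_j = v_j^{-1} q^{-1/2}$. Under these substitutions, the prefactors $\prod_{j=1}^{\theta_i-1}(1-v_j w_i)/(1-q v_j w_i)$ and $\prod_{r=1}^{N}(x_r-q w_i)/(x_r-w_i)$ in \eqref{height-exp} arise from, respectively, the product over vertical lines to the left of the observation point and the product over the $N$ horizontal lines in the formula of \cite[Theorem 9.8]{BorodinP1}; the residual factor $\prod_{\alpha<\beta}(w_\alpha-w_\beta)/(w_\alpha-q w_\beta)$ and the overall $q^{k(k-1)/2}$ come from the nested Bethe-type kernel in that theorem.

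Third, the integration contours need to be identified. In \cite[Theorem 9.8]{BorodinP1} the contours encircle the inverses of the horizontal rapidities together with a small nested family of circles around $0$ (or around infinity, depending on the convention), with a $q$-nesting between successive contours. Upon reciprocating $u_r \mapsto x_r$, these become precisely the contours of Figure \ref{fig:two-pieces}: a piece $c[x]$ around the points $\{x_r\}_{r=1}^{N}$, together with the $q$-nested circles $c[i]$ around $0$. I would verify carefully that the orientations and the $q$-nesting are preserved by the change of variables, since this is the step most prone to sign and direction errors.

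The main obstacle will be this contour bookkeeping: the parameter swap $u_r \leftrightarrow x_r^{-1}$ is an inversion on the Riemann sphere and exchanges the roles of $0$ and $\infty$, so one must check that the residues picked up by the contours in \eqref{height-exp} correspond exactly to the residues picked up by the contours of \cite[Theorem 9.8]{BorodinP1} after the substitution. Once this is confirmed, the identity \eqref{height-exp} follows verbatim from the cited theorem, with the only substantive work being the verification that the conventions for numbering horizontal lines (top-to-bottom here versus possibly bottom-to-top in \cite{BorodinP1}) and for the ordering $\theta_1 \geq \cdots \geq \theta_k$ match those required in the source. No independent combinatorial argument is needed.
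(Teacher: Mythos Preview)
Your proposal is correct and matches the paper's approach exactly: the theorem is quoted from \cite[Theorem 9.8]{BorodinP1} without an independent proof, and the only content is the parameter translation ${\sf s}_j = q^{-1/2}$, $\xi_j = v_j^{-1} q^{-1/2}$, $u_r = x_r^{-1}$ that you identify. The paper does not spell out the contour bookkeeping you outline, so your write-up is in fact more detailed than what appears there.
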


\begin{prop}
\label{prop:6v-YYY}
In terms of all preceding notation, one has
\begin{align}
\label{prop-d.5}
\mathbb{E}_{\rm 6v}
\left[
\prod_{i=1}^{k}
\frac{q^{\mathfrak{h}(\theta_i)}-q^{\mathfrak{h}(\theta_i+1)+1}}
{1-q}
\right]
=
\frac{q^{(N-M)k+||\mathcal{I}||}}{\Pi(x;y)}
\cdot
\tilde{Y}_{I_1} \cdots \tilde{Y}_{I_k} \Pi(x;y),
\end{align}
provided that
\begin{equation}
\label{ids}
\{\theta_1,\dots,\theta_k\}
=
\{M-I_1+1,\dots,M-I_k+1\},
\qquad
(v_1,\dots,v_M) 
= 
(y_M,\dots,y_1).
\end{equation}
\end{prop}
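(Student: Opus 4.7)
The plan is to prove \eqref{prop-d.5} by direct computation of the left-hand side using the known integral representation \eqref{height-exp} for joint $q$-moments of the height function, and then matching the resulting expression against \eqref{Y_iii-integral}. The key observation is that the peculiar combination $(q^{\mathfrak{h}(\theta)}-q^{\mathfrak{h}(\theta+1)+1})/(1-q)$ is designed precisely so that, after expansion, the resulting contour integrals telescope into a single integral carrying the additional factor $\prod_\ell(1-qy_{I_\ell}w_\ell)^{-1}$ that distinguishes \eqref{Y_iii-integral} from \eqref{height-exp}.

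Concretely, I would first expand the product on the LHS of \eqref{prop-d.5} by distributivity as
\begin{equation*}
\prod_{i=1}^{k}\frac{q^{\mathfrak{h}(\theta_i)}-q^{\mathfrak{h}(\theta_i+1)+1}}{1-q}
=\frac{1}{(1-q)^k}\sum_{S\subseteq\{1,\dots,k\}}(-q)^{|S|}\prod_{i\notin S}q^{\mathfrak{h}(\theta_i)}\prod_{i\in S}q^{\mathfrak{h}(\theta_i+1)},
\end{equation*}
and then apply \eqref{height-exp} term by term. Since $\mathcal{I}=\{I_1<\cdots<I_k\}$ implies $\theta_1>\cdots>\theta_k$ strictly, the shifted sequence obtained by replacing $\theta_i\mapsto\theta_i+1$ for $i\in S$ remains weakly decreasing, so \eqref{height-exp} is applicable for every subset $S$.

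After pulling the (finite) $S$-sum inside the $k$-fold contour integral, the integrand factorises over $\ell$. Writing $f_\ell(w,\theta):=\prod_{j=1}^{\theta-1}\frac{1-v_j w}{1-qv_j w}\prod_{r=1}^{N}\frac{x_r-qw}{x_r-w}$, the $\ell$-th factor becomes
\begin{equation*}
\frac{1}{1-q}\bigl[f_\ell(w_\ell,\theta_\ell)-q\,f_\ell(w_\ell,\theta_\ell+1)\bigr]
=f_\ell(w_\ell,\theta_\ell)\cdot\frac{1}{1-q v_{\theta_\ell}w_\ell},
\end{equation*}
where the simplification uses the elementary identity
$1-q\cdot\tfrac{1-v_{\theta_\ell}w_\ell}{1-qv_{\theta_\ell}w_\ell}=\tfrac{1-q}{1-qv_{\theta_\ell}w_\ell}$.
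Under the identifications $v_j=y_{M-j+1}$ and $\theta_\ell=M-I_\ell+1$, we have $v_{\theta_\ell}=y_{I_\ell}$ and $\prod_{j=1}^{\theta_\ell-1}\frac{1-v_j w}{1-qv_j w}=\prod_{j=I_\ell+1}^{M}\frac{1-y_j w}{1-qy_j w}$, so the resulting integral matches, line by line, the integrand of \eqref{Y_iii-integral}; the only bookkeeping task is to check that the overall $q$-powers agree, i.e.\ that $q^{(N-M)k+\|\mathcal{I}\|}\cdot q^{(M-N)k-\|\mathcal{I}\|}=1$, which is immediate.

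There is no serious obstacle in this argument — essentially everything reduces to the one-line telescoping identity above and careful reindexing between the $v_j,\theta_i$ and $y_j,I_i$ conventions. The most delicate point is ensuring that \eqref{height-exp} continues to apply across all $2^k$ terms of the inclusion–exclusion expansion; this is where the strict gap $I_i-I_{i-1}\ge 1$ in $\mathcal{I}$ is used, since it guarantees that shifting an arbitrary subset of the $\theta_i$'s up by one preserves the weak decrease required by the cited theorem. Once this is verified, the identification of the two integrals is purely algebraic.
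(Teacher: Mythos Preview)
Your proposal is correct and follows essentially the same route as the paper: both arguments reduce to the single telescoping identity
\[
\frac{1}{1-q}\Bigl(\prod_{j=1}^{\theta-1}\tfrac{1-v_j w}{1-qv_j w}-q\prod_{j=1}^{\theta}\tfrac{1-v_j w}{1-qv_j w}\Bigr)=\prod_{j=1}^{\theta-1}\tfrac{1-v_j w}{1-qv_j w}\cdot\tfrac{1}{1-qv_\theta w},
\]
applied factor by factor inside the integrand of \eqref{height-exp}, followed by the reindexing $v_{\theta_\ell}=y_{I_\ell}$ to match \eqref{Y_iii-integral}. Your explicit $2^k$-subset expansion and the accompanying check that the shifted $\theta$-sequence stays weakly decreasing are slightly more detailed than the paper's presentation, but the substance of the argument is identical.
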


\begin{proof}
Let us check the effect of taking differences $(q^{\mathfrak{h}(\theta_i)}-q^{\mathfrak{h}(\theta_i+1)+1})/(1-q)$ on the integrand of \eqref{height-exp}. In view of the relation
\begin{align*}
\frac{1}{1-q}
\left(
\prod_{j=1}^{\theta-1}
\frac{1-v_j w}{1-q v_j w}
-
q \cdot
\prod_{j=1}^{\theta}
\frac{1-v_j w}{1-q v_j w}
\right)
&=
\frac{1}{1-q}
\prod_{j=1}^{\theta-1}
\frac{1-v_j w}{1-q v_j w}
\left(
1-q\cdot \frac{1-v_{\theta} w}{1-q v_{\theta} w}
\right)
\\
&=
\prod_{j=1}^{\theta-1}
\frac{1-v_j w}{1-q v_j w}
\cdot
\frac{1}{1-q v_{\theta} w},
\end{align*}
one clearly has
\begin{multline}
\label{height-exp2}
\mathbb{E}_{\rm 6v}
\left[
\prod_{i=1}^{k}
\frac{q^{\mathfrak{h}(\theta_i)}-q^{\mathfrak{h}(\theta_i+1)+1}}
{1-q}
\right]
=
\frac{q^{\frac{k(k-1)}{2}}}{(2\pi \sqrt{-1})^k}
\\
\times
\oint_{C_1}
\cdots
\oint_{C_k}
\prod_{1\leq \alpha < \beta \leq k}
\frac{w_{\alpha}-w_{\beta}}{w_{\alpha}-q w_{\beta}}
\prod_{i=1}^{k}
\left(
\frac{1}{1-q v_{\theta_i} w_i}
\cdot
\prod_{j=1}^{\theta_i-1}
\frac{1-v_j w_i}{1- q v_j w_i}
\prod_{r=1}^{N}
\frac{x_r-q w_i}{x_r-w_i}
\right)
\frac{dw_i}{w_i},
\end{multline}
which can be seen to coincide exactly with \eqref{Y_iii-integral} under the identifications \eqref{ids}.

\end{proof}

\section{Reduction to non-symmetric Hall--Littlewood measures}

In the remainder of the chapter, we shall analyze the result \eqref{prop-d.5} in the case $p=0$. Note that the six-vertex side of \eqref{prop-d.5} is manifestly independent of $p$, so this specialization only affects the interpretation of the right hand side of that equation. Let us begin with the reduction of \eqref{act-Y_iii} to non-symmetric Hall--Littlewood measures:
\begin{cor}
\label{cor:nsHL}
Assuming $p=0$, from \eqref{act-Y_iii} we recover
\begin{align}
\label{p=0-Y_iii}
\mathbb{E}_{\rm nsHL}
\left[
\prod_{i=1}^{k}
\left(
{\bm 1}_{\mu_{I_i} = 0}
\right)
q^{\#\{j > I_i : \mu_j = 0\}}
\right]
=
\left.
\frac{1}{\Pi(x;y)}
\cdot
\tilde{Y}_{I_1} \cdots \tilde{Y}_{I_k} \Pi(x;y)
\right|_{p=0},
\end{align}
where the expectation is taken with respect to the non-symmetric Hall--Littlewood measure
\begin{align}
\label{nsHL}
\mathbb{P}_{\rm nsHL}(\mu)
\index{P@$\mathbb{P}_{\rm nsHL}(\mu)$; non-symmetric Hall--Littlewood measure}
=
\prod_{i=1}^{N}
\prod_{j=1}^{M}
\frac{1-x_i y_j}{1-q x_i y_j}
\cdot
E_{\mu}(y_1,\dots,y_M;0,q)
Q_{\mu^+}(x_1,\dots,x_N;0,q),
\end{align}
which is the $p=0$ specialization of \eqref{nsM}. Note that, in terms of our previously defined measure \eqref{W-MN-mu}, one has $\mathbb{P}_{\rm nsHL}(\mu;y_1,\dots,y_M) = \mathbb{W}_{M,N}(\tilde{\mu};y_M,\dots,y_1)$.
\end{cor}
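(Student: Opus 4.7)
The plan is to take the already-proved identity \eqref{act-Y_iii} and specialize both sides at $p=0$. On the left, I would observe that the non-symmetric Macdonald measure $\mathbb{P}_{\rm nsM}$ degenerates directly to $\mathbb{P}_{\rm nsHL}$: one has $d_\mu(0,q)=1$ for all $\mu$, the polynomials $E_\mu(y;p,q)$ and $Q_{\mu^+}(x;p,q)$ specialize at $p=0$ to their Hall--Littlewood counterparts (this is the $p=0$ case of Macdonald theory; for $E_\mu$ we also used it in Theorem \ref{thm:f-E}), and $\Pi(x;y)|_{p=0}=\prod_{i,j}\frac{1-qx_iy_j}{1-x_iy_j}$ provides exactly the normalization appearing in \eqref{nsHL}. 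Hence the left-hand side of \eqref{act-Y_iii} at $p=0$ is $\mathbb{E}_{\rm nsHL}\!\left[\prod_{j=1}^{k}\langle\mu\rangle_{I_j}|_{p=0}\right]$, and it only remains to identify this observable.

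The key computation is the $p\to 0$ limit of a single eigenvalue $\langle\mu\rangle_i = p^{\mu_i}q^{\rho_i(\mu)+M-1}$. The factor $p^{\mu_i}$ forces $\mu_i=0$, so $\langle\mu\rangle_i|_{p=0}=\mathbf{1}_{\mu_i=0}\cdot q^{\rho_i(\mu)+M-1}$ evaluated on the event $\mu_i=0$. Using the definition $\rho_i(\mu)=-\#\{j\neq i:\mu_j>\mu_i\}-\#\{j<i:\mu_j=\mu_i\}$, when $\mu_i=0$ we have $\#\{j\neq i:\mu_j>0\}=M-1-\#\{j\neq i:\mu_j=0\}$, so
\begin{align*}
\rho_i(\mu)+M-1 &= -(M-1)+\#\{j\neq i:\mu_j=0\}-\#\{j<i:\mu_j=0\}+(M-1) \\
&= \#\{j>i:\mu_j=0\}.
\end{align*}
This gives the clean formula $\langle\mu\rangle_i|_{p=0}=\mathbf{1}_{\mu_i=0}\,q^{\#\{j>i:\mu_j=0\}}$, matching the desired factor exactly. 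Because the $\tilde Y_i$ pairwise commute and are simultaneously diagonalized by $\{E_\mu\}$, the product eigenvalue $\prod_j\langle\mu\rangle_{I_j}|_{p=0}$ is the product of the individual factors, yielding the left-hand side of \eqref{p=0-Y_iii}.

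The only technicality that needs a word is that the operator $\tilde Y_i$ involves the shift $\tau_1:y_1\mapsto py_1$ inside $\omega$; at $p=0$ this becomes the evaluation $y_1\mapsto 0$, which is nonsingular on $\Pi(x;y)|_{p=0}$. Thus both sides of \eqref{act-Y_iii} are rational in $p$ with well-defined values at $p=0$, and the specialization commutes with the equality. I do not expect any genuine obstacle: the derivation is essentially a direct application of \eqref{act-Y_iii}, the standard $p\to 0$ reduction of Macdonald theory to Hall--Littlewood theory, and the elementary combinatorial identity for $\rho_i(\mu)+M-1$. (The same eigenvalue formula can alternatively be extracted from Theorem \ref{thm:f-E} together with the eigenrelation established in Section \ref{ssec:hl-reduce}, accounting for the extra factor $q^{i-1}$ built into the definition \eqref{Y-tilde} of $\tilde Y_i$.)
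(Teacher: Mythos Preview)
Your proposal is correct and follows exactly the approach the paper intends: the corollary is stated without an explicit proof because it is meant to be the direct $p=0$ specialization of \eqref{act-Y_iii}, and the only nontrivial ingredient is the evaluation of $\langle\mu\rangle_i|_{p=0}$, which you compute correctly from \eqref{rhomu} (your identity $\rho_i(\mu)+M-1=\#\{j>i:\mu_j=0\}$ on the event $\mu_i=0$ is exactly what is needed, and is consistent with the eigenvalue \eqref{HL-eig2} after accounting for the extra $q^{i-1}$ in \eqref{Y-tilde}). Your remarks on $d_\mu(0,q)=1$, the reduction of $\Pi(x;y)$, and the well-definedness of $\omega$ at $p=0$ are all appropriate and leave no gaps.
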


\begin{rmk}
\label{rmk:11.7}
When $q=1$, the left hand side of \eqref{p=0-Y_iii} is the $k$-point correlation function for the point process of the locations of zeros in $\mu$, \ie\ it becomes 
\begin{align*}
\mathbb{E}_{\rm nsHL} \left[ {\bm 1}_{\mu_{I_1} = \cdots = \mu_{I_k} = 0} \right] = \mathbb{P}_{\rm nsHL} \left[ \mu_{I_1} = \cdots = \mu_{I_k} = 0 \right].
\end{align*} 
\end{rmk}

Now by combining the results of Proposition \ref{prop:6v-YYY} and Corollary \ref{cor:nsHL}, we arrive at the following match of expectations:
\begin{cor}
With the same matching of parameters \eqref{ids}, one has
\begin{align}
\label{cor-d7}
\mathbb{E}_{\rm 6v}
\left[
\prod_{i=1}^{k}
{\bm 1}_{\{\text{no path at}\ (\theta_i,N)\}} 
q^{\mathfrak{h}(\theta_i)}
\right]
=
\mathbb{E}_{\rm nsHL}
\left[
\prod_{i=1}^{k}
\left(
{\bm 1}_{\mu_{I_i}=0} 
\right)
q^{N-M+I_i+\#\{j > I_i : \mu_j = 0\}}
\right].
\end{align}

\end{cor}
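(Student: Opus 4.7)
The plan is to combine Proposition~\ref{prop:6v-YYY} with Corollary~\ref{cor:nsHL} after an elementary pathwise simplification on the six-vertex side of \eqref{prop-d.5}. First I would observe that in the (unfused) stochastic six-vertex model each edge carries at most one path, so flux conservation applied to the rectangle $[1,\theta_i+1]\times[1,N]$ yields
$$\mathfrak{h}(\theta_i,N)-\mathfrak{h}(\theta_i+1,N)\in\{0,1\},$$
this difference being exactly the indicator that the outgoing vertical edge $(\theta_i+1,N)\to(\theta_i+1,N+1)$ is occupied. Substituting the two possible values into the prefactor $(q^{\mathfrak{h}(\theta_i)}-q^{\mathfrak{h}(\theta_i+1)+1})/(1-q)$ gives the pathwise identity
$$\frac{q^{\mathfrak{h}(\theta_i)}-q^{\mathfrak{h}(\theta_i+1)+1}}{1-q}\;=\;{\bm 1}_{\{\text{no path at }(\theta_i,N)\}}\,q^{\mathfrak{h}(\theta_i)},$$
which, after taking the product over $i$ and the expectation, turns the left-hand side of \eqref{prop-d.5} into precisely the left-hand side of \eqref{cor-d7}.

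Next I would specialize \eqref{prop-d.5} to $p=0$. Its left-hand side is manifestly $p$-independent, while the right-hand side equals $q^{(N-M)k+\|\mathcal{I}\|}$ times the quantity $\Pi(x;y)^{-1}\,\tilde{Y}_{I_1}\cdots\tilde{Y}_{I_k}\,\Pi(x;y)\big|_{p=0}$. By Corollary~\ref{cor:nsHL}, the latter equals
$$\mathbb{E}_{\rm nsHL}\!\left[\prod_{i=1}^{k}\!\bigl({\bm 1}_{\mu_{I_i}=0}\bigr)\,q^{\#\{j>I_i:\mu_j=0\}}\right].$$
Chaining these equalities and redistributing the scalar $q^{(N-M)k+\|\mathcal{I}\|}=\prod_{i=1}^{k}q^{N-M+I_i}$ inside the product over $i$ yields exactly the right-hand side of \eqref{cor-d7}.

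The proof is really just bookkeeping between two previously established identities; there is no serious obstacle. The only place warranting care is the pathwise collapse of the $q$-difference into an indicator times $q^{\mathfrak{h}(\theta_i)}$, together with keeping the column-shift conventions (and the meaning of ``no path at $(\theta_i,N)$'' in the author's shorthand) consistent with the flux argument for $\mathfrak{h}(\theta_i,N)-\mathfrak{h}(\theta_i+1,N)$ and with the parameter identification~\eqref{ids}.
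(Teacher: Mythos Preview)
Your proposal is correct and follows exactly the same route as the paper's proof, only with more detail: the paper's argument is the one-liner ``the left hand side of \eqref{cor-d7} is the same as the left hand side of Proposition~\ref{prop:6v-YYY}, and the right hand side of \eqref{cor-d7} arises by multiplying \eqref{p=0-Y_iii} by $q^{(N-M)k+\|\mathcal{I}\|}$'', which is precisely your two-step chaining of Proposition~\ref{prop:6v-YYY} with Corollary~\ref{cor:nsHL}. Your explicit pathwise collapse of $(q^{\mathfrak{h}(\theta_i)}-q^{\mathfrak{h}(\theta_i+1)+1})/(1-q)$ into an indicator times $q^{\mathfrak{h}(\theta_i)}$ is exactly what the paper means by ``one easily sees'', and you are right to flag the column-index bookkeeping as the only delicate point.
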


\begin{proof}
Indeed, one easily sees that the left hand side of \eqref{cor-d7} is the same as the left hand side of Proposition \ref{prop:6v-YYY}, and the right hand side of \eqref{cor-d7} arises by multiplying \eqref{p=0-Y_iii} by $q^{(N-M)k+||\mathcal{I}||}$.
\end{proof}

\section{Matching the underlying distributions}

We would now like to strengthen \eqref{cor-d7} to a statement at the level of the underlying distributions. Namely, we wish to prove the following result:
\begin{prop}
Fix any subset $\mathcal{I} \subset \{1,\dots,M\}$. Under the identification $(v_1,\dots,v_M) = (y_M,\dots,y_1)$, we have
\begin{align}
\label{prop-d.9}
\mathbb{P}_{\rm 6v}
\left[
\text{no path at edge $(M-i+1,N) \rightarrow (M-i+1,N+1)$},\ i \in \mathcal{I}
\right]
=
\mathbb{P}_{\rm nsHL}
\left[
\mu_i=0\ \text{for each}\ i \in \mathcal{I}
\right].
\end{align}
This statement is equivalent to \eqref{simpler-match}.
\end{prop}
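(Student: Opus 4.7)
The plan is to extract the distributional identity \eqref{prop-d.9} from the $q$-weighted moment identity \eqref{cor-d7} by matching the structure of the $q$-exponents on the two sides. The decisive preparatory step is to express the six-vertex height function $\mathfrak{h}(M-i+1, N)$, restricted to the event
\[
E^{6v}_{\mathcal{I}} := \big\{\text{no path at top edges } (M-i+1, N)\to(M-i+1, N+1),\ \forall\, i \in \mathcal{I}\big\},
\]
in terms of the top-boundary pattern $\widetilde{Z}$ obtained by reversing positions via $j \mapsto M-j+1$. Using path conservation $|T|+|R|=N$ together with the fact that each right-exiting path crosses every line $x=\theta$ for $\theta<M$, one has $\mathfrak{h}(\theta, N) = |R| + \#\{p \in T : p > \theta\}$. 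A short bookkeeping exercise, crucially using that $i \in \mathcal{I} \subseteq \widetilde{Z}$ so that $i$ itself counts as an element of $\widetilde{Z} \cap [i,M]$, yields
\begin{equation}
\label{h-recast}
\mathfrak{h}(M-i+1, N) \;=\; N - M + i + \#\{j > i : j \in \widetilde{Z}\}
\qquad \text{on } E^{6v}_{\mathcal{I}},\ \ i \in \mathcal{I}.
\end{equation}
This is exactly the exponent $N - M + I_i + \#\{j > I_i : \mu_j = 0\}$ appearing on the nsHL side of \eqref{cor-d7}.

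With \eqref{h-recast} in place, I would rewrite both sides of \eqref{cor-d7} as sums over the respective zero-pattern distributions, weighted by one and the same function $q^{W(\mathcal{I}, Z)}$, where
\[
W(\mathcal{I}, Z) := |\mathcal{I}|(N-M) + \sum_{i \in \mathcal{I}}\bigl(i + \#\{j \in Z : j > i\}\bigr).
\]
Setting $P^{6v}(Z) := \mathbb{P}_{\rm 6v}(\widetilde{Z} = Z)$, $P^{\rm nsHL}(Z) := \mathbb{P}_{\rm nsHL}(\{j : \mu_j = 0\} = Z)$, and $\Delta(Z) := P^{6v}(Z) - P^{\rm nsHL}(Z)$, the identity \eqref{cor-d7} then recasts as the homogeneous system
\begin{equation}
\label{Delta-id}
\sum_{Z \supseteq \mathcal{I}} \Delta(Z)\, q^{W(\mathcal{I}, Z)} \;=\; 0 \qquad \text{for every } \mathcal{I} \subseteq \{1,\dots,M\}.
\end{equation}

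From here I would conclude by downward induction on $|\mathcal{I}|$. Taking $\mathcal{I} = \{1,\dots,M\}$ forces $Z = \mathcal{I}$, so \eqref{Delta-id} gives $\Delta(\{1,\dots,M\}) = 0$. Inductively, assuming $\Delta(Z) = 0$ for every $Z$ strictly containing some fixed $Z_0$, specializing \eqref{Delta-id} to $\mathcal{I} = Z_0$ leaves only the $Z = Z_0$ term, forcing $\Delta(Z_0) = 0$. Propagating down to all subsets shows that the reversed top-exit zero pattern under $\mathbb{P}_{\rm 6v}$ has the same law as the zero pattern of $\mu$ under $\mathbb{P}_{\rm nsHL}$; summing over $Z \supseteq \mathcal{I}$ then yields \eqref{prop-d.9}. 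The equivalence with \eqref{simpler-match} is tautological given the identifications $(v_1,\dots,v_M) = (y_M,\dots,y_1)$ and $\mathbb{P}_{\rm nsHL}(\mu; y_1,\dots,y_M) = \mathbb{W}_{M,N}(\tilde\mu; y_M,\dots,y_1)$, under which $\mu_i = 0$ corresponds to $\tilde\mu_i = \mu_{M-i+1} = 0$.

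The main obstacle is the identification \eqref{h-recast}: the verification requires careful accounting of top-exits versus right-exits, with the constraint $\mathcal{I} \subseteq \widetilde{Z}$ doing subtle work in converting the raw count $(i-1) + |\widetilde{Z} \cap [i,M]|$ into the cleaner $i + \#\{j > i : j \in \widetilde{Z}\}$ that matches the nsHL exponent. Once \eqref{h-recast} is in hand, the passage to the homogeneous triangular system \eqref{Delta-id} is cosmetic, and the inductive inversion is immediate.
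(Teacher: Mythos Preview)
Your proof is correct and follows the same overall strategy as the paper: use the height-function rewriting to recast both sides of \eqref{cor-d7} as sums $\sum_{Z \supseteq \mathcal{I}} P(Z)\, q^{W(\mathcal{I},Z)}$ over zero-patterns, then invert the resulting homogeneous linear system in the unknowns $\Delta(Z)$. Your identity \eqref{h-recast} and the paper's computation of $\mathfrak{h}(\theta_i)$ agree on the relevant event.

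Where you differ is in the inversion step. You observe that the system is upper-triangular in the inclusion order with nonzero diagonal $q^{W(Z_0,Z_0)}$, and invert by downward induction on $|Z_0|$; this works for every $q\neq 0$. The paper instead argues indirectly: the coefficient matrix is nonsingular at $q=1$ (where the system becomes the zeta matrix of the inclusion poset), hence nonsingular for generic $q$ by polynomiality. Your route is a touch cleaner and gives the conclusion for all $q$ rather than generic $q$; the paper's route trades that for brevity by invoking the $q=1$ case as a black box.
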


\begin{proof}
We see that the distribution match \eqref{prop-d.9} is a trivial corollary of \eqref{cor-d7} when $q=1$, \cf\ Remark \ref{rmk:11.7}. Now consider \eqref{cor-d7} for generic $q$. We remark that, for $\theta_i=M-I_i+1$, one has
\begin{align*}
\mathfrak{h}(\theta_i)
&=
\#\{\text{paths situated at, or right of}\ (\theta_i,N)\}
\\
&=
N - \#\{\text{paths strictly left of}\ (\theta_i,N)\}
\\
&=
N - \theta_i + 1 + \#\{\text{empty vertices among}\ (1,N),\dots,(\theta_i-1,N)\}
\\
&=
N - M + I_i + \#\{\text{empty vertices among}\ (1,N),\dots,(M-I_i,N)\}.
\end{align*}
Hence, if we pair up the random set $\{\text{empty vertices among}\ (1,N),\dots,(M-I_i,N)\}$ on the side of the six-vertex model with the random set $\{j>I_i:\mu_j=0\}$ on the non-symmetric Hall--Littlewood side, we see that powers of $q$ coincide in the two expectations of \eqref{cor-d7}. It follows that the difference of the two sides of \eqref{cor-d7} can be written as linear combinations of 
\begin{multline}
\Delta(\mathcal{L}) 
:=
\mathbb{P}_{\rm 6v}
\left[
\text{no path at edge $(M-\ell+1,N) \rightarrow (M-\ell+1,N+1)$},\ \ell \in \mathcal{L}
\right]
\\
-
\mathbb{P}_{\rm nsHL}
\left[
\mu_\ell=0\ \text{for each}\ \ell \in \mathcal{L}
\right] 
\end{multline}
for various subsets $\mathcal{L} \subset \{1,\dots,M\}$, with polynomial in $q$ coefficients. The number of the unknowns $\Delta(\mathcal{L})$ is $2^M$, while \eqref{cor-d7} (for various choices of $\mathcal{I} \subset \{1,\dots,M\}$) imposes $2^M$ homogeneous linear equations on them. The resulting linear system admits non-trivial solutions (solutions other than $\Delta(\mathcal{L})=0$, for all $\mathcal{L}$) if and only if its matrix is singular; but it is clear that it is non-singular when $q=1$, given that $\Delta(\mathcal{L})=0$ is the only possible solution in that case (since \eqref{prop-d.9} holds for $q=1$). The matrix of the linear system is, therefore, non-singular for generic $q$, and we conclude that $\Delta(\mathcal{L})=0$ for all $\mathcal{L}$.

The equivalence of \eqref{simpler-match} and \eqref{prop-d.9} is immediate, taking into account the translation between measures $\mathbb{P}_{\rm nsHL}(\mu;y_1,\dots,y_M) = \mathbb{W}_{M,N}(\tilde{\mu};y_M,\dots,y_1)$, which not only implements the alphabet reversal $(v_1,\dots,v_M) = (y_M,\dots,y_1)$ but allows each instance of $M-i+1$ on the left hand side of \eqref{prop-d.9} to be replaced with $i$.
\end{proof}

\section{Hall--Littlewood difference operators}

Let us recall the form of the first-order Macdonald difference operators \cite[Chapter VI]{Macdonald} acting on an alphabet $(x_1,\dots,x_N)$:
\begin{align}
\label{D1N}
D_N^1 \equiv
D_N
\index{D@$D_N$; Macdonald difference operators}
=
\sum_{i=1}^{N}
\Big(
\prod_{\substack{j\not=i \\ j=1}}^{N}
\frac{qx_i-x_j}{x_i-x_j}
\Big)
\tau_i,
\end{align}
where $\tau_i$ \index{t@$\tau_i$; $p$-shift operator} denotes the $p$-shift operator with action
\begin{align*}
\tau_i \cdot h(x_1,\dots,x_N) = h(x_1,\dots,p x_i,\dots,x_N)
\end{align*} 
on arbitrary functions $h$ of our alphabet. They have the following eigenaction on the symmetric Macdonald polynomials $Q_{\lambda}(x_1,\dots,x_N;p,q)$:
\begin{align}
\label{mac-ops}
D_N Q_{\lambda}(x_1,\dots,x_N;p,q)
=
\left(
\sum_{i=1}^{N} p^{\lambda_i} q^{N-i}
\right)
Q_{\lambda}(x_1,\dots,x_N;p,q).
\end{align}
In what follows we will focus on the Hall--Littlewood limit, tacitly assuming that $p=0$. This converts equation \eqref{mac-ops} into
\begin{align}
\label{HL-ops}
D_N Q_{\lambda}(x_1,\dots,x_N)
=
\left(
\frac{1-q^{N-\ell(\lambda)}}{1-q}
\right)
Q_{\lambda}(x_1,\dots,x_N),
\end{align}
where $\ell(\lambda)$ denotes the length of $\lambda$, and where the participating functions are the symmetric Hall--Littlewood polynomials. Now recall the branching rule for Hall--Littlewood polynomials,
\begin{align}
\label{Q-branch}
Q_{\lambda}(x_1,\dots,x_N)
=
\sum_{\nu}
Q_{\nu}(x_1,\dots,x_{N-1})
Q_{\lambda/\nu}(x_N),
\end{align}
and apply to this equation the linear combination 
$q^{-N+1} (D_N-D_{N-1})$ of the difference operators \eqref{D1N} at $p=0$. Then as a result of \eqref{HL-ops}, we see that
\begin{align}
\label{DN-diff}
\frac{D_N-D_{N-1}}{q^{N-1}}
Q_{\lambda}(x_1,\dots,x_N)
=
\sum_{\nu}
\left(
\frac{q^{-\ell(\nu)}-q^{-\ell(\lambda)+1}}{1-q}
\right)
Q_{\nu}(x_1,\dots,x_{N-1})
Q_{\lambda/\nu}(x_N).
\end{align}
Since $Q_{\lambda/\nu}(x_N) = 0$ unless $\lambda \succ \nu$, the sum on the right hand side of \eqref{DN-diff} is taken over partitions $\nu$ such that $\ell(\nu) = \ell(\lambda)$ or $\ell(\nu) = \ell(\lambda)-1$. Due to the factor $(q^{-\ell(\nu)} - q^{-\ell(\lambda)+1})$ present in the summand, all terms such that $\ell(\nu) = \ell(\lambda)-1$ will give a vanishing contribution, and we then find that
\begin{align}
\label{DN-diff2}
\frac{D_N-D_{N-1}}{q^{N-1}}
Q_{\lambda}(x_1,\dots,x_N)
=
q^{-\ell(\lambda)}
\sum_{\nu}
\bm{1}_{\ell(\nu) = \ell(\lambda)}
\cdot
Q_{\nu}(x_1,\dots,x_{N-1})
Q_{\lambda/\nu}(x_N).
\end{align}

\section{Restoring the subset $\mathcal{J}$}

Now assume that $\kappa$ is a partition with length $\ell(\kappa) \leq k$, for some $k \geq 1$, and fix a subset $\mathcal{J} = \{J_1 < \cdots < J_{N-k}\} \subset \{1,\dots,N\}$. Following \cite[Section 4]{BCGS}, by repeated application of \eqref{DN-diff2}, one concludes that
\begin{multline}
\label{multiple-D-action}
\left(
\frac{D_{J_1}-D_{J_1-1}}{q^{J_1-1}}
\right)
\cdots
\left(
\frac{D_{(J_{N-k})}-D_{(J_{N-k})-1}}{q^{(J_{N-k})-1}}
\right)
Q_{\kappa}(x_1,\dots,x_N)
\\
=
\sum_{\bm{\nu}}
\left( \bm{1}_{\zeta(\kappa,\bm{\nu}) \supset \mathcal{J}} \right)
\cdot
\prod_{i \in \mathcal{J}} q^{-\ell\left(\nu^{(i)}\right)}
\cdot
Q_{\kappa/\nu^{(N-1)}}(x_N)
\cdot
\prod_{j=1}^{N-1}
Q_{\nu^{(j)}/\nu^{(j-1)}}(x_j),
\end{multline}
where $\nu^{(N)} = \kappa$ by agreement, and with the sum taken over all length-$(N-1)$ Gelfand--Tsetlin patterns $\bm{\nu} = \nu^{(N-1)} \succ \cdots \succ \nu^{(1)} \succ \nu^{(0)} \equiv \varnothing$. Note that in the extremal case $\ell(\kappa) = k$, the inclusion of $\mathcal{J}$ in $\zeta(\kappa,\bm{\nu})$ is only possible if $\zeta(\kappa,\bm{\nu}) \equiv \mathcal{J}$. Then due to the constraints imposed by the indicator function $\bm{1}_{\zeta(\kappa,\bm{\nu}) = \mathcal{J}}$, each of the lengths $\ell\left(\nu^{(i)}\right)$ is known explicitly; in particular, one has 
\begin{align*}
\ell\left(\nu^{(J_j)}\right) = J_j-j,\qquad \forall\ 1 \leq j \leq N-k.
\end{align*}
The associated powers of $q$ can be factored outside of the sum, and after some cancellation we obtain
\begin{multline}
\label{multiple-D-action2}
\left(
D_{J_1}-D_{J_1-1}
\right)
\cdots
\left(
D_{(J_{N-k})}-D_{(J_{N-k})-1}
\right)
Q_{\kappa}(x_1,\dots,x_N)
\\
=
q^{\frac{(N-k)(N-k-1)}{2}}
\sum_{\bm{\nu}}
\left( \bm{1}_{\zeta(\kappa,\bm{\nu}) = \mathcal{J}} \right)
\cdot
Q_{\kappa/\nu^{(N-1)}}(x_N)
\cdot
\prod_{j=1}^{N-1}
Q_{\nu^{(j)}/\nu^{(j-1)}}(x_j),
\qquad
\ell(\kappa) = k.
\end{multline}
We can now conclude our alternative proof of Theorem \ref{thm:2=1}:

\begin{cor}
The match \eqref{simpler-match} implies \eqref{2=1}.
\end{cor}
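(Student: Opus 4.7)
The plan is to upgrade the single-subset equality \eqref{simpler-match} to the joint equality \eqref{2=1} by combining Möbius inversion (to refine the $\mathcal{I}$-event) with the action of Hall--Littlewood difference operators (to inject the $\mathcal{J}$-event).

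First, I would rewrite \eqref{simpler-match} using the explicit expansions \eqref{P-6v-sym} and \eqref{P-cHL-sym}. After multiplying through by the Cauchy kernel $\Pi(x;y)|_{p=0}$, both sides become expansions of the form $\sum_\kappa Q_\kappa(x_1,\dots,x_N)\,\phi_\kappa^{\mathcal{I}}(y)$, with coefficients encoding the inclusive events $\zeta(\kappa,\bm{\nu})\supset\bar{\mathcal{I}}$ on the six-vertex side and $z(\mu)\supset\bar{\mathcal{I}}$ on the cHL side. Applying Möbius inversion over the Boolean lattice of subsets of $\{1,\dots,M\}$ promotes these inclusive events to exact ones; since \eqref{simpler-match} holds for every $\mathcal{I}$, the match persists at the exact level, and now effectively forces $\ell(\kappa) = |\mathcal{I}| = k$ in both expansions.

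Next, I would apply the Hall--Littlewood difference operator $\mathcal{D}_\mathcal{J} := \prod_{j\in\mathcal{J}}(D_j-D_{j-1})$, acting on the $x$ variables, to both sides of the resulting exact identity. By \eqref{multiple-D-action2}, which applies cleanly precisely because $\ell(\kappa)=k=N-|\mathcal{J}|$, this operator sends each $Q_\kappa(x_1,\dots,x_N)$ to the sum over Gelfand--Tsetlin patterns $\bm{\lambda}$ of products of one-variable skew Hall--Littlewood polynomials, carved out by the indicator $\bm{1}_{\zeta(\kappa,\bm{\lambda}) = \mathcal{J}}$ and carrying an overall prefactor $q^{(N-k)(N-k-1)/2}$. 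After reinstating the $\Pi(x;y)^{-1}$ factor, both sides assume exactly the shape of $\mathbb{P}_{\rm 6v}(\mathcal{I},\mathcal{J})$ (via the BBW-type formula \eqref{BBW-match}, which underlies the derivation of \eqref{P-6v-sym}) and of $\mathbb{P}_{\rm cHL}(\mathcal{I},\mathcal{J})$ (via \eqref{PcHL} combined with \eqref{asc-HL}). Equality of these two sides is exactly \eqref{2=1}.

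The hard part will be reconciling the two indexing conventions for the Gelfand--Tsetlin pattern: \eqref{multiple-D-action2} indexes the pattern as growing from $\varnothing$ to $\kappa$, while the coloured Hall--Littlewood measure \eqref{asc-HL} uses the pattern decreasing from $\mu^+$ to $\varnothing$. One must verify that, under the natural rename $\tilde\lambda^{(j)} := \lambda^{(N-j)}$, the resulting pairing of skew Hall--Littlewood factors with specific $x_j$ variables matches the one in \eqref{asc-HL}, likely after a compensating reversal of the $x$-alphabet that mirrors the reversed $y$-alphabet already present in $E_{\tilde\mu}(y_M,\dots,y_1)$. Secondary bookkeeping issues, namely tracking the $q^{(N-k)(N-k-1)/2}$ prefactor from \eqref{multiple-D-action2} and the factor $q^{-\|\mathcal{J}\|+|\mathcal{J}|}$ implicit in the normalized form \eqref{multiple-D-action}, are routine but must be carried through consistently for the two sides to agree on the nose.
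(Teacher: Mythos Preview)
Your approach is correct and uses the same essential toolkit as the paper: the Hall--Littlewood difference operators $D_j-D_{j-1}$ acting via \eqref{multiple-D-action}--\eqref{multiple-D-action2}, together with inclusion-exclusion on the $\mathcal{I}$-data, and the alphabet reversal $(x_1,\dots,x_N)\mapsto(x_N,\dots,x_1)$ to reconcile the two Gelfand--Tsetlin conventions. The main difference is the order of operations. The paper applies the difference operators directly to the inclusive identity \eqref{10.16}, which (since one only has $\ell(\kappa)\leq k$ at that stage) forces the use of the weaker relation \eqref{multiple-D-action} and produces the two-parameter triangular system \eqref{triang-sys} summed over $\mathcal{I}'\subset\mathcal{I}$ and $\mathcal{J}'\supset\mathcal{J}$, which must then be inverted. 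You instead perform M\"obius inversion on $\mathcal{I}$ first, pinning $\ell(\kappa)=k$ exactly, so that the sharper formula \eqref{multiple-D-action2} applies and lands directly on the exact $\mathcal{J}$-event without any residual triangular system. This reorganization is a genuine simplification of the bookkeeping, though the content is the same; your caveats about the $q$-prefactor and the ascending/descending GT-pattern relabelling are precisely the issues the paper handles via the $x$-alphabet reversal and the remark that ``powers of $q$ match on both sides of \eqref{triang-sys}, but their exact values are otherwise irrelevant.''
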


\begin{proof}
Comparing \eqref{P-6v-sym} and \eqref{P-cHL-sym}, the equality \eqref{simpler-match} implies that
\begin{multline}
\label{10.16}
\sum_{\kappa}
\sum_{\bm{\nu}}
\bm{1}_{\zeta(\kappa,\bm{\nu}) \supset \b{\mathcal{I}}}
\cdot
\prod_{i=1}^{M-1}
P_{\nu^{(i)} / \nu^{(i-1)}}(y_i)
\cdot
P_{\kappa / \nu^{(M-1)}}(y_M)
\cdot
Q_{\kappa}(x_1,\dots,x_N)
\\
=
\sum_{\kappa}
\sum_{\mu:\mu^{+}=\kappa}
E_{\tilde{\mu}}(y_M,\dots,y_1)
Q_{\kappa}(x_1,\dots,x_N)
\cdot
\bm{1}_{z(\mu) \supset \b{\mathcal{I}}},
\end{multline}
for any $\mathcal{I} = \{I_1,\dots,I_k\} \subset \{1,\dots,M\}$, $\b{\mathcal{I}} = \{1,\dots,M\} \backslash \mathcal{I}$. On either side of this equation, $\kappa$ is being summed over all partitions, but the indicator functions $\bm{1}_{\zeta(\kappa,\bm{\nu}) \supset \b{\mathcal{I}}}$ and $\bm{1}_{z(\mu) \supset \b{\mathcal{I}}}$ readily imply that nonzero terms only come from $\kappa$ with $\ell(\kappa) \leq k$. If we now act on both sides of \eqref{10.16} with $\left(D_{J_1}-D_{J_1-1}\right) \cdots \left( D_{(J_{N-k})}-D_{(J_{N-k})-1} \right)$, perform the relabelling $(x_1,\dots,x_N) \mapsto (x_N,\dots,x_1)$, and restore the reciprocal of the Cauchy kernel to the equation, we ultimately find that
\begin{align}
\label{triang-sys}
\sum_{\substack{
\mathcal{I}' \subset \mathcal{I},\mathcal{J}' \supset \mathcal{J}
\\ \vspace{-0.1cm} \\
|\mathcal{I}'| + |\mathcal{J}'| = N
}}
\prod_{i=1}^{N-k}
q^{-\#\{j: j < J_i,j \not\in \mathcal{J}'\}}
\cdot
\mathbb{P}_{\rm 6v}(\mathcal{I}',\mathcal{J}')
=
\sum_{\substack{
\mathcal{I}' \subset \mathcal{I},\mathcal{J}' \supset \mathcal{J}
\\ \vspace{-0.1cm} \\
|\mathcal{I}'| + |\mathcal{J}'| = N
}}
\prod_{i=1}^{N-k}
q^{-\#\{j: j < J_i,j \not\in \mathcal{J}'\}}
\cdot
\mathbb{P}_{\rm cHL}(\mathcal{I}',\mathcal{J}'),
\end{align}
where we have used \cite[Theorem 5.5]{BorodinBW} and the definition of $\mathbb{P}_{\rm 6v}(\mathcal{I}',\mathcal{J}')$  for the left hand side, and the definition \eqref{PcHL} of $\mathbb{P}_{\rm cHL}(\mathcal{I}',\mathcal{J}')$ for the right hand side. The latter (triangular) system of equations can be diagonalized to yield \eqref{2=1}. Note that the powers of $q$ match on both sides of \eqref{triang-sys}, but their exact values are otherwise irrelevant in reaching this conclusion.
\end{proof}

\section{A match between Cherednik--Dunkl and Macdonald difference operators}

One can also think of the subset $\mathcal{I}$ as arising from the action of suitable operators, and doing so leads to an interesting relation between the Macdonald difference and Cherednik--Dunkl operators. To see this, let us extract the coefficient of the Hall--Littlewood polynomial $Q_{\kappa}(x_1,\dots,x_N)$ from both sides of equation \eqref{10.16}, since the latter comprise a basis for the ring of symmetric functions in $(x_1,\dots,x_N)$. After some cosmetic relabelling we read the identity
\begin{align}
\label{hl-identity1}
\sum_{\bm{\nu}}
\bm{1}_{\zeta(\kappa,\bm{\nu}) \supset \theta}
\cdot
\prod_{i=1}^{M-1}
P_{\nu^{(i)} / \nu^{(i-1)}}(\tilde{y}_i)
\cdot
P_{\kappa / \nu^{(M-1)}}(\tilde{y}_M)
=
\sum_{\mu:\mu^{+}=\kappa}
E_{\mu}(y_1,\dots,y_M)
\cdot
\bm{1}_{z(\mu) \supset \tilde{\theta}},
\end{align}
for any partition $\kappa$ such that $\ell(\kappa) \leq M-k$, any set of pairwise distinct integers $\theta = \{\theta_1,\dots,\theta_{k}\}$, with 
$\tilde{\theta} = \{M-\theta_{1}+1,\dots,M-\theta_{k}+1\}$, and 
$\tilde{y}_i = y_{M-i+1}$ for all $1 \leq i \leq M$. In an analogous way to the diagonalization of the system \eqref{triang-sys}, one can rearrange the set of equations \eqref{hl-identity1} to deduce that
\begin{align}
\label{hl-identity2}
\sum_{\bm{\nu}}
\bm{1}_{\zeta(\kappa,\bm{\nu}) = \theta}
\cdot
\prod_{i=1}^{M-1}
P_{\nu^{(i)} / \nu^{(i-1)}}(\tilde{y}_i)
\cdot
P_{\kappa / \nu^{(M-1)}}(\tilde{y}_M)
=
\sum_{\mu:\mu^{+}=\kappa}
E_{\mu}(y_1,\dots,y_M)
\cdot
\bm{1}_{z(\mu) = \tilde{\theta}},
\end{align}
for any partition $\kappa$ such that $\ell(\kappa) = M-k$, any set of pairwise distinct integers $\theta = \{\theta_1,\dots,\theta_{k}\}$, with $\tilde{\theta} = \{M-\theta_{1}+1,\dots,M-\theta_k+1\}$, and any integer $1 \leq k \leq M$. The difference, compared with \eqref{hl-identity1}, is that the indicator functions in \eqref{hl-identity2} now fully determine $\zeta(\kappa,\bm{\nu})$ and $z(\mu)$.

These identities motivate the following result:
\begin{prop}
Define another version of the difference operators \eqref{D1N}, acting on subsets of the alphabet $(\tilde{y}_1,\dots,\tilde{y}_M) = (y_M,\dots,y_1)$ (with the parameter $p$ set to zero):
\begin{align*}
\tilde{D}_a
\index{D@$\tilde{D}_a$; reversed-alphabet Macdonald difference operators}
=
\sum_{i=1}^{a}
\Big(
\prod_{\substack{j\not=i \\ j=1}}^{a}
\frac{q \tilde{y}_i - \tilde{y}_j}{\tilde{y}_i - \tilde{y}_j}
\Big)
\tilde{\tau}_i,
\qquad
1 \leq a \leq M,
\end{align*}
where $\tilde{\tau}_i \cdot h(\tilde{y}_1,\dots,\tilde{y}_M) = h(\tilde{y}_1,\dots,\tilde{y}_{i-1},0,\tilde{y}_{i+1},\dots,\tilde{y}_M)$. Let us also recall the definition of the Cherednik--Dunkl operators \eqref{Y-tilde}, with $p=0$. Then for any set $\theta = \{1 \leq \theta_1 < \cdots < \theta_k \leq M\}$ one has the equality
\begin{align}
\label{mac-CD}
\left( \tilde{D}_{\theta_1} - \tilde{D}_{\theta_1-1} \right)
\cdots
\left( \tilde{D}_{\theta_{k}} - \tilde{D}_{\theta_{k}-1} \right)
=
\tilde{Y}_{\tilde{\theta}_1}
\dots
\tilde{Y}_{\tilde{\theta}_{k}},
\qquad
\tilde{\theta}_i = M-\theta_{i}+1,
\end{align}
as an operator identity on symmetric functions in $(y_1,\dots,y_M)$.
\end{prop}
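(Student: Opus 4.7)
The plan is to verify the operator identity \eqref{mac-CD} by testing both sides on the basis $\{P_\lambda(y_1,\dots,y_M)\}_\lambda$ of symmetric Hall--Littlewood polynomials in $M$ variables, and then matching their expansions in the non-symmetric basis $\{E_\mu\}$ via the identity \eqref{hl-identity2} proved earlier in this chapter. For the right-hand side, I would use $P_\lambda(y_1,\dots,y_M)=\sum_{\mu:\mu^+=\lambda}E_\mu(y_1,\dots,y_M;0,q)$ together with the eigenvalues $\langle\mu\rangle_i=\bm 1_{\mu_i=0}\,q^{\#\{j>i:\mu_j=0\}}$ of the Cherednik--Dunkl operators $\tilde Y_i$ at $p=0$ (computed from \eqref{rhomu} after incorporating the extra $q^{i-1}$ in the normalization \eqref{Y-tilde}), to get
\begin{equation*}
\tilde Y_{\tilde\theta_1}\cdots\tilde Y_{\tilde\theta_k}\,P_\lambda(y_1,\dots,y_M)=\sum_{\substack{\mu\,:\,\mu^+=\lambda\\ z(\mu)\supset\tilde\theta}} q^{\sum_j \#\{i>\tilde\theta_j:\mu_i=0\}}\,E_\mu(y_1,\dots,y_M).
\end{equation*}

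For the left-hand side, a short calculation based on the branching rule and the eigenrelation \eqref{HL-ops} gives
\begin{equation*}
(\tilde D_a-\tilde D_{a-1})\,P_\kappa(\tilde y_1,\dots,\tilde y_a)=q^{a-1-\ell(\kappa)}\sum_{\substack{\nu\prec\kappa\\\ell(\nu)=\ell(\kappa)}}P_\nu(\tilde y_1,\dots,\tilde y_{a-1})\,P_{\kappa/\nu}(\tilde y_a),
\end{equation*}
the $P$-analogue of \eqref{DN-diff2}. Iterating this against the full branching $P_\lambda(\tilde y_1,\dots,\tilde y_M)=\sum_{\bm\nu}\prod_i P_{\nu^{(i)}/\nu^{(i-1)}}(\tilde y_i)\,P_{\lambda/\nu^{(M-1)}}(\tilde y_M)$ produces
\begin{equation*}
(\tilde D_{\theta_1}-\tilde D_{\theta_1-1})\cdots(\tilde D_{\theta_k}-\tilde D_{\theta_k-1})\,P_\lambda=\sum_{\substack{\bm\nu\\ \zeta(\lambda,\bm\nu)\supset\theta}}q^{\sum_j(\theta_j-1-\ell(\nu^{(\theta_j)}))}\prod_{i=1}^{M-1}P_{\nu^{(i)}/\nu^{(i-1)}}(\tilde y_i)\,P_{\lambda/\nu^{(M-1)}}(\tilde y_M).
\end{equation*}
To match the two expansions, I would refine the sums by fixing the extra data $T=\zeta(\lambda,\bm\nu)\setminus\theta$ on the left and $\tilde T=z(\mu)\setminus\tilde\theta$ on the right; both have cardinality $M-k-\ell(\lambda)$ and correspond under the involution $a\mapsto M-a+1$. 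For each fixed $T\leftrightarrow\tilde T$, the identity \eqref{hl-identity2} (applied with $\theta$ replaced by $\theta\cup T$, which has cardinality $M-\ell(\lambda)$, as required) equates the restricted $\bm\nu$-sum on the left with the restricted $\mu$-sum on the right.

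It remains to check that the $q$-factors agree under this correspondence. Using $\ell(\nu^{(i)})=i-|\zeta(\lambda,\bm\nu)\cap[1,i]|$, a direct computation shows that the $q$-exponent on the left equals $\binom{k}{2}+\sum_{t\in T}\#\{j:\theta_j>t\}$, while on the right it is $\binom{k}{2}+\sum_{s\in\tilde T}\#\{j:\tilde\theta_j<s\}$; under $s=M-t+1$ these coincide, the identity $\#\{j:\theta_j\geq t\}=\#\{j:\theta_j>t\}$ being valid because $t\notin\theta$. The main obstacle is precisely this $q$-exponent bookkeeping, which requires simultaneously tracking the intermediate partition lengths produced by the iterated $(\tilde D-\tilde D)$ action and the zero-positions that appear as $\tilde Y$-eigenvalues; once those are aligned, the proof reduces entirely to \eqref{hl-identity2} and elementary combinatorics of the involution $a\mapsto M-a+1$.
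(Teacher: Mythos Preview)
Your proposal is correct and follows essentially the same approach as the paper: both test the identity on $P_\lambda$, expand the left-hand side via the iterated $(\tilde D_a-\tilde D_{a-1})$ action on the branching decomposition and the right-hand side via the Cherednik--Dunkl eigenvalues on the $E_\mu$ expansion, refine each side by the full set $\phi=\theta\cup T$ (equivalently $\tilde\phi=\tilde\theta\cup\tilde T$), match the $q$-exponents by the same count $\#\{j<\theta_i:\,j\notin\phi\}+\#\{j>\tilde\theta_i:\,j\in\tilde\phi\}=\theta_i-1$, and then invoke \eqref{hl-identity2} term-by-term. The only cosmetic difference is that you parametrize the refinement by the complement $T=\phi\setminus\theta$ and organize the $q$-bookkeeping as $\binom{k}{2}+\sum_{t\in T}\#\{j:\theta_j>t\}$, whereas the paper keeps $\phi$ itself and writes the exponent as $\sum_i(\theta_i-1)-\sum_i\#\{j<\theta_i:\,j\notin\phi\}$; these are the same computation.
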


\begin{proof}
One checks \eqref{mac-CD} by acting on an arbitrary Hall--Littlewood polynomial $P_{\kappa}(y_1,\dots,y_M) = \sum_{\mu: \mu^{+} = \kappa} E_{\mu}(y_1,\dots,y_M)$. The action of the left hand side can be obtained as follows 
(\cf\ \eqref{multiple-D-action}):
\begin{multline}
\label{lhs-action}
\left( \tilde{D}_{\theta_1} - \tilde{D}_{\theta_1-1} \right)
\cdots
\left( \tilde{D}_{\theta_{k}} - \tilde{D}_{\theta_{k}-1} \right)
P_{\kappa}(y_1,\dots,y_M)
\\
=
\prod_{i=1}^{k} q^{\theta_i-1}
\cdot
\sum_{\bm{\nu}}
\left( \bm{1}_{\zeta(\kappa,\bm{\nu}) \supset \theta} \right)
\cdot
\prod_{i \in \theta} q^{-\ell\left(\nu^{(i)}\right)}
\cdot
P_{\kappa/\nu^{(M-1)}}(\tilde{y}_M)
\cdot
\prod_{j=1}^{M-1}
P_{\nu^{(j)}/\nu^{(j-1)}}(\tilde{y}_j),
\\
=
\prod_{i=1}^{k} q^{\theta_i-1}
\cdot
\sum_{\phi \supset \theta}
\prod_{i=1}^{k} q^{-\#\{j: j < \theta_i, j\not\in \phi \}}
\cdot
\sum_{\bm{\nu}}
\left( \bm{1}_{\zeta(\kappa,\bm{\nu}) = \phi} \right)
\cdot
P_{\kappa/\nu^{(M-1)}}(\tilde{y}_M)
\cdot
\prod_{j=1}^{M-1}
P_{\nu^{(j)}/\nu^{(j-1)}}(\tilde{y}_j),
\end{multline}
where the final expression is summed over all sets $\phi$ which contain $\theta$ as a subset. 

For the action of the right hand side of \eqref{mac-CD}, one uses the de-symmetrized form of the Hall--Littlewood polynomial to calculate
\begin{multline}
\label{rhs-action}
\tilde{Y}_{\tilde{\theta}_1}
\dots
\tilde{Y}_{\tilde{\theta}_{k}}
\sum_{\mu: \mu^{+} = \kappa}
E_{\mu}(y_1,\dots,y_M)
=
\sum_{\mu: \mu^{+} = \kappa}
\bm{1}_{z(\mu) \supset \tilde{\theta}}
\cdot
\prod_{i=1}^{k}
q^{\#\{j: j > \tilde{\theta}_i, j \in z(\mu) \}}
\cdot
E_{\mu}(y_1,\dots,y_M),
\\
=
\sum_{\phi \supset \theta}
\prod_{i=1}^{k}
q^{\#\{j: j > \tilde{\theta}_i, j \in \tilde{\phi} \}}
\cdot
\sum_{\mu: \mu^{+} = \kappa}
\bm{1}_{z(\mu) = \tilde{\phi}}
\cdot
E_{\mu}(y_1,\dots,y_M),
\end{multline}
where $\tilde{\theta}_i = M-\theta_i+1$, $\tilde{\phi}_i = M-\phi_i+1$ as usual, and the final sum is over all sets $\phi$ which contain $\theta$ as a subset. Finally, one checks that 
\begin{align*}
\#\{j: j<\theta_i, j \not\in\phi\} + \#\{j: j>\tilde{\theta}_i, j\in\tilde{\phi}\} = 
\#\{j: j<\theta_i\} = \theta_i-1,
\end{align*}
which allows powers of $q$ to be matched between \eqref{lhs-action} and \eqref{rhs-action}, with their equality then being a direct consequence of \eqref{hl-identity2}.
\end{proof}

\chapter{Reduction to one-dimensional systems of particles}
\label{sec:reduction}

We start this chapter by considering the effect of taking principal specializations of the set of variables $(y_1,\dots,y_M)$ appearing in Theorem \ref{thm:2=3}; this lifts the result \eqref{2=3} to a statement in the stochastic version of the fused model \eqref{s-weights}, with inhomogeneous spins and vertical rapidities (\ie\ a different spin parameter $s_b$ and rapidity $y_b$ associated to each column of the lattice). We subsequently degenerate our matching result to three different interacting particle systems.

\section{Higher spin vertex model with inhomogeneous spins}
\label{ssec:inhom-model}

Throughout the next sections we shall work with the following vertex model: 
\begin{align}
\label{inhom-weights}
\begin{tabular}{|c|c|c|}
\hline
\quad
\tikz{0.7}{
\draw[lgray,line width=1.5pt,->] (-1,0) -- (1,0);
\draw[lgray,line width=4pt,->] (0,-1) -- (0,1);
\node[left] at (-1,0) {\tiny $0$};\node[right] at (1,0) {\tiny $0$};
\node[below] at (0,-1) {\tiny $\AA$};\node[above] at (0,1) {\tiny $\AA$};
}
\quad
&
\quad
\tikz{0.7}{
\draw[lgray,line width=1.5pt,->] (-1,0) -- (1,0);
\draw[lgray,line width=4pt,->] (0,-1) -- (0,1);
\node[left] at (-1,0) {\tiny $i$};\node[right] at (1,0) {\tiny $i$};
\node[below] at (0,-1) {\tiny $\AA$};\node[above] at (0,1) {\tiny $\AA$};
}
\quad
&
\quad
\tikz{0.7}{
\draw[lgray,line width=1.5pt,->] (-1,0) -- (1,0);
\draw[lgray,line width=4pt,->] (0,-1) -- (0,1);
\node[left] at (-1,0) {\tiny $0$};\node[right] at (1,0) {\tiny $i$};
\node[below] at (0,-1) {\tiny $\AA$};\node[above] at (0,1) {\tiny $\AA^{-}_i$};
}
\quad
\\[1.3cm]
\quad
$\dfrac{1-s_b x_a y_b q^{\As{1}{n}}}{1-s_b x_a y_b}$
\quad
& 
\quad
$\dfrac{s_b (s_b q^{A_i}-x_a y_b) q^{\As{i+1}{n}}}{1-s_b x_a y_b}$
\quad
& 
\quad
$\dfrac{s_b x_a y_b (q^{A_i}-1) q^{\As{i+1}{n}}}{1-s_b x_a y_b}$
\quad
\\[0.7cm]
\hline
\quad
\tikz{0.7}{
\draw[lgray,line width=1.5pt,->] (-1,0) -- (1,0);
\draw[lgray,line width=4pt,->] (0,-1) -- (0,1);
\node[left] at (-1,0) {\tiny $i$};\node[right] at (1,0) {\tiny $0$};
\node[below] at (0,-1) {\tiny $\AA$};\node[above] at (0,1) {\tiny $\AA^{+}_i$};
}
\quad
&
\quad
\tikz{0.7}{
\draw[lgray,line width=1.5pt,->] (-1,0) -- (1,0);
\draw[lgray,line width=4pt,->] (0,-1) -- (0,1);
\node[left] at (-1,0) {\tiny $i$};\node[right] at (1,0) {\tiny $j$};
\node[below] at (0,-1) {\tiny $\AA$};\node[above] at (0,1) 
{\tiny $\AA^{+-}_{ij}$};
}
\quad
&
\quad
\tikz{0.7}{
\draw[lgray,line width=1.5pt,->] (-1,0) -- (1,0);
\draw[lgray,line width=4pt,->] (0,-1) -- (0,1);
\node[left] at (-1,0) {\tiny $j$};\node[right] at (1,0) {\tiny $i$};
\node[below] at (0,-1) {\tiny $\AA$};\node[above] at (0,1) {\tiny $\AA^{+-}_{ji}$};
}
\quad
\\[1.3cm] 
\quad
$\dfrac{1-s_b^2 q^{\As{1}{n}}}{1-s_b x_a y_b}$
\quad
& 
\quad
$\dfrac{s_b x_a y_b(q^{A_j}-1) q^{\As{j+1}{n}}}{1-s_b x_a y_b}$
\quad
&
\quad
$\dfrac{s_b^2 (q^{A_i}-1)q^{\As{i+1}{n}}}{1-s_b x_a y_b}$
\quad
\\[0.7cm]
\hline
\end{tabular} 
\end{align}
valid for $1 \leq i < j \leq n$. The weights shown above apply to the vertex at the intersection of the $a$-th horizontal and $b$-th vertical lines. The index $a$ will take values $1 \leq a \leq N$: depending on the partition function under consideration, it will either increase or decrease as horizontal lines are read from bottom to top of the lattice. The index $b$ will take values $0 \leq b \leq M$ (it is more convenient to start the labelling at $0$ rather than $1$). These are the stochastic $\tilde{L}$-weights of Proposition \ref{prop:stoch}, with explicitly shown dependence of parameters on rows and columns.

\section{Higher spin partition functions $\mathbb{Z}_{M,N}(\mathcal{I},\mathcal{J})$ and $\mathbb{X}_{M,N}(\mathcal{I},\mathcal{J})$}
\label{ssec:inhom-model2}

\index{Z4@$\mathbb{Z}_{M,N}(\mathcal{I},\mathcal{J})$}
\index{X4@$\mathbb{X}_{M,N}(\mathcal{I},\mathcal{J})$}

Let us now define fused analogues of the previous quantities $Z_{M,N}(\mathcal{I},\mathcal{J})$ and $X_{M,N}(\mathcal{I},\mathcal{J})$. In what follows $\mathcal{J}$ will continue to denote a set $\{1 \leq J_1 < \cdots < J_{\ell} \leq N\}$ of increasing positive integers, but $\mathcal{I}$ will now take the form $\{0 \leq I_1 \leq \cdots \leq I_k \leq M\}$; that is, we relax the constraint that the parts of $\mathcal{I}$ be pairwise distinct, and allow them to assume the value $0$.

The fused analogue of $Z_{M,N}(\mathcal{I},\mathcal{J})$ will be denoted $\mathbb{Z}_{M,N}(\mathcal{I},\mathcal{J})$. It is defined as the partition function of the model \eqref{inhom-weights} at $n=1$ (colour-blind case) inside the $M \times N$ rectangle, where the $a$-th horizontal line (counted from the {\it top}) has rapidity $x_a$, while the $b$-th vertical line (counted from the left) has spin parameter $s_b$ and rapidity $y_b$. The boundary conditions are chosen as follows: {\bf 1.} There is an incoming horizontal path at the edge $(-1,j) \rightarrow (0,j)$ for all $1 \leq j \leq N$; {\bf 2.} The edge $(i,0) \rightarrow (i,1)$ is devoid of a path, for all $0 \leq i \leq M$; {\bf 3.} There is an outgoing horizontal path at the edge $(M,N-J_a+1) \rightarrow (M+1,N-J_a+1)$ for all $1 \leq a \leq \ell$; {\bf 4.} There is an outgoing vertical path at the edge $(I_b,N) \rightarrow (I_b,N+1)$ for all $1 \leq b \leq k$, which may lead to multiple outgoing paths at any given vertical edge. See the left panel of Figure \ref{fig:fus-6v-quad} for an illustration of $\mathbb{Z}_{M,N}(\mathcal{I},\mathcal{J})$.

\begin{figure}
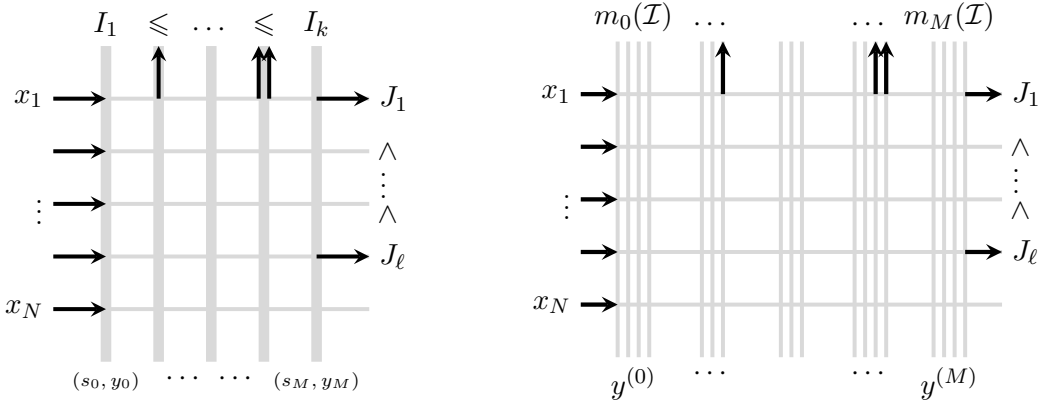

\tikz{0.7}{
\foreach\x in {1,...,5}{
\draw[lgray,line width=4pt] (\x,0) -- (\x,6);
}
\foreach\y in {1,...,5}{
\draw[lgray,line width=1.5pt] (0,\y) -- (6,\y);
\draw[line width=1.5pt,->] (0,\y) -- (1,\y);
}
\foreach\x in {2,3.9,4.1}{
\draw[line width=1.5pt,->] (\x,5) -- (\x,6);
}
\foreach\y in {2,5}{
\draw[line width=1.5pt,->] (5,\y) -- (6,\y);
}
\node[above] at (5,6) {$I_k$};
\node[above] at (4,6) {$\leq$};
\node[above] at (3,6) {$\cdots$};
\node[above] at (2,6) {$\leq$};
\node[above] at (1,6) {$I_1$};
\node[below] at (5,0) {\tiny $(s_M,y_M)$};
\node[below] at (2.5,0) {$\cdots$};
\node[below] at (3.5,0) {$\cdots$};
\node[below] at (1,0) {\tiny $(s_0,y_0)$};
\node[left] at (0,1) {$x_N$};
\node[left] at (0,3) {$\vdots$};
\node[left] at (0,5) {$x_1$};
\node[right] at (6,2) {$J_{\ell}$};
\node[right] at (6,2.8) {$\rotgr$};
\node[right] at (6,3.5) {$\vdots$};
\node[right] at (6,4) {$\rotgr$};
\node[right] at (6,5) {$J_1$};
}
\qquad\quad
\tikz{0.7}{
\foreach\y in {1,...,5}{
\draw[lgray,line width=1.5pt] (0,\y) -- (8,\y);
}
\foreach\x in {0.7,0.9,1.1,1.3}{
\draw[lgray,line width=1.5pt] (\x,0) -- (\x,6);
}
\foreach\x in {2.3,2.5,2.7}{
\draw[lgray,line width=1.5pt] (\x,0) -- (\x,6);
}
\foreach\x in {3.8,4,4.2}{
\draw[lgray,line width=1.5pt] (\x,0) -- (\x,6);
}
\foreach\x in {5.2,5.4,5.6,5.8}{
\draw[lgray,line width=1.5pt] (\x,0) -- (\x,6);
}
\foreach\x in {6.7,6.9,7.1,7.3}{
\draw[lgray,line width=1.5pt] (\x,0) -- (\x,6);
}
\node[left] at (0,1) {$x_N$};
\node[left] at (0,3) {$\vdots$};
\node[left] at (0,5) {$x_1$};
\node[below] at (1,0) {$y^{(0)}$};
\node[below] at (2.5,0) {$\cdots$};
\node[below] at (5.5,0) {$\cdots$};
\node[below] at (7,0) {$y^{(M)}$};
\node[above] at (1,6) {$m_0(\mathcal{I})$};
\node[above] at (2.5,6) {$\cdots$};
\node[above] at (5.5,6) {$\cdots$};
\node[above] at (7,6) {$m_M(\mathcal{I})$};
\node[right] at (8,2) {$J_{\ell}$};
\node[right] at (8,2.8) {$\rotgr$};
\node[right] at (8,3.5) {$\vdots$};
\node[right] at (8,4) {$\rotgr$};
\node[right] at (8,5) {$J_1$};
\draw[ultra thick,->] (0,5) -- (0.7,5);
\draw[ultra thick,->] (0,4) -- (0.7,4);
\draw[ultra thick,->] (0,3) -- (0.7,3);
\draw[ultra thick,->] (0,2) -- (0.7,2); 
\draw[ultra thick,->] (0,1) -- (0.7,1);
\draw[ultra thick,->] (5.8,5) -- (5.8,6);
\draw[ultra thick,->] (5.6,5) -- (5.6,6);
\draw[ultra thick,->] (2.7,5) -- (2.7,6);
\draw[ultra thick,->] (7.3,5) -- (8,5);
\draw[ultra thick,->] (7.3,2) -- (8,2);
}
\caption{Left panel: lattice definition of $\mathbb{Z}_{M,N}(\mathcal{I},\mathcal{J})$. The set $\mathcal{I}$ labels the coordinates of $k$ paths leaving via the top boundary, while $\mathcal{J}$ gives the coordinates of the remaining $\ell$ paths leaving via the right boundary. Right panel: the pre-fused version of $\mathbb{Z}_{M,N}(\mathcal{I},\mathcal{J})$, $\mathcal{Z}_{M,N}(\mathcal{I},\mathcal{J})$. For each $0 \leq i \leq M$, a total of $m_i(\mathcal{I})$ paths leave via the top of the $i$-th bundle, where $m_i(\mathcal{I}) = \#\{a: I_a = i\}$, and they are positioned as far to the right as possible within the bundle.}
\label{fig:fus-6v-quad}
\end{figure}

The fused analogue of $X_{M,N}(\mathcal{I},\mathcal{J})$ will be denoted $\mathbb{X}_{M,N}(\mathcal{I},\mathcal{J})$. It is defined as the partition function of the model \eqref{inhom-weights} at $n=N$ (rainbow case) inside the $M \times N$ rectangle, where the $a$-th horizontal line (counted from the {\it bottom}) has rapidity $x_a$, while the $b$-th vertical line (counted from the left) has spin parameter $s_b$ and rapidity $y_b$. The boundary conditions are chosen as follows: {\bf 1.} There is an incoming horizontal path of colour $j$ at the edge $(-1,j) \rightarrow (0,j)$ for all $1 \leq j \leq N$; {\bf 2.} The edge $(i,0) \rightarrow (i,1)$ is devoid of a path, for all $0 \leq i \leq M$; {\bf 3.} The collection of edges $(M,j) \rightarrow (M+1,j)$, $1 \leq j \leq N$ feature a total of $\ell$ outgoing horizontal paths, of colours $\{J_1,\dots,J_{\ell}\}$. These edges are summed over all choices which obey this criterion; {\bf 4.} There is an outgoing vertical path, of unspecified colour, at the edge $(I_b,N) \rightarrow (I_b,N+1)$ for all $1 \leq b \leq k$. These edges are also summed over all choices which obey this criterion. See the left panel of Figure \ref{fig:fus-col-quad} for an illustration of $\mathbb{X}_{M,N}(\mathcal{I},\mathcal{J})$.
\begin{figure}
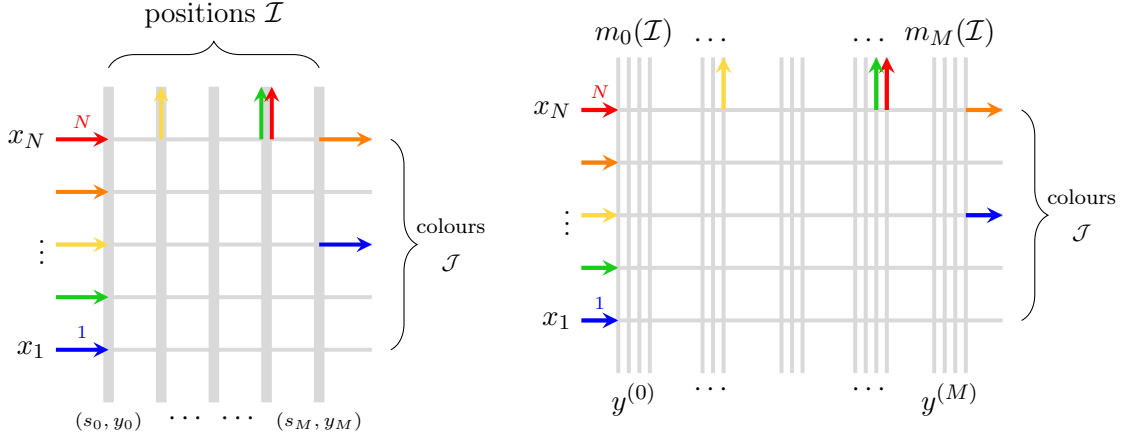

\tikz{0.7}{
\foreach\x in {1,...,5}{
\draw[lgray,line width=4pt] (\x,0) -- (\x,6);
}
\foreach\y in {1,...,5}{
\draw[lgray,line width=1.5pt] (0,\y) -- (6,\y);
}
\draw[line width=1.5pt,red,->] (0,5) -- (1,5) node[midway,above] {\tiny $N$};
\draw[line width=1.5pt,orange,->] (0,4) -- (1,4);
\draw[line width=1.5pt,yellow,->] (0,3) -- (1,3);
\draw[line width=1.5pt,green,->] (0,2) -- (1,2);
\draw[line width=1.5pt,blue,->] (0,1) -- (1,1) node[midway,above] {\tiny $1$};
\draw[line width=1.5pt,yellow,->] (2,5) -- (2,6);
\draw[line width=1.5pt,green,->] (3.9,5) -- (3.9,6);
\draw[line width=1.5pt,red,->] (4.1,5) -- (4.1,6);
\draw[line width=1.5pt,blue,->] (5,3) -- (6,3);
\draw[line width=1.5pt,orange,->] (5,5) -- (6,5);
\draw [decorate,decoration={brace,amplitude=10pt},xshift=0pt,yshift=-4pt] (1,6.5) -- (5,6.5) 
node [black,midway,yshift=0.7cm] {positions\ $\mathcal{I}$};
\node[below] at (5,0) {\tiny $(s_M,y_M)$};
\node[below] at (2.5,0) {$\cdots$};
\node[below] at (3.5,0) {$\cdots$};
\node[below] at (1,0) {\tiny $(s_0,y_0)$};
\node[left] at (0,1) {$x_1$};
\node[left] at (0,3) {$\vdots$};
\node[left] at (0,5) {$x_N$};
\draw [decorate,decoration={brace,amplitude=10pt},xshift=-4pt,yshift=0pt] (6.5,5) -- (6.5,1) 
node [black,midway,xshift=0.8cm] {$\substack{\text{colours}\\ \\ \mathcal{J}}$};
}
\
\tikz{0.7}{
\foreach\y in {1,...,5}{
\draw[lgray,line width=1.5pt] (0,\y) -- (8,\y);
}
\foreach\x in {0.7,0.9,1.1,1.3}{
\draw[lgray,line width=1.5pt] (\x,0) -- (\x,6);
}
\foreach\x in {2.3,2.5,2.7}{
\draw[lgray,line width=1.5pt] (\x,0) -- (\x,6);
}
\foreach\x in {3.8,4,4.2}{
\draw[lgray,line width=1.5pt] (\x,0) -- (\x,6);
}
\foreach\x in {5.2,5.4,5.6,5.8}{
\draw[lgray,line width=1.5pt] (\x,0) -- (\x,6);
}
\foreach\x in {6.7,6.9,7.1,7.3}{
\draw[lgray,line width=1.5pt] (\x,0) -- (\x,6);
}
\node[left] at (0,1) {$x_1$};
\node[left] at (0,3) {$\vdots$};
\node[left] at (0,5) {$x_N$};
\node[below] at (1,0) {$y^{(0)}$};
\node[below] at (2.5,0) {$\cdots$};
\node[below] at (5.5,0) {$\cdots$};
\node[below] at (7,0) {$y^{(M)}$};
\node[above] at (1,6) {$m_0(\mathcal{I})$};
\node[above] at (2.5,6) {$\cdots$};
\node[above] at (5.5,6) {$\cdots$};
\node[above] at (7,6) {$m_M(\mathcal{I})$};
\draw [decorate,decoration={brace,amplitude=10pt},xshift=-4pt,yshift=0pt] (8.5,5) -- (8.5,1) 
node [black,midway,xshift=0.8cm] {$\substack{\text{colours}\\ \\ \mathcal{J}}$};
\draw[ultra thick,->,red] (0,5) -- (0.7,5) node[midway,above] {\tiny $N$};
\draw[ultra thick,->,orange] (0,4) -- (0.7,4);
\draw[ultra thick,->,yellow] (0,3) -- (0.7,3);
\draw[ultra thick,->,green] (0,2) -- (0.7,2); 
\draw[ultra thick,->,blue] (0,1) -- (0.7,1) node[midway,above] {\tiny $1$};
\draw[ultra thick,->,red] (5.8,5) -- (5.8,6);
\draw[ultra thick,->,green] (5.6,5) -- (5.6,6);
\draw[ultra thick,->,yellow] (2.7,5) -- (2.7,6);
\draw[ultra thick,->,orange] (7.3,5) -- (8,5);
\draw[ultra thick,->,blue] (7.3,3) -- (8,3);
}
\caption{Left panel: lattice definition of $\mathbb{X}_{M,N}(\mathcal{I},\mathcal{J})$. The set $\mathcal{I}$ labels the coordinates of $k$ paths leaving via the top boundary, while $\mathcal{J}$ gives the colours of the remaining $\ell$ paths which exit via the right boundary. Right panel: the pre-fused version of $\mathbb{X}_{M,N}(\mathcal{I},\mathcal{J})$, $\mathcal{X}_{M,N}(\mathcal{I},\mathcal{J})$. For each $0 \leq i \leq M$, a total of $m_i(\mathcal{I})$ paths leave via the top of the $i$-th bundle, and they are positioned as far to the right as possible within the bundle. The top boundary is summed over all permutations of the exiting colours, holding the outgoing coordinates fixed; the right boundary is summed over all positions where the colours $\mathcal{J}$ exit.}
\label{fig:fus-col-quad}
\end{figure}

\begin{thm}\label{thm:fused2=3}
Fix two integers $M,N \geq 1$ and two sets $\mathcal{I} = \{0 \leq I_1 \leq \cdots \leq I_k \leq M\}$ and $\mathcal{J} = \{1 \leq J_1 < \cdots < J_{\ell} \leq N\}$ whose cardinalities satisfy $k+\ell=N$. We have the following equality of partition functions:
\begin{align}
\label{fused-pf=}
\mathbb{Z}_{M,N}(\mathcal{I},\mathcal{J})
=
\mathbb{X}_{M,N}(\mathcal{I},\mathcal{J}).
\end{align}
\end{thm}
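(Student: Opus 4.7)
\medskip

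The plan is to deduce Theorem \ref{thm:fused2=3} from Theorem \ref{thm:2=3} via the fusion procedure, in the same spirit as Theorem \ref{thm:fusion-F} connects the pre-fused functions $\mathcal{F}_\mu$ to the non-symmetric spin Hall--Littlewood functions $f_\mu$. The weights \eqref{inhom-weights} are exactly the column-inhomogeneous version of the stochastic $\tilde{L}$ weights, and the construction of Appendix \ref{app:fusion} tells us they arise from bundling $J_k$ adjacent columns of the unfused model \eqref{col-vert} (with vertical rapidities in a geometric progression of ratio $q$ with base $s_k y_k$) and analytically continuing $q^{J_k} \mapsto s_k^{-2}$.

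First I would introduce pre-fused partition functions $\mathcal{Z}_{M,N}(\mathcal{I},\mathcal{J})$ and $\mathcal{X}_{M,N}(\mathcal{I},\mathcal{J})$ in the unfused model \eqref{col-vert}, as depicted in the right panels of Figures \ref{fig:fus-6v-quad} and \ref{fig:fus-col-quad}. Fix integers $J_0,\dots,J_M$ with $J_k \geq m_k(\mathcal{I})$, where $m_k(\mathcal{I}) = \#\{a : I_a = k\}$. Replace the $k$-th vertical line of the fused picture by a bundle of $J_k$ unfused vertical lines with rapidities $\bigl(y^{(k)}_1,\dots,y^{(k)}_{J_k}\bigr)$, and impose: on the top of the $k$-th bundle, exactly $m_k(\mathcal{I})$ outgoing paths, occupying the rightmost $m_k(\mathcal{I})$ vertical edges; the right boundary conditions, bottom boundary, and left (coloured or uncoloured) incoming paths are inherited unchanged from the fused problem. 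For the coloured object $\mathcal{X}_{M,N}$ we additionally sum over permutations of outgoing colours inside each bundle in the ordered manner prescribed by Definition \ref{def:prefus} (outgoing colours inside a bundle are arranged in increasing order from left to right).

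Second, I would observe that the pre-fused set of outgoing column coordinates is strictly increasing (we placed them rightmost inside each bundle, and distinct bundles occupy disjoint columns), so Theorem \ref{thm:2=3}, applied with ``$M$'' replaced by the total horizontal size $\sum_{k=0}^{M}J_k$ and ``$\mathcal{I}$'' replaced by this strictly-increasing collection of positions, yields
\begin{equation*}
\mathcal{Z}_{M,N}(\mathcal{I},\mathcal{J}) = \mathcal{X}_{M,N}(\mathcal{I},\mathcal{J}).
\end{equation*}

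Third, I would carry out the fusion. Specialize the bundle rapidities as in \eqref{geom-spec2} of Remark \ref{rmk:general-spins},
\begin{equation*}
y^{(k)}_j = s_k y_k q^{J_k-j}, \qquad 1 \leq j \leq J_k, \quad 0 \leq k \leq M,
\end{equation*}
and then analytically continue $q^{J_k} \mapsto s_k^{-2}$. By the same mechanism used in the proof of Theorem \ref{thm:fusion-F} (together with the colour-blind analogue of the same fusion identity, which is classical and underlies Chapter \ref{sec:models}), each bundle collapses onto a single fused vertical line carrying parameters $(s_k, y_k)$ and with vertex weights exactly \eqref{inhom-weights}. Because we pre-positioned the outgoing paths rightmost in each bundle, the combinatorial factor produced by fusion is identical on the two sides and drops out; what remains is precisely $\mathbb{Z}_{M,N}(\mathcal{I},\mathcal{J})$ on one side and $\mathbb{X}_{M,N}(\mathcal{I},\mathcal{J})$ on the other.

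The main technical hurdle I anticipate is bookkeeping in the last step: one must verify that the summation on the right boundary over which paths of colours $\mathcal{J}$ exit, and the summation on the top boundary over the colour permutations inside each bundle, are compatible with the ordering constraint required by fusion (outgoing colours inside each bundle in strictly increasing order). This matches the setup of Definition \ref{def:prefus} and Theorem \ref{thm:fusion-F}, so no new ideas are needed, but one must check that the fusion normalization $\prod_{k}(1-q)^{m_k(\mathcal{I})}/(q^{J_k};q^{-1})_{m_k(\mathcal{I})}$ together with the stochastic $(-s_k)$ factors appears identically on both sides of the pre-fused equality, and thus cancels when one compares $\mathbb{Z}_{M,N}$ and $\mathbb{X}_{M,N}$. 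Once this accounting is in place, the resulting equality is independent of the auxiliary $J_k$'s (as it must be, since neither side of \eqref{fused-pf=} depends on them), and \eqref{fused-pf=} follows.
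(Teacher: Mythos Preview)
Your approach is essentially the paper's own proof: introduce pre-fused objects $\mathcal{Z}_{M,N}$, $\mathcal{X}_{M,N}$, match them via Theorem \ref{thm:2=3}, then specialize bundle rapidities geometrically and analytically continue $q^{J_k}\mapsto s_k^{-2}$ to land on $\mathbb{Z}_{M,N}$, $\mathbb{X}_{M,N}$ with identical prefactors that cancel.

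One point in your write-up is internally inconsistent and happens to be exactly the ``main technical hurdle'' you flag. You say you ``sum over permutations of outgoing colours inside each bundle'' but then invoke Definition \ref{def:prefus}, where colours inside a bundle are \emph{fixed} in increasing order. These are different objects, and only the first makes the argument work. To apply Theorem \ref{thm:2=3} you must sum over all colour assignments at the top positions, hence over all permutations inside each bundle; this is how $X_{M,N}$ is defined in Figure \ref{fig:col-quad}. Under the geometric specialization, $q$-exchangeability (Proposition \ref{prop:q-exch}) then says each such permutation equals $q^{\mathrm{inv}(\sigma)}$ times the ordered term, producing the factor $\sum_{\sigma\in\mathfrak{S}_{m_j}}q^{\mathrm{inv}(\sigma)}=(q;q)_{m_j}/(1-q)^{m_j}$ per bundle. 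Combined with the rainbow normalization $Z_q(K_j;\K)^{-1}=(q;q)_{K_j-m_j}/(q;q)_{K_j}$ from \eqref{stack-Nrow}, this yields exactly the colour-blind prefactor $(q;q)_{K_j-m_j}(q;q)_{m_j}/(q;q)_{K_j}$ appearing on the $\mathcal{Z}$ side. If instead you used only the ordered arrangement of Definition \ref{def:prefus}, the $(q;q)_{m_j}$ would be missing and the prefactors would not match. So: keep the full permutation sum in the definition of $\mathcal{X}_{M,N}$ and invoke $q$-exchangeability in Step 3; with that correction your outline is the paper's proof.
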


\begin{proof}
The idea of the proof is to establish a pre-fused version of the equality \eqref{fused-pf=}, which we are able to do by virtue of Theorem \ref{thm:2=3}, and then to apply principal specializations/analytic continuation to map to the desired statement in the fused model \eqref{inhom-weights}.

Let us consider the partition function as shown in the right panel of Figure \ref{fig:fus-6v-quad}, which we denote $\mathcal{Z}_{M,N}(\mathcal{I},\mathcal{J})$. Each vertex in the lattice is of unfused type, and assumes one of the six possible forms \eqref{six-vert}. The boundary conditions assigned to external horizontal edges match completely with the corresponding boundaries in $\mathbb{Z}_{M,N}(\mathcal{I},\mathcal{J})$. Where the two partition functions differ is with respect to the vertical lines; in $\mathcal{Z}_{M,N}(\mathcal{I},\mathcal{J})$ we group these lines into bundles with respective cardinalities $(K_0,K_1,\dots,K_M)$, such that the $i$-th line within the $j$-th bundle (as counted from the left) carries vertical rapidity $y^{(j)}_i$. The bottom external edges of the $j$-th bundle are devoid of paths, while a total of $m_j(\mathcal{I})$ paths exit via the top external edges of this bundle, where $m_j(\mathcal{I}) = \#\{a: I_a = j\}$. The paths that exit from the top of each bundle are positioned as far to the right as possible within the bundle.

We shall also study the coloured pre-fused partition function shown in the right panel of Figure \ref{fig:fus-col-quad}, which we denote $\mathcal{X}_{M,N}(\mathcal{I},\mathcal{J})$. Its vertices are also of unfused type, and assume the possible forms \eqref{col-vert}. As we did above, we assign boundary conditions to external horizontal edges which match with the corresponding boundaries in $\mathbb{X}_{M,N}(\mathcal{I},\mathcal{J})$; namely, there is a path of colour $i$ entering via the $i$-th left external edge (counted from the bottom), and the right external edges of $\mathcal{X}_{M,N}(\mathcal{I},\mathcal{J})$ are summed over all ways of positioning the colours $\mathcal{J}$ over the $N$ available sites. The vertical lines of $\mathbb{X}_{M,N}(\mathcal{I},\mathcal{J})$ are again grouped into bundles with respective cardinalities $(K_0,K_1,\dots,K_M)$, such that the $i$-th line within the $j$-th bundle has rapidity $y^{(j)}_i$. Bottom external edges of the $j$-th bundle are devoid of paths, while a total of $m_j(\mathcal{I})$ paths exit via the top external edges of this bundle, where $m_j(\mathcal{I})$ has the same definition as above. We continue to enforce the constraint that paths leave each bundle via the rightmost external edges, but with an extra degree of freedom in the case of $\mathcal{X}_{M,N}(\mathcal{I},\mathcal{J})$: namely, having fixed the positions where paths must leave the lattice, we sum over all ways of distributing the colours $\{1,\dots,N\}\backslash\mathcal{J}$ over those edges.

Now by application of Theorem \ref{thm:2=3}, we immediately see that
\begin{align}
\label{prefused-pf=}
\mathcal{Z}_{M,N}(\mathcal{I},\mathcal{J})
=
\mathcal{X}_{M,N}(\mathcal{I},\mathcal{J}).
\end{align}
It remains to see what happens to both sides of \eqref{prefused-pf=} after taking the principal specializations $y^{(j)}_i = s_j y_j q^{K_j-i}$ and performing the analytic continuation $q^{K_j} \mapsto s_j^{-2}$. In the case of $\mathcal{Z}_{M,N}(\mathcal{I},\mathcal{J})$, we apply the equations \eqref{stack-Nrow} and \eqref{an-cont} with $n=1$ to calculate the fusion of each bundle of the lattice; the result is
\begin{align}
\nonumber
\mathcal{Z}_{M,N}(\mathcal{I},\mathcal{J})
\Big|_{y^{(j)}_i = s_j y_j q^{K_j-i}}
\Big|_{q^{K_j} \mapsto s_j^{-2}}
&=
\prod_{j=0}^{M}
\left.
\frac{(q;q)_{K_j-m_j(\mathcal{I})} (q;q)_{m_j(\mathcal{I})}}
{(q;q)_{K_j}}
\right|_{q^{K_j} \mapsto s_j^{-2}}
\cdot
\mathbb{Z}_{M,N}(\mathcal{I},\mathcal{J})
\\
\label{aab}
&=
\prod_{j=0}^{M}
\frac{(q;q)_{m_j(\mathcal{I})}}
{(s_j^{-2};q^{-1})_{m_j(\mathcal{I})}}
\cdot
\mathbb{Z}_{M,N}(\mathcal{I},\mathcal{J}).
\end{align}
The prefactor that arises above is the inverse of that on the left hand side of \eqref{stack-Nrow}. The calculation is similar for $\mathcal{X}_{M,N}(\mathcal{I},\mathcal{J})$, except that there is a sum over the colours of the paths which exit via the top of each bundle. By virtue of the $q$-exchangeability property \eqref{q-exch}, the terms in this sum differ from each other only up to powers of $q$. Applying equations \eqref{stack-Nrow} and \eqref{an-cont} to each bundle of $\mathcal{X}_{M,N}(\mathcal{I},\mathcal{J})$ (keeping $n$ generic), we find that
\begin{multline}
\label{aaa}
\mathcal{X}_{M,N}(\mathcal{I},\mathcal{J})
\Big|_{y^{(j)}_i = s_j y_j q^{K_j-i}}
\Big|_{q^{K_j} \mapsto s_j^{-2}}
\\
=
\prod_{j=0}^{M}
\left.
\left(
\sum_{\sigma \in \mathfrak{S}_{m_j(\mathcal{I})}}
q^{{\rm inv}(\sigma)}
\right)
\frac{(q;q)_{K_j-m_j(\mathcal{I})} (1-q)^{m_j(\mathcal{I})}}
{(q;q)_{K_j}}
\right|_{q^{K_j} \mapsto s_j^{-2}}
\cdot
\mathbb{X}_{M,N}(\mathcal{I},\mathcal{J}),
\end{multline}
where we use ${\rm inv}(\sigma) = \#\{a < b: \sigma(a) > \sigma(b)\}$. Now since
\begin{align}
\sum_{\sigma \in \mathfrak{S}_m}
q^{{\rm inv}(\sigma)}
=
\frac{(q;q)_m}{(1-q)^m},
\qquad
\forall\ m \geq 1,
\end{align}
the factors on the right hand side of \eqref{aaa} can be regrouped to yield
\begin{align}
\label{aac}
\mathcal{X}_{M,N}(\mathcal{I},\mathcal{J})
\Big|_{y^{(j)}_i = s_j y_j q^{K_j-i}}
\Big|_{q^{K_j} \mapsto s_j^{-2}}
=
\prod_{j=0}^{M}
\frac{(q;q)_{m_j(\mathcal{I})}}
{(s_j^{-2};q^{-1})_{m_j(\mathcal{I})}}
\cdot
\mathbb{X}_{M,N}(\mathcal{I},\mathcal{J}).
\end{align}
Combining equations \eqref{prefused-pf=}, \eqref{aab} and \eqref{aac} completes the proof of \eqref{fused-pf=}.

\end{proof}

\section{Reduction to ASEP}

We come to the first application of the matching in Theorem \ref{thm:fused2=3}; namely, its reduction to the asymmetric simple exclusion process (ASEP). In order to carry out this reduction one needs the $s_b = q^{-1/2},\ \forall\ b \geq 1$ specialization of \eqref{fused-pf=}, when the underlying model \eqref{inhom-weights} reverts to the fundamental vertex model \eqref{col-vert}. 


\subsection{Multi-species ASEP}
\label{sssec:masep}

Before progressing to these details, let us recall the definition of the multi-species ASEP. It is a system of interacting, coloured particles with the following properties:
\begin{itemize}
\item The system lives on the infinite one-dimensional integral lattice, with sites labelled by $\mathbb{Z}$.
\item Every particle in the system has a colour prescribed to it, which is a positive integer. 
\item Each site can be occupied by at most one particle. Accordingly, at any point in time $t$, a configuration of the multi-species ASEP is given by the list 
\begin{align*}
\index{ah2@$\eta^{\rm mASEP}(t)$; mASEP occupation data}
\eta^{\rm mASEP}(t) = \{\dots,\eta_{-1}(t),\eta_0(t),\eta_1(t),\dots\},
\qquad
\eta_i(t) \in \mathbb{N},
\end{align*}
where $0$ indicates an unoccupied site, and an integer $j \geq 1$ indicates a particle of colour $j$.

\item Let $\{\eta_{i-1},\eta_i,\eta_{i+1}\}$ be the occupation data for the site $i$ and its two neighbours, at some point in time. This site is assigned two exponential clocks: a left clock of rate $q \cdot {\bm 1}_{\eta_i > \eta_{i-1}}$ and a right clock of rate ${\bm 1} _{\eta_i > \eta_{i+1}}$. When the left clock rings, the occupation data gets updated as $\{\eta_{i-1},\eta_i,\eta_{i+1}\} \mapsto \{\eta_i,\eta_{i-1},\eta_{i+1}\}$ (\ie\ the particles at sites $i$ and $i-1$ exchange positions). When the right clock rings, the occupation data is updated as $\{\eta_{i-1},\eta_i,\eta_{i+1}\} \mapsto \{\eta_{i-1},\eta_{i+1},\eta_{i}\}$ (\ie\ the particles at sites $i$ and $i+1$ exchange positions). All clocks in the system are independent. See Figure \ref{fig:ASEP}.
\end{itemize}

\begin{figure}
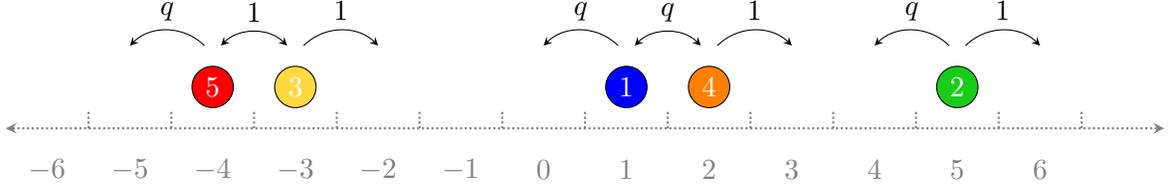

\tikz{1.1}{
\draw[gray,densely dotted,thick,<->] (-7,0) -- (7,0);
\foreach\x in {-6,...,6}{
\draw[gray,densely dotted,thick] (\x,0) -- (\x,0.2);
\node at (\x-0.5,-0.5) {\color{gray} $\x$};
}
\filldraw[fill=red] (-4.5,0.5) circle (0.25);
\filldraw[fill=yellow] (-3.5,0.5) circle (0.25);
\filldraw[fill=blue] (0.5,0.5) circle (0.25); 
\filldraw[fill=orange] (1.5,0.5) circle (0.25);
\filldraw[fill=green] (4.5,0.5) circle (0.25);
\node at (0.5,0.5) {\color{white} $1$};
\node at (4.5,0.5) {\color{white} $2$};
\node at (-3.5,0.5) {\color{white} $3$};
\node at (1.5,0.5) {\color{white} $4$};
\node at (-4.5,0.5) {\color{white} $5$};
\draw[<-,] (-5.5,1) to [in=135, out=45] node[above] {$q$} (-4.6,1);
\draw[<->] (-4.4,1) to [in=135, out=45] node[above] {$1$} (-3.6,1);
\draw[->] (-3.4,1) to [in=135, out=45] node[above] {$1$} (-2.5,1);
\draw[<-,] (-0.5,1) to [in=135, out=45] node[above] {$q$} (0.4,1);
\draw[<->] (0.6,1) to [in=135, out=45] node[above] {$q$} (1.4,1);
\draw[->] (1.6,1) to [in=135, out=45] node[above] {$1$} (2.5,1);
\draw[<-] (3.5,1) to [in=135, out=45] node[above] {$q$} (4.4,1);
\draw[->] (4.6,1) to [in=135, out=45] node[above] {$1$} (5.5,1);
}
\caption{A configuration of the multi-species ASEP and its associated dynamics. The colour of each particle is indicated by its internal number, while possible hops (and their associated rates) are shown by arrows.}
\label{fig:ASEP}
\end{figure}
We will consider two different types of initial data:
\begin{itemize}
\item When referring to {\it step initial data}, we will mean that there is initially a particle of colour $i$ positioned at site $-i$ of the lattice, for all $i \in \mathbb{Z}_{\geq 1}$, while the nonnegative integer sites $\{0,1,2,\dots\}$ are initially empty. In other words,
\begin{align*}
\eta_i(0) = -i,\ \forall\ i < 0,
\qquad
\eta_i(0) = 0,\ \forall\ i \geq 0.
\end{align*}

\item {\it Bernoulli initial data} means that with probability $p$ there is initially a particle of colour $i$ positioned at site $-i$ of the lattice, and with probability $1-p$ this site is unoccupied, for all $i \in \mathbb{Z}_{\geq 1}$. The nonnegative integer sites $\{0,1,2,\dots\}$ are all initially empty.\footnote{Note that not all colours in 
$\{1,2,3,\dots\}$ are present in such a system.}
\end{itemize}

\subsection{Single-species ASEP}

The single-species ASEP is a special case of the system described in Section \ref{sssec:masep}, in which particles become indistinguishable (\ie\ every particle has the same colour, which we can take to be $1$). Therefore, at any point in time $t$, a configuration of the single-species ASEP is given by the list
\begin{align}
\index{ah1@$\eta^{\rm ASEP}(t)$; ASEP occupation data}
\eta^{\rm ASEP}(t)
=
\{\dots,\eta_{-1}(t),\eta_{0}(t),\eta_{1}(t),\dots\},
\qquad
\eta_i(t)
\in \{0,1\}.
\end{align}
We shall again be interested in two different classes of initial data:
\begin{itemize}
\item {\it Step initial data}, which will mean that there is initially a particle positioned at site $-i$ of the lattice, for all $i \in \mathbb{Z}_{\geq 1}$, while the nonnegative integer sites $\{0,1,2,\dots\}$ are initially empty. In other words,
\begin{align*}
\eta_i(0) = 1,\ \forall\ i < 0,
\qquad
\eta_i(0) = 0,\ \forall\ i \geq 0.
\end{align*}

\item {\it Bernoulli initial data}, which will mean that with probability $p$ there is initially a particle positioned at site $-i$ of the lattice, and with probability $1-p$ this site is unoccupied, for all $i \in \mathbb{Z}_{\geq 1}$. The nonnegative integer sites $\{0,1,2,\dots\}$ are all initially empty.
\end{itemize}

\subsection{From the vertex model \eqref{inhom-weights} to multi-species ASEP with step initial data}
\label{sssec:to-step}

Now we describe the reduction of the vertex model \eqref{inhom-weights} to the multi-species ASEP. Our exposition is fairly informal, but it can readily be upgraded to a formal argument following the lines of \cite{Aggarwal2}. 

We will work inside the quadrant $\mathbb{Z}_{\geq 1} \times \mathbb{Z}_{\geq 1}$, so that the indices of all parameters take values in $\mathbb{Z}_{\geq 1}$. The starting point is to study the vertex weights \eqref{inhom-weights} at $s_b = q^{-1/2}$ and to tune the rapidities to the values
\begin{align}
\label{rapidity-choice}
x_a = 1,
\quad
y_b = q^{-1/2} + (1-q) q^{-1/2} \epsilon,
\qquad
\forall\
a \geq 1,\
b \geq 1,
\end{align}
where $\epsilon$ is a small positive real number. The $s_b = q^{-1/2}$ specialization takes us back to the vertex model \eqref{col-vert} (with $x \equiv 1/x_a$, $y \equiv q^{-1/2}/y_b$), and after sending rapidities to the value \eqref{rapidity-choice}, the vertex weights have the following small-$\epsilon$ dependence:
\begin{align}
\label{small-eps}
\begin{tabular}{|c|c|c|}
\hline
\quad
\tikz{0.6}{
\draw[lgray,line width=1.5pt,->] (-1,0) -- (1,0);
\draw[lgray,line width=1.5pt,->] (0,-1) -- (0,1);
\node[left] at (-1,0) {\tiny $i$};\node[right] at (1,0) {\tiny $i$};
\node[below] at (0,-1) {\tiny $i$};\node[above] at (0,1) {\tiny $i$};
}
\quad
&
\quad
\tikz{0.6}{
\draw[lgray,line width=1.5pt,->] (-1,0) -- (1,0);
\draw[lgray,line width=1.5pt,->] (0,-1) -- (0,1);
\node[left] at (-1,0) {\tiny $i$};\node[right] at (1,0) {\tiny $i$};
\node[below] at (0,-1) {\tiny $j$};\node[above] at (0,1) {\tiny $j$};
}
\quad
&
\quad
\tikz{0.6}{
\draw[lgray,line width=1.5pt,->] (-1,0) -- (1,0);
\draw[lgray,line width=1.5pt,->] (0,-1) -- (0,1);
\node[left] at (-1,0) {\tiny $i$};\node[right] at (1,0) {\tiny $j$};
\node[below] at (0,-1) {\tiny $j$};\node[above] at (0,1) {\tiny $i$};
}
\quad
\\[1.3cm]
\quad
$1$
\quad
& 
\quad
$q \epsilon + O(\epsilon^2)$
\quad
& 
\quad
$1-q \epsilon + O(\epsilon^2)$
\quad
\\[0.7cm]
\hline
&
\quad
\tikz{0.6}{
\draw[lgray,line width=1.5pt,->] (-1,0) -- (1,0);
\draw[lgray,line width=1.5pt,->] (0,-1) -- (0,1);
\node[left] at (-1,0) {\tiny $j$};\node[right] at (1,0) {\tiny $j$};
\node[below] at (0,-1) {\tiny $i$};\node[above] at (0,1) {\tiny $i$};
}
\quad
&
\quad
\tikz{0.6}{
\draw[lgray,line width=1.5pt,->] (-1,0) -- (1,0);
\draw[lgray,line width=1.5pt,->] (0,-1) -- (0,1);
\node[left] at (-1,0) {\tiny $j$};\node[right] at (1,0) {\tiny $i$};
\node[below] at (0,-1) {\tiny $i$};\node[above] at (0,1) {\tiny $j$};
}
\quad
\\[1.3cm]
& 
\quad
$\epsilon + O(\epsilon^2)$
\quad
&
\quad
$1-\epsilon + O(\epsilon^2)$
\quad 
\\[0.7cm]
\hline
\end{tabular}
\end{align}
for all $0 \leq i < j \leq n$. As $\epsilon \rightarrow 0$, vertices of the types
\tikz{0.3}{
\draw[lgray,line width=1.5pt,->] (-1,0) -- (1,0);
\draw[lgray,line width=1.5pt,->] (0,-1) -- (0,1);
\node[left] at (-1,0) {\tiny $i$};\node[right] at (1,0) {\tiny $i$};
\node[below] at (0,-1) {\tiny $i$};\node[above] at (0,1) {\tiny $i$};
},
\tikz{0.3}{
\draw[lgray,line width=1.5pt,->] (-1,0) -- (1,0);
\draw[lgray,line width=1.5pt,->] (0,-1) -- (0,1);
\node[left] at (-1,0) {\tiny $i$};\node[right] at (1,0) {\tiny $j$};
\node[below] at (0,-1) {\tiny $j$};\node[above] at (0,1) {\tiny $i$};
} and
\tikz{0.3}{
\draw[lgray,line width=1.5pt,->] (-1,0) -- (1,0);
\draw[lgray,line width=1.5pt,->] (0,-1) -- (0,1);
\node[left] at (-1,0) {\tiny $j$};\node[right] at (1,0) {\tiny $i$};
\node[below] at (0,-1) {\tiny $i$};\node[above] at (0,1) {\tiny $j$};
}
will occur with probability approaching $1$, while vertices of the types
\tikz{0.3}{
\draw[lgray,line width=1.5pt,->] (-1,0) -- (1,0);
\draw[lgray,line width=1.5pt,->] (0,-1) -- (0,1);
\node[left] at (-1,0) {\tiny $i$};\node[right] at (1,0) {\tiny $i$};
\node[below] at (0,-1) {\tiny $j$};\node[above] at (0,1) {\tiny $j$};
} and
\tikz{0.3}{
\draw[lgray,line width=1.5pt,->] (-1,0) -- (1,0);
\draw[lgray,line width=1.5pt,->] (0,-1) -- (0,1);
\node[left] at (-1,0) {\tiny $j$};\node[right] at (1,0) {\tiny $j$};
\node[below] at (0,-1) {\tiny $i$};\node[above] at (0,1) {\tiny $i$};
}
become rare. This means that, in a random configuration of the model in the quadrant $\mathbb{Z}_{\geq 1} \times \mathbb{Z}_{\geq 1}$, paths will tend to propagate diagonally in a zig-zagging fashion, while deviations from these diagonal strata are low-probability events. It is precisely such deviations which will translate into the hopping of particles in the ASEP.

In order to see this more concretely, let us define a family of sets of edges within our lattice. We write
\begin{align}
\mathcal{E}(0)
=
\{ (0,j) \rightarrow (1,j) \}_{j \geq 1} 
\cup
\{ (i,0) \rightarrow (i,1) \}_{i \geq 1}
\end{align}
for the collection of all left and bottom external edges of the quadrant, and define more generally
\begin{align}
\mathcal{E}(k)
=
\{ (k,k+j) \rightarrow (k+1,k+j) \}_{j \geq 1} 
\cup
\{ (k+i,k) \rightarrow (k+i,k+1) \}_{i \geq 1}
\end{align}
by translating all edges in the set $\mathcal{E}(0)$ horizontally and vertically by $k \geq 1$ steps. For any given configuration of the model inside the quadrant $\mathbb{Z}_{\geq 1} \times \mathbb{Z}_{\geq 1}$, we will be interested in the {\it occupation data} associated to the above sets:
\begin{align}
\label{eta-quad}
\eta^{\rm quad}(k)
\index{ah@$\eta^{\rm quad}(k)$; quadrant occupation data}
=
\{\dots,\eta_{-1}(k),\eta_0(k),\eta_{1}(k),\dots\},
\end{align}
where 
\begin{align*}
\eta_{-j}(k) = [\text{colour at edge}\ (k,k+j) \rightarrow (k+1,k+j)],& \ \qquad j \geq 1, 
\\
\eta_{i-1}(k) = [\text{colour at edge}\ (k+i,k) \rightarrow (k+i,k+1)],& \ \qquad i \geq 1.
\end{align*} 
%

\begin{prop}
\label{prop:quad-ASEP-step}
Let $\{\eta^{\rm quad}(k)\}_{k \geq 0}$ denote the occupation data of a random configuration of the model with vertex weights \eqref{small-eps}, inside $\mathbb{Z}_{\geq 1} \times \mathbb{Z}_{\geq 1}$, with coloured domain wall boundary conditions:
\begin{align*}
\eta_{-j}(0) = j, \qquad \forall\ j \geq 1,
\qquad
\eta_i(0) = 0, \qquad \forall\ i \geq 0.
\end{align*}
Similarly, let $\eta^{\rm mASEP}(t)$ denote a configuration of the multi-species ASEP at time $t$, with step initial data at $t=0$. Then by performing the rescaling $k = t/\epsilon$, $t \in \mathbb{R}_{>0}$, we have the following convergence in the sense of finite-dimensional distributions:
\begin{align}
\label{quad-ASEP}
\lim_{\epsilon \rightarrow 0}
\eta^{\rm quad}(t/\epsilon)
=
\eta^{\rm mASEP}(t).
\end{align}
\end{prop}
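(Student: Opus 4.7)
The plan is to identify $\{\eta^{\rm quad}(k)\}_{k\ge 0}$ with a time-discretization of the multi-species ASEP and then let $\epsilon \to 0$. The initial data match tautologically: with coloured domain-wall boundary conditions $\eta_{-j}(0)=j$ for $j\ge 1$ and $\eta_i(0)=0$ for $i\ge 0$, which is exactly step initial data for the multi-species ASEP. Hence the task reduces to showing that the one-layer transition $\eta^{\rm quad}(k)\mapsto \eta^{\rm quad}(k+1)$ agrees, to leading order in $\epsilon$, with the mASEP semigroup acting for time $\epsilon$.

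First I would isolate the generic ``zig-zag'' regime, corresponding to the vertices of weight $1$ or $1-O(\epsilon)$ in \eqref{small-eps}, in which the left-incoming colour exits through the top and the bottom-incoming colour exits through the right. A short induction on the distance from the corner $(k+1,k+1)$ shows that if every vertex in the cascade from $\mathcal{E}(k)$ to $\mathcal{E}(k+1)$ is zig-zag, then $\eta^{\rm quad}(k+1)=\eta^{\rm quad}(k)$, because $\mathcal{E}(k+1)$ is the diagonal translate of $\mathcal{E}(k)$ and zig-zag transports each colour along that diagonal. Next, a single ``straight-through'' vertex (one of the $O(\epsilon)$ weights in \eqref{small-eps}) occurring in the cascade, with all other vertices zig-zag, produces a single adjacent transposition in the occupation data: a rare event at the corner swaps $\eta_{-1}(k)\leftrightarrow\eta_0(k)$; one at $(k+1+i,k+1)$ with $i\ge 1$ swaps $\eta_{i-1}(k)\leftrightarrow\eta_i(k)$; and one at $(k+1,k+1+j)$ with $j\ge 1$ swaps $\eta_{-j-1}(k)\leftrightarrow\eta_{-j}(k)$. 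This is verified by directly tracing the cascade. Reading the weights of the rare vertices off \eqref{small-eps}, the per-layer probability of swapping $(\eta_i,\eta_{i+1})$ is $\epsilon+O(\epsilon^2)$ when $\eta_i>\eta_{i+1}$ and $q\epsilon+O(\epsilon^2)$ when $\eta_i<\eta_{i+1}$; after the rescaling $k=t/\epsilon$ these become exactly the mASEP swap rates $1$ and $q$ described in Section~\ref{sssec:masep}.

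The last ingredient is to control multiple straight-through vertices in a single layer, which the cascade composes into two or more ordered adjacent transpositions. A straightforward extension of the single-event calculation shows that $m$ rare events along the cascade implement $m$ adjacent transpositions applied in cascade order, and their joint probability is $O(\epsilon^m)$; summed over the $t/\epsilon$ layers their contribution to any finite-dimensional distribution is $O(\epsilon^{m-1})$, which vanishes for $m\ge 2$. A standard speed-of-propagation estimate (on the vertex-model side via colour conservation through the zig-zag regime, on the mASEP side via the nearest-neighbour interaction) restricts attention to a bounded window of indices. Coupling the single-swap events of the vertex model to the Poissonian swap clocks of mASEP on a common probability space then gives \eqref{quad-ASEP} in the sense of finite-dimensional distributions. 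The main obstacle is the a priori non-local nature of the one-layer update---the colour on an edge of $\mathcal{E}(k+1)$ can depend on an infinite slice of vertices between the two L-shapes---but the $O(\epsilon)$ suppression of straight-through vertices, combined with the speed-of-propagation estimate, reduces the problem to the single-swap analysis above, which is essentially the framework of \cite{Aggarwal2}.
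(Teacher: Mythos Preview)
Your proposal is correct and is precisely the standard argument that the paper is gesturing at: the paper does not give its own proof of this proposition, but simply states that the rank-1 version has already appeared in \cite{BorodinCG,BorodinP1,Aggarwal2} and declines to repeat it. Your sketch---identifying the zig-zag regime as the identity update, reading off adjacent-transposition rates $1$ and $q$ from the $O(\epsilon)$ vertices in \eqref{small-eps}, bounding multi-swap events by $O(\epsilon^m)$, and invoking a finite-speed-of-propagation localization---is exactly the content of those references (your citation of \cite{Aggarwal2} at the end is apt), so you have in fact supplied more detail than the paper does while following the same route.
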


\begin{proof}
We will not pursue a rigorous proof of this statement, since its rank-1 counterpart has already been explained in a number of previous works \cite{BorodinCG,BorodinP1,Aggarwal2}. 
\end{proof}

\begin{rmk}
\label{rmk:sets}
The convergence result \eqref{quad-ASEP} is in fact rather more general than we have stated. It applies not just to the edge sets \eqref{eta-quad}, but moreover to any family of edge sets which are transversal to the main diagonal of the lattice; any such family will converge to the multi-species ASEP under the scaling that we consider. The convergence can also be extended to multiple time moments.
\end{rmk}

\subsection{From the vertex model \eqref{inhom-weights} to multi-species ASEP with Bernoulli initial data}

The previous arguments can be modified to allow more general boundary conditions in the vertex model \eqref{inhom-weights}, and in turn, in the multi-species ASEP. In particular, we will show how Bernoulli initial data also lies within the scope of our approach. Let us now work inside the quadrant $\mathbb{Z}_{\geq 0} \times \mathbb{Z}_{\geq 1}$, \ie\ with the $0$-th column restored, subject to rainbow boundary conditions: 
\begin{align}
\label{rainbow-bc}
\text{horizontal edge}\ (-1,j) \rightarrow (0,j)\ \text{occupied by path of colour}\ j, & \qquad \forall\ j \geq 1,
\\
\nonumber
\text{vertical edge}\ (i,0) \rightarrow (i,1)\ \text{unoccupied}, & \qquad \forall\ i \geq 0.
\end{align}
We adopt the same choices for the parameters $\{s_b\}_{b \geq 1}$, $\{x_a\}_{a \geq 1}$, $\{y_b\}_{b \geq 1}$ as in Section \ref{sssec:to-step}, and in addition to these, we choose
\begin{align*}
s_0 = \epsilon,
\qquad
y_0 = \frac{p}{\epsilon(1-p)},
\quad
p \in [0,1).
\end{align*}
The vertices in the $0$-th column of the lattice then have the following small-$\epsilon$ dependence:
\begin{align}
\label{0-th}
\begin{tabular}{|c|c|c|}
\hline
\quad
\tikz{0.6}{
\draw[lgray,line width=1.5pt,->] (-1,0) -- (1,0);
\draw[lgray,line width=4pt,->] (0,-1) -- (0,1);
\node[left] at (-1,0) {\tiny $i$};\node[right] at (1,0) {\tiny $0$};
\node[below] at (0,-1) {\tiny $\AA$};\node[above] at (0,1) {\tiny $\AA^{+}_i$};
}
\quad
&
\quad
\tikz{0.6}{
\draw[lgray,line width=1.5pt,->] (-1,0) -- (1,0);
\draw[lgray,line width=4pt,->] (0,-1) -- (0,1);
\node[left] at (-1,0) {\tiny $i$};\node[right] at (1,0) {\tiny $i$};
\node[below] at (0,-1) {\tiny $\AA$};\node[above] at (0,1) {\tiny $\AA$};
}
\quad
&
\quad
\tikz{0.6}{
\draw[lgray,line width=1.5pt,->] (-1,0) -- (1,0);
\draw[lgray,line width=4pt,->] (0,-1) -- (0,1);
\node[left] at (-1,0) {\tiny $j$};\node[right] at (1,0) {\tiny $i$};
\node[below] at (0,-1) {\tiny $\AA$};\node[above] at (0,1) {\tiny $\AA^{+-}_{ji}$};
}
\quad
\\[1.3cm]
\quad
$1-p+O(\epsilon^2)$
\quad
& 
\quad
$p+O(\epsilon^2)$
\quad
& 
\quad
$O(\epsilon^2)$
\quad
\\[0.7cm]
\hline
\end{tabular} 
\end{align}
where we assume that $0 < i < j$. Note that we have only written down the three types of vertices which can actually occur in the $0$-th column, in view of the rainbow domain wall boundary conditions \eqref{rainbow-bc} assigned to its left edges. 

When $\epsilon \rightarrow 0$, and for each $i \geq 1$, we see that as the colour $i$ enters the lattice it either turns into the $0$-th column (and remains there permanently) with probability $1-p$, or takes a horizontal step across to the 1st column with probability $p$. This leads us straight to the following result:
\begin{prop}
\label{prop:quad-ASEP-bern}
Consider the quadrant $\mathbb{Z}_{\geq 0} \times \mathbb{Z}_{\geq 1}$, where the vertices of the $0$-th column are given by \eqref{0-th}, vertices in all other columns have the form \eqref{small-eps}, subject to the boundary conditions \eqref{rainbow-bc}. Let $\{\eta^{\rm quad}(k)\}_{k \geq 0}$ denote the occupation data of a random configuration inside the quadrant, as defined previously \eqref{eta-quad}. 

\smallskip
Similarly, let $\eta^{\rm mASEP, Ber}(t)$ denote a configuration of the multi-species ASEP at time $t$, with Bernoulli initial data at $t=0$. Then by performing the rescaling $k = t/\epsilon$, $t \in \mathbb{R}_{>0}$, we have the following convergence in the sense of finite-dimensional distributions:
\begin{align*}
\lim_{\epsilon \rightarrow 0}
\eta^{\rm quad}(t/\epsilon)
=
\eta^{\rm mASEP, Ber}(t).
\end{align*}
\end{prop}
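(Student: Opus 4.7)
The plan is to reduce this result to the step-initial-data case already established in Proposition \ref{prop:quad-ASEP-step} by treating the $0$-th column as an independent Bernoulli filter that preselects which colours participate in the effective ASEP dynamics.

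First I would analyze the $0$-th column in isolation. By the table \eqref{0-th}, when a colour $i$ enters at edge $(-1,i)\to(0,i)$, it independently chooses, up to $O(\epsilon^{2})$ corrections, between turning into the column with probability $1-p$ (becoming permanently trapped, since there is no further interaction between a trapped colour and those that subsequently enter at leading order) and passing horizontally into column $1$ with probability $p$. The remaining "swap" vertex has weight $O(\epsilon^{2})$; summed over the $O(\epsilon^{-1})$ many anti-diagonals $k\leq T/\epsilon$ relevant to any fixed rescaled time window $t\in[0,T]$, its cumulative contribution is $O(\epsilon)$ and hence vanishes in the limit.

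Next I would couple the full configuration to the step-initial-data case. Let $S\subset \mathbb{Z}_{\geq 1}$ denote the random set of colours that pass through the $0$-th column. By the previous step, in the limit $\epsilon\to 0$ the indicator variables $\mathbf{1}_{i\in S}$ become independent $\mathrm{Bernoulli}(p)$ random variables, and the colours not in $S$ stay frozen in the $0$-th column, decoupling from everything that happens in columns $\geq 1$. Conditional on $S$, the paths of colours $i\in S$ enter column $1$ through the edges $(0,i)\to(1,i)$, providing precisely the coloured rainbow domain-wall boundary condition on the subquadrant $\mathbb{Z}_{\geq 1}\times \mathbb{Z}_{\geq 1}$ of Proposition \ref{prop:quad-ASEP-step}, but restricted to colours $i\in S$. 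Applying that proposition (after relabelling $S=\{i_{1}<i_{2}<\cdots\}$ as $\{1,2,\dots\}$) shows that the occupation data $\eta^{\mathrm{quad}}(t/\epsilon)$ converges in the sense of finite-dimensional distributions to the multi-species ASEP with step initial data on the colour labels in $S$. Since Bernoulli initial data is by definition step initial data on a Bernoulli-thinned colour set, this yields the claim.

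The main obstacle will be making this decoupling rigorous at the level of joint distributions, because the $0$-th column decisions for colours of large index $i$ occur concurrently with non-trivial bulk dynamics in the rest of the lattice, rather than strictly before the ASEP dynamics starts. To handle this, I would exploit the fact that at rescaled time $t$, only colours whose index $i$ is $O(t/\epsilon^{0})=O(1)$ on the ASEP scale (i.e., $i$ comparable to the spatial range of relevant ASEP observables) can influence a finite-range observable near the origin; for such colours, the $0$-th column sampling already takes place at rescaled time $i\epsilon\to 0$ and is thus asymptotically instantaneous, while the contribution of much larger indices is tight and irrelevant at the level of finite-dimensional distributions. The remaining estimates --- Poissonisation of the $O(\epsilon)$ waiting-time variables that drive the bulk vertex model and suppression of higher-order vertex events --- can be carried out in the framework developed in \cite{Aggarwal2}, which is what was already invoked implicitly in the proof sketch of Proposition \ref{prop:quad-ASEP-step}.
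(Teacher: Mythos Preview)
Your proposal is correct and follows the same approach as the paper, which does not give a separate proof for this proposition but simply records (in the paragraph preceding the statement) that the $0$-th column acts as an independent Bernoulli filter deciding, for each colour $i$, whether it passes through to column $1$ (probability $p$) or is trapped (probability $1-p$), after which the argument for Proposition~\ref{prop:quad-ASEP-step} applies. Your write-up expands on this with more care than the paper does, correctly identifying the decoupling issue and the light-cone/tightness argument needed to make it rigorous.

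One minor point: your phrasing ``step initial data on the colour labels in $S$'' after relabelling $S=\{i_1<i_2<\cdots\}$ as $\{1,2,\dots\}$ is slightly misleading. The relabelled system does not have rainbow boundary conditions on the subquadrant (the $j$-th surviving colour enters at row $i_j$, not row $j$), so you cannot literally invoke Proposition~\ref{prop:quad-ASEP-step} as a black box. What you actually use is that the local Poissonisation argument behind that proposition is insensitive to which rows carry incoming paths; conditional on $S$, the limit is mASEP with initial condition ``colour $i$ at site $-i$ for $i\in S$, empty elsewhere'', which is precisely the Bernoulli initial data conditioned on $S$. This is what the paper's informal discussion is implicitly asserting as well.
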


\begin{rmk}
More elaborate types of random initial data should also be accessible within this framework. For example, performing the specialization $s_b = \epsilon$, $y_b = \frac{p_b}{\epsilon(1-p_b)}$ for all $b \in \{0,1,\dots,k\}$ leads to ``generalized Bernoulli'' initial conditions of the type considered in \cite{AggarwalB}.
\end{rmk}

\subsection{Matching distributions}

Let us now translate the matching statement of Theorem \ref{thm:fused2=3} into the language of the ASEP. In what follows we fix an arbitrary integer $P$ and two sets of pairwise distinct integers $\mathcal{I} = \{I_1 < \cdots < I_k \leq P\}$ and $\mathcal{J} = \{1 \leq J_1 < \cdots < J_{\ell} \}$. We shall be interested in the following pair of probability distributions:
\begin{align*}
\mathbb{P}^{\rm ASEP}(\mathcal{I},\mathcal{J};P,t)
\index{P4@$\mathbb{P}^{\rm ASEP}(\mathcal{I},\mathcal{J};P,t)$}
:=
{\rm Prob}\Big[\eta_{I_i}(t) = 1,\ \forall\ 1 \leq i \leq k,\ \text{and}\ 
\eta_{P+J_j}(t) =1,\ \forall\ 1\leq j \leq \ell \Big],
\end{align*}
describing the probability that in the (uncoloured) ASEP at time $t$, each of the positions $\{I_1,\dots,I_k\}$ and $\{P+J_1,\dots,P+J_{\ell}\}$ are occupied by particles; and
\begin{multline*}
\mathbb{P}^{\rm mASEP}(\mathcal{I},\mathcal{J};P,t)
\index{P5@$\mathbb{P}^{\rm mASEP}(\mathcal{I},\mathcal{J};P,t)$}
\\
:=
{\rm Prob}\Big[\eta_{I_i}(t) \geq 1,\ \forall\ 1 \leq i \leq k,\ \text{and}\ \exists\ p_j > P\ \text{such that}\ 
\eta_{p_j}(t) = J_j, \ \forall\ 1\leq j \leq \ell \Big],
\end{multline*}
describing the probability that in the multi-species ASEP at time $t$, each of the positions $\{I_1,\dots,I_k\}$ are occupied by particles, and the particles of colours $\{J_1,\dots,J_{\ell}\}$ are all situated strictly to the right of position $P$.

\begin{thm}
\label{thm:ASEP-match}
For all time $t \geq 0$, any integer $P \in \mathbb{Z}$, and integer sets $\mathcal{I} = \{I_1 < \cdots < I_k \leq P\}$ and $\mathcal{J} = \{1 \leq J_1 < \cdots < J_{\ell} \}$, one has
\begin{align}
\mathbb{P}^{\rm ASEP}(\mathcal{I},\mathcal{J};P,t)
=
\mathbb{P}^{\rm mASEP}(\mathcal{I},\mathcal{J};P,t).
\end{align}
This distribution match holds for both step and Bernoulli initial data.
\end{thm}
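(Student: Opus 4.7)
The proof plan is to derive Theorem \ref{thm:ASEP-match} as a large-time, small-$\epsilon$ limit of the finite-volume matching in Theorem \ref{thm:fused2=3}, working at the stochastic specialization $s_b=q^{-1/2}$ so that one is back in the fundamental six-vertex / coloured model \eqref{col-vert} with weights \eqref{small-eps}, and using the diagonal convergence to (multi-species) ASEP described in Propositions \ref{prop:quad-ASEP-step} and \ref{prop:quad-ASEP-bern}. The key observation is that $\mathbb{Z}_{M,N}(\mathcal{I},\mathcal{J})$ and $\mathbb{X}_{M,N}(\mathcal{I},\mathcal{J})$ already have matching boundary data on the $L$-shaped top-plus-right boundary of the $M \times N$ rectangle, and Theorem \ref{thm:fused2=3} asserts their equality on the nose; so everything reduces to matching the $L$-shaped boundary with an appropriate transversal observation family in the quadrant.

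First I would fix the ASEP reference time $t>0$ and reference position $P \in \mathbb{Z}$, and choose integer sequences $M_\epsilon, N_\epsilon \to \infty$ scaled according to $k = t/\epsilon$ in such a way that the corner $(M_\epsilon,N_\epsilon)$ of the rectangle lands, in the diagonal coordinates used to define $\eta^{\rm quad}$, on lattice site $P$ of the ASEP. With this identification the top boundary of the rectangle (read right-to-left) and the right boundary (read top-to-bottom) together trace out a transversal family of edges through the diagonal at the point corresponding to $P$; by Remark \ref{rmk:sets}, the occupation data on such a family converges in the limit $\epsilon \to 0$ to the ASEP occupation data of $\eta^{\rm ASEP}(t)$ (for the colour-blind case) or $\eta^{\rm mASEP}(t)$ (for the coloured case), in the sense of finite-dimensional distributions. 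Step initial data corresponds to the bare quadrant of Section~\ref{sssec:to-step}, while Bernoulli initial data is implemented by the enlarged quadrant with the extra column of Proposition~\ref{prop:quad-ASEP-bern}; in either case the same identification on the boundary works.

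Next, I would translate the two observables. On the colour-blind side, $\mathbb{Z}_{M_\epsilon,N_\epsilon}(\mathcal{I},\mathcal{J})$ records the event that vertical paths exit through the top of the rectangle at the horizontal coordinates given by $\mathcal{I}$ and that horizontal paths exit through the right at the vertical coordinates given by $\mathcal{J}$; under our diagonal identification, these two sets of exit points are precisely the locations $\{I_1,\dots,I_k\} \subseteq (-\infty,P]$ and $\{P+J_1,\dots,P+J_\ell\} \subseteq (P,\infty)$ in the ASEP lattice, and the partition function limits to $\mathbb{P}^{\rm ASEP}(\mathcal{I},\mathcal{J};P,t)$. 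On the coloured side, $\mathbb{X}_{M_\epsilon,N_\epsilon}(\mathcal{I},\mathcal{J})$ fixes the \emph{positions} of the paths exiting at the top (with unspecified colour) but only the \emph{colours} (summed over positions) of the paths exiting at the right; under the same identification this becomes exactly $\mathbb{P}^{\rm mASEP}(\mathcal{I},\mathcal{J};P,t)$, namely occupancy at $\{I_i\}_{i \leq k}$ and the presence of colours $\{J_j\}_{j \leq \ell}$ strictly to the right of $P$. Applying Theorem \ref{thm:fused2=3} at the prelimit level and passing $\epsilon \to 0$ yields the claimed equality.

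The main obstacle is the passage to the limit, and specifically the subtlety that for step initial data the set $\mathcal{I}$ is allowed to contain negative integers, while for Bernoulli data only finitely many colours $j$ have survived near the origin at positive times; both force the rectangles to have $N_\epsilon$ going to infinity fast enough that the truncation of the lattice is not felt by the observables. One needs a tightness/locality argument showing that, with high probability as $\epsilon \to 0$, all paths relevant to the events in question enter the observation region well inside the rectangle, so that the finite-volume partition functions $\mathbb{Z}_{M_\epsilon,N_\epsilon}$ and $\mathbb{X}_{M_\epsilon,N_\epsilon}$ agree with their infinite-volume ASEP counterparts up to $o(1)$ errors; this is routine but must be done separately for the step and Bernoulli columns using the weights \eqref{small-eps} and \eqref{0-th} respectively. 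Once this is in place the equality of distributions is immediate from Theorem \ref{thm:fused2=3}, and no further computation is required.
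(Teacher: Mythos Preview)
Your proposal is correct and follows essentially the same approach as the paper: the paper's proof is itself only a sketch, stating that the result is a straightforward combination of Theorem~\ref{thm:fused2=3}, Proposition~\ref{prop:quad-ASEP-step} (respectively Proposition~\ref{prop:quad-ASEP-bern} for Bernoulli data), and Remark~\ref{rmk:sets}. You have spelled out precisely how these ingredients fit together, including the identification of the $L$-shaped rectangle boundary with a transversal observation family and the translation of the two observables, and you have correctly flagged the finite-volume truncation as the one point requiring care.
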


\begin{proof}
We omit the full details of this proof. In the case of step initial data, the proof is a straightforward combination of Theorem \ref{thm:fused2=3}, Proposition \ref{prop:quad-ASEP-step} and Remark \ref{rmk:sets}. For Bernoulli initial data, one combines Theorem \ref{thm:fused2=3}, Proposition \ref{prop:quad-ASEP-bern} and Remark \ref{rmk:sets} to deduce the result.
\end{proof}

\begin{rmk} 
In the case of step initial data and $\mathcal{I}=\varnothing$, Theorem \ref{thm:ASEP-match} is an easy corollary of Theorem 1.4 in \cite{AAV} (in the TASEP case, the latter theorem goes back to \cite{AHR}). This does not extend to non-trivial $\mathcal{I}$ and other initial data, however. 
\end{rmk}

\section{Reduction to $q$-bosons}
\label{sec:bosons}

The next reduction that we consider is to a system of $q$-bosons. We will outline, separately, how the system works when $q=0$ and $q \in (0,1)$, since the $q=0$ case is simpler and worthy of study in its own right.

\subsection{Coloured $q$-bosons}
\label{sssec:coloured-q-bos}

Consider a system of particles with the following features:
\begin{itemize}
\item The system lives on a finite segment of the one-dimensional integral lattice, with sites labelled by the integers $\{1,\dots,M\}$, for some fixed $M \geq 1$.

\item Every particle in the system has a colour prescribed to it, which is a positive real number (colours are no longer restricted to take integral values). 

\item Each site can be occupied by any number of particles, and there are initially no particles in the system. 
\end{itemize}
Let us firstly describe the $q=0$ dynamics, $\mathfrak{D}_{\rm bos}$, of this system:
\begin{enumerate}
\item Particles arrive at site $1$ of the lattice according to a Poisson process of rate $r_0$. The $i$-th particle to enter the system is assigned colour $t_i \in \mathbb{R}_{>0}$, where $t_i$ is its time of entrance.

\item There is an exponential clock of rate $r_i \in \mathbb{R}_{>0}$ attached to the $i$-th site of the lattice, for each $1 \leq i \leq M$ (\ie\ these clocks ring at times that form $M$ independent Poisson processes of intensity $r_i$ at position $i$). All Poisson processes are independent.

\item When the $i$-th clock rings, $1 \leq i \leq M-1$, the particle with maximal colour at position $i$ hops to position $i+1$; all other particles remain stationary. When the $M$-th clock rings, the particle with maximal colour at position $M$ exits the system.
\end{enumerate}
The evolution of the system can be described by world lines of the particles in two dimensions, where the horizontal axis corresponds with the sites $\{1,\dots,M\}$ of the lattice, and the vertical axis is continuous time:
\begin{align}
\label{q-boson-pic}
\tikz{0.8}{
\draw[gray,densely dotted,thick,->] (0,0) -- (11,0);
\draw[gray,densely dotted,thick,->] (0,-0.5) node[below] {entrance} -- (0,6);
\draw[gray,densely dotted,thick] (2,-0.5) node[below] {$1$} -- (2,1);
\draw[gray,densely dotted,thick] (4,-0.5) node[below] {$2$} -- (4,1.2);
\draw[gray,densely dotted,thick] (6,-0.5) node[below] {$\cdots$} -- (6,1.7);
\draw[gray,densely dotted,thick] (8,-0.5) node[below] {$M$} -- (8,3);
\draw[gray,densely dotted,thick] (10,-0.5) node[below] {exit} -- (10,6);
\draw[line width=1.5pt,blue,->,smooth] (0,1) node[left] {$t_1$} -- (2,1) -- (2,1.2) -- (4,1.2) -- (4,1.7) -- (6,1.7) -- (6,3.3) -- (8,3.3) -- (8,3.6) -- (10,3.6);
\draw[line width=1.5pt,green,->,smooth] (0,2) node[left] {$t_2$} -- (2,2) -- (2,4) -- (4,4) -- (4,4.3) -- (6,4.3) -- (6,6);
\draw[line width=1.5pt,yellow,->,smooth] (0,2.4) node[left] {$t_3$} -- (2,2.4) -- (2,2.6) -- (4,2.6) -- (4,2.8) -- (6,2.8) -- (6,2.9) -- (8,2.9) -- (8,3.2) -- (10,3.2);
\draw[line width=1.5pt,orange,->,smooth] (0,3) node[left] {$t_4$} -- (2,3) -- (2,3.6) -- (4,3.6) -- (4,3.8) -- (6,3.8) -- (6,4.7) -- (8,4.7) -- (8,4.9) -- (10,4.9);
\draw[line width=1.5pt,red,->,smooth] (0,5) node[left] {$t_5$} -- (2,5) -- (2,5.5) -- (4,5.5) -- (4,6);
}
\end{align}
The state of the system $\eta^{\rm bos}(t)$ at a given time $t$ can be read by taking a horizontal cross-section of the above picture; namely, one has
\begin{align*}
\index{ah@$\eta^{\rm bos}(t)$; $q$-boson occupation data}
\eta^{\rm bos}(t)
=
\{c_1(t),\dots,c_M(t)\},
\end{align*}
where each $c_i(t)$ is a (possibly empty) set of real numbers which indicates the colours present at position $i$ at time $t$. 

The dynamics for generic $q \in (0,1)$, denoted \index{D2@$\mathfrak{D}_{\rm bos}(q)$; $q$-boson dynamics} $\mathfrak{D}_{\rm bos}(q)$, are obtained by replacing rules $(2)$ and $(3)$ above with the following:
\begin{enumerate}[label={(\arabic*$'$)}]
\setcounter{enumi}{1}
\item At some point in time, suppose there are $m_i$ particles present at the $i$-th site of the lattice. Then we assign to that site $m_i$ clocks with rates $\{r_i (1-q), r_i (1-q) q,\dots,r_i (1-q) q^{m_i-1}\}$, where the clock of rate $r_i (1-q) q^{j-1}$ is associated to the particle with $j$-th highest colour. All clocks are independent.
\item When the $j$-th clock at site $i$ rings, the particle with the $j$-th highest colour hops from position $i$ to $i+1$. The total rate of exit from site $i$ is thus $r_i (1-q^{m_i})$. If $i=M$, the hopping particle is considered to have left the system.
\end{enumerate}
It is clear that rules $(2')$ and $(3')$ reduce to $(2)$ and $(3)$ respectively, when $q=0$. 

\subsection{Projection onto $q$-{\rm TASEP}}

The dynamics $\mathfrak{D}_{\rm bos}$ have a nice colour-blind projection. If one ignores the different colours prescribed to each of the lines in the picture \eqref{q-boson-pic}, it can also be viewed as the time evolution of the (uncoloured) totally asymmetric simple exclusion process (TASEP). Let us recall the TASEP dynamics, 
$\mathfrak{D}_{\rm TASEP}$, on $\mathbb{Z}$ with step initial data:
\begin{itemize}[label={$\star$}]
\item Each site of the lattice can be occupied by at most one particle.

\item Initially there are no particles present at sites labelled by nonnegative integers $\{0,1,2,\dots\}$, and particles situated at all negative positions $\{\dots,-2,-1\}$. We refer to the particle that starts at $-i$ as the $i$-th particle, $i \in \mathbb{Z}_{\geq 1}$. In what follows we focus only on the motion of the first $M+1$ particles, in which case the initial presence of the particles at positions $j <-M-1$ is irrelevant. 

\item We denote TASEP configurations by $\mathcal{P}^{\rm TASEP}(t) = \{p_i(t)\}_{i \in \mathbb{Z}_{\geq 1}}$, where $p_i(t) \in \mathbb{Z}$ is the position of the $i$-th particle at time $t$.

\item The $i$-th particle has an associated exponential clock of rate $r_{i-1} \cdot {\bm 1}_{(p_{i-1}>p_i+1)} \in \mathbb{R}_{\geq 0}$, where $p_0 = \infty$, by agreement. When this clock rings, the $i$-th particle takes one step to the right (by assumption of the clock ringing, the position that it hops to is unoccupied by the $(i-1)$-th particle). All clocks are independent.

\end{itemize}
To read the time evolution of the TASEP from the uncoloured version of the picture \eqref{q-boson-pic}, one should view each arrival of a boson at site $i \in \{1,\dots,M\}$ as corresponding with a jump of the $i$-th particle in the TASEP. In this way, at any time $t$, the number of particles $|c_{i-1}(t)|$ present at site $i-1$ in the picture \eqref{q-boson-pic} is equivalent to the gap $g_i = p_{i-1}-p_{i}-1$ between the $i$-th and $(i-1)$-th particles in the TASEP.

One can also consider colour-blind projections of the dynamics $\mathfrak{D}_{\rm bos}(q)$. In this case, the relevant underlying system is the $q$-TASEP, with dynamics $\mathfrak{D}_{\rm TASEP}(q)$ \index{D3@$\mathfrak{D}_{\rm TASEP}(q)$; $q$-TASEP dynamics} given by replacing the last of the rules above with the following:
\begin{itemize}[label={$\star$}]
\item The $i$-th particle has an associated exponential clock of rate $r_{i-1}
 (1-q^{g_i})$, where $g_i = p_{i-1}-p_i-1$, as before. When this clock rings, the $i$-th particle takes one step to the right; note that this rate is automatically $0$ when the $i$-th and $(i-1)$-th particles are nearest neighbours. All clocks are independent.
\end{itemize}
It is easy to see that the colour-blind projection of the $q$-boson system and the $q$-TASEP coincide exactly; indeed, the total rate of arrival at site $i$ in the former is given by $r_{i-1} (1-q^{|c_{i-1}(t)|})$, which matches with the hopping rate $r_{i-1} (1-q^{g_i})$ of the $i$-th particle in the latter.

\subsection{From the vertex model \eqref{inhom-weights} to coloured $q$-bosons}
\label{sssec:reduce-qb}

Below we present a sketch of the reduction of the vertex model \eqref{inhom-weights} to the system of coloured $q$-bosons; we will avoid a rigorous proof of this result, as it is fairly straightforward. 

Consider the model \eqref{inhom-weights} after making the specializations $s_b = \epsilon$, $y_b = r_b$, $x_a = -1$, where both $\epsilon,r_b \in \mathbb{R}_{>0}$. We shall also remove the restriction on the rank of the model, taking $n \rightarrow \infty$. Assuming that $\epsilon$ is small, and expanding each vertex weight to leading order in $\epsilon$, they take the form
\begin{align}
\label{inhom-weights-eps}
\begin{tabular}{|c|c|c|}
\hline
\quad
\tikz{0.6}{
\draw[lgray,line width=1.5pt,->] (-1,0) -- (1,0);
\draw[lgray,line width=4pt,->] (0,-1) -- (0,1);
\node[left] at (-1,0) {\tiny $0$};\node[right] at (1,0) {\tiny $0$};
\node[below] at (0,-1) {\tiny $\AA$};\node[above] at (0,1) {\tiny $\AA$};
}
\quad
&
\quad
\tikz{0.6}{
\draw[lgray,line width=1.5pt,->] (-1,0) -- (1,0);
\draw[lgray,line width=4pt,->] (0,-1) -- (0,1);
\node[left] at (-1,0) {\tiny $i$};\node[right] at (1,0) {\tiny $i$};
\node[below] at (0,-1) {\tiny $\AA$};\node[above] at (0,1) {\tiny $\AA$};
}
\quad
&
\quad
\tikz{0.6}{
\draw[lgray,line width=1.5pt,->] (-1,0) -- (1,0);
\draw[lgray,line width=4pt,->] (0,-1) -- (0,1);
\node[left] at (-1,0) {\tiny $0$};\node[right] at (1,0) {\tiny $i$};
\node[below] at (0,-1) {\tiny $\AA$};\node[above] at (0,1) {\tiny $\AA^{-}_i$};
}
\quad
\\[1.3cm]
\quad
$1+O(\epsilon)$
\quad
& 
\quad
$\epsilon r_b q^{[\bm{A}]_{i+1}}+O(\epsilon^2)$
\quad
& 
\quad
$\epsilon r_b (1-q^{A_i}) q^{[\bm{A}]_{i+1}}+O(\epsilon^2)$
\quad
\\[0.7cm]
\hline
\quad
\tikz{0.6}{
\draw[lgray,line width=1.5pt,->] (-1,0) -- (1,0);
\draw[lgray,line width=4pt,->] (0,-1) -- (0,1);
\node[left] at (-1,0) {\tiny $i$};\node[right] at (1,0) {\tiny $0$};
\node[below] at (0,-1) {\tiny $\AA$};\node[above] at (0,1) {\tiny $\AA^{+}_i$};
}
\quad
&
\quad
\tikz{0.6}{
\draw[lgray,line width=1.5pt,->] (-1,0) -- (1,0);
\draw[lgray,line width=4pt,->] (0,-1) -- (0,1);
\node[left] at (-1,0) {\tiny $i$};\node[right] at (1,0) {\tiny $j$};
\node[below] at (0,-1) {\tiny $\AA$};\node[above] at (0,1) 
{\tiny $\AA^{+-}_{ij}$};
}
\quad
&
\quad
\tikz{0.6}{
\draw[lgray,line width=1.5pt,->] (-1,0) -- (1,0);
\draw[lgray,line width=4pt,->] (0,-1) -- (0,1);
\node[left] at (-1,0) {\tiny $j$};\node[right] at (1,0) {\tiny $i$};
\node[below] at (0,-1) {\tiny $\AA$};\node[above] at (0,1) {\tiny $\AA^{+-}_{ji}$};
}
\quad
\\[1.3cm] 
\quad
$1+O(\epsilon)$
\quad
& 
\quad
$\epsilon r_b(1-q^{A_j}) q^{[\bm{A}]_{j+1}}+O(\epsilon^2)$
\quad
&
\quad
$O(\epsilon^2)$
\quad
\\[0.7cm]
\hline
\end{tabular} 
\end{align}
valid for $1 \leq i < j$, where we have introduced $[\bm{A}]_i := \sum_{k \geq i} A_k$. As $\epsilon \rightarrow 0$, we see that vertices whose right edge is unoccupied (those in the first column of the table above) become extremely probable, while the remaining types of vertices are low-probability events. 

We will study what happens at each such instance of a rare event. Starting from the rainbow boundary conditions in the quadrant, namely,
\begin{align}
\label{quad}
\tikz{0.7}{
\foreach\x in {1,...,5}{
\draw[lgray,line width=4pt,->] (\x,0) -- (\x,6);
}
\foreach\y in {1,...,5}{
\draw[lgray,line width=1.5pt,->] (0,\y) -- (6,\y);
}
\draw[line width=1.5pt,red,->] (0,5) -- (1,5);
\draw[line width=1.5pt,orange,->] (0,4) -- (1,4);
\draw[line width=1.5pt,yellow,->] (0,3) -- (1,3) node[midway,above] {\tiny $3$};
\draw[line width=1.5pt,green,->] (0,2) -- (1,2) node[midway,above] {\tiny $2$};
\draw[line width=1.5pt,blue,->] (0,1) -- (1,1) node[midway,above] {\tiny $1$};
\node[below] at (5,0) {$r_M$};
\node[below] at (4,0) {$\cdots$};
\node[below] at (3,0) {$\cdots$};
\node[below] at (2,0) {$r_1$};
\node[below] at (1,0) {$r_0$};
\node[below right] at (6,0.2) {(rate parameters)};
}
\end{align}
paths will tend to turn into the $0$-th column and remain in it permanently. However, as the model evolves vertically up the page, at any given moment we can see the occurrence of one of two possible rare events: {\bf 1.} A path of colour $i$, as it enters the lattice, does not immediately turn into the $0$-th column, but instead takes a single horizontal step across to the 1st column. This event corresponds with the vertex $\tikz{0.35}{
\draw[lgray,line width=1pt,->] (-1,0) -- (1,0);
\draw[lgray,line width=2.5pt,->] (0,-1) -- (0,1);
\node[left] at (-1,0) {\tiny $i$};\node[right] at (1,0) {\tiny $i$};
\node[below] at (0,-1) {\tiny $\bm{A}$};\node[above] at (0,1) 
{\tiny $\bm{A}$};
}$ and happens with probability $\epsilon r_0 + O(\epsilon^2)$, where we have noted that the $q^{[\bm{A}]_{i+1}}$ factor which is associated to such a vertex is absent, due to the fact that necessarily $[\bm{A}]_{i+1} = 0$ (all colours greater than $i$ enter the lattice higher up along the vertical axis). This path turns immediately into the 1st column, since any further horizontal steps come with extra powers of $\epsilon$, leading to a combined weight of $O(\epsilon^2)$, which is negligibly small as $\epsilon \rightarrow 0$; {\bf 2.} A path of colour $i$, situated in the $j$-th column of the lattice, makes a turn into the $(j+1)$-th column. This event corresponds with the vertex $\tikz{0.35}{
\draw[lgray,line width=1pt,->] (-1,0) -- (1,0);
\draw[lgray,line width=2.5pt,->] (0,-1) -- (0,1);
\node[left] at (-1,0) {\tiny $0$};\node[right] at (1,0) {\tiny $i$};
\node[below] at (0,-1) {\tiny $\bm{A}$};\node[above] at (0,1) 
{\tiny $\bm{A}^{-}_i$};
}$ and happens with probability $\epsilon r_j (1-q) q^{[\bm{A}]_{i+1}}+O(\epsilon^2)$. Once again, we can rule out the possibility of this path making further horizontal steps beyond the $(j+1)$-th column, since this leads to an $O(\epsilon^2)$ weighting.

One can easily verify that these are the only possible $O(\epsilon)$ events, and that furthermore they coincide exactly with the hopping rules and rates of $\mathfrak{D}_{\rm bos}(q)$. Although the vertical spacing between such events diverges as $\epsilon \rightarrow 0$, we can restore a finite spacing by rescaling the vertical axis by $1/\epsilon$. We now summarise these considerations in the following statement:


\begin{prop}
Consider a random configuration of the model \eqref{inhom-weights-eps} inside the quadrant \eqref{quad}, and for all $1 \leq i \leq M$, $j \geq 0$, let $c^{\rm quad}_i(j)$ denote the set of colours which pass through the vertical edge $(i,j) \rightarrow (i,j+1)$. Then under the rescaling $j = t/\epsilon$ (of both vertical coordinates and colours) we have the following convergence in finite-dimensional distributions:
\begin{align*}
\lim_{\epsilon \rightarrow 0}
\left\{c^{\rm quad}_i(t/\epsilon)\right\}_{1 \leq i \leq M}
=
\eta^{\rm bos}(t).
\end{align*}
\end{prop}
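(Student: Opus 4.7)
The plan is to establish the convergence at the level of infinitesimal generators, passing from the discrete vertical dynamics of the vertex model to the continuous-time generator of $\mathfrak{D}_{\rm bos}(q)$. I would first set up an induced Markov chain on the state space of coloured configurations on $\{1,\dots,M\}$, by reading each row of the quadrant \eqref{quad} as a transition from the occupation data $\left(c^{\rm quad}_1(j),\dots,c^{\rm quad}_M(j)\right)$ to $\left(c^{\rm quad}_1(j+1),\dots,c^{\rm quad}_M(j+1)\right)$. Because the weights in \eqref{inhom-weights-eps} are stochastic, a single row is exactly the conditional distribution of the new column-states given the previous column-states together with the specification of which coloured path enters from the left at row $j$ (in the rainbow quadrant, the $j$-th row receives a path of colour $j$, which in the scaled time coordinate becomes the arrival of colour $t = j\epsilon$).

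Next I would classify the leading-order contribution of a single row, on an arbitrary fixed coloured state $\bm{A}(i) = c^{\rm quad}_i(j)$ across sites $i=0,\dots,M$. The vertex-by-vertex analysis already given in the excerpt shows that, to order $\epsilon$, each row contains at most one non-trivial vertex of one of the two types: (i) the entering path of colour $i$ takes a horizontal step from column $0$ into column $1$ (a $\tikz{0.25}{\draw[lgray,line width=0.8pt,->] (-1,0) -- (1,0); \draw[lgray,line width=2pt,->] (0,-1) -- (0,1); \node[left] at (-1,0) {\tiny $i$};\node[right] at (1,0) {\tiny $i$}; \node[below] at (0,-1) {\tiny $\bm{A}$};\node[above] at (0,1) {\tiny $\bm{A}$};}$ vertex, weight $\epsilon r_0 + O(\epsilon^2)$); or (ii) a single path of colour $i$ sitting in column $k\ge 1$ makes a turn out of column $k$ into column $k+1$ (a $\tikz{0.25}{\draw[lgray,line width=0.8pt,->] (-1,0) -- (1,0); \draw[lgray,line width=2pt,->] (0,-1) -- (0,1); \node[left] at (-1,0) {\tiny $0$};\node[right] at (1,0) {\tiny $i$}; \node[below] at (0,-1) {\tiny $\bm{A}$};\node[above] at (0,1) {\tiny $\bm{A}^{-}_i$};}$ vertex, weight $\epsilon r_k (1-q^{A_i}) q^{[\bm{A}]_{i+1}} + O(\epsilon^2)$), combined with the identity vertex in all other columns. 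Rows containing two or more non-trivial vertices carry weight $O(\epsilon^2)$, so their cumulative contribution over $\lfloor t/\epsilon\rfloor$ rows is $O(\epsilon)$ and vanishes in the limit. Summing the rates within a single site $k$ one obtains the total rate $\epsilon r_k(1-q^{|c_k|})$ for some coloured path to leave site $k$, which after the time-change $j=t/\epsilon$ converges to an exponential clock of rate $r_k(1-q^{|c_k|})$, matching rule $(3')$ of $\mathfrak{D}_{\rm bos}(q)$. Furthermore, conditional on a hop occurring at site $k$, the probabilities $(1-q)q^{a-1}/(1-q^{|c_k|})$ of selecting the path of $a$-th highest colour are exactly those prescribed by rule $(2')$, because the geometric weighting $q^{[\bm{A}]_{i+1}}$ in the $\tikz{0.25}{\draw[lgray,line width=0.8pt,->] (-1,0) -- (1,0); \draw[lgray,line width=2pt,->] (0,-1) -- (0,1); \node[left] at (-1,0) {\tiny $0$};\node[right] at (1,0) {\tiny $i$}; \node[below] at (0,-1) {\tiny $\bm{A}$};\node[above] at (0,1) {\tiny $\bm{A}^{-}_i$};}$-vertex weight penalises a hop of colour $i$ by $q$ for every larger colour present at the site.

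With these two ingredients in hand, convergence in finite-dimensional distributions follows by a standard semigroup/generator argument (e.g.\ along the lines of Ethier--Kurtz, Theorem 4.2.6): the one-step transition kernel $P^\epsilon$ of the discrete chain satisfies $\frac{1}{\epsilon}(P^\epsilon - I)\phi \to \mathcal{L}\phi$ uniformly on bounded cylindrical functions $\phi$, where $\mathcal{L}$ is the generator of $\mathfrak{D}_{\rm bos}(q)$ acting on functions of $(c_1,\dots,c_M)$ together with the Poisson arrival process of colours at site $1$ of rate $r_0$. Convergence of the marginal arrivals follows from the classical fact that a sum of $\lfloor t/\epsilon\rfloor$ independent Bernoulli($\epsilon r_0$) events converges to a Poisson($r_0 t$) process, and once the arrival process is matched one can restrict attention to a state space with uniformly bounded particle number on any finite time interval, so all generator computations are genuinely finite.

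The main obstacle, in my view, is controlling the probability that two or more non-trivial vertices appear in the same row when there are many paths present at some site. In principle a row with many particles at some column $k$ could have, say, two different paths leaving column $k$ simultaneously, and the weight of such events scales like $\epsilon^2 q^{\text{stuff}}$ times a combinatorial factor that grows with the number of particles. One has to rule out this contribution uniformly over bounded time intervals $[0,T]$, which requires an a priori bound on the total number of paths in the first $M+1$ columns by time $t/\epsilon$; this is handled by comparison with the colour-blind ($n=1$) projection, which is itself the $q$-TASEP and whose particle counts are well controlled. Once this tightness-type bound is in place, the generator convergence pushes through and yields the claimed distributional limit.
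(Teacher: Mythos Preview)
Your approach is correct and aligns with the paper's own treatment. The paper does not give a formal proof of this proposition; it explicitly writes ``we will avoid a rigorous proof of this result, as it is fairly straightforward'' and only supplies the heuristic vertex-by-vertex classification of $O(\epsilon)$ events that you reproduce in your second paragraph. Your proposal goes further than the paper by spelling out the generator-convergence machinery (Ethier--Kurtz), the Bernoulli-to-Poisson convergence for arrivals, and the a priori particle-count bound via the colour-blind projection --- all of which are the standard ingredients one would use to upgrade the sketch to a proof, and none of which the paper writes down.
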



\subsection{Equally distributed random sets}
\label{sssec:equal}

Let us fix some cut-off time $t \in \mathbb{R}_{>0}$ in the coloured $q$-boson system, and define the collection of random sets that will interest us:
\begin{itemize}
\item The set of colours \index{J1@$\mathcal{J}^{\rm bos}(t)$} $\mathcal{J}^{\rm bos}(t) = \{0 < J_1 < \cdots < J_\ell < t\}$ that have exited the system (\ie\ colours that have passed to the right of site $M$) up to time $t$. This is a continuous random variable, since $J_i \in \mathbb{R}_{>0}$ for all $1 \leq i \leq \ell$.

\smallskip

\item The set of times \index{T2@$\mathcal{T}^{\rm bos}(t)$} $\mathcal{T}^{\rm bos}(t) = \{0 < \tau_1 < \cdots < \tau_\ell < t\}$ when the exits happened. We will define, in fact, the complemented set \index{J1@$\tilde{\mathcal{J}}^{\rm bos}(t)$} $\tilde{\mathcal{J}}^{\rm bos}(t) = \{0 < \tilde{J}_1 < \cdots < \tilde{J}_\ell < t\}$, where $\tilde{J}_i = t-\tau_{\ell-i+1} \in \mathbb{R}_{>0}$ for all $1 \leq i \leq \ell$. 

\smallskip
\noindent
In the language of the $q$-TASEP, $\mathcal{T}^{\rm bos}(t)$ translates to the set of times when the $(M+1)$-th particle performed a hop.

\smallskip

\item The set of occupancies \index{O@$\mathcal{O}^{\rm bos}(t)$} $\mathcal{O}^{\rm bos}(t) = \{m_1,\dots,m_M\}$, where $m_i \in \mathbb{N}$ is the number of particles (of any colour) situated at position $i$ at time $t$, for all $1 \leq i \leq M$. Unlike the previous two, this random variable remains discrete. 

\smallskip
\noindent
In the language of the $q$-TASEP, $\mathcal{O}^{\rm bos}(t)$ translates to $\{g_2,\dots,g_{M+1}\}$, where as before $g_i$ measures the gap between the $i$-th and $(i-1)$-th particles at time $t$.
\end{itemize} 
\begin{thm}
\label{thm:12.5}
$(\mathcal{O}^{\rm bos}(t),\tilde{\mathcal{J}}^{\rm bos}(t))$ is equidistributed with $(\mathcal{O}^{\rm bos}(t),\mathcal{J}^{\rm bos}(t))$, for any $t \in \mathbb{R}_{>0}$.\footnote{Note that the inclusion of the random variable $\mathcal{O}^{\rm bos}(t)$ in this statement is non-trivial, since it is not independent of either $\tilde{\mathcal{J}}^{\rm bos}(t)$ or $\mathcal{J}^{\rm bos}(t)$.}
\end{thm}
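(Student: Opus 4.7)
The plan is to deduce the equidistribution directly from Theorem \ref{thm:fused2=3} by passing to the coloured $q$-boson limit of Section \ref{sssec:reduce-qb}. Taking $s_b = \epsilon$, $y_b = r_b$ for $0 \leq b \leq M$, and $x_a = -1$ for all $a \geq 1$, and rescaling the vertical coordinate by $1/\epsilon$, the identity $\mathbb{Z}_{M,N}(\mathcal{I},\mathcal{J}) = \mathbb{X}_{M,N}(\mathcal{I},\mathcal{J})$ degenerates, as $\epsilon \to 0$ with $t = N\epsilon$ fixed, to an identity of probabilities in the coloured $q$-boson system run up to time $t$ under $\mathfrak{D}_{\rm bos}(q)$. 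The task is then to identify which random variables of the system are encoded by the boundary data $\mathcal{I}$ and $\mathcal{J}$ on each side.

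First I would observe that in both $\mathbb{Z}$ and $\mathbb{X}$, the multiset $\mathcal{I} = \{0 \leq I_1 \leq \cdots \leq I_k \leq M\}$ records, via the multiplicities $m_i(\mathcal{I}) = \#\{a : I_a = i\}$, the number of paths exiting the top of the $i$-th column. Under the $q$-boson reduction, $m_i(\mathcal{I})$ is precisely the occupancy of site $i$ at time $t$ for $1 \leq i \leq M$, so $\mathcal{I}$ encodes $\mathcal{O}^{\rm bos}(t)$ (with $m_0(\mathcal{I})$ accounting for the not-yet-entered particles, which is determined by the rest of the data). The two partition functions differ only in the meaning assigned to $\mathcal{J} = \{1 \leq J_1 < \cdots < J_\ell \leq N\}$. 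In $\mathbb{X}$, the integers $J_a$ are the \emph{colours} of the paths crossing the right boundary; since by the rainbow boundary condition colour $j$ enters the lattice through the left edge of row $j$, it corresponds to a particle that entered the $q$-boson at time $j\epsilon$, and so after rescaling $J_a \mapsto J_a\epsilon$ the set $\mathcal{J}$ converges to $\mathcal{J}^{\rm bos}(t)$. In $\mathbb{Z}$, by contrast, the integers $J_a$ are the \emph{positions} (counted from the top) of the right-boundary exits; the $J_a$-th such position lies at row $N - J_a + 1$ from the bottom, which translates to $q$-boson exit time $t - J_a\epsilon + O(\epsilon)$. Arranged in increasing order the exit times are thus $\tau_i = t - J_{\ell-i+1}\epsilon + O(\epsilon)$, and their complements $\tilde{J}_i = t - \tau_{\ell-i+1} = J_i\epsilon + O(\epsilon)$ recover the set $\{J_a\epsilon\}$. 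Hence $\mathcal{J}$ in $\mathbb{Z}$ encodes $\tilde{\mathcal{J}}^{\rm bos}(t)$.

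With these identifications, Theorem \ref{thm:fused2=3} gives the equidistribution of $(\mathcal{O}^{\rm bos}(t),\mathcal{J}^{\rm bos}(t))$ and $(\mathcal{O}^{\rm bos}(t),\tilde{\mathcal{J}}^{\rm bos}(t))$ at the level of the rescaled vertex-model marginals. The main obstacle will be making rigorous the passage from discrete to continuous random variables: $\mathcal{J}^{\rm bos}(t)$ and $\tilde{\mathcal{J}}^{\rm bos}(t)$ take values in $(0,t)^\ell$, so to recover their joint law with $\mathcal{O}^{\rm bos}(t)$ one must sum both vertex-model probabilities over configurations $\mathcal{J}$ whose entries lie in prescribed integer windows $J_a \in [\lfloor a_i/\epsilon\rfloor, \lfloor b_i/\epsilon\rfloor]$, and verify that the resulting Riemann sums converge, uniformly in the choice of intervals, to the joint densities of the continuous-time $q$-boson variables. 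Since the $O(\epsilon^2)$ corrections in the vertex weights \eqref{inhom-weights-eps} contribute negligibly in each such sum, this convergence follows from the same machinery that underpins the ASEP reductions in the previous section, now adapted to the $q$-boson degeneration.
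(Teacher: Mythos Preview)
Your proposal is correct and follows essentially the same approach as the paper: both deduce the result by degenerating Theorem~\ref{thm:fused2=3} under the $q$-boson specializations of Section~\ref{sssec:reduce-qb}, identifying $\mathcal{I}$ with the occupancy data $\mathcal{O}^{\rm bos}(t)$, $\mathcal{J}$ on the $\mathbb{X}$ side with the exiting colours $\mathcal{J}^{\rm bos}(t)$, and $\mathcal{J}$ on the $\mathbb{Z}$ side with the complemented exit times $\tilde{\mathcal{J}}^{\rm bos}(t)$. Your treatment of the discrete-to-continuous passage is in fact more explicit than the paper's, which simply asserts the convergence of $\mathbb{Z}_{M,N}$ and $\mathbb{X}_{M,N}$ to the respective joint distributions.
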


\begin{proof}
This result follows as a degeneration of Theorem \ref{thm:fused2=3}. In Section \ref{sssec:reduce-qb} we have already seen how a certain specialization of the rapidity and spin parameters, combined with suitable rescaling of the vertical axis, makes the vertex model \eqref{inhom-weights} in the quadrant converge to the dynamics $\mathfrak{D}_{\rm bos}(q)$. By analogous techniques, one can show that under the same specializations and rescaling, the $n=1$ version of the model \eqref{inhom-weights} in the quadrant converges to the colour-blind dynamics 
$\mathfrak{D}_{\rm TASEP}(q)$. 

It follows that Theorem \ref{thm:fused2=3} induces a distribution match between the coloured $q$-boson system and its colour-blind equivalent; it is then only a matter of checking what observables one recovers. It is easy to see that $\mathbb{Z}_{M,N}(\mathcal{I},\mathcal{J})$ converges to the distribution of $(\mathcal{I}^{\rm bos}(t),\tilde{\mathcal{J}}^{\rm bos}(t))$, where $\mathcal{I}^{\rm bos}(t) = \{1 \leq I_1 \leq \cdots \leq I_k \leq M \}$ records the positions of all particles which remain in the system at time $t$; on the other hand, $\mathbb{X}_{M,N}(\mathcal{I},\mathcal{J})$ converges to the distribution of $(\mathcal{I}^{\rm bos}(t),\mathcal{J}^{\rm bos}(t))$. The claim is then immediate, noting that $\mathcal{I}^{\rm bos}(t)$ and $\mathcal{O}^{\rm bos}(t)$ encode the same information about the system.

\end{proof}

\section{Reduction to $q$-PushTASEP}
\label{sec:push}

Our final reduction is to a system which, even under colour-blind projection, is to the best of our knowledge unstudied. Throughout this section we tacitly assume that $q>1$; this is necessary to ensure that events are assigned nonnegative probabilities.

\subsection{Coloured $q$-PushTASEP}

The coloured $q$-PushTASEP has a similar setup to the previous model of coloured $q$-bosons:
\begin{itemize}
\item The system lives on a finite segment of the one-dimensional integral lattice, with sites labelled by the integers $\{1,\dots,M\}$, for some fixed $M \geq 1$. 

\item Every particle in the system has a colour prescribed to it, which is a positive real number. 

\item Each site can be occupied by at most $P$ particles, where $P$ is some arbitrary fixed positive integer, and there are initially no particles in the system. 
\end{itemize}
The dynamics $\mathfrak{D}_{\rm cPush}(q,P)$ \index{D1@$\mathfrak{D}_{\rm cPush}(q,P)$; $q$-PushTASEP dynamics} of the system are composed of two basic ingredients: (a) the ringing of an exponential clock, which ``activates'' a particle somewhere within the system; (b) the activated particle relocating to a new site somewhere to the right of its starting position, subject to certain probabilistic rules, and then ``de-activating''. In the course of this relocation, it may activate other particles. These dynamics are more involved than in the previous ASEP and $q$-boson systems, in the sense that in the latter systems particle motion is deterministic after a clock rings. 

Below we describe $\mathfrak{D}_{\rm cPush}(q,P)$; points (1) and (2) deal with activation by clocks, while (3) describes what happens after a particle gets activated. As usual, all clocks in the system are independent.

\begin{enumerate}
\item A new particle is activated and arrives at site $1$ of the lattice according to a Poisson process of rate $r_0$. The $i$-th particle to arrive in this way is assigned colour $t_i \in \mathbb{R}_{>0}$, where $t_i$ is its time of entrance.

\item At some time $t$ suppose there are $m_i \leq P$ particles present at the $i$-th site of the lattice. We assign to that site $m_i$ clocks with rates $\{r_i (q-1) q^{-P}, r_i (q-1) q^{1-P},\dots,r_i (q-1) q^{m_i-1-P}\}$, and refer to the clock with rate $r_i (q-1) q^{j-1-P}$ as the $j$-th clock. When the $j$-th clock rings, the particle with $j$-th highest colour is activated and removed from this site, and arrives at site $i+1$.

\item When an activated particle of colour $c$ arrives at site $i$, three types of events are possible: {\bf 1.} It remains at this site and de-activates with probability $1-q^{m_i-P}$, where $m_i$ is the number of particles at site $i$ prior to arrival; {\bf 2.} It remains at this site and de-activates, but causes the activation of another particle of colour $d < c$ present at site $i$, which is then transferred to site $i+1$. This event occurs with probability $(q-1) q^{m_i(d)-P}$, where $m_i(d)$ is the number of particles at site $i$ with colours exceeding $d$, prior to arrival; {\bf 3.} The activated particle passes on to site $i+1$ and remains activated, with probability $q^{m_i(c)-P}$. Note that (with probability $1$) no two particles have the same colour, and
\begin{align*}
1-q^{m_i-P} 
+ 
\sum_{\substack{\text{particles of} \\ \text{colours}\ d<c}}
(q-1) q^{m_i(d)-P}
+
q^{m_i(c)-P}
\equiv
1.
\end{align*} 

\item If a particle arrives at site $M+1$ of the lattice, it is considered to have exited the system.
\end{enumerate}
As in the case of the $q$-boson system of Section \ref{sssec:coloured-q-bos}, we can chart the time evolution of the coloured $q$-PushTASEP by figures of the form 
\begin{align}
\label{q-push-pic}
\tikz{0.8}{
\draw[gray,densely dotted,thick,->] (0,0) -- (11,0);
\draw[gray,densely dotted,thick,->] (0,-0.5) node[below] {entrance} -- (0,6);
\draw[gray,densely dotted,thick] (2,-0.5) node[below] {$1$} -- (2,1);
\draw[gray,densely dotted,thick] (4,-0.5) node[below] {$2$} -- (4,1);
\draw[gray,densely dotted,thick] (6,-0.5) node[below] {$\cdots$} -- (6,1.8);
\draw[gray,densely dotted,thick] (8,-0.5) node[below] {$M$} -- (8,2.2);
\draw[gray,densely dotted,thick] (10,-0.5) node[below] {exit} -- (10,6);
\draw[line width=1.5pt,blue,->,smooth] (0,1) node[left] {$t_1$} -- (4,1) -- (4,3.4) -- (8,3.4) -- (8,4.6) -- (10,4.6);
\draw[line width=1.5pt,green,->,smooth] (0,1.8) node[left] {$t_2$} -- (6,1.8) -- (6,2.2) -- (8,2.2) -- (8,3.4) -- (10,3.4);
\draw[line width=1.5pt,yellow,->,smooth] (0,2.4) node[left] {$t_3$} -- (2,2.4) -- (2,4.1) -- (4,4.1) -- (4,6);
\draw[line width=1.5pt,orange,->,smooth] (0,3) node[left] {$t_4$} -- (6,3) -- (6,5.5) -- (10,5.5);
\draw[line width=1.5pt,red,->,smooth] (0,4.1) node[left] {$t_5$} -- (2,4.1) -- (2,6);
\filldraw[fill=blue,draw=blue] (0,1) circle (0.1);
\filldraw[fill=white,draw=blue] (2,1) circle (0.1);
\filldraw[fill=white,draw=blue] (4,1) circle (0.1);
\filldraw[fill=blue,draw=blue] (4,3.4) circle (0.1);
\filldraw[fill=white,draw=blue] (6,3.4) circle (0.1);
\filldraw[fill=white,draw=blue] (8,3.4) circle (0.1);
\filldraw[fill=blue,draw=blue] (8,4.6) circle (0.1);
\filldraw[fill=green,draw=green] (0,1.8) circle (0.1);
\filldraw[fill=white,draw=green] (2,1.8) circle (0.1);
\filldraw[fill=white,draw=green] (4,1.8) circle (0.1);
\filldraw[fill=white,draw=green] (6,1.8) circle (0.1);
\filldraw[fill=green,draw=green] (6,2.2) circle (0.1);
\filldraw[fill=white,draw=green] (8,2.2) circle (0.1);
\filldraw[fill=yellow,draw=yellow] (0,2.4) circle (0.1);
\filldraw[fill=white,draw=yellow] (2,2.4) circle (0.1);
\filldraw[fill=white,draw=yellow] (4,4.1) circle (0.1);
\filldraw[fill=orange,draw=orange] (0,3) circle (0.1);
\filldraw[fill=white,draw=orange] (2,3) circle (0.1);
\filldraw[fill=white,draw=orange] (4,3) circle (0.1);
\filldraw[fill=white,draw=orange] (6,3) circle (0.1);
\filldraw[fill=orange,draw=orange] (6,5.5) circle (0.1);
\filldraw[fill=white,draw=orange] (8,5.5) circle (0.1);
\filldraw[fill=red,draw=red] (0,4.1) circle (0.1);
\filldraw[fill=white,draw=red] (2,4.1) circle (0.1);
}
\end{align}
where in this instance we choose $P=1$ (lines cannot vertically overlap). In this picture, closed dots $\bullet$ indicate times where an alarm clock rang; while open dots $\circ$ indicate subsequent ``choices'' made by activated particles, until they de-activate.

In a similar way as in the coloured $q$-boson system, the state $\eta^{\rm Push}(t)$ of the coloured $q$-PushTASEP at a given time $t$ is denoted
\begin{align*}
\index{ah@$\eta^{\rm Push}(t)$; $q$-PushTASEP occupation data}
\eta^{\rm Push}(t)
=
\{c_1(t),\dots,c_M(t)\},
\qquad
0 \leq |c_i(t)| \leq P,
\quad \forall\ 1 \leq i \leq M,
\end{align*}
where each $c_i(t)$ is a (possibly empty) set of real numbers which indicates the colours present at position $i$ at time $t$.

The dynamics $\mathfrak{D}_{\rm cPush}(q,P)$ admit a well-defined $q \rightarrow \infty$ limit, when they simplify considerably. The rules which govern $\mathfrak{D}_{\rm cPush}(\infty,P)$ are as stated above, up to replacement of (2) and (3) by the following:
\begin{enumerate}[label={(\arabic*$'$)}]
\setcounter{enumi}{1}
\item At some time $t$ suppose there are $m_i \leq P$ particles present at the $i$-th site of the lattice. We associate to that site a single exponential clock with rate $r_i \cdot {\bm 1}_{m_i = P}$. When the clock rings, the particle with the lowest colour at site $i$ is activated and removed from the site, and arrives at site $i+1$.
\item When a particle arrives at site $i$, hosting $m_i$ particles prior to the arrival, it remains there and de-activates if $m_i < P$. If $m_i = P$, the smallest colour at site $i$ (post arrival) gets activated and removed from the site, and arrives at site $i+1$.
\end{enumerate}
Note that the colour of individual particles becomes redundant information in $\mathfrak{D}_{\rm cPush}(\infty,P)$, since it is always the smallest colour at any given site that undergoes activation. Taking $P=1$, $\mathfrak{D}_{\rm cPush}(\infty,1)$ becomes exactly the dynamics of the (space inhomogeneous) PushTASEP.


\subsection{Projection onto $q$-PushTASEP}

If one ignores the different colours assigned to particles in the previous system, it projects onto colour-blind dynamics $\mathfrak{D}_{\rm Push}(q,P)$ with the following rules:
\begin{enumerate}
\item Particles are activated and arrive at site $1$ of the lattice according to a Poisson process of rate $r_0$.

\item At some time $t$ suppose there are $m_i \leq P$ particles present at the $i$-th site of the lattice. We assign to that site a single clock of rate $r_i (q^{m_i}-1) q^{-P}$. When the clock rings, one of the particles is activated and removed from this site, and arrives at site $i+1$.

\item When an activated particle arrives at site $i$, two events are possible: {\bf 1.} It remains at this site and de-activates with probability $1-q^{m_i-P}$, where $m_i$ is the number of particles at site $i$ prior to arrival; {\bf 2.} The activated particle passes on to site $i+1$ and remains activated, with probability $q^{m_i-P}$. One can also view this event as the activated particle settling at site $i$, and kicking out one of the existing particles there; the two points of view are equivalent, since all particles are now indistinguishable.

\item If a particle arrives at site $M+1$ of the lattice, it is considered to have exited the system.
\end{enumerate}

\subsection{From the vertex model \eqref{inhom-weights} to coloured $q$-PushTASEP }

Now we explain how to recover the $q$-PushTASEP from the inhomogeneous, higher spin vertex model \eqref{inhom-weights}. To perform the reduction we treat the $0$-th column of the lattice on a separate footing from the columns with positive index. The reason for this separate treatment is that the $0$-th column will control the entrance of particles into the system (\ie\ it will give rise to a Poisson process of rate $r_0$), while the remaining columns will produce the bulk dynamics of the $q$-PushTASEP.

Starting from the vertex model \eqref{inhom-weights}, we make the following specializations: {\bf 1.} Let $s_0 = \epsilon$, while for all $b \geq 1$ we choose $s_b = q^{-P/2}$, where $P$ is some fixed positive integer. The latter choice bounds the total number of paths occupying a vertical edge (in columns with index $b \geq 1$) to be no greater than $P$; {\bf 2.} Let $x_a = -1$ for all $a \geq 1$, $y_0 = r_0$ and $y_b = -\epsilon r_b q^{-P/2}$ for all $b \geq 1$; {\bf 3.} Remove the bound on the rank, by sending $n \rightarrow \infty$. Vertices in the $0$-th column then have the same small-$\epsilon$ expansion as in \eqref{inhom-weights-eps} (with $b=0$), while the vertices of the remaining columns are tabulated below:
\begin{align}
\label{inhom-weights-eps2}
\begin{tabular}{|c|c|c|}
\hline
\quad
\tikz{0.6}{
\draw[lgray,line width=1.5pt,->] (-1,0) -- (1,0);
\draw[lgray,line width=4pt,->] (0,-1) -- (0,1);
\node[left] at (-1,0) {\tiny $0$};\node[right] at (1,0) {\tiny $0$};
\node[below] at (0,-1) {\tiny $\AA$};\node[above] at (0,1) {\tiny $\AA$};
}
\quad
&
\quad
\tikz{0.6}{
\draw[lgray,line width=1.5pt,->] (-1,0) -- (1,0);
\draw[lgray,line width=4pt,->] (0,-1) -- (0,1);
\node[left] at (-1,0) {\tiny $i$};\node[right] at (1,0) {\tiny $i$};
\node[below] at (0,-1) {\tiny $\AA$};\node[above] at (0,1) {\tiny $\AA$};
}
\quad
&
\quad
\tikz{0.6}{
\draw[lgray,line width=1.5pt,->] (-1,0) -- (1,0);
\draw[lgray,line width=4pt,->] (0,-1) -- (0,1);
\node[left] at (-1,0) {\tiny $0$};\node[right] at (1,0) {\tiny $i$};
\node[below] at (0,-1) {\tiny $\AA$};\node[above] at (0,1) {\tiny $\AA^{-}_i$};
}
\quad
\\[1.3cm]
\quad
$1+O(\epsilon)$
\quad
& 
\quad
$q^{[\bm{A}]_{i}-P}+O(\epsilon)$
\quad
& 
\quad
$\epsilon r_b (q^{A_i}-1) q^{[\bm{A}]_{i+1}-P}+O(\epsilon^2)$
\quad
\\[0.7cm]
\hline
\quad
\tikz{0.6}{
\draw[lgray,line width=1.5pt,->] (-1,0) -- (1,0);
\draw[lgray,line width=4pt,->] (0,-1) -- (0,1);
\node[left] at (-1,0) {\tiny $i$};\node[right] at (1,0) {\tiny $0$};
\node[below] at (0,-1) {\tiny $\AA$};\node[above] at (0,1) {\tiny $\AA^{+}_i$};
}
\quad
&
\quad
\tikz{0.6}{
\draw[lgray,line width=1.5pt,->] (-1,0) -- (1,0);
\draw[lgray,line width=4pt,->] (0,-1) -- (0,1);
\node[left] at (-1,0) {\tiny $i$};\node[right] at (1,0) {\tiny $j$};
\node[below] at (0,-1) {\tiny $\AA$};\node[above] at (0,1) 
{\tiny $\AA^{+-}_{ij}$};
}
\quad
&
\quad
\tikz{0.6}{
\draw[lgray,line width=1.5pt,->] (-1,0) -- (1,0);
\draw[lgray,line width=4pt,->] (0,-1) -- (0,1);
\node[left] at (-1,0) {\tiny $j$};\node[right] at (1,0) {\tiny $i$};
\node[below] at (0,-1) {\tiny $\AA$};\node[above] at (0,1) {\tiny $\AA^{+-}_{ji}$};
}
\quad
\\[1.3cm] 
\quad
$1-q^{[\bm{A}]_1-P} + O(\epsilon)$
\quad
& 
\quad
$\epsilon r_b (q^{A_j}-1) q^{[\bm{A}]_{j+1}-P}+O(\epsilon^2)$
\quad
&
\quad
$(q^{A_i}-1)q^{[\bm{A}]_{i+1}-P}+O(\epsilon)$
\quad
\\[0.7cm]
\hline
\end{tabular} 
\end{align}
valid for $1 \leq i < j$, with $[\bm{A}]_i = \sum_{k \geq i} A_k$, as before. Let us now repeat the type of analysis performed in Section \ref{sssec:reduce-qb}. Starting from rainbow boundary conditions in the quadrant \eqref{quad}, with $0$-th column given by \eqref{inhom-weights-eps} and all other columns given by \eqref{inhom-weights-eps2}, paths will tend to turn into the $0$-th column and remain in it permanently. As the model evolves vertically up the page, we can see the occurrence of two possible rare events: {\bf 1.} A path of colour $i$, as it enters the lattice, does not immediately turn into the $0$-th column, but instead takes a single horizontal step across to the 1st column. This event corresponds with the vertex $\tikz{0.35}{
\draw[lgray,line width=1pt,->] (-1,0) -- (1,0);
\draw[lgray,line width=2.5pt,->] (0,-1) -- (0,1);
\node[left] at (-1,0) {\tiny $i$};\node[right] at (1,0) {\tiny $i$};
\node[below] at (0,-1) {\tiny $\bm{A}$};\node[above] at (0,1) 
{\tiny $\bm{A}$};
}$ in \eqref{inhom-weights-eps} and happens with probability $\epsilon r_0 + O(\epsilon^2)$. This is the same type of injection of a particle into the system as in Section \ref{sssec:reduce-qb}, because both lattices share the same $0$-th column; {\bf 2.} A path of colour $i$, situated in the $j$-th column of the lattice, makes a turn into the $(j+1)$-th column. This event corresponds with the vertex $\tikz{0.35}{
\draw[lgray,line width=1pt,->] (-1,0) -- (1,0);
\draw[lgray,line width=2.5pt,->] (0,-1) -- (0,1);
\node[left] at (-1,0) {\tiny $0$};\node[right] at (1,0) {\tiny $i$};
\node[below] at (0,-1) {\tiny $\bm{A}$};\node[above] at (0,1) 
{\tiny $\bm{A}^{-}_i$};
}$ in \eqref{inhom-weights-eps2} and happens with probability $\epsilon r_j (q-1) q^{[\bm{A}]_{i+1}-P}+O(\epsilon^2)$.

\smallskip
The events {\bf 1} and {\bf 2} coincide with the activation events (1) and (2) listed in $\mathfrak{D}_{\rm cPush}(q,P)$. After either event, one then sees the de-activation of the path which has just moved. This happens via combinations of the vertices $\tikz{0.35}{
\draw[lgray,line width=1pt,->] (-1,0) -- (1,0);
\draw[lgray,line width=2.5pt,->] (0,-1) -- (0,1);
\node[left] at (-1,0) {\tiny $i$};\node[right] at (1,0) {\tiny $0$};
\node[below] at (0,-1) {\tiny $\bm{A}$};\node[above] at (0,1) 
{\tiny $\bm{A}^{+}_i$};
}$, 
$\tikz{0.35}{
\draw[lgray,line width=1pt,->] (-1,0) -- (1,0);
\draw[lgray,line width=2.5pt,->] (0,-1) -- (0,1);
\node[left] at (-1,0) {\tiny $j$};\node[right] at (1,0) {\tiny $i$};
\node[below] at (0,-1) {\tiny $\bm{A}$};\node[above] at (0,1) 
{\tiny $\bm{A}^{+-}_{ji}$};
}$
and
$\tikz{0.35}{
\draw[lgray,line width=1pt,->] (-1,0) -- (1,0);
\draw[lgray,line width=2.5pt,->] (0,-1) -- (0,1);
\node[left] at (-1,0) {\tiny $i$};\node[right] at (1,0) {\tiny $i$};
\node[below] at (0,-1) {\tiny $\bm{A}$};\node[above] at (0,1) 
{\tiny $\bm{A}$};
}$ 
in \eqref{inhom-weights-eps2}, each with a finite weight, and leads exactly to the probabilities listed in point (3) of $\mathfrak{D}_{\rm cPush}(q,P)$.

One can check that all other rare events will occur with weight at best $O(\epsilon^2)$. Scaling the vertical axis by $1/\epsilon$ and sending $\epsilon \rightarrow 0$, we deduce the following result:
\begin{prop}
Consider a random configuration inside the quadrant \eqref{quad}, with $0$-th column given by \eqref{inhom-weights-eps} and all other columns given by \eqref{inhom-weights-eps2}. For all $1 \leq i \leq M$, $j \geq 0$, let $c^{\rm quad}_i(j)$ denote the set of colours which pass through the vertical edge $(i,j) \rightarrow (i,j+1)$. Then under the rescaling $j = t/\epsilon$ (of both vertical coordinates and colours) we have the following convergence in finite-dimensional distributions:
\begin{align*}
\lim_{\epsilon \rightarrow 0}
\left\{c^{\rm quad}_i(t/\epsilon)\right\}_{1 \leq i \leq M}
=
\eta^{\rm Push}(t).
\end{align*}
\end{prop}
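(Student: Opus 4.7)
The plan is to upgrade the informal reduction sketched in the two paragraphs preceding the proposition into a rigorous convergence of finite-dimensional distributions. The structural input is that, under the chosen specialization, the vertex weights in the $0$-th column \eqref{inhom-weights-eps} and in the columns indexed by $b \geq 1$ \eqref{inhom-weights-eps2} split into three classes: \emph{identity vertices} with weight $1 + O(\epsilon)$, \emph{rare vertices} with weight $\epsilon \cdot r_b \cdot (\text{explicit factor}) + O(\epsilon^2)$, and \emph{cascade vertices} with finite weights bounded away from both $0$ and $1$ (those in the bottom row of \eqref{inhom-weights-eps2}). The first two classes generate the clock rings of $\mathfrak{D}_{\rm cPush}(q,P)$, while the third governs the ensuing push cascades.

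First I would fix $t>0$ and show that, up to vertical height $t/\epsilon$, the conditional probability of observing a rare vertex in a given row, given the state of the lattice below, is $\epsilon \cdot \rho + O(\epsilon^2)$ with $\rho$ exactly the corresponding clock rate in $\mathfrak{D}_{\rm cPush}(q,P)$. Concretely, the injection vertex in column $0$ with weight $\epsilon r_0 + O(\epsilon^2)$ produces a Poisson process of arrivals at site $1$ of rate $r_0$, matching rule (1); and a rare vertex at height $j$ in column $b \geq 1$ activating the path of colour $i$ from a column state $\bm{A}$ occurs with conditional weight $\epsilon r_b(q-1)q^{[\bm{A}]_{i+1}-P}+O(\epsilon^2)$, which exactly matches the rate $r_b(q-1)q^{j-1-P}$ of the $j$-th clock in rule (2), since $[\bm{A}]_{i+1}$ equals the number of colours at that site strictly exceeding $i$, and the path with the $j$-th highest colour is precisely the one with $j-1$ colours above it.

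Next I would analyse the geometry of cascades triggered by a rare vertex. Immediately above such a vertex in column $b$, the column state becomes $\bm{A}^{-}_i$, and the activated path must exit column $b+1$ through one of the three cascade vertices listed in \eqref{inhom-weights-eps2}: the de-activation vertex with probability $1 - q^{[\bm{A}']_1 - P} + O(\epsilon)$, the colour-swap vertex with probability $(q^{A'_j}-1)q^{[\bm{A}']_{j+1}-P}+O(\epsilon)$, or the pass-through vertex with probability $q^{[\bm{A}']_i-P}+O(\epsilon)$ (where $\bm{A}'$ is the current column state). These weights are in bijection with the outcomes (1)--(3) of rule (3) of $\mathfrak{D}_{\rm cPush}(q,P)$ once one reads the indices against the ordering of colours at the site. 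Since each pass-through probability is at most $q^{-1}<1$ (using $q>1$), the number of columns traversed by a single cascade is stochastically dominated by a geometric random variable, so the cascade occupies only $O_P(1)$ rows and is therefore \emph{instantaneous} in the rescaled time $t = \epsilon j$.

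The main obstacle is controlling interactions between distinct cascades: one must show that the probability of two rare vertices, or a rare vertex and a cascade initiated elsewhere, falling within $O(1)$ rows of each other is $O(\epsilon)$ and hence negligible in the rescaled limit. This is a standard Poissonisation estimate given the independence of the one-row updates. Combining the Poissonisation of rare vertices, the match of their rates with clock rates (1)--(2), and the match of cascade vertex probabilities with the propagation rules (3), one obtains convergence of the row-wise transition kernel to the infinitesimal generator of $\mathfrak{D}_{\rm cPush}(q,P)$; the finite-dimensional distributional convergence then follows from a standard Trotter--Kurtz or generator-convergence argument applied to the Markov chain $\{c^{\rm quad}_i(j)\}_{1\le i \le M}$ indexed by $j$.
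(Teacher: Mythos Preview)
Your approach is essentially the same as the paper's, which in fact does not give a formal proof at all: the proposition is stated immediately after the informal vertex-weight analysis with the phrase ``we deduce the following result,'' and the paper relies on the rank-$1$ precedents \cite{BorodinCG,BorodinP1,Aggarwal2} (as it does explicitly for the analogous ASEP proposition) to justify the limit. Your proposal is thus a reasonable fleshing-out of the paper's sketch.

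Two small points are worth correcting. First, your bound ``each pass-through probability is at most $q^{-1}$'' is not valid in general: the pass-through weight $q^{m_i(c)-P}$ can equal $1$ when every particle at the site has colour exceeding $c$. This does not matter, however, because each cascade is confined to a \emph{single row} of the lattice (it propagates horizontally through columns $b, b+1, \dots$ within the row where the rare vertex occurred), and hence touches at most $M+1$ columns; there is no need for a geometric domination argument, and your phrase ``occupies only $O_P(1)$ rows'' should read ``occupies exactly one row''. Second, the interaction estimate you describe (two rare vertices within $O(1)$ rows) is indeed the right thing to control, but since cascades are one-row events the relevant bad event is simply two rare vertices in the same row, which has probability $O(\epsilon^2)$ per row and hence $O(\epsilon)$ total over $t/\epsilon$ rows.
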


\subsection{Equally distributed random sets}

This section directly mirrors Section \ref{sssec:equal}; for the sake of precision, let us briefly restate the definitions therein. Fixing a cut-off time $t \in \mathbb{R}_{>0}$ in the coloured $q$-PushTASEP system, we introduce the following random sets:
\begin{itemize}
\item The set of colours \index{J@$\mathcal{J}^{\rm Push}(t)$} $\mathcal{J}^{\rm Push}(t) = \{0 < J_1 < \cdots < J_\ell < t\}$ that have exited the system up to time $t$.

\smallskip

\item The set of times \index{T1@$\mathcal{T}^{\rm Push}(t)$} $\mathcal{T}^{\rm Push}(t) = \{0 < \tau_1 < \cdots < \tau_\ell < t\}$ when the exits happened. We define the complemented set \index{J@$\tilde{\mathcal{J}}^{\rm Push}(t)$} $\tilde{\mathcal{J}}^{\rm Push}(t) = \{0 < \tilde{J}_1 < \cdots < \tilde{J}_\ell < t\}$, where $\tilde{J}_i = t-\tau_{\ell-i+1}$ for all $1 \leq i \leq \ell$. 

\smallskip

\item The set of occupancies \index{O@$\mathcal{O}^{\rm Push}(t)$} $\mathcal{O}^{\rm Push}(t) = \{m_1,\dots,m_M\}$, where $0 \leq m_i \leq P$ is the number of particles (of any colour) situated at position $i$ at time $t$, for all $1 \leq i \leq M$. 
\end{itemize}
\begin{thm}
\label{thm:push-match}
$(\mathcal{O}^{\rm Push}(t),\tilde{\mathcal{J}}^{\rm Push}(t))$ is equidistributed with $(\mathcal{O}^{\rm Push}(t),\mathcal{J}^{\rm Push}(t))$, for any $t \in \mathbb{R}_{>0}$.
\end{thm}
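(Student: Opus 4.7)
The plan is to mirror the argument used for Theorem \ref{thm:12.5}, degenerating Theorem \ref{thm:fused2=3} under the specializations that reduce the inhomogeneous vertex model \eqref{inhom-weights} to the coloured $q$-PushTASEP and its colour-blind projection. The reduction from \eqref{inhom-weights} to $\mathfrak{D}_{\rm cPush}(q,P)$ has already been carried out in the preceding section; the first step will be to verify that under the same specializations $s_0 = \epsilon$, $s_b = q^{-P/2}$ for $b \ge 1$, $x_a = -1$, $y_0 = r_0$, $y_b = -\epsilon r_b q^{-P/2}$, and after rescaling the vertical coordinate by $1/\epsilon$, the $n=1$ version of \eqref{inhom-weights} in the quadrant converges to the colour-blind dynamics $\mathfrak{D}_{\rm Push}(q,P)$. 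This is a routine small-$\epsilon$ expansion analogous to \eqref{inhom-weights-eps2}: the injection rate $r_0 \epsilon$ survives in the $0$-th column, while in columns $b \ge 1$ activation of a particle from a site with $m_i$ occupants happens with infinitesimal rate $\epsilon r_b(q^{m_i}-1)q^{-P}$, and the ensuing de-activation/push probabilities $1-q^{m_i-P}$ and $q^{m_i-P}$ arise from finite-weight vertices exactly as in rule (3) of $\mathfrak{D}_{\rm Push}(q,P)$.

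Once both convergences are in place, Theorem \ref{thm:fused2=3} applied to the rectangle of horizontal size $M$ and vertical size $N=t/\epsilon$ yields, in the limit $\epsilon\to 0$, an equality of distributions of observables of the coloured and colour-blind $q$-PushTASEPs run up to time $t$. It remains to identify the correct observables on each side. On the colour-blind side, $\mathbb{Z}_{M,N}(\mathcal{I},\mathcal{J})$ specifies the positions $\mathcal{I}=\{I_1 \le \cdots \le I_k\}$ where paths terminate along the top of the lattice (\emph{i.e.}, the sites of the segment $\{1,\dots,M\}$ together with their multiplicities at time $t$) and the vertical coordinates $\mathcal{J}=\{J_1<\cdots<J_\ell\}$, measured from the top, at which paths exit through the right boundary; under the scaling $j=t/\epsilon$ the latter coordinates converge to the complemented exit times $\tilde{J}_a = t - \tau_{\ell-a+1}$, giving the joint law of $(\mathcal{I}^{\rm Push}(t), \tilde{\mathcal{J}}^{\rm Push}(t))$. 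On the coloured side, $\mathbb{X}_{M,N}(\mathcal{I},\mathcal{J})$ records the same terminal positions $\mathcal{I}$ along the top and the \emph{colours} $\mathcal{J}$ of paths exiting the right, which in the limit become precisely the colours $\mathcal{J}^{\rm Push}(t)$ of particles that have left the system. Since the multiset $\mathcal{I}^{\rm Push}(t)$ carries the same information as the occupancy vector $\mathcal{O}^{\rm Push}(t)=\{m_1,\dots,m_M\}$ (with $m_i = \#\{a : I_a = i\}$), the equidistribution of $(\mathcal{O}^{\rm Push}(t),\tilde{\mathcal{J}}^{\rm Push}(t))$ and $(\mathcal{O}^{\rm Push}(t),\mathcal{J}^{\rm Push}(t))$ follows.

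The main obstacle I anticipate is the double role played by the spin parameter in this specialization: the $0$-th column requires $s_0 = \epsilon \to 0$ (to produce a Poisson stream of particles) while the bulk columns need $s_b = q^{-P/2}$ fixed (to enforce the bounded occupancy $|c_i(t)|\le P$). The pre-fused partition functions $\mathcal{Z}_{M,N}(\mathcal{I},\mathcal{J})$ and $\mathcal{X}_{M,N}(\mathcal{I},\mathcal{J})$ of Theorem \ref{thm:fused2=3} accommodate column-dependent spins, so the algebraic input is already available; what requires care is checking that the $q>1$ regime needed to make the probabilities of $\mathfrak{D}_{\rm cPush}(q,P)$ nonnegative is compatible with the fusion identity \eqref{aab}--\eqref{aac} (the $q$-Pochhammers $(s_j^{-2};q^{-1})_{m_j}$ remain well-defined and nonzero when $s_b = q^{-P/2}$ with $q>1$, since then $s_b^{-2}=q^P$ and the relevant Pochhammer factors are nonzero for $m_j \le P$). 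Modulo this verification, and the standard checks that the convergence of generators propagates to finite-dimensional distributions (\emph{cf.}\ the analogous step in the proofs of Theorems \ref{thm:ASEP-match} and \ref{thm:12.5}), the argument closes.
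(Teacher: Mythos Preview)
Your proposal is correct and follows essentially the same approach as the paper's own proof: degenerate Theorem \ref{thm:fused2=3} under the specializations of Section \ref{sec:push}, note that the $n=1$ model converges to the colour-blind $q$-PushTASEP, and identify the observables exactly as in Theorem \ref{thm:12.5}. Your discussion of the column-dependent spins and the $q>1$ compatibility is more detailed than what the paper provides, but these concerns are already handled by the inhomogeneous-spin formulation of Theorem \ref{thm:fused2=3} and do not present a genuine obstacle.
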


\begin{proof}
This again follows by appropriate degeneration of Theorem \ref{thm:fused2=3}. The above discussion demonstrates how specializations of the rapidity and spin parameters, combined with suitable rescaling of the vertical axis, makes the vertex model \eqref{inhom-weights} in the quadrant converge to the dynamics $\mathfrak{D}_{\rm cPush}(q,P)$. Under the same specializations and rescaling, the $n=1$ version of the model \eqref{inhom-weights} in the quadrant converges to the colour-blind dynamics $\mathfrak{D}_{\rm Push}(q,P)$. The proof then goes through in the same way as the proof of Theorem \ref{thm:12.5}, since one is dealing with the same observables.
\end{proof}

\begin{rmk} Theorem \ref{thm:push-match} remains non-trivial even for $q=\infty$, $P=1$, when the dynamics reduce to the usual PushTASEP. 
\end{rmk}


\begin{appendix}

\chapter{Matching with \cite{Kuan}}
\label{app:kuan-weights}

The purpose of this appendix is to match the weights of the vertex model \eqref{s-weights} to those used by \cite{Kuan}. Quoting from Section 3.5 of that work, one introduces the weights
\begin{align}
\label{S(z)}
\mathcal{S}(z)_{\epsilon_j,\beta}^{\epsilon_k,\delta}
=
\frac{{\bm 1}_{(\epsilon_j + \beta = \epsilon_k + \delta)}}
{\mu^{-1} Q - z}
\cdot
\left\{
\begin{array}{ll}
Q^{2 \beta_{[1,k]}}
(Q\mu-Q^{-2\beta_k}z),
&
\quad
k=j<n+1,
\\
\\
Q\mu^{-1}-Q^{2 \beta_{[1,n]}} z,
&
\quad
k=j=n+1,
\\
\\
-Q^{2 \beta_{[1,k-1]}} Q \mu (1-Q^{2 \beta_k}),
&
\quad
n+1 > k > j,
\\
\\
Q \mu (-Q^{2 \beta_{[1,n]}} + \mu^{-2}),
&
\quad
n+1 = k > j,
\\
\\
-Q^{2\beta_{[1,k-1]}} z (1-Q^{2\beta_k}),
&
\quad
k<j,
\end{array}
\right.
\end{align}
where $k,j \in \{1,\dots,n+1\}$ and $\beta,\delta \in \mathbb{N}^{n}$, with $\epsilon_k$ denoting the $k$-th canonical basis vector of $\mathbb{R}^{n}$ for $1 \leq k \leq n$, while $\epsilon_{n+1}$ is identified with the zero vector of $\mathbb{R}^{n}$. $\mathcal{S}(z)$ defines a stochastic vertex model with (left, bottom) input states 
$(\epsilon_j,\beta)$ and (right, top) output states $(\epsilon_k,\delta)$.

We proceed over the five cases in \eqref{S(z)}, performing the replacements $z \mapsto Qx$, $\mu \mapsto s$ and $Q^2 \mapsto q$. The result is
\begin{align}
\label{S(z)-2}
\mathcal{S}(z)_{\epsilon_j,\beta}^{\epsilon_k,\delta}
\mapsto
\frac{{\bm 1}_{(\epsilon_j + \beta = \epsilon_k + \delta)}}
{1-sx}
\cdot
\left\{
\begin{array}{ll}
s(sq^{\beta_k}-x)q^{\beta_{[1,k-1]}},
&
\quad
k=j<n+1,
\\
\\
1-s x q^{\beta_{[1,n]}},
&
\quad
k=j=n+1,
\\
\\
s^2 (q^{\beta_k}-1) q^{\beta_{[1,k-1]}},
&
\quad
n+1 > k > j,
\\
\\
1-s^2 q^{\beta_{[1,n]}},
&
\quad
n+1 = k > j,
\\
\\
sx(q^{\beta_k}-1) q^{\beta_{[1,k-1]}},
&
\quad
k<j.
\end{array}
\right.
\end{align}
We then have the following matching:
\begin{align}
\label{weight-match}
\mathcal{S}(z)_{\epsilon_j,\beta}^{\epsilon_k,\delta}
\Big|_{z \mapsto Qx,\ \mu \mapsto s,\ Q^2 \mapsto q}
=
\tilde{L}_x(\tilde{\beta},n-j+1; \tilde{\delta},n-k+1)
\end{align}
where $\tilde{\beta} = (\beta_n,\dots,\beta_1)$, $\tilde{\delta} = 
(\delta_n,\dots,\delta_1)$ and $\tilde{L}_x$ is the stochastic version of our $L$-matrix, defined in \eqref{stoch-wt}. The matching \eqref{weight-match} is easily verified by comparing \eqref{S(z)-2} against \eqref{explicit-weights}, and noting that in \cite{Kuan} the labelling of colours is the reverse of the ordering adopted in the present paper (with, in particular, the state $n+1$ in \cite{Kuan} identified with the empty state $0$ in our work).

\chapter{Fusion}
\label{app:fusion}

At times in the paper it is more convenient to state and prove results pertaining to the fundamental vertex model \eqref{fund-vert}, before lifting them to the more general framework of the higher-spin model \eqref{s-weights}. The procedure which allows us to easily translate results from one setting to the other is the technique of {\it fusion}, that goes back to \cite{KulishRS}. Here we recall some basic facts about it, in particular the derivation of the weights \eqref{s-weights} from the fundamental $R$-matrix \eqref{Rmat}.

\section{Row-vertices}

The key combinatorial object in the fusion procedure is the {\it row-vertex}. It is obtained by horizontally concatenating $M$ of the $R$-vertices \eqref{R-vert}, and specializing the spectral parameters to a geometric progression in $q$ with base $y/x$. More specifically, for two fixed integers $j, \ell \in \{0,1,\dots,n\}$ and two $M$-tuples of integers $(i_1,\dots,i_M), (k_1,\dots,k_M) \in \{0,1,\dots,n\}^M$, we define
\begin{multline}
\label{R-row}
\index{R@$R_{y/x}((i_1,\dots,i_M),j; (k_1,\dots,k_M),\ell)$; row-vertices}
R_{y/x}\Big((i_1,\dots,i_M),j; (k_1,\dots,k_M),\ell \Big)
\\
:=
\sum_{c_1 = 0}^{n}
\cdots
\sum_{c_{M-1} = 0}^{n}
R_{q^{M-1} y/x}(i_1,j;k_1,c_1)
R_{q^{M-2} y/x}(i_2,c_1;k_2,c_2)
\dots
R_{y/x}(i_M,c_{M-1};k_M,\ell),
\end{multline}
or graphically,
\begin{align}
\label{R-row-graph}
R_{y/x}\Big((i_1,\dots,i_M),j; (k_1,\dots,k_M),\ell \Big)
= 
\tikz{0.9}{
\draw[lgray,line width=1.5pt,->] (-1,0) -- (6,0);
\draw[lgray,line width=1.5pt,->] (0,-1) -- (0,1);
\draw[lgray,line width=1.5pt,->] (1,-1) -- (1,1);
\draw[lgray,line width=1.5pt,->] (2,-1) -- (2,1);
\draw[lgray,line width=1.5pt,->] (3,-1) -- (3,1);
\draw[lgray,line width=1.5pt,->] (4,-1) -- (4,1);
\draw[lgray,line width=1.5pt,->] (5,-1) -- (5,1);
\draw[densely dotted] (0.5,0) arc (0:90:0.5);
\node[above right] at (-0.1,-0.1) {\tiny $\alpha_1$};
\draw[densely dotted] (1.5,0) arc (0:90:0.5);
\node[above right] at (0.9,-0.1) {\tiny $\alpha_2$};
\draw[densely dotted] (2.5,0) arc (0:90:0.5);
\draw[densely dotted] (3.5,0) arc (0:90:0.5);
\draw[densely dotted] (4.5,0) arc (0:90:0.5);
\draw[densely dotted] (5.5,0) arc (0:90:0.5);
\node[above right] at (4.85,-0.1) {\tiny $\alpha_M$};
\node[left] at (-1,0) {\tiny $j$};\node[right] at (6,0) {\tiny $\ell$};
\node[below] at (0,-1) {\tiny $i_1$};\node[above] at (0,1) {\tiny $k_1$};
\node[below] at (1,-1) {\tiny $i_2$};\node[above] at (1,1) {\tiny $k_2$};
\node[below] at (3,-1) {\tiny $\cdots$};\node[above] at (3,1) {\tiny $\cdots$};
\node[below] at (5,-1) {\tiny $i_M$};\node[above] at (5,1) {\tiny $k_M$};
},
\end{align}
where the angles (=\ spectral parameters) are given by $\alpha_i = q^{M-i} y/x$, and each internal horizontal edge is summed over all possible values in the set $\{0,1,\dots,n\}$. In fact, one can easily see that the row-vertex \eqref{R-row-graph} is either equal to zero or factorized into a product of weights \eqref{fund-vert}. This is because the value of the summation index $c_1$ is constrained uniquely by colour-conservation, given the states $i_1,j,k_1$ surrounding the first vertex, which then constrains uniquely the value of index $c_2$, and so on. If at the $a$-th vertex along the row no $c_a \in \{0,1,\dots,n\}$ exists such that colour-conservation is respected at that vertex, the weight of the entire row-vertex is zero.

\section{$M$-fused vertices}

\begin{defn}
Let $M \geq 1$ and consider an $M$-tuple of nonnegative integers $(i_1,\dots,i_M) \in \{0,1,\dots,n\}^M$. From this we define another vector,
\begin{align*}
\mathcal{C}(i_1,\dots,i_M)
:=
(I_1,\dots,I_n),
\qquad
I_a = \#\{k : i_k = a \},
\quad
\forall\ 1 \leq a \leq n,
\end{align*}
which keeps track of the multiplicity of each colour $1 \leq a \leq n$ within $(i_1,\dots,i_M)$.
\end{defn}

Let us fix two vectors $\I = (I_1,\dots,I_n)$ and $\K = (K_1,\dots,K_n)$ whose components are nonnegative integers, such that $|\I| \leq M$ and $|\K| \leq M$. We define an {\it $M$-fused vertex} as follows:
\begin{align}
\label{M-fus}
\mathcal{L}^{(M)}_{y/x}(\I,j; \K,\ell)
\index{L4@$\mathcal{L}^{(M)}_{y/x}(\I,j; \K,\ell)$; $M$-fused vertices}
:=
\frac{1}{Z_q(M;\I)}
\sum_{\substack{
\mathcal{C}(i_1,\dots,i_M) = \I
\\
\mathcal{C}(k_1,\dots,k_M) = \K
}}
q^{{\rm inv}(i_1,\dots,i_M)}
R_{y/x}\Big((i_1,\dots,i_M),j; (k_1,\dots,k_M),\ell \Big),
\end{align}
where the summation is over all $M$-tuples of integers
$(i_1,\dots,i_M), (k_1,\dots,k_M) \in \{0,1,\dots,n\}^M$ such that $\mathcal{C}(i_1,\dots,i_M) = \I$ and $\mathcal{C}(k_1,\dots,k_M) = \K$. The exponent appearing in the sum is given by
\begin{align*}
{\rm inv}(i_1,\dots,i_M) = \#\{ a<b : i_a > i_b \},
\end{align*}
while the normalization takes the form of a $q$-multinomial coefficient:
\begin{align*}
\index{Z5@$Z_q(M;\I)$}
Z_q(M;\I)
:=
\sum_{\mathcal{C}(i_1,\dots,i_M) = \I}
q^{{\rm inv}(i_1,\dots,i_M)}
=
\frac{(q;q)_M}{(q;q)_{I_0} (q;q)_{I_1} \dots (q;q)_{I_n}},
\quad
I_0 := M - \sum_{a=1}^{n} I_a.
\end{align*}
\begin{prop}[$q$-exchangeability]
\label{prop:q-exch}
Fix an $M$-tuple of integers $(k_1,\dots,k_M) \in \{0,1,\dots,n\}^M$ such that $k_a > k_{a+1}$ for some $1 \leq a \leq M-1$, and two further integers $j,\ell \in \{0,1,\dots,n\}$. Then one has the exchange relation
\begin{multline}
\label{q-exch}
\sum_{i_1=0}^{n} \cdots \sum_{i_M=0}^{n}
q^{{\rm inv}(i_1,\dots,i_M)}
R_{y/x}\Big((i_1,\dots,i_M),j; (k_1,\dots,k_M),\ell \Big)
\\
=
\sum_{i_1=0}^{n} \cdots \sum_{i_M=0}^{n}
q^{{\rm inv}(i_1,\dots,i_M)+1}
R_{y/x}\Big((i_1,\dots,i_M),j; (k_1,\dots,k_{a+1},k_a,\dots,k_M),\ell \Big).
\end{multline}
\end{prop}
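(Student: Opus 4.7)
The plan is to prove \eqref{q-exch} via the Yang--Baxter equation, in the spirit of the fusion procedure for quantum integrable systems. The key observation is that, if we interpret the $i$-th vertex of the row \eqref{R-row-graph} as having horizontal rapidity $x$ and vertical rapidity $q^{M-i}y$ (so that the ratios reproduce the spectral parameters $q^{M-i}y/x$), then the vertical rapidities of columns $a$ and $a+1$ stand in the fixed ratio $q^{-1}$. This makes $R_{V_aV_{a+1}}(q^{-1})$ the natural intertwiner between these two vertical spaces, and suggests the following strategy: insert an auxiliary $R$-vertex at spectral parameter $q^{-1}$ above the row between columns $a$ and $a+1$, and use \eqref{YB} twice in succession to slide it past the two adjacent vertices at positions $a, a+1$. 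All other vertices of the row are unaffected by this operation, so the auxiliary vertex emerges below the row acting on the inputs $(i_a,i_{a+1})$ instead of on the outputs $(k_a,k_{a+1})$.

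The second ingredient is a direct verification that the weighted input sum $\sum q^{\mathrm{inv}(i_1,\dots,i_M)} \ket{i_1,\dots,i_M}$ is a $q$-symmetric tensor on the pair $V_a \otimes V_{a+1}$. Indeed, a pair of $i$-sequences differing only by the swap of positions $a$ and $a+1$ have $\mathrm{inv}$-values differing by exactly $\pm 1$; grouping such terms shows that the input sum is (up to an overall scalar depending on the other columns) a superposition of diagonal vectors $\ket{uu}$ and $q$-symmetric combinations $\ket{uv}+q\ket{vu}$ with $u<v$ on the factors $V_a\otimes V_{a+1}$. A direct case-by-case computation using \eqref{R-weights-a} and \eqref{R-weights-bc} then verifies that the (regularized) auxiliary $R$-vertex at $z=q^{-1}$ acts on these vectors by a common scalar, so that after sliding the auxiliary vertex below the row, its action on the input sum collapses to a single overall constant.

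Putting these two steps together yields an identity in which the same weighted row-vertex value $\Psi(k_1,\dots,k_M) := \sum q^{\mathrm{inv}(i)} R_{y/x}((i),j;(k),\ell)$ appears on both sides, but with the outputs $(k_a,k_{a+1})$ acted on by the auxiliary $R$-vertex on one side; inserting the explicit entries of the regularized $R_{q^{-1}}$ from \eqref{R-weights-bc} in the case $k_a>k_{a+1}$ and separating diagonal from off-diagonal contributions then yields precisely the factor $q$ in \eqref{q-exch}. The main technical obstacle will be the handling of the regularization near $z=q^{-1}$: each entry of $R_z$ has a simple pole at $z=q^{-1}$, so the intermediate identities must be multiplied through by $(1-qz)$ and the finite limits taken carefully, verifying both that the auxiliary vertex remains a well-defined linear operator on $V_a\otimes V_{a+1}$ in the limit, and that the overall normalization cancels cleanly between the two sides of the final identity. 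Once this is established, the remainder of the argument is a routine algebraic rearrangement of the $R$-matrix entries at $z=q^{-1}$.
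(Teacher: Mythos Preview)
Your overall strategy---attach an auxiliary $R$-vertex between columns $a$ and $a{+}1$ and push it through the row via Yang--Baxter---is exactly right, and it is what the paper does. However, your chosen spectral parameter $q^{-1}$ is the wrong one, and this is not merely a regularization nuisance: it breaks the eigenvector claim that your argument rests on. Multiplying through by $(1-qz)$ and setting $z=q^{-1}$ gives an operator with $\tilde R\,|uu\rangle=0$, while on the off-diagonal sector one computes (using the explicit entries \eqref{R-weights-bc})
\[
\tilde R\bigl(|uv\rangle + q\,|vu\rangle\bigr)
\;=\;
\bigl(1-q^{-1}+q-q^2\bigr)\bigl(|uv\rangle - |vu\rangle\bigr),
\qquad u<v,
\]
which is a nonzero multiple of the $q$-\emph{antisymmetric} vector, not of $|uv\rangle+q|vu\rangle$. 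So the regularized $R_{q^{-1}}$ neither acts by a common scalar nor preserves the $q$-symmetric input sum; your step (2) fails as written. There is also no clean way to ``cancel'' the two sides afterwards, since one of the scalars is zero and the other produces a vector outside the $q$-symmetric subspace.

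The paper's proof makes the opposite choice: it attaches the auxiliary vertex at spectral parameter $q$, where there is no pole, and feeds it in from \emph{below} using the reverse-stochasticity identity $\sum_{i,j} q^{\mathrm{inv}(j,i)} R_z(i,j;k,\ell)=q^{\mathrm{inv}(k,\ell)}$ (valid for all $z$). A single Yang--Baxter move then transports the vertex above the row; there one uses the elementary fact $R_q(i,j;k,\ell)=q\,R_q(i,j;\ell,k)$ for $k>\ell$ to swap the outputs $k_a\leftrightarrow k_{a+1}$ at the cost of exactly the factor $q$, after which the whole manipulation is reversed to remove the auxiliary vertex. This round-trip also takes care of the swap of the column spectral parameters induced by the Yang--Baxter move---an issue your one-way plan does not address. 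If you rewrite your proposal with $R_q$ in place of $R_{q^{-1}}$, your eigenvector claim \emph{does} hold ($R_q$ fixes $|uu\rangle$ and $|uv\rangle+q|vu\rangle$ with eigenvalue $1$), and the argument becomes essentially the paper's.
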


\begin{proof}
This property can be proved by a case-by-case analysis of the two vertices involved in the exchange, such as in \cite{CorwinP,BorodinP1}. Here we give another proof based on the Yang--Baxter equation\footnote{We are very grateful to A.~Garbali for showing us this proof.}. First, note the following analogue of the stochastic property \eqref{stoch}:
\begin{align}
\label{rev-stoch}
\sum_{i=0}^{n}
\sum_{j=0}^{n}
q^{{\rm inv}(j,i)}
R_z(i,j;k,\ell)
=
q^{{\rm inv}(k,\ell)},
\qquad
{\rm inv}(a,b)
\equiv
\bm{1}_{a > b},
\end{align}
obtained by summing over the incoming edges of the vertex \eqref{R-vert}, rather than the outgoing states. We are now able to write the left hand side of \eqref{q-exch} as
\begin{multline}
\label{b.1-1}
\sum_{p_a=0}^{n}\ 
\sum_{p_{a+1}=0}^{n}\ 
\sum_{i_1=0}^{n} \cdots \sum_{i_M=0}^{n}
q^{{\rm inv}(i_1,\dots,i_{a-1},p_{a+1},p_{a},i_{a+2},\dots,i_M)}
R_q(p_a,p_{a+1};i_a,i_{a+1})
\\
\times
R_{y/x}\Big((i_1,\dots,i_M),j; (k_1,\dots,k_M),\ell \Big),
\end{multline}
where we have used \eqref{rev-stoch} to introduce an ``extra'' $R$-matrix into \eqref{b.1-1}, with spectral parameter equal to $q$. By virtue of the Yang--Baxter equation \eqref{YB}, we observe the relation 
\begin{multline}
\label{b.1-2}
\sum_{i_a=0}^{n}\ 
\sum_{i_{a+1}=0}^{n}
R_q(p_a,p_{a+1};i_a,i_{a+1})
R_{y/x}\Big((i_1,\dots,i_M),j; (k_1,\dots,k_M),\ell \Big)
\\
=
\sum_{r_a=0}^{n}\ 
\sum_{r_{a+1}=0}^{n}
R^{(a+1,a)}_{y/x}\Big((i_1,\dots,i_{a-1},p_{a+1},p_a,i_{a+2},\dots,i_M),j; 
(k_1,\dots,k_{a-1},r_{a+1},r_a,k_{a+2},\dots,k_M),\ell \Big)
\\
\times
R_q(r_a,r_{a+1};k_a,k_{a+1}),
\end{multline}
where the row-vertex on the right hand side has the order of its $a$-th and $(a+1)$-th spectral parameters reversed. Pictorially, \eqref{b.1-2} is given by
\begin{align*}
\tikz{0.9}{
\draw[lgray,line width=1.5pt,->] (-1,0) -- (6,0);
\draw[lgray,line width=1.5pt,->] (0,-1) -- (0,1);
\draw[lgray,line width=1.5pt,->] (1,-1) -- (1,1);
\draw[lgray,line width=1.5pt,->] (2,-1) -- (2,1);
\draw[lgray,line width=1.5pt,->] (3,-1) -- (3,1);
\draw[lgray,line width=1.5pt,->] (4,-1) -- (4,1);
\draw[lgray,line width=1.5pt,->] (5,-1) -- (5,1);
\draw[lgray,line width=1.5pt] (3,-2) -- (2,-1);
\draw[lgray,line width=1.5pt] (2,-2) -- (3,-1);
\draw[densely dotted] (0.5,0) arc (0:90:0.5);
\draw[densely dotted] (1.5,0) arc (0:90:0.5);
\draw[densely dotted] (2.5,0) arc (0:90:0.5);
\node[above right] at (1.9,-0.1) {\tiny $\alpha_{a}$};
\draw[densely dotted] (3.5,0) arc (0:90:0.5);
\node[above right] at (2.9,-0.1) {\tiny $\alpha_{a+1}$};
\draw[densely dotted] (4.5,0) arc (0:90:0.5);
\draw[densely dotted] (5.5,0) arc (0:90:0.5);
\node[left] at (-1,0) {\tiny $j$};\node[right] at (6,0) {\tiny $\ell$};
\node[below] at (0,-1) {\tiny $i_1$};\node[above] at (0,1) {\tiny $k_1$};
\node[below] at (1,-1) {\tiny $\cdots$};\node[above] at (1,1) {\tiny $\cdots$};
\node[below] at (2,-1) {\tiny $i_a$}; \node[below] at (3,-1) {\tiny\ \ \ $i_{a+1}$};
\node[below] at (2,-2) {\tiny $p_{a+1}$}; \node[below] at (3,-2) {\tiny $p_a$};
\node[above] at (2,1) {\tiny $k_a$}; \node[above] at (3,1) {\tiny $k_{a+1}$};
\node[below] at (4,-1) {\tiny $\cdots$};\node[above] at (4,1) {\tiny $\cdots$};
\node[below] at (5,-1) {\tiny $i_M$};\node[above] at (5,1) {\tiny $k_M$};
}
=
\tikz{0.9}{
\draw[lgray,line width=1.5pt,->] (-1,0) -- (6,0);
\draw[lgray,line width=1.5pt,->] (0,-1) -- (0,1);
\draw[lgray,line width=1.5pt,->] (1,-1) -- (1,1);
\draw[lgray,line width=1.5pt] (2,-1) -- (2,1);
\draw[lgray,line width=1.5pt] (3,-1) -- (3,1);
\draw[lgray,line width=1.5pt,->] (4,-1) -- (4,1);
\draw[lgray,line width=1.5pt,->] (5,-1) -- (5,1);
\draw[lgray,line width=1.5pt,->] (3,1) -- (2,2);
\draw[lgray,line width=1.5pt,->] (2,1) -- (3,2);
\draw[densely dotted] (0.5,0) arc (0:90:0.5);
\draw[densely dotted] (1.5,0) arc (0:90:0.5);
\draw[densely dotted] (2.5,0) arc (0:90:0.5);
\node[above right] at (1.9,-0.1) {\tiny $\alpha_{a+1}$};
\draw[densely dotted] (3.5,0) arc (0:90:0.5);
\node[above right] at (2.9,-0.1) {\tiny $\alpha_a$};
\draw[densely dotted] (4.5,0) arc (0:90:0.5);
\draw[densely dotted] (5.5,0) arc (0:90:0.5);
\node[left] at (-1,0) {\tiny $j$};\node[right] at (6,0) {\tiny $\ell$};
\node[below] at (0,-1) {\tiny $i_1$};\node[above] at (0,1) {\tiny $k_1$};
\node[below] at (1,-1) {\tiny $\cdots$};\node[above] at (1,1) {\tiny $\cdots$};
\node[below] at (2,-1) {\tiny $p_{a+1}$}; \node[below] at (3,-1) {\tiny $p_a$};
\node[above] at (2,1) {\tiny $r_{a+1}$\ \ }; \node[above] at (3,1) {\tiny $r_a$};
\node[above] at (2,2) {\tiny $k_a$}; \node[above] at (3,2) {\tiny $k_{a+1}$};
\node[below] at (4,-1) {\tiny $\cdots$};\node[above] at (4,1) {\tiny $\cdots$};
\node[below] at (5,-1) {\tiny $i_M$};\node[above] at (5,1) {\tiny $k_M$};
}
\end{align*}
where $\alpha_i = q^{M-i} y/x$ for both values $i \in \{a,a+1\}$. Substituting \eqref{b.1-2} into \eqref{b.1-1} and relabelling summation indices $p_{a+1} \rightarrow i_a$, $p_a \rightarrow i_{a+1}$, one finds that the left hand side of \eqref{q-exch} is equal to
\begin{multline}
\label{b.1-3}
\sum_{i_1=0}^{n} \cdots \sum_{i_M=0}^{n}
\
\sum_{r_a = 0}^{n} \ \sum_{r_{a+1} = 0}^{n}
q^{{\rm inv}(i_1,\dots,i_M)}
\\
\times
R^{(a+1,a)}_{y/x}\Big((i_1,\dots,i_M),j; (k_1,\dots,k_{a-1},r_{a+1},r_a,k_{a+2},\dots,k_M),\ell \Big)
R_q(r_a,r_{a+1};k_a,k_{a+1}).
\end{multline}
Now we note another property of the $R$-vertex \eqref{R-vert}, namely that when its spectral parameter is equal to $q$, one has
\begin{align*}
R_q(i,j;k,\ell) =q\cdot R_q(i,j;\ell,k),
\qquad
\forall\
n \geq k > \ell \geq 0,
\end{align*}
which is verified by a quick inspection of the weights \eqref{R-weights-bc}. Making use of this in \eqref{b.1-3} (since $k_a > k_{a+1}$ by assumption), the left hand side of \eqref{q-exch} is now given by
\begin{multline}
\sum_{i_1=0}^{n} \cdots \sum_{i_M=0}^{n}
\
\sum_{r_a = 0}^{n} \ \sum_{r_{a+1} = 0}^{n}
q^{{\rm inv}(i_1,\dots,i_M)+1}
\\
\times
R^{(a+1,a)}_{y/x}\Big((i_1,\dots,i_M),j; (k_1,\dots,k_{a-1},r_{a+1},r_a,k_{a+2},\dots,k_M),\ell \Big)
R_q(r_a,r_{a+1};k_{a+1},k_a).
\end{multline}
The sole purpose of these calculations has been to switch the order of the indices $k_a,k_{a+1}$; this has been achieved, and one can now perform all of the above steps in reverse. It can be seen that the final result will be precisely the right hand side of \eqref{q-exch}.
\end{proof}

A very useful consequence of Proposition \ref{prop:q-exch} is that one does not need to perform the summation over $(k_1,\dots,k_M)$ in order to compute the $M$-fused vertex \eqref{M-fus}:
\begin{cor}
\label{cor:alt-fus}
The $M$-fused vertex \eqref{M-fus} is given, equivalently, by
\begin{align}
\label{M-fus2}
\mathcal{L}^{(M)}_{y/x}(\I,j; \K,\ell)
=
\frac{Z_q(M;\K)}{Z_q(M;\I)}
\sum_{\substack{
\mathcal{C}(i_1,\dots,i_M) = \I
}}
q^{{\rm inv}(i_1,\dots,i_M)}
R_{y/x}\Big((i_1,\dots,i_M),j; (0^{K_0},1^{K_1},\dots,n^{K_n}),\ell \Big),
\end{align}
where $(0^{K_0},1^{K_1},\dots,n^{K_n})$ denotes the unique weakly-increasing $M$-tuple $(k_1,\dots,k_M)$ such that $\mathcal{C}(k_1,\dots,k_M) = \K$, with $K_0 := M - \sum_{a=1}^{n} K_a$.
\end{cor}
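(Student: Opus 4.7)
\medskip

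\noindent\textbf{Proof proposal.} The plan is to use the $q$-exchangeability of Proposition \ref{prop:q-exch} as a means of sorting the outgoing tuple $(k_1,\dots,k_M)$, thereby collapsing the double sum in \eqref{M-fus} to the single sum appearing in \eqref{M-fus2}. Throughout, abbreviate the inner sum over incoming tuples as
\begin{equation*}
S(k_1,\dots,k_M)
:=
\sum_{\mathcal{C}(i_1,\dots,i_M)=\I}
q^{{\rm inv}(i_1,\dots,i_M)}
R_{y/x}\bigl((i_1,\dots,i_M),j;(k_1,\dots,k_M),\ell\bigr),
\end{equation*}
so that
$\mathcal{L}^{(M)}_{y/x}(\I,j;\K,\ell) = Z_q(M;\I)^{-1}\sum_{\mathcal{C}(k_1,\dots,k_M)=\K} S(k_1,\dots,k_M)$.

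The first step is to observe that Proposition \ref{prop:q-exch} translates directly into a single-swap identity for $S$: whenever $k_a>k_{a+1}$, one has $S(k_1,\dots,k_a,k_{a+1},\dots,k_M) = q\cdot S(k_1,\dots,k_{a+1},k_a,\dots,k_M)$. This is exactly the content of \eqref{q-exch}, since the only object that depends on $(k_1,\dots,k_M)$ is the row-vertex weight and the identity is uniform in $(i_1,\dots,i_M)$.

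The second step is to iterate this single-swap rule. Fix any tuple $(k_1,\dots,k_M)$ with $\mathcal{C}(k_1,\dots,k_M)=\K$, and let $(k_1^{\rm sort},\dots,k_M^{\rm sort}) = (0^{K_0},1^{K_1},\dots,n^{K_n})$ denote its weakly-increasing reordering. Any sequence of adjacent transpositions that brings $(k_1,\dots,k_M)$ into weakly-increasing order performs exactly ${\rm inv}(k_1,\dots,k_M)$ inversions-removing swaps, each costing a factor of $q$. Hence
\begin{equation*}
S(k_1,\dots,k_M) = q^{{\rm inv}(k_1,\dots,k_M)}\, S(k_1^{\rm sort},\dots,k_M^{\rm sort}),
\end{equation*}
and this value is independent of the particular reduced-word decomposition chosen, since any two can be connected by repeated applications of the same single-swap rule on both sides.

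The final step is to perform the outer summation. Substituting the previous display and pulling the (now $\bar k$-independent) quantity $S(k^{\rm sort})$ out of the sum, one obtains
\begin{equation*}
\mathcal{L}^{(M)}_{y/x}(\I,j;\K,\ell)
=
\frac{S(0^{K_0},1^{K_1},\dots,n^{K_n})}{Z_q(M;\I)}
\sum_{\mathcal{C}(k_1,\dots,k_M)=\K}
q^{{\rm inv}(k_1,\dots,k_M)}
=
\frac{Z_q(M;\K)}{Z_q(M;\I)}\, S(0^{K_0},1^{K_1},\dots,n^{K_n}),
\end{equation*}
which is precisely \eqref{M-fus2}. The only nontrivial input is Proposition \ref{prop:q-exch}; the mild obstacle to watch for is the unambiguity of the iteration in the second step, which ultimately follows from the well-definedness of ${\rm inv}(k_1,\dots,k_M)$ as the minimal number of adjacent transpositions needed to sort the tuple.
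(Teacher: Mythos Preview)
Your proof is correct and follows essentially the same route as the paper: use \eqref{q-exch} repeatedly to reduce every term in the sum over $(k_1,\dots,k_M)$ to the sorted representative times $q^{{\rm inv}(k_1,\dots,k_M)}$, then sum those powers to produce $Z_q(M;\K)$. One small imprecision: your justification that Proposition~\ref{prop:q-exch} applies to $S(k_1,\dots,k_M)$ (``the identity is uniform in $(i_1,\dots,i_M)$'') is not quite right, since \eqref{q-exch} is stated for the \emph{unrestricted} sum over $(i_1,\dots,i_M)\in\{0,1,\dots,n\}^M$ rather than the sum restricted to $\mathcal{C}(i_1,\dots,i_M)=\I$. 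The correct reason these coincide is colour conservation through the row-vertex, which forces $\mathcal{C}(i_1,\dots,i_M)$ to be determined by $\mathcal{C}(k_1,\dots,k_M)$, $j$, and $\ell$; hence the restriction is automatic on both sides.
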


\begin{proof}
Using \eqref{q-exch} repeatedly, all terms in the sum over $(k_1,\dots,k_M)$ in \eqref{M-fus} are proportional: the proportionality factor counts the number of exchanges of the type \eqref{q-exch} required to sort $(k_1,\dots,k_M)$ increasingly, which is precisely ${\rm inv}(k_1,\dots,k_M)$. It follows that
\begin{multline*}
\mathcal{L}^{(M)}_{y/x}(\I,j; \K,\ell)
=
\frac{1}{Z_q(M;\I)}
\\
\times
\sum_{\substack{
\mathcal{C}(i_1,\dots,i_M) = \I
\\
\mathcal{C}(k_1,\dots,k_M) = \K
}}
q^{{\rm inv}(i_1,\dots,i_M)+{\rm inv}(k_1,\dots,k_M)}
R_{y/x}\Big((i_1,\dots,i_M),j; (0^{K_0},1^{K_1},\dots,n^{K_n}),\ell \Big),
\end{multline*}
and the sum over $(k_1,\dots,k_M)$ can now be taken to yield \eqref{M-fus2}.

\end{proof}

\section{Stacking $M$-fused vertices}

The $M$-fused vertices have the following important property, which allows them to be {\it vertically} concatenated in a natural way.
\begin{prop}
\label{prop:double-row}
Introduce a double-row analogue of row-vertices, obtained by vertically stacking two copies of \eqref{R-row-graph}:
\begin{multline}
\label{double-row}
R_{y/\{x_1,x_2\}}
\Big( (i_1,\dots,i_M), (j_1,j_2); (k_1,\dots,k_M), (\ell_1,\ell_2) \Big)
\\
:=
\sum_{a_1 = 0}^{n}
\cdots
\sum_{a_M = 0}^{n}
R_{y/x_1}\Big((i_1,\dots,i_M),j_1; (a_1,\dots,a_M),\ell_1 \Big)
R_{y/x_2}\Big((a_1,\dots,a_M),j_2; (k_1,\dots,k_M),\ell_2 \Big)
\end{multline}
or graphically,
\begin{align*}
R_{y/\{x_1,x_2\}}
\Big( (i_1,\dots,i_M), (j_1,j_2); (k_1,\dots,k_M), (\ell_1,\ell_2) \Big)
= 
\tikz{0.9}{
\draw[lgray,line width=1.5pt,->] (-1,0) -- (6,0);
\draw[lgray,line width=1.5pt,->] (-1,1) -- (6,1);
\draw[lgray,line width=1.5pt,->] (0,-1) -- (0,2);
\draw[lgray,line width=1.5pt,->] (1,-1) -- (1,2);
\draw[lgray,line width=1.5pt,->] (2,-1) -- (2,2);
\draw[lgray,line width=1.5pt,->] (3,-1) -- (3,2);
\draw[lgray,line width=1.5pt,->] (4,-1) -- (4,2);
\draw[lgray,line width=1.5pt,->] (5,-1) -- (5,2);
\draw[densely dotted] (0.5,0) arc (0:90:0.5);
\node[above right] at (-0.1,-0.1) {\tiny $\alpha_1$};
\draw[densely dotted] (1.5,0) arc (0:90:0.5);
\node[above right] at (0.9,-0.1) {\tiny $\alpha_2$};
\draw[densely dotted] (2.5,0) arc (0:90:0.5);
\draw[densely dotted] (3.5,0) arc (0:90:0.5);
\draw[densely dotted] (4.5,0) arc (0:90:0.5);
\draw[densely dotted] (5.5,0) arc (0:90:0.5);
\node[above right] at (4.85,-0.1) {\tiny $\alpha_M$};
\draw[densely dotted] (0.5,1) arc (0:90:0.5);
\node[above right] at (-0.1,0.9) {\tiny $\beta_1$};
\draw[densely dotted] (1.5,1) arc (0:90:0.5);
\node[above right] at (0.9,0.9) {\tiny $\beta_2$};
\draw[densely dotted] (2.5,1) arc (0:90:0.5);
\draw[densely dotted] (3.5,1) arc (0:90:0.5);
\draw[densely dotted] (4.5,1) arc (0:90:0.5);
\draw[densely dotted] (5.5,1) arc (0:90:0.5);
\node[above right] at (4.85,0.9) {\tiny $\beta_M$};
\node[left] at (-1,0) {\tiny $j_1$};\node[right] at (6,0) {\tiny $\ell_1$};
\node[left] at (-1,1) {\tiny $j_2$};\node[right] at (6,1) {\tiny $\ell_2$};
\node[below] at (0,-1) {\tiny $i_1$};\node[above] at (0,2) {\tiny $k_1$};
\node[below] at (1,-1) {\tiny $i_2$};\node[above] at (1,2) {\tiny $k_2$};
\node[below] at (3,-1) {\tiny $\cdots$};\node[above] at (3,2) {\tiny $\cdots$};
\node[below] at (5,-1) {\tiny $i_M$};\node[above] at (5,2) {\tiny $k_M$};
}
\end{align*}
with angles given by $\alpha_i = q^{M-i} y/x_1$ and $\beta_i = q^{M-i} y/x_2$, 
$1 \leq i \leq M$. Then one has the relation
\begin{multline}
\label{stacking}
\frac{Z_q(M;\K)}{Z_q(M;\I)}
\sum_{\substack{
\mathcal{C}(i_1,\dots,i_M) = \I
}}
q^{{\rm inv}(i_1,\dots,i_M)}
R_{y/\{x_1,x_2\}}\Big((i_1,\dots,i_M),(j_1,j_2); (0^{K_0},1^{K_1},\dots,n^{K_n}),(\ell_1,\ell_2) \Big)
\\
=
\sum_{\AA = (A_1,\dots,A_n)}
\mathcal{L}^{(M)}_{y/x_1}(\I, j_1; \AA, \ell_1)
\mathcal{L}^{(M)}_{y/x_2}(\AA, j_2; \K, \ell_2),
\end{multline}
where $(0^{K_0},1^{K_1},\dots,n^{K_n})$ is the weakly increasing $M$-tuple described in Corollary \ref{cor:alt-fus}.
\end{prop}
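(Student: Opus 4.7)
\textbf{Proof Proposal for Proposition \ref{prop:double-row}.}

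The plan is to expand the double-row partition function on the left-hand side of \eqref{stacking} according to its intermediate horizontal states, group these intermediate states by their colour-composition, and then identify each group with a single term in the sum over $\AA$ on the right-hand side. Concretely, using the definition \eqref{double-row}, the left-hand side of \eqref{stacking} becomes
\begin{equation*}
\frac{Z_q(M;\K)}{Z_q(M;\I)}
\sum_{\AA}
\sum_{\substack{\mathcal{C}(i_1,\dots,i_M)=\I \\ \mathcal{C}(a_1,\dots,a_M)=\AA}}
q^{{\rm inv}(i_1,\dots,i_M)}
R_{y/x_1}\bigl((i),j_1;(a),\ell_1\bigr)
R_{y/x_2}\bigl((a),j_2;(0^{K_0},\dots,n^{K_n}),\ell_2\bigr),
\end{equation*}
after refining the sum over the intermediate $M$-tuple $(a_1,\dots,a_M)$ by its composition $\AA$. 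Thus it suffices, for each fixed $\AA$, to match the inner double sum with $\frac{Z_q(M;\I)}{Z_q(M;\K)}\mathcal{L}^{(M)}_{y/x_1}(\I,j_1;\AA,\ell_1)\mathcal{L}^{(M)}_{y/x_2}(\AA,j_2;\K,\ell_2)$.

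The next step is to use Corollary~\ref{cor:alt-fus} for the second $M$-fused vertex, writing $\mathcal{L}^{(M)}_{y/x_2}(\AA,j_2;\K,\ell_2)$ as the sum $\frac{Z_q(M;\K)}{Z_q(M;\AA)}\sum_{\mathcal{C}(a)=\AA} q^{{\rm inv}(a)} R_{y/x_2}\bigl((a),j_2;(0^{K_0},\dots,n^{K_n}),\ell_2\bigr)$, and the defining formula \eqref{M-fus} for the first $M$-fused vertex. The product on the right-hand side of \eqref{stacking} then features two independent $M$-tuples $(b)$ and $(a)$, both of composition $\AA$, appearing at the top of the first row and bottom of the second row respectively, weighted by $q^{{\rm inv}(i)+{\rm inv}(a)}$. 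The combinatorial content of the proposition reduces to the identity
\begin{equation*}
\sum_{\mathcal{C}(i)=\I} q^{{\rm inv}(i)} R_{y/x_1}\bigl((i),j_1;(a),\ell_1\bigr)
=
\frac{q^{{\rm inv}(a)}}{Z_q(M;\AA)}
\sum_{\substack{\mathcal{C}(i)=\I \\ \mathcal{C}(b)=\AA}}
q^{{\rm inv}(i)}
R_{y/x_1}\bigl((i),j_1;(b),\ell_1\bigr)
\end{equation*}
for every fixed $(a)$ with $\mathcal{C}(a)=\AA$.

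This last identity is a direct consequence of the $q$-exchangeability relation \eqref{q-exch} of Proposition~\ref{prop:q-exch}: iterating that exchange $\text{inv}(b)$ times to sort $(b)$ into the weakly increasing $M$-tuple $(0^{A_0},1^{A_1},\dots,n^{A_n})$ shows that, for any $(b)$ with $\mathcal{C}(b)=\AA$,
\begin{equation*}
\sum_{\mathcal{C}(i)=\I} q^{{\rm inv}(i)} R_{y/x_1}\bigl((i),j_1;(b),\ell_1\bigr)
=
q^{{\rm inv}(b)} \sum_{\mathcal{C}(i)=\I} q^{{\rm inv}(i)} R_{y/x_1}\bigl((i),j_1;(0^{A_0},\dots,n^{A_n}),\ell_1\bigr).
\end{equation*}
Applying this to the right-hand side (and summing over $(b)$ using $\sum_{\mathcal{C}(b)=\AA}q^{{\rm inv}(b)}=Z_q(M;\AA)$) and to the left-hand side (with $(b)$ replaced by $(a)$) immediately produces the required relation. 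Substituting back, the two sums merge to reproduce exactly the inner double sum extracted above.

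The main obstacle is purely bookkeeping: ensuring that the $q$-exchange is applied consistently on both sides of the double row so that all factors $Z_q(M;\AA)$ cancel correctly and the intermediate tuples $(a)$ and $(b)$ get correctly identified through their common composition $\AA$. There is no genuine analytical difficulty once Proposition~\ref{prop:q-exch} and Corollary~\ref{cor:alt-fus} are invoked; the proof is essentially a careful reorganisation of the sums.
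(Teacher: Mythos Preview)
Your proposal is correct and follows essentially the same approach as the paper: refine the intermediate $M$-tuple $(a_1,\dots,a_M)$ by its colour composition $\AA$, then use the $q$-exchangeability relation \eqref{q-exch} to sort the outgoing tuple of the first row (producing the factor $q^{{\rm inv}(a)}$ which passes to the second row), and finally recognise each row as an $M$-fused vertex via Corollary~\ref{cor:alt-fus}. The only cosmetic difference is that the paper transforms the left-hand side of \eqref{stacking} directly into the right-hand side, whereas you expand both sides and meet in the middle via your displayed combinatorial identity; the underlying mechanism is identical.
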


\begin{proof}
Using the definition \eqref{double-row} of the double-row vertex, we can write the left hand side of \eqref{stacking} as
\begin{multline*}
\frac{Z_q(M;\K)}{Z_q(M;\I)}
\sum_{\AA = (A_1,\dots,A_n)}
\sum_{\substack{
\mathcal{C}(i_1,\dots,i_M) = \I
\\
\mathcal{C}(a_1,\dots,a_M) = \AA
}}
q^{{\rm inv}(i_1,\dots,i_M)}
R_{y/x_1}\Big((i_1,\dots,i_M),j_1; (a_1,\dots,a_M),\ell_1 \Big)
\\
\times
R_{y/x_2}\Big((a_1,\dots,a_M),j_2; (0^{K_0},1^{K_1},\dots,n^{K_n}),\ell_2 \Big).
\end{multline*}
Making repeated use of the $q$-exchangeability relation \eqref{q-exch}, we can order the $M$-tuple $(a_1,\dots,a_M)$ appearing in the first row-vertex and rearrange the summation slightly:
\begin{multline*}
\frac{Z_q(M;\K)}{Z_q(M;\I)}
\sum_{\AA = (A_1,\dots,A_n)}
\sum_{\mathcal{C}(i_1,\dots,i_M) = \I}
q^{{\rm inv}(i_1,\dots,i_M)}
R_{y/x_1}\Big((i_1,\dots,i_M),j_1; (0^{A_0},1^{A_1},\dots,n^{A_n}),\ell_1 \Big)
\\
\times
\sum_{\substack{
\mathcal{C}(a_1,\dots,a_M) = \AA
}}
q^{{\rm inv}(a_1,\dots,a_M)}
R_{y/x_2}\Big((a_1,\dots,a_M),j_2; (0^{K_0},1^{K_1},\dots,n^{K_n}),\ell_2 \Big),
\end{multline*}
where $A_0 := M-\sum_{b=1}^{n} A_b$. Taking the sum over $(i_1,\dots,i_M)$ using \eqref{M-fus2}, we obtain
\begin{multline*}
\sum_{\AA = (A_1,\dots,A_n)}
\mathcal{L}^{(M)}_{y/x_1}(\I,j_1;\AA,\ell_1)
\frac{Z_q(M;\K)}{Z_q(M;\AA)}
\\
\times
\sum_{\substack{
\mathcal{C}(a_1,\dots,a_M) = \AA
}}
q^{{\rm inv}(a_1,\dots,a_M)}
R_{y/x_2}\Big((a_1,\dots,a_M),j_2; (0^{K_0},1^{K_1},\dots,n^{K_n}),\ell_2 \Big),
\end{multline*}
and we recover the right hand side of \eqref{stacking} after again applying \eqref{M-fus2} to take the remaining sum over $(a_1,\dots,a_M)$.
\end{proof}

It is clear that Proposition \ref{prop:double-row} admits a generalization to $N$ rows, for arbitrary $N > 2$. We record it below:
\begin{multline}
\label{stack-Nrow}
\frac{Z_q(M;\K)}{Z_q(M;\I)}
\sum_{\substack{
\mathcal{C}(i_1,\dots,i_M) = \I
}}
q^{{\rm inv}(i_1,\dots,i_M)}
R_{y/\{x_1,\dots,x_N\}}\Big((i_1,\dots,i_M),(j_1,\dots,j_N); 
(0^{K_0},1^{K_1},\dots,n^{K_n}),(\ell_1,\dots,\ell_N) \Big)
\\
=
\sum_{\AA_1,\dots,\AA_{N-1}}
\mathcal{L}^{(M)}_{y/x_1}(\I, j_1; \AA_1, \ell_1)
\mathcal{L}^{(M)}_{y/x_2}(\AA_1, j_2; \AA_2, \ell_2)
\dots
\mathcal{L}^{(M)}_{y/x_N}(\AA_{N-1}, j_N; \K, \ell_N),
\end{multline}
where $\AA_1,\dots,\AA_{N-1}$ are summed over vectors in $\mathbb{N}^n$ and $R_{y/\{x_1,\dots,x_N\}}$ denotes the partition function
\begin{multline}
R_{y/\{x_1,\dots,x_N\}}\Big((i_1,\dots,i_M),(j_1,\dots,j_N); 
(k_1,\dots,k_M),(\ell_1,\dots,\ell_N) \Big)
\\
=
\tikz{0.8}{
\foreach\y in {2,...,5}{
\draw[lgray,line width=1.5pt,->] (1,\y) -- (8,\y);
}
\foreach\x in {2,...,7}{
\draw[lgray,line width=1.5pt,->] (\x,1) -- (\x,6);
}
\foreach\x in {2,...,5}{
\foreach\y in {2,...,7}{
\draw[densely dotted] (\y+0.5,\x) arc (0:90:0.5);
}}
\node[above] at (7,6) {$\tiny k_M$};
\node[above] at (5,6) {$\tiny \cdots$};
\node[above] at (4,6) {$\tiny \cdots$};
\node[above] at (2,6) {$\tiny k_1$};
\node[left] at (1,2) {$\tiny j_1$};
\node[left] at (0.5,3) {$\tiny \vdots$};
\node[left] at (0.5,4) {$\tiny \vdots$};
\node[left] at (1,5) {$\tiny j_N$};
\node[below] at (7,1) {$\tiny i_M$};
\node[below] at (5,1) {$\tiny \cdots$};
\node[below] at (4,1) {$\tiny \cdots$};
\node[below] at (2,1) {$\tiny i_1$};
\node[right] at (8,2) {$\tiny \ell_1$};
\node[right] at (8,3) {$\tiny \vdots$};
\node[right] at (8,4) {$\tiny \vdots$};
\node[right] at (8,5) {$\tiny \ell_N$};
}
\end{multline}
with the spectral parameter of the vertex $(a,b)$ (both indices counted from the bottom-left corner) equal to $q^{M-a}y/x_b$, $1 \leq a \leq M$, $1 \leq b \leq N$. The proof of \eqref{stack-Nrow} proceeds along the same lines as the proof of Proposition \ref{prop:double-row}; we omit it. It is a central result for our purposes, since it allows us to freely transition between partition functions in the fundamental vertex model \eqref{fund-vert} and those in the higher-spin setting \eqref{s-weights}, provided that appropriate boundary conditions and choices of vertical rapidities are imposed in the former.

\section{Evaluation of $M$-fused vertices and analytic continuation}

\begin{thm}
Fix two vectors $\I, \K \in \mathbb{N}^n$ such that $|\I| \leq M$, $|\K| \leq M$, and two integers $j,\ell \in \{0,1,\dots,n\}$. The $M$-fused vertex \eqref{M-fus} has the following explicit evaluation:
\begin{align}
\label{fus-wt}
\mathcal{L}^{(M)}_{y/x}(\I,j;\K,\ell)
=
\bm{1}_{(\I + \bm{e}_j = \K + \bm{e}_{\ell})}
\cdot
\frac{1}
{1-q^M y/x}
\cdot
\left\{
\begin{array}{ll}
(1-q^{I_\ell} y/x) q^{\Is{\ell+1}{n}}, & \quad j = \ell,
\\ \\
(1-q^{I_\ell}) q^{\Is{\ell+1}{n}}, & \quad j < \ell,
\\ \\
y/x (1-q^{I_\ell}) q^{\Is{\ell+1}{n}}, & \quad j > \ell,
\end{array}
\right.
\end{align}
or more compactly,
\begin{align}
\label{fus-compact}
\mathcal{L}^{(M)}_{y/x}(\I,j;\K,\ell)
=
\bm{1}_{(\I + \bm{e}_j = \K + \bm{e}_{\ell})}
\cdot
(y/x)^{\bm{1}_{j >\ell}}
\cdot
\frac{1-q^{I_{\ell}} (y/x)^{\bm{1}_{j=\ell}}}{1-q^M y/x}
\cdot
q^{\Is{\ell+1}{n}},
\end{align}
where by agreement $I_0 = M - \sum_{i=1}^{n} I_i$.
\end{thm}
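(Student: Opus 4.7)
\noindent The Kronecker delta factor $\bm{1}_{\I+\bm{e}_j=\K+\bm{e}_\ell}$ is immediate: at every fundamental vertex in the row \eqref{R-row-graph}, the multiset of colours on \{bottom,left\} equals that on \{top,right\}, so this conservation law iterates to the full row. For the explicit evaluation of the remaining quantity, the plan is to invoke Corollary \ref{cor:alt-fus} and work with the form in which the top states are fixed to the weakly-increasing tuple $(0^{K_0},1^{K_1},\dots,n^{K_n})$, reducing $\mathcal{L}^{(M)}_{y/x}(\I,j;\K,\ell)$ to a single $q^{\mathrm{inv}}$-weighted sum over bottom tuples $(i_1,\dots,i_M)$ with multiset $\I$, times the combinatorial prefactor $Z_q(M;\K)/Z_q(M;\I)$.

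Each configuration in this sum is then encoded by the sequence of horizontal edge states $(h_0,h_1,\dots,h_M)=(j,\dots,\ell)$. Since the top colour $k_a$ at column $a$ is fixed by the sorted convention, conservation at that vertex forces either a diagonal vertex $(h_a,i_a)=(h_{a-1},k_a)$ or an exchange vertex $(h_a,i_a)=(k_a,h_{a-1})$, with weight read from \eqref{R-weights-a}--\eqref{R-weights-bc} at spectral parameter $q^{M-a}y/x$. The weakly-increasing character of the top row means that whenever the horizontal colour $h_{a-1}$ exceeds $k_a$, one is automatically in the diagonal case, while when $h_{a-1}<k_a$ an exchange is possible; this rigidly constrains the allowed flows.

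The computation now splits naturally along the three cases in the theorem. For $j=\ell$, the horizontal flow returns to its starting value, and one parametrises configurations by the unique (possibly empty) interval on which $h_a\neq j$. For $j<\ell$ (resp.\ $j>\ell$), there must be exactly one column where the horizontal colour transitions from $j$ to $\ell$, and the configurations are parametrised by the position of this transition together with the distribution of the remaining bottom colours. In each case, the product of fundamental weights factorises into a telescoping product of $q$-Pochhammer factors of the form $(1-q^{M-a+?}y/x)/(1-q^{M-a+?'}y/x)$, while the $q^{\mathrm{inv}}$-weighted sum over the remaining freedom in $(i_1,\dots,i_M)$ collapses, via the $q$-binomial identity $\sum_{\mathcal C(i_1,\dots,i_M)=\I}q^{\mathrm{inv}}=Z_q(M;\I)$ and its refinements, into a ratio of $q$-Pochhammers that cancels neatly against $Z_q(M;\K)/Z_q(M;\I)$ to produce \eqref{fus-wt}.

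The main obstacle will be the bookkeeping for the $j=\ell$ case, where the horizontal flow may take an arbitrary excursion into one of several auxiliary colours and the corresponding sums have several moving parts; one needs to organise by the distinct value visited to keep everything manageable. A more structural alternative is induction on $M$: the base case $M=1$ is a direct match of \eqref{fus-wt} against the fundamental weights \eqref{R-weights-a}--\eqref{R-weights-bc} (eight subcases, one per nonzero entry), while the inductive step uses the local RLL relation \eqref{RLLa} for the vector--symmetric intertwiner, together with the uniqueness of the fused weights given the fusion normalisation built into \eqref{M-fus2}, to pin down the formula; checking that the closed expression \eqref{fus-wt} satisfies the RLL consistency conditions at the level of matrix elements reduces to a finite family of rational identities that can be verified directly.
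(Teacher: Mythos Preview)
Your direct-computation approach via Corollary \ref{cor:alt-fus} is plausible in outline, but the key steps are only asserted: phrases like ``the $q^{\mathrm{inv}}$-weighted sum collapses into a ratio of $q$-Pochhammers that cancels neatly'' are exactly where the work lies, and you have not done it. The $j=\ell$ case in particular, as you yourself flag, would require real bookkeeping that is absent here.

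Your ``more structural alternative'' --- induction on $M$ --- is in fact the paper's route, and the base case $M=1$ is exactly as you say. But your proposed inductive step is wrong: the RLL relation \eqref{RLLa} intertwines two $L$-matrices of the \emph{same} spin via the fundamental $R$, and does not by itself produce a recursion in $M$. What the paper does instead is far simpler. The row-vertex of width $M$ decomposes, by its very definition \eqref{R-row}, as a row of width $M-1$ (with spectral parameter shifted to $qy/x$) followed by a single fundamental vertex at $y/x$. Substituting this into \eqref{M-fus} and reorganising the sum over the final bottom/top states $(i_M,k_M)$ yields the recursion
\[
(1-q^M)\,\mathcal{L}^{(M)}_{y/x}(\I,j;\K,\ell)
=
\sum_{a,b,c}(1-q^{I_a})q^{\I_{[a+1,n]}}\,
\mathcal{L}^{(M-1)}_{qy/x}(\I_a^{-},j;\K_b^{-},c)\,R_{y/x}(a,c;b,\ell),
\]
which colour-conservation at the last vertex collapses to a short finite sum. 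One then checks by hand that the closed form \eqref{fus-wt} satisfies this recursion. No RLL, no uniqueness argument --- just split off the last column.
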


\begin{proof}
One simple way of proving this is to write a recursion for $\mathcal{L}^{(M)}_{y/x}(\I,j;\K,\ell)$ in $M$, and then show that the weights \eqref{fus-wt} satisfy it. Taking the definition \eqref{M-fus} of the $M$-fused vertex, we decompose the row-vertex that appears into two pieces:
\begin{align*}
R_{y/x}\Big((i_1,\dots,i_M),j; (k_1,\dots,k_M),\ell \Big)
=
\sum_{c=0}^{n}
R_{qy/x}\Big( (i_1,\dots,i_{M-1}),j ; (k_1,\dots,k_{M-1}),c \Big)
R_{y/x}(i_M,c ; k_M,\ell).
\end{align*}
Substituting this into \eqref{M-fus}, after some rearrangement of the summation we find that
\begin{multline*}
Z_q(M;\I)
\mathcal{L}^{(M)}_{y/x}(\I,j; \K,\ell)
=
\sum_{a,b,c=0}^{n}
\sum_{\substack{
\mathcal{C}(i_1,\dots,i_{M-1}) = \I_{a}^{-}
\\
\mathcal{C}(k_1,\dots,k_{M-1}) = \K_{b}^{-}
}}
q^{{\rm inv}(i_1,\dots,i_{M-1})}
q^{\Is{a+1}{n}}
\\
\times
R_{qy/x}\Big( (i_1,\dots,i_{M-1}),j ; (k_1,\dots,k_{M-1}),c \Big)
R_{y/x}(a,c ; b,\ell).
\end{multline*}
Now applying the definition \eqref{M-fus} in the case of an $(M-1)$-fused vertex, we obtain the recursion
\begin{align*}
\nonumber
Z_q(M;\I)
\mathcal{L}^{(M)}_{y/x}(\I,j; \K,\ell)
=
\sum_{a,b,c=0}^{n}
Z_q(M-1;\I_a^{-})
q^{\Is{a+1}{n}}
\mathcal{L}^{(M-1)}_{qy/x}(\I_a^{-},j; \K_b^{-},c)
R_{y/x}(a,c ; b,\ell),
\end{align*}
and cancelling factors common to the normalizations $Z_q(M;\I)$ and $Z_q(M-1;\I_a^{-})$, we have
\begin{align}
\label{fus-rec}
(1-q^M)
\mathcal{L}^{(M)}_{y/x}(\I,j; \K,\ell)
&=
\sum_{a,b,c=0}^{n}
(1-q^{I_a})
q^{\Is{a+1}{n}}
\mathcal{L}^{(M-1)}_{qy/x}(\I_a^{-},j; \K_b^{-},c)
R_{y/x}(a,c ; b,\ell).
\end{align}
The number of summations appearing in \eqref{fus-rec} can be reduced by considering explicitly the sums over $a$ and $b$. In particular, we note that when $a=b$ the only surviving terms will have $c=\ell$ (by colour-conservation of the $R$-vertex). Similarly, for $a\not=b$, colour-conservation through the $R$-vertex imposes that $a=\ell$ and $b=c$. This leads us to a simpler version of the recursion:
\begin{align}
\label{fus-rec2}
(1-q^M)
\mathcal{L}^{(M)}_{y/x}(\I,j; \K,\ell)
&=
\sum_{a=0}^{n}
(1-q^{I_a})
q^{\Is{a+1}{n}}
\mathcal{L}^{(M-1)}_{qy/x}(\I_a^{-},j; \K_a^{-},\ell)
R_{y/x}(a,\ell;a,\ell)
\\
\nonumber
&+
\sum_{\substack{c=0 \\ c\not=\ell}}^{n}
(1-q^{I_{\ell}})
q^{\Is{\ell+1}{n}}
\mathcal{L}^{(M-1)}_{qy/x}(\I_{\ell}^{-},j; \K_c^{-},c)
R_{y/x}(\ell,c;c,\ell).
\end{align}
It is now easy, by case-by-case analysis of the possible values of $j$ and $\ell$, to check that the weights \eqref{fus-wt} obey the required relation \eqref{fus-rec2}.

Finally, one needs to check the base case $M=1$. In this situation, one necessarily has $\I = \bm{e}_i$ and $\K = \bm{e}_k$ for some $i,k \in \{0,1,\dots,n\}$. Using the expression \eqref{M-fus2} for the $1$-fused vertex, it is clear that
\begin{align*}
\mathcal{L}^{(1)}_{y/x}(\bm{e}_i,j;\bm{e}_k,\ell)
=
R_{y/x}(i,j;k,\ell),
\end{align*}
and this is consistent with equation \eqref{fus-compact} at $M=1$, $\I = \bm{e}_i$ and $\K = \bm{e}_k$.
\end{proof}

\begin{prop}
\label{prop:rll-fus}
The weights \eqref{fus-wt} provide a solution of the intertwining equation
\begin{multline}
\label{prop-b.5}
\sum_{0 \leq k_1,k_2 \leq n}
\
\sum_{\K \in \mathbb{N}^n}
R_{y/x}(i_2,i_1;k_2,k_1)
\mathcal{L}^{(M)}_{z/x}(\I,k_1;\K,j_1)
\mathcal{L}^{(M)}_{z/y}(\K,k_2;\J,j_2)
\\
=
\sum_{0 \leq k_1,k_2 \leq n}
\
\sum_{\K \in \mathbb{N}^n}
\mathcal{L}^{(M)}_{z/y}(\I,i_2;\K,k_2)
\mathcal{L}^{(M)}_{z/x}(\K,i_1;\J,k_1)
R_{y/x}(k_2,k_1;j_2,j_1).
\end{multline}
\end{prop}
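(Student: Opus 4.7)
The plan is to derive \eqref{prop-b.5} as an iterated consequence of the Yang--Baxter equation \eqref{YB} for the fundamental $R$-matrix, combined with the representation \eqref{M-fus2} of the $M$-fused vertex as a single sum over $M$-tuples with $q$-weighted inversions. The thick vertical edges of the $M$-fused vertices should be ``unthickened'' into $M$ parallel thin vertical lines carrying spectral parameters forming a geometric progression $z, qz, \dots, q^{M-1}z$ along each column, at which point both sides of \eqref{prop-b.5} become partition functions in the fundamental model, differing only by the position of a single $R$-vertex crossing the two horizontal lines; this $R$-vertex can then be slid from the left of the two horizontal lines to the right using $M$ successive applications of \eqref{YB}.

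Concretely, first I would apply \eqref{M-fus2} on both sides of \eqref{prop-b.5} to reduce the two products of $\mathcal{L}^{(M)}$'s to double sums over $M$-tuples in the fundamental model. On each side, after fixing outgoing states at the top to be the weakly increasing $M$-tuple associated with the vector $\J$, and weighting incoming $M$-tuples at the bottom by $q^{\mathrm{inv}(\cdots)}$, the intermediate horizontal edges joining the two $\mathcal{L}^{(M)}$ factors are summed using Corollary \ref{cor:alt-fus} — but here one must be careful: the intermediate vertical state $\K$ is internal, so the summation over $\K$ on each side of \eqref{prop-b.5} must be matched with the sum over all intermediate $M$-tuples $(c_1, \dots, c_M)$ in the iterated row-vertex picture. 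Using the $q$-exchangeability property of Proposition \ref{prop:q-exch}, one verifies that the two representations agree up to the same normalizing factor $Z_q(M;\J)/Z_q(M;\I)$.

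Having reduced both sides to partition functions in the fundamental model of the form
\[
\sum_{(i_1,\dots,i_M)} q^{\mathrm{inv}(i_1,\dots,i_M)} \cdot R_{\text{left}} \cdot R_{y/\{z_M\}}\Big((i_1,\dots,i_M),j_1^{\text{intermediate}}\Big) \cdot R_{y/\{z_M\}}\Big(\cdots\Big),
\]
with $z_a = q^{M-a}z$, I would apply the ``train argument'': the fundamental $R$-vertex $R_{y/x}$ at the left of the two horizontal lines commutes past each of the $M$ vertical columns by a single application of the local Yang--Baxter equation \eqref{graph-RLLa} (specialized to all three spaces being fundamental, i.e., the bare Yang--Baxter equation \eqref{YB}), since the specialization $z_a = q^{M-a} z$ is precisely what makes the ratios $z_a/x$ and $z_a/y$ consistent with pushing the $R_{y/x}$ vertex across each column. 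After $M$ such applications, the $R$-vertex emerges on the right of all columns, producing the other side of \eqref{prop-b.5}.

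The main obstacle, and the step requiring the most care, is the bookkeeping of the $q^{\mathrm{inv}}$ weights and the ordering of the bottom $M$-tuple $(i_1, \dots, i_M)$ under the train argument: when we slide the $R_{y/x}$ across the columns via the Yang--Baxter equation, the internal summation structure that produces $\mathcal{L}^{(M)}$ on each side is genuinely different, and matching them requires invoking Proposition \ref{prop:q-exch} to rearrange $M$-tuples into canonical (weakly increasing) form, while tracking the corresponding $q$-factors. Once this is done, the combinatorial normalizations $Z_q(M;\bullet)$ cancel consistently across both sides, and \eqref{prop-b.5} follows. An alternative approach is a direct case-by-case verification using the explicit formula \eqref{fus-compact}, but this is far more tedious and obscures the underlying Yang--Baxter origin of the identity.
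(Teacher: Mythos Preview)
Your approach is correct and is essentially the same as the paper's. The paper packages your ``unthickening'' step and the associated $q^{\mathrm{inv}}$ bookkeeping into a single citation of Proposition~\ref{prop:double-row} (the stacking identity \eqref{stacking}), which already shows that a product $\sum_{\K}\mathcal{L}^{(M)}\mathcal{L}^{(M)}$ equals a double-row vertex in the fundamental model with the correct $Z_q(M;\cdot)$ normalizations; once both sides are so written, $M$ applications of \eqref{YB} finish the argument, and the ``main obstacle'' you flag does not actually require any additional work.
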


\begin{proof}
This is a simple corollary of Proposition \ref{prop:double-row}. Using \eqref{stacking}, we see that both of the sums $\sum_{\K \in \mathbb{N}^n}
\mathcal{L}^{(M)}_{z/x}(\I,k_1;\K,j_1)
\mathcal{L}^{(M)}_{z/y}(\K,k_2;\J,j_2)$ 
and
$\sum_{\K \in \mathbb{N}^n}
\mathcal{L}^{(M)}_{z/y}(\I,i_2;\K,k_2)
\mathcal{L}^{(M)}_{z/x}(\K,i_1;\J,k_1)$ 
can be replaced by double-row vertices of the form \eqref{double-row}. The statement then follows by $M$ applications of the Yang--Baxter equation \eqref{YB} in the fundamental vertex model.
\end{proof}

As can be seen from \eqref{fus-wt}, the $M$-fused vertices depend on $M$ only via the combination $q^M$. Analytically continuing in this variable by sending $q^M \mapsto s^{-2}$, and setting $y = s$, after rearrangement one finds that the weights become
\begin{align}
\label{explicit-weights}
\mathcal{L}^{(M)}_{y/x}(\I,j;\K,\ell)
\Big|_{q^M \rightarrow s^{-2}, y \rightarrow s}
=
\frac{\bm{1}_{(\I + \bm{e}_j = \K + \bm{e}_{\ell})}}
{1-s x}
\cdot
\left\{
\begin{array}{ll}
1-s q^{\Is{1}{n}} x, & \quad j = \ell = 0,
\\ \\
-s(x-s q^{I_\ell}) q^{\Is{\ell+1}{n}}, & \quad j = \ell \geq 1,
\\ \\
-sx(1-q^{I_\ell}) q^{\Is{\ell+1}{n}}, & \quad j < \ell,
\\ \\
1-s^2 q^{\Is{1}{n}}, & \quad j > \ell = 0,
\\ \\
-s^2(1-q^{I_\ell}) q^{\Is{\ell+1}{n}}, & \quad j > \ell \geq 1.
\end{array}
\right.
\end{align}
One can now check that
\begin{align}
\label{an-cont}
\mathcal{L}^{(M)}_{y/x}(\I,j;\K,\ell)
\Big|_{q^M \rightarrow s^{-2}, y \rightarrow s}
=
\tilde{L}_x(\I,j;\K,\ell),
\end{align}
where $\tilde{L}_x(\I,j;\K,\ell)$ is the stochastic $L$-matrix as defined in \eqref{stoch-wt}, establishing a direct link between the higher-spin model studied throughout the paper and fusion of the fundamental vertex model \eqref{fund-vert}. Note that, as a consequence of Proposition \ref{prop:rll-fus}, one has the following fact:
\begin{cor}
The stochastic $L$-matrix \eqref{stoch-wt} obeys the intertwining equation
\begin{multline}
\label{cor-b.6}
\sum_{0 \leq k_1,k_2 \leq n}
\
\sum_{\K \in \mathbb{N}^n}
R_{y/x}(i_2,i_1;k_2,k_1)
\tilde{L}_x(\I,k_1;\K,j_1)
\tilde{L}_y(\K,k_2;\J,j_2)
\\
=
\sum_{0 \leq k_1,k_2 \leq n}
\
\sum_{\K \in \mathbb{N}^n}
\tilde{L}_y(\I,i_2;\K,k_2)
\tilde{L}_x(\K,i_1;\J,k_1)
R_{y/x}(k_2,k_1;j_2,j_1).
\end{multline}
\end{cor}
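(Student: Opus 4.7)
\medskip

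The plan is to derive the corollary directly from Proposition \ref{prop:rll-fus}, which gives the analogous intertwining equation for the $M$-fused vertex weights $\mathcal{L}^{(M)}$, via the analytic continuation relation \eqref{an-cont}. Specifically, I would first rewrite Proposition \ref{prop:rll-fus} with its three variables $(x,y,z)$ by specializing the vertical rapidity to $z = s$. This transforms each $\mathcal{L}^{(M)}_{z/x}$ into $\mathcal{L}^{(M)}_{s/x}$ (and similarly for the $y$-version), while leaving the factor $R_{y/x}$ untouched, since it is independent of the vertical rapidity $z$ and of $M$.

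Next I would perform the analytic continuation $q^M \mapsto s^{-2}$ in the resulting equation. Since both sides are rational functions in $q^M$ (as is visible from the closed form \eqref{fus-wt}), this continuation is unambiguous. After the continuation, the identification \eqref{an-cont} gives
\[
\mathcal{L}^{(M)}_{s/x}(\I,j;\K,\ell)\Big|_{q^M \mapsto s^{-2}}
=
\tilde{L}_x(\I,j;\K,\ell),
\qquad
\mathcal{L}^{(M)}_{s/y}(\I,j;\K,\ell)\Big|_{q^M \mapsto s^{-2}}
=
\tilde{L}_y(\I,j;\K,\ell),
\]
and the $R$-weights $R_{y/x}$ pass through unaffected. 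Substituting these identifications into the specialized form of \eqref{prop-b.5} yields precisely \eqref{cor-b.6}.

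The only point requiring a moment of care is that the sum $\sum_{\K \in \mathbb{N}^n}$ must remain a meaningful (and in fact finite) sum after the analytic continuation. This follows from conservation of colour flux through each vertex: for fixed external states $(\I, j_1, i_2)$ or $(\J, j_2, i_1)$, only finitely many intermediate $\K$ give non-vanishing contributions on either side (indeed, $|\K|$ is pinned up to a shift of at most one from $|\I|$ or $|\J|$), so exchanging sum and continuation is harmless. I do not expect any genuine obstacle to the argument — the content is entirely in Proposition \ref{prop:rll-fus} and the identification \eqref{an-cont}, and the rest is a routine specialization of variables.
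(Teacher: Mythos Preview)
Your proposal is correct and follows essentially the same route as the paper: both derive \eqref{cor-b.6} from Proposition \ref{prop:rll-fus} via the identification \eqref{an-cont}, using rationality to pass from integer $M$ to generic $s$. The only cosmetic difference is that you continue \eqref{prop-b.5} in $q^M$ to $s^{-2}$, whereas the paper phrases it as verifying \eqref{cor-b.6} at the infinitely many points $s=q^{-M/2}$; these are the same argument viewed from opposite ends.
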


\begin{proof}
Equation \eqref{cor-b.6} is true at each of the points $s = q^{-M/2}$, where $M$ is any sufficiently large positive integer, since at such values it reduces to \eqref{prop-b.5} at $y=s$. Since both sides of \eqref{cor-b.6} are rational in $s$, equality at infinitely many values of $s$ proves the equation for generic $s$.
\end{proof}

\chapter{Three intertwining equations}
\label{app:RLL}

The purpose of this appendix is to explain how the three intertwining equations \eqref{RLLa}--\eqref{RLLc} arise as special cases of a single Yang--Baxter equation. We will quote a number of results within it, but in most cases we omit proofs in view of their lengthy nature.

\section{Bosnjak--Mangazeev solution of the Yang--Baxter equation}

Throughout this section, $n$ denotes the rank of the quantized affine algebra $U_q(\wh{\mathfrak{sl}_{n+1}})$, $\l,\m,\n$ are positive integers which denote the spin of a line, and $x,y,z$ are rapidities which are associated to lines. We consider vertices of the following type:
\begin{align*}
\tikz{0.9}{
\draw[lgray,line width=4pt,->] (-1,0) -- (1,0);
\draw[lgray,line width=4pt,->] (0,-1) -- (0,1);
\node[left] at (-1,0) {\tiny $\BB$};\node[right] at (1,0) {\tiny $\DD$};
\node[below] at (0,-1) {\tiny $\AA$};\node[above] at (0,1) {\tiny $\CC$};
\node[left] at (-1.5,0) {$(x,\l) \rightarrow$};
\node[below] at (0,-1.4) {$\uparrow$};
\node[below] at (0,-1.9) {$(y,\m)$};
}
=
\vert{\l}{\m}{\frac{x}{y};q}{\AA}{\BB}{\CC}{\DD}
\equiv
\index{W@$W_{\l,\m}(x/y;q; \AA,\BB,\CC,\DD)$; Bosnjak--Mangazeev weights}
W_{\l,\m}(x/y;q; \AA,\BB,\CC,\DD),
\end{align*}
where $\AA,\BB,\CC,\DD$ are compositions in $\mathbb{N}^n$, whose weights are constrained by
\begin{align}
\label{wt-constrain}
|\AA|, |\CC| \leq \m,
\qquad
|\BB|, |\DD| \leq \l.
\end{align}
If the conditions \eqref{wt-constrain} are not met, the vertex weight $W_{\l,\m}(x/y;q; \AA,\BB,\CC,\DD)$ is identically zero. The Yang--Baxter equation takes the form
\begin{multline}
\label{master-yb}
\sum_{\CC_1,\CC_2,\CC_3}
\vert{\l}{\m}{\frac{x}{y};q}{\AA_2}{\AA_1}{\CC_2}{\CC_1}
\vert{\l}{\n}{\frac{x}{z};q}{\AA_3}{\CC_1}{\CC_3}{\BB_1}
\vert{\m}{\n}{\frac{y}{z};q}{\CC_3}{\CC_2}{\BB_3}{\BB_2}
\\
=
\sum_{\CC_1,\CC_2,\CC_3}
\vert{\m}{\n}{\frac{y}{z};q}{\AA_3}{\AA_2}{\CC_3}{\CC_2}
\vert{\l}{\n}{\frac{x}{z};q}{\CC_3}{\AA_1}{\BB_3}{\CC_1}
\vert{\l}{\m}{\frac{x}{y};q}{\CC_2}{\CC_1}{\BB_2}{\BB_1},
\end{multline}
where $\AA_1,\AA_2,\AA_3,\BB_1,\BB_2,\BB_3 \in \mathbb{N}^n$ are fixed compositions, and the summation on both sides of the equation is over triples of compositions $\CC_1,\CC_2,\CC_3 \in \mathbb{N}^n$. Equivalently, we can represent the equation \eqref{master-yb} graphically:
\begin{align}
\label{graph-master}
\sum_{\CC_1,\CC_2,\CC_3}
\tikz{0.9}{
\draw[lgray,line width=4pt,->]
(-2,1) node[above,scale=0.6] {\color{black} $\AA_1$} -- (-1,0) node[below,scale=0.6] {\color{black} $\CC_1$} -- (1,0) node[right,scale=0.6] {\color{black} $\BB_1$};
\draw[lgray,line width=4pt,->] 
(-2,0) node[below,scale=0.6] {\color{black} $\AA_2$} -- (-1,1) node[above,scale=0.6] {\color{black} $\CC_2$} -- (1,1) node[right,scale=0.6] {\color{black} $\BB_2$};
\draw[lgray,line width=4pt,->] 
(0,-1) node[below,scale=0.6] {\color{black} $\AA_3$} -- (0,0.5) node[scale=0.6] {\color{black} $\CC_3$} -- (0,2) node[above,scale=0.6] {\color{black} $\BB_3$};
\node[left] at (-2.2,1) {$(x,\l) \rightarrow$};
\node[left] at (-2.2,0) {$(y,\m) \rightarrow$};
\node[below] at (0,-1.4) {$\uparrow$};
\node[below] at (0,-1.9) {$(z,\n)$};
}
\quad
=
\quad
\sum_{\CC_1,\CC_2,\CC_3}
\tikz{0.9}{
\draw[lgray,line width=4pt,->] 
(-1,1) node[left,scale=0.6] {\color{black} $\AA_1$} -- (1,1) node[above,scale=0.6] {\color{black} $\CC_1$} -- (2,0) node[below,scale=0.6] {\color{black} $\BB_1$};
\draw[lgray,line width=4pt,->] 
(-1,0) node[left,scale=0.6] {\color{black} $\AA_2$} -- (1,0) node[below,scale=0.6] {\color{black} $\CC_2$} -- (2,1) node[above,scale=0.6] {\color{black} $\BB_2$};
\draw[lgray,line width=4pt,->] 
(0,-1) node[below,scale=0.6] {\color{black} $\AA_3$} -- (0,0.5) node[scale=0.6] {\color{black} $\CC_3$} -- (0,2) node[above,scale=0.6] {\color{black} $\BB_3$};
\node[left] at (-1.5,1) {$(x,\l) \rightarrow$};
\node[left] at (-1.5,0) {$(y,\m) \rightarrow$};
\node[below] at (0,-1.4) {$\uparrow$};
\node[below] at (0,-1.9) {$(z,\n)$};
}
\end{align}

\begin{thm}[Bosnjak--Mangazeev, \cite{BosnjakM}]
\label{thm:master-yb}
For any two compositions $\lambda, \mu \in \mathbb{N}^n$ such that $\lambda_i \leq \mu_i$ for all $1\leq i \leq n$, we define the function
\begin{align*}
\Phi(\lambda,\mu;x,y)
:=
\frac{(x;q)_{|\lambda|} (y/x;q)_{|\mu-\lambda|}}{(y;q)_{|\mu|}}
(y/x)^{|\lambda|}
\left(q^{\sum_{i<j} (\mu_i-\lambda_i) \lambda_j} \right)
\prod_{i=1}^{n} \binom{\mu_i}{\lambda_i}_q.
\end{align*}
Let $\AA = (A_1,\dots,A_n)$, $\BB = (B_1,\dots,B_n)$, $\CC = (C_1,\dots,C_n)$ and 
$\DD = (D_1,\dots,D_n)$ be compositions and fix two positive integers $\l,\m$. An explicit solution of the Yang--Baxter equation \eqref{master-yb} is obtained by choosing
\begin{multline}
\label{bm-weights}
\vert{\l}{\m}{x;q}{\AA}{\BB}{\CC}{\DD}
=
\left( \bm{1}_{\AA + \BB = \CC + \DD} \right)
x^{|\DD-\BB|} q^{|\AA \l - \DD \m|}
\\
\times
\sum_{\bm{P}}
\Phi(\CC-\bm{P},\CC+\DD-\bm{P}; q^{\l-\m} x, q^{-\m} x)
\Phi(\bm{P},\BB; q^{-\l}/x, q^{-\l}),
\end{multline}
where the sum is over compositions $\bm{P} = (P_1,\dots,P_n)$ such that $0 \leq P_i \leq \min(B_i,C_i)$ for all $1 \leq i \leq n$.
\end{thm}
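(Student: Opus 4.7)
The plan is to prove Theorem \ref{thm:master-yb} via a two-step procedure: first, establish that the Yang--Baxter equation \eqref{master-yb} holds (abstractly) for vertex weights obtained by iterated fusion of the fundamental $R$-matrix \eqref{Rmat}; second, verify that these fusion weights are given by the explicit combinatorial formula \eqref{bm-weights}. This is the standard way such $R$-matrices are constructed in integrable vertex models, and the material developed in Appendix \ref{app:fusion} already furnishes most of the required machinery.

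For the first step, one starts from the fundamental Yang--Baxter equation \eqref{YB}. By applying the row-fusion construction \eqref{R-row-graph} to each of the three auxiliary spaces in succession, and invoking the $q$-exchangeability result of Proposition \ref{prop:q-exch} to symmetrize outgoing edges, one obtains an intertwining relation identical in form to \eqref{master-yb} at the discrete values $q^{\ell}, q^{\mu}, q^{\nu}$ corresponding to integer fusion parameters. Since both sides of \eqref{master-yb} are rational in the variables $q^{\ell}, q^{\mu}, q^{\nu}$ (after suitable normalization), agreement on an infinite sequence of values extends to generic values by analytic continuation, along the lines of \eqref{an-cont}. This step is essentially a higher-rank generalization of what was sketched at the end of Appendix \ref{app:fusion}.

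The main obstacle is the second step: proving that the fusion procedure actually yields the closed-form expression \eqref{bm-weights}. The weights \eqref{bm-weights} involve a nontrivial sum over intermediate compositions $\bm{P}$, a product of two $\Phi$ functions (which themselves encode $q$-multinomial data), and a power prefactor in $x$ and $q$. To extract this from the iterated fusion sum, I would proceed by induction on the fusion indices $\ell, \mu$: start from the rank-one ($\ell=\mu=1$) case where the formula reduces to \eqref{R-weights-a}--\eqref{R-weights-bc}, and then use the recursive structure \eqref{fus-rec2} (suitably generalized to fuse both horizontal and vertical lines) to derive a recursion for the weights. The key technical lemma will be a $q$-hypergeometric identity identifying the sum over $\bm{P}$ with the recursion output; such identities are typically consequences of the $q$-Vandermonde or $q$-Saalsch\"utz summation, and the $\Phi$ function is designed precisely to telescope under these manipulations.

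Once Theorem \ref{thm:master-yb} is in hand, the three intertwining equations \eqref{RLLa}--\eqref{RLLc} follow as special cases by choosing two of the three spins appropriately (either keeping one line as a vector representation by setting its spin to one, or analytically continuing $q^{\ell} \mapsto s^{-2}$ as in \eqref{an-cont} to pass from symmetric tensor representations to Verma modules). The choice of which auxiliary space is the ``short'' vector line and which carries the continuous spin parameter determines which of the three equations one recovers; the bookkeeping is routine but tedious, requiring careful tracking of the reciprocations $\bar{x}, \bar{q}, \bar{s}$ that appear in \eqref{RLLb}, which reflect the fact that the Verma module is being paired with the vector representation in the ``dual'' order.
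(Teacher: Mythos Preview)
The paper does not prove this theorem: immediately after the statement it writes ``We refer the reader to \cite{BosnjakM} for a proof of Theorem \ref{thm:master-yb}, using techniques from three-dimensional integrability.'' So there is no proof in the paper to compare against, only a pointer to the original source and a one-phrase indication of method.

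That method is genuinely different from what you propose. Bosnjak--Mangazeev obtain the weights \eqref{bm-weights} not by fusion but by three-dimensional integrability: one starts from a solution of the tetrahedron equation, and the Yang--Baxter equation \eqref{master-yb} together with the explicit form of the weights drops out as a two-dimensional reduction. This bypasses entirely the inductive/recursive verification you sketch in your second step, and is in fact the reason the closed form \eqref{bm-weights} was discovered in the first place.

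Your fusion route is a legitimate alternative in principle --- your first step is exactly what Appendix \ref{app:fusion} does in the case $\l=1$, and extending it to general $\l,\m$ by fusing both lines is standard. But your second step is where the real work lies, and the proposal does not close it: you say the key lemma will be ``a $q$-hypergeometric identity identifying the sum over $\bm{P}$ with the recursion output,'' without specifying which identity or checking that the $\Phi$-function structure actually telescopes under the fusion recursion. This is not a small gap; the double-$\Phi$ sum in \eqref{bm-weights} is precisely the kind of expression that is easy to verify \emph{a posteriori} but hard to guess from a recursion, and the three-dimensional approach is what supplies it. Your final paragraph on deriving \eqref{RLLa}--\eqref{RLLc} is downstream of the theorem (it is the content of the rest of Appendix \ref{app:RLL}) and not part of its proof.
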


We refer the reader to \cite{BosnjakM} for a proof of Theorem \ref{thm:master-yb}, using techniques from three-dimensional integrability.

\begin{prop}[Stochasticity]\label{prop:bm-stoch}
Let $\AA = (A_1,\dots,A_n)$ and $\BB = (B_1,\dots,B_n)$ be two fixed compositions in 
$\mathbb{N}^n$. The following identity holds for all such $\AA,\BB$:
\begin{align}
\label{sum-to-1}
\sum_{\CC, \DD}
\vert{\l}{\m}{x;q}{\AA}{\BB}{\CC}{\DD}
=
1,
\end{align}
where the sum is over all compositions $\CC = (C_1,\dots,C_n)$ and $\DD = (D_1,\dots,D_n)$ in $\mathbb{N}^n$.
\end{prop}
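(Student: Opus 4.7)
My plan is to prove the stochasticity identity \eqref{sum-to-1} by reducing it, via the fusion construction of Appendix \ref{app:fusion}, to the stochasticity of the fundamental $U_q(\wh{\mathfrak{sl}_{n+1}})$ $R$-matrix that was already established in Proposition \ref{prop:Rstoch}. The underlying idea is that the Bosnjak--Mangazeev weights $W_{\l,\m}(x;q;\AA,\BB,\CC,\DD)$ should be expressible, up to an explicit combinatorial prefactor, as an $\l\times\m$ rectangular partition function of fundamental vertices \eqref{R-vert}, with the inputs $\AA,\BB$ occupying the left and bottom external edges and $\CC,\DD$ occupying the top and right external edges, in a fashion extending the one-sided fusion of Appendix \ref{app:fusion} to both spaces.

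The steps, in order, would be: (i) identify the precise two-sided fused version of \eqref{fus-wt}, namely sum $\l$ horizontal and $\m$ vertical lines of the fundamental model with appropriate geometric-progression rapidities and $q$-multinomial normalizations, and verify that the resulting weights coincide with those of Theorem \ref{thm:master-yb}; (ii) observe that at $\l=\m=1$ the formula \eqref{bm-weights} reduces directly to the entries \eqref{R-weights-a}--\eqref{R-weights-bc} of the fundamental $R$-matrix, matching the base case of fusion; (iii) iterate the stochasticity relation \eqref{stoch} vertex by vertex across the $\l\times\m$ block, summing the right and top edge states of each fundamental vertex in turn, to collapse the full sum over $(\CC,\DD)$ (which includes the internal summations appearing within $W_{\l,\m}$) to $1$.

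The main obstacle is step (i): matching the compact Bosnjak--Mangazeev formula \eqref{bm-weights}, with its auxiliary inner sum over $\bm{P}$, to the more straightforward but unwieldy fused expression. This is essentially a $q$-combinatorial identity, and verifying it requires care with the prefactors $\Phi(\lambda,\mu;x,y)$ and the $q$-binomial coefficients; the computation reduces to recognising the $\bm{P}$-summation as an instance of the $q$-Chu--Vandermonde sum distributed across the components of $\bm{P}$. An alternative, more direct route would be to bypass the fusion identification and instead apply $q$-Chu--Vandermonde (or, equivalently, a telescoping argument) component-wise to the explicit sum
\[
\sum_{\CC:\ C_i\le A_i+B_i,\ |\CC|\le\m}\ \sum_{\bm{P}}\ x^{|\AA+\BB-\BB-\CC|}q^{|\AA\l-(\AA+\BB-\CC)\m|}\,\Phi(\CC-\bm{P},\AA+\BB-\bm{P};\cdots)\,\Phi(\bm{P},\BB;\cdots),
\]
after swapping the order of summation so that $\bm{P}$ is summed last. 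Both routes should yield the same answer; I would pursue the fusion one, since it also makes transparent how stochasticity interacts with the Yang--Baxter equation \eqref{master-yb} and explains, a posteriori, why this property is inherited by all three of the specialized intertwining equations \eqref{RLLa}--\eqref{RLLc}.
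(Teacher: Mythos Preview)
The paper does not actually prove this proposition: immediately after stating it (together with the symmetry Proposition~\ref{prop:bm-sym}) the authors write that both ``can be proved directly from the explicit form \eqref{bm-weights} of the weights. These proofs are not difficult but fairly laborious; we omit them.'' So there is no argument in the paper to compare against, only the indication that the intended route is a head-on computation with the formula \eqref{bm-weights}.

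Your fusion route is conceptually the right picture, and step (iii) is indeed trivial once (i) is in hand: summing out the top and right edges of an $\l\times\m$ block of fundamental vertices one at a time using Proposition~\ref{prop:Rstoch} collapses everything to $1$. The issue is that your step (i), identifying the doubly-fused $\l\times\m$ block with the closed-form expression \eqref{bm-weights}, is precisely the nontrivial content of \cite{BosnjakM} (the paper only quotes this as Theorem~\ref{thm:master-yb} for the Yang--Baxter equation, and Appendix~\ref{app:fusion} carries out fusion in one direction only). So the fusion argument, as you have laid it out, front-loads a harder computation than the one you are trying to avoid. Your alternative direct route, summing \eqref{bm-weights} over $\CC,\DD$ and applying $q$-Chu--Vandermonde componentwise after interchanging the $\bm P$ and $\CC$ sums, is exactly what the paper has in mind and is the shorter path; it is self-contained and does not require establishing the two-sided fusion identification.
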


\begin{prop}[Symmetry of weights]\label{prop:bm-sym}
The vertex weights \eqref{bm-weights} satisfy the following symmetry relation:
\begin{align}
\label{bm-sym}
\vert{\l}{\m}{x;q}{\AA}{\BB}{\CC}{\DD}
=
\vert{\m}{\l}{\frac{q^{\m-\l}}{x};\frac{1}{q}}{\BB}{\AA}{\DD}{\CC}.
\end{align}
\end{prop}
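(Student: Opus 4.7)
The identity to be established is a term-by-term equality of two instances of the explicit formula (B.2.5), once with parameters $(\l,\m,x,q)$ and once with the transformed parameters $(\m,\l,q^{\m-\l}/x,q^{-1})$ and indices permuted. I would approach this as a direct verification, breaking (B.2.5) into three factors (indicator, prefactor, sum of $\Phi$-products) and handling each in turn.

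The indicator $\bm{1}_{\AA+\BB=\CC+\DD}$ is manifestly invariant under the swaps $\AA\leftrightarrow\BB$, $\CC\leftrightarrow\DD$, so there is nothing to check there. For the monomial prefactor $x^{|\DD-\BB|} q^{|\AA\l-\DD\m|}$, the transformed expression is $(q^{\m-\l}/x)^{|\CC-\AA|}(q^{-1})^{|\BB\m-\CC\l|}$; using $\AA-\CC=\DD-\BB$ (from the conservation law) one finds $x^{|\CC-\AA|}=x^{|\DD-\BB|}$ up to sign matching, and using $|\AA|+|\BB|=|\CC|+|\DD|$ the $q$-exponent simplifies to $\l|\AA|-\m|\DD|$, exactly matching. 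This is a short computation.

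The heart of the proof is verifying the equality of the two sums of $\Phi$-products. My plan is first to write out, using the standard identities
\begin{align*}
(a;q^{-1})_m=(a^{-1};q)_m(-a)^m q^{-\binom{m}{2}},\qquad \binom{\mu_i}{\lambda_i}_{q^{-1}}=q^{-\lambda_i(\mu_i-\lambda_i)}\binom{\mu_i}{\lambda_i}_q,
\end{align*}
an explicit formula expressing $\Phi(\lambda,\mu;x,y)$ evaluated at base $q^{-1}$ as $\Phi(\lambda,\mu;\tilde x,\tilde y)$ evaluated at base $q$ times an explicit monomial in $q$ and the variables. Substituting this into the transformed right-hand side reduces the problem to a polynomial identity between two sums of $\Phi$-products, each indexed by a composition $\bm P$. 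At this point I would make a change of summation variable — most likely $\bm P\mapsto \CC-\bm P$ or $\bm P\mapsto \BB-\bm P'$ — chosen so that the two $\Phi$ factors swap roles (the $\Phi(\CC-\bm P,\CC+\DD-\bm P;\dots)$ factor transforms into a $\Phi(\bm Q,\AA;\dots)$-type factor, and vice versa), matching the pattern on the left-hand side after the $(\AA,\BB,\CC,\DD)\to(\BB,\AA,\DD,\CC)$ swap.

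The main obstacle I expect is the combinatorial bookkeeping of the quadratic exponent $q^{\sum_{i<j}(\mu_i-\lambda_i)\lambda_j}$ that appears inside each $\Phi$. Under $q\mapsto q^{-1}$ this exponent flips sign, and under the substitution of $\bm P$ it is reshuffled in a way that interacts nontrivially with the monomial factors produced by the $q$-Pochhammer and $q$-binomial transformation rules above. Tracking these contributions and confirming that they recombine exactly with the transformed prefactor to reproduce the original expression is the delicate step. As an alternative route, should the direct bookkeeping prove unwieldy, I would fall back on a structural argument: observe that both sides of the claimed identity satisfy the same Yang--Baxter equation \eqref{master-yb} (since the transformation $(x,q,\l,\m)\mapsto(q^{\m-\l}/x,q^{-1},\m,\l)$ maps (B.2.1) to an instance of itself with the three auxiliary spaces relabeled) and the same stochasticity condition \eqref{sum-to-1}; together with agreement on a sufficient family of boundary cases (e.g.\ $\AA=\BB=\bm 0$ or $\l=1$), this pins the weights down and forces the symmetry. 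However, the direct computation outlined above is the cleaner route once the reindexing is correctly identified.
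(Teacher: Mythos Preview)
Your approach is exactly what the paper indicates: it states that this proposition ``can be proved directly from the explicit form \eqref{bm-weights} of the weights'' and that the proof is ``not difficult but fairly laborious,'' then omits it. Your breakdown into indicator/prefactor/$\Phi$-sum, together with the $q\mapsto q^{-1}$ identities and a reindexing of the summation over $\bm P$, is precisely the direct verification the paper has in mind (and your prefactor computation is correct).
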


Propositions \ref{prop:bm-stoch} and \ref{prop:bm-sym} can both be proved directly from the explicit form \eqref{bm-weights} of the weights. These proofs are not difficult but fairly laborious; we omit them. The sum-to-unity property \eqref{sum-to-1} allows one to construct a wide variety of stochastic processes as limiting cases of the model \eqref{bm-weights}. The symmetry property \eqref{bm-sym} is necessary to allow the three relations \eqref{RLLa}--\eqref{RLLc} to be deduced from the single equation \eqref{master-yb}, as we will show below.

\begin{prop}[Reduction to fundamental $R$-matrix]
\label{prop:reduction}
Taking $\l = \m = 1$ in \eqref{bm-weights}, one has
\begin{align*}
\vert{1}{1}{\frac{x}{y};q}{\AA}{\BB}{\CC}{\DD}
=
\left\{
\begin{array}{ll}
R_{y/x}(\AA^*,\BB^*;\CC^*,\DD^*), 
& \quad
|\AA|, |\BB|, |\CC|, |\DD| \leq 1,
\\ \\
0,
& \quad
{\rm otherwise},
\end{array}
\right.
\end{align*}
where $R_{y/x}$ denotes the fundamental $R$-matrix as given by \eqref{R-weights-a}, \eqref{R-weights-bc}, and where we have defined 
\begin{align*}
\I^{*}
=
\left\{
\begin{array}{ll}
0, & \quad \I = \bm{0},
\\
i, & \quad \I = \bm{e}_i,
\end{array}
\right.
\end{align*}
for any composition $\I = (I_1,\dots,I_n)$ such that $|\I| \leq 1$.
\end{prop}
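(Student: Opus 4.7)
The second case of the claim is immediate: if any of $|\AA|, |\BB|, |\CC|, |\DD|$ exceeds $1$, then the support condition \eqref{wt-constrain} (with $\l = \m = 1$) forces the weight $W_{1,1}(x/y; q; \AA, \BB, \CC, \DD)$ to vanish identically. So the substance of the proposition lies in matching the two families of weights across the remaining finite collection of cases.

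Under the restriction $|\AA|, |\BB|, |\CC|, |\DD| \le 1$, each of $\AA, \BB, \CC, \DD$ is either the zero vector $\bm 0$ (corresponding to state $0$ under $\I \mapsto \I^*$) or a standard basis vector $\bm e_i$ (corresponding to state $i$). The built-in indicator $\bm 1_{\AA + \BB = \CC + \DD}$ in \eqref{bm-weights} encodes precisely the conservation of horizontal/vertical occupancies, which matches the conservation rule underlying the five categories of nonzero entries of $R$ tabulated in \eqref{fund-vert}. In particular, the nonvanishing cases split into: (i) the ``empty'' diagonal, $\AA = \BB = \CC = \DD = \bm 0$; (ii) the ``straight-through'' diagonals, such as $\AA = \CC = \bm e_i$ with $\BB = \DD = \bm 0$ or $\BB = \DD = \bm e_j$; and (iii) the ``crossing'' configurations in which $\AA = \DD$ and $\BB = \CC$ are distinct nonzero basis vectors. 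This provides exactly the six cases (up to the trivial full-diagonal case) whose weights are recorded in \eqref{R-weights-a}--\eqref{R-weights-bc}.

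For each such configuration I plan to substitute directly into \eqref{bm-weights}. The prefactor $x^{|\DD-\BB|} q^{|\AA\l - \DD\m|}$ simplifies because $|\AA\l - \DD\m| = |\AA| - |\DD|$ at $\l = \m = 1$, and the summation index $\bm P = (P_1,\dots,P_n)$ is constrained by $0 \le P_i \le \min(B_i, C_i) \le 1$, so at most one coordinate of $\bm P$ can be nonzero and the sum has at most two terms. The factors $\Phi(\cdot,\cdot; \cdot,\cdot)$ collapse drastically: the $q$-Pochhammer symbols $(a;q)_m$ occur only for $m \in \{0,1,2\}$, and every $q$-binomial $\binom{\mu_i}{\lambda_i}_q$ reduces to $1$ or $1+q$. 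After extracting the exponent of $q$ from the sum $\sum_{i<j}(\mu_i - \lambda_i)\lambda_j$ in each $\Phi$, it remains to verify that the resulting rational functions of $(x/y, q)$ match \eqref{R-weights-a}--\eqref{R-weights-bc} under the identification of spectral parameters (the BM weight is evaluated at $x/y$ while $R$ is evaluated at $y/x$, which accounts for the ``inversion'' in the nontrivial entries).

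The main obstacle is purely computational: keeping track of the cumulative powers of $q$ contributed by $q^{|\AA| - |\DD|}$, by the combinatorial exponents $\sum_{i<j}(\mu_i - \lambda_i)\lambda_j$ in the two $\Phi$-factors, and by the arguments $q^{\l-\m}x, q^{-\m}x, q^{-\l}/x, q^{-\l}$ which all become $x$ or $1/x$ at $\l = \m = 1$. A convenient book-keeping device is to organize the cases by the pair $(\AA^*, \BB^*)$ of incoming states and then, within each such pair, to verify against the corresponding row of \eqref{fund-vert}; in each case one or two of the candidate terms in the $\bm P$-sum will vanish because some argument $P_i > \min(B_i, C_i)$ is forbidden, leaving a short closed-form expression that can be matched to the tabulated $R$-weight by inspection. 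As a consistency check, the identity $W_{1,1}(1; q; \cdot) = \delta$ (the permutation weight) and the stochasticity \eqref{sum-to-1} should both reduce to the corresponding identities for the fundamental $R$-matrix, which provides useful sanity checks during the case analysis.
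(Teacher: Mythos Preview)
Your proposal is correct and follows essentially the same approach as the paper: use the support constraint \eqref{wt-constrain} at $\l=\m=1$ to reduce to compositions of weight at most one, invoke the conservation rule $\AA+\BB=\CC+\DD$ to isolate the finitely many nonvanishing cases, and then check each case explicitly against the tabulated fundamental weights. The paper's proof is terser (it simply lists the three conservation-allowed patterns and asserts the verification), whereas you spell out in more detail how the $\bm P$-sum and the $\Phi$-factors collapse, but the underlying strategy is identical.
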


\begin{proof}
Taking $\l = \m = 1$ means that the compositions $\AA,\BB,\CC,\DD$ are constrained, by \eqref{wt-constrain}, to be equal to $\bm{e}_0 \equiv \bm{0}$ or $\bm{e}_i$ for some $1 \leq i \leq n$. The conservation requirement $\AA + \BB = \CC + \DD$ then imposes that the only non-vanishing cases are (i) $\AA = \BB = \CC = \DD = \bm{e}_j$, $0 \leq j \leq n$, (ii) $\AA = \CC = \bm{e}_i$ and $\BB = \DD = \bm{e}_j$ with $i \not= j$, or (iii) $\AA = \DD = \bm{e}_i$ and $\BB = \CC = \bm{e}_j$ with $i \not= j$. One can then check these cases explicitly, with the weights \eqref{R-weights-a}, \eqref{R-weights-bc} being recovered.
\end{proof}

\section{First intertwining equation, \eqref{RLLa}}

Consider equation \eqref{master-yb} for the weights \eqref{bm-weights}. Taking the specialization $\l = \m = 1$, while keeping $\n$ general, we obtain the relation
\begin{multline}
\label{RLL-A}
\sum_{c_1,c_2 = 0}^{n}\
\sum_{\CC \in \mathbb{N}^n}
R_{y/x}(a_2,a_1;c_2,c_1)
\vert{1}{\n}{\frac{x}{z};q}{\AA}{\bm{e}_{c_1}}{\CC}{\bm{e}_{b_1}}
\vert{1}{\n}{\frac{y}{z};q}{\CC}{\bm{e}_{c_2}}{\BB}{\bm{e}_{b_2}}
\\
=
\sum_{c_1,c_2 = 0}^{n}\ 
\sum_{\CC \in \mathbb{N}^n}
\vert{1}{\n}{\frac{y}{z};q}{\AA}{\bm{e}_{a_2}}{\CC}{\bm{e}_{c_2}}
\vert{1}{\n}{\frac{x}{z};q}{\CC}{\bm{e}_{a_1}}{\BB}{\bm{e}_{c_1}}
R_{y/x}(c_2,c_1;b_2,b_1),
\end{multline}
where $a_1,a_2,b_1,b_2 \in \{0,1,\dots,n\}$ and $\AA,\BB \in \mathbb{N}^n$ are fixed to arbitrary values on both sides of \eqref{RLL-A}, and where we have used Proposition \ref{prop:reduction} to replace $W_{\l,\m}$ by the fundamental $R$-matrix $R_{y/x}$. As can be seen from their explicit form \eqref{bm-weights}, the weights $W_{1,\n}$ in \eqref{RLL-A} depend on $\n$ only through $q^{\n}$. After normalizing both sides of \eqref{RLL-A} appropriately, it becomes a polynomial identity in $q^{\n}$, true at infinitely many points $\n \in \{1,2,3,\dots\}$. It is therefore true for arbitrary complex values of $q^{\n}$; with this in mind we make the substitution $q^{\n} \mapsto s^{-2} \in \mathbb{C}$ and define
\begin{align}
\label{def-Ltilde}
\tilde{L}_{x}(\AA,b;\CC,d)
:=
\left.
\vert{1}{\n}{\frac{x}{s};q}{\AA}{\bm{e}_b}{\CC}{\bm{e}_d}
\right|_{q^{\n} \rightarrow s^{-2}}
\end{align}
for all $\AA,\CC \in \mathbb{N}^n$ and $b,d \in \{0,1,\dots,n\}$.
\begin{prop}
\label{prop:c.5}
The function $\tilde{L}_{x}$ as defined in \eqref{def-Ltilde} is also given by \eqref{generic-L}, \eqref{s-weights} and \eqref{stoch-wt}.
\end{prop}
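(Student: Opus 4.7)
The plan is to verify the equality directly by specializing the Bosnjak--Mangazeev formula \eqref{bm-weights} to $\l=1$, performing the analytic continuation $q^{\n}\mapsto s^{-2}$, and matching the result case-by-case against the table \eqref{s-weights} (composed with the stochastic gauge \eqref{stoch-wt}). The conservation condition $\AA+\bm{e}_b=\CC+\bm{e}_d$ already forces $\CC$ to take one of the six forms that appear in \eqref{s-weights}, so the task splits naturally into six computations indexed by the pairs $(b,d)\in\{0,\dots,n\}^2$ together with the pattern relating $\AA$ and $\CC$.

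The key simplification comes from the fact that $\BB=\bm{e}_b$ has weight at most $1$, so the inner summation index $\bm{P}$ in \eqref{bm-weights} is constrained to $0\le P_i\le\min(B_i,C_i)$ and thus contributes at most two terms (only $\bm{P}=\bm{0}$ when $b=0$, and $\bm{P}\in\{\bm{0},\bm{e}_b\}$ when $b\ge 1$, provided $C_b\ge 1$). First I would write out each of the two factors $\Phi(\CC-\bm{P},\CC+\DD-\bm{P};q^{\l-\m}x,q^{-\m}x)$ and $\Phi(\bm{P},\BB;q^{-\l}/x,q^{-\l})$ with the substitution $\l=1$, $\m=\n$, $x\mapsto x/s$, and reduce every $q$-Pochhammer and $q$-binomial in $q^{\n}$ to a polynomial in $q^{\n}$; for example $(q^{-\n};q)_{|\CC-\bm{P}|}$, $(q^{-\n}x/s;q)_{|\DD-\bm{P}|}$, and $\binom{\n-\CC_i}{\dots}_q$-type factors all collapse to closed-form expressions upon the substitution $q^{\n}\mapsto s^{-2}$. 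In particular, $(q^{-\n};q)_k|_{q^\n=s^{-2}}$ becomes a product telescoping to powers of $s$ and factors of $(1-s^2q^j)$, which are precisely the building blocks appearing in \eqref{s-weights}.

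Next I would carry out the six cases. The two diagonal cases $b=d$ (with $\AA=\CC$) yield a single-term sum when $b=0$ (giving $\frac{1-sxq^{|\AA|}}{1-sx}$ after including the $(-s)^{\theta(d)}$ gauge) and a two-term sum when $b=d=i\ge 1$ that telescopes to $\frac{(x-sq^{I_i})q^{[\bm A]_{i+1}}}{1-sx}$. The off-diagonal cases $b<d$ and $b>d$ are single-term sums producing the remaining four entries of \eqref{s-weights}; the asymmetry between $b<d$ and $b>d$ is already visible in the factor $q^{|\AA\l-\DD\m|}$ and the powers of $x^{|\DD-\BB|}$ in \eqref{bm-weights}, which (after analytic continuation) supply the overall $s$ or $x$ weighting that distinguishes these rows of the table. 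The stochastic gauge $(-s)^{\theta(d)}$ is absorbed into the powers of $q^{\n}\mapsto s^{-2}$ and the extra factor $q^{-|\DD\m|}$; I expect each case to reduce to the claimed weight after a short manipulation of $q$-Pochhammer identities.

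The main obstacle is bookkeeping rather than insight: keeping track of how the factors $q^{|\AA\l-\DD\m|}$, $(q^{-\m}x/s)^{\cdots}$, and $(y/x;q)_{|\mu-\lambda|}$ interact once $q^{\n}\mapsto s^{-2}$ is applied, and verifying that the overall sign and power of $s$ match the stochastic normalization in \eqref{stoch-wt}. One convenient shortcut is to invoke Proposition \ref{prop:bm-stoch}: since both $\tilde L_x$ defined via \eqref{def-Ltilde} and the stochastic $L$-matrix of Proposition \ref{prop:stoch} sum to $1$ over $(\CC,d)$ for any fixed $(\AA,b)$, it suffices to check equality for five of the six vertex types and deduce the sixth from stochasticity, cutting down the case analysis. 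An even more efficient route would be to note that by Proposition \ref{prop:rll-fus} (or rather its $\l=1$ specialization of \eqref{RLL-A}) the function $\tilde L_x$ of \eqref{def-Ltilde} satisfies the same $RLL$ intertwining relation as our stochastic weights; after checking matching on a few boundary cases (say, $\AA=\bm 0$, or the diagonal $b=d=0$), the remaining entries are forced, but I would only fall back on this if the direct computation proves unwieldy.
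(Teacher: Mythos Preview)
Your proposal is correct and matches the paper's approach exactly: the paper's proof consists of the single sentence ``This can be checked by a straightforward calculation with the use of Theorem \ref{thm:master-yb},'' which is precisely the direct case-by-case specialization of \eqref{bm-weights} at $\l=1$ followed by $q^{\n}\mapsto s^{-2}$ that you outline. Your write-up is considerably more detailed than the paper's, and the shortcuts you suggest (using stochasticity to eliminate one case, or bootstrapping from the intertwining relation) are reasonable optimizations but not needed.
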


\begin{proof}
This can be checked by a straightforward calculation with the use of Theorem \ref{thm:master-yb}.
\end{proof}

Making the substitution \eqref{def-Ltilde} for both of the $W_{1,\n}$ weights in \eqref{RLL-A}, we obtain the relation
\begin{multline}
\label{RLL-A2}
\sum_{c_1,c_2 = 0}^{n}\
\sum_{\CC \in \mathbb{N}^n}
R_{y/x}(a_2,a_1;c_2,c_1)
\tilde{L}_x(\AA,c_1;\CC,b_1)
\tilde{L}_y(\CC,c_2;\BB,b_2)
\\
=
\sum_{c_1,c_2 = 0}^{n}\
\sum_{\CC \in \mathbb{N}^n}
\tilde{L}_y(\AA,a_2;\CC,c_2)
\tilde{L}_x(\CC,a_1;\BB,c_1)
R_{y/x}(c_2,c_1;b_2,b_1),
\end{multline}
which is just the intertwining equation \eqref{RLLa}, but for the stochastic weights \eqref{stoch-wt}.

\section{Second intertwining equation, \eqref{RLLb}}

Taking the specialization $\l = \n = 1$ of \eqref{master-yb}, while keeping $\m$ general, we obtain
\begin{multline}
\label{RLL-B}
\sum_{c_1,c_3 = 0}^{n}\
\sum_{\CC \in \mathbb{N}^n}
\vert{1}{\m}{\frac{x}{y};q}{\AA}{\bm{e}_{a_1}}{\CC}{\bm{e}_{c_1}}
R_{z/x}(a_3,c_1;c_3,b_1)
\vert{\m}{1}{\frac{y}{z};q}{\bm{e}_{c_3}}{\CC}{\bm{e}_{b_3}}{\BB}
\\
=
\sum_{c_1,c_3 = 0}^{n}\
\sum_{\CC \in \mathbb{N}^n}
\vert{\m}{1}{\frac{y}{z};q}{\bm{e}_{a_3}}{\AA}{\bm{e}_{c_3}}{\CC}
R_{z/x}(c_3,a_1;b_3,c_1)
\vert{1}{\m}{\frac{x}{y};q}{\CC}{\bm{e}_{c_1}}{\BB}{\bm{e}_{b_1}},
\end{multline}
where $a_1,a_3,b_1,b_3 \in \{0,1,\dots,n\}$ and $\AA,\BB \in \mathbb{N}^n$ are fixed to arbitrary values on both sides of the equation. Now we may apply the symmetry \eqref{bm-sym} to \eqref{RLL-B}, performing in addition the change of variables $z \mapsto q^{-1} z^{-1} \equiv \b{q}\b{z}$. This gives rise to the relation
\begin{multline}
\label{RLL-B2}
\sum_{c_1,c_3 = 0}^{n}\
\sum_{\CC \in \mathbb{N}^n}
\vert{1}{\m}{\frac{x}{y};q}{\AA}{\bm{e}_{a_1}}{\CC}{\bm{e}_{c_1}}
R_{\b{q}\b{x}\b{z}}(a_3,c_1;c_3,b_1)
\vert{1}{\m}{\frac{q^{-\m}}{yz};\frac{1}{q}}{\CC}{\bm{e}_{c_3}}{\BB}{\bm{e}_{b_3}}
\\
=
\sum_{c_1,c_3 = 0}^{n}\
\sum_{\CC \in \mathbb{N}^n}
\vert{1}{\m}{\frac{q^{-\m}}{yz};\frac{1}{q}}{\AA}{\bm{e}_{a_3}}{\CC}{\bm{e}_{c_3}}
R_{\b{q}\b{x}\b{z}}(c_3,a_1;b_3,c_1)
\vert{1}{\m}{\frac{x}{y};q}{\CC}{\bm{e}_{c_1}}{\BB}{\bm{e}_{b_1}}.
\end{multline}
In a similar vein to \eqref{def-Ltilde}, we define
\begin{align}
\label{def-Mtilde}
\tilde{M}_{z}(\AA,b;\CC,d)
:=
\left.
\vert{1}{\m}{\frac{s}{z};\frac{1}{q}}{\AA}{\bm{e}_b}{\CC}{\bm{e}_d}
\right|_{q^{\m} \rightarrow s^{-2}}
\end{align}
for all $\AA,\CC \in \mathbb{N}^n$ and $b,d \in \{0,1,\dots,n\}$.

\begin{prop}
The function $\tilde{M}_z$ as defined in \eqref{def-Mtilde} is equivalent to that given by \eqref{generic-M}, \eqref{dual-s-weights} and \eqref{stoch-wt}.
\end{prop}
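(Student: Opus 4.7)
The plan is to avoid any direct computation with the explicit six-vertex breakdown of the Bosnjak--Mangazeev weights, and instead deduce the claim from Proposition \ref{prop:c.5} together with the symmetry relations already in hand.

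First, I would observe that the definition \eqref{def-Mtilde} of $\tilde{M}_z$ has exactly the shape of the definition \eqref{def-Ltilde} of $\tilde{L}_x$, but with the parameters $(x, q, s)$ replaced by $(z^{-1}, q^{-1}, s^{-1})$: indeed, writing $\tilde{L}_{x,q,s}$ to emphasize parameter dependence, one has
\begin{equation*}
\tilde{M}_z(\AA, b; \CC, d)
=
W_{1,\m}\!\left(\tfrac{s}{z};\tfrac{1}{q}; \AA, \bm{e}_b, \CC, \bm{e}_d\right)\Big|_{q^{\m} \to s^{-2}}
=
\tilde{L}_{z^{-1},\, q^{-1},\, s^{-1}}(\AA, b; \CC, d),
\end{equation*}
since the spin-fusion parameter $\m$ here plays the role of $\n$ in \eqref{def-Ltilde}, with $q^{-\n}\mapsto s^{2}$ being literally the same analytic continuation as $q^{\m}\mapsto s^{-2}$ but with $q$ inverted. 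This is the only nontrivial bookkeeping step, and I would verify it cleanly by matching the three substitutions one at a time.

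Next, by Proposition \ref{prop:c.5} applied with parameters $(z^{-1}, q^{-1}, s^{-1})$, the right-hand side equals the stochastic $L$-weight $\tilde{L}_{z^{-1},q^{-1},s^{-1}}(\AA, b; \CC, d) = (-s^{-1})^{\theta(d)} L_{z^{-1},q^{-1},s^{-1}}(\AA, b; \CC, d)$, using \eqref{stoch-wt}.

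Finally, I would invoke the symmetry \eqref{sym1} of Proposition \ref{prop:sym}, which gives $M_{z,q,s}(\AA, b; \CC, d) = (-s^{-1})^{\theta(d)-\theta(b)} L_{z^{-1},q^{-1},s^{-1}}(\AA, b; \CC, d)$. Combining this with the definition \eqref{stoch-wt} of the stochastic dual weight $\tilde{M}_z(\AA, b; \CC, d) = (-s)^{-\theta(b)} M_z(\AA, b; \CC, d)$ and simplifying the overall prefactor
\begin{equation*}
(-s)^{-\theta(b)} \cdot (-s^{-1})^{\theta(d)-\theta(b)} = (-s^{-1})^{\theta(d)}
\end{equation*}
yields exactly $\tilde{L}_{z^{-1},q^{-1},s^{-1}}(\AA, b; \CC, d)$, matching the expression obtained from \eqref{def-Mtilde}. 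This closes the chain of equalities.

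The only place where something could go wrong is the parameter bookkeeping in the first step, where one must be careful that the analytic continuation $q^{\m}\mapsto s^{-2}$ in \eqref{def-Mtilde} is genuinely the image under $(q,s)\mapsto (q^{-1},s^{-1})$ of the continuation $q^{\n}\mapsto s^{-2}$ in \eqref{def-Ltilde}; this I expect to be the main (minor) obstacle, since all the remaining manipulations are purely algebraic rewritings of prefactors using already-proved identities.
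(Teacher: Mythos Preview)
Your argument is correct and takes a genuinely different route from the paper. The paper simply says ``one checks this directly via \eqref{bm-weights}'', i.e.\ a brute-force case-by-case evaluation of the Bosnjak--Mangazeev formula at $\l=1$ followed by the analytic continuation. You instead bootstrap from Proposition~\ref{prop:c.5}: the key observation that the definition \eqref{def-Mtilde} is literally \eqref{def-Ltilde} with $(x,q,s)\mapsto(z^{-1},q^{-1},s^{-1})$ reduces the claim to the already-established $L$-case, and then the symmetry \eqref{sym1} together with the gauge factors in \eqref{stoch-wt} closes the loop. The prefactor bookkeeping $(-s)^{-\theta(b)}(-s^{-1})^{\theta(d)-\theta(b)}=(-s^{-1})^{\theta(d)}$ is correct. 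Your route is more conceptual and avoids any repeated computation; the paper's direct check has the (minor) advantage of being self-contained and not relying on the rational-function identity of Proposition~\ref{prop:c.5} holding for the inverted parameter values, though this is of course unproblematic.
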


\begin{proof}
As in the case of Proposition \ref{prop:c.5}, one checks this directly via \eqref{bm-weights}.
\end{proof}

Substituting $y \mapsto s$ in \eqref{RLL-B2} and sending $q^{\m} \mapsto s^{-2}$, one recovers
\begin{multline}
\label{RLL-B3}
\sum_{c_1,c_3 = 0}^{n}\
\sum_{\CC \in \mathbb{N}^n}
\tilde{L}_x(\AA,a_1;\CC,c_1)
R_{\b{q}\b{x}\b{z}}(a_3,c_1;c_3,b_1)
\tilde{M}_z(\CC,c_3;\BB,b_3)
\\
=
\sum_{c_1,c_3 = 0}^{n}\
\sum_{\CC \in \mathbb{N}^n}
\tilde{M}_z(\AA,a_3;\CC,c_3)
R_{\b{q}\b{x}\b{z}}(c_3,a_1;b_3,c_1)
\tilde{L}_x(\CC,c_1;\BB,b_1),
\end{multline}
which can be recognised as the intertwining equation \eqref{RLLb}, but for the stochastic weights \eqref{stoch-wt}.

\section{Third intertwining equation, \eqref{RLLc}}

Finally let us take the specialization $\m= \n = 1$ of \eqref{master-yb}, keeping $\l$ general. This yields
\begin{multline*}
\sum_{c_2,c_3 = 0}^{n}\
\sum_{\CC \in \mathbb{N}^n}
\vert{\l}{1}{\frac{x}{y};q}{\bm{e}_{a_2}}{\AA}{\bm{e}_{c_2}}{\CC}
\vert{\l}{1}{\frac{x}{z};q}{\bm{e}_{a_3}}{\CC}{\bm{e}_{c_3}}{\BB}
R_{z/y}(c_3,c_2;b_3,b_2)
\\
=
\sum_{c_2,c_3 = 0}^{n}\
\sum_{\CC \in \mathbb{N}^n}
R_{z/y}(a_3,a_2;c_3,c_2)
\vert{\l}{1}{\frac{x}{z};q}{\bm{e}_{c_3}}{\AA}{\bm{e}_{b_3}}{\CC}
\vert{\l}{1}{\frac{x}{y};q}{\bm{e}_{c_2}}{\CC}{\bm{e}_{b_2}}{\BB},
\end{multline*}
where $a_2,a_3,b_2,b_3 \in \{0,1,\dots,n\}$ and $\AA,\BB \in \mathbb{N}^n$ are fixed to arbitrary values on both sides of the equation. Applying everywhere the symmetry \eqref{bm-sym}, and changing variables $y \mapsto \b{q} \b{y}$, $z \mapsto \b{q} \b{z}$, we obtain
\begin{multline*}
\sum_{c_2,c_3 = 0}^{n}\
\sum_{\CC \in \mathbb{N}^n}
\vert{1}{\l}{\frac{q^{-\l}}{xy};\frac{1}{q}}{\AA}{\bm{e}_{a_2}}{\CC}{\bm{e}_{c_2}}
\vert{1}{\l}{\frac{q^{-\l}}{xz};\frac{1}{q}}{\CC}{\bm{e}_{a_3}}{\BB}{\bm{e}_{c_3}}
R_{y/z}(c_3,c_2;b_3,b_2)
\\
=
\sum_{c_2,c_3 = 0}^{n}\
\sum_{\CC \in \mathbb{N}^n}
R_{y/z}(a_3,a_2;c_3,c_2)
\vert{1}{\l}{\frac{q^{-\l}}{xz};\frac{1}{q}}{\AA}{\bm{e}_{c_3}}{\CC}{\bm{e}_{b_3}}
\vert{1}{\l}{\frac{q^{-\l}}{xy};\frac{1}{q}}{\CC}{\bm{e}_{c_2}}{\BB}{\bm{e}_{b_2}}.
\end{multline*}
Substituting $x \mapsto s$ and sending $q^{\l} \mapsto s^{-2}$, using \eqref{def-Mtilde} we find that
\begin{multline}
\label{RLL-C}
\sum_{c_2,c_3 = 0}^{n}\
\sum_{\CC \in \mathbb{N}^n}
\tilde{M}_y(\AA,a_2;\CC,c_2)
\tilde{M}_z(\CC,a_3;\BB,c_3)
R_{y/z}(c_3,c_2;b_3,b_2)
\\
=
\sum_{c_2,c_3 = 0}^{n}\
\sum_{\CC \in \mathbb{N}^n}
R_{y/z}(a_3,a_2;c_3,c_2)
\tilde{M}_z(\AA,c_3;\CC,b_3)
\tilde{M}_y(\CC,c_2;\BB,b_2),
\end{multline}
which is the final intertwining equation \eqref{RLLc}, for the stochastic weights \eqref{stoch-wt}.

\section{Gauge transformations}

In the previous sections we have shown how three types of intertwining equations, namely \eqref{RLL-A2}, \eqref{RLL-B3} and \eqref{RLL-C}, arise as specializations of the master higher-spin Yang--Baxter equation \eqref{master-yb}. These relations are almost identically the same as \eqref{RLLa}, \eqref{RLLb} and \eqref{RLLc}, except that the Boltzmann weights which appear in \eqref{RLL-A2}, \eqref{RLL-B3} and \eqref{RLL-C} are stochastic. To go from the stochastic version of the weights to the non-stochastic one, one defines
\begin{align}
\label{transf-wt}
L_x(\I,j;\K,\ell)
:=
(-s)^{-\bm{1}_{\ell \geq 1}}
\tilde{L}_x(\I,j;\K,\ell),
\qquad
M_x(\I,j;\K,\ell)
:=
(-s)^{\bm{1}_{j \geq 1}}
\tilde{M}_x(\I,j;\K,\ell).
\end{align}
It is easy to see that the transformed weights \eqref{transf-wt} are still integrable; for example, substituting $\tilde{L}_x$ and $\tilde{L}_y$ in \eqref{RLL-A2} by their non-stochastic counterparts yields
\begin{multline}
\label{RLL-A3}
(-s)^{\bm{1}_{b_1 \geq 1}+\bm{1}_{b_2 \geq 1}}
\sum_{c_1,c_2 = 0}^{n}\
\sum_{\CC \in \mathbb{N}^n}
R_{y/x}(a_2,a_1;c_2,c_1)
L_x(\AA,c_1;\CC,b_1)
L_y(\CC,c_2;\BB,b_2)
\\
=
\sum_{c_1,c_2 = 0}^{n}\
\sum_{\CC \in \mathbb{N}^n}
(-s)^{\bm{1}_{c_1 \geq 1}+\bm{1}_{c_2 \geq 1}}
L_y(\AA,a_2;\CC,c_2)
L_x(\CC,a_1;\BB,c_1)
R_{y/x}(c_2,c_1;b_2,b_1)
\\
=
(-s)^{\bm{1}_{b_1 \geq 1}+\bm{1}_{b_2 \geq 1}}
\sum_{c_1,c_2 = 0}^{n}\
\sum_{\CC \in \mathbb{N}^n}
L_y(\AA,a_2;\CC,c_2)
L_x(\CC,a_1;\BB,c_1)
R_{y/x}(c_2,c_1;b_2,b_1),
\end{multline}
where the final equality follows from the colour-conservation property of the $R$-matrix \eqref{R-vert}, namely,
\begin{align*}
\prod_{b=0}^{n}
\alpha_b^{\bm{1}_{i = b} + \bm{1}_{j = b}}
R_z(i,j;k,\ell)
=
\prod_{b=0}^{n}
\alpha_b^{\bm{1}_{k = b} + \bm{1}_{\ell = b}}
R_z(i,j;k,\ell),
\end{align*}
for arbitrary parameters $\{\alpha_0,\alpha_1,\dots,\alpha_n\}$ and $i,j,k,\ell \in \{0,1,\dots,n\}$, by choosing $\alpha_0 = 1$ and $\alpha_1,\dots,\alpha_n = -s$. Equation \eqref{RLL-A3} matches \eqref{RLLa} after deleting the spurious multiplicative factors. The derivation of \eqref{RLLb} and \eqref{RLLc} is analogous.

\end{appendix}

\printindex

\bibliographystyle{alpha}
\bibliography{references}

\begin{thebibliography}{KMMO16}

\bibitem[AAV11]{AAV}
G.~Amir, O.~Angel, and B.~Valk\'o.
\newblock The {TASEP} speed process.
\newblock {\em Annals of Probability}, 39(4):1205--1242, 2011.

\bibitem[AB16]{AggarwalB}
A.~Aggarwal and A.~Borodin.
\newblock Phase transitions in the {ASEP} and stochastic six-vertex model.
\newblock {\em To appear in Annals of Probability, arXiv:1607.08684}, 2016.

\bibitem[ABFR00]{AlbertBFR}
T.-D. Albert, H.~Boos, R.~Flume, and K.~Ruhlig.
\newblock Resolution of the nested hierarchy for rational $sl(n)$ models.
\newblock {\em Journal of Physics A: Mathematical and General}, 33(28):4963,
  2000.

\bibitem[ACQ11]{ACQ}
G.~Amir, I.~Corwin, and J.~Quastel.
\newblock Probability distribution of the free energy of the continuum directed
  random polymer in 1 + 1 dimensions.
\newblock {\em Communications in Pure and Applied Mathematics}, 64:466--537,
  2011.

\bibitem[Agg17]{Aggarwal2}
A.~Aggarwal.
\newblock Convergence of the stochastic six-vertex model to the {ASEP}.
\newblock {\em Mathematical Physics, Analysis and Geometry}, 20(2):3, 2017.

\bibitem[Agg18]{Aggarwal}
A.~Aggarwal.
\newblock Dynamical stochastic higher spin vertex models.
\newblock {\em Selecta Mathematica}, 24(3):2659--2735, 2018.

\bibitem[AHR09]{AHR}
O.~Angel, A.~Holroyd, and D.~Romik.
\newblock The oriented swap process.
\newblock {\em Annals of Probability}, 37(5):1970--1998, 2009.

\bibitem[Ale16]{Alexandersson}
P.~Alexandersson.
\newblock Non-symmetric {M}acdonald polynomials and {D}emazure--{L}usztig
  operators.
\newblock {\em arXiv:1602.05153}, 2016.

\bibitem[AZHE10]{Aigner-book}
M.~Aigner, G.~M. Ziegler, K.~H. Hofmann, and P.~Erd\H{o}s.
\newblock {\em Proofs from the {B}ook}, volume 274.
\newblock Springer, 2010.

\bibitem[Bax07]{Baxter}
R.~J. Baxter.
\newblock {\em Exactly solved models in statistical mechanics}.
\newblock Courier Corporation, 2007.

\bibitem[BBCW17]{BarraquandBCW}
G.~Barraquand, A.~Borodin, I.~Corwin, and M.~Wheeler.
\newblock Stochastic six-vertex model in a half-quadrant and half-line open
  {ASEP}.
\newblock {\em To appear in Duke Mathematical Journal, arXiv:1704.04309}, 2017.

\bibitem[BBW16]{BorodinBW}
A.~Borodin, A.~Bufetov, and M.~Wheeler.
\newblock Between the stochastic six vertex model and {H}all--{L}ittlewood
  processes.
\newblock {\em To appear in Journal of Combinatorial Theory Series A,
  arXiv:1611.09486}, 2016.

\bibitem[BC14]{BorodinC}
A.~Borodin and I.~Corwin.
\newblock Macdonald processes.
\newblock {\em Probability Theory and Related Fields}, 158(1-2):225--400, 2014.

\bibitem[BCG16]{BorodinCG}
A.~Borodin, I.~Corwin, and V.~Gorin.
\newblock Stochastic six-vertex model.
\newblock {\em Duke Mathematical Journal}, 165(3):563--624, 2016.

\bibitem[BCGS16]{BCGS}
A.~Borodin, I.~Corwin, V.~Gorin, and S.~Shakirov.
\newblock Observables of {M}acdonald processes.
\newblock {\em Transactions of the American Mathematical Society},
  368(3):1517--1558, 2016.

\bibitem[BCPS15a]{BorodinCPS}
A.~Borodin, I.~Corwin, L.~Petrov, and T.~Sasamoto.
\newblock Spectral theory for interacting particle systems solvable by
  coordinate {B}ethe ansatz.
\newblock {\em Communications in Mathematical Physics}, 339(3):1167--1245,
  2015.

\bibitem[BCPS15b]{BorodinCPS2}
A.~Borodin, I.~Corwin, L.~Petrov, and T.~Sasamoto.
\newblock Spectral theory for the $q$-{B}oson particle system.
\newblock {\em Compositio Mathematica}, 151(1):1--67, 2015.

\bibitem[BCS14]{BorodinCS}
A.~Borodin, I.~Corwin, and T.~Sasamoto.
\newblock From duality to determinants for $q$-{TASEP} and {ASEP}.
\newblock {\em Annals of Probability}, 42(6):2314--2382, 2014.

\bibitem[BDJ99]{BaikDJ}
J.~Baik, P.~Deift, and K.~Johansson.
\newblock On the distribution of the length of the longest increasing
  subsequence of random permutations.
\newblock {\em Journal of the American Mathematical Society}, 12(4):1119--1178,
  1999.

\bibitem[Bet31]{Bethe}
H.~Bethe.
\newblock Zur theorie der metalle.
\newblock {\em Zeitschrift f{\"u}r Physik}, 71(3-4):205--226, 1931.

\bibitem[BF08]{BorodinFerrari}
A.~Borodin and P.~L. Ferrari.
\newblock Large time asymptotics of growth models on space-like paths i:
  {P}ush{ASEP}.
\newblock {\em Electronic Journal of Probability}, 13(50):1380--1418, 2008.

\bibitem[BG97]{BG}
L.~Bertini and G.~Giacomin.
\newblock Stochastic {B}urgers and {KPZ} equations from particle systems.
\newblock {\em Communications in Mathematical Physics}, 183:571--607, 1997.

\bibitem[BG12]{BorodinG}
A.~Borodin and V.~Gorin.
\newblock Lectures on integrable probability.
\newblock In {\em Probability and Statistical Physics in St. Petersburg,
  Proceedings of Symposia in Pure Mathematics}, volume~91, pages 155--214,
  2012.

\bibitem[BG18]{BG-telegraph}
A.~Borodin and V.~Gorin.
\newblock A stochastic telegraph equation from the six-vertex model.
\newblock {\em arXiv:1803.09137}, 2018.

\bibitem[BM16]{BosnjakM}
G.~Bosnjak and V.~Mangazeev.
\newblock Construction of ${R}$-matrices for symmetric tensor representations
  related to ${U}_q(\widehat{sl_n})$.
\newblock {\em arXiv:1607.07968}, 2016.

\bibitem[BN17]{BN}
M.~Bal\'azs and A.~L. Nagy.
\newblock How to initialize a second class particle?
\newblock {\em Annals of Probability}, 45(6A):3535--3570, 2017.

\bibitem[BO17]{BorodinO}
A.~Borodin and G.~Olshanski.
\newblock The {ASEP} and determinantal point processes.
\newblock {\em Communications in Mathematical Physics}, 353(2):853--903, 2017.

\bibitem[Bor16]{Borodin2}
A.~Borodin.
\newblock Stochastic higher spin six vertex model and {M}acdonald measures.
\newblock {\em To appear in Journal of Mathematical Physics, arXiv:1608.01553},
  2016.

\bibitem[Bor17a]{Borodin}
A.~Borodin.
\newblock On a family of symmetric rational functions.
\newblock {\em Advances in Mathematics}, 306C:973--1018, 2017.

\bibitem[Bor17b]{Borodin3}
A.~Borodin.
\newblock Symmetric elliptic functions, {IRF} models, and dynamic exclusion
  processes.
\newblock {\em To appear in Journal of the European Mathematical Society,
  arXiv:1701.05239}, 2017.

\bibitem[BP15]{BorodinP2}
A.~Borodin and L.~Petrov.
\newblock {I}ntegrable probability: {S}tochastic vertex models and symmetric
  functions.
\newblock In {\em Stochastic Processes and Random Matrices: Lecture Notes of
  the Les Houches Summer School}, volume 104. Oxford University Press, 2015.

\bibitem[BP16]{BorodinP1}
A.~Borodin and L.~Petrov.
\newblock Higher spin six vertex model and symmetric rational functions.
\newblock {\em Selecta Math. New Ser.}, doi:10.1007/s00029-016-0301-7, 2016.

\bibitem[BR08]{BelliardR}
S.~Belliard and E.~Ragoucy.
\newblock The nested {B}ethe ansatz for `all' closed spin chains.
\newblock {\em Journal of Physics A: Mathematical and Theoretical},
  41(29):295202, 2008.

\bibitem[BW17]{BorodinWheeler}
A.~Borodin and M.~Wheeler.
\newblock Spin $q$-{W}hittaker polynomials.
\newblock {\em arXiv:1701.06292}, 2017.

\bibitem[CD17]{CD}
I.~Corwin and E.~Dimitrov.
\newblock Transversal fluctuations of the {ASEP}, stochastic six vertex model,
  and {H}all-{L}ittlewood {G}ibbsian line ensembles.
\newblock {\em arXiv:1703.07180}, 2017.

\bibitem[CdGHS18]{ChenGHS}
Z.~Chen, J.~de~Gier, I.~Hiki, and T.~Sasamoto.
\newblock Exact confirmation of 1{D} nonlinear fluctuating hydrodynamics for a
  two-species exclusion process.
\newblock {\em arXiv:1803.06829}, 2018.

\bibitem[CGST18]{CGST}
I.~Corwin, P.~Ghosal, H.~Shen, and L.-C. Tsai.
\newblock Stochastic {PDE} limit of the six vertex model.
\newblock {\em arXiv:1803.08120}, 2018.

\bibitem[Che95a]{Cherednik2}
I.~Cherednik.
\newblock Double affine {H}ecke algebras and {M}acdonald's conjectures.
\newblock {\em Annals of Mathematics}, 141(1):191--216, 1995.

\bibitem[Che95b]{CherednikNonsym}
I.~Cherednik.
\newblock Nonsymmetric {M}acdonald polynomials.
\newblock {\em International Mathematics Research Notices}, 1995(10):483--515,
  1995.

\bibitem[CLDR10]{CalabreseDR}
P.~Calabrese, P.~Le~Doussal, and A.~Rosso.
\newblock Free-energy distribution of the directed polymer at high temperature.
\newblock {\em EPL (Europhysics Letters)}, 90(2):20002, 2010.

\bibitem[Cor12]{Corwin-survey}
I.~Corwin.
\newblock The {K}ardar-{P}arisi-{Z}hang equation and universality class.
\newblock {\em Random Matrices: Theory and Applications}, 1(1):1130001, 2012.

\bibitem[CP16]{CorwinP}
I.~Corwin and L.~Petrov.
\newblock Stochastic higher spin vertex models on the line.
\newblock {\em Communications in Mathematical Physics}, 343(2):651--700, 2016.

\bibitem[CT17]{CT}
I.~Corwin and L.-C. Tsai.
\newblock {KPZ} equation limit of higher-spin exclusion processes.
\newblock {\em Annals of Probability}, 45(3):1771--1798, 2017.

\bibitem[DG91]{DG}
P.~Dittrich and J.~G. G\"artner.
\newblock A central limit theorem for the weakly asymmetric simple exclusion
  process.
\newblock {\em Mathematische Nachrichten}, 151:75--93, 1991.

\bibitem[DL06]{DescouensL}
F.~Descouens and A.~Lascoux.
\newblock Non-symmetric {H}all--{L}ittlewood polynomials.
\newblock {\em S{\'e}minaire Lotharingien de Combinatoire}, 54:B54Ar, 2006.

\bibitem[DMPS89]{MPS}
A.~De~Masi, E.~Presutti, and E.~Scacciatelli.
\newblock The weakly asymmetric simple exclusion process.
\newblock {\em Annales de l'I.H.P. Probabilit\'es et statistiques},
  25(1):1--38, 1989.

\bibitem[Dot10]{Dotsenko}
V.~Dotsenko.
\newblock Bethe ansatz derivation of the {T}racy-{W}idom distribution for
  one-dimensional directed polymers.
\newblock {\em EPL (Europhysics Letters)}, 90(2):20003, 2010.

\bibitem[Dri86]{Drinfeld}
V.~Drinfeld.
\newblock Quantum groups.
\newblock {\em Zapiski Nauchnykh Seminarov POMI}, 155:18--49, 1986.

\bibitem[Fad96]{Faddeev}
L.~D. Faddeev.
\newblock How algebraic {B}ethe ansatz works for integrable model.
\newblock In {\em Sym{\'e}tries quantiques (Les Houches)}. Citeseer, 1996.

\bibitem[FK95]{FK}
P.~A. Ferrari and C.~Kipnis.
\newblock Second class particles in the rarefaction fan.
\newblock {\em Annales de l'I.H.P. Probabilit\'es et statistiques},
  31(1):143--154, 1995.

\bibitem[FW13]{FodaW}
O.~Foda and M.~Wheeler.
\newblock Colour-independent partition functions in coloured vertex models.
\newblock {\em Nuclear Physics B}, 871(2):330--361, 2013.

\bibitem[GdGW17]{GarbaliGW}
A.~Garbali, J.~de~Gier, and M.~Wheeler.
\newblock A new generalisation of {M}acdonald polynomials.
\newblock {\em Communications in Mathematical Physics},
  doi:10.1007/s00220-016-2818-1, 2017.

\bibitem[GS92]{GwaS}
L.-H. Gwa and H.~Spohn.
\newblock Six-vertex model, roughened surfaces, and an asymmetric spin
  {H}amiltonian.
\newblock {\em Physical review letters}, 68(6):725, 1992.

\bibitem[HHL08]{HaglundHL}
J.~Haglund, M.~Haiman, and N.~Loehr.
\newblock A combinatorial formula for nonsymmetric {M}acdonald polynomials.
\newblock {\em American Journal of Mathematics}, 130(2):359--383, 2008.

\bibitem[Hum92]{Humphreys}
J.~E. Humphreys.
\newblock {\em Reflection groups and {C}oxeter groups}, volume~29.
\newblock Cambridge university press, 1992.

\bibitem[Jim86]{Jimbo}
M.~Jimbo.
\newblock Quantum {R} matrix for the generalized {T}oda system.
\newblock {\em Communications in Mathematical Physics}, 102(4):537--547, 1986.

\bibitem[JM94]{JimboM}
M.~Jimbo and T.~Miwa.
\newblock {\em Algebraic analysis of solvable lattice models}, volume~85.
\newblock American Mathematical Society, 1994.

\bibitem[Joh00]{Johansson}
K.~Johansson.
\newblock Shape fluctuations and random matrices.
\newblock {\em Communications in mathematical physics}, 209(2):437--476, 2000.

\bibitem[KBI93]{KorepinBI}
V.~E. Korepin, N.~M. Bogoliubov, and A.~G. Izergin.
\newblock {\em Quantum inverse scattering method and correlation functions}.
\newblock Cambridge Monographs on Mathematical Physics. Cambridge University
  Press, Cambridge, 1993.

\bibitem[KMMO16]{KunibaMMO}
A.~Kuniba, V.~Mangazeev, S.~Maruyama, and M.~Okado.
\newblock Stochastic {R} matrix for ${U}_q ({A}_n^{(1)})$.
\newblock {\em Nuclear Physics B}, 913:248--277, 2016.

\bibitem[KN98]{KirillovN}
A.~N. Kirillov and M.~Noumi.
\newblock Affine {H}ecke algebras and raising operators for {M}acdonald
  polynomials.
\newblock {\em Duke Mathematical Journal}, 93(1):1--39, 1998.

\bibitem[Kno97]{Knop}
F.~Knop.
\newblock Integrality of two variable {K}ostka functions.
\newblock {\em Journal f\"ur die Reine und Angewandte Mathematik},
  482:177--189, 1997.

\bibitem[Kor82]{Korepin}
V.~E. Korepin.
\newblock Calculation of norms of bethe wave functions.
\newblock {\em Communications in Mathematical Physics}, 86(3):391--418, 1982.

\bibitem[KRS81]{KulishRS}
P.~Kulish, N.~Reshetikhin, and E.~Sklyanin.
\newblock Yang--{B}axter equation and representation theory: {I}.
\newblock {\em Letters in Mathematical Physics}, 5(5):393--403, 1981.

\bibitem[Kua17]{Kuan}
J.~Kuan.
\newblock An algebraic construction of duality functions for the stochastic
  ${U}_q({A}_n^{(1)})$ vertex model and its degenerations.
\newblock {\em To appear in Communications in Mathematical Physics,
  arXiv:1701.04468}, 2017.

\bibitem[Kua18]{Kuan2}
J.~Kuan.
\newblock Probability distributions of multi-species $q$-{TAZRP} and {ASEP} as
  double cosets of parabolic subgroups.
\newblock {\em arXiv:1801.02313}, 2018.

\bibitem[Kun17]{Kuniba}
A.~Kuniba.
\newblock Remarks on ${A}_n^{(1)}$ face weights.
\newblock {\em arXiv:1711.02984}, 2017.

\bibitem[Mac95a]{Macdonald-Bourbaki}
I.~G. Macdonald.
\newblock Affine {H}ecke algebras and orthogonal polynomials.
\newblock {\em S{\'e}minaire Bourbaki}, 37:189--207, 1994--1995.

\bibitem[Mac95b]{Macdonald}
I.~G. Macdonald.
\newblock {\em Symmetric functions and {H}all polynomials}.
\newblock Oxford Mathematical Monographs. The Clarendon Press Oxford University
  Press, New York, second edition, 1995.
\newblock With contributions by A. Zelevinsky, Oxford Science Publications.

\bibitem[Mar99]{Marshall}
D.~Marshall.
\newblock Symmetric and nonsymmetric {M}acdonald polynomials.
\newblock {\em Annals of Combinatorics}, 3(2-4):385--415, 1999.

\bibitem[MdS00]{SantosM}
J.-M. Maillet and S.~de~Santos.
\newblock Drinfeld twists and algebraic {B}ethe ansatz.
\newblock {\em Translations of the American Mathematical Society-Series 2},
  201:137--178, 2000.

\bibitem[MN98]{MimachiN}
K.~Mimachi and M.~Noumi.
\newblock A reproducing kernel for nonsymmetric {M}acdonald polynomials.
\newblock {\em Duke Mathematical Journal}, 91(3):621--634, 1998.

\bibitem[MQR17]{MQR}
K.~Matetski, J.~Quastel, and D.~Remenik.
\newblock The {KPZ} fixed point.
\newblock {\em arXiv:1701.00018}, 2017.

\bibitem[MW12]{McAteerW}
S.~McAteer and M.~Wheeler.
\newblock On factorizing ${F}$-matrices in ${Y}(sl_n)$ and
  ${U}_q(\widehat{sl_n})$ spin chains.
\newblock {\em Journal of Statistical Mechanics}, 2012(04), 2012.

\bibitem[O'C12]{O'Connell}
N.~O'Connell.
\newblock Directed polymers and the quantum {T}oda lattice.
\newblock {\em Annals of Probability}, 40(2):437--458, 2012.

\bibitem[Ons44]{Onsager}
L.~Onsager.
\newblock Crystal statistics. {I}. {A} two-dimensional model with an
  order-disorder transition.
\newblock {\em Physical Review}, 65(3-4):117, 1944.

\bibitem[Sah96]{Sahi}
S.~Sahi.
\newblock Interpolation, integrality, and a generalization of {M}acdonald's
  polynomials.
\newblock {\em International Mathematics Research Notices}, 1996(10):457--471,
  1996.

\bibitem[Spo12]{SpohnKPZ}
H.~Spohn.
\newblock {KPZ} scaling theory and the semi-discrete directed polymer model.
\newblock {\em Random Matrix Theory, Interacting Particle Systems and
  Integrable Systems}, 65, 2012.

\bibitem[Spo14]{Spohn}
H.~Spohn.
\newblock Nonlinear fluctuating hydrodynamics for anharmonic chains.
\newblock {\em Journal of Statistical Physics}, 154(5):1191--1227, 2014.

\bibitem[ST18]{ShenT}
H.~Shen and L.-C. Tsai.
\newblock Stochastic {T}elegraph {E}quation {L}imit for the {S}tochastic {S}ix
  {V}ertex {M}odel.
\newblock {\em arXiv:1807.04678}, 2018.

\bibitem[Tak15]{Takeyama}
Y.~Takeyama.
\newblock Algebraic construction of multi-species $q$-{B}oson system.
\newblock {\em arXiv:1507.02033}, 2015.

\bibitem[Tsi06]{Tsilevich}
N.~V. Tsilevich.
\newblock Quantum inverse scattering method for the $q$-boson model and
  symmetric functions.
\newblock {\em Functional Analysis and Its Applications}, 40(3):207--217, 2006.

\bibitem[TV13]{TarasovV}
V.~O. Tarasov and A.~N. Varchenko.
\newblock Combinatorial formulae for nested {B}ethe vectors.
\newblock {\em Symmetry, Integrability and Geometry: Methods and Applications},
  9(0):48--28, 2013.

\bibitem[TW08a]{TracyW2}
C.~Tracy and H.~Widom.
\newblock A {F}redholm determinant representation in {ASEP}.
\newblock {\em Journal of Statistical Physics}, 132(2):291--300, 2008.

\bibitem[TW08b]{TracyW1}
C.~Tracy and H.~Widom.
\newblock Integral formulas for the asymmetric simple exclusion process.
\newblock {\em Communications in Mathematical Physics}, 279(3):815--844, 2008.

\bibitem[TW09]{TracyW3}
C.~Tracy and H.~Widom.
\newblock Asymptotics in {ASEP} with step initial condition.
\newblock {\em Communications in Mathematical Physics}, 290(1):129--154, 2009.

\bibitem[WZJ16]{WheelerZ}
M.~Wheeler and P.~Zinn-Justin.
\newblock Refined {C}auchy/{L}ittlewood identities and six-vertex model
  partition functions: {III}. {D}eformed bosons.
\newblock {\em Advances in Mathematics}, 299:543--600, 2016.

\end{thebibliography}

\end{document}